\theoremstyle{plain}
\newtheorem{theorem}{Theorem}
\newtheorem{proposition}[theorem]{Proposition}
\newtheorem{corollary}[theorem]{Corollary}
\newtheorem{lemma}[theorem]{Lemma}
\newtheorem{question}[theorem]{Question}
\theoremstyle{definition}
\newtheorem{definition}[theorem]{Definition}
\theoremstyle{remark}
\newenvironment{definitions}{\begin{definition}\renewcommand{\theenumi}{(\roman{enumi})}\begin{enumerate}}{\end{enumerate}\end{definition}}
\numberwithin{theorem}{chapter}        
\DeclareRobustCommand{\edit}[1]{%
   \ifmmode
   \mathbf{\blacktriangleright #1 \blacktriangleleft }%
   \else
      \textbf{$\blacktriangleright$ #1 $\blacktriangleleft$ }%
   \fi
}
\def\RCSpreamble$#1: #2 ${\expandafter\def\csname RCSpreamble#1\endcsname{#2}}
\long\def\symbolfootnote[#1]#2{\begingroup%
\def\thefootnote{\fnsymbol{footnote}}\footnote[#1]{#2}\endgroup}
\newcommand{\restrict}{\upharpoonright}
\newcommand{\partmap}{\rightharpoonup}
\newcommand{\partialmap}{\rightharpoonup}
\renewcommand{\mid}{\mathrel{:}}
\DeclareMathOperator{\supp}{supp}
\newcommand{\forces}{\ensuremath{\Vdash}}
\newcommand{\highcheck}{\check{\vphantom{\A{}}}}
\newcommand{\MA}{\ensuremath{\mathsf{MA}}}
\newcommand{\ZFC}{\ensuremath{\mathsf{ZFC}}}
\newcommand{\CH}{\ensuremath{\mathsf{CH}}}
\newcommand{\GCH}{\ensuremath{\mathsf{GCH}}}
\newcommand{\powset}{\mathcal{P}}
\DeclareMathOperator{\cf}{cf}
\DeclareMathOperator{\Fn}{Fn}
\newcommand{\Non}{\mathrm{non}}
\newcommand{\Ord}{\mathsf{Ord}}
\newcommand{\card}[1]{\lvert #1 \rvert}
\newcommand{\N}{\ensuremath{\mathbb{N}}}
\DeclareMathOperator{\dom}{dom}
\DeclareMathOperator{\ran}{ran}
\newcommand{\A}{\mathcal{A}}
\newcommand{\BS}{\ensuremath{{}^\N\N}}
\DeclareMathOperator{\Sym}{Sym}
\DeclareMathOperator{\id}{Id}
\DeclareMathOperator{\Id}{Id}
\DeclareMathOperator{\lh}{lh}
\DeclareMathOperator{\Th}{Th}
\DeclareMathOperator{\Sk}{Sk}
\newcommand{\seq}[1]{\bar{#1}}
\newcommand{\Nat}{\ensuremath{\mathsf{Nat}}}
\newcommand{\Real}{\ensuremath{\mathsf{Real}}}
\newcommand{\code}[1]{\ulcorner {#1} \urcorner}
\newcommand{\poset}{\ensuremath{{\mathbb{E}}_\A{}}}
\newcommand{\B}{\mathcal{B}}
\newcommand{\ocx}{\#_x}
\newcommand{\use}{\mathsf{use}}
\newcommand{\cof}{\mathrm{cof}}
\newcommand{\non}{\mathrm{non}}
\newcommand{\add}{\mathrm{add}}
\newcommand{\loc}{\mathbb{LOC}}
\newcommand{\Null}{\mathcal{N}}
\DeclareMathOperator{\Dp}{Dp}
\newcommand{\ON}{\mathsf{ON}}
\DeclareMathOperator{\red}{red}
\newcommand{\leqc}{\mathrel{< \!\! \circ}}
\newcommand{\I}{\mathcal{I}}
\DeclareMathOperator{\md}{md}
\newcommand{\leqcirc}{\mathrel{< \! \!\circ}}
\newcommand{\bv}[1]{\, [ \! [ #1 ] \! ] \,}
\newcommand{\finset}[1]{{}^{<\omega}[#1]}
\newcommand{\finsetd}[2]{{}^{#2}[{#1}]}
\newcommand{\finseq}[1]{{}^{<\omega}(#1)}
\newcommand{\funcN}[1]{{}^\N#1}
\begin{document}

\bibliographystyle{plain}    


\titlepage{Cofinitary Groups and Other Almost Disjoint Families of
Reals}{Bart Kastermans}{Doctor of Philosophy}
{Mathematics} {2006}
{Professor Andreas R. Blass, Co-chair\\
 Professor Yi Zhang, Co-chair, Sun Yat-Sen University (China)\\
  Professor Peter G. Hinman\\
  Professor Berit Stensones\\
  Associate Professor James P. Tappenden\\
  Assistant Professor Alexei S. Kolesnikov\\
  Assistant Professor Monica VanDieren}


\initializefrontsections


\setcounter{page}{1}



\startacknowledgementspage{This thesis is the result of research I
have done while I was a PhD student at the University of Michigan.  I
would like to thank everyone who has played a part in my doing this.
  
  In particular I am grateful to Andreas Blass for being an excellent
  advisor, Peter Hinman for many nice discussions and much information
  about recursion theory, and Yi Zhang who got me started on both very
  mad families and cofinitary groups, and of course for inviting me to
  visit him in China at Sun Yat-Sen University during the year 2005.
  
  I also want to thank Marat Arslanov for hosting me in Kazan in the
  summer of 2003, making for a pleasant and productive visit.

  There are also many people who influenced me before I came to
  Michigan: foremost Jan van Mill (Free University in Amsterdam) who
  introduced me to set theory and supported me in my initial work.
  While I was an undergraduate student at the Free University in
  Amsterdam, I learned a lot from the following people: Klaas Pieter
  Hart (Delft University of Technology) who helped me while working
  on my Masters thesis; the teachers of the Master Class in
  Mathematical Logic organized by the Mathematical Research Institute
  in the Netherlands (at the University of Nijmegen: Henk Barendregt,
  Herman Geuvers, and Wim Veldman; at Utrecht University: Albert
  Visser, Ieke Moerdijk, Harold Schellinx, and Jaap van Oosten); and
  the teachers of the logic classes I took at the University of
  Amsterdam: Kees Doets, Dick de Jongh, and Anne Troelstra.}

\tableofcontents

\startthechapters 

\chapter{Introduction}
\label{intro} 
Almost disjoint families and maximal almost disjoint (mad) families
have received a lot of attention in set theory.  In their study many
different varieties have been introduced.  There are the ``standard''
almost disjoint families, and varieties on different spaces and with
different additional conditions.

The general definitions of the notions almost disjoint, almost
disjoint family, and maximal almost disjoint family are as follows.
Take a collection $X$ consisting of objects of a certain (infinite)
cardinality $\kappa$.  Two objects $x,y$ in $X$ are called
\emph{almost disjoint} if the intersection of $x$ and $y$, $x \cap y$,
is of cardinality less than $\kappa$.  A subset $\mathcal{A}$ of $X$
is an \emph{almost disjoint family} if every two distinct objects in
$\mathcal{A}$ are almost disjoint.  Such a subset $\mathcal{A}$ is a
\emph{maximal almost disjoint family} if $\mathcal{A}$ is an almost
disjoint family and there does not exist an almost disjoint family
$\mathcal{B}$ of $X$ such that $\mathcal{A}$ is a proper subset of
$\mathcal{B}$ ($\A$ is maximal with respect to inclusion).

Existence of maximal almost disjoint families is easy to see.  Using
Zorn's lemma (an equivalent of the axiom of choice) and noticing that
the union of an increasing chain (with respect to inclusion) of almost
disjoint families is almost disjoint, we see that there does exist a
maximal almost disjoint family.

Different notions of almost disjointness can be formed from the
general notion in different ways --- we will mention the two we will
use.  One can change the set $X$ of which the almost disjoint families
are subsets.  Usual choices for this are $\powset(\N)$, the
powerset of the natural numbers, or $\BS$, the set of functions from
$\N$ to $\N$.  Other choices for $X$ have been and are receiving
attention, for instance see \cite{BHZ}.  Another
way to change the notion of almost disjointness is by imposing
additional conditions on the family $\A$.  One can for instance impose
a group structure on $\A$ if there is one on $X$.  With additional
conditions imposed, the existence of maximal almost disjoint families
needs to be reinvestigated.

In this thesis we will study some different varieties of almost
disjoint and maximal almost disjoint families (for all of them
$\kappa$ as in the above general definition of almost disjointness
will be equal to $\aleph_0$).  Some of the results and questions are
inspired by similarities and differences among these different
varieties of almost disjoint families.

\section{Subsets of $\N$}

The standard example of almost disjointness is the following.

\begin{definition}
  We call $x,y \subseteq \N$ \emph{almost disjoint} if both are
  infinite and $x \cap y$ is finite.
\end{definition}

For various reasons, which we will explain shortly, we impose the
additional condition that the family is infinite on the related notion
of maximal almost disjoint family.

\begin{definition}
  A family $\mathcal{A} \subseteq \mathcal{P}(\N)$ is a \emph{maximal
  almost disjoint family (of subsets of the natural numbers)} if it
  is infinite, consists of pairwise almost disjoint sets, and is
  not properly contained in another such family.
\end{definition}

Existence of maximal almost disjoint families of subsets of the
natural numbers is still unproblematic: we get such a family by
applying Zorn's lemma to an infinite partition of $\N$ into
infinite parts.

The following results on the cardinality of these maximal almost
disjoint families are well known.

\begin{theorem}
  \label{thm:IntroMACHN}
  Both Martin's axiom ($\MA$) and the continuum hypothesis ($\CH$)
  imply that all maximal almost disjoint families of subsets of the
  natural numbers are of cardinality continuum.
\end{theorem}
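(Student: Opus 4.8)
The plan is to treat the two hypotheses separately and, in each case, reduce the statement to the non-existence of ``small'' maximal almost disjoint families. Since any almost disjoint family of subsets of $\N$ is a subset of $\powset(\N)$, it has cardinality at most $\mathfrak{c}$; so in both cases it suffices to show that every maximal almost disjoint family has cardinality \emph{at least} $\mathfrak{c}$. Under $\CH$ this follows once one knows there is no countably infinite maximal almost disjoint family: given a countable almost disjoint family $\{A_n \mid n \in \N\}$, I would diagonalize, recursively choosing $x_n \in A_n \setminus (A_0 \cup \dots \cup A_{n-1})$ — possible because $A_n$ is infinite while $A_n \cap (A_0 \cup \dots \cup A_{n-1})$ is a finite union of finite sets. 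Then $B = \{x_n \mid n \in \N\}$ is infinite with $B \cap A_n \subseteq \{x_0,\dots,x_n\}$ for all $n$, and being infinite it is not one of the $A_n$; so $\{A_n\}$ is not maximal. Hence under $\CH$ a maximal almost disjoint family is uncountable, so of size $\aleph_1 = \mathfrak{c}$.

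Under $\MA$ the argument is a standard forcing/$\MA$ construction. Fix an infinite almost disjoint family $\A$ with $\card{\A} = \kappa < \mathfrak{c}$; the goal is to produce an infinite $B \subseteq \N$ almost disjoint from every member of $\A$, which (being infinite, $B \notin \A$) shows $\A$ is not maximal. I would use the poset $\itposet$ whose conditions are pairs $(s,F)$ with $s \in [\N]^{<\omega}$ and $F \in [\A]^{<\omega}$, ordered by $(s',F') \le (s,F)$ iff $s \subseteq s'$, $F \subseteq F'$, and $(s' \setminus s) \cap \bigcup F = \emptyset$. Two conditions with the same first coordinate are compatible (union the second coordinates), so $\itposet$ is $\sigma$-centered, in particular ccc. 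For $n \in \N$ set $D_n = \{(s,F) \mid \card{s} \ge n\}$ and for $A \in \A$ set $E_A = \{(s,F) \mid A \in F\}$; each $E_A$ is trivially dense, and each $D_n$ is dense because for any $(s,F)$ the set $\N \setminus \bigcup F$ is infinite. The family $\{D_n \mid n \in \N\} \cup \{E_A \mid A \in \A\}$ has size $\kappa < \mathfrak{c}$, so $\MA$ yields a filter $G$ meeting all of them; put $B = \bigcup\{s \mid \exists F\,(s,F) \in G\}$.

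It then remains to check that $B$ works. Meeting every $D_n$ makes $B$ infinite. For $A \in \A$, fix $(s_0,F_0) \in G$ with $A \in F_0$; given any $(s,F) \in G$, a common lower bound $(s',F') \le (s_0,F_0)$ satisfies $(s'\setminus s_0)\cap A = \emptyset$ and $s \subseteq s'$, so $s \cap A \subseteq s_0 \cap A \subseteq s_0$. Hence $B \cap A \subseteq s_0$ is finite, i.e.\ $B$ is almost disjoint from $A$. Since $B$ is infinite it is not a member of $\A$, so $\A \cup \{B\}$ is a strictly larger (still infinite) almost disjoint family, contradicting maximality. Thus under $\MA$ no infinite almost disjoint family of size $< \mathfrak{c}$ is maximal, so every maximal almost disjoint family has size exactly $\mathfrak{c}$.

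The one point needing genuine care — the ``main obstacle'' — is the density of the sets $D_n$, equivalently the claim that $\N \setminus \bigcup F$ is infinite for every finite $F \subseteq \A$. This uses essentially that $\A$ is infinite: picking $A \in \A \setminus F$, the set $A \cap \bigcup F$ is finite, so $A \setminus \bigcup F \subseteq \N \setminus \bigcup F$ is infinite. It is genuinely false for finite almost disjoint families (e.g.\ $\{\text{evens},\text{odds}\}$ has cofinite union), which is exactly why the thesis builds infiniteness into the definition of a maximal almost disjoint family; the same observation shows that $\MA_{\aleph_0}$, a theorem of $\ZFC$, already handles the $\CH$ case, though the diagonalization above is the more transparent route there.
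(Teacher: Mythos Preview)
Your proof is correct and follows the standard approach: a diagonalization for the $\CH$ case and the usual almost-disjoint (Mathias-style) forcing combined with $\MA$ for the general case, with the density of the $D_n$ correctly pinned on the infiniteness of $\A$. The paper itself does not prove this theorem; it is stated in the introduction as a well-known background result without argument, so there is nothing to compare against beyond noting that what you wrote is exactly the textbook proof one would expect. One phrasing nit: in the $\CH$ paragraph, ``being infinite it is not one of the $A_n$'' reads oddly since the $A_n$ are infinite too---the point is rather that $B \cap A_n$ is finite while $A_n$ is not, hence $B \neq A_n$.
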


\begin{theorem}[\cite{KK80}]  
  \label{thm:IntroCohenN}
  In a model $M$ of $\ZFC + \CH$ there exists a maximal almost
  disjoint family of subsets of the natural numbers that is still
  maximal almost disjoint in any Cohen extension of $M$.
\end{theorem}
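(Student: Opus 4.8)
The plan is to build, under $\CH$, a single maximal almost disjoint family $\A=\{A_\alpha\mid\alpha<\omega_1\}$ in $M$ that is \emph{Cohen-indestructible}, by a transfinite recursion of length $\omega_1$ in which the $\alpha$-th set is chosen so as to defeat the $\alpha$-th potential new almost disjoint set.

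The first move is a reduction to adding a single Cohen real. A Cohen extension of $M$ is $M[G]$ with $G$ generic over $\Fn(\kappa,2)$ for some $\kappa$; since this poset is ccc, every $Y\in M[G]\cap\powset(\N)$ already lies in $M[G\restrict s]$ for some countably infinite $s\subseteq\kappa$, and $M[G\restrict s]$ is a $\Fn(s,2)$-extension of $M$ with $\Fn(s,2)$ isomorphic to $\mathbb{C}:=\Fn(\omega,2)$. So it suffices to arrange that $\A$ stays maximal in every $\mathbb{C}$-generic extension of $M$: then no such $Y$ is almost disjoint from all of $\A$, and, $\A$ being already almost disjoint in $M$ (this being absolute), $\A$ is maximal in $M[G]$; maximality in $M$ itself then follows, since any witness to non-maximality in $M$ would witness non-maximality in some Cohen extension of $M$.

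Now I work in $M\models\CH$. Because $\mathbb{C}$ is countable there are only $\aleph_1$ nice $\mathbb{C}$-names for subsets of $\N$, and by the nice-name lemma these are all that matter; fix an enumeration $\langle(\dot B_\alpha,p_\alpha)\mid\alpha<\omega_1\rangle$ of all pairs consisting of such a name and a condition $p_\alpha\in\mathbb{C}$. Fix also a partition $\{A_n\mid n<\omega\}$ of $\N$ into infinite pieces, which will be the first $\omega$ members of $\A$. At stage $\alpha\in[\omega,\omega_1)$, with $\A_\alpha:=\{A_\xi\mid\xi<\alpha\}$ an infinite almost disjoint family containing that partition, I choose $A_\alpha\in[\N]^{\omega}$ so that $\A_\alpha\cup\{A_\alpha\}$ is almost disjoint and, \emph{in case} $p_\alpha\Vdash$``$\dot B_\alpha$ is infinite'', also $p_\alpha\Vdash$``$\dot B_\alpha$ meets some member of $\A_\alpha\cup\{A_\alpha\}$ infinitely'' (at limit stages one simply takes unions, which stay almost disjoint, and the whole recursion is definable in $M$). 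Granting that such an $A_\alpha$ always exists, $\A=\{A_\alpha\mid\alpha<\omega_1\}$ is an infinite almost disjoint family, and it is maximal in any $\mathbb{C}$-extension $M[G]$: for $Y\in M[G]\cap[\N]^{\omega}$, write $Y=\dot B[G]$ for a nice name $\dot B$ and pick $p\in G$ with $p\Vdash$``$\dot B$ is infinite''; then $(\dot B,p)=(\dot B_\alpha,p_\alpha)$ for some $\alpha$, so by construction $p$ forces $\dot B$ to meet some $A_\eta$ infinitely, whence $|Y\cap A_\eta|=\omega$ in $M[G]$.

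The heart of the argument, and the step I expect to be the main obstacle, is the stage-$\alpha$ choice of $A_\alpha$, which must be almost disjoint from the (in general infinitely many) earlier sets while forcing $\dot B:=\dot B_\alpha$ to be caught. I would enumerate $\alpha$ as $\{\xi_j\mid j<\omega\}$ and the conditions below $p:=p_\alpha$ as $\langle r_k\mid k<\omega\rangle$ with each appearing infinitely often, and build $m_0<m_1<\cdots$ recursively: at step $k$, with $F_k:=A_{\xi_0}\cup\cdots\cup A_{\xi_{k-1}}$, if there are $q\leq r_k$ and $m\in\N\setminus F_k$ with $m>m_{k-1}$ and $q\Vdash\check m\in\dot B$, fix such a pair and put $m_k:=m$; otherwise let $m_k$ be the least element of $\N\setminus F_k$ above $m_{k-1}$. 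Put $A_\alpha:=\{m_k\mid k<\omega\}$. Three things then need checking. (i) The recursion never stalls, since $\N\setminus F_k$ is infinite: any member $A_n$ of the fixed partition other than $A_{\xi_0},\dots,A_{\xi_{k-1}}$ is almost disjoint from each of them, hence is not almost covered by $F_k$. (ii) $\A_\alpha\cup\{A_\alpha\}$ is almost disjoint, because $m_k\in A_{\xi_j}$ forces $k\leq j$, so $A_\alpha\cap A_{\xi_j}\subseteq\{m_0,\dots,m_j\}$. (iii) If $p\Vdash$``$\dot B$ infinite'' then $p$ forces $\dot B$ to meet $\A_\alpha\cup\{A_\alpha\}$ infinitely: otherwise some $p'\leq p$ forces $|\dot B\cap A_\eta|<\omega$ for all $\eta\leq\alpha$, and one splits into two cases. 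If for every $r\leq p'$ and every $N$ there is a step $k$ with $r_k=r$, $m_{k-1}\geq N$, and the first alternative occurring, then densely below $p'$ one forces arbitrarily large elements of $A_\alpha$ into $\dot B$, so $p'\Vdash|\dot B\cap A_\alpha|=\omega$, a contradiction. Otherwise some $r\leq p'$ and $N$ admit no such $k$, so at some step $k$ with $r_k=r$ the second alternative occurs, which readily gives $r\Vdash\dot B\setminus(m_{k-1}+1)\subseteq F_k$; then, as $r\Vdash$``$\dot B$ infinite'', some extension of $r$ forces $|\dot B\cap A_{\xi_j}|=\omega$ for some $j<k$, again contradicting the choice of $p'$. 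This dichotomy --- balancing the growing ``forbidden'' sets $F_k$ against the density requirement coming from the name --- is the one genuinely delicate point; the rest (the nice-name lemma, the ccc support argument, and absoluteness of almost disjointness) is routine.
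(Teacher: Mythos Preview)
Your argument is correct and is essentially the classical construction from \cite{KK80}; note that the paper does not give its own proof of this theorem---it is stated in the introduction as a cited background result. The template you use (enumerate pairs of conditions and nice $\Fn(\omega,2)$-names under $\CH$, at stage $\alpha$ build $A_\alpha$ by a finite-injury-style sub-recursion that diagonalizes against the $\alpha$-th pair while remaining almost disjoint from the earlier sets, then reduce arbitrary $\Fn(I,2)$ to $\Fn(\omega,2)$ via the countable-support lemma) is exactly the one the paper later adapts in Chapter~\ref{chap2} to prove the analogous indestructibility result for very mad families; there the only substantive change is that at stage $\beta$ one must capture \emph{all} earlier uncovered names simultaneously rather than just the $\beta$-th one.

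One cosmetic remark: your parenthetical ``at limit stages one simply takes unions'' is slightly misleading, since at every ordinal $\alpha\in[\omega,\omega_1)$, limit or successor, you carry out the same construction of a new set $A_\alpha$; there is no separate limit-stage step beyond forming $\A_\alpha=\{A_\xi\mid\xi<\alpha\}$.
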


\begin{theorem}
  There does not exist a countable maximal almost disjoint family of
  subsets of the natural numbers.
\end{theorem}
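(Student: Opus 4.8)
The plan is to reduce to the countably infinite case and diagonalize. Finite families are excluded from being maximal almost disjoint by the definition above, so it suffices to show that no countably infinite pairwise almost disjoint family is maximal. Fix an enumeration $\mathcal{A} = \{A_n \mid n \in \N\}$ of such a family. I will construct an infinite set $B \subseteq \N$ that is almost disjoint from every $A_n$ and distinct from every $A_n$; then $\mathcal{A} \cup \{B\}$ is a strictly larger almost disjoint family which is still infinite (it contains $\mathcal{A}$), contradicting maximality of $\mathcal{A}$.

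First I would build $B = \{b_n \mid n \in \N\}$ by recursion, choosing $b_n \in A_n \setminus (A_0 \cup \cdots \cup A_{n-1})$ at stage $n$ (so $b_0 \in A_0$). This is possible because $A_n$ is infinite while each $A_i \cap A_n$ for $i < n$ is finite by almost disjointness, hence $A_n \cap (A_0 \cup \cdots \cup A_{n-1})$ is finite and its complement inside $A_n$ is infinite, in particular nonempty. The points $b_n$ are automatically pairwise distinct, since for $m > n$ we have $b_m \notin A_n$ whereas $b_n \in A_n$; consequently $B$ is infinite.

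Next I would check that $B$ is almost disjoint from each $A_n$: for every $m > n$, $b_m \notin A_0 \cup \cdots \cup A_{m-1} \supseteq A_n$, so $B \cap A_n \subseteq \{b_0, \ldots, b_n\}$ is finite. Since $A_n$ is infinite, this also forces $B \neq A_n$ for all $n$, so $B \notin \mathcal{A}$. Thus $\mathcal{A} \cup \{B\}$ is an infinite almost disjoint family properly extending $\mathcal{A}$, the desired contradiction. I do not expect a genuine obstacle here; the only points needing care are that the recursion must keep $B$ almost disjoint from \emph{all} of $\mathcal{A}$ at once — handled by deleting the initial segment $A_0 \cup \cdots \cup A_{n-1}$ at stage $n$ — and the (definitional) remark that ``maximal'' in this context already excludes finite families, so only the countably infinite case requires argument.
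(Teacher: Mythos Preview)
Your argument is correct and is the standard diagonalization proof of this well-known fact. The paper itself states this theorem in the introduction as a known result and does not supply a proof, so there is nothing to compare against; your write-up would serve perfectly well as the omitted argument.
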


With these theorems the following definition becomes reasonable.

\begin{definition}
  Let the cardinal $\mathfrak{a}$ be the least cardinality of a
  maximal almost disjoint family of subsets of the natural numbers.
\end{definition}

The reason for the additional condition that the family be infinite in
the definition of maximal almost disjoint family of subsets of the
natural numbers is to keep the cardinal $\mathfrak{a}$ from being
trivial.  If we did not have the additional condition, then any
partition of $\N$ into finitely many infinite pieces would be a
maximal almost disjoint family.

The following theorem about $\mathfrak{a}$ is well known.

\begin{theorem}[\cite{SHe}]
  Suppose $\kappa$ is a regular uncountable cardinal less than the
  continuum in a model of $\ZFC$.  Then there exists a forcing
  extension preserving cardinals and the cardinality of the continuum,
  in which there is a maximal almost disjoint family of subsets of
  the natural numbers of cardinality $\kappa$.
\end{theorem}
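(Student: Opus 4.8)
The plan is to obtain the MAD family as the generic object of a finite support iteration of length $\kappa$ of ``almost disjointness'' forcings. Work over the ground model $V$, in which $\kappa<\mathfrak c$ is regular uncountable; the case $\kappa=\mathfrak c$ is trivial, since a partition of $\N$ into infinitely many infinite pieces extends to a MAD family of size $\mathfrak c$. For an almost disjoint family $\mathcal B$ let $\mathbb{M}_{\mathcal B}$ be the poset of pairs $(s,F)$ with $s\in[\N]^{<\omega}$ and $F\in[\mathcal B]^{<\omega}$, where $(s',F')\le(s,F)$ iff $s\subseteq s'$, $F\subseteq F'$ and $(s'\setminus s)\cap\bigcup F=\emptyset$. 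This poset is $\sigma$-centered, since conditions with the same first coordinate are compatible; its generic real --- the union of the first coordinates appearing in the generic filter --- is almost disjoint from every member of $\mathcal B$, and is infinite provided no finite subfamily of $\mathcal B$ has cofinite union. I would build a finite support iteration $\langle\mathbb{P}_\alpha,\dot{\mathbb Q}_\alpha:\alpha<\kappa\rangle$ with $\dot{\mathbb Q}_\alpha=\mathbb{M}_{\dot{\mathcal A}_\alpha}$, where $\dot{\mathcal A}_\alpha=\{\dot A_\beta:\beta<\alpha\}$ consists of the generic reals $\dot A_\beta$ added at the earlier stages. A straightforward induction (below a condition whose side set contains $A_\gamma$, the real $A_\beta$ acquires no further elements of $A_\gamma$) shows each $\dot{\mathcal A}_\alpha$ is forced to be an almost disjoint family whose members, being infinite and pairwise almost disjoint, are distinct; so in $V^{\mathbb{P}_\kappa}$ the set $\mathcal A:=\{A_\alpha:\alpha<\kappa\}$ is an almost disjoint family of size $\kappa$.

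Several points are routine. First, $\mathbb{P}_\kappa$ is a finite support iteration of ccc (indeed $\sigma$-centered) posets, hence ccc, so cardinals and cofinalities --- in particular $\kappa$ --- are preserved. Second, each iterand $\mathbb{M}_{\dot{\mathcal A}_\alpha}$ has size $\le\card{\alpha}<\mathfrak c$ and the length is $\le\mathfrak c$, so $\card{\mathbb{P}_\kappa}\le\mathfrak c$, whence $\mathbb{P}_\kappa$ has at most $\mathfrak c^{\aleph_0}=\mathfrak c$ nice names for subsets of $\N$ and does not change $2^{\aleph_0}$. Third, one must check that each $\mathbb{M}_{\dot{\mathcal A}_\alpha}$ really does add an infinite set, i.e.\ that no finite subfamily of $\mathcal A_\alpha$ has cofinite union: for $\alpha\ge\omega$ this is automatic, since any further member of the infinite almost disjoint family $\mathcal A_\alpha$ is almost contained in the complement of the given finite union; for finite $\alpha$ it holds because $A_\alpha$ is then essentially a Cohen real over $V^{\mathbb{P}_\alpha}$ and so is co-infinite even inside $\N\setminus\bigcup\mathcal A_\alpha$ --- alternatively, one can simply insert at the start $\omega$ stages building a partition of $\N$ into infinitely many infinite pieces, making this clause automatic throughout.

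The heart of the argument is maximality, which rests on the following density observation: if $\mathcal B$ is an almost disjoint family in a model $W$ and $B\in W$ is infinite and almost disjoint from every member of $\mathcal B$, then the $\mathbb{M}_{\mathcal B}$-generic real meets $B$ infinitely, because for any $(s,F)$ and any $n$ the set $B\setminus\bigcup F$ is infinite (as $B$ is almost disjoint from each of the finitely many members of $F$), so adjoining $n$ of its elements to $s$ shows $\{(s,F):\card{s\cap B}\ge n\}$ is dense. Suppose now, toward a contradiction, that in $V^{\mathbb{P}_\kappa}$ there is an infinite $B\subseteq\N$ almost disjoint from every $A_\alpha$. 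Since $\mathbb{P}_\kappa$ is a finite support ccc iteration and $\cf(\kappa)>\aleph_0$, a nice name for $B$ has countable support and hence is (equivalent to) a $\mathbb{P}_\alpha$-name for some $\alpha<\kappa$, so $B\in V^{\mathbb{P}_\alpha}$; there $B$ is infinite and almost disjoint from every member of $\mathcal A_\alpha=\{A_\beta:\beta<\alpha\}$, so by the density observation (with $W=V^{\mathbb{P}_\alpha}$ and $\mathcal B=\mathcal A_\alpha$) the stage-$\alpha$ generic real $A_\alpha$ satisfies $\card{A_\alpha\cap B}=\aleph_0$, contradicting that $B$ is almost disjoint from $A_\alpha$. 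Hence no such $B$ exists and $\mathcal A$ is a maximal almost disjoint family of size $\kappa$. I expect the density observation to be the essential step: a single generic real $A_\alpha$ simultaneously ``reaps'' every infinite subset of $\N$ lying in $V^{\mathbb{P}_\alpha}$ and almost disjoint from $\mathcal A_\alpha$, so there is no need to enumerate the (possibly more than $\kappa$ many) candidate sets; the only reflection needed --- that every such candidate already appears at some stage below $\kappa$ --- is exactly where the uncountable cofinality of $\kappa$ is used.
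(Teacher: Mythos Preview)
Your proof is correct. The paper does not actually prove this theorem---it is merely cited from Hechler---but the paper does prove the analogous result for very mad families (Theorem~\ref{thm:notSizec}), and your argument is essentially the set-version of that proof: a finite-support $\kappa$-iteration of the natural ``add one more almost disjoint object'' forcing (your $\mathbb{M}_{\mathcal B}$ playing the role of the paper's $\mathbb{P}_{\mathcal A}$), the same $\sigma$-centered/ccc and counting arguments for preservation of cardinals and of $2^{\aleph_0}$, and the same maximality argument via the density observation that the stage-$\alpha$ generic hits every candidate already present in $V^{\mathbb{P}_\alpha}$, combined with the reflection of any such candidate below $\kappa$ using $\cf(\kappa)>\aleph_0$.
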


The families $\A$ are subsets of $\powset(\N)$ which is a Polish space
(a separable completely metrizable space).  This is clear from the
fact that $2^\N$ is, and $2^\N$ and $\powset(\N)$ are homeomorphic as
witnessed by the homeomorphism $S \in \powset(\N) \mapsto \chi_S \in
2^\N$, where $\chi_S$ is the characteristic function of $S$.  So we can
apply descriptive set theoretic methods to these families in order to
study their definability.

\begin{question}
  How definable can a maximal almost disjoint family of subsets of the
  natural numbers be?
\end{question}

The following two well known theorems answer this
question.

\begin{theorem}[\cite{ARDM77}]
  \label{thm:IntroNotBorelN}
  A maximal almost disjoint family of subsets of the natural numbers
  cannot be analytic ($\mathbf{\Sigma^1_1}$).
\end{theorem}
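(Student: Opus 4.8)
The plan is to show that a maximal almost disjoint (mad) family $\A \subseteq \powset(\N)$ cannot be $\mathbf{\Sigma^1_1}$ by deriving a contradiction from the assumption that it is. The standard approach, going back to Mathias, is to combine a \emph{closure} argument with a \emph{tree / fusion} argument. First I would observe that if $\A$ is analytic and infinite, one can peel off countably many members $A_0, A_1, \dots$ of $\A$ and consider the set $B = \N \setminus \bigcup_{n} A_n'$, where $A_n' = A_n \setminus \bigcup_{i<n} A_i$ are the disjointified pieces; more usefully, one works with the ideal generated by $\A$ together with the finite sets, call it $\I(\A)$, and notes that this ideal is also analytic (it is a countable union of continuous images of $\A^k \times \finset{\N}$, roughly). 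The key point is that $\I(\A)$ is a proper, tall ideal, and maximality of $\A$ says precisely that every infinite $X \subseteq \N$ either lies in $\I(\A)$ or has infinite intersection with some member of $\A$ — equivalently $\I(\A)$ is a \emph{maximal} such ideal in a suitable sense, or at least that there is no infinite $X$ almost disjoint from every element of the ideal.

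Next I would set up a fusion argument to manufacture such a forbidden $X$. The heart of the matter is: given an analytic set $\A$ (equivalently, the body $[T]$ of a tree $T$ on $\N \times \N$ projecting to $\A$, or a Souslin scheme), together with the requirement that $\A$ be almost disjoint, one builds by recursion a decreasing sequence of conditions in a forcing-like partial order (for instance, a tree of finite approximations where at stage $n$ one has committed to finitely much of an infinite set $X$ and to finitely many ``promises'' about elements of $\A$). At each step one diagonalizes: one ensures $X$ will have only finite intersection with the $n$-th member enumerated so far, using that almost disjointness lets one avoid each $A \in \A$ eventually, and simultaneously one shrinks the analytic set so as to continue to have ``many'' members available. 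The point of analyticity (as opposed to arbitrary families) is that the Souslin scheme / tree representation gives a \emph{definable} and \emph{well-founded} bookkeeping structure, so the recursion can be carried out and the fusion $X = \bigcup_n$ (the committed parts) is infinite and almost disjoint from \emph{every} member of $\A$ — contradicting maximality, since $\A \cup \{X\}$ would then be a strictly larger almost disjoint family.

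The main obstacle, and the place where analyticity is genuinely used, is the second step: ensuring that after infinitely many diagonalization requirements the ``surviving'' portion of $\A$ is still rich enough that the construction of $X$ can't accidentally be swallowed by $\I(\A)$ — i.e., that $X$ really is outside the ideal. This is where one invokes a Mathias-style property (every analytic set is ``Ramsey'' / completely Ramsey, or the Mathias forcing associated to a coideal has the right genericity), or alternatively a direct argument using Souslin's operation $\mathcal{A}$ and the fact that an analytic set has the perfect set property so cannot be a mad family of size $< \mathfrak{c}$ while also not being of size $\mathfrak{c}$ by the tree argument. I would structure the write-up so that the combinatorial core is isolated as: \emph{if $\A$ is analytic and almost disjoint, then there is an infinite $X$ almost disjoint from all $A \in \A$}, proved by the fusion above, and then maximality is contradicted in one line. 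I expect the technical work to be bookkeeping the simultaneous requirements on $X$ and on the Souslin scheme so the fusion limit exists; the conceptual content is just that analytic = Souslin gives enough regularity to run a diagonalization that arbitrary families resist.
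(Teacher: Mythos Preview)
The paper does not give a proof of this theorem: it is stated in the introduction as a known result with the citation \cite{ARDM77} (Mathias), and no argument is supplied. So there is nothing in the paper to compare your proposal against.

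That said, your sketch is pointing in the right direction but is too vague at the crucial step. The standard Mathias argument does not really run a bespoke fusion on a Souslin scheme for $\A$; rather, one first isolates a countable subfamily $\{A_n : n \in \N\} \subseteq \A$, passes to an infinite set $B$ almost disjoint from each $A_n$ (easy diagonalization), and then applies the theorem that every analytic set is completely Ramsey to the analytic set $\{X \in [B]^\omega : X \text{ has infinite intersection with some } A \in \A\}$. Complete Ramseyness yields an infinite $C \subseteq B$ with either $[C]^\omega$ contained in this set or disjoint from it; maximality forces the former, but then splitting $C$ into two disjoint infinite pieces and using almost disjointness of $\A$ gives a contradiction. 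Your write-up conflates this with a direct tree construction and never actually says what regularity property of analytic sets is being invoked or how the ``surviving portion of $\A$'' is controlled---that is precisely the content of Mathias's Ramsey theorem, and it should be stated and used explicitly rather than gestured at.
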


\begin{theorem}[\cite{AM89}]
  \label{thm:IntroCoanalyticN}
  The axiom of constructibility implies that there exists a coanalytic
  ($\mathbf{\Pi^1_1}$) maximal almost disjoint family of subsets of
  the natural numbers.
\end{theorem}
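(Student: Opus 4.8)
The plan is to prove this by the \emph{coding\nobreakdash-by\nobreakdash-$L$} method of the cited paper: under the axiom of constructibility one runs the usual transfinite recursion that builds a maximal almost disjoint family, but along the way one forces each member of the family to encode a snapshot of the construction, so that membership in the family becomes a $\Pi^1_1$ condition. By Theorem~\ref{thm:IntroNotBorelN} this is optimal, so one cannot hope for anything simpler than $\Pi^1_1$.

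First I would fix, under $V=L$, the canonical wellordering $<_L$ of the reals; restricted to the infinite subsets of $\N$ it is a $\Sigma^1_2$ good wellordering of order type $\omega_1$, and every countable initial segment of it is an element of some level $L_\gamma$ of the constructible hierarchy. Then I build $\A=\{a_\alpha : \alpha<\omega_1\}$ by recursion on $\alpha<\omega_1$: at stage $\alpha$ let $y_\alpha$ be the $<_L$\nobreakdash-least infinite subset of $\N$ that is almost disjoint from every $a_\beta$, $\beta<\alpha$ (if there is none, repeat an earlier value), and pick $a_\alpha\subseteq y_\alpha$ infinite. Any such $\A$ is a maximal almost disjoint family by a routine bookkeeping argument; the real work is in choosing the $a_\alpha$'s so that the resulting family is definable.

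To that end I would split $\N$ once and for all into two recursive pieces and demand, in addition, that on the first piece $a_\alpha$ carries its ``combinatorial content'' (an infinite subset of $y_\alpha$, almost disjoint from the earlier pieces --- this is what keeps $\A$ maximal almost disjoint), while on the second piece $a_\alpha$ carries, via a fixed recursive coding of binary relations on $\N$, a well-founded extensional structure $M_\alpha$ which is a model of ``$V=L$'' together with ``I am the level $L_{\gamma_\alpha}$, where $\gamma_\alpha$ is least such that $L_{\gamma_\alpha}$ contains $\langle a_\beta : \beta\le\alpha\rangle$ and satisfies a sufficient fragment of $\ZFC$''. The point of this extra clause is that, for $z\in\N$ infinite, the statement ``$z\in\A$'' can then be unwound to: the relation $E_z$ recursively read off the second piece of $z$ is extensional and \emph{well-founded}; its transitive collapse, which by condensation must genuinely be some $L_\gamma$, internally describes a run of the recursion above; and $z$ is exactly the real that this $L_\gamma$ names at the stage where it first appears. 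Every clause here is arithmetic in $z$ except ``$E_z$ is well-founded'', which is $\Pi^1_1$; since $E_z$ is recursive in $z$ there is no real quantifier to eliminate, and all parameters are lightface under $V=L$, so $\A$ is $\Pi^1_1$, in particular $\mathbf{\Pi^1_1}$.

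The main obstacle is the simultaneous satisfaction of the two demands on $a_\alpha$: the coding requirement essentially pins down the second piece of $a_\alpha$, while the almost-disjointness and maximality requirements constrain the intersections of $a_\alpha$ with the earlier $a_\beta$'s. One must lay out the recursive splitting of $\N$ --- and build enough ``don't care'' coordinates into the code --- so that these requirements live on disjoint coordinates and never conflict, and one must check that the description of the construction internal to $M_\alpha$ is absolute enough that the real $M_\alpha$ computes as $a_\alpha$ is the genuine $a_\alpha$; this is where Shoenfield-style absoluteness and the $\Sigma^1_2$\nobreakdash-goodness of $<_L$ enter. Once this compatibility-and-absoluteness bookkeeping is in place, verifying that the displayed characterization of ``$z\in\A$'' is correct and is $\Pi^1_1$ is routine.
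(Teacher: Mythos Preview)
The paper does not itself prove Theorem~\ref{thm:IntroCoanalyticN} --- it cites it from \cite{AM89} --- but Section~\ref{sect:VisL} proves the analogous result for very mad families by adapting Miller's argument, and your outline is recognizably that argument. So the high-level approach matches.

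There is, however, a circularity in your construction that the paper's version is careful to avoid. You ask $a_\alpha$ to encode the least $L_{\gamma_\alpha}$ containing $\langle a_\beta:\beta\le\alpha\rangle$; since $a_\alpha$ is among these, the level you must code depends on where $a_\alpha$ first appears, which in turn depends on what $a_\alpha$ codes. The paper breaks this loop by having the $\gamma$-th element encode a strictly \emph{earlier} level $L_{\beta_\gamma}$ --- specifically one of an unbounded set of ``good'' levels where $L_{\beta_\gamma}$ equals its own countable Skolem hull, so that a code $E$ with $(\N,E)\cong(L_{\beta_\gamma},{\in})$ is available already in $L_{\beta_\gamma+\omega}$ (Lemmas~\ref{lem:unboundedLsk} and~\ref{lem:Eavailable}); the new element is then built in $L_{\beta_\gamma+\omega}$ from the earlier data together with this $E$. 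Because the coded $L_{\beta_\gamma}$ does not yet see the $\gamma$-th element, the $\Pi^1_1$ definition cannot simply ask the coded model to witness membership; instead it \emph{arithmetically reconstructs} a code $E_\omega$ for $L_{\beta_\gamma+\omega}$ from $E$ (Lemma~\ref{lem:nextlevel}) and checks membership there (Lemma~\ref{lem:decidemembership}). Your ``sufficient fragment of $\ZFC$'' clause and your absoluteness remark are gesturing at these issues, but the actual technical content --- isolating levels where a code is provably available close by, and rebuilding the next few levels inside the $\Pi^1_1$ formula --- is what your sketch leaves out.
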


\section{Families of Functions}

We can now change the underlying set from $\powset(\N)$ to a different
set and see what we get there.  The first such set we consider is
$\BS$, Baire space, the space of functions from $\N$ to $\N$.  Note
that this is also a Polish space.  The usual definition of almost
disjointness on this space is the following.

\begin{definition}
  We call $g_0,g_1 \in \BS$ \emph{almost disjoint} or \emph{eventually
  different} if the set $\{n \in \N \mid g_0(n) = g_1(n) \}$ is finite.
\end{definition}

This is equivalent to the general notion of almost disjointness on
this space if we consider functions from $\N$ to $\N$ to be subsets of
$\N \times \N$.  The corresponding notion of (maximal) almost disjoint
family, called \emph{maximal almost disjoint family of functions},
also requires no change from the general notion.

The analogues of Theorems \ref{thm:IntroMACHN}, \ref{thm:IntroCohenN},
and \ref{thm:IntroCoanalyticN} can be proved using similar methods.
The following question is still open though (the analogue of Theorem
\ref{thm:IntroNotBorelN}).

\begin{question}
  Does there exist a Borel maximal almost disjoint family of functions?
\end{question}

In working on this question Juris Stepr\=ans introduced the following
strengthening of the notion of maximal almost disjoint family of
functions.

\begin{definitions}
\item
  For two sets $X, Y$ we say $X$ is \emph{almost contained} in $Y$,
  written $X \subseteq^* Y$, if $X \setminus Y$ is finite.

\item
  A function $f \in \BS$ is \emph{finitely covered} by a family
  $\mathcal{A} \subseteq \BS$ if there exist $g_0, \ldots, g_n \in
  \mathcal{A}$ such that $f \subseteq^* \bigcup_{i \leq n} g_i$ is
  finite.
  
\item
  A family $F \subseteq \BS$ is \emph{finitely covered} by a family
  $\mathcal{A}$ if there exists an $f \in F$ that is finitely covered
  by $\mathcal{A}$ (note: only one function in the family needs to be
  finitely covered).
  
\item A family $\mathcal{A} \subseteq \BS$ is a \emph{strongly mad
    family} if it is an almost disjoint family of functions, and for
  every countable $F \subseteq \BS$ that is not finitely covered by
  $\mathcal{A}$ there is a function $g \in \mathcal{A}$ such that for
  all $f \in F$ the intersection $f \cap g$ is infinite.
\end{definitions}

Note that any strongly mad family $\A$ is mad:
if $f$ is almost disjoint from all members of $\A$, then certainly
$\{f\}$ is not finitely covered by $\A$.  Since $\A$ is strongly mad
this means there is $g \in \A$ such that $g \cap f$ is infinite
contradicting that $f$ is almost disjoint from all members of $\A$.

Stepr\=ans showed the following theorem about this notion.

\begin{theorem}[\cite{KSZ}]
  \label{steprans theorem}
  There does not exist an analytic ($\mathbf{\Sigma^1_1}$) strongly
  mad family.
\end{theorem}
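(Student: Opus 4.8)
The plan is to adapt the Mathias-style argument that rules out analytic maximal almost disjoint families (Theorem~\ref{thm:IntroNotBorelN}) to the stronger "strongly mad" setting, exploiting the extra room the strong madness requirement gives us. Suppose for contradiction that $\A \subseteq \BS$ is an analytic strongly mad family. First I would fix a tree $T$ on $\N \times \N$ with $\A = p[T]$, or equivalently a continuous surjection from a closed subset of Baire space onto $\A$, so that membership in $\A$ has a $\mathbf{\Sigma^1_1}$ definition available for absoluteness arguments. The key asymmetry to use is this: an ordinary mad family only needs to "catch" single functions, whereas a strongly mad family must, for every countable $F$ not finitely covered by $\A$, produce a \emph{single} $g \in \A$ meeting \emph{every} $f \in F$ infinitely often. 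So it suffices to manufacture one countable $F = \{f_k : k \in \N\}$ which (i) is not finitely covered by $\A$, yet (ii) no single member of $\A$ can meet all the $f_k$ infinitely often — the contradiction then being immediate.

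The construction of $F$ would proceed by a fusion/forcing argument. I would work with a forcing notion that generically adds a function $f_\infty$ eventually different from every ground-model real (an "eventually different real" forcing, or Mathias-type forcing guided by the tree $T$), and arrange the countable family $F$ inside the ground model so as to "diagonalize" against the analytic code for $\A$. Concretely, using the Shoenfield absoluteness of $\mathbf{\Sigma^1_1}$ (and $\mathbf{\Pi^1_1}$) statements between the ground model and the generic extension, I would ensure: each $f_k$ is chosen so that for every candidate $g$ lying on the analytic tree $T$, $g$ agrees infinitely often with at most finitely many of the $f_k$'s — this is where one pushes the branching structure of $T$ around, much as in Mathias's proof one shreds a putative $\mathbf{\Sigma^1_1}$ mad family by finding a set almost disjoint from "enough" of it. Simultaneously one guarantees $F$ is not finitely covered: given any finite $g_0,\dots,g_n \in \A$, their union is a "slalom"-like set with finite columns, and a single suitably generic $f_k$ escapes it on an infinite set, so in particular no $f_k \subseteq^* \bigcup_{i\le n} g_i$.

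The main obstacle, as usual in these definability-of-mad-family theorems, is the \emph{uniformity} of the diagonalization: a naive construction kills each single $g$ but there are continuum-many branches of $T$, so one must diagonalize against all of them at once. The standard device is to do the killing against \emph{codes} (elements of the domain tree $T$) rather than against the reals $g$ themselves, using a rank/derivative analysis of $T$ or a perfect-set/Cantor–Bendixson argument: either $\A$ is countable — impossible, since a countable almost disjoint family of functions is easily seen not to be strongly mad (indeed not even mad, as one can diagonalize to produce an eventually different function, and then $\{$that function$\}$ witnesses failure of strong madness) — or $T$ contains a perfect subtree, and then a single generic real simultaneously defeats a perfect set of candidates while the remaining "small" part is handled by a separate category/measure argument on the tree. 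Assembling these two cases, together with verifying the non-finite-covering clause survives the fusion, is the technical heart; once $F$ is built, strong madness of $\A$ is contradicted outright, completing the proof.
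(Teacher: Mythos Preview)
The paper does not actually contain a proof of this theorem: it is stated as Stepr\=ans's result and attributed to \cite{KSZ}, serving only as motivation for the paper's subsequent work on very mad families. So there is no ``paper's own proof'' to compare against; I can only assess your proposal on its merits.

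Your high-level strategy is correct: to contradict strong madness it suffices to exhibit a single countable family $F=\{f_k:k\in\N\}$ that is not finitely covered by $\A$ yet such that no $g\in\A$ meets every $f_k$ in an infinite set. But the proposal does not actually carry this out, and several of the mechanisms you invoke do not fit together as stated. First, the forcing you name (eventually-different real forcing, or Mathias-style forcing) adds a \emph{single} function, not a countable family, and you never explain how a single generic real is converted into the countable $F$ you need, nor what $\mathbf{\Sigma}^1_1$ or $\mathbf{\Pi}^1_1$ statement Shoenfield absoluteness is being applied to. Second, the sentence ``each $f_k$ is chosen so that for every candidate $g$ lying on $T$, $g$ agrees infinitely often with at most finitely many of the $f_k$'s'' asserts something much stronger than what you need (and is not obviously achievable): you only need that no $g$ meets \emph{all} $f_k$ infinitely often, not that each $g$ meets only finitely many of them. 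Third, the perfect-set dichotomy at the end is a gesture, not an argument: you say ``a single generic real simultaneously defeats a perfect set of candidates'' without saying what ``defeats'' means here or why genericity over a perfect tree yields the required conclusion for \emph{every} branch, and the ``remaining small part is handled by a separate category/measure argument'' is left entirely unspecified.

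In short, you have correctly identified the target (a countable counterexample family) and the general toolkit (analytic codes, absoluteness, perfect-set structure), but there is no actual construction of $F$, no concrete absoluteness transfer, and no verification of either clause (not finitely covered; no common infinite-intersection witness). As written this is a plan rather than a proof.
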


This result motivated us to study further the existence of strongly
mad families.  Note that with the additional covering condition
imposed, the usual proof of existence using Zorn's lemma no longer
works.  In studying this existence question it quickly became apparent
that all the results we obtained go through for a natural further
strengthening.

\begin{definition}
  A family $\mathcal{A} \subseteq \BS$ is a \emph{very mad family} if
  it is an almost disjoint family and for every family $F \subseteq
  \BS$ that is not finitely covered by $\mathcal{A}$ such that $|F| <
  |\mathcal{A}|$, there is a function $g \in \mathcal{A}$ such that for
  all $f \in F$ the intersection $f \cap g$ is infinite.
\end{definition}

Note the following about these notions:
\begin{itemize}
\item
   Any very mad family is strongly mad (there do not exist
   countable mad families in Baire space).
\item
   A strongly mad family of cardinality $\aleph_1$ is very mad.
\item
   Under the continuum hypothesis the notions of very mad and strongly
   mad coincide.
\end{itemize}

We then proved the following theorems.

\begin{theorem}[\cite{BK}]
  Martin's Axiom implies that very mad families exist and are of
  cardinality $2^{\aleph_0}$.
\end{theorem}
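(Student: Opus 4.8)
The plan is a transfinite recursion of length $\mathfrak c := 2^{\aleph_0}$. I fix a bijective enumeration $\langle r_\xi : \xi < \mathfrak c\rangle$ of $\BS$ and build an increasing chain $\langle \A_\alpha : \alpha < \mathfrak c\rangle$ of almost disjoint families of functions with $|\A_\alpha| \le |\alpha| + \aleph_0$, taking unions at limits and setting $\A = \bigcup_{\alpha < \mathfrak c} \A_\alpha$. At stage $\alpha$ I adjoin a single new function $g_\alpha$, with $\A_{\alpha+1} = \A_\alpha \cup \{g_\alpha\}$, chosen so that: (i) $g_\alpha$ is almost disjoint from every member of $\A_\alpha$; and (ii) $g_\alpha \cap r_\xi$ is infinite for every $\xi < \alpha$ such that $r_\xi$ is \emph{not} finitely covered by $\A_\alpha$. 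Since no function in $\BS$ is almost disjoint from itself, (i) guarantees $g_\alpha \notin \A_\alpha$; and for $\alpha < \alpha'$ we get $g_\alpha \ne g_{\alpha'}$, since $g_\alpha \in \A_{\alpha'}$. Hence $|\A| = \mathfrak c$, and $\A$ is almost disjoint, being the union of an increasing chain of such families.

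Granting the recursion, $\A$ is very mad. Let $F \subseteq \BS$ with $|F| < \mathfrak c$ be not finitely covered by $\A$; by the definition of a family being finitely covered, this means no $f \in F$ is finitely covered by $\A$, hence none is finitely covered by any $\A_\alpha$ (finite coverability only increases as the family grows). Since $\MA$ implies $\mathfrak c$ is regular and our enumeration is a bijection, $F \subseteq \{r_\xi : \xi < \beta\}$ for some $\beta < \mathfrak c$. Then at stage $\beta$ the function $g_\beta$ was chosen with $g_\beta \cap r_\xi$ infinite for every $\xi < \beta$ with $r_\xi$ not finitely covered by $\A_\beta$; each $f \in F$ is such an $r_\xi$, so $g_\beta \cap f$ is infinite, and $g_\beta \in \A$ is the required witness. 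The cardinality statement for arbitrary very mad families follows because a very mad family is mad, and under $\MA$ every mad family of functions has size $\mathfrak c$: if $\B \subseteq \BS$ is almost disjoint with $|\B| < \mathfrak c$, then the poset described below with clause (ii) suppressed produces, via $\MA$, a function almost disjoint from all of $\B$, so $\B$ is not maximal; hence a very mad family has size $\ge \mathfrak c$, and it cannot exceed $|\BS| = \mathfrak c$.

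The crux is the stagewise choice of $g_\alpha$, and this is where $\MA$ is used. I would use the poset $\itposet$ whose conditions are pairs $(s, E)$ with $s$ a finite partial function from $\N$ to $\N$ and $E$ a finite subset of $\A_\alpha$, ordered by $(s',E') \le (s,E)$ iff $s \subseteq s'$, $E \subseteq E'$, and $s'(n) \ne h(n)$ for all $h \in E$ and all $n \in \dom(s') \setminus \dom(s)$. Any two conditions with the same first coordinate are compatible (their common extension keeps that coordinate and has empty new part), so $\itposet$ is $\sigma$-centered, hence ccc. The dense sets I need are: $D_n = \{(s,E) : n \in \dom(s)\}$ for $n \in \N$, forcing the generic $g$ to be total; $D^h = \{(s,E) : h \in E\}$ for $h \in \A_\alpha$, forcing $g$ to be almost disjoint from $h$; and, for each $\xi < \alpha$ with $r_\xi$ not finitely covered by $\A_\alpha$ and each $k \in \N$, the set $D^\xi_k = \{(s,E) : (\exists n > k)(n \in \dom(s) \wedge s(n) = r_\xi(n))\}$, forcing $g \cap r_\xi$ to be infinite. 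Density of $D_n$ and $D^h$ is immediate; density of $D^\xi_k$ says exactly that for every finite $E \subseteq \A_\alpha$ there are infinitely many $n$ with $r_\xi(n) \notin \{h(n) : h \in E\}$, i.e.\ that $r_\xi \subseteq^* \bigcup E$ fails for every finite $E \subseteq \A_\alpha$ --- which is precisely the hypothesis that $r_\xi$ is not finitely covered by $\A_\alpha$, and this is the one place clause (ii)'s bookkeeping hypothesis is essential. There are fewer than $\mathfrak c$ dense sets in all, so $\MA$ yields a filter meeting them, and $g_\alpha$, defined as the union of the first coordinates of the conditions in the filter, is as required. The only subtlety to watch is that clause (ii) refers to the \emph{current} $\A_\alpha$ rather than the final $\A$; this is legitimate because a function not finitely covered by $\A$ is not finitely covered by any $\A_\alpha$, so the verification of very madness above goes through unchanged.
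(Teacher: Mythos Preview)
Your proof is correct and follows essentially the same approach as the paper: the same $\sigma$-centered poset (the paper calls it $\mathbb{P}_\A$, eventually different forcing), the same three families of dense sets, and the same bookkeeping over a length-$\mathfrak c$ enumeration of $\BS$ using regularity of $\mathfrak c$ under $\MA$. Your argument for the cardinality clause---that a very mad family is mad and that the poset with the ``hitting'' dense sets suppressed shows no almost disjoint family of size ${<}\,\mathfrak c$ is maximal---is exactly how the paper derives it from the step lemma.
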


\begin{theorem}[\cite{BK}]
 Any model of $\ZFC + \CH$ contains a very mad family that is still
 very mad in any Cohen extension of that model.
\end{theorem}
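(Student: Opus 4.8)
\emph{Proof proposal.} The plan is to carry out a recursion of length $\omega_1$ in the ground model $M$, which satisfies $\CH$, constructing an almost disjoint family $\A = \{\, g_\alpha \mid \alpha < \omega_1 \,\} \subseteq \BS$ each of whose proper initial segments $\A_\alpha := \{\, g_\beta \mid \beta < \alpha \,\}$ is already an almost disjoint family of functions. A preliminary reduction lets us work only with the one-step Cohen poset $\mathbb{C} = \Fn(\omega,2)$: Cohen forcing is ccc, so any countable $F \subseteq \BS$ in an extension $M[G]$ by $\Fn(\kappa,2)$ already lies in $M[G\restriction a]$ for some countable $a \subseteq \kappa$, and the properties ``$F$ is not finitely covered by $\A$'' and ``some $g \in \A$ meets every $f \in F$ infinitely'' are absolute between $M[G\restriction a]$, which is a $\Fn(\omega,2)$-extension of $M$, and $M[G]$. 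Since every Cohen forcing preserves cardinals, and since (by the facts recorded above) a strongly mad family of size $\aleph_1$ is very mad, it suffices to build $\A$ of size $\aleph_1$ so that the strong madness condition holds in $M$ and in every $\Fn(\omega,2)$-generic extension of $M$; these are instances of a single recursion.

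Using $\CH$, fix in $M$ an enumeration $\sequ{(\dot h_\alpha, p_\alpha)}{\alpha < \omega_1}$ of all pairs in which $p_\alpha \in \mathbb{C}$ and $\dot h_\alpha$ is a nice $\mathbb{C}$-name for a function from $\N$ into $\BS$ (equivalently, for a real); there are only $2^{\aleph_0} = \aleph_1$ such names. At stage $\alpha$, having built the countable family $\A_\alpha$: if $p_\alpha$ forces that the range of $\dot h_\alpha$ is not finitely covered by $\check{\A_\alpha}$, invoke the Key Lemma below to obtain $g_\alpha \in \BS$ almost disjoint from every member of $\A_\alpha$ with $p_\alpha \Vdash$ ``$g_\alpha \cap \dot h_\alpha(\check n)$ is infinite for every $n$''; otherwise let $g_\alpha$ be any function almost disjoint from all members of $\A_\alpha$ (one exists, $\A_\alpha$ being countable). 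Each $g_\alpha$ differs from the earlier ones, so $\A$ has size $\aleph_1$ and is an almost disjoint family of functions. To see that the construction works: in a $\Fn(\omega,2)$-extension $M[G]$ (or in $M$ itself, the trivial-forcing instance), a countable $F \subseteq \BS$ not finitely covered by $\A$ can be enumerated, and that enumeration has a nice name $\dot h \in M$; some $p \in G$ forces that the range of $\dot h$ is not finitely covered by $\check\A$, hence --- as $\A_\alpha \subseteq \A$ --- by $\check{\A_\alpha}$, where $(\dot h, p) = (\dot h_\alpha, p_\alpha)$ for the appropriate $\alpha$; so stage $\alpha$ used the Key Lemma, and $g_\alpha \in \A$ meets every member of $F$ infinitely in $M[G]$.

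The Key Lemma is: if $\A_0 \subseteq \BS$ is a countable almost disjoint family of functions, $p \in \mathbb{C}$, and $p$ forces that $\dot h$ names a sequence $\sequ{\dot f_n}{n < \omega}$ of reals whose range is not finitely covered by $\check{\A_0}$, then there is $g \in \BS$, almost disjoint from every member of $\A_0$, with $p \Vdash$ ``$g \cap \dot f_n$ is infinite for all $n$''. To prove it, list $\A_0$ as $\{\, h_k \mid k<\omega \,\}$ and the countably many conditions below $p$ as $\{\, r_k \mid k<\omega \,\}$, and build $g$ as the union of an increasing chain $\sequ{t_s}{s<\omega}$ of finite partial functions, where $s$ runs through a bookkeeping enumeration of all triples $\langle k, n, \ell\rangle$. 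At step $s = \langle k, n, \ell\rangle$: since $p$ forces $\dot f_n \not\subseteq^* h_0 \cup \cdots \cup h_s$, the condition $r_k \leq p$ forces that some $m > \max(\{\ell\} \cup \dom t_s)$ satisfies $\dot f_n(m) \neq h_i(m)$ for all $i \le s$; choose $r' \le r_k$ and values $m, j$ with $r' \Vdash \dot f_n(\check m) = \check\jmath$ and $j \neq h_i(m)$ for $i \le s$, and put $t_{s+1} := t_s \cup \{(m,j)\}$. Finally extend $\bigcup_s t_s$ to a total $g \in \BS$ by choosing, on any coordinate $m$ not already in its domain (the $t$-th such coordinate, say), a value outside $\{\, h_i(m) \mid i \le t \,\}$. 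Then $g \cap h_k$ is finite for every $k$ --- on $\dom(\bigcup_s t_s)$ only the steps $s < k$ can agree with $h_k$, and off that domain only the first $k$ added coordinates can --- so $g$ is almost disjoint from $\A_0$; and for every $n$ and $\ell$ the set of conditions forcing ``$\exists m > \ell\ (\check{g(m)} = \dot f_n(\check m))$'' is dense below $p$, since for each $r_k \le p$ the witness $r'$ chosen at step $\langle k,n,\ell\rangle$ belongs to it. Hence $p$ forces ``$g \cap \dot f_n$ is infinite'' for every $n$, and so forces the countable conjunction.

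I expect the Key Lemma to be the main obstacle: it pits the requirement that $g$ be almost disjoint from all of $\A_0$ (which forbids the values $h_i(m)$) against the requirement that $g$ meet each $\dot f_n$ infinitely (which demands $g(m) = \dot f_n(m)$ at infinitely many $m$). The ``not finitely covered'' hypothesis is exactly what breaks the deadlock: no matter how many earlier members $h_0, \dots, h_s$ we have committed to avoiding, it still provides infinitely many coordinates $m$ at which $\dot f_n(m)$ is forced to differ from all of them, leaving a legal choice for $g(m)$. A secondary point to get right is that the witnessing condition may be taken to be $p$ itself: a finite Cohen condition cannot decide infinitely much about $\dot f_n$, so ``$g \cap \dot f_n$ infinite'' can only be forced by $p$ through density of the approximate agreements, which is precisely what the bookkeeping secures. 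Granting the lemma, the remaining checks --- that $\A$ is an almost disjoint family of functions of size $\aleph_1$, that Cohen forcing preserves this, and that every countable family of reals in a Cohen extension appears, with a nice name, in a countable subextension --- are routine, and by the cited reductions $\A$ is then very mad in $M$ and in every Cohen extension of $M$.
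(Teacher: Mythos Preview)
Your proposal is correct and follows essentially the same approach as the paper: a length-$\omega_1$ recursion in $M$ that anticipates Cohen names, combined with the standard reduction of $\Fn(I,2)$ to $\Fn(\omega,2)$ via countable supports. The only organizational difference is bookkeeping---you enumerate pairs (condition, nice name for a \emph{sequence} of reals) and handle one such pair per stage, whereas the paper enumerates pairs (condition, nice name for a \emph{single} real) and at stage $\beta$ simultaneously handles all earlier names $\tau_\alpha$ that $p_\beta$ forces to be uncovered; your packaging of the stage work as an explicit Key Lemma is a clean variant of the paper's inlined construction.
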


\begin{theorem}[\cite{BK}]
  Suppose $\kappa$ is a regular uncountable cardinal less than the
  continuum in a model of $\ZFC$.  Then there exists a forcing
  extension preserving cardinals and the cardinality of the continuum,
  in which there is a very mad family of cardinality $\kappa$.
\end{theorem}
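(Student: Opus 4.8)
The plan is to build the family by a finite support iteration of length $\kappa$ in which each iterand adjoins one new member $g_\alpha$, together with finite ``promises'' ensuring that $g_\alpha$ is eventually different from all earlier members while agreeing infinitely often with everything that is not yet finitely covered. The point of choosing the iterands this way is that \emph{no} bookkeeping over the families $F$ will be needed: ``$F$ is not finitely covered by $\A$'' unwinds, by the definition of finite covering for a family, to the \emph{universal} statement ``every $f\in F$ fails to be finitely covered by $\A$'', and a universal statement of this kind reflects into any intermediate extension containing $F$.

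Concretely, working in the given model $V$ (no preliminary forcing of $\CH$ is necessary), I would define an iteration $\langle\mathbb{P}_\alpha,\dot{\mathbb{Q}}_\alpha:\alpha\le\kappa\rangle$ with finite supports, where $\dot{\mathbb{Q}}_\alpha$ names the poset whose conditions are pairs $(s,E)$ with $s\colon\N\rightharpoonup\N$ a finite partial function and $E$ a finite subset of $\alpha$, ordered by declaring that $(s',E')$ extends $(s,E)$ when $s'\supseteq s$, $E'\supseteq E$, and $s'(n)\ne g_\beta(n)$ for every $n\in\dom s'\setminus\dom s$ and every $\beta\in E$ (here $g_\beta\in\BS$ is the union of the generic stems at stage $\beta<\alpha$, and $E$ records the promise that $g_\alpha$ is eventually different from each such $g_\beta$). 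Two conditions with the same stem $s$ are compatible via $(s,E\cup E')$, so each $\dot{\mathbb{Q}}_\alpha$ is forced $\sigma$-centered; hence $\mathbb{P}_\kappa$ is ccc by the Solovay--Tennenbaum theorem, and since the domain of each $\dot{\mathbb{Q}}_\alpha$ lies in $V$ one computes $|\mathbb{P}_\kappa|\le\kappa\le 2^{\aleph_0}$. A ccc forcing of size at most $2^{\aleph_0}$ preserves all cardinals and the value of the continuum, so the resulting extension $V[G]$ has the required properties apart from containing the family.

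Next I would isolate the density facts about stage $\alpha$, each proved by a one-step density argument inside $V[G_\alpha]$: $g_\alpha$ is total; $g_\alpha$ is eventually different from $g_\beta$ for every $\beta<\alpha$ (it is dense to enlarge $E$ to include $\beta$); and, crucially, for every $f\in V[G_\alpha]$ that is \emph{not} finitely covered by $\A_\alpha:=\{g_\beta:\beta<\alpha\}$ the set $\{n:g_\alpha(n)=f(n)\}$ is infinite. For this last fact, given a condition $(s,E)$ and a bound $k$, the assumption that $f$ is not finitely covered by $\A_\alpha$, applied to the finite subfamily $\{g_\beta:\beta\in E\}$, says exactly that $\{n:f(n)\ne g_\beta(n)\text{ for all }\beta\in E\}$ is infinite; picking such an $n>k$ outside $\dom s$ and passing to $\bigl(s\cup\{(n,f(n))\},E\bigr)$ gives a legal extension forcing $g_\alpha(n)=f(n)$. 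In $V[G]$ set $\A=\{g_\alpha:\alpha<\kappa\}$; the $g_\alpha$ are pairwise eventually different, so $\A$ is an almost disjoint family of size exactly $\kappa$.

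To see that $\A$ is very mad, let $F\in V[G]$ with $|F|<\kappa$ and $F$ not finitely covered by $\A$; then every $f\in F$ fails to be finitely covered by $\A$, hence also by any $\A_\gamma\subseteq\A$. Because the iteration has finite support, is ccc, and $\kappa$ is regular and uncountable, there is a $\gamma<\kappa$ with every $f\in F$ lying in $V[G_\gamma]$; by the crucial density fact for stage $\gamma$ the function $g_\gamma$ then agrees infinitely often with every $f\in F$, so $g_\gamma\in\A$ is the desired witness. I expect the only real obstacle to be calibrating the poset $\dot{\mathbb{Q}}_\alpha$ so that the promises $E$ are simultaneously strong enough to keep the $g_\alpha$ pairwise eventually different and weak enough that ``$f$ is not finitely covered by the finite set named in the condition'' always leaves infinitely many coordinates at which one may legally copy $f$ — which is exactly the content of the density fact above, and it is precisely the universal quantifier hidden in ``not finitely covered'' that lets the construction go through with no bookkeeping at all.
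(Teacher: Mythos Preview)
Your proposal is correct and is essentially the paper's own proof: the iterand $\dot{\mathbb{Q}}_\alpha$ you describe is exactly the eventually-different forcing $\mathbb{P}_{\A_\alpha}$ of Section~\ref{sec:CHMA} (with $E\subseteq\alpha$ standing in for a finite subset of $\A_\alpha$), and your density, cardinal-preservation, and very-madness arguments match the paper's Lemma~\ref{lem:genericobject} and the proof of Theorem~\ref{thm:notSizec}. Your explicit remark that the universal quantifier in ``not finitely covered'' obviates bookkeeping is a helpful gloss on a point the paper leaves implicit.
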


With these results the question of whether $\ZFC$ suffices to prove
existence of very mad families is still open.

After this we showed with Yi Zhang a companion result to Stepr\=ans'
result (\ref{steprans theorem}).

\begin{theorem}[\cite{KSZ}]
  The axiom of constructibility implies the existence of a coanalytic
  ($\mathbf{\Pi^1_1}$) very mad family.
\end{theorem}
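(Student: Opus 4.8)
Since $V=L$ implies $\CH$, and under $\CH$ every strongly mad family is very mad (the two notions coincide), it suffices to exhibit a $\mathbf{\Pi^1_1}$ strongly mad family in $L$. The plan is to run the Erd\H{o}s--Kunen--Mauldin/Miller coding construction underlying Theorem~\ref{thm:IntroCoanalyticN}, adapted so as to respect the covering requirement. Fix the canonical $\Sigma^1_2$-good wellordering $<_L$ of $\BS$ together with a bookkeeping function that presents, at each stage $\alpha<\omega_1$, a real coding a countable family $F\subseteq\BS$, arranged so that every such real is presented at some stage. Build $\A=\{g_\alpha:\alpha<\omega_1\}$ by recursion: given $\A_\alpha=\{g_\gamma:\gamma<\alpha\}$ and the family $F$ presented at stage $\alpha$, choose $g_\alpha$ eventually different from every member of $\A_\alpha$ and, whenever $F$ is \emph{not} finitely covered by $\A_\alpha$, with $g_\alpha\cap f$ infinite for every $f\in F$. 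Such a $g_\alpha$ exists by a routine diagonalization: ``$F$ not finitely covered by $\A_\alpha$'' means each $f\in F$ disagrees with every finite union of members of the countable family $\A_\alpha$ infinitely often, so one interleaves coordinates on which $g_\alpha$ copies the round-robin-next $f\in F$ at a place avoiding the finitely many currently relevant members of $\A_\alpha$, with coordinates on which $g_\alpha$ simply exceeds all of those members. Since the bookkeeping reaches each $F$ at a stage where $F$ is either already finitely covered by the part of $\A$ built so far --- hence by all of $\A$ --- or caught by the new $g_\alpha$, the family $\A$ is strongly mad.

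To make $\A$ coanalytic, fold a Miller-style code into each $g_\alpha$. Reserve a fixed sparse recursive set of coordinates, say $\{2^m:m\in\N\}$, and put $g_\alpha(2^m)=\langle e_\alpha(m),w_m\rangle$ for a sufficiently large $w_m$, where $\langle\cdot,\cdot\rangle$ is an increasing pairing function and $e_\alpha\in\BS$ is the canonical real coding $L_{\lambda_\alpha}$, with $\lambda_\alpha$ the least ordinal such that $L_{\lambda_\alpha}$ contains $\langle g_\gamma:\gamma<\alpha\rangle$ and is closed enough to run the construction through all stages below $\alpha$. Taking $w_m$ large keeps $g_\alpha$ eventually different from $\A_\alpha$ on the coding coordinates too, while $m\mapsto$ (first coordinate of the unpairing of $g_\alpha(2^m)$) recovers $e_\alpha$ recursively from $g_\alpha$; the two combinatorial demands on $g_\alpha$ use only the remaining coordinates, chosen adaptively, so they do not clash with the coding. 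All choices in the construction are made canonically, so that it becomes a single $\Sigma_1$-definable transfinite recursion; note also that the $e_\alpha$, hence the $g_\alpha$, are pairwise distinct.

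The definability is then established by the usual ``single witness versus every witness'' device. Let $\Phi(x)$ assert: the real $e$ decoded recursively from $x$ codes a wellfounded extensional structure; and, identifying it with its transitive collapse $M$, the model $M$ is a suitably closed transitive model of $V=L$, the construction of $\A$ run inside $M$ is defined exactly for the ordinals below some $\alpha$ and cannot continue (stage $\alpha$ would require a level $L_{\lambda_\alpha}$ lying below the ordinals of $M$, and there is none), the ordinal height of $M$ equals $\lambda_\alpha$, and $x$ is precisely the function produced by carrying out stage $\alpha$ externally, using $\langle g_\gamma:\gamma<\alpha\rangle$ as computed in $M$ and taking the coding real of that stage to be the canonical code of $M$. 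Once $M$ is known wellfounded, everything else is arithmetic in $e$, so $\Phi$ is $\mathbf{\Pi^1_1}$. If $x=g_\alpha\in\A$ then $e=e_\alpha$ genuinely codes $L_{\lambda_\alpha}$, and by condensation together with upward $\Sigma_1$-absoluteness the construction computed inside $L_{\lambda_\alpha}$ reproduces the real one and stalls exactly at stage $\alpha$, so $\Phi(x)$. Conversely, $\Phi(x)$ forces $M$, by condensation, to be a genuine $L_\lambda$ with $\lambda<\omega_1$ in which the construction agrees with the real one (it selects the same $<_L$-least witnesses), which pins down $\lambda=\lambda_\alpha$ and hence $x=g_\alpha\in\A$.

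The crux is the fine-structural bookkeeping hidden in ``stalls exactly at stage $\alpha$'': the levels $\lambda_\alpha$ must be chosen so that inside $L_{\lambda_\alpha}$ every stage below $\alpha$ gets completed --- so that each earlier sequence $\langle g_\gamma:\gamma<\delta\rangle$, and the level that stage $\delta$ needs, lie strictly below $\lambda_\alpha$ --- while stage $\alpha$ cannot be completed there, because $L_{\lambda_\alpha}$ cannot internally locate the level $\lambda_\alpha$ that stage $\alpha$ would then require. One must also check that suitably closed transitive models of $V=L$ carry out exactly the recursion that $L$ does, which is where condensation and the $\Sigma^1_2$-goodness of $<_L$ really enter. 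The only remaining point requiring care --- that folding in the code does not disturb the eventual-difference and infinitely-many-agreements requirements on $g_\alpha$ --- is handled by the coordinate bookkeeping sketched above; the rest follows the familiar pattern behind Theorem~\ref{thm:IntroCoanalyticN}.
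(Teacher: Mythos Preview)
Your proposal is sound and follows the Miller template that the paper also uses, but the implementation differs from the paper's in several respects worth noting.

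First, at each stage the paper does \emph{not} bookkeep a single family $F$; instead $g_\gamma$ is built to meet every individual $f\in L_{\beta_\gamma}$ that is not finitely covered by $\{g_{\gamma'}:\gamma'<\gamma\}$.  Any small $F$ not finitely covered by $\A$ then sits inside some $L_{\beta_\gamma}$, and that single $g_\gamma$ catches all of $F$ at once.  This eliminates your bookkeeping layer entirely.  Second, the paper's coding is not via a reserved set of coordinates but via a pointer scheme: $g_\gamma(n_s)$ packs both the next coding location $n_{s+1}$ and the bit $\chi(s)$, so the diagonalization and coding are interleaved on the same coordinates.  Third, and most substantively, the levels and the $\Pi^1_1$ definition are organized differently.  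The paper does not use your ``least level containing the prerequisites'' $\lambda_\alpha$ together with a ``stalls exactly at $\alpha$'' test; it singles out a cofinal set of levels $\beta_\gamma$ with $L_{\beta_\gamma}\cong\Sk(L_{\beta_\gamma})$ via a \emph{small witness}, proves that such a level has a code $E\in L_{\beta_\gamma+\omega}$, and then the $\Pi^1_1$ formula says: the relation $E$ decoded from $g$ is wellfounded, and for the uniquely determined $(\N,E_\omega)$ built from $E$ (encoding $L_{\beta_\gamma+\omega}$) together with its satisfaction relation, the model believes $g\in\A$.  In other words, the paper jumps one limit level up and checks membership there, rather than re-running the stage externally.

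What each buys: your ``stalls and re-run'' device is closer to the original Miller argument and perhaps more portable, but it forces you to be careful that the level you encode is exactly some $\lambda_\alpha$ (and that the stage-$\alpha$ data---including your bookkept $F$---is actually available in or from $L_{\lambda_\alpha}$; as written, your $\lambda_\alpha$ is chosen only to see earlier stages, not the stage-$\alpha$ family).  The paper's Skolem-hull levels plus the ``next limit level'' lemma sidestep this by making the check purely internal to a canonically constructed model, at the cost of the extra Lemmas on $\Sk$-levels and on building $E_\omega$ from $E$ by number-quantifier formulas.
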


We noticed while working on very mad families that they are not as big
as they seem; it is possible that there is very much space outside of
them.

\begin{definition}
  \label{definition orthogonal}
  Two very mad families $\A$, $\mathcal{B}$ are \emph{orthogonal} if
  neither is finitely covered by the other.
\end{definition}

We then proved the following theorem.

\begin{theorem}[\cite{BK}]
  Martin's axiom implies the existence of a continuum size collection
  of very mad families that are pairwise orthogonal.
\end{theorem}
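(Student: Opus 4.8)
The plan is to construct all $\mathfrak{c}=2^{\aleph_0}$ families simultaneously by a single transfinite recursion of length $\mathfrak{c}$, rather than one after another. The naive approach --- build a very mad $\A_0$, then a very mad $\A_1$ orthogonal to it, then $\A_2$, and so on --- breaks down immediately: once $\A_0$ has size $\mathfrak{c}$, arranging that a newly added member of $\A_1$ is not almost contained in a union of finitely many members of $\A_0$ means avoiding $\mathfrak{c}$ many finite unions at once, which is too many for a single appeal to $\mathrm{MA}$. Interleaving fixes this, because at every stage of the recursion only fewer than $\mathfrak{c}$ functions have been placed in any of the families altogether. Throughout we use that $\mathrm{MA}$ implies $\mathfrak{c}$ is regular and $2^{\kappa}=\mathfrak{c}$ for all $\kappa<\mathfrak{c}$, so that $\card{[\BS]^{<\mathfrak{c}}}=\mathfrak{c}$.

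Fix a bookkeeping enumeration of length $\mathfrak{c}$ of the following requirements: for each $\xi<\mathfrak{c}$ and each $F\in[\BS]^{<\mathfrak{c}}$ a \emph{covering} requirement (asking for the witness in $\A_\xi$ demanded by the definition of very mad), and for each $\xi<\mathfrak{c}$ a \emph{growth} requirement occurring cofinally often; there are $\mathfrak{c}$ requirements in all. We build $\A_\xi=\bigcup_{\alpha<\mathfrak{c}}\A_\xi^{\alpha}$ ($\xi<\mathfrak{c}$), adding at most one function to one family at each stage, so that fewer than $\mathfrak{c}$ functions are placed before stage $\alpha$; the inductive invariants are that each $\A_\xi^{\alpha}$ is an almost disjoint family of functions and that for $\xi\neq\eta$ neither of $\A_\xi^{\alpha},\A_\eta^{\alpha}$ is finitely covered by the other. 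At stage $\alpha$: if the requirement is a growth requirement for $\A_\zeta$, or a covering requirement for $(\zeta,F)$ with $F$ already finitely covered by $\A_\zeta^{\alpha}$ (hence, by monotonicity of finite coverage, by the final $\A_\zeta$, so the demand is vacuous), treat it as a covering requirement for $(\zeta,\emptyset)$; otherwise $F$ is not finitely covered by $\A_\zeta^{\alpha}$. Let $\mathbb{P}_\alpha$ be the $\sigma$-centered (hence ccc) poset of pairs $(s,E)$ with $s\colon\N\partmap\N$ finite and $E\subseteq\A_\zeta^{\alpha}$ finite, where $(s',E')\le(s,E)$ iff $s'\supseteq s$, $E'\supseteq E$, and $s'(n)\neq e(n)$ for $e\in E$ and $n\in\dom s'\setminus\dom s$ (conditions with a common stem are compatible). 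Using $\mathrm{MA}$, meet the following fewer-than-$\mathfrak{c}$ many dense sets: those forcing $g:=\bigcup\{s:(s,E)\in\text{filter}\}$ to be total (then $g$ is eventually different from each member of $\A_\zeta^{\alpha}$); for each $f\in F$ and each $k$, one forcing $g(n)=f(n)$ for some $n\ge k$ (dense since no $f\in F$ is almost covered by $\bigcup E$ for any condition); for each $\xi\neq\zeta$, each $f\in\A_\xi^{\alpha}$, each finite $G\subseteq\A_\zeta^{\alpha}$ and each $k$, one forcing some $n\ge k$ with $f(n)\notin\{g(n)\}\cup\{e(n):e\in G\}$ (dense since $\{n:f(n)\notin\{e(n):e\in G\}\}$ is infinite, by the orthogonality invariant); and for each $\eta\neq\zeta$, each finite $G\subseteq\A_\eta^{\alpha}$ and each $k$, one forcing some $n\ge k$ with $g(n)\notin\{e(n):e\in G\}$. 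Put $\A_\zeta^{\alpha+1}=\A_\zeta^{\alpha}\cup\{g\}$, leave the other families unchanged, and take unions at limits; the last two families of dense sets maintain the orthogonality invariant.

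After the recursion, each $\A_\xi$ is an almost disjoint family of functions of size $\mathfrak{c}$ (by the growth requirements). For very-madness: if $F\in[\BS]^{<\mathfrak{c}}$ is not finitely covered by $\A_\xi$, the corresponding covering requirement was handled at a stage $\alpha$ at which $F$ was not finitely covered by $\A_\xi^{\alpha}$ (a subfamily of $\A_\xi$), so the function $g$ added then lies in $\A_\xi$ and has $g\cap f$ infinite for every $f\in F$. For orthogonality: suppose $\xi\neq\eta$ and some $f\in\A_\xi$ had $f\subseteq^{*}\bigcup_{i\le n}g_i$ with $g_0,\dots,g_n\in\A_\eta$; let $\beta$ be the stage at which the last of $f,g_0,\dots,g_n$ was added. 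If that last function is some $g_j$, then at stage $\beta$ the third family of dense sets, applied to this $f\in\A_\xi^{\beta}$ and $G=\{g_i:i\neq j\}\subseteq\A_\eta^{\beta}$, supplies infinitely many $n$ with $f(n)\notin\{g_0(n),\dots,g_n(n)\}$, contradicting $f\subseteq^{*}\bigcup_i g_i$; if the last function is $f$ itself, the fourth family at stage $\beta$, applied to this $\eta$ and $G=\{g_0,\dots,g_n\}\subseteq\A_\eta^{\beta}$, does the same. Hence $\{\A_\xi:\xi<\mathfrak{c}\}$ is a collection of $\mathfrak{c}$ pairwise orthogonal very mad families.

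The main obstacle is not any single calculation but the interleaved bookkeeping itself: packing all $\mathfrak{c}$ covering requirements (over all $\mathfrak{c}$ families and all $F$ of size $<\mathfrak{c}$), all pairwise orthogonality demands, and all growth requirements into one length-$\mathfrak{c}$ recursion while never having $\mathfrak{c}$ functions in play, which is exactly what lets $\mathrm{MA}$ be invoked at each stage; together with checking that the single poset $\mathbb{P}_\alpha$ can be asked to do everything at once --- produce a function eventually different from its own family, meeting the prescribed $F$ infinitely, and neither almost covered by nor contributing to an almost-cover of members of the other families. The density of those last dense sets rests on the inductive orthogonality invariant, and the ``last member'' closing argument for orthogonality is the step that must be set up precisely.
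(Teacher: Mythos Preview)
Your argument is essentially correct and achieves the same goal as the paper, but the organization is different. The paper introduces the finite-support product $\mathbb{Q}_{\langle \A_\alpha\mid\alpha<\beta\rangle}$ of the eventually-different posets and, at stage $\beta$ of the recursion, uses a single application of $\mathrm{MA}$ to add \emph{one new element to each} of the first $\beta$ families simultaneously; the extra dense sets $A_{\alpha_1,\alpha_2,a,n}$ live in the product and guarantee pairwise orthogonality in one stroke. You instead keep the single-family poset $\mathbb{P}_{\A_\zeta^\alpha}$ and compensate with more elaborate bookkeeping, adding one function to one family per step and letting the third and fourth families of dense sets carry the orthogonality invariant. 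Both routes work; the product-forcing version is a little cleaner because the ``very mad'' and ``orthogonal'' requirements for all families are handled in parallel without a separate bookkeeping enumeration of $(\xi,F)$-pairs, while your version has the virtue of never leaving the basic poset $\mathbb{P}_\A$ already used in Lemma~\ref{lem:stepma}.

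One small slip: the parenthetical ``(then $g$ is eventually different from each member of $\A_\zeta^{\alpha}$)'' does not follow from the totality dense sets alone. You also need to meet, for each $h\in\A_\zeta^{\alpha}$, the dense set $C_h=\{(s,E):h\in E\}$; only then does the side condition in the ordering force $g\cap h$ to be finite. These are fewer than $\mathfrak{c}$ additional dense sets and are trivially dense, so the fix is immediate, but as written that step is not justified.
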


And even more:

\begin{theorem}
  The continuum hypothesis implies that for every mad family $\A$
  there exists a very mad family $\mathcal{B}$ such that $\A$ and
  $\mathcal{B}$ are orthogonal.
\end{theorem}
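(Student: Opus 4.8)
The plan is to build $\mathcal{B}$ by a transfinite recursion of length $\mathfrak{c} = \aleph_1$, enumerating in advance all the ``tasks'' that $\mathcal{B}$ must accomplish, and interleaving with each step a bookkeeping device that guarantees orthogonality with the given mad family $\mathcal{A}$. Fix an enumeration $\langle f_\alpha \mid \alpha < \omega_1 \rangle$ of $\BS$ and an enumeration $\langle \mathcal{F}_\alpha \mid \alpha < \omega_1 \rangle$ of all countable subsets of $\BS$ (each appearing cofinally often, using $\CH$ to enumerate the $\aleph_1$-many countable subsets). At stage $\alpha$ we will have a countable almost disjoint family $\mathcal{B}_\alpha = \{g_\beta \mid \beta < \alpha\}$; at the end we set $\mathcal{B} = \{g_\beta \mid \beta < \omega_1\}$. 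There are three kinds of requirements: (i) \emph{madness/almost disjointness}: if $f_\alpha$ is eventually different from every $g_\beta$, $\beta < \alpha$, we must add some $g_\alpha$ meeting $f_\alpha$ infinitely (so that in the end $\mathcal{B}$ is mad); (ii) \emph{the strong/very mad covering requirement}: if $\mathcal{F}_\alpha$ is not finitely covered by $\mathcal{B}_\alpha$ then we must eventually produce a $g \in \mathcal{B}$ meeting every member of $\mathcal{F}_\alpha$ infinitely; (iii) \emph{non-coverage requirements}: we must ensure that no finitely many members of $\mathcal{B}$ almost cover any member of $\mathcal{A}$, and symmetrically no finitely many members of $\mathcal{A}$ almost cover any member of $\mathcal{B}$.

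The heart of the construction is the single-step lemma: given a countable almost disjoint family $\mathcal{B}_\alpha$, a countable set $\mathcal{F} \subseteq \BS$ not finitely covered by $\mathcal{B}_\alpha$, and the mad family $\mathcal{A}$, one can build a function $g \in \BS$ that (a) is eventually different from every member of $\mathcal{B}_\alpha$, (b) has infinite intersection with every member of $\mathcal{F}$, and (c) is ``spread out'' with respect to $\mathcal{A}$ in a controlled way --- specifically, for every finite subset $F \subseteq \mathcal{A}$ chosen in advance along the recursion, $g \setminus \bigcup F$ is infinite, and moreover $g$ avoids being part of a finite cover of any fixed $a \in \mathcal{A}$. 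This is the familiar diagonalization: enumerate $\mathcal{B}_\alpha = \{g_n \mid n < \omega\}$ and $\mathcal{F} = \{h_n \mid n < \omega\}$, and construct $g$ by choosing its values on larger and larger finite blocks, on the $n$-th block forcing agreement with $h_n$ at one new point (possible since $h_n \not\subseteq^* \bigcup$ of finitely many $g_i$'s, by the not-finitely-covered hypothesis one can find a point where $h_n$ escapes the relevant finite union and set $g$ equal to $h_n$ there) while keeping $g$ different from $g_0, \ldots, g_n$ on the rest of the block; the freedom in the ``rest of the block'' is what we exploit to keep $g$ off any prescribed finite union of members of $\mathcal{A}$.

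For the orthogonality half going the other direction --- that no finitely many $g$'s almost cover any $a \in \mathcal{A}$ --- we add to the bookkeeping a list of pairs $(a, n)$ with $a \in \mathcal{A}$ and $n < \omega$, asking at the appropriate stage that the next few functions $g$ we add each leave $a$ infinitely often outside their union with the previously chosen $g$'s; since each $g$ is being constructed essentially freely off a null/meager set of constraints, and $a$ is infinite, there is always room to arrange $a \setminus (g_{\beta_1} \cup \dots \cup g_{\beta_k})$ infinite. The symmetric direction, that no finite union of $\mathcal{A}$-members almost covers any $g \in \mathcal{B}$, is handled inside the single-step lemma as requirement (c) above: when we create $g_\alpha$ we already know all finite subsets of $\mathcal{A}$ we need to worry about (there are only countably many relevant ones at a countable stage --- or rather, we use the enumeration to schedule them), and we diagonalize $g_\alpha$ out of each such finite union on an infinite set. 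Because $\mathcal{A}$ is merely mad, not strongly mad, there is no obstruction here: being mad only forces $g_\alpha$ to \emph{meet} each $a$ infinitely, which is compatible with also escaping each \emph{finite union} infinitely often.

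The main obstacle is checking that the covering requirement (ii) and the non-coverage requirement (iii) do not collide: we need, for a countable $\mathcal{F}$ not finitely covered by $\mathcal{B}_\alpha$, to find $g$ meeting all of $\mathcal{F}$ infinitely \emph{and} escaping every prescribed finite union from $\mathcal{A}$ infinitely. The point to verify is that the set of ``escape'' constraints is small enough (a countable family of ``be infinitely often outside $\bigcup F$'' conditions, each of which is a dense/comeager-type condition in the space of candidate $g$'s compatible with the already-fixed finite initial segment) that they can all be met simultaneously with the countably many ``meet $h_n$ infinitely'' conditions --- this is a standard fusion/interleaving argument, but it must be set up carefully so that the choices made to satisfy one $h_n$ do not permanently commit $g$ to lying inside some $\bigcup F$. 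The resolution is that we only ever fix $g$ on finite sets and at each block we have infinitely many ``safe'' values (those not taken by the finitely many relevant members of $\mathcal{A}$ and $\mathcal{B}_\alpha$ at that coordinate), since each such member is a function and hence forbids only one value per coordinate. After the recursion, a routine verification shows $\mathcal{B}$ is an almost disjoint family, is very mad (every $\mathcal{F}$ with $|\mathcal{F}| < |\mathcal{B}| = \aleph_1$, i.e. every countable $\mathcal{F}$, not finitely covered by $\mathcal{B}$ is handled at a cofinal stage), and is orthogonal to $\mathcal{A}$ in both directions, completing the proof.
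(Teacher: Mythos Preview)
Your bookkeeping scheme for orthogonality does not work in either direction, and this is a genuine gap rather than a matter of detail. Consider first the requirement that no $a \in \mathcal{A}$ be finitely covered by $\mathcal{B}$. You propose to schedule pairs $(a,n)$ and, when such a pair comes up, arrange that the functions $g$ built from that point on leave $a$ outside the relevant finite union. But by then the finitely many $g_{\beta_1},\ldots,g_{\beta_k}$ in question are already fixed, and if $a \subseteq^* g_{\beta_1}\cup\cdots\cup g_{\beta_k}$ there is nothing later stages can do about it. Since $\mathcal{A}$ has size $\aleph_1$, at the finite stage when $g_0,\ldots,g_n$ are built you cannot have taken all members of $\mathcal{A}$ into account; and for fixed $g_0,\ldots,g_n$ there are continuum many functions $a$ with $a(k)\in\{g_0(k),\ldots,g_n(k)\}$ for all $k$, so there is no generic reason to expect $\mathcal{A}$ to miss all of them. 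The same problem affects the other direction: diagonalizing $g_\alpha$ out of countably many \emph{scheduled} finite subsets of $\mathcal{A}$ does not prevent some unscheduled finite $F\subseteq\mathcal{A}$ from covering $g_\alpha$.

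The paper avoids this by building structural guarantees into each $\bar g_\alpha$ rather than relying on enumeration. For the direction ``$\mathcal{B}$ not finitely covered by $\mathcal{A}$'' it makes every $\bar g_\alpha$ agree on infinitely many inputs with each of infinitely many distinct members of $\mathcal{A}$; almost disjointness of $\mathcal{A}$ then rules out finite coverage by \emph{any} finite subset of $\mathcal{A}$ at once. For the harder direction it maintains a function $H$ (``good for $\mathcal{B}$'') assigning to each finite $\{\bar g_0,\ldots,\bar g_n\}\subseteq\mathcal{B}$ two infinite sets $W_0,W_1$ and two pairwise-distinct tuples $(g_i^0),(g_i^1)$ from $\mathcal{A}$ with $\bar g_i\restriction W_j = g_i^j\restriction W_j$. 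Then any $a\in\mathcal{A}$ almost covered by $\bar g_0,\ldots,\bar g_n$ would, restricted to $W_0$, have to equal some $g_i^0$ (by almost disjointness of $\mathcal{A}$), and restricted to $W_1$ some $g_j^1$, contradicting distinctness. The construction of $\bar g_\alpha$ and the extension of $H$ are intertwined: one reserves two fresh members $g_{0,\alpha},g_{1,\alpha}\in\mathcal{A}$ and forces $\bar g_\alpha$ to agree with them on suitable infinite subsets of the existing $W$'s. This is the key idea your proposal is missing.
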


\section{Families and Groups of Permutations}

We change the underlying space for the notion of almost
disjointness again.  Note that other than the change of space the
notions here are identical to the general notion.

\begin{definitions}
\item
  Let $\Sym(\N) \subseteq \BS$ denote the group of bijections from the
  natural numbers to the natural numbers with composition as the group
  operation.

\item
  A family of permutations $\mathcal{A} \subseteq \Sym(\N)$ is
  \emph{almost disjoint} if all distinct $f, g \in \mathcal{A}$ are
  almost disjoint.  It is a \emph{maximal almost disjoint family (of
  permutations)} if it is almost disjoint and not properly included
  in another such family.
\end{definitions}

Having an almost disjoint family being a subset of a group makes it
very natural to impose the requirement that the family is a group,
which gives us the following.

\begin{definition}
  A family $\mathcal{A} \subseteq \Sym(\N)$ is a \emph{cofinitary
    group} if it is an almost disjoint family and it is a subgroup of
  $\Sym(\N)$.  It is a \emph{maximal cofinitary group} if it is a
  cofinitary group not properly contained in another cofinitary group.
\end{definition}

Here we should point out that this is not the usual definition of
cofinitary group, although, of course, it is equivalent to it.

\begin{definitions}
\item A permutation $g \in \Sym(\N)$ is \emph{cofinitary} if it has
  only finitely many fixed points.
\item
  We write $G \leq \Sym(\N)$ if $G$ is a subgroup of $\Sym(\N)$.
\end{definitions}

The usual definition is given by the condition in the following theorem.

\begin{theorem}
  A subgroup $G \leq \Sym(\N)$ is a cofinitary group if all its
  non-identity members are cofinitary.
\end{theorem}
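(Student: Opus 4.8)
The plan is to reduce the pairwise condition defining an almost disjoint family to a condition on single non-identity elements, exploiting that $G$ is closed under composition and inverses. The one algebraic fact doing all the work is that for $f,g \in \Sym(\N)$ the agreement set $\{n \in \N \mid f(n) = g(n)\}$ is exactly the fixed-point set of $g^{-1} \circ f$, because $f(n) = g(n)$ holds precisely when $(g^{-1}\circ f)(n) = n$.

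First I would take two distinct members $f,g \in G$ and set $h := g^{-1} \circ f$. Since $G \leq \Sym(\N)$, we have $h \in G$, and since $f \neq g$, $h$ is not the identity. By hypothesis every non-identity element of $G$ is cofinitary, so $h$ has only finitely many fixed points; by the observation above $\{n \mid f(n) = g(n)\}$ is finite, i.e.\ $f$ and $g$ are almost disjoint (as subsets of $\N \times \N$ they are automatically infinite, being graphs of permutations of $\N$). As $f,g$ were an arbitrary pair of distinct members, $G$ is an almost disjoint family of permutations; being also a subgroup of $\Sym(\N)$, it is a cofinitary group in the sense of the earlier definition.

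For completeness, and to justify the word ``equivalent'' in the preceding remark, I would also record the converse. Here one uses that $\id \in G$ since $G$ is a subgroup. If $h \in G$ is any non-identity element, then $h$ and $\id$ are two distinct members of the cofinitary group $G$, hence almost disjoint, so $\{n \mid h(n) = n\}$ is finite; that is, $h$ is cofinitary.

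I do not expect a genuine obstacle: the whole argument is the identity relating agreement sets to fixed-point sets, plus closure of $G$ under $\circ$ and ${}^{-1}$. The only point deserving a line of care is the ``infinite'' clause hidden in the definition of almost disjointness for elements of $\BS$, which is immediate because a permutation of $\N$, regarded as a subset of $\N \times \N$, is infinite.
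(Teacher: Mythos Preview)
Your proof is correct and uses essentially the same approach as the paper: the key observation that $\{n \mid f(n)=g(n)\}$ equals the fixed-point set of $f^{-1}g$ (equivalently $g^{-1}f$). The paper compresses this into a single line, while you spell out both directions and the minor point about infinitude of graphs, but the content is the same.
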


\begin{proof}
  For $f,g \in G$, $n$ is a fixed point of $f^{-1}g$ iff $g(n) = f(n)$
  iff $(n, g(n)) \in g \cap f$.
\end{proof}

The existence of both maximal almost disjoint families of permutations
and maximal cofinitary groups follows, as it does for the general
notions, from an argument using Zorn's lemma.

The following theorem was proved by Adeleke \cite{A} and Truss \cite{T}.

\begin{theorem}
  If $H \leq \Sym(\N)$ is a maximal cofinitary group, then $H$ is not
  countable.
\end{theorem}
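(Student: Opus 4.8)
The plan is to argue by contradiction: suppose $H = \{h_n \mid n \in \N\}$ is a countable maximal cofinitary group, and construct a cofinitary permutation $g \notin H$ that is almost disjoint from every member of $H$. Since adjoining $g$ to $H$ generates a strictly larger cofinitary group (one must check the generated group is still cofinitary, but this is where the construction's care pays off), this contradicts maximality. The construction proceeds by a finite-extension (back-and-forth style) recursion, building $g = \bigcup_n p_n$ as an increasing union of finite injective partial functions $p_n \colon \N \partialmap \N$, where at each stage we meet one requirement.

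First I would set up the bookkeeping. The requirements are: (a) $\dom(g) = \ran(g) = \N$, so that $g$ is a genuine permutation; (b) for each nonidentity $h \in H$, the word (in fact any reduced word) built from $g$ and elements of $H$ has only finitely many fixed points — but to keep things manageable one reduces this, using the theorem proved just above in the excerpt, to controlling fixed points of group elements, and it suffices to arrange that $g$ itself is eventually different from every $h_n \in H \setminus \{\id\}$ and, more delicately, that $g$ does not collapse into $H$. The key point from Adeleke--Truss is that at each finite stage $p_n$ we have committed only finitely much information, and because every nonidentity element of $H$ is cofinitary (has cofinitely many non-fixed points), there are infinitely many legal choices for the next value $g(k)$ that avoid creating an unwanted agreement $g(k) = h_m(k)$ for the finitely many $h_m$ and finitely many $k$ currently under consideration. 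Concretely: to extend $p_n$ so that $\ell \in \dom(g)$, pick the least $k = \ell \notin \dom(p_n)$, and choose $g(k)$ outside $\ran(p_n)$ and outside $\{h_m(k) \mid m \le n, h_m \ne \id\}$ — possible since the forbidden set is finite; the symmetric move handles surjectivity.

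The subtle step — and the one I expect to be the main obstacle — is ensuring $g \notin H$, equivalently that the enlarged group is \emph{properly} larger, together with ensuring the enlarged group stays cofinitary. For the first, one interleaves an extra family of requirements: at stage $n$ ensure $g \ne h_n$ by forcing $g(k) \ne h_n(k)$ for some fresh $k$ (trivial to arrange alongside the above). For the second, one must verify that every element of $\langle H \cup \{g\}\rangle$ other than the identity is cofinitary. Such an element is represented by a reduced word alternating powers of $g$ with elements of $H$; here Adeleke and Truss use a more careful construction — when choosing $g(k)$ one avoids, in addition, the finitely many values that would create a fixed point for any of the finitely many reduced words currently ``activated'' by the bookkeeping, again possible because each relevant constraint forbids only finitely many values at each step. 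The combinatorial heart is thus a counting argument showing the set of \emph{bad} values for $g(k)$ at any stage is always finite while the set of available values ($\N$ minus a finite set) is infinite, so the recursion never gets stuck; running it through $\omega$ stages produces the desired $g$, completing the contradiction.
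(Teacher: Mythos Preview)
The paper does not prove this theorem in place --- it merely cites Adeleke and Truss --- but the machinery that \emph{would} prove it is developed later in the paper's ``Basics'' section on cofinitary groups (the domain and range extension lemmas together with the notion of \emph{good extension}). Your outline follows exactly this route: build $g$ by finite injective approximations, meeting totality/surjectivity requirements and, crucially, word-by-word requirements ensuring $\langle H, g\rangle$ stays cofinitary. So the approach is the same.

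There is, however, one point where your sketch is genuinely imprecise rather than merely informal. You write that at each stage one should ``avoid the finitely many values that would create a fixed point'' for the currently active words. In general you \emph{cannot} avoid creating new fixed points: if $w = u^{-1} z u$ without cancellation and $z(p)$ already has a fixed point $n$, then any extension that makes $u(g)^{-1}(n)$ defined will produce a fixed point of $w(g)$, and since you want $g$ total you must eventually do this. The paper handles this with the notion of \emph{good extension} (an extension is good for $w$ if every new fixed point of $w$ arises in this conjugated way from an old fixed point of some shorter $z$), and then proves two things: (i) for each $w$ and each single new pair, all but finitely many choices give a good extension (the Domain/Range Extension Lemmas), and (ii) a union of good extensions yields $w(g)$ with only finitely many fixed points, because the fixed points of $w(g)$ are controlled by those of its shortest conjugate subword at the finite stage where $w$ was activated. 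Your ``counting argument'' is correct in conclusion --- only finitely many values are bad at each step --- but the reason is this good-extension analysis, not the naive ``no new fixed points'' condition, and stating it the naive way would make the argument fail as written.

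A smaller remark: your reduction ``it suffices to arrange that $g$ itself is eventually different from every $h_n$'' is not a sufficient reduction (that only handles words of the form $h_n^{-1} x$), and you correctly abandon it in the next paragraph; you might simply delete that sentence.
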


Also, P. Neumann proved the following result (see, e.g.
\cite[Proposition 10.4]{C}).

\begin{theorem}
  There exists a cofinitary group of cardinality $2^{\aleph_0}$.
\end{theorem}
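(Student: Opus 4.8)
The goal is to construct a cofinitary group of size continuum. The natural approach is to build a free group on continuum-many generators inside $\Sym(\N)$ in such a way that every non-identity element acts with only finitely many fixed points; by the theorem above this suffices. So the plan is to find a family $\{g_\alpha \mid \alpha < 2^{\aleph_0}\}$ of permutations of $\N$ that freely generate a free group $F$, and such that for every reduced nonempty word $w$ in the generators, the permutation $w(\bar g)$ has only finitely many fixed points.

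First I would reduce the problem to a single universal object. Rather than building generators one at a time (which only gives $\aleph_0$ or $\aleph_1$ generators via a transfinite recursion enumerating words), I would exploit the fact that the free group $F_{2^{\aleph_0}}$ on continuum-many generators already embeds into the free group $F_2$ on two generators (indeed $F_\omega \le F_2$, and the commutator subgroup of $F_2$ is free of countably infinite rank; for continuum many one passes through $F_\omega$ and then takes a free subgroup of rank $2^{\aleph_0}$ — or one simply notes $\card{F_2} = \aleph_0$ has $2^{\aleph_0}$-many... no). The cleaner route: it suffices to find a \emph{faithful} action of the free group $F$ of rank $2^{\aleph_0}$ on $\N$ in which every nonidentity element is cofinitary — equivalently, to find $2^{\aleph_0}$-many cofinitary permutations generating a free group freely. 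Since any subgroup of a cofinitary group is cofinitary, it is in fact enough to produce \emph{one} cofinitary group containing a free subgroup of rank $2^{\aleph_0}$; and for that it suffices to produce a cofinitary group that is free of countably infinite rank, because $F_\omega$ contains a free subgroup of rank $2^{\aleph_0}$ only if... again this fails on cardinality grounds ($\card{F_\omega} = \aleph_0$). So the rank-$2^{\aleph_0}$ free group genuinely has size continuum and must be built directly.

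Hence the real construction is a transfinite recursion of length $2^{\aleph_0}$, at stage $\alpha$ choosing $g_\alpha \in \Sym(\N)$ so that: (i) every reduced word involving $g_\alpha$ and previously chosen generators has finitely many fixed points, and (ii) no such word is the identity (freeness). The standard device, going back to the Adeleke/Truss-style arguments, is to build $g_\alpha$ as an increasing union of finite partial injections, handling one "requirement" at a time: for each word $w$ and each $k \in \N$ we must eventually either place a point $n > k$ with $w(\bar g)(n) \ne n$ (killing fixed points beyond $k$, giving cofinitariness) or ensure $w(\bar g) \ne \id$ somewhere. The key combinatorial lemma is that a finite partial injection can always be extended to decide one more value of $g_\alpha$ while avoiding creating an unwanted fixed point of any given finite set of words — there is always "room" because $\N$ is infinite and only finitely many constraints are active. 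One then does a bookkeeping argument interleaving all countably many requirements for each generator; since there are continuum-many generators and each is determined by a countable recursion, the whole thing runs in $2^{\aleph_0}$ steps and the resulting group has exactly cardinality $2^{\aleph_0}$ (at most continuum since it sits in $\Sym(\N)$, at least continuum since the generators are distinct).

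The main obstacle is the extension lemma: showing that at each finite stage one can extend the partial injection defining $g_\alpha$ without prematurely forcing a fixed point of some relevant reduced word. Reduced words can be long and can involve $g_\alpha$ multiple times (including with both signs), so "following the word" from a point $n$ may repeatedly demand values of $g_\alpha$ that are not yet defined; one must choose those values generically (outside a finite "forbidden" set determined by the current partial data and the requirement being satisfied) so that the orbit of $n$ under the word does not close up at $n$. Managing the interaction of the two signs $g_\alpha$ and $g_\alpha^{-1}$ within a single word, and ensuring consistency across the finitely many words and points handled so far, is the delicate bookkeeping; everything else — the cardinality count and the deduction of cofinitariness from "all nonidentity elements have finitely many fixed points" via the theorem above — is routine. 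I would therefore structure the proof as: (1) state and prove the finite extension lemma; (2) set up the bookkeeping list of requirements; (3) run the recursion and verify freeness and cofinitariness in the limit; (4) conclude the cardinality is $2^{\aleph_0}$.
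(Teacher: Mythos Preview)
The paper does not actually prove this theorem; it is quoted as a result of P.~Neumann with a reference to \cite[Proposition 10.4]{C}. So there is no ``paper's proof'' to compare against, and your proposal must be judged on its own.

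Your plan has a genuine gap that you in fact flagged yourself and then walked straight back into. You write that a one-generator-at-a-time transfinite recursion ``only gives $\aleph_0$ or $\aleph_1$ generators via a transfinite recursion enumerating words'', and this diagnosis is correct; nevertheless your ``real construction'' is exactly such a recursion of length $2^{\aleph_0}$. The problem is that at stage $\alpha$ with $|\alpha|\ge\aleph_1$ the group $G_\alpha=\langle g_\beta:\beta<\alpha\rangle$ is uncountable, hence so is $W_{G_\alpha}=G_\alpha*F(x)$. Your construction of $g_\alpha$ is an $\omega$-length recursion through finite partial injections handling ``countably many requirements'', but there are uncountably many words $w$ for which you must guarantee $w(g_\alpha)$ is cofinitary. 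Already the simplest words $gx$ with $g\in G_\alpha$ force $g_\alpha$ to be almost disjoint from every element of the uncountable almost disjoint family $G_\alpha$; consistently (e.g.\ when $\mathfrak{a}_p=\aleph_1<2^{\aleph_0}$, which the paper itself shows is possible) such a $g_\alpha$ need not exist at all. So the recursion can simply get stuck, and the argument is not a ZFC proof.

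What does work, and stays close to your good-extension machinery, is to build all $2^{\aleph_0}$ generators \emph{simultaneously} as the branches of a perfect tree of finite partial injections: at each node one splits while taking good extensions with respect to the finitely many words in the finitely many sibling approximations, so that any finite set of branches freely generates a cofinitary group. Alternatively, there are direct algebraic constructions: for instance, letting $\mathbb{Z}_p$ act on $\N=\bigsqcup_{n}\mathbb{Z}/p^{n}\mathbb{Z}$ by translation through the quotient maps gives a cofinitary action of a group of size $2^{\aleph_0}$, since a nonzero $p$-adic integer has nonzero image in all but finitely many $\mathbb{Z}/p^n\mathbb{Z}$. Either route avoids the uncountable-bookkeeping obstruction that sinks the stage-by-stage recursion.
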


Thus, {P.} Cameron (in \cite{C}) asked the following question.

\begin{question}
  If the continuum hypothesis ($\CH$) fails, is it possible that there
  exists a maximal cofinitary group $H$ such that $|H| <
  2^{\aleph_0}$?
\end{question}

In \cite{Z}, this question was answered by proving
the following results.

\begin{theorem}
  \label{yi1.4}
  Martin's axiom implies that, if $H \leq \Sym(\N)$ is a
  maximal cofinitary group, then $H$ has cardinality $2^{\aleph_0}$.
\end{theorem}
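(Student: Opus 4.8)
The plan is to run a standard Martin's Axiom density-argument: given a maximal cofinitary group $H$ with $|H| = \kappa < 2^{\aleph_0}$, I will build a new permutation $g \in \Sym(\N)$ that is almost disjoint from every member of $H$ and generates, together with $H$, a still-cofinitary group, contradicting maximality. The extra subtlety over the $\powset(\N)$ case is that adding $g$ forces us to add every word in $g$ and the elements of $H$, so we must ensure that \emph{all} nontrivial such words are cofinitary; this is exactly the condition isolated in the ``usual definition'' theorem above. So the poset $\mathbb{P}$ will consist of finite partial injections $p \colon \N \partialmap \N$ (finite approximations to $g$), ordered by reverse inclusion, and I will need a family of dense sets guaranteeing (i) $g$ is total and surjective (so $g \in \Sym(\N)$), (ii) for each $h \in H$, $g$ and $h$ agree only finitely often, and more generally (iii) for each reduced word $w(g, h_0, \dots, h_{k-1})$ with $h_i \in H$ that is not trivially the identity, the evaluation has only finitely many fixed points.

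First I would set up $\mathbb{P}$ and check it is $\sigma$-centered (any two conditions with the same underlying finite graph are compatible, and there are only countably many finite graphs), hence ccc, so $\MA$ applies. Next, for totality and surjectivity: for each $n$, the sets $D_n = \{p \mid n \in \dom p\}$ and $E_n = \{p \mid n \in \ran p\}$ are dense by a one-point extension argument, using that $\N$ is infinite so there is always a fresh value/preimage available. Then comes the core: for each nontrivial reduced word $w$ over the generators $\{g\} \cup H$ (of which there are only $\kappa$-many, using $|H| = \kappa < 2^{\aleph_0}$ and $\MA(\kappa)$), and each $m \in \N$, I want the set $D_{w,m} = \{p \mid p \text{ forces } w \text{ to have a fixed point larger than } m, \text{ or } p \text{ forces } w \text{ undefined at some eligible point}\}$—more precisely a set of conditions that decide enough of $g$ to pin down the behavior of $w$ below a bound—to be dense, so that in the end each $w$ has a computable, and finite, fixed-point set. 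Intersecting a filter meeting all these $\kappa + \aleph_0 = \kappa$ dense sets yields the desired $g$.

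The hard part, and the step I would spend the most care on, is the density of the word-dense sets: given a condition $p$ and a reduced word $w$, I must extend $p$ so as to create a new fixed point of $w$ beyond $m$ (to eventually force the filter-generic $w$ to be cofinitary we actually want the \emph{opposite}—to extend $p$ so that $w$ does \emph{not} acquire spurious infinite fixed-point sets, i.e. to be able to continue without ever being forced into infinitely many fixed points). Concretely: starting from a point $n$, one follows the ``orbit'' of $n$ under the letters of $w$, alternating applications of known permutations $h_i^{\pm 1} \in H$ (whose full graphs are available) with applications of $g^{\pm 1}$ (only the finite fragment $p$ is available), and at each occurrence of $g^{\pm 1}$ where $p$ is not yet defined one picks a \emph{fresh} value—fresh meaning outside $\dom p \cup \ran p$ and outside the finitely many values already visited along this and previously-handled orbits. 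The reducedness of $w$ is what guarantees this greedy fresh-value choice never forces $w(n) = n$ accidentally, so one can always extend $p$ to a condition $q$ that decides $w(n)$ to be something $\neq n$; running this for the finitely many $n \le$ (some bound) handled at this stage, and noting each word is treated by its own dense family, gives that the generic $g$ makes every nontrivial reduced word cofinitary. Assembling: $H \cup \{g\}$ generates a cofinitary group properly extending $H$, contradicting maximality, so $|H| = 2^{\aleph_0}$.
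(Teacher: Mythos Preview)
There is a genuine gap: with your poset---finite partial injections under plain reverse inclusion---no family of $|H|$-many dense sets will force $\langle H,g\rangle$ to be cofinitary. Take $H=\{\Id\}$ and $w(x)=x^2$. The set $\{p:p^2(n)\text{ defined and }\neq n\}$ is not dense below any $p$ that already has $n$ in a $2$-cycle, so it cannot appear on your list; the set $\{p:p^2(n)\text{ defined}\}$ is dense but controls nothing. Your fresh-value argument shows only that, given $p$ and $n$ with $w(p)(n)$ undefined, \emph{some} extension $q$ has $w(q)(n)\neq n$; it does not exhibit a dense set whose meeting forces this for cofinitely many $n$. Concretely, if $g=(0\;1)(2\;3)(4\;5)\cdots$ then the filter $\{p:p\subseteq g\}$ meets every $D_n$ and $E_n$ you wrote down, yet $g^2=\Id$. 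Your mid-paragraph reversal (``we actually want the opposite'') is precisely this problem surfacing: under bare reverse inclusion, $\MA$ hands you \emph{some} filter, and you have no leverage over how it extends between the dense sets you specified.

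The fix---and the reason the paper develops Section~\ref{sec:MCGintro} before anything else---is to build the fixed-point control into the \emph{order} rather than the dense sets. Conditions are pairs $(p,W)$ with $p$ a finite partial injection and $W\in[W_H]^{<\omega}$; declare $(q,W')\leq(p,W)$ iff $p\subseteq q$, $W\subseteq W'$, and $q$ is a \emph{good extension} of $p$ with respect to every word in $W$ and its subwords (Definition~\ref{definition good extension}). This is $\sigma$-centered since conditions sharing the same $p$ are compatible via $(p,W\cup W')$. The Domain and Range Extension Lemmas make the sets $\{(q,W'):n\in\dom q\}$ and $\{(q,W'):n\in\ran q\}$ dense, the sets $\{(q,W'):w\in W'\}$ for $w\in W_H$ are trivially dense, and $\{(q,W'):q\not\subseteq h\}$ for $h\in H$ forces $g\notin H$; that is $|H|<2^{\aleph_0}$ dense sets in all. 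Once the filter contains a condition with $w$ in its side set, every further extension in the filter is good for $w$, and Lemma~\ref{lem:finfxpts} bounds the fixed points of $w(g)$ by those of its shortest conjugate subword on that finite stage. Hence $\langle H,g\rangle$ is cofinitary with $g\notin H$, contradicting maximality. Your fresh-value heuristic is exactly the intuition behind the Domain Extension Lemma; what was missing was encoding it in $\leq$.
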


In the following theorem the notation $M^\mathbb{P} \models \varphi$
means that for any $\mathbb{P}$-generic set $G$ over $M$ the statement
$\varphi$ is true in $M[G]$.

\begin{theorem}
  \label{yi1.5}
  Let $M \models \ZFC + \neg \CH$.  Let $\kappa \in M$ be a regular
  cardinal such that in $M$ $\aleph_1 \leq \kappa < 2^{\aleph_0} =
  \lambda$.  Then there exists a countable chain condition notion of
  forcing $\mathbb{P}$ such that the following statements hold in
  $M^{\mathbb{P}}$:
  {
    \renewcommand{\theenumi}{(\roman{enumi})}
  \begin{enumerate}
    \item $2^{\aleph_0} = \lambda$;
    \item there exists a maximal cofinitary group $H \leq \Sym(\N)$ of
      cardinality $\kappa$.
  \end{enumerate}
  }
\end{theorem}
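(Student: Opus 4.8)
The plan is to build the forcing $\mathbb{P}$ as a finite-support iteration (or, more conveniently here, a finite-condition product-style poset) of length $\kappa$ that generically adjoins new permutations to a cofinitary group while simultaneously \emph{sealing} the group so that no further permutation can be added. First I would fix in $M$ a cofinitary group $H_0$ of size $\aleph_1$ (e.g.\ the free group on $\aleph_1$ generators realized cofinitarily, which exists by Neumann's construction restricted appropriately, or simply any cofinitary group of that cardinality). The iteration has two kinds of coordinates. At "generic" coordinates $\alpha<\kappa$ we use the standard poset whose conditions are pairs $(s,F)$ where $s$ is a finite partial injection $\N\partmap\N$ (an approximation to the new generator $g_\alpha$) and $F$ is a finite subset of the group generated so far; the extension relation demands that enlarging $s$ not create new fixed points for any nontrivial reduced word in $F\cup\{g_\alpha\}$. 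This is the now-standard Zhang-style poset (cf.\ \cite{Z}), and its finite conditions give the countable chain condition. The key point is that the generic $g_\alpha$, together with everything before it, still generates a cofinitary group, because any reduced word's fixed-point set is controlled by a dense set of conditions.

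Next I would interleave "sealing" coordinates. Enumerate (using a bookkeeping function, possible since $\kappa$ is regular and we can arrange $\kappa^{<\kappa}=\kappa$ or handle names by a suitable reflection/chain-condition argument) all $\mathbb{P}$-names $\dot f$ for potential new permutations. At the sealing coordinate assigned to $\dot f$, we force with a poset that adds a permutation $h$ in the group which meets $\dot f$ infinitely often — i.e.\ generically builds $h$ as a word over the existing generators together with a commitment that $h\cap\dot f$ is infinite — so that $\dot f$ cannot be almost disjoint from every element of the final group. The conditions again are finite, and one shows density of the sets forcing "$|h\cap\dot f|\ge n$" for each $n$; crucially this must be done without destroying cofinitariness of the ambient group, which is where the reduced-word bookkeeping has to be maintained across both coordinate types simultaneously. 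Because every condition is finite and the whole iteration has length $\kappa$ with finite support, the ccc is preserved (a $\Delta$-system argument on the finite conditions), hence cardinals and cofinalities are preserved; a nice-names count gives $2^{\aleph_0}=\lambda$ in $M^{\mathbb{P}}$ provided $\lambda^{\kappa}=\lambda$ in $M$, which we may assume or first force.

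Finally, in $M^{\mathbb{P}}$ let $H$ be the subgroup of $\Sym(\N)$ generated by $H_0$ together with all the generic generators $g_\alpha$. Then $|H|=\kappa$ (each generic generator is genuinely new, and there are $\kappa$ of them, while $|H_0|=\aleph_1\le\kappa$ and the set of reduced words has cardinality $\kappa$). $H$ is cofinitary by the reduced-word density arguments. And $H$ is \emph{maximal}: any permutation $f$ in $M^{\mathbb{P}}$ has a name $\dot f$ handled at some sealing coordinate, so there is $h\in H$ with $f\cap h$ infinite, i.e.\ $f$ is not almost disjoint from all of $H$, hence $H\cup\{f\}$ is not a cofinitary group; thus $H$ is not properly contained in any cofinitary group. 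This gives clause (ii), and clause (i) was secured by the nice-names computation. The main obstacle I anticipate is the simultaneous maintenance of cofinitariness while sealing: one must verify that the sealing posets, which deliberately force infinitely many agreements between $h$ and $\dot f$, never force a nontrivial reduced word in the generators to have infinitely many fixed points — this requires a careful choice of the sealing poset (working with partial words and "reserving" infinitely many coordinates for future agreements with $\dot f$ while freezing the word structure elsewhere) and is the technical heart of the construction, exactly paralleling the difficulty in \cite{Z} for Theorem~\ref{yi1.5} but now with the bookkeeping stretched over length $\kappa$ rather than the full continuum.
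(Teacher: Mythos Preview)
Your general shape---a finite-support ccc iteration of length $\kappa$ adding generic generators---is correct and matches the approach the paper attributes to \cite{Z}. (The paper does not reprove this theorem directly but proves the analogous result for very mad families in Section~\ref{sec:sizec}, stating explicitly that the forcing ``is based on the proof of the similar result for maximal cofinitary groups by Yi Zhang''.) The essential discrepancy is your introduction of separate ``sealing'' coordinates together with a bookkeeping enumeration of $\mathbb{P}$-names $\dot f$. This is unnecessary and in fact problematic: there may be $\kappa^{\aleph_0}>\kappa$ nice names for permutations, so a $\kappa$-length enumeration of names cannot cover them all, and your aside about arranging $\kappa^{<\kappa}=\kappa$ does not address this. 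Your description of the sealing step is also not well-formed: you say it ``adds a permutation $h$ in the group \ldots\ as a word over the existing generators'', but a word in the existing generators is already determined, not something one can force to meet $\dot f$.

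The actual mechanism is cleaner. In Zhang's poset for adjoining a single new generator $g$ to a countable cofinitary group $G$, the sets
\[
E_{f,n}=\{(s,F):\exists m\ge n\ \ s(m)=f(m)\}
\]
are dense whenever $f\in\Sym(\N)\setminus G$ and $\langle G,f\rangle$ is cofinitary; this is exactly the content of the Hitting $f$ Lemma. Hence the generic $g_\alpha$ added at stage $\alpha$ automatically satisfies $|g_\alpha\cap f|=\aleph_0$ for \emph{every} such $f$ already in $M[G_\alpha]$---no bookkeeping is needed, because genericity meets all these dense sets simultaneously. Maximality then follows: any permutation $f$ in $M^{\mathbb{P}}$ is, by the ccc and finite supports, already present in some $M[G_\alpha]$ with $\alpha<\kappa$; the generic $g_\alpha$ added there meets $f$ infinitely, so $f^{-1}g_\alpha$ has infinitely many fixed points and $\langle H,f\rangle$ is not cofinitary. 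Thus the ``sealing'' is absorbed into the generator-adding step itself, and the iteration has only one kind of coordinate. The technical obstacle you anticipate---maintaining cofinitariness while forcing infinitely many agreements with $f$---is precisely what the Hitting $f$ Lemma handles, but inside the single generator poset rather than in a separate sealing stage. (Starting from a fixed $H_0$ of size $\aleph_1$ is also unnecessary; one simply begins from the trivial group.)
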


The results corresponding to Theorem \ref{yi1.4} and Theorem
\ref{yi1.5} for maximal almost disjoint families of permutations can
be proved by similar methods.  So both of the following two cardinal
numbers are non-trivial.

\begin{definition}
  \label{definition ap and ag}
  Define the cardinal $\mathfrak{a}_p$ to be the least cardinality of
  a maximal almost disjoint family of permutations, and similarly,
  $\mathfrak{a}_g$ to be the least cardinality of a maximal cofinitary
  group.
\end{definition}

That these families can behave essentially differently from maximal
almost disjoint families is witnessed by the following theorem.

\begin{theorem}[\cite{BSZ}]
  It is consistent with $\ZFC$ that $\mathfrak{a} < \mathfrak{a}_p =
  \mathfrak{a}_g$.
\end{theorem}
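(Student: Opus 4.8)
The plan is to deduce the separation from two facts about cardinal invariants: first, that $\non(\mathcal{M})$, the uniformity of the meager ideal, is a lower bound for both $\mathfrak{a}_p$ and $\mathfrak{a}_g$; and second, that $\mathfrak{a} = \aleph_1$ is consistent with $\non(\mathcal{M}) = 2^{\aleph_0}$. Granting these, work in a model of $\ZFC$ in which $\mathfrak{a} = \aleph_1$ and $\non(\mathcal{M}) = 2^{\aleph_0} = \aleph_2$. Since trivially $\non(\mathcal{M}) \le \mathfrak{a}_p \le 2^{\aleph_0}$ and $\non(\mathcal{M}) \le \mathfrak{a}_g \le 2^{\aleph_0}$, in this model $\mathfrak{a}_p = \mathfrak{a}_g = \aleph_2$ while $\mathfrak{a} = \aleph_1$, so $\mathfrak{a} < \mathfrak{a}_p = \mathfrak{a}_g$, as required.

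The first fact, $\non(\mathcal{M}) \le \mathfrak{a}_g$ (the main result of \cite{BSZ}), would be proved as follows. Let $G \le \Sym(\N)$ be a cofinitary group with $\card{G} < \non(\mathcal{M})$; it suffices to construct $\pi \in \Sym(\N)$ with $\langle G, \pi \rangle$ cofinitary, which witnesses that $G$ is not maximal. By the characterization recalled above this amounts to building $\pi$ so that every reduced word over $G \cup \{\pi, \pi^{-1}\}$ not collapsing into $G$ has only finitely many fixed points, and there are exactly $\card{G}$-many such words once the parameters from $G$ are specified. The point is to build $\pi$ not generically but explicitly: using the combinatorial description of $\non(\mathcal{M})$ --- as the least cardinality of a family of functions admitting no common eventually different real --- one suppresses, simultaneously for all of these (fewer than $\non(\mathcal{M})$) words, their fixed points beyond a finite stage, while running a bookkeeping that forces $\pi$ to become an honest bijection almost disjoint from every member of $G$. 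The same scheme yields $\non(\mathcal{M}) \le \mathfrak{a}_p$: given a maximal almost disjoint family of permutations $\A$ with $\card{\A} < \non(\mathcal{M})$, there is no word closure to control and one only needs $\pi \in \Sym(\N)$ eventually different from every member of $\A$, but pushing the hypothesis $\card{\A} < \non(\mathcal{M})$ through to the existence of such a $\pi$ --- which contradicts the maximality of $\A$ --- still requires an encoding argument.

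The second fact, a model of $\mathfrak{a} = \aleph_1$ with $\non(\mathcal{M}) = 2^{\aleph_0}$: one standard route is the random real model, the measure algebra of $2^{\omega_2}$ over a ground model of $\GCH$; there $\non(\mathcal{M}) = 2^{\aleph_0} = \aleph_2$, and $\mathfrak{a} = \aleph_1$ because there is a mad family of subsets of $\N$ of size $\aleph_1$ that random forcing leaves maximal. Alternatively one may force with an $\omega_2$-length finite support iteration of the standard ($\sigma$-centered) eventually-different-real forcing over a model of $\GCH$: any set of reals of size $\aleph_1$ appears by some stage below $\omega_2$ and is thereafter ``differed from'' by a later generic, so $\non(\mathcal{M}) = \aleph_2 = 2^{\aleph_0}$; and the iteration is set up, starting from a suitably indestructible ground-model mad family $\A_0 \subseteq \powset(\N)$ of size $\aleph_1$, so that no iterand adds an infinite set almost disjoint from all of $\A_0$, keeping $\A_0$ mad and $\mathfrak{a} = \aleph_1$.

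I expect the main obstacle to be the inequality $\non(\mathcal{M}) \le \mathfrak{a}_g$. A naive argument --- making $\pi$ generic for Zhang's $\sigma$-centered permutation-extending forcing over $G$, in the spirit of the proof of Theorem \ref{yi1.4} --- only meets $\card{G}$-many dense sets and so yields the strictly weaker bound $\mathfrak{p} \le \mathfrak{a}_g$; squeezing this up to $\non(\mathcal{M})$ forces one to replace genericity by an explicit construction of $\pi$ out of a single eventually different real, and then to check that the fixed points of all reduced words can be killed coherently and compatibly with $\pi$ being an almost disjoint bijection. The parallel encoding for $\mathfrak{a}_p$, and the preservation theorem needed to keep $\A_0$ mad through the iteration that blows up $\non(\mathcal{M})$ (equivalently, the verification that $\mathfrak{a} = \aleph_1$ in the random model), are secondary but still genuine difficulties.
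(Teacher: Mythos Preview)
The paper does not give its own proof of this theorem; it is quoted from \cite{BSZ} without argument. Your outline is correct and is exactly the strategy behind that result: the inequality $\non(\mathcal{M}) \le \mathfrak{a}_p, \mathfrak{a}_g$ is recorded in the paper as Theorem~\ref{thm:1.10} (also attributed to \cite{BSZ}), and the paper itself combines that inequality with iterated random forcing in the proof of the corollary immediately following it---so your choice of model, your reduction to $\mathfrak{a} < \non(\mathcal{M})$, and your identification of $\non(\mathcal{M}) \le \mathfrak{a}_g$ as the substantive step are all on target.
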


So the difference in which space the almost disjoint family is defined
on can be used to get them to be of different cardinality (in
\cite{Zthesis} it was proved that $\mathfrak{a} <
\mathfrak{a}_p$ is consistent, and in \cite{HSZ} that $\mathfrak{a} <
\mathfrak{a}_g$ is consistent).  The following question is however
still open.

\begin{question}[\cite{YZ03}]
  Is it consistent with $\ZFC$ that $\mathfrak{a}_p$ is distinct from
  $\mathfrak{a}_g$?
\end{question}

That is, does the group structure influence the possible least
cardinalities of almost disjoint families?

We next compare the cardinals $\mathfrak{a}_p$ and $\mathfrak{a}_g$ to
some other well known cardinals which we define first.

\begin{definitions}
\label{def part Cichon}
\item Suppose that $H$ is a group that is not finitely generated.
  Then $H$ can be expressed as the union of a chain of proper
  subgroups.  The \emph{cofinality} of $H$, written $c(H)$, is the
  least $\lambda$ such that $H$ can be expressed as the union of a
  chain of $\lambda$ proper subgroups.

\item $S \subseteq \BS$ is \emph{meager} if it is contained in the union of
  countably many nowhere dense sets.  $\mathcal{M}$ is the collection
  of meager subsets of $\BS$.
  
\item $\Non(\mathcal{M})$, the \emph{uniformity of meager sets}, is
  the size of the smallest non-meager set of reals.

\item $\mathcal{N}$ is the collection of Lebesgue null subsets of
  $\BS$.
  
\item $\add(\mathcal{N})$, the \emph{additivity of null sets}, is the
  least cardinality of a family $\mathcal{F} \subseteq \mathcal{N}$
  such that $\cup \mathcal{F}$ is not a Lebesgue null set.
  
\item $\cof(\mathcal{N})$, the \emph{cofinality of null sets}, is the
  least cardinality of a family $\mathcal{F} \subseteq \mathcal{N}$
  such that for all $N \in \mathcal{N}$ there is $S \in \mathcal{F}$
  such that $N \subseteq S$.
\end{definitions}

We first focus on the cofinality of the symmetric group.  The
following result was proved by {H.}  {D.} Macpherson and {P.} Neumann
in \cite{MN}.

\begin{theorem}
  \label{thm:1.6}
  If $\kappa$ is an infinite cardinal, then $c(\Sym(\kappa)) >
  \kappa$.
\end{theorem}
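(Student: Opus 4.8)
The plan is to argue by contradiction. If $c(\Sym(\kappa))\le\kappa$ then, since $\Sym(\kappa)$ is not finitely generated and the cofinality of a non--finitely generated group is always regular (pass to a cofinal subchain of any witnessing chain), there is a regular cardinal $\mu\le\kappa$ and an increasing chain $(H_\xi)_{\xi<\mu}$ of proper subgroups of $G:=\Sym(\Omega)$, where $|\Omega|=\kappa$, with $\bigcup_{\xi<\mu}H_\xi=G$. I want to derive a contradiction for every such $\mu$, treating the classical case $\kappa=\aleph_0$ (uncountable cofinality of $\Sym(\aleph_0)$) as a special instance.

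The first ingredient is a bounded--generation lemma for symmetric groups: if $A,B\subseteq\Omega$ with $A\cup B=\Omega$ and $|A\cap B|=|\Omega\setminus A|=|\Omega\setminus B|=\kappa$, then $G$ is a product of a fixed finite number of copies of the block subgroups $\Sym(A):=\{g\in G:\supp g\subseteq A\}$ and $\Sym(B)$ -- a Bruhat--type decomposition $G=\Sym(A)\Sym(B)\Sym(A)\cdots$ proved by a back--and--forth matching of orbits. An immediate consequence is that a proper $H\le G$ cannot contain $\Sym(\Delta)$ for every $\kappa$-sized $\Delta$ with $\kappa$-sized complement, since otherwise it would contain two block subgroups satisfying the hypotheses and hence all of $G$. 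Working with this inside the smaller copies of $\Sym(\kappa)$ carried on subsets of $\Omega$, I would build by transfinite recursion on $\xi<\mu$ a partition $\Omega=\bigsqcup_{\xi<\mu}\Omega_\xi$ with each $|\Omega_\xi|=\kappa$, together with permutations $g_\xi$ supported in $\Omega_\xi$ with $g_\xi\notin H_\xi$. Two points need care here: keeping a $\kappa$-sized reservoir available at each stage, and disposing of the degenerate case in which $H_\xi$ already contains the pointwise stabiliser of everything used so far, which is handled by a structural fact about subgroups lying above such a stabiliser.

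Now set $g:=\prod_{\xi<\mu}g_\xi$, a legitimate element of $G$ because the supports are pairwise disjoint, and fix $\beta<\mu$ with $g\in H_\beta$. The crux -- and the step I expect to be the main obstacle -- is to turn this single membership into $g_\xi\in H_\xi$ for some $\xi\ge\beta$, which contradicts the choice of $g_\xi$. ``Restriction of $g$ to $\Omega_\xi$'' is not a group operation, and for uncountable $\mu$ one cannot reduce to finitely generated subgroups, since $\mu$-many distinct escaping permutations cannot be packed into a subgroup on boundedly many generators. The resolution, following Macpherson--Neumann, is self--similarity: one arranges the array $(\Omega_\xi,g_\xi)_{\xi<\mu}$ so that, via a fixed permutation of $\Omega$, it is isomorphic to a ``doubled'' copy of itself, making $g$ a product of two fixed conjugates of a re--indexed copy of itself; feeding this functional identity into the bounded--generation lemma forces that $g$ cannot be absorbed at any bounded level of the chain unless some $H_\xi$ already equals $G$. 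Getting the sharp bound $c(\Sym(\kappa))>\kappa$, rather than merely $>\aleph_0$, is precisely the place where $\mu$-many blocks rather than $\aleph_0$-many must be used, and where the cycle--type bookkeeping needed to realise the self--similarity -- replacing each $g_\xi$ by a conjugate with $\kappa$ fixed points, or by a short product of such conjugates, so that $g_\xi$ really is conjugate to its own ``doubled'' version -- has to be carried out uniformly in $\xi$.
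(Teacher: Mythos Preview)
The paper does not prove this theorem; it is stated with attribution to Macpherson and Neumann \cite{MN} and no argument is given. So there is no proof in the paper to compare against.

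For what it is worth, your sketch follows the broad architecture of the Macpherson--Neumann argument: pass to a chain of regular length $\mu\le\kappa$, invoke the bounded-generation fact that $G=\Sym(A)\Sym(B)\Sym(A)\Sym(B)$ when $A\cup B=\Omega$ with all three relevant sets of size $\kappa$, deduce that a proper subgroup cannot contain every block stabiliser, and then try to assemble a single permutation escaping every level via a disjoint-support product over $\mu$ blocks. You also correctly flag the real obstacle: from $g=\prod_\xi g_\xi\in H_\beta$ one cannot simply read off $g_\xi\in H_\xi$, because restriction to a block is not a group operation and no finiteness trick is available when $\mu$ is uncountable.

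However, the ``self-similarity'' manoeuvre you propose for closing this gap is too vague to count as a proof, and as described it is not how Macpherson and Neumann actually finish. Their resolution leans more directly on the bounded-generation lemma together with a structural analysis of proper subgroups containing a block stabiliser, rather than on arranging $g$ to be a product of conjugates of itself. As it stands your write-up is an outline with the decisive step missing; if you want a complete argument you should work through the original paper, since the endgame combinatorics is genuinely delicate and is precisely where the strength of the conclusion (cofinality $>\kappa$, not merely $>\aleph_0$) is won.
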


Upon learning of Theorem \ref{thm:1.6}, {A.} Mekler and {S.} Thomas
independently pointed out the following easy observation (see, e.g.
\cite{ST}). 

\begin{theorem}
  \label{thm:1.7}
  Suppose that $M \models \kappa^\omega = \kappa > \aleph_1$.  Let
  $\mathbb{P} = \Fn(\kappa,2)$ be the partial order of finite partial
  functions from $\kappa$ to $2$.  Then $M^\mathbb{P} \models
  c(\Sym(\N)) = \aleph_1 < 2^{\aleph_0} = \kappa$.
\end{theorem}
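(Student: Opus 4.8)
The strategy is the classical "add many Cohen reals, then read off both the upper and lower bound for the cofinality" argument. First I would set $\mathbb{P} = \Fn(\kappa,2)$ and work in $M^{\mathbb{P}}$. Since $\mathbb{P}$ is ccc and $\card{\mathbb{P}} = \kappa$, it preserves cardinals and forces $2^{\aleph_0} = \kappa$ (here $\kappa^\omega = \kappa$ is used to count nice names). So the arithmetic half, $2^{\aleph_0} = \kappa$, is immediate, and the content is showing $c(\Sym(\N)) = \aleph_1$ in the extension. The inequality $c(\Sym(\N)) > \aleph_0$ holds in any model, since a group with a finite-index-free chain cofinality $\aleph_0$ would contradict that $\Sym(\N)$ is not the union of countably many proper subgroups (this is Baire-category-type folklore, or just Theorem \ref{thm:1.6} with $\kappa = \aleph_0$, giving $c(\Sym(\N)) > \aleph_0$). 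Hence it suffices to exhibit, in $M^{\mathbb{P}}$, a chain of $\aleph_1$ proper subgroups of $\Sym(\N)$ whose union is all of $\Sym(\N)$.

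For the upper bound $c(\Sym(\N)) \le \aleph_1$ I would use a Löwenheim–Skolem/reflection argument on the forcing. Write $\mathbb{P} = \Fn(\kappa,2)$ as an increasing union of $\aleph_1$ subposets $\mathbb{P}_\alpha = \Fn(A_\alpha, 2)$, $\alpha < \omega_1$, where $\langle A_\alpha : \alpha < \omega_1\rangle$ is an increasing continuous chain of subsets of $\kappa$ each of size $< \kappa$ with union $\kappa$ (possible since $\cf(\kappa) \ge \aleph_1$, as $\kappa > \aleph_1$ and $\kappa^\omega = \kappa$ forces $\cf(\kappa) > \aleph_0$, and actually one can even arrange $\cf(\kappa)$-many pieces, but $\aleph_1$ cofinal pieces is all we need if $\cf(\kappa) = \aleph_1$; in general take $A_\alpha$ cofinal-in-$\alpha$-indexed so that every real in the extension appears in some $M[G \cap \mathbb{P}_\alpha]$ — crucially, every element of $\Sym(\N)$ in $M^{\mathbb{P}}$, being coded by a real and thus by a countable set of conditions, lies in $M[G \restriction A_\alpha]$ for some $\alpha < \omega_1$). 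Then set $G_\alpha = \Sym(\N) \cap M[G \restriction A_\alpha]$, the group of permutations of $\N$ in the intermediate model. Each $G_\alpha$ is a subgroup of $\Sym(\N)$, the chain is increasing and continuous, its union is $\Sym(\N)$, and each $G_\alpha$ is \emph{proper}: the next block of Cohen reals $G \restriction (A_{\alpha+1} \setminus A_\alpha)$ adds a new permutation (a Cohen real can be coded as, e.g., a permutation built by a standard back-and-forth from the Cohen real) not lying in $M[G \restriction A_\alpha]$. This gives a chain of $\aleph_1$ proper subgroups covering $\Sym(\N)$, so $c(\Sym(\N)) \le \aleph_1$, and with the lower bound $c(\Sym(\N)) = \aleph_1$.

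The main obstacle is the properness of each $G_\alpha$ — i.e., ensuring the intermediate model $M[G \restriction A_\alpha]$ genuinely fails to contain \emph{all} permutations of $\N$ — and, relatedly, ensuring the union is genuinely everything, which requires the counting argument that every permutation in $M^{\mathbb{P}}$ has a name using only countably many coordinates and hence is captured at some countable stage. The first point needs the observation that any single Cohen real over $M[G\restriction A_\alpha]$ can be converted into a permutation of $\N$ lying outside that model (otherwise that model would compute the Cohen real, contradiction); one must be slightly careful that the conversion is recoverable, but the standard trick (interpret the Cohen real as picking, at each stage of a back-and-forth construction of a bijection, which value to assign) works and is routine. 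The second point is where $\kappa^\omega = \kappa$ and the ccc of $\mathbb{P}$ are essential: a nice name for a real is a countable antichain per integer pair, there are $\kappa^\omega = \kappa$ such names, and each uses countably many coordinates of $\kappa$, so by a pressing-down/cofinality argument each real — hence each permutation — sits inside some $M[G \restriction A_\alpha]$ with $\alpha < \omega_1$ provided the $A_\alpha$ are chosen to exhaust all countable subsets of $\kappa$ in an appropriate bookkeeping sense; arranging this bookkeeping over $\omega_1$ steps, using $\cf(\kappa) \ge \aleph_1$ together with $\kappa^\omega = \kappa$, is the technical heart of the argument.
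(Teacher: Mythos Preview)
The paper does not actually prove Theorem~\ref{thm:1.7}; it merely attributes the observation to Mekler and Thomas and cites \cite{ST}. So there is no ``paper's own proof'' to compare against. Your outline is the standard argument and is correct in spirit, but there is one genuine muddle worth fixing.

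The issue is your chain $\langle A_\alpha : \alpha < \omega_1\rangle$. You ask for $|A_\alpha| < \kappa$ and $\bigcup_\alpha A_\alpha = \kappa$; together these force $\cf(\kappa) \le \aleph_1$, which is \emph{not} part of the hypothesis (e.g.\ $\kappa = \aleph_2$ under $\GCH$ satisfies $\kappa^\omega = \kappa > \aleph_1$ but has cofinality $\aleph_2$). You notice this and gesture at a ``general'' case, but the phrase ``cofinal-in-$\alpha$-indexed'' does not resolve it. The clean fix is to drop the size requirement entirely: you only need an increasing sequence $\langle A_\alpha : \alpha < \omega_1\rangle$ of subsets of $\kappa$ such that (i) every countable $C \subseteq \kappa$ in $M$ is contained in some $A_\alpha$, and (ii) each $\kappa \setminus A_\alpha$ is infinite. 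When $\cf(\kappa) = \aleph_1$ your original construction works; when $\cf(\kappa) > \aleph_1$, fix pairwise disjoint countably infinite sets $I_\beta \subseteq \kappa$ for $\beta < \omega_1$ and set $A_\alpha = \kappa \setminus \bigcup_{\beta \ge \alpha} I_\beta$. Any countable $C$ meets at most countably many of the disjoint $I_\beta$, hence lies in some $A_\alpha$, and $\kappa \setminus A_\alpha$ is infinite, so the Cohen reals on $\kappa \setminus A_\alpha$ give a permutation outside $M[G \restriction A_\alpha]$. With this adjustment (and noting, via Lemma~\ref{lem:kk}, that every permutation in $M[G]$ lies in some $M[G\restriction C]$ with $C \in M$ countable), your chain $G_\alpha = \Sym(\N)^{M[G\restriction A_\alpha]}$ does witness $c(\Sym(\N)) \le \aleph_1$, and the rest of your outline is fine.
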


Although $\MA$ implies $c(\Sym(\N)) = 2^{\aleph_0}$ (see, e.g.
\cite{ST}), some results indicate that $c(\Sym(\N))$ is rather small
among the cardinal invariants.  We give two examples:

(I) If $\mathfrak{d}$ is the dominating number (the minimum
cardinality of a dominating family in $\BS$), then
\begin{theorem}[\cite{ST}]
  $c(\Sym(\N)) \leq \mathfrak{d}$.
\end{theorem}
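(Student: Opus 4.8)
The plan is to write $\Sym(\N)$ as the union of a $\subseteq$-increasing chain of at most $\mathfrak{d}$ proper subgroups; since $\Sym(\N)$ is uncountable, hence not finitely generated, this gives $c(\Sym(\N)) \leq \mathfrak{d}$. Fix a dominating family $\{f_\alpha : \alpha < \mathfrak{d}\} \subseteq \BS$; replacing each $f_\alpha$ by the function $n \mapsto n + \max_{m \leq n} f_\alpha(m)$ we may assume every $f_\alpha$ is non-decreasing and satisfies $f_\alpha(n) \geq n$. For $g \in \Sym(\N)$ put
\[
  h_g(n) = \max\bigl(\{g(m) : m \leq n\} \cup \{g^{-1}(m) : m \leq n\}\bigr),
\]
so $h_g$ is non-decreasing, $h_g(n) \geq n$, $h_{g^{-1}} = h_g$, $h_{\id} = \id$, and a short computation gives $h_{g_1 g_2}(n) \leq \max\bigl(h_{g_1}(h_{g_2}(n)),\, h_{g_2}(h_{g_1}(n))\bigr)$ for all $n$. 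Write $h \leq^* k$ for ``$h(n) \leq k(n)$ eventually''. For $\alpha < \mathfrak{d}$ let $\Phi_\alpha$ be the closure of $\{f_\beta : \beta \leq \alpha\} \cup \{\id\}$ under composition --- so its members are non-decreasing and $\geq \id$, $|\Phi_\alpha| = \max(|\alpha|, \aleph_0) < \mathfrak{d}$ (there is no countable dominating family), and any $\psi_1 \circ \psi_2 \in \Phi_\alpha$ dominates both $\psi_1$ and $\psi_2$ pointwise --- and set
\[
  H_\alpha = \{g \in \Sym(\N) : h_g \leq^* \psi \text{ for some } \psi \in \Phi_\alpha\}.
\]

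First I would check that each $H_\alpha$ is a subgroup: closure under inverses is immediate from $h_{g^{-1}} = h_g$; for closure under products, if $h_{g_i} \leq^* \psi_i$ with $\psi_i \in \Phi_\alpha$ then the displayed estimate and the monotonicity of the $\psi_i$ give $h_{g_1 g_2} \leq^* \max(\psi_1 \circ \psi_2,\, \psi_2 \circ \psi_1)$, and $(\psi_1 \circ \psi_2) \circ (\psi_2 \circ \psi_1) \in \Phi_\alpha$ dominates the right-hand side; and $\id \in H_\alpha$ since $h_{\id} = \id$. As $\Phi_\alpha \subseteq \Phi_{\alpha'}$ for $\alpha \leq \alpha'$, the $H_\alpha$ form a chain, and $\bigcup_{\alpha < \mathfrak{d}} H_\alpha = \Sym(\N)$ because every $h_g \colon \N \to \N$ is dominated by some $f_\alpha$.

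The substantial step is that each $H_\alpha$ is \emph{proper}. Since $|\Phi_\alpha| < \mathfrak{d}$, $\Phi_\alpha$ is not a dominating family, so there is a function $\phi$, which we may take non-decreasing, with $\phi \not\leq^* \psi$ for all $\psi \in \Phi_\alpha$; it then suffices to build $g \in \Sym(\N)$ with $h_g(n) \geq \phi(n)$ for all large $n$, since such a $g$ has $h_g \not\leq^* \psi$ for every $\psi \in \Phi_\alpha$ and so lies outside $H_\alpha$. The naive attempts fail: a permutation reversing a rapidly growing sequence of blocks has $h_g(n)$ back near $n$ at the block endpoints, and no permutation can satisfy $g(n) \geq \phi(n)$ for all $n \geq 1$ (its small values would be unhit). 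The device that works is to give up surjectivity only on a sparse set: choose a strictly increasing sequence $(\ell_j)_{j \in \N}$ with $\ell_j \geq \phi(2j+2)$, define $g(2j+1) = \ell_j$, and complete $g$ to a bijection by mapping the even numbers onto $\N \setminus \{\ell_j : j \in \N\}$, which is still infinite since $\ell_j \geq 2j+2$. For $n \geq 2$, with $m$ the largest odd number $\leq n$ we get $g(m) = \ell_{(m-1)/2} \geq \phi(m+1) \geq \phi(n)$, hence $h_g(n) \geq \phi(n)$.

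This done, $\langle H_\alpha : \alpha < \mathfrak{d}\rangle$ is a chain of at most $\mathfrak{d}$ proper subgroups of $\Sym(\N)$ with union $\Sym(\N)$, so $c(\Sym(\N)) \leq \mathfrak{d}$. I expect the properness step to be the main obstacle --- specifically the realization that $h_g$ cannot be forced above a prescribed function by any ``uniform'' permutation, and that the necessary slack comes from reserving a sparse set of very large target values whose complement is still large enough to carry the rest of the bijection. A smaller but genuine subtlety is the design of $\Phi_\alpha$: the first guess ``$g \leq^* f$ and $g^{-1} \leq^* f$'' gives subgroups that are not closed under composition, which is why one must pass to the composition closure of $\{f_\beta : \beta \le \alpha\}$ while keeping it of size $< \mathfrak{d}$.
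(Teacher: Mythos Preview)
The paper does not prove this theorem; it is quoted from \cite{ST} without argument, so there is no in-paper proof to compare against. Your proposal is essentially the standard Sharp--Thomas argument: filter $\Sym(\N)$ by growth of the two-sided envelope $h_g$, close the bounding functions under composition so the filtration pieces are subgroups, and use $|\Phi_\alpha|<\mathfrak{d}$ to witness properness. The argument is correct.

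One tiny gap: when you construct the witness $g\notin H_\alpha$, you infer $\ell_j\geq 2j+2$ from $\ell_j\geq\phi(2j+2)$, which needs $\phi(n)\geq n$; you only arranged $\phi$ non-decreasing. Either add ``and $\phi(n)\geq n$'' when choosing $\phi$ (harmless, since replacing $\phi$ by $n\mapsto\max(\phi(n),n)$ preserves non-domination by every $\psi\in\Phi_\alpha$), or simply impose $\ell_j\geq\max(\phi(2j+2),2j+2)$ directly. With that adjustment the properness step goes through, and the rest is fine.
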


(II) A notion of forcing $\mathbb{P}$ is \emph{Suslin} if and only if
$\mathbb{P}$ is a $\mathbf{\Sigma^1_1}$ subset of $\mathbb{R}$ and
both $\leq_{\mathbb{P}}$ and $\bot_{\mathbb{P}}$ are
$\mathbf{\Sigma^1_1}$ subsets of $\mathbb{R} \times \mathbb{R}$, where
$\mathbb{R}$ denotes the reals.  Then (see, e.g. \cite{Z2}).

\begin{theorem}
  \label{thm:1.9}
  Let $M \models \ZFC + \GCH$.  Let $\mathbb{P}$ be a Suslin c.c.c.
  notion of forcing which adjoins reals, and let $\mathbb{Q}$ be the
  finite support iteration of $\mathbb{P}$ of length $\aleph_2$.  Then
  $M^{\mathbb{Q}} \models c(\Sym(\N)) = \aleph_1$.
\end{theorem}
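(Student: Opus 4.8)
First, $M^{\mathbb{Q}}\models 2^{\aleph_0}=\aleph_2$: the underlying set of a Suslin ccc forcing is a set of reals, so $|\mathbb{P}|\le 2^{\aleph_0}=\aleph_1$ under $\GCH$; hence $\mathbb{Q}$ is ccc of size $\aleph_2$, it adjoins $\aleph_2$ reals, and a nice-name count bounds $2^{\aleph_0}$ in $M^{\mathbb{Q}}$ by $\aleph_2$. The lower bound $c(\Sym(\N))\ge\aleph_1$ is immediate from Theorem~\ref{thm:1.6} with $\kappa=\aleph_0$. So everything reduces to $M^{\mathbb{Q}}\models c(\Sym(\N))\le\aleph_1$, which I would obtain by producing inside $M^{\mathbb{Q}}$ an increasing chain $\langle G_\alpha : \alpha<\omega_1\rangle$ of proper subgroups of $\Sym(\N)$ with union $\Sym(\N)$.

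Write $\mathbb{Q}$ as the finite-support iteration $\langle \mathbb{Q}_\xi,\dot{\mathbb{P}}_\xi : \xi<\omega_2\rangle$ with each $\dot{\mathbb{P}}_\xi$ the canonical name for $\mathbb{P}$. For $\alpha<\omega_1$ set $B_\alpha=[0,\alpha)\cup[\omega_1,\omega_2)$, let $\mathbb{Q}_{B_\alpha}$ be the restriction of the iteration to the coordinates lying in $B_\alpha$, and put $G_\alpha=\Sym(\N)\cap M^{\mathbb{Q}_{B_\alpha}}$. Each $G_\alpha$ is clearly a subgroup of $\Sym(\N)$, and they form a chain since $\alpha<\beta$ gives $B_\alpha\subseteq B_\beta$, hence $M^{\mathbb{Q}_{B_\alpha}}\subseteq M^{\mathbb{Q}_{B_\beta}}$. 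Each $G_\alpha$ is \emph{proper}: since $\alpha\notin B_\alpha$ and $[0,\alpha+1)\cap B_\alpha=[0,\alpha)$, the real $r_\alpha$ adjoined by $\mathbb{P}$ at coordinate $\alpha$ lies in $M^{\mathbb{Q}_{[0,\alpha+1)}}\setminus M^{\mathbb{Q}_{[0,\alpha)}}$, while the intersection identity $M^{\mathbb{Q}_{B_\alpha}}\cap M^{\mathbb{Q}_{[0,\alpha+1)}}=M^{\mathbb{Q}_{[0,\alpha)}}$ for these restrictions forces $r_\alpha\notin M^{\mathbb{Q}_{B_\alpha}}$, so coding $r_\alpha$ as a permutation of $\N$ yields an element of $\Sym(\N)\setminus G_\alpha$. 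Finally $\bigcup_{\alpha<\omega_1}G_\alpha=\Sym(\N)$: any $g\in\Sym(\N)$ is a real of $M^{\mathbb{Q}}$, so $g\in M^{\mathbb{Q}_C}$ for some countable $C\subseteq\omega_2$; then $C\cap\omega_1\subseteq\alpha$ for some $\alpha<\omega_1$, whence $C\subseteq B_\alpha$ and $g\in G_\alpha$.

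The work --- and the only place the Suslin hypothesis enters --- is in justifying the machinery just used: that for a Suslin ccc iterand a finite-support iteration can be restricted to an arbitrary subset of its index set, yielding a complete suborder with an absolutely computed intermediate model; that these restrictions commute, so $M^{\mathbb{Q}_B}\cap M^{\mathbb{Q}_{B'}}=M^{\mathbb{Q}_{B\cap B'}}$; and that every real of $M^{\mathbb{Q}}$ lies in $M^{\mathbb{Q}_C}$ for some countable $C\subseteq\omega_2$. These are standard ingredients of the theory of iterated Suslin (absolutely definable) forcing, but they should be set up carefully; without them the naive intermediate models $M^{\mathbb{Q}_\gamma}$, $\gamma<\omega_2$, only give a chain of length $\omega_2$. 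The sole combinatorial point is the choice $B_\alpha=[0,\alpha)\cup[\omega_1,\omega_2)$: the $B_\alpha$ increase, each omits coordinates (keeping $G_\alpha$ proper), yet every countable $C\subseteq\omega_2$ fits inside some $B_\alpha$ because the omitted part $[\alpha,\omega_1)$ always lies below $\omega_1$. I expect the main obstacle to be exactly this Suslin-iteration bookkeeping; note that the argument is genuinely forcing-theoretic and does not pass through a cardinal characteristic --- e.g.\ in the Hechler model $\mathfrak{d}=\aleph_2$, so the inequality $c(\Sym(\N))\le\mathfrak{d}$ recorded after Theorem~\ref{thm:1.7} says nothing there.
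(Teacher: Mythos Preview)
The paper does not supply its own proof of Theorem~\ref{thm:1.9}; the result is only quoted, with a pointer to~\cite{Z2}. So there is nothing in the paper to compare your argument against line by line.

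On its own terms your outline is the right one and matches the standard proof. The chain $G_\alpha=\Sym(\N)\cap M^{\mathbb{Q}_{B_\alpha}}$ with $B_\alpha=[0,\alpha)\cup[\omega_1,\omega_2)$ is exactly the device used in the literature, and you have correctly isolated the three facts about Suslin ccc iterations that carry the weight: restriction to an arbitrary $B\subseteq\omega_2$ yields a complete suborder $\mathbb{Q}_B\leqc\mathbb{Q}$; these restrictions commute, giving $M^{\mathbb{Q}_B}\cap M^{\mathbb{Q}_{B'}}=M^{\mathbb{Q}_{B\cap B'}}$; and every real of $M^{\mathbb{Q}}$ already lies in some $M^{\mathbb{Q}_C}$ with $C$ countable. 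You are also right that none of this is automatic for arbitrary ccc iterands and that the Suslin (absolutely $\mathbf{\Sigma^1_1}$-definable) hypothesis is precisely what makes the iteration behave like a product for these purposes; the relevant machinery is developed in the Judah--Shelah work on Souslin forcing and reproduced in~\cite{Z2}. Your closing remark about the Hechler model is apt: there $\mathfrak{d}=\aleph_2$, so the bound $c(\Sym(\N))\le\mathfrak{d}$ is useless and one really needs the subset-restriction argument.

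The one place to be careful when you write this up is the properness step. You want $r_\alpha\notin M^{\mathbb{Q}_{B_\alpha}}$, and you deduce this from $r_\alpha\in M^{\mathbb{Q}_{\alpha+1}}\setminus M^{\mathbb{Q}_\alpha}$ together with the intersection identity. That is fine, but note that ``$\mathbb{P}$ adjoins reals'' must be read as: over \emph{every} intermediate model $M^{\mathbb{Q}_\alpha}$, the next iterand adds a new real. For a Suslin ccc forcing this is unproblematic (the statement ``$\mathbb{P}$ adds a real'' is suitably absolute), but it is worth saying explicitly, since otherwise the hypothesis only literally speaks about the ground model.
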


On the other hand, the following is a theorem of $\ZFC$ (see, e.g.
\cite{BSZ}).

\begin{theorem}
  \label{thm:1.10}
  $\non(\mathcal{M}) \leq \mathfrak{a}_p, \mathfrak{a}_g$.
\end{theorem}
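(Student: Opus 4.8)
The plan, after unfolding the definitions, is to show: if $\mathcal{A}=\{f_\alpha:\alpha<\kappa\}\subseteq\Sym(\N)$ is a maximal almost disjoint family of permutations — resp.\ if $G\le\Sym(\N)$ is a maximal cofinitary group with $|G|=\kappa$ — then $\kappa\ge\non(\mathcal{M})$. I would invoke a combinatorial characterization of $\non(\mathcal{M})$ of ``matching'' type: $\non(\mathcal{M})$ is the least size of a family $D$ of ``chopped reals'' (pairs $(x,\bar I)$, $x\in\BS$, $\bar I$ an interval partition of $\N$) such that every chopped real $(y,\bar J)$ is matched by some member of $D$, where $(x,\bar I)$ matches $(y,\bar J)$ iff infinitely many intervals of $\bar J$ contain an interval of $\bar I$ on which $x=y$. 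A family of permutations itself cannot be matching-covering — a permutation hits a constant function only finitely often — so real work is needed to manufacture such a $D$ of size $\le\kappa$ out of $\mathcal{A}$.

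The construction. To each chopped real $(y,\bar J)$ I would assign a permutation $h=h_{(y,\bar J)}\in\Sym(\N)$, built as an increasing union $\bigcup_k p_k$ of finite partial injections with $[0,k)\subseteq\dom p_k\cap\ran p_k$ (so $h$ is automatically a bijection), in which each interval $J\in\bar J$ (cofinitely many of them) has a distinguished short subinterval $I_J$, and $h$ is designed so that knowing $h$ at any one point of the part of $\N$ that $h$ ``devotes to'' $J$ already determines $y\restrict I_J$. For the group case one asks for more: knowing $h$ along any finite orbit of a reduced word $a_0h^{k_1}a_1\cdots a_{r-1}h^{k_r}a_r$ with $a_i\in G$ that visits $J$'s part determines $y\restrict I_J$ — that is, the relevant values of $h$ can be reconstructed from such a configuration together with the word-shape and the coefficient tuple. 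The slack to do this comes from the fact that distinct elements of $\mathcal{A}$, and distinct elements of $G$, agree only finitely often, so on all but finitely many of $h$'s ``blocks'' an external permutation can interact in only few ways.

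Closing the argument. Assume $\mathcal{A}$ (resp.\ $G$) is maximal and fix $(y,\bar J)$; form $h=h_{(y,\bar J)}$. By maximality $h$ is not a witness to non-maximality, so some $f_\alpha$ agrees with $h$ infinitely often — resp.\ $h\in G$, or some reduced word $w$ with coefficient tuple $\vec a$ over $G$ has $w(h)\ne\id$ with infinitely many fixed points. Since the blocks of $h$ relative to $\bar J$ are finite, these infinitely many agreements (resp.\ fixed points) meet infinitely many intervals $J\in\bar J$, and by the design each such $J$ yields $y\restrict I_J$. The decoding uses only the ``catching datum'' — the index $\alpha$; or the pair $(w,\vec a)$; or (in the degenerate case $h\in G$) the index $\beta$ with $h=f_\beta$ — so it produces a chopped real $r$ depending only on that datum and matching $(y,\bar J)$. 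Taking one such $r$ per $f_\alpha$, one per element of $G$ (for the case $h\in G$), and one per pair $(w,\vec a)$ — altogether at most $\kappa$, as $\kappa$ is infinite — yields a matching-covering family of size $\le\kappa$, hence $\kappa\ge\non(\mathcal{M})$.

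The decisive difficulty is the design of $h$, and especially its robust form in the group case. Pervasive, decodable coding of $y$ into a single bijection is subtle: the coded part of a block hits only a thin slice of its target, so the completion to a bijection must be arranged not to spoil decodability; and for maximal cofinitary groups the coding must withstand arbitrary words, whose fixed points route through a block by many interleaved applications of $h^{\pm1}$ and elements of $G$, for arbitrary such elements. I expect this to be handled by a finite-injury-style recursion over the countably many word-shapes at each stage, treating the $G$-coefficients uniformly rather than one tuple at a time, and reserving at every stage enough freedom both to keep $h$ a genuine bijection and to keep the coding decodable (the latter being what makes the degenerate alternative $h=f_\beta$ manageable). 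Granting this, the remainder is the bookkeeping above together with the routine fact that infinitely many agreements or fixed points, lying in finite blocks, must meet infinitely many blocks.
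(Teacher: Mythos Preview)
The paper contains no proof of this theorem; it is quoted from \cite{BSZ} as a known result, so there is no in-paper argument to compare against.

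Your plan is along the right lines and is close in spirit to the original argument. You are correct that the naive route fails --- a family of permutations cannot itself witness the infinite-agreement characterization of $\non(\mathcal{M})$, since constant functions evade every permutation --- so an encoding layer is required, and packing each chopped real $(y,\bar J)$ into a permutation $h$ whose values redundantly carry $y\restrict I_J$, then using maximality to find a catching datum that decodes a matching chopped real, is exactly the right kind of move. What is missing is the construction itself: the entire weight of the argument rests on designing $h$ so that the decoding depends only on the catching datum (not on $h$), and you leave this as ``I expect this to be handled by a finite-injury-style recursion.'' For $\mathfrak{a}_p$ this is not too hard --- a fixed pairing function lets each value $h(n)$ record both the block index and $y\restrict I_J$, with bookkeeping to keep $h$ bijective; the chopped real attached to $f_\alpha$ is then read straight off the values of $f_\alpha$. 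For $\mathfrak{a}_g$ the design genuinely is delicate, since a fixed point of a word $w(h)$ gives access to $h$ only through interleaved applications of $h^{\pm1}$ and group elements, and the coding must be robust enough that the word shape and the coefficient tuple alone determine the decoded block data. Until that construction is written down and verified, what you have is a sound outline rather than a proof.
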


As a corollary of Theorems \ref{thm:1.9} and \ref{thm:1.10}, we know
the following.

\begin{corollary}
  It is consistent with $\ZFC$ that $c(\Sym(\N)) = \aleph_1 <
  \mathfrak{a}_p = \mathfrak{a}_g = 2^{\aleph_0} = \aleph_2$.
\end{corollary}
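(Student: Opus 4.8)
The plan is to produce a single c.c.c.\ forcing extension in which all four equalities hold at once, using Theorem~\ref{thm:1.9} to keep $c(\Sym(\N))$ down at $\aleph_1$ while using Theorem~\ref{thm:1.10} to push $\mathfrak{a}_p$ and $\mathfrak{a}_g$ up to the continuum. The one substantive choice is which Suslin c.c.c.\ forcing $\mathbb{P}$ to feed into Theorem~\ref{thm:1.9}. The naive choice, Cohen forcing, is useless here, because in the Cohen model $\non(\mathcal{M}) = \aleph_1$, so Theorem~\ref{thm:1.10} would give nothing. Instead I would take $\mathbb{P}$ to be \emph{Hechler (dominating) forcing}: conditions are pairs $(s,f)$ where $s$ is a finite sequence of natural numbers, $f \in \BS$, and $s$ is an initial segment of $f$, ordered so that stronger conditions extend $s$ and dominate $f$ everywhere. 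This forcing is $\sigma$-centered (hence c.c.c.), it is Suslin (its conditions are coded by reals and $\leq_{\mathbb{P}}$, $\bot_{\mathbb{P}}$ are Borel relations), and it adjoins a dominating real.

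Fix $M \models \ZFC + \GCH$ and let $\mathbb{Q}$ be the finite support iteration of $\mathbb{P}$ of length $\aleph_2$. I record three facts about $M^{\mathbb{Q}}$. (a) $2^{\aleph_0} = \aleph_2$: the iteration $\mathbb{Q}$ is c.c.c.\ of size $\leq \aleph_2$, so a standard nice‑names count together with $\aleph_2^{\aleph_0} = \aleph_2$ (which holds in $M$ by $\GCH$) gives $2^{\aleph_0} \leq \aleph_2$, while each stage of the iteration adjoins a fresh real so $2^{\aleph_0} \geq \aleph_2$. (b) $\mathfrak{b} = \aleph_2$: since $\cf(\aleph_2) > \aleph_1$ and the iteration is c.c.c.\ with finite supports, every family of $\aleph_1$ reals of $M^{\mathbb{Q}}$ already lies in some intermediate extension $M^{\mathbb{Q}_\alpha}$ with $\alpha < \aleph_2$, and the dominating real added at stage $\alpha$ then dominates all of them; hence no family of size $\aleph_1$ is unbounded. (c) $c(\Sym(\N)) = \aleph_1$: this is immediate from Theorem~\ref{thm:1.9}, since $\mathbb{P}$ is a Suslin c.c.c.\ forcing which adjoins reals and $\mathbb{Q}$ is its finite support iteration of length $\aleph_2$.

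To finish, recall the Cichoń-diagram inequality $\mathfrak{b} \leq \non(\mathcal{M})$; combined with (a) and (b) this gives $M^{\mathbb{Q}} \models \non(\mathcal{M}) = \aleph_2$. By Theorem~\ref{thm:1.10} we then get $\mathfrak{a}_p, \mathfrak{a}_g \geq \aleph_2$ in $M^{\mathbb{Q}}$, and the reverse bound $\mathfrak{a}_p, \mathfrak{a}_g \leq 2^{\aleph_0} = \aleph_2$ is trivial, since any maximal almost disjoint family of permutations and any maximal cofinitary group is a subset of $\Sym(\N) \subseteq \BS$, a set of cardinality $2^{\aleph_0}$. Therefore $M^{\mathbb{Q}} \models c(\Sym(\N)) = \aleph_1 < \mathfrak{a}_p = \mathfrak{a}_g = 2^{\aleph_0} = \aleph_2$, and since $M^{\mathbb{Q}}$ is a model of $\ZFC$ (and a model of $\ZFC + \GCH$ exists granting that $\ZFC$ is consistent, e.g.\ $L$), this is exactly the asserted consistency.

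I expect the only place where genuine care is needed to be the choice of $\mathbb{P}$, i.e.\ arranging that the iterand in Theorem~\ref{thm:1.9} is such that the iteration drives $\non(\mathcal{M})$ (equivalently, here, $\mathfrak{b}$ or $\add(\mathcal{M})$) up to the continuum; the obvious Cohen choice fails this, so one must work with a real-adjoining Suslin c.c.c.\ forcing — Hechler forcing being the simplest — that has this effect. Once that is in place, the rest is routine bookkeeping with standard cardinal-characteristic facts.
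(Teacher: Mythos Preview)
Your argument is correct and follows the same overall strategy as the paper: pick a Suslin c.c.c.\ forcing that adjoins reals so that Theorem~\ref{thm:1.9} pins $c(\Sym(\N))$ at $\aleph_1$, while choosing the iterand so that $\non(\mathcal{M})$ is pushed up to the continuum and Theorem~\ref{thm:1.10} finishes the job. The difference is only in the choice of iterand. The paper uses \emph{random} forcing and argues directly that in a random extension the set of ground model reals is meager, so any set of $\aleph_1$ reals (which appears at an intermediate stage by c.c.c.) is meager, giving $\non(\mathcal{M}) = \aleph_2$. You instead use \emph{Hechler} forcing and route through $\mathfrak{b}$ via the Cicho\'n inequality $\mathfrak{b} \leq \non(\mathcal{M})$. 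Both routes are standard; note that they are not interchangeable at the level of the auxiliary invariant, since random forcing is $\omega^\omega$-bounding and leaves $\mathfrak{b} = \aleph_1$, so your $\mathfrak{b}$-argument genuinely requires Hechler (or something similar), while the paper's meagerness argument is tailored to random reals. Your observation that Cohen forcing fails here is exactly the point both approaches must address.
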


\begin{proof}
  Iteratively add $\aleph_2$ random reals with finite support to a
  ground model $M \models \ZFC + \GCH$.  (That $\non(\mathcal{M}) =
  2^{\aleph_0}$ follows from the fact that any small set of reals
  already appears at some intermediate stage, and that in a random
  extension the set of ground model reals is meager, see \cite[Theorem
  3.20]{KK84}.)
\end{proof} 

The obvious question left to answer is whether $\ZFC \vdash
c(\Sym(\N)) \leq \mathfrak{a}_p, \mathfrak{a}_g$.  The following
theorem shows that this does not hold.

\begin{theorem}[\cite{KY}]
  \label{theoremrelcof}
  \label{theorem intro KZ}
  It is consistent with $\ZFC$ that $\mathfrak{a}_p = \mathfrak{a}_g <
  c(\Sym(\N))$.
\end{theorem}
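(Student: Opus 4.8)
The plan is to produce, over a model $M\models\ZFC+\GCH$, a finite support ccc iteration $\mathbb{P}=\langle\mathbb{P}_\alpha,\dot{\mathbb{Q}}_\alpha:\alpha<\omega_2\rangle$ such that in $M^{\mathbb{P}}$ we have $2^{\aleph_0}=\aleph_2$, there is a maximal cofinitary group and a maximal almost disjoint family of permutations each of cardinality $\aleph_1$ (so $\mathfrak{a}_p=\mathfrak{a}_g=\aleph_1$), and every increasing $\omega_1$-chain of proper subgroups of $\Sym(\N)$ fails to cover $\Sym(\N)$, so that $c(\Sym(\N))\ge\aleph_2$; since $c(\Sym(\N))\le\mathfrak{d}\le 2^{\aleph_0}=\aleph_2$ this forces $c(\Sym(\N))=\aleph_2$, and hence $\mathfrak{a}_p=\mathfrak{a}_g=\aleph_1<\aleph_2=c(\Sym(\N))$. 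The iterands are ccc (in fact Knaster) posets of finite partial injections carrying finitely many forbidden patterns, and the iteration is steered by a bookkeeping function of length $\omega_2$.

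The iterands come in two flavours. \emph{Building} iterands are used cofinally within the first $\omega_1$ coordinates only: at those stages one adjoins generic permutations in the style of the posets behind Theorem~\ref{yi1.5} (and its analogue for maximal almost disjoint families of permutations), so that the permutations added there generate a cofinitary group $G$ with $|G|=\aleph_1$ and, at a disjoint set of coordinates, form an almost disjoint family of permutations $\A$ with $|\A|=\aleph_1$. \emph{Spoiling} iterands occupy all the remaining coordinates (hence occur cofinally in $\omega_2$): given, through the bookkeeping, a $\mathbb{P}_\alpha$-name for an increasing $\omega_1$-sequence $\langle\dot H_\xi:\xi<\omega_1\rangle$ of proper subgroups of $\Sym(\N)$ together with names for witnesses $\dot f_\xi\in\Sym(\N)\setminus\dot H_\xi$, the poset $\dot{\mathbb{Q}}_\alpha$ adds a permutation $g_\alpha$ with $g_\alpha\notin\dot H_\xi$ for every $\xi<\omega_1$. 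The decisive extra feature is that $\dot{\mathbb{Q}}_\alpha$ is built so that $g_\alpha$ is \emph{also} forced to agree at infinitely many points with prescribed members of $G$ and of $\A$; this is what stops any spoiling step from introducing a permutation almost disjoint from all of $G$ or all of $\A$.

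The verification then has four parts. (1) $\mathbb{P}$ is ccc, so cardinals and cofinalities are preserved; since the first $\omega_1$ stages form a ccc iteration of length $\omega_1$ over a model of $\GCH$ we get $|G|=|\A|=\aleph_1$, and the whole iteration (of size $\aleph_2$) gives $2^{\aleph_0}=\aleph_2$ in $M^{\mathbb{P}}$. (2) \emph{Preservation}: for every $\mathbb{P}$-name $\dot g$ for an element of $\Sym(\N)$ it is forced that $\langle G\cup\{\dot g\}\rangle$ has a nonidentity element with infinitely many fixed points and that $\dot g$ agrees infinitely often with some member of $\A$; by ccc $\dot g$ is a $\mathbb{P}_\alpha$-name for some $\alpha<\omega_2$, and a density argument — at a later building coordinate forcing a word in the generators and $\dot g$ to have infinitely many fixed points, at a later spoiling coordinate invoking the built-in agreement with a member of $\A$ — closes this exactly as in the preservation arguments of \cite{Z} and \cite{BK}. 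Hence $G$ is a maximal cofinitary group and $\A$ a maximal almost disjoint family of permutations in $M^{\mathbb{P}}$, so $\mathfrak{a}_g=\mathfrak{a}_p=\aleph_1$. (3) $c(\Sym(\N))\ge\aleph_2$: an $\omega_1$-chain $\langle H_\xi\rangle$ of proper subgroups covering $\Sym(\N)$ in $M^{\mathbb{P}}$, with witnesses $f_\xi\notin H_\xi$, is caught by the bookkeeping at some spoiling stage $\alpha$ — here one uses ccc and $\cf(\omega_2)>\omega_1$ to arrange that the relevant names may be taken to appear before a stage $<\omega_2$ — whence $g_\alpha\notin\bigcup_\xi H_\xi$, a contradiction. (4) $c(\Sym(\N))\le\aleph_2$ is automatic as noted.

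The main obstacle is the combined design of the spoiling poset $\dot{\mathbb{Q}}_\alpha$. On one hand it must force the generic $g_\alpha$ out of \emph{all} $\omega_1$ of the given proper subgroups $H_\xi$ simultaneously — a global requirement that has to be reduced, in the style of Macpherson and Neumann behind Theorem~\ref{thm:1.6}, to moves on finite partial injections, with the subgroups entering only through the finitely many witnesses relevant to a given condition. On the other hand it must at the same time force $g_\alpha$ to meet prescribed members of $G$ and of $\A$ infinitely often, so that the maximality secured in the first $\omega_1$ stages is not undone. Proving that a finite partial injection can always be extended to serve both purposes, that the resulting poset is Knaster, and — the point most in need of care — that the bookkeeping can be threaded so that \emph{every} $\omega_1$-chain of proper subgroups (whose naming data need not obviously reflect to an intermediate model) and every potential almost-disjoint extension of $G$ and of $\A$ is handled, is the technical heart of the argument. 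With this in hand, $M^{\mathbb{P}}$ witnesses $\mathfrak{a}_p=\mathfrak{a}_g=\aleph_1<\aleph_2=c(\Sym(\N))$, the desired consistency.
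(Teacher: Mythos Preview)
Your approach is fundamentally different from the paper's, and the preservation step contains a genuine gap.

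The paper does not build a tailored iteration at all. It works in the Miller model: by a result of Moore--Hru\v{s}\'ak--D\v{z}amonja the parametrized diamond $\Diamond(\BS,{=^\infty})$ holds there, and the paper proves directly that this principle implies $\mathfrak{a}_p=\aleph_1$ and $\mathfrak{a}_g=\aleph_1$, by using the $\Diamond$-sequence to steer a recursive construction of a maximal a.d.\ family of permutations and of a maximal cofinitary group of size $\aleph_1$. For the other inequality one simply quotes that the groupwise density number $\mathfrak{g}$ equals $\aleph_2$ in the Miller model and that $\mathfrak{g}\le c(\Sym(\N))$. No bookkeeping, no spoiling poset, and --- crucially --- no preservation-of-maximality argument under further forcing is needed.

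In your plan the preservation argument (2) does not go through as written. You place all building iterands in the first $\omega_1$ coordinates, so $G$ and $\A$ are completely determined in $M^{\mathbb{P}_{\omega_1}}$; yet for an arbitrary $\mathbb{P}$-name $\dot g$ first appearing at some stage $\alpha\ge\omega_1$ you invoke ``a later building coordinate'' --- and there is none. Likewise the ``built-in agreement with a member of $\A$'' at a spoiling stage constrains only the spoiling generic $g_\alpha$, not an arbitrary $\dot g$; it says nothing about the many other permutations the tail of the iteration adds. What you actually need is a preservation theorem: that each spoiling iterand (and the length-$\omega_2$ tail iteration) preserves ``$G$ is maximal cofinitary'' and ``$\A$ is maximal a.d.''. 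That is not automatic for a nontrivial ccc poset, and nothing in your description of the spoiling poset supplies it. A separate and, as you yourself flag, unresolved difficulty is reflecting an $\omega_1$-chain of proper subgroups of $\Sym(\N)$ --- objects of size up to $\aleph_2$ in the final model --- down to a $\mathbb{P}_\alpha$-name to which a small ccc poset can respond; this is exactly why the known routes to large $c(\Sym(\N))$ go through invariants like $\mathfrak{g}$ rather than through direct chain-killing.
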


Having shown that $\mathfrak{a}_g$ can be rather small, we turn to
showing it can be very large.

The cardinals $\add(\mathcal{N})$, $\non(\mathcal{M})$, and
$\cof(\mathcal{N})$, in Definition \ref{def part Cichon}, are some of
the cardinals in Cicho\'n's diagram.  For these it is known (see
\cite[Lemma 1.3.2]{BJ}) that $\add(\mathcal{N}) \leq \non(\mathcal{M})
\leq \cof(\mathcal{N})$ (in fact $\add(\mathcal{N})$ is the smallest
cardinal in Cicho\'n's diagram, and $\cof(\mathcal{N})$ the largest).
We'll construct, in a model of $\ZFC + \CH$, for any two cardinals
$\lambda > \mu \geq \aleph_1$ a c.c.c. notion of forcing, using
Shelah's recent technique of template forcing (\cite{Sh}), such that in
the forcing extension all cardinals in Cicho\'n's diagram are equal to
$\mu$, and $\mathfrak{a}_g = \lambda = 2^{\aleph_0}$ --- that is,

\begin{theorem}
  It is consistent with $\ZFC$ that $\add(\mathcal{N}) = \cof(\mathcal{N}) <
  \mathfrak{a}_g = 2^{\aleph_0}$.
\end{theorem}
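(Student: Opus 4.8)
The plan is to build the forcing extension using Shelah's template forcing (\cite{Sh}) so that all cardinals in Cicho\'n's diagram stay equal to $\mu$ while $\mathfrak{a}_g$ is pushed up to $\lambda = 2^{\aleph_0}$. The lower bound half is essentially free: since $\add(\mathcal{N}) \leq \non(\mathcal{M}) \leq \cof(\mathcal{N})$ and, by Theorem \ref{thm:1.10}, $\non(\mathcal{M}) \leq \mathfrak{a}_g$, once we arrange $\add(\mathcal{N}) = \cof(\mathcal{N}) = \mu$ and $2^{\aleph_0} = \lambda > \mu$ in the extension, it only remains to force that there is no maximal cofinitary group of size $< \lambda$, i.e. that $\mathfrak{a}_g = \lambda$. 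So the real content is the upper bound: every cofinitary group of cardinality strictly less than $\lambda$ must be extendable.

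First I would set up a template-forcing iteration over a template (a linearly ordered index set together with a suitable ideal of ``allowed'' subsets) of length $\lambda$, whose building blocks are of two kinds. One kind is a small family of forcings (finitely-many-reals posets such as amoeba-for-category/measure or Hechler-type posets, combined appropriately) arranged so that each of $\add(\mathcal{N})$, $\cov(\mathcal{N})$, $\non(\mathcal{M})$, $\cof(\mathcal{N})$, etc., gets collapsed to $\mu$ — here one reuses the standard template-forcing analysis showing these invariants end up equal to $\mu$ because cofinally often one adds the relevant generic real and any witnessing family of size $<\mu$ appears at an initial segment. The second kind of building block is the generic-extension poset for a cofinitary group: given a cofinitary group $H$ appearing at an intermediate stage, this is a finite-condition forcing $\mathbb{P}_H$ adding a single cofinitary permutation $g$ such that $\langle H, g\rangle$ is again a cofinitary group, exactly as in the construction behind Theorems \ref{yi1.4} and \ref{yi1.5}; one checks this poset is c.c.c.\ (indeed Knaster) and, crucially, is absorbed into the template machinery (it has the right ``finite-support/finitely-many-reals'' form and is Suslin-like, so the template iteration of such posets is well-defined and c.c.c.).

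Then I would run the standard bookkeeping: along the template of length $\lambda$, enumerate (using $\CH$ in the ground model and a $\lambda$-sized bookkeeping function) all potential cofinitary groups of size $<\lambda$ that can appear in the final model; each such group, being of size $<\lambda$, is (by a reflection/chain-condition argument for template forcings) already present in the submodel given by some subset of the index set of size $<\lambda$ that lies in the template's ideal, hence ``generated below'' a genuine intermediate stage, and at a later stage the bookkeeping applies $\mathbb{P}_H$ to kill its maximality. Iterating this $\lambda$ times, with the Cicho\'n-collapsing blocks interleaved cofinally, yields $2^{\aleph_0} = \lambda$, all Cicho\'n cardinals equal to $\mu$, and no maximal cofinitary group of cardinality $<\lambda$, so $\mathfrak{a}_g = \lambda$, giving $\add(\mathcal{N}) = \cof(\mathcal{N}) = \mu < \lambda = \mathfrak{a}_g = 2^{\aleph_0}$.

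The main obstacle I expect is the interaction of the template structure with the two jobs simultaneously: one must verify that the template/ideal can be chosen so that (a) the subforcings responsible for keeping Cicho\'n small are still ``generic enough'' despite the presence of the group-extending posets — i.e., the group posets do not accidentally blow up $\cof(\mathcal{N})$ or $\non(\mathcal{M})$ past $\mu$ — and (b) every cofinitary group of size $<\lambda$ in the extension reflects to an intermediate model sitting on an index set in the ideal, which is the delicate ``reading names on small subsets'' lemma for template forcing. Making the bookkeeping catch all such groups while respecting the partial (non-linear) order of dependencies in the template, and proving the whole template iteration is c.c.c.\ with the desired continuum value, is where the technical weight lies; the rest is a recombination of Stepr\=ans--Zhang-style cofinitary-extension arguments with Shelah's template apparatus.
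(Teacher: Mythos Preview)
Your proposal diverges from the paper's argument in a crucial way, and the divergence is exactly where the real difficulty lies. The paper does \emph{not} interleave any cofinitary-extension forcings $\mathbb{P}_H$ and does \emph{not} use bookkeeping to catch and kill candidate maximal cofinitary groups. The entire iteration is localization forcing $\loc$ along a fixed template $(L,\I)$ of size $\lambda$; the Cicho\'n control ($\add(\mathcal{N})=\cof(\mathcal{N})=\mu$) comes from the slaloms added at a cofinal $\mu$-chain in $L$, and the fact that $\mathfrak{a}_g=\lambda$ is obtained by an \emph{isomorphism-of-names} argument. Given a name for a cofinitary group $\dot G=\{\dot g_\alpha:\alpha<\kappa\}$ with $\mu\le\kappa<\lambda$, each $\dot g_\alpha$ lives on a countable tree $B^\alpha\subseteq L$; using the combinatorics built into the template (the $\lambda^*$-copies and the $S^\alpha$-partition) one finds $\aleph_1$ many $B^\alpha$'s that are pairwise isomorphic in a sense compatible with $\I$, and a further ``new'' tree $B^\kappa$ isomorphic to all of them. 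The induced isomorphism of posets transports some $\dot g_\alpha$ to a new permutation $\dot g_\kappa$, and the innocuous-extension lemma shows $\langle\dot G,\dot g_\kappa\rangle$ is still cofinitary. No enumeration of groups is needed.

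Your bookkeeping strategy has a genuine gap: a cofinitary group of size $\kappa$ in the extension is determined by $\kappa$ names for reals, so there are up to $\lambda^\kappa$ many such groups; the paper's hypotheses are only $\CH$ in the ground model and $\lambda=\lambda^\omega$, which does not give $\lambda^\kappa=\lambda$ for $\mu\le\kappa<\lambda$. Thus a $\lambda$-length bookkeeping cannot in general list all candidate groups, and this is precisely why Shelah's template method with the isomorphism-of-names argument is used instead of an iteration with diamond-style anticipation. Your point (b) correctly identifies that reading names on small subsets is delicate, but the obstacle is worse than reflection: even granting reflection, there are simply too many groups to enumerate. A secondary issue is that grafting group-extension posets $\mathbb{P}_H$ (which depend on an uncountable parameter $H$) onto the template would require verifying the whole completeness-of-embeddings and innocuous-extension machinery for a non-uniform family of iterands, which the paper avoids entirely by forcing only with $\loc$.
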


\subsection{Definability}

Just as for the other notions of almost disjointness, there is the
question of how definable a maximal cofinitary group can be.  Su Gao
and Yi Zhang proved the following

\begin{theorem}[\cite{SGYZ}]
  The axiom of constructibility implies that there exists a maximal
  cofinitary group with a coanalytic generating set.
\end{theorem}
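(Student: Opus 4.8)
The plan is to run a transfinite recursion of length $\omega_1$ inside $L$, building an increasing chain $\langle G_\alpha : \alpha<\omega_1\rangle$ of countable cofinitary groups together with their successive generators $h_\alpha$, and then to show that the resulting generating set $A=\{h_\alpha : \alpha<\omega_1\}$ can be made (even lightface) $\Pi^1_1$, while $G=\langle A\rangle=\bigcup_{\alpha<\omega_1}G_\alpha$ is a maximal cofinitary group. We aim only for a coanalytic \emph{generating set}, not a coanalytic group, because membership in $\langle A\rangle$ unwinds to ``$\exists$ a finite reduced word over $A\cup A^{-1}$ evaluating to $x$'', an existential real quantifier in front of a $\Pi^1_1$ matrix, which a priori only gives $\Sigma^1_2$; keeping the target a generating set sidesteps this.

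Fix the canonical wellordering $<_L$ of the reals and let $\langle g_\alpha : \alpha<\omega_1\rangle$ enumerate $\Sym(\N)$ in order type $\omega_1$, with $g_\alpha$ the $<_L$-least permutation not already listed. Put $G_0=\{\Id\}$ and take unions at limits; the union is again a cofinitary group, since any nonidentity element of it lies in some $G_\alpha$. At stage $\alpha+1$, if $g_\alpha\notin G_\alpha$ and $\langle G_\alpha,g_\alpha\rangle$ is cofinitary, we \emph{block} $g_\alpha$: let $h_\alpha$ be the $<_L$-least permutation $h\notin\{g_\alpha,g_\alpha^{-1}\}$ with $\langle G_\alpha,h\rangle$ cofinitary and $hg_\alpha$ having infinitely many fixed points, and set $G_{\alpha+1}=\langle G_\alpha,h_\alpha\rangle$; note $h_\alpha\notin G_\alpha$ automatically, since otherwise $h_\alpha g_\alpha\in\langle G_\alpha,g_\alpha\rangle$ would be a nonidentity permutation with infinitely many fixed points. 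In the remaining case (either $g_\alpha\in G_\alpha$, or $\langle G_\alpha,g_\alpha\rangle$ is already not cofinitary, so $g_\alpha$ is permanently settled) we just let $h_\alpha$ be the $<_L$-least permutation outside $G_\alpha$ with $\langle G_\alpha,h_\alpha\rangle$ cofinitary, so the chain is strictly increasing and $\alpha\mapsto h_\alpha$ injective. Then $G=\bigcup_\alpha G_\alpha$ is cofinitary, $\langle A\rangle=G$, and $G$ is maximal: for $g\notin G$, at the stage $\alpha$ with $g=g_\alpha$ we have $g\notin G_\alpha$, so either $\langle G_\alpha,g\rangle$ was not cofinitary --- hence neither is $\langle G,g\rangle\supseteq\langle G_\alpha,g\rangle$ --- or we blocked $g$, so $\langle G,g\rangle$, which contains $G_{\alpha+1}\ni h_\alpha$ and $g$, contains $h_\alpha g\neq\Id$, a permutation with infinitely many fixed points, and so is not cofinitary. (By Adeleke--Truss, $G$ is then automatically uncountable, hence of size $\aleph_1$.)

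The single combinatorial input is the \textbf{blocking lemma}: if $H$ is a countable cofinitary group and $g\in\Sym(\N)$ with $\langle H,g\rangle$ cofinitary, then there is $h\in\Sym(\N)\setminus\{g,g^{-1}\}$ with $\langle H,h\rangle$ cofinitary and $hg$ having infinitely many fixed points (and, for the remaining case, at least one $h\notin H$ with $\langle H,h\rangle$ cofinitary --- which already follows from Adeleke--Truss). I would prove this by the usual finite-approximation method: build $h=\bigcup_n p_n$ with $p_n$ finite injective partial maps, interleaving (i) forcing one more agreement $h(g(n))=n$ with $g^{-1}$ --- while deviating at one point, to keep $h$ off $g^{\pm1}$ --- and (ii) killing the least not-yet-killed potential fixed point of the next nontrivial reduced word over $H\cup\{h^{\pm1}\}$. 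Step (ii) is arrangeable because each nonidentity element of $H$ has only finitely many fixed points and each fresh occurrence of $h^{\pm1}$ in a word can be routed to a value outside the current approximation, with words $h^k$ handled by forbidding short periods. This is routine in the theory of maximal cofinitary groups; cf.\ the constructions in \cite{Z} and \cite{HSZ}.

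It remains to see that $A$ is $\Pi^1_1$, and the recursion has been set up precisely so that Miller's method from \cite{AM89} --- the one that yields a coanalytic maximal almost disjoint family, adapted in \cite{KSZ} for coanalytic very mad families --- applies: at each stage $h_\alpha$ is the $<_L$-least real satisfying a condition that, relative to codes for $G_\alpha$ and $g_\alpha$, is arithmetical, and that is computed correctly in any transitive model of a sufficiently large finite fragment of $\ZFC+(V=L)$ containing those codes (levels of $L$ compute $<_L$, the enumeration $\langle g_\beta\rangle$, and arithmetical facts about reals correctly). Using $V=L$ and the condensation lemma one then argues that ``$x\in A$'' is equivalent to the $\Pi^1_1$ statement that every countable wellfounded model of that fragment containing $x$, tall enough to have run the recursion past the stage at which $x$ could enter, computes the recursion and finds $x$ among the generators --- the point being that by absoluteness ``some such model sees $x$ enter $A$'' and ``every such model does'' coincide, while wellfoundedness is $\Pi^1_1$. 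The genuinely delicate step is this last one: the recursion must be arranged --- coding into $h_\alpha$ just enough information (the stage, or the relevant level of $L$) without reintroducing an existential real quantifier --- so that the ``$\exists$ model'' and ``$\forall$ model'' readings really agree and the outcome lands in $\Pi^1_1$ rather than merely $\Sigma^1_2$. That bookkeeping is the heart of Miller's technique, and carrying it over to the group setting, while keeping the object we make $\Pi^1_1$ a generating set so that composition and inversion do not spoil the complexity, is where the real work lies.
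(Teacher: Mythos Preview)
The paper does not prove this cited theorem itself, but it describes the Gao--Zhang approach: prove a coding lemma of the ``generic form'' (given countable cofinitary $G$ and $z\in 2^\N$, construct a new generator $g$ with $\langle G,g\rangle$ cofinitary, $g$ hitting the required $f$, and $z$ uniformly recursive in $g$), then run Miller's machinery as in Section~\ref{sect:VMAD:VisL} so that each generator $h_\alpha$ encodes the relevant level $L_{\beta_\alpha}$, and the $\Pi^1_1$ definition reads off that code. Your high-level plan matches this, and your blocking lemma is just the hitting-$f$ lemma of Section~\ref{sec:MCGintro}; no issue there.

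The gap is that your construction specifies $h_\alpha$ as the $<_L$-\emph{least} permutation satisfying the blocking condition, and only afterwards says the recursion ``must be arranged'' to code the stage into $h_\alpha$. These are incompatible: taking the $<_L$-least element leaves no freedom to encode anything. And without a code in $h_\alpha$, your proposed $\Pi^1_1$ formula---``every countable wellfounded model of enough $\ZFC+V{=}L$ containing $x$ sees $x\in A$''---can fail on genuine members of $A$: the $<_L$-least $h$ meeting a condition that mentions $g_\alpha$ and $G_\alpha$ may well have $L$-rank far below that of $g_\alpha$, so there will be levels $L_\gamma\ni h_\alpha$ with $\omega_1^{L_\gamma}$ too small to reach stage $\alpha$ of the recursion, and such $L_\gamma$ will (correctly, internally) compute $h_\alpha\notin A^{L_\gamma}$. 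The fix is precisely what the paper's Lemma~\ref{lem:coding} does for very mad families and what Gao--Zhang do here: abandon the $<_L$-least choice and explicitly \emph{construct} $h_\alpha$ so that a fixed recursive functional recovers a code for $L_{\beta_\alpha}$ from it; the $\Pi^1_1$ formula then decodes, checks wellfoundedness, rebuilds $L_{\beta_\alpha+\omega}$, and verifies the construction there (cf.\ Lemma~\ref{lem:vmad:decidemembership}). You rightly call this ``where the real work lies''---but it is essentially the whole content of the theorem beyond the routine $\CH$ construction, so an outline that defers it is not yet a proof.
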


We improved that to the following

\begin{theorem}[\cite{BKPI11}]
  The axiom of constructibility implies that there exists a coanalytic
  maximal cofinitary group.
\end{theorem}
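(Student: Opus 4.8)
The plan is to carry out a careful $L$-construction, building a maximal cofinitary group $G \le \Sym(\N)$ together with a $\mathbf{\Pi^1_1}$ definition of it, using the standard coding-into-an-initial-segment-of-$L$ technique (as in \cite{AM89} for mad families, and as in the Gao--Zhang argument \cite{SGYZ} for a coanalytic \emph{generating set}). Fix a $\Sigma_1$ well-ordering $<_L$ of $L$ and enumerate in an $\omega_1$-length list all the ``tasks'': (a) every real $f \in \Sym(\N)$, which must either be put into the group we are building or else killed by producing a group element $g$ already committed with $f \cap g$ infinite; and (b) the closure requirements forcing $G$ to be a subgroup (products and inverses of elements added so far). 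At stage $\alpha$ we will have a countable partial group $G_\alpha$, specified by a countable set of generators $a_\xi$, $\xi < \alpha$, each given as a real; the key point, as always in these arguments, is that adding one more generator to a cofinitary group while ``diagonalizing'' against a prescribed countable set of eventually-different functions can be done so that the resulting group is still cofinitary — this is exactly the amalgamation/extension lemma underlying Theorem \ref{yi1.4} and Theorem \ref{thm:1.6}-type constructions, and I would cite or reprove the finite-condition version of it (the combinatorics of freely adding a permutation with controlled fixed-point behavior, à la Adeleke--Truss).

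The steps, in order. First, isolate and state the extension lemma: given a countable cofinitary group $H$ with generating sequence $\seq a$, a real $f \notin H$, and a countable family $F \subseteq \BS$, there is a permutation $b$ such that $\langle H, b\rangle$ is cofinitary, $b$ is ``generic'' enough that $b \cap g$ is infinite for a suitable $g$ (handling task (a) for $f$), and all new group elements are eventually different from every member of $F$; moreover $b$ can be found $\Delta^1_1$ in the parameters. Second, set up the recursion inside $L$: at stage $\alpha$, let $f_\alpha$ be the $<_L$-least real not yet decided, use the extension lemma with $F$ the set of reals appearing in $L_{\gamma_\alpha}$ for an appropriate countable ordinal $\gamma_\alpha$ coding the stage, and let $a_\alpha$ be the $<_L$-least witness. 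Third — the coding step — arrange that the sequence $\langle a_\xi : \xi < \omega_1 \rangle$ is ``fast'' in the sense that $a_\alpha$ codes (via its graph) a countable ordinal $\gamma_\alpha$ and an $L$-code for the initial segment $\langle a_\xi : \xi < \alpha\rangle$, with the $\gamma_\alpha$ increasing and cofinal in $\omega_1$; this is the reflection/condensation trick that lets one replace the quantifier ``$\exists$ a construction up to $\alpha$ such that $x$ is generated'' by a statement absolute between $L_{\gamma_\alpha}$ and $L$. Fourth, verify the $\mathbf{\Pi^1_1}$ bound: $x \in G$ iff \emph{for every} countable well-founded model (equivalently, every real coding a countable ordinal $\gamma$ and the construction restricted below $\gamma$) that correctly computes an initial segment of the construction, $x$ is a word in the generators appearing there — the universal quantifier over reals coding countable well-founded structures, together with the absoluteness secured in step three, gives a $\Pi^1_1$ definition; and $V = L$ guarantees every real is eventually caught so that $G$ really is maximal.

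The main obstacle is step three together with the interaction between the \emph{algebraic} closure requirements and the coding: unlike a mad family, which is just a set of reals, a group must be closed under words, so the ``element'' witnessing that a bad real $f$ is killed, and the ``elements'' used for coding, are not the generators themselves but arbitrary reduced words in them; one has to check that the extension lemma can simultaneously (i) keep the whole group cofinitary, (ii) kill $f_\alpha$, and (iii) make the new generator $a_\alpha$ do double duty as the carrier of the combinatorial code — and that the decoding at the end sees only the generators, not needing to already know the group, on pain of circularity. Concretely, the delicate point is showing that one can demand ``$a_\alpha$, viewed as an element of $2^\N$ on an agreed-upon infinite coordinate set disjoint from where cofinitariness is controlled, equals a prescribed real'' while still satisfying the Adeleke--Truss genericity that keeps $\langle H, a_\alpha \rangle$ cofinitary; this requires partitioning $\N$ so that the coding happens ``off to the side'' of all the fixed-point constraints. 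I expect that managing this partition uniformly across all stages, so that the final decoding predicate is genuinely $\Pi^1_1$ and not merely $\Sigma^1_2$, is where the real work lies; the rest is the by-now-routine $L$-machinery, and I would lean on the treatment in \cite{SGYZ} and \cite{AM89}, upgrading ``coanalytic generating set'' to ``coanalytic group'' exactly as Theorem \ref{thm:IntroCoanalyticN} was upgraded in the very mad case.
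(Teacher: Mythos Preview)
Your framework is right and you have correctly located the obstacle: unlike a mad family, adding a generator $g$ adds all the words $w(g)$, and the $\Pi^1_1$ predicate must classify \emph{every} such element, not just the generators. But your proposed fix --- code into $a_\alpha$ on a reserved set of coordinates, then define $G$ by ``for every correct model $M$, $x$ is a word in the generators appearing in $M$'' --- does not close the gap. Coding into the generators alone is precisely the Gao--Zhang argument, and it yields only a coanalytic \emph{generating set}: given an arbitrary $x = w(a_{\xi_1},\dots,a_{\xi_k}) \in G$, composition scrambles whatever you wrote on the reserved coordinates of the $a_{\xi_i}$, so $x$ itself carries no readable code. Your universal-over-models formula then fails outright: if $M$ is a correct initial segment of the construction not yet containing $a_{\xi_k}$, then $x$ is \emph{not} a word in the generators present in $M$, and the $\forall$-formula rejects $x$ even though $x \in G$. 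Restricting to models with $x \in M$ does not repair this, since nothing in your construction prevents the real $w(a_{\xi_1},\dots,a_{\xi_k})$ from coinciding with a real that appears in $L$ long before $a_{\xi_k}$ does.

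The paper's solution is of a different shape: one arranges that \emph{every} element $w(g)$ of the enlarged group, not just $g$, encodes the required real $z$. The paper first proves a negative result --- there is no single uniform recursive decoding $\Phi$ that reads $z$ out of every $w(g)$ --- and then circumvents it by allowing a natural-number parameter $(m,\epsilon)$. The construction begins with a fixed explicit group $G_0 = \langle h\rangle$, and the coding lemma builds $g$ so that for each word $w$ one has $z(n) = \pi_1\big(w(g)^n(m)\big)$ when $\epsilon = 0$, or $z(n) = \pi_1\big(w(g)(h\,w(g))^n(m)\big)$ when $\epsilon = 1$, the case depending on whether $w$ has a proper conjugate subword (the twist by $h$ is needed to escape the forced fixed-point structure in that case). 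Since $(m,\epsilon)$ ranges over $\N \times 2$, the final predicate
\[
\phi_{G_0}(x)\ \lor\ \exists (m,\epsilon)\,\big[\text{the model encoded by $x$ via $(m,\epsilon)$ is well-founded and certifies }x \in G\big]
\]
is still $\Pi^1_1$. The real work is exactly where you said it would be, but it lies in making the encoding recoverable from every word $w(g)$ simultaneously, not in reserving coordinates on the generator alone.
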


It is conjectured that there does not exist a Borel maximal cofinitary
group.  With the following lemma proved by Andreas Blass, this shows
that the above theorem is conjecturally the best possible.

\begin{lemma}[Andreas Blass]
  Any analytic maximal cofinitary group is Borel.
\end{lemma}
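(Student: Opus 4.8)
The plan is to prove that an analytic maximal cofinitary group $G$ must in fact be closed, hence Borel. The key observation is that any group which is an analytic subset of $\Sym(\N)$ has the property of Baire (as every analytic set does), and a subgroup of a Polish group with the property of Baire is either meager or clopen in the subspace topology inherited from its closure — more precisely, by Pettis's theorem, a non-meager subgroup with the property of Baire is open, hence closed. So the dichotomy is: $G$ is meager, or $G$ is closed. I would first dispose of the closed case quickly: if $G$ is closed in $\Sym(\N)$ it is certainly Borel, so we are done. The substance is to rule out the possibility that $G$ is a meager maximal cofinitary group — or rather, to show that even a meager analytic $G$ ends up Borel, but the cleanest route is to show a maximal cofinitary group cannot be meager at all, forcing the Pettis alternative.

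So the main step is: \textbf{no maximal cofinitary group is meager.} Here I would use a genericity/amalgamation argument. Suppose $G$ is a meager cofinitary group. The idea is to find a permutation $h \in \Sym(\N)$, eventually different from every non-identity element of $G$, such that the group $\langle G, h\rangle$ generated by $G$ and $h$ is still cofinitary; this contradicts maximality. Concretely, elements of $\langle G, h\rangle$ are represented by reduced words $w = g_0 h^{n_1} g_1 h^{n_2} \cdots h^{n_k} g_k$ with $g_i \in G$ and $n_i \in \Z \setminus \{0\}$ (for $k \geq 1$), and one must ensure each such $w$ has only finitely many fixed points. The standard technique (going back to the Adeleke/Truss-style constructions and used in many cofinitary group arguments) builds $h$ by a finite-extension recursion: at stage $s$ one has a finite partial injection $h_s$, and one extends it to kill a potential fixed point of a given word while preserving injectivity and the cofinitariness constraints. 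The countably many requirements (one per word-shape and per "eventually different from $g$" condition) can be enumerated since $G$ is analytic, hence has a $\Sigma^1_1$, in particular countably-coded, structure — but crucially we do not need $G$ countable: we need only that the extension at each step has enough freedom, which it does because $\N$ is infinite and each constraint so far involves only finitely many points.

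The main obstacle, and where I would spend the most care, is exactly the amalgamation/freeness step: showing that the finite conditions on $h$ can always be extended to defeat the next fixed-point requirement for a reduced word $w$ involving $h$ and several group elements $g_i \in G$, \emph{using} the hypothesis that $G$ is meager to locate a point where the relevant composite of the $g_i$'s and the as-yet-undefined values of $h$ can be steered off the diagonal. Meagerness of $G$ is what lets us avoid the "trap": if $G$ were comeager (e.g. all of $\Sym(\N)$, which is not cofinitary anyway, but morally) there might be no room, whereas a meager $G$ sits inside a small set and its elements cannot pin down $h$'s behavior everywhere. I would formalize this by a Baire-category argument in the space of permutations extending the current finite condition: the set of $h$ making $w$ fixed-point-free "cofinally often" is comeager, and intersecting countably many such comeager sets (plus the eventually-different conditions, which are also comeager-many since $G$ is meager) gives a valid $h$. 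Once $h$ is produced, $\langle G, h\rangle$ is a cofinitary group properly containing $G$, contradicting maximality; therefore $G$ is non-meager, and by Pettis's theorem $G$ is clopen in its closure and in particular $G = \overline{G}$ is closed, hence Borel.
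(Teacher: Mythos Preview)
The paper states this lemma without proof (it is simply attributed to Blass in the introduction), so there is no proof in the paper to compare against. I will therefore evaluate your argument on its own merits and sketch what is almost certainly the intended proof.

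Your Pettis-theorem dichotomy is set up correctly, but you resolved it on the wrong side. A cofinitary group can \emph{never} be open in $\Sym(\N)$: any neighborhood of the identity contains all permutations fixing some finite set $F$ pointwise, and among these are non-identity permutations with infinitely many fixed points. Hence, by Pettis, any cofinitary group with the Baire property---in particular any analytic one---is necessarily \emph{meager}. Your plan to rule out meagerness is therefore doomed from the start. The symptom of this shows up in your construction: for a single word such as $w(x)=g_0x$ with $g_0\in G\setminus\{\id\}$, the set of $h$ for which $g_0h$ has only finitely many fixed points is exactly the set of $h$ eventually different from $g_0^{-1}$, and that set is meager, not comeager. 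So the ``generic $h$'' you want does not exist, and the hand-wave about ``countably many requirements, one per word-shape'' hides the real problem that there are uncountably many group parameters and each individual requirement already cuts down to a meager set.

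The clean argument is descriptive-set-theoretic rather than topological-group-theoretic. By maximality,
\[
g\in G \iff \langle G,g\rangle \text{ is cofinitary} \iff \forall\,\bar g\in G^{<\omega}\ \forall\,\bar k\ \bigl[\, g_0 g^{k_0}g_1\cdots g^{k_{l-1}}g_l \text{ is cofinitary or }=\id\,\bigr].
\]
The matrix is a Borel condition on $(\bar g,g)$, and the universal quantifier ranges over the analytic set $G^{<\omega}$; thus the right-hand side is $\Pi^1_1$. Since $G$ is also $\Sigma^1_1$ by hypothesis, Suslin's theorem gives that $G$ is Borel. This is presumably Blass's argument; note it does not show $G$ is closed, only Borel, which is consistent with the still-open question (mentioned just after the lemma in the paper) of whether a closed maximal cofinitary group can exist.
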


Unfortunately the conjecture is quite far from being proved.  For
instance the following question is still open.

\begin{question}
  Does there exist a closed maximal cofinitary group?
\end{question}

There are some partial results though.  Su Gao obtained the following
theorem.

\begin{theorem}
  There does not exist a compact maximal cofinitary group.
\end{theorem}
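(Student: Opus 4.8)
The strategy is to show that every compact cofinitary group $G\le\Sym(\N)$ admits a proper cofinitary extension, and hence is not maximal. First I would extract the structure forced by compactness. A compact subset of $\BS$ is bounded in each coordinate, so every $G$-orbit is finite; write $\N=\bigsqcup_{i<\omega}O_i$ for the decomposition into $G$-orbits --- infinitely many of them, since $\N$ is infinite. Then $G$ is a closed, hence compact, subgroup of the profinite group $\prod_i\Sym(O_i)$: it is the inverse limit of its finite images $G_n\le\prod_{i\le n}\Sym(O_i)$, and the essential point is that for every finite $F\subseteq\N$ the restriction map $g\mapsto g\restrict F$ has \emph{finite} range.

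Next I would build a permutation $h\in\Sym(\N)$ with $h\notin G$ and $\langle G,h\rangle$ cofinitary, as an increasing union $h=\bigcup_n s_n$ of finite partial injections constructed in $\omega$ stages. Stage $0$ puts $s_0=\{(p_0,p_1)\}$ for chosen $p_i\in O_i$; since $g(O_0)=O_0\not\ni p_1$ for every $g\in G$, this already forces $h\notin G$. The remaining stages must make $\langle G,h\rangle$ cofinitary. Its members that lie in $G$ are cofinitary by hypothesis, so it is enough to ensure that every element $\theta=g_0h^{n_1}g_1\cdots h^{n_m}g_m$ with $g_j\in G$ and all $n_j\neq0$ has finitely many fixed points. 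Here compactness is used decisively: whether a point $p\in O_i$ is fixed by such a $\theta$ is determined by the values of $h$ and $h^{-1}$ at the finitely many points traced out while computing $\theta(p)$, together with the restrictions of the $g_j$ to the finitely many orbits those points lie in --- and by the previous paragraph only finitely many such restrictions are realized in $G$. So although the parameter tuples range over an uncountable group, for each word shape $w=(n_1,\dots,n_m)$ and each level $\ell$ there are only finitely many relevant ``local types'' of parameters at level $\ell$, and only countably many pairs $(w,\ell)$ altogether.

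Given this reduction, the construction is the familiar finite-extension argument for cofinitary groups. One enumerates the countably many requirements --- ``for word shape $w$, level $\ell$, and $G$-realized local type $\tau$, no element $\theta=w(h,\bar g)$ with $\bar g$ of type $\tau$ has a fixed point in any orbit beyond the current stage'' --- interleaved with the standard requirements forcing $h$ to be a total surjection. When $h$ is to be defined near a fresh orbit, only finitely many values are dangerous, namely those completing a fixed-point trajectory for one of the finitely many words-with-local-types then active; since $\N$ is infinite, a safe value always exists. The resulting $h$ makes $\langle G,h\rangle\supsetneq G$ cofinitary, so $G$ is not maximal, which proves the theorem. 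I expect the sole real difficulty to be the bookkeeping in this last step: one must organize the enumeration so that the configurations to be avoided near $O_i$ genuinely form a finite set --- this needs both the finiteness of the $G_\ell$ and a bound, in terms of $i$, on which words and types are active --- and one must handle words whose trajectories re-enter low-index orbits without disturbing earlier commitments. These are exactly the delicate points of the classical construction for countable cofinitary groups, now carried out uniformly over the finitely many local types at each level in place of the finitely many parameters of the countable case.
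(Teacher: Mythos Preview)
Your opening move is exactly right and is really all the compactness you need: a compact subset of $\BS$ is pointwise bounded, so every $G$-orbit is finite and hence there are infinitely many of them. But from that point you work much harder than necessary. The paper proves elsewhere that \emph{any} cofinitary group with infinitely many orbits is non-maximal, and that argument uses no further hypothesis on $G$ --- no countability, no compactness, no enumeration of $W_G$. The construction of $h$ there is essentially your stage~$0$ continued: each new value of $h$ is placed in a completely fresh orbit, so the ``$G$-orbits graph of $h$'' is a tree. Then for an \emph{arbitrary} $w\in W_G$ one shows $w(h)$ has only finitely many fixed points by tracing the orbit path of a putative fixed point to its vertex of maximal tree-distance and reading off a fixed point of one of the finitely many $g_j$ occurring in $w$. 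The verification is carried out word by word \emph{after} $h$ is built, so the possible uncountability of $G$ never enters and no reduction to countably many requirements is needed.

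Your local-types reduction is therefore superfluous, and as you yourself flag, it has a circularity you have not resolved: which orbits a trajectory of $\theta$ visits --- and hence which restriction of $\bar g$ is the relevant ``type'' --- depends on values of $h$ not yet committed, so it is not clear that the dangerous configurations near $O_i$ really form a finite set determined by the data available at that stage. For the record, the paper itself attributes the compact statement to Su~Gao without proof and instead derives it from the stronger result that no eventually bounded (hence no $K_\sigma$) cofinitary group is maximal; that argument also builds a single $h$ and verifies cofinitariness one word at a time, using an interval partition manufactured from the bound in place of the orbit tree.
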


We were able, with methods developed for one of the orbit results
below, to improve this to the following.  A set $S \subseteq \Sym(\N)$
is a $K_\sigma$ set if it is contained in a countable union of compact
sets.

\begin{theorem}
  There does not exist a $K_\sigma$ maximal cofinitary group.
\end{theorem}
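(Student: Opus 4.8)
<br>

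The plan is to strengthen the argument that rules out compact maximal cofinitary groups (and the one ruling out $K_\sigma$ groups might follow the same scheme). Let $G \leq \Sym(\N)$ be a cofinitary group that is $K_\sigma$, say $G = \bigcup_n K_n$ with each $K_n$ compact and (without loss of generality) $K_n \subseteq K_{n+1}$. The goal is to produce a single permutation $g \in \Sym(\N) \setminus G$ such that $\langle G \cup \{g\} \rangle$ is still cofinitary, contradicting maximality. The natural strategy is a back-and-forth / finite-approximation construction of $g$, building $g$ as an increasing union of finite injective partial functions $p_0 \subseteq p_1 \subseteq \cdots$, where at each step we ensure that $g$ (together with $G$) generates no new permutation with infinitely many fixed points, and that $g \notin G$.

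\textbf{Key steps.} First I would recall the standard combinatorial fact about building new elements for cofinitary groups: to keep $\langle G \cup \{g\}\rangle$ cofinitary, it suffices to ensure that for every reduced word $w(g) = g^{a_0} h_1 g^{a_1} \cdots h_k g^{a_k}$ with $h_i \in G$ and the word nontrivial, the resulting permutation has only finitely many fixed points. The crucial point (this is the usual lemma in the subject) is that when $g$ is built generically by finite conditions, each such word contributes only finitely many ``forced'' fixed points, and at stage $s$ we can always extend the current condition $p_s$ to kill one more potential fixed point of the $s$-th word while also guaranteeing $\dom(g)$ and $\ran(g)$ eventually exhaust $\N$ (so $g$ is a genuine permutation). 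Second, and this is where compactness enters, I would use the $K_\sigma$ hypothesis to control the ``parameters'' $h_i$ ranging over $G$: rather than a single word, at stage $s$ we must handle a word whose group-coefficients $h_i$ range over all of $K_n$ for the relevant $n$. Compactness of $K_n$ lets us reduce this to finitely many cases on any bounded domain, because the evaluation map $h \mapsto h\restriction F$ for finite $F \subseteq \N$ has finite image on $K_n$; this is precisely the place where Gao's compactness argument lived, and it survives the passage to a countable union of compact sets by a diagonalization over the pairs $(n, \text{word pattern})$.

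\textbf{Main obstacle.} The hard part will be the uniformity in the coefficients $h_i \in K_n$: for a fixed word pattern and a fixed finite piece of $g$, the condition ``$w(g)$ has no fixed point beyond $N$'' is a statement quantifying over the compact set $K_n$, and one must verify that after finitely many extensions of the finite condition on $g$ this is achieved simultaneously for all $h_i \in K_n$. One shows that the set of ``bad'' tuples $(h_1,\dots,h_k) \in K_n^k$ for which some fixed point of $w(g)$ is forced into a given bounded region is closed, hence compact, and that it can be shrunk to empty by a single further extension of the $g$-condition; the compactness prevents the bad region from ``escaping to infinity'' in the coefficient space. Once this is set up, a bookkeeping argument enumerating all triples (stage bound $n$, word pattern, target fixed-point location) and interleaving the back-and-forth requirements that make $g$ a permutation and that make $g \notin G$ (using that $G$ is meager, being $K_\sigma$, or directly that $g$ avoids each $K_n$ by a diagonal value) completes the construction and yields the desired contradiction with maximality of $G$.
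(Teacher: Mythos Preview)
Your approach is genuinely different from the paper's, and as written it has a real gap.

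The paper does \emph{not} attempt a uniform good-extension construction over the compact pieces $K_n$. Instead it extracts from the $K_\sigma$ hypothesis only one consequence: a $K_\sigma$ subset of $\BS$ is eventually bounded, i.e.\ there is a single $f\in\BS$ with $g<^*f$ for every $g\in G$. From this bound the paper builds an explicit interval partition $i_0=0$, $p_n=f(i_n)$, $i_{n+1}=f(p_n)$, so that every $g\in G$ satisfies $g(p_n)\in[i_n,i_{n+1})$ for all large $n$. It then defines a \emph{single} permutation $h$ by a short back-and-forth in which every pair added has one coordinate equal to some $p_n$ from a fresh interval. No good-extension lemmas are used and no enumeration of words is needed. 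The verification that $\langle G,h\rangle$ is cofinitary is a direct evaluation-path analysis in the spirit of the ``finitely many orbits'' theorem: for any word $w\in W_G$ one locates, in the evaluation path of a large fixed point, a place where the path enters an interval from below via $h^{\pm1}$ (hence lands on a $p_k$), is moved by some $g\in G$ within that interval, and must exit downward through the same unique pair of $h$; this forces that $g$ to fix $p_k$. Thus infinitely many fixed points of $w(h)$ would give infinitely many fixed points of some $g$ appearing in $w$. This argument in fact proves the stronger statement that no eventually bounded cofinitary group is maximal.

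Your sketch, by contrast, tries to run the good-extension machinery while letting the coefficients range over a compact set. The step you flag as the ``main obstacle'' is exactly where the argument is unfinished: the sentence ``the set of bad tuples $(h_1,\dots,h_k)\in K_n^k$ \ldots\ can be shrunk to empty by a single further extension of the $g$-condition'' is not the right formulation and does not follow from closedness alone. What you actually need is a uniform domain/range-extension lemma: for finite injective $p$, $a\notin\dom(p)$, a fixed word pattern, and compact $K$, all but finitely many $b$ make $p\cup\{(a,b)\}$ a good extension simultaneously for every choice of coefficients in $K$. The relevant fact is the one you do mention --- $h\mapsto h\restriction F$ has finite image on $K$ --- which means that on the finite set of inputs reachable from $p$ there are only finitely many effective coefficient tuples; then one invokes the ordinary domain-extension lemma finitely many times. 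You have not carried this out, nor addressed the conjugate-subword bookkeeping uniformly in $K$, so as it stands this is an outline rather than a proof. If completed it would give a different argument, but a heavier one; the paper's interval-partition construction is shorter, uses less of the hypothesis, and yields the sharper ``eventually bounded'' theorem.
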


Otmar Spinas proved the following theorem related to this question.

\begin{theorem}
  There does not exist a locally compact maximal cofinitary group.
\end{theorem}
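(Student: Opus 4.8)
The plan is to reduce this to the theorem, already established above, that there is no $K_\sigma$ maximal cofinitary group. Suppose toward a contradiction that $G \leq \Sym(\N)$ is a maximal cofinitary group which is locally compact in the topology it carries as a subspace of $\BS$ (equivalently, of $\Sym(\N)$ with its usual Polish topology). I will show that such a $G$ is automatically a $K_\sigma$ subset of $\Sym(\N)$; the cited theorem then supplies the contradiction.

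The one substantive step is a piece of general topology: a locally compact subspace of $\BS$ is $\sigma$-compact. Being a subspace of the second countable metrizable space $\BS$, the group $G$ is itself second countable and Hausdorff, hence Lindel\"of. Since $G$ is moreover locally compact and Hausdorff, every $x \in G$ has a compact neighborhood $K$ in $G$; as $K$ is then closed in $G$, the closure in $G$ of $\mathrm{int}(K)$ is contained in $K$, hence compact, so $\mathrm{int}(K)$ is an open neighborhood of $x$ with compact closure. The relatively compact open sets so obtained cover $G$, so by the Lindel\"of property countably many of them, say $\{U_n \mid n \in \N\}$, already cover $G$; then $G = \bigcup_{n \in \N} \overline{U_n}$ exhibits $G$ as a countable union of sets compact in $G$, and these are a fortiori compact in $\Sym(\N)$. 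Hence $G$ is $K_\sigma$. (Alternatively one may first note that a locally compact subgroup of a Hausdorff topological group is closed --- a dense locally compact subspace of a Hausdorff space is open, an open subgroup of a topological group is closed, so $G$ equals its closure in $\Sym(\N)$ --- and then run the $\sigma$-compactness argument inside the Polish group $G$; but closedness is not actually needed.)

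Once $G$ is known to be $K_\sigma$, the theorem that no $K_\sigma$ set is a maximal cofinitary group finishes the proof. I do not expect a genuine obstacle on this route: the entire mathematical weight sits in the $K_\sigma$ theorem being quoted, and the only care required is bookkeeping --- being explicit that the topology on $G$ is the subspace topology inherited from $\BS$, and using that compactness is intrinsic so that ``compact in $G$'' passes for free to ``compact in $\Sym(\N)$.'' Should one want an argument for the locally compact case that does not invoke the $K_\sigma$ result, the hard part would be exactly what makes that result hard: showing directly that a group presented as a countable increasing union of compact subsets of $\Sym(\N)$ can never be maximal among the cofinitary groups.
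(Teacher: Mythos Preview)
Your argument is correct. The paper, however, does not actually supply a proof of this statement: the theorem is attributed to Spinas and merely stated in the introduction, alongside the author's own $K_\sigma$ result. So there is no proof in the paper to compare your proposal against.

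Your reduction to the $K_\sigma$ theorem is the natural one and is carried out cleanly: a locally compact subspace of a second-countable Hausdorff space is Lindel\"of and hence $\sigma$-compact, and since compactness is intrinsic the compact pieces are compact in $\Sym(\N)$ as well, so $G$ is $K_\sigma$. This shows that Spinas's result is in fact a corollary of the author's $K_\sigma$ theorem (which in the body of the thesis is proved via the stronger ``eventually bounded implies not maximal'' result). The paper presents the locally compact and $K_\sigma$ theorems as parallel strengthenings of Gao's compact case and does not record this implication; your observation makes the logical relationship explicit.
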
 

\subsection{Orbits and Isomorphism Types}

Since $\Sym(\N)$ has a natural action on $\N$, defined by $g \in
\Sym(\N)$ maps $n \in \N$ to $g(n)$, we can ask about the orbit
structure of a maximal cofinitary group with this action. We proved
the following results about this action.

\begin{theorem}[\cite{BKOI}]
  A maximal cofinitary group has finitely many orbits.
\end{theorem}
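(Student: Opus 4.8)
The plan is to argue by contradiction. Suppose $G \leq \Sym(\N)$ is a maximal cofinitary group with infinitely many orbits. I will produce a permutation $h \in \Sym(\N) \setminus G$ such that $\langle G, h \rangle$ is again cofinitary; since $\langle G, h\rangle$ then properly contains $G$, this contradicts maximality.

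The first step is to fix the coarse (orbit-level) behaviour of $h$. There are only countably many $G$-orbits, and since there are infinitely many of them we may partition the orbits into $\mathbb{Z}$-many blocks $\{B_i : i \in \mathbb{Z}\}$, each a union of whole orbits and each of cardinality $\aleph_0$ (this is where ``infinitely many orbits'' is first used, to have enough orbits to fill $\mathbb{Z}$-many infinite blocks). I will build $h$ so that $h$ maps $B_i$ bijectively onto $B_{i+1}$ for every $i$. The purpose of this is purely structural: every element of $G$ preserves each orbit, hence each block, so a reduced word $w = g_0 h^{n_1} g_1 \cdots h^{n_k} g_k$ with $g_j \in G$ carries $B_i$ onto $B_{i+N}$, where $N = \sum_{j} n_j$. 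Consequently, as soon as $N \neq 0$ the permutation $w$ has \emph{no} fixed points at all, hence is automatically cofinitary; and $h$ itself cannot lie in $G$, since it does not preserve the blocks. So the only elements of $\langle G,h\rangle$ we must actively control are those coming from \emph{balanced} words, $N = 0$, which map each block onto itself.

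The second step is the construction of $h$ as an increasing union of finite partial injections $p_0 \subseteq p_1 \subseteq \cdots$ out of the natural poset $\poseta{G}$ of finite approximations (finite partial injections respecting the block-shift and ``not yet forcing any forbidden fixed point''). A bookkeeping is run so that at stage $s$ we either (a) adjoin a prescribed number to $\dom p_s$ or to $\ran p_s$, guaranteeing that $\bigcup_s p_s$ is a total bijection, or (b) \emph{seal off} a balanced reduced word $w$: extend $p_s$ to $p_{s+1}$ so that no further extension can ever create a new fixed point of $w(h)$. Meeting the countably many requirements of type (a) together with a countable cofinal family of requirements of type (b) — working, if $G$ is uncountable, inside a countable elementary submodel carrying the finitely many parameters from $G$ relevant to each word — yields $h$ with $\langle G, h\rangle$ cofinitary, and the contradiction.

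The crux, and the step I expect to be the main obstacle, is the Extension Lemma underlying (b): given a condition $p$ that forces no forbidden fixed point, a balanced word $w$, and the goal of sealing off $w$, one can extend $p$ to a finite $q$ achieving this while still forcing no forbidden fixed point. The idea is that when one traces the computation of $w(h)$ on a point $n$ that is not yet ``resolved'' by $p$, the computation must at some stage apply $h$ or $h^{-1}$ to a point outside the current domain or range; one then defines $h$ there so as to send that point to a \emph{fresh} target — one lying in a block and a region untouched by $p$ and by the finite supports of the $g_j$ occurring in $w$ — which derails the computation and prevents $n$ from being a fixed point, and this is done simultaneously for the finitely many still-dangerous $n$. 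The hypothesis of infinitely many orbits is exactly what supplies an inexhaustible stock of such fresh room at every stage; with only finitely many orbits the traced computations can be forced to close up and manufacture unavoidable infinite fixed-point sets. The delicate points to verify are that the finitely many fresh choices for $w$ can be made mutually consistently, and that sealing off one word never reopens a previously sealed one; everything else — the bookkeeping, the check that the limit is a permutation outside $G$, and the observation (from the block-shift set-up) that unbalanced words require no attention — is routine.
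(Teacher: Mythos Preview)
Your block-shift idea is pleasant and does dispose of the unbalanced words for free, but the ``sealing off'' step for balanced words is a genuine gap. You claim that a single finite extension $q \supseteq p$ can be chosen so that \emph{no further extension} ever creates a new fixed point of $w(h)$, and that only ``finitely many still-dangerous $n$'' need to be derailed. Neither is true. Take the simplest balanced word $w = g_0 x^{-1} g_1 x$: a fixed point is an $n$ with $g_1(h(n)) = h(g_0^{-1}(n))$, and since $h$ must eventually be total, for every $n$ both sides will eventually be defined; there are infinitely many such $n$, and nothing in a finite $q$ prevents future extensions from producing infinitely many coincidences unless you continue to extend carefully \emph{with respect to $w$ at every later stage}. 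That is exactly the good-extension framework, not a one-shot seal. Your elementary-submodel remark does not rescue the uncountable case either: a bookkeeping that enumerates balanced words only covers countably many of them, and cofinitariness of $\langle G\cap M,h\rangle$ does not yield cofinitariness of $\langle G,h\rangle$.

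The paper avoids all of this by a different, purely combinatorial device. It builds $h$ so that at each step one endpoint of the new pair lies in an orbit untouched by anything so far; the resulting \emph{orbit graph} (vertices the $G$-orbits, edges the pairs of $h$) is then a tree. For any word $w$ and any fixed point $n$ of $w(h)$, the orbit path of the evaluation is a closed walk on this tree; at the vertex of maximal distance from the start, the walk must enter and immediately leave along the same edge, which forces a single $g_j\in G$ occurring in $w$ to fix the point reached there. Thus every fixed point of $w(h)$ yields a fixed point of one of the finitely many $g_j$'s in $w$, each cofinitary, so $w(h)$ has only finitely many. This argument is word-by-word and uses no enumeration of $W_G$, so it works for $G$ of any cardinality without appeal to elementary submodels.
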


From the standard construction of maximal cofinitary groups using
Martin's axiom or the continuum hypothesis, it is clear that
non-transitive maximal cofinitary groups exist.  The standard
construction easily yields a maximal cofinitary group with any finite
number of finite orbits.  The following theorem required a new idea though.

\begin{theorem}[\cite{BKOI}]
  Martin's axiom implies that for every $n,m \in \N$ with $m \geq 1$
  there exists a maximal cofinitary group with $n$ finite orbits and
  $m$ infinite orbits.
\end{theorem}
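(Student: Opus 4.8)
The plan is to build the maximal cofinitary group $G$ by a transfinite recursion of length $2^{\aleph_0}$ under Martin's axiom, enumerating in advance all the "tasks" that must be handled: killing potential extensions (for each permutation $h$ on a partial list, either put $h$ into $G$ or arrange a reason it cannot be added without creating an infinite intersection) and, crucially, controlling the orbit structure so that exactly $n$ orbits stay finite and exactly $m$ orbits become infinite. First I would fix $n$ disjoint finite sets $F_1,\dots,F_n$ and $m$ disjoint infinite sets $I_1,\dots,I_m$ partitioning $\N$; the goal is that every generator we add preserves each $F_j$ setwise and acts within $\bigcup_k I_k$ in a way that never merges two of the prescribed infinite blocks and never connects a finite block to anything else. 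Maintaining "setwise preserves each $F_j$" is easy because each $F_j$ is finite. The real work is ensuring the infinite orbits are exactly $I_1,\dots,I_m$: for each $I_k$ we must guarantee it does not split (some pair of points in $I_k$ must eventually lie in a common orbit) while also not fusing with $I_{k'}$ for $k' \neq k$.

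The key device is the standard one for MA-constructions of cofinitary groups: at each step we have a cofinitary group generated by fewer than $2^{\aleph_0}$ elements, and we want to extend it by one more generator $g$ solving the next task while keeping all non-identity words cofinitary. This is done by a c.c.c. (indeed finitely-branching, Hechler-like) forcing $\mathbb{P}$ whose conditions are finite partial injections approximating $g$ together with finite side conditions bounding the fixed points of the finitely many "new" reduced words in $g$ and the old generators that arise up to the current stage; genericity over the $<2^{\aleph_0}$ dense sets (supplied by MA) yields a total $g$ making every such word cofinitary. To also control orbits, I would add to the side conditions the requirement that every approximating partial injection respects the partition — it maps $F_j\cap\dom$ into $F_j$ and maps $I_k\cap\dom$ into $I_k$ — and I would add, for each $k$, dense sets forcing more and more of $I_k$ into a single orbit of the group generated so far (e.g. forcing $g$ or some previously committed generator to send a fresh point of $I_k$ to the designated base point of $I_k$). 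Since these are countably many dense sets per infinite block, MA handles them. One must check that imposing the partition constraint does not destroy the density of the fixed-point-bounding sets: this holds because each $F_j$ is finite, so only finitely much of $g$ is constrained there, and on each infinite $I_k$ there is still enough room to extend any finite partial injection while avoiding the finitely many forbidden values coming from the word-fixed-point conditions.

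Concretely the steps are: (1) set up the fixed partition and the bookkeeping enumerating all permutations to be decided and all orbit-linking requirements; (2) at stage $\alpha$, given the current partially specified cofinitary group $G_\alpha$ on $<2^{\aleph_0}$ generators respecting the partition, if the $\alpha$-th permutation $h$ can be added while keeping cofinitariness and respecting the partition, add it (this preserves orbits trivially since $h$ respects the partition); otherwise diagonalize against it by adding a new generator that has infinite intersection with $h$, using the forcing $\mathbb{P}$ above relativized to stay inside the partition; (3) interleave the orbit-linking tasks so that after all $2^{\aleph_0}$ stages each $I_k$ lies in a single orbit; (4) take unions at limits; (5) let $G$ be the union, verify it is a cofinitary group (every non-identity word was made cofinitary at the stage its last generator appeared), verify maximality (every permutation outside was explicitly diagonalized), and verify the orbits: each $F_j$ is a union of orbits, hence a finite orbit or several finite orbits — here one has to be a little careful and instead demand the stronger "each $F_j$ is a single orbit", again arrangeable by finitely many extra dense sets per $F_j$ — and each $I_k$ is exactly one infinite orbit by (3), and nothing was ever linked across blocks. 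The main obstacle I anticipate is exactly this compatibility check in step (2): proving that the dense sets used to bound fixed points of reduced words remain dense after one restricts all conditions to respect the partition, i.e. that one can always extend a partition-respecting finite partial injection so as to avoid any prescribed finite "bad" set of values while still landing in the correct block; for the infinite blocks $I_k$ this is fine (infinitely many legal targets, finitely many forbidden), but one must make sure no reduced word is forced to fix points inside the small sets $F_j$ — which is handled by choosing the action of each new generator on $\bigcup F_j$ to be, say, a fixed-point-free permutation of each $F_j$ when $|F_j|\geq 2$, absorbed into the finitely many cases of the side-condition analysis. Once that density lemma is in hand, the rest is the routine MA bookkeeping already used in Theorems \ref{yi1.4} and the cited orbit paper.
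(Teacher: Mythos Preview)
Your framework---fix the partition, add only block-preserving generators, run good extensions inside each infinite block---matches the paper, and the density worry you flag (that the good-extension lemmas survive restriction to a single infinite block) is real but harmless: each infinite block still offers cofinitely many legal targets. The gap is elsewhere, in step~(2), and it is exactly the ``new idea'' the introduction singles out for the case $m\ge 2$.

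Your ``otherwise'' clause covers in particular the situation where $\langle G_\alpha,h\rangle$ is cofinitary but $h$ fails to respect the partition, and there you propose to kill $h$ by building a block-preserving generator $g$ with $g\cap h$ infinite. That can be flatly impossible. If $h$ sends every point of $I_0$ into $\bigcup_{k\ne 0}I_k$---for instance $h$ cyclically shifts the infinite blocks $I_0\to I_1\to\cdots\to I_{m-1}\to I_0$ via bijections and fixes each $F_j$ pointwise---then any block-preserving $g$ has $g(x)\in I_k$ whenever $x\in I_k$, while $h(x)\notin I_k$; so $g$ and $h$ disagree at every point of $\bigcup_k I_k$ and $g\cap h$ is finite no matter how you build $g$. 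Such an $h$ can certainly have $\langle G_\alpha,h\rangle$ cofinitary, so it is not yet excluded and your group will not be maximal.

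The paper kills such $h$ with a commutator rather than a direct hit. If $h\cap(I_0\times I_0)$ is infinite, proceed as you say inside $I_0$. Otherwise pigeonhole gives some $k\ne 0$ with $R:=\ran\bigl(h\cap(I_0\times I_k)\bigr)\subseteq I_k$ infinite. Build the new generator on $I_k$ first (domain/range extension alone suffices) so that $g\restrict I_k$ sends infinitely many points of $R$ back into $R$; then $h^{-1}\,(g\restrict I_k)\,h$ is an infinite partial injection $I_0\rightharpoonup I_0$. Now build $g\restrict I_0$ using the hitting-$f$ lemma against that partial function, arranging the agreement to be infinite but not all of $g\restrict I_0$. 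The assembled block-preserving $g$ makes $w(x)=g^{-1}x^{-1}gx\in W_{G_{\alpha+1}}$ satisfy that $w(h)$ has infinitely many fixed points yet $w(h)\ne\id$, so $\langle G_{\alpha+1},h\rangle$ is not cofinitary. With this two-case diagonalization in place of your direct-intersection step, the rest of your outline goes through.
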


These theorems completely characterize the possible orbits of a
maximal cofinitary group on $\N$ with respect to cardinality.  The action of
subgroups of $\Sym(\N)$ on $\N$ generalizes to higher powers of $\N$.

\begin{definition}
  If $G \leq \Sym(\N)$ then the \emph{diagonal action} of $G$ on $\N^k$ for
  some $k \in \N$ is given by $g(n_0, \ldots, n_{k-1}) = (g(n_0),
  \ldots, g(n_{k-1}))$.
\end{definition}

The above theorems are then a first step towards answering the
following question.

\begin{question}
  Does a maximal cofinitary group have finitely many orbits under the
  diagonal action on $\N^k$ for any $k \in \N$?
\end{question}

This question is related to the question on the complexity of maximal
cofinitary groups in the following way (the statements and their
proofs can be found in \cite{WH}).  A group for which the answer to
the question is affirmative is called oligomorphic.  A closed subgroup
of $\Sym(\N)$ is the automorphism group of a countable first-order
structure.  Such a closed subgroup is oligomorphic if and only if it
is the automorphism group of a countably categorical structure.  So
the answer to the question will give information on what sort of
groups closed maximal cofinitary groups could be, of which type of
structure they could be automorphism groups.

Any subgroup of $\Sym(\N)$ is, obviously, of a certain abstract
isomorphism type.  The result of the usual constructions (both forcing
and from Martin's axiom and the continuum hypothesis) is a group that
has a generating set freely generating the group.  In answering
questions such as the one about the complexity of maximal cofinitary
groups, only working with free maximal cofinitary groups is a
restriction: it is possible that a maximal cofinitary group of least
complexity is not a free group.  Having good methods available to
construct maximal cofinitary groups of different isomorphism types
would therefore be beneficial for this work.  So we work on the
following question.

\begin{question}
  What are the possible isomorphism types of maximal cofinitary
  groups?
\end{question}

Related to this is the following question, which is also of
independent interest.

\begin{definition}
  If $G$ is an abstract group, it \emph{has a cofinitary action} if
  there is an embedding $G \hookrightarrow \Sym(\N)$ such that the
  image of the group under this embedding is a cofinitary group.
\end{definition}

\begin{question}
  Which abstract groups have cofinitary actions?
\end{question}

Here we present our results which are some initial steps towards
answering these questions.

\begin{theorem}[\cite{BKOI}]
  Martin's axiom implies that there exists a maximal cofinitary group
  into which any countable group embeds.
\end{theorem}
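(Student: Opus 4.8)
The plan is to build the desired maximal cofinitary group by a transfinite recursion of length $2^{\aleph_0}$ under Martin's axiom, where the starting point of the recursion is not the trivial group but a fixed cofinitary copy of a "universal" countable group. First I would recall that there is a countable group $U$ into which every countable group embeds — for instance the Hall universal locally finite group, or more concretely any countable group containing an isomorphic copy of every finitely presented group (by Higman embedding) and closed under taking such, or simply $\operatorname{Sym}(\N)_{\mathrm{fin}}$-type constructions won't do, so I would use Hall's universal group $U$. The key input I need is that $U$ itself \emph{has a cofinitary action}: there is an embedding $U \hookrightarrow \Sym(\N)$ whose image is a cofinitary group. For this I would either cite/adapt a known construction of a cofinitary action of the Hall group, or build one directly: enumerate $U = \{u_n : n \in \N\}$ and construct the action by a finite-approximation argument exactly parallel to the Neumann-style construction of large cofinitary groups (Theorem on existence of a cofinitary group of size $2^{\aleph_0}$ in the excerpt), making sure at each finite stage that every nontrivial word in the $u_n$ that has been "decided so far" can be extended to have only finitely many fixed points.

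Once I have a cofinitary group $G_0 \cong U$, the main recursion is the standard MA construction of a maximal cofinitary group, but carried out \emph{above} $G_0$. Using $\MA$, enumerate in order type $2^{\aleph_0}$ all potential "threats" — functions $f \in \Sym(\N)$ that could witness non-maximality — and at stage $\alpha$, given a cofinitary group $G_\alpha$ of size $< 2^{\aleph_0}$ generated over $G_0$ by fewer than $2^{\aleph_0}$ new free generators, either $f_\alpha$ is already in $G_\alpha$ or one adjoins a new generator $g_\alpha$ and extends to $G_{\alpha+1} = \langle G_\alpha, g_\alpha\rangle$ so that $f_\alpha \notin G_{\alpha+1}$. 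The crucial lemma is the one underlying all these MA constructions: given a cofinitary group $H$ of size $< 2^{\aleph_0}$ and a word equation, the poset of finite partial injections approximating a new generator $g$ such that $\langle H, g\rangle$ is still cofinitary is ccc, and the relevant density requirements (each nontrivial reduced word in $g^{\pm1}$ and elements of $H$ has infinitely many non-fixed points; $f_\alpha$ gets a point where it disagrees with every such word) are $< 2^{\aleph_0}$ many, so $\MA$ supplies a generic $g$. The only new point over the textbook construction is bookkeeping: the generators of $G_0$ are never "used up" or quotiented, so $U$ embeds into the final group $H = \bigcup_\alpha G_\alpha$, and hence so does every countable group.

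The hard part will be establishing that $U$ (or whatever universal countable group one picks) \emph{admits a cofinitary action that can serve as the base of the recursion} — that is, a cofinitary action which is moreover "robust" in the sense that the ccc poset for adjoining further generators over it behaves correctly. Concretely, when one later wants to adjoin $g_\alpha$ and needs $\langle G_0, \dots, g_\alpha \rangle$ cofinitary, one must control fixed points of words $w(g_\alpha, h)$ with $h \in G_0$; this requires knowing that in $G_0$ itself no nontrivial element has infinitely many fixed points, and that the combinatorics of $G_0$'s action don't force unwanted coincidences. I would handle this by choosing the cofinitary action of $U$ to be "as free as possible" — e.g. built by the same finite-approximation method so that it is in a suitable sense generic — and then the extension lemma goes through with only routine modifications. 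A secondary subtlety is making sure $U$ is genuinely universal for countable groups while still being countable; Hall's theorem gives exactly this, so I would cite it. Everything else (ccc-ness of the posets, counting density arguments, the limit-stage union being a cofinitary group, final maximality) is parallel to the proofs of the orbit-structure theorems under $\MA$ already cited in the excerpt (\cite{BKOI}) and to Theorem~\ref{yi1.4}, so I would present it by analogy rather than in full detail.
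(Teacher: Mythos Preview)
Your approach has a fatal flaw at the very first step: there is \emph{no} countable group into which every countable group embeds. A countable group $U$ has only countably many pairs of elements, hence only countably many $2$-generated subgroups, but by a classical result of B.~H.~Neumann there are $2^{\aleph_0}$ pairwise non-isomorphic $2$-generated groups. So no single countable $G_0$ can serve as the universal seed you want. In particular, Hall's universal locally finite group is universal only for countable \emph{locally finite} groups; it contains no copy of $\mathbb{Z}$, let alone the free group on two generators.

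The paper's argument gets around this not by finding a universal object but by embedding the countable groups \emph{one at a time} during the recursion. The technical core is a lemma (Theorem~\ref{thm:countgrp}) saying that for any countable cofinitary $G$ and any countable group $H$ one can find permutations $\vec{f}$ generating an isomorphic copy of $H$ with $\langle G,\vec{f}\rangle$ still cofinitary. This requires a genuinely new idea beyond the usual good-extension machinery: one works in a poset $\mathbb{P}_{G,H}$ of finite approximations that already satisfy all the relations of $H$, with a modified notion of $(G,H)$-good extension tailored to $W_{G,H}=G*(F(\vec{x})/W_{H,\Id})$, and an ``applying relations'' operation that closes a finite approximation under the relations of $H$ without destroying goodness. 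Once this lemma is in hand, one enumerates all countable groups and all permutations and alternates between embedding the next $H_\alpha$ and killing the next potential witness to non-maximality. Your Step~3 (the maximality recursion) is essentially the same as the paper's, but you would need to replace your Step~1--2 with this embedding lemma.
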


Any countable group has an obvious cofinitary action, its translation
action on itself; so here we do not get new information about which
groups can act cofinitarily.  We do get a cofinitary group which is
not free.  We also proved that a group for which it is not a priori
clear that it has a cofinitary action can consistently have a
cofinitary action.

\begin{theorem}[\cite{BKOI}]
  There exists a c.c.c. notion of forcing such that in the forcing
  extension the group $\bigoplus_{\alpha \in \aleph_1} \mathbb{Z}_2$
  has a cofinitary action.
\end{theorem}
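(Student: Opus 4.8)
The plan is to build a finite support c.c.c. iteration (or a single c.c.c. poset obtained as a finite support product/iteration over $\aleph_1$) that generically adds a cofinitary action of $H=\bigoplus_{\alpha\in\aleph_1}\mathbb Z_2$, i.e.\ an embedding $\Phi\colon H\hookrightarrow\Sym(\N)$ whose image is an almost disjoint subgroup. The essential point is that $H$ is generated by involutions $t_\alpha$ ($\alpha<\aleph_1$) with $t_\alpha^2=1$ and $t_\alpha t_\beta=t_\beta t_\alpha$, so to specify $\Phi$ it suffices to assign to each $\alpha$ an involution $\sigma_\alpha=\Phi(t_\alpha)\in\Sym(\N)$ such that the $\sigma_\alpha$ pairwise commute, each has no (or finitely many) fixed points, and every nontrivial word in them — which here is just a finite product $\sigma_{\alpha_1}\cdots\sigma_{\alpha_k}$ of distinct generators — is cofinitary. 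First I would fix the forcing $\mathbb P$ whose conditions are finite partial approximations: a condition is a finite partial injection $p_\alpha\subseteq\N\times\N$ for each $\alpha$ in a finite set $a=\dom(p)\subseteq\aleph_1$, carrying the promise that each $p_\alpha$ extends to an involution and that the $p_\alpha$ are ``compatible'' with commuting and with the finitely-many-fixed-points requirement for all the relevant words; the order is reverse inclusion coordinatewise. Standard bookkeeping (a $\Delta$-system argument on the finite supports, exactly as in the Zhang-style constructions behind Theorems~\ref{yi1.4} and~\ref{yi1.5}) gives the c.c.c.

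Next I would isolate the density arguments that make the generic object a genuine cofinitary action. There are four families of dense sets. (1) For each $\alpha$ and each $n$, it is dense to decide $\sigma_\alpha(n)$ — here one must check that a finite involutionary partial map can always be extended to act on one more point while preserving commutation with the finitely many other coordinates in the condition; this is where one uses that $\Z_2$-generators are involutions, so the ``extension on one point'' forces a matching pair, and one has infinitely many fresh integers available to absorb the constraints. (2) For each finite nonempty $F\subseteq\aleph_1$ and each $m$, it is dense to force the word $\sigma_F:=\prod_{\alpha\in F}\sigma_\alpha$ to have a fixed point above $m$ be impossible to add later — i.e.\ to extend the condition so that $\sigma_F$ is already ``cofinitary up to $m$'' in a way that persists; this genericity-freezing is the standard trick ensuring the realized word has only finitely many fixed points. (3) Density ensuring each $\sigma_\alpha$ is total and a bijection. (4) Density ensuring injectivity of $\Phi$ on $H$: since distinct elements of $H$ are distinct finite products of generators, one must force $\sigma_F\neq\id$ for every finite nonempty $F$, which follows from (2) with $m=0$ once we know $\sigma_F$ is cofinitary and we additionally arrange it is not the identity on some initial segment.

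The main obstacle — and the reason this needs a dedicated argument rather than being a corollary of earlier results — is item (2) combined with the commutation constraints in (1): when extending a condition to evaluate $\sigma_\alpha(n)$, the involution and commutativity relations propagate the choice across all coordinates in $\dom(p)$ and across all the words $\sigma_F$ being tracked, and one must verify that these finitely many propagated constraints can always be satisfied simultaneously by choosing the new values among the cofinitely many integers not yet used — in particular without accidentally creating a new fixed point of some word $\sigma_F$ that the condition has already promised to keep fixed-point-free above its bound. The verification is a finite combinatorial lemma: given a finite partial ``$\Z_2$-action'' and a finite set of words to control, one can always route a new orbit of the $\Z_2^a$-action through fresh integers, because $\Z_2^{|a|}$ acting freely on a new block of $2^{|a|}$ fresh integers realizes all the required incidences while leaving every nontrivial word fixed-point-free on that block. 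Once this closure lemma is in hand, a generic filter meeting all dense sets of types (1)--(4) yields, for each $\alpha$, an involution $\sigma_\alpha\in\Sym(\N)$; the map $t_\alpha\mapsto\sigma_\alpha$ extends to an embedding $H\hookrightarrow\Sym(\N)$ by the universal property of $\bigoplus\Z_2$ together with (4); and by (2) every nontrivial element of the image is cofinitary, so by the theorem characterizing cofinitary groups via cofinitary non-identity members, the image is a cofinitary group, which is exactly a cofinitary action of $H=\bigoplus_{\alpha\in\aleph_1}\Z_2$.
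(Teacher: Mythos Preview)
Your overall architecture is right---finite-support conditions, a $\Delta$-system argument for c.c.c., density for totality---and your ``closure lemma'' (route a new point through a fresh free $\mathbb{Z}_2^{|a|}$-block) is a pleasant substitute for the paper's domain/range extension plus ``applying relations'' machinery. But the mechanism you propose for cofinitary-ness, item~(2), is where the proposal breaks down, and the paper handles this quite differently.

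With the order you specified (``reverse inclusion coordinatewise'') there is no condition that can make a future fixed point of $\sigma_F$ \emph{impossible}: if $p$ is a finite condition and $k,k'$ are large integers not mentioned in $p$, an extension can always assign values so that $\sigma_F(k)=k$ (e.g.\ for $F=\{\alpha,\beta\}$, set $p_\alpha(k)=k'$, $p_\beta(k)=k'$). So ``genericity-freezing'' cannot be achieved by a dense set unless some side information is built into the conditions or the order. You gesture at this (``the condition has already promised to keep fixed-point-free above its bound''), but no such promise appears in your definition of a condition, and without it item~(2) is simply false. The paper's fix is to build the promise into the \emph{order}: $\vec{q}\le\vec{p}$ requires that $\vec{q}$ be a $(G,H)$-good extension of $\vec{p}$ with respect to an explicit finite list of words $W_{\vec{p}}$ determined by the size and support of $\vec{p}$. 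Cofinitary-ness then follows from the earlier good-extension lemmas (any new fixed point of $w$ must come from a fixed point of a shorter conjugate subword already present in $\vec{p}$), and no density set of type~(2) is needed at all.

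A second, smaller gap: the c.c.c.\ argument needs more than a $\Delta$-system on supports. After thinning so that the supports form a $\Delta$-system with root $r$ and all conditions agree on $r$, you still need to show that $\vec{s}=\vec{p}\cup\vec{q}$ lies in $P$, i.e.\ that it satisfies every relation $[x_\alpha,x_\beta]=\Id$. The paper's point is that this uses the specific structure of $H$: after a further refinement so that all conditions have the \emph{same shape} (same finite functions, only the non-root indices differ), a commutator $[x_\alpha,x_\beta](\vec{s})$ with $\alpha\in\supp(\vec{p})$, $\beta\in\supp(\vec{q})$ equals $[x_\alpha,x_{\alpha'}](\vec{p})$ for the $\alpha'\in\supp(\vec{p})$ corresponding to $\beta$, and this is already $\cong\Id$ because $\vec{p}\in P$. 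Your sketch (``standard bookkeeping\ldots exactly as in the Zhang-style constructions'') does not surface this step, and it is where the abelian-involution structure of $\bigoplus\mathbb{Z}_2$ is actually used.
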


The proof of this result yields also the following theorem.

\begin{theorem}[\cite{BKOI}]
  In any model of Martin's axiom and the negation of the continuum
  hypothesis the group $\bigoplus_{\alpha \in \aleph_1} \mathbb{Z}_2$
  has a cofinitary action.
\end{theorem}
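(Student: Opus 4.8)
The plan is to re-run, under $\MA_{\aleph_1}$, the density argument underlying the proof of the preceding theorem; $\MA_{\aleph_1}$ is available here because $\neg\CH$ forces $\aleph_1<2^{\aleph_0}$. Write $G=\bigoplus_{\alpha<\aleph_1}\mathbb{Z}_2$ with its canonical basis $(e_\alpha)_{\alpha<\aleph_1}$. Since $G$ is a free elementary abelian $2$-group, any family $(\sigma_\alpha)_{\alpha<\aleph_1}$ of pairwise commuting involutions of $\N$ induces a homomorphism $\varphi\colon G\to\Sym(\N)$ with $\varphi(e_\alpha)=\sigma_\alpha$; by the characterisation of cofinitary groups recalled above, $\varphi$ is injective with cofinitary image as soon as $\prod_{\alpha\in S}\sigma_\alpha$ has only finitely many fixed points for every nonempty finite $S\subseteq\aleph_1$ --- indeed cofinitariness already precludes $\prod_{\alpha\in S}\sigma_\alpha$ from being the identity, and so forces injectivity. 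Thus it suffices to construct such a commuting family.

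For this I would use the forcing $\mathbb{P}$ from the proof of the previous theorem: its conditions are finite approximations $p$ to a cofinitary action of $G$, consisting of a finite set $F_p\subseteq\aleph_1$ of ``active'' generators, finite partial involutions $\sigma^p_\alpha$ ($\alpha\in F_p$) that fit together into a partial action of the finite group $\langle e_\alpha:\alpha\in F_p\rangle$, and a threshold $N_p\in\N$, subject to the requirement that no nontrivial word in the $\sigma^p_\alpha$ has a fixed point $\geq N_p$; extension is allowed to enlarge $F_p$, to extend each $\sigma^p_\alpha$, and to raise the threshold, but must never create a fixed point $\geq N_p$ for a word over the old $F_p$. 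The role of the threshold is that a condition makes a permanent promise about its active words: once a filter contains a $p$ with $\supp v\subseteq F_p$, every stronger condition in the filter keeps the word computing $\varphi(v)$ fixed-point-free above $N_p$, so in the union the permutation $\prod_{\alpha\in\supp v}\sigma_\alpha$ can fix only the finitely many numbers below $N_p$ and is therefore cofinitary. This $\mathbb{P}$ is ccc, by a $\Delta$-system argument: one pins the active sets to a common root and, since there are only countably many finite partial involutions of $\N$, pins the finite pattern of partial involutions carried by the root, after which two conditions are amalgamated by taking the union of their data and declaring a huge new threshold, which makes the ``no fixed point'' clause vacuous for the new words. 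The dense sets that matter are $D^\alpha_n=\{p:\alpha\in F_p\text{ and }n\in\dom\sigma^p_\alpha\}$ for $\alpha<\aleph_1$ and $n\in\N$: meeting all of them makes every $\sigma_\alpha$ total.

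Since there are only $\aleph_1$ sets $D^\alpha_n$, $\MA_{\aleph_1}$ provides a filter $\mathcal G$ meeting all of them. Reading off $\sigma_\alpha=\bigcup\{\sigma^p_\alpha:p\in\mathcal G,\,\alpha\in F_p\}$ gives, for each $\alpha$, a total injection, in fact a permutation and an involution since every $\sigma^p_\alpha$ is; any two $\sigma_\alpha,\sigma_\beta$ commute because the relevant instance $\sigma_\alpha\sigma_\beta(n)=\sigma_\beta\sigma_\alpha(n)$ is decided inside a single condition of $\mathcal G$; and, by the permanence argument, $\prod_{\alpha\in S}\sigma_\alpha$ is cofinitary for every nonempty finite $S$. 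Hence $e_\alpha\mapsto\sigma_\alpha$ extends to an embedding of $G$ into $\Sym(\N)$ whose image is a cofinitary group, i.e.\ $G$ has a cofinitary action.

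The step I expect to be the main obstacle is the density of the $D^\alpha_n$: given $p$ one must extend it so that $n$ enters $\dom\sigma^p_\alpha$ while keeping the condition coherent, and the natural move --- adjoin $\alpha$ to $F_p$ (with $\sigma_\alpha=\emptyset$) if necessary, and then ``close up the orbit of $n$ with fresh natural numbers'', i.e.\ replace the current finite partial $\langle e_\beta:\beta\in F_p\rangle$-set generated by $n$ by a full orbit (free when $n\geq N_p$) sitting on $\{n\}$ together with brand-new numbers --- must be checked to produce a legitimate condition: no nontrivial word acquires a fixed point at or above the old threshold (the delicate case being a proper subword that already fixes $n$ below $N_p$), and the commutativity relations among the partial generators must be preserved, which is exactly what forces one to extend several of the $\sigma_\alpha$ at once and to choose the fresh values with care. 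Everything else --- the $\Delta$-system bookkeeping for ccc, the verification that the limit maps are commuting involutions, and the deduction of cofinitariness from the permanence of the threshold --- is routine, and, with $\mathbb{P}$ already in hand from the previous theorem, the passage to an $\MA_{\aleph_1}$ argument under $\neg\CH$ is immediate.
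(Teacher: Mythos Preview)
Your approach is essentially the paper's: apply $\MA_{\aleph_1}$ to the c.c.c.\ poset from the preceding forcing theorem, observing that only $\aleph_1$ dense sets (the $D_{\alpha,n}$ and $R_{\alpha,n}$) are needed to produce the cofinitary action. The one difference is that the poset you sketch --- with an explicit threshold $N_p$ controlling fixed points --- is not the paper's $\mathbb{P}_H$, which instead encodes the fixed-point control through the good-extension ordering; both devices work, and the paper's entire proof is the one-line observation you make about the number of dense sets.
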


As Andreas Blass has observed, this group cannot have a maximal
cofinitary action.


\chapter{Very Mad Families}
\label{chap2}

In this chapter we prove our results on very mad families.  For
convenience we will repeat the relevant definitions and give a short
list of notational conventions used.

\begin{definition}
  $\BS$ is the space of functions from $\N$ to $\N$, called
  \emph{Baire space}.
\end{definition}

Often we will think of $f \in \BS$ as a subset of $\N \times \N$ (the
subset $\{ (n,f(n)) \mid n \in \N\}$).  In the definitions below we
will give statements equivalent to the versions where we don't use
this idea; in the remainder of the chapter, however, we will not give
these equivalents.

\begin{definitions}
\item $g_0, g_1 \in \BS$ are \emph{almost disjoint} (also called
  \emph{eventually different}) if $g_0 \cap g_1$ is finite ($\{n \in
  \N \mid g_0(n) = g_1(n) \}$ is finite).

\item
  $\A \subseteq \BS$ is an \emph{almost disjoint family of functions}
  if any two distinct $g_0, g_1 \in \A$ are almost disjoint.
  
\item If $f \in \BS$ and $\A \subseteq \BS$, then $f$ is
  \emph{finitely covered} by $\A$ if there are $g_0, \ldots, g_n \in
  \A$ such that $f \setminus \bigcup_{i \leq n} g_i$ is finite ($\{k
  \in \N \mid f(k) \not \in \{g_i(k) \mid i \leq n\}$ is finite).

\item
  If $F, \A \subseteq \BS$ then $F$ is \emph{finitely covered} by $\A$
  if there exists an $f \in F$ that is finitely covered by $\A$.  (The
  notion of finitely covered is usually used in a negative context: if
  $F, \A \subseteq \A$ then $F$ is not finitely covered by $\A$ if no
  element of $F$ is finitely covered by $\A$.)

\item
  $\A \subseteq \BS$ is a \emph{very mad family} if $\A$ is an almost
  disjoint family of functions, and for any $F \subseteq \BS$ such
  that $|F| < |\A|$ and $F$ is not finitely covered by $\A$, there
  exists $g \in \A$ such that for all $f \in F$, the set $f \cap g$
  is infinite ($\{ n \in \N \mid f(n) = g(n) \}$ is infinite).
\end{definitions}

Now we will give our guide to notation for this
chapter. \label{chap2notation}

$\A$ will be the very mad family under consideration (or parts thereof
that we have already constructed).  $A$ will be a finite subset of
$\A$.  $s$ will be a finite partial function $\N \partmap \N$.  $g$ or
$h$ (with sub- or superscripts) will be an element of $\A$.  $f$ will be any
other element of $\BS$, often an element of $F$, which will be a
family of functions not finitely covered by $\A$.  $\bar{p} = \langle
p_0, \ldots, p_n \rangle$ will be the sequence of length $\lh(\bar{p}) = n
+ 1$.  We write $\pi_i$ for the $i$th projection function;
$\pi_i(\bar{p}) = p_i$.

\section{Martin's Axiom}
\label{sec:CHMA}

In this section we prove that $\MA$ implies the existence of very mad
families.

We define for any subset $\A$ of Baire space a notion of forcing
$\mathbb{P}_\A$, called \emph{eventually different forcing}, see
\cite[page 366]{BJ} and \cite{AM81} (which will also be used in Sections
\ref{sect:orthogonality results} and \ref{sec:sizec}).  It consists of
all conditions of the form $\langle s,A \rangle$ such that
\begin{itemize}
\item
   $s$ is a finite partial function $\mathbb{N} \rightharpoonup
   \mathbb{N}$, and
\item
   $A$ is a finite subset of $\A$.
\end{itemize}
The ordering\footnote{$q \leq p$ means that $q$ is an extension of
$p$.} $\langle s_2,A_2 \rangle \leq \langle s_1,A_1 \rangle$ is
defined by
\begin{equation*}
   s_1 \subseteq s_2 \ \wedge \ A_1 \subseteq A_2 \ \wedge \ \forall g
     \in A_1 \ [g \cap s_2 \subseteq s_1].
\end{equation*}

This notion of forcing is $\sigma$-centered (a strengthening of
c.c.c., for the definition see \cite[p. 23]{BJ}) since there are only
countably many choices for $s$ in a condition and any two conditions
with identical first coordinate are compatible.  This shows that in
fact for the following lemma and theorem the hypothesis $\mathfrak{p}
= 2^{\aleph_0}$ (here $\mathfrak{p}$ is pseudointersection number)
suffices, since this equality implies $\MA(\sigma\text{-centered})$
(\cite{ABhandbook} has a nice presentation of forcing axioms where
this implication is proved; this result originally appeared in
\cite{B12}).

Since $\CH$ implies $\MA$, the results below are also true under
$\CH$.  Of course in the case of $\CH$ you can do essentially the same
construction without mentioning a notion of forcing.

\begin{lemma}[\MA{}]
  \label{lem:stepma}
  Assume that $\A$ is an almost disjoint family of functions with
  $|\A{}| < 2^{\aleph_0}$ and that $F$ is a family not finitely
  covered by $\A$ with $|F| < 2^{\aleph_0}$. Then there exists a
  function $g \not \in \A$ such that:
  {
    \renewcommand{\theenumi}{(\roman{enumi})}
  \begin{enumerate}
    \item $\A \cup \{g\}$ is an almost disjoint family of functions,
      and
    \item for all $f \in F$, the set  $f \cap g$ is infinite.
  \end{enumerate}
  }
\end{lemma}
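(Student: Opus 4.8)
The natural approach is a forcing argument using the eventually different forcing $\mathbb{P}_\A$ defined just above the lemma, combined with Martin's axiom for $\sigma$-centered posets. The plan is to build a filter $G$ meeting a family of fewer than $2^{\aleph_0}$ dense sets, and then let $g = \bigcup \{ s : \langle s, A \rangle \in G \text{ for some } A\}$. I would first verify that $g$ is a total function: for each $n \in \N$ the set $D_n = \{ \langle s, A\rangle : n \in \dom(s)\}$ is dense, since given any condition $\langle s, A \rangle$ with $n \notin \dom(s)$ we may extend $s$ by setting $s(n)$ to any value not equal to $g(n)$ for the finitely many $g \in A$ — there are infinitely many such values — and this extension is a legitimate condition below $\langle s, A\rangle$. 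Meeting all $D_n$ (only $\aleph_0$ of them) makes $g$ total. Genericity over the dense sets $E_g = \{\langle s, A\rangle : g \in A\}$ for $g \in \A$ (again, fewer than $2^{\aleph_0}$ of them) forces, for each $g \in \A$, that eventually $g(n) \ne g(n)$, i.e.\ $g \cap g$ is finite; this gives clause (i). One also checks $g \notin \A$: either it differs from every member of $\A$ by the almost-disjointness just established, or if $\A$ is empty this is trivial — in any case $g$ is eventually different from all of $\A$, hence not one of them.

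The substantive part is clause (ii): for each $f \in F$ and each $k \in \N$, I want the set
\begin{equation*}
  D_{f,k} = \{ \langle s, A\rangle : \exists n \ge k \ \big( n \in \dom(s) \ \wedge \ s(n) = f(n)\big) \}
\end{equation*}
to be dense. Given a condition $\langle s, A\rangle$ with $A = \{g_0, \ldots, g_m\}$, I need to find $n \ge k$, $n \notin \dom(s)$, with $f(n) = g(n)$ forbidden for no structural reason — precisely, I need $f(n) \notin \{g_i(n) : i \le m\}$ to fail to be required; wait, re-examining: to extend the condition I need to add the pair $(n, f(n))$ to $s$, and for $\langle s \cup \{(n,f(n))\}, A\rangle$ to be below $\langle s, A\rangle$ I need $g_i \cap (s \cup \{(n,f(n))\}) \subseteq s$ for each $i$, i.e.\ $f(n) \ne g_i(n)$ for all $i \le m$. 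So I need infinitely many $n$ with $f(n) \notin \{g_0(n), \ldots, g_m(n)\}$. This is exactly the statement that $f$ is not finitely covered by $\{g_0, \ldots, g_m\} \subseteq \A$, which holds because $F$ is not finitely covered by $\A$ (so no element of $F$, in particular not $f$, is finitely covered by any finite subset of $\A$). Pick such an $n \ge k$ with $n \notin \dom(s)$ (possible since $\dom(s)$ is finite and there are infinitely many good $n$), and extend. Thus $D_{f,k}$ is dense, and meeting all $D_{f,k}$ for $f \in F$, $k \in \N$ — that is $|F| \cdot \aleph_0 < 2^{\aleph_0}$ many dense sets — guarantees $f \cap g$ is infinite for every $f \in F$.

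Finally I would assemble the count: the dense sets used are the $D_n$ ($n \in \N$), the $E_g$ ($g \in \A$), and the $D_{f,k}$ ($f \in F$, $k \in \N$), totalling $\max(\aleph_0, |\A|, |F|) < 2^{\aleph_0}$ many. Since $\mathbb{P}_\A$ is $\sigma$-centered, $\MA(\sigma\text{-centered})$ — a consequence of $\MA$ — yields a filter $G$ meeting all of them, and the resulting $g$ witnesses the lemma. The main obstacle is really just the bookkeeping in clause (ii): correctly identifying that the density of $D_{f,k}$ is equivalent to "$f$ is not finitely covered by the finite set $A$" and extracting that cleanly from the hypothesis that $F$ is not finitely covered by $\A$. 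Everything else is routine density-and-genericity manipulation.
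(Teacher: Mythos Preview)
Your proposal is correct and matches the paper's proof essentially line for line: the same three families of dense sets (your $D_n$, $E_g$, $D_{f,k}$ are the paper's $D_n$, $C_h$, $E_{f,n}$), the same density arguments, and the same appeal to $\MA$ for the $\sigma$-centered poset $\mathbb{P}_\A$. The only cosmetic issue is that you reuse the letter $g$ for both members of $\A$ and the generic function; otherwise there is nothing to add.
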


\begin{proof}
  For each $f \in F$, $h \in \A$ and $n \in \N$ let
  \begin{itemize}
    \item $C_h := \{ \langle s, A \rangle \in \mathbb{P}_\A \mid h \in A \}$;
    \item $D_n := \{ \langle s, A \rangle \in \mathbb{P}_\A \mid n \in
      \dom(s)\}$;
    \item $E_{f,n} := \{ \langle s, A \rangle \in \mathbb{P}_\A \mid
      \exists m \geq n \ f(m) = s(m) \}$.
  \end{itemize}
  
  All these sets are dense in $\mathbb{P}_\A$: Let $\langle {s},{A}
  \rangle \in \mathbb{P}_\A$.  For $C_h$ we have $\langle {s},{A}
  \rangle \geq\langle {s},{A\cup\{h\}} \rangle \in C_h$. For $D_n$ we
  need to find an extension of $s$ such that $n$ is in its domain (if
  this is not already the case).  Since $A$ is finite, there exists an
  $m$ such that $m \neq g_0(n)$ for all $g_0 \in A$ and we take $s
  \cup \{(n,m)\}$.  For $E_{f,n}$ note that since $f$ is not finitely
  covered by $\A$, there is an $m > n$ such that $f(m) \not \in
  \{g_0(m) \mid g_0 \in A\}$.  Then $ \langle {s},{A} \rangle \geq
  \langle {s \cup \{(m,f(m))\}},{A} \rangle \in E_{f,n}$.

  The family
  \begin{equation*}
    \mathcal{D} = \{ C_h \mid h \in \A\} \cup \{D_n \mid n \in
      \mathbb{N} \} \cup \{ E_{f,n} \mid f \in F, n \in \mathbb{N} \}
  \end{equation*}
  has cardinality less than $2^{\aleph_0}$.  Therefore by $\MA$ there
  is a filter $G$ meeting all of the sets in $\mathcal{D}$.  Then $g
  := \bigcup \{s\mid \exists A \subseteq \A \ [\langle {s},{A} \rangle
  \in G]\}$ is the desired function ($\{D_n \mid n \in \N\}$ ensure
  that it is a function with domain all of $\N$, $\{C_h \mid h \in
  \A\}$ together with the definition of the order ensure $g$ is almost
  disjoint from all members of $\A$, and for each $f \in F$,
  $\{E_{f,n} \mid n \in \N\}$ ensure that $f \cap g$ is infinite).
\end{proof}

\begin{theorem}[\MA{}]
  There exists a very mad family, and any such family is of
  cardinality $2^{\aleph_0}$.
\end{theorem}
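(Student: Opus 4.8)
The plan is to establish the theorem in two parts: existence of a very mad family of size $2^{\aleph_0}$, and maximality of cardinality for all such families. For existence, I would build the family $\A = \{g_\alpha \mid \alpha < 2^{\aleph_0}\}$ by transfinite recursion of length $2^{\aleph_0}$, at each stage invoking Lemma \ref{lem:stepma}. The bookkeeping must simultaneously ensure almost disjointness is maintained and that every ``small'' family $F$ that is not finitely covered gets a witness $g \in \A$ with $f \cap g$ infinite for all $f \in F$. Since $2^{\aleph_0}$ is regular under $\MA$, and each requirement concerns an object coded by a subset of $\BS$ of size $< 2^{\aleph_0}$, a standard enumeration argument lets us list all such $(F, \text{requirement})$ pairs in order type $2^{\aleph_0}$ with each appearing cofinally often.

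First I would fix an enumeration $\langle F_\alpha \mid \alpha < 2^{\aleph_0}\rangle$ of all subsets of $\BS$ of cardinality $< 2^{\aleph_0}$, arranged so that each such set appears for cofinally many $\alpha$ (possible since $2^{\aleph_0}$ is regular and there are exactly $2^{\aleph_0}$ such sets — using $\MA$, which gives $2^{<2^{\aleph_0}} = 2^{\aleph_0}$). At stage $\alpha$, suppose $\langle g_\beta \mid \beta < \alpha\rangle$ has been constructed forming an almost disjoint family $\A_\alpha$ of size $|\alpha| < 2^{\aleph_0}$. If $F_\alpha$ is not finitely covered by $\A_\alpha$, apply Lemma \ref{lem:stepma} with this $\A_\alpha$ and $F = F_\alpha$ to obtain $g_\alpha \notin \A_\alpha$ with $\A_\alpha \cup \{g_\alpha\}$ almost disjoint and $f \cap g_\alpha$ infinite for all $f \in F_\alpha$; otherwise let $g_\alpha$ be any function almost disjoint from $\A_\alpha$ (such exists, e.g. by the lemma with $F = \emptyset$, guaranteeing $\A$ ends up of size exactly $2^{\aleph_0}$ and all the $g_\alpha$ distinct). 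Set $\A = \{g_\alpha \mid \alpha < 2^{\aleph_0}\}$. To verify $\A$ is very mad: it is almost disjoint by construction; and if $F \subseteq \BS$ has $|F| < |\A| = 2^{\aleph_0}$ and $F$ is not finitely covered by $\A$, then $F = F_\alpha$ for some $\alpha$ large enough that $|F| < |\alpha|$, and since $\A_\alpha \subseteq \A$, $F$ is a fortiori not finitely covered by $\A_\alpha$, so $g_\alpha$ was chosen as a witness — giving $g_\alpha \in \A$ with $f \cap g_\alpha$ infinite for all $f \in F$.

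For the second part — that any very mad family $\A$ has $|\A| = 2^{\aleph_0}$ — I would argue by contradiction. Suppose $\A$ is very mad with $|\A| = \kappa < 2^{\aleph_0}$. The key point is that $\A$ cannot finitely cover all of $\BS$ (indeed $\BS$ itself is not finitely covered by any countable, hence by any size-$<2^{\aleph_0}$, family of functions under $\MA$ — finitely many functions cover only a set of size $\leq$ their number of branches, and more carefully a diagonal argument produces $f$ with $f(n) \notin \{g_i(n) : i \le k\}$ for all but finitely many $n$, for any fixed finite set; iterating, one shows no small subfamily finitely covers everything). Fix any single $f_0 \in \BS$ not finitely covered by $\A$. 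Now I want to produce a family $F$ with $|F| < |\A|$ not finitely covered by $\A$ such that no $g \in \A$ meets all members of $F$ infinitely — contradicting very madness. Take $F = \{f_0\} \cup \A'$ for a suitable small piece, or better: using $|\A| = \kappa < 2^{\aleph_0}$, apply Lemma \ref{lem:stepma} with the family $\A$ itself in the role of the background family and $F = \{f_0\}$ to get $g_0 \notin \A$ almost disjoint from all of $\A$; then $\{g_0\}$ has size $1 < |\A|$ and is not finitely covered by $\A$ (if $g_0 \subseteq^* \bigcup_{i \le n} g_i$ with $g_i \in \A$, then since the $g_i$ are pairwise a.d. and $g_0$ is a.d. from each, $g_0$ would be finite, absurd), yet no $g \in \A$ has $g \cap g_0$ infinite since $g_0$ is a.d. from every member of $\A$. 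This contradicts the definition of very mad.

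The main obstacle is the lemma needed for the non-finite-cover facts used throughout part two: that a small family of functions cannot finitely cover $\BS$, and that a function almost disjoint from an almost disjoint family $\A$ cannot be finitely covered by $\A$. The second is the elementary observation above (pairwise almost disjointness forces $\bigcup_{i\le n} g_i \cap g_0$ finite). The first requires slightly more care: one must check that for any family $\mathcal{B}$ with $|\mathcal{B}| < 2^{\aleph_0}$ there is $f \in \BS$ not finitely covered by $\mathcal{B}$ — this follows because ``not finitely covered'' is exactly the hypothesis of Lemma \ref{lem:stepma} applied with $\A = \emptyset$ and $F$ anything, but to get a single $f_0$ one can instead observe directly that the eventually different forcing $\mathbb{P}_{\mathcal{B}}$ with its dense sets (as in the lemma's proof, replacing the $E_{f,n}$ dens sets with ones ensuring $s$ eventually avoids all finite subsets of $\mathcal{B}$) yields such an $f_0$ under $\MA$. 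I would state and prove this auxiliary fact as a short preliminary remark before the contradiction argument.
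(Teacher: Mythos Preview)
Your proof is correct in both parts, but the existence argument takes a heavier route than the paper's. You enumerate all subsets $F \subseteq \BS$ of size $<2^{\aleph_0}$ (with cofinal repetition), which requires invoking $2^{<2^{\aleph_0}} = 2^{\aleph_0}$ under $\MA$. The paper instead simply enumerates $\BS = \{f_\alpha \mid \alpha < 2^{\aleph_0}\}$ and at stage $\beta$ applies Lemma~\ref{lem:stepma} with $F$ equal to \emph{all} $f_\alpha$ ($\alpha < \beta$) not finitely covered by $\{g_\gamma \mid \gamma < \beta\}$. The verification is then: given any $F$ of size $<2^{\aleph_0}$ not finitely covered by $\A$, regularity of $2^{\aleph_0}$ gives $\beta$ with $F \subseteq \{f_\alpha \mid \alpha < \beta\}$, and the single function $g_\beta$ meets every member of $F$ infinitely. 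This avoids both the cardinal arithmetic and the cofinal-repetition bookkeeping; the trick is that catching all uncovered functions seen so far \emph{simultaneously} at each stage automatically handles every small family at once. (Incidentally, your clause ``$|F| < |\alpha|$'' in the verification is unused and can be dropped.)

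For the second part, your argument is essentially what the paper means by ``follows immediately from Lemma~\ref{lem:stepma},'' but you take an unnecessary detour through finding an $f_0$ not finitely covered by $\A$. You can apply the lemma directly with $F = \emptyset$ (vacuously not finitely covered) to get $g_0 \notin \A$ almost disjoint from every member of $\A$; then, exactly as you argue, $\{g_0\}$ has size $1 < |\A|$, is not finitely covered by $\A$, yet no $g \in \A$ meets $g_0$ infinitely --- contradicting very madness. The auxiliary fact you flag as ``the main obstacle'' is therefore not needed.
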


\begin{proof}
  We shall construct functions $g_\alpha$ such that $\{ g_\alpha \mid
  \alpha < 2^{\aleph_0} \}$ is a very mad family of size
  $2^{\aleph_0}$.  Let $\{f_\alpha \mid \alpha < 2^{\aleph_0} \}$ be
  an enumeration of $\BS$. At stage $\beta$ we do the following.

  Let $F$ be the maximal subset of $\{ f_\alpha \mid \alpha <
  \beta\}$ that is not finitely covered by $A_\beta := \{g_\alpha \mid
  \alpha < \beta \}$ ($F$ is the set of functions in $\{f_\alpha \mid
  \alpha < \beta\}$ that are not finitely covered by $A_\beta$).  By
  Lemma \ref{lem:stepma} there exists $g_\beta$ such that $A_\beta
  \cup \{g_\beta\}$ is an almost disjoint family of functions, and for
  any $f \in F$ the set $f \cap g_\beta$ is infinite (if $F$ is
  empty, we just get a new function almost disjoint from all of
  $A_\alpha$).

  Now let $\A = \{ g_\beta \mid \beta < 2^{\aleph_0} \}$; we claim
  that $\A$ is a very mad family.  Let $F$ be not finitely covered by
  $\A$ and such that $|F| < |\A{}| = 2^{\aleph_0}$.  Since $F$ is of
  cardinality less than $2^{\aleph_0}$, we have $F \subseteq \{
  f_\alpha \mid \alpha < \beta \}$ for some $\beta < 2^{\aleph_0}$
  ($\MA$ implies that $2^{\aleph_0}$ is regular), and then $g_\beta$ will
  meet all members of $F$ infinitely often.

  The second clause follows immediately from Lemma \ref{lem:stepma}.
\end{proof}

\section{In the Cohen Model}

We prove that in any model of the continuum hypothesis
there is a very mad family that survives Cohen
forcing. For this we need the following lemma, which is from
\cite[Lemma 2.2, p. 256]{KK80}.

We let $\Fn(I,2)$ denote the set of finite partial functions $I
\rightharpoonup 2$ ordered by reverse inclusion.

\begin{lemma}
  \label{lem:kk}
  Let $M$ be a model of $\ZFC$, $I, S \in M$, $G$ be
  $\Fn(I,2)$-generic over $M$, and $X \subseteq S$ with $X \in M[G]$.
  Then $X \in M[G \cap \Fn(I_0,2)]$ for some $I_0 \subseteq I$ such
  that $I_0 \in M$ and $(|I_0| \leq |S|)^M$.\hfill $\qed$
\end{lemma}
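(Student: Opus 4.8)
The plan is to prove Lemma \ref{lem:kk} by the standard ``small support reflection'' argument for Cohen forcing on a product index set, exploiting that $\Fn(I,2)$ factors as a product $\Fn(I_0,2)\times\Fn(I\setminus I_0,2)$ for any $I_0\subseteq I$.

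First, working inside $M$, I would fix a $\Fn(I,2)$-name $\dot X$ with $\dot X^G = X$ (possible since $X\in M[G]$), and for each $s\in S$ use the axiom of choice in $M$ to pick a maximal antichain $A_s\subseteq\Fn(I,2)$ all of whose members decide the statement $\check s\in\dot X$. Since $\Fn(I,2)$ satisfies the countable chain condition (a $\Delta$-system argument on finite partial functions), each $A_s$ is countable in $M$. Set
\[
  I_0 \;=\; \bigcup\{\,\dom(p)\mid s\in S,\ p\in A_s\,\}.
\]
Then $I_0\in M$, and in $M$ this is a union of $|S|$ countable families of finite sets, so $(|I_0|\leq|S|)^M$, where I assume $S$ infinite as in every application (otherwise the bound is $\aleph_0$).

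Next I would invoke the product decomposition $\Fn(I,2)\cong\Fn(I_0,2)\times\Fn(I\setminus I_0,2)$, under which $G$ corresponds to the pair $(G_0,G_1)$ with $G_0=G\cap\Fn(I_0,2)$ and $G_1=G\cap\Fn(I\setminus I_0,2)$; here $G_0$ is $\Fn(I_0,2)$-generic over $M$ and $M[G]=M[G_0][G_1]$. The key observation is that for each $s\in S$ the filter $G$ meets the maximal antichain $A_s\in M$ in a unique condition $p_s$, and $\dom(p_s)\subseteq I_0$, so in fact $p_s\in G\cap\Fn(I_0,2)=G_0$. Since $p_s$ decides $\check s\in\dot X$ and $p_s\in G$, we get $s\in X$ iff $p_s\forces\check s\in\dot X$. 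Hence
\[
  X \;=\; \{\, s\in S \mid \exists p\in G_0\cap A_s\ \ p\forces_{\Fn(I,2)}\check s\in\dot X \,\},
\]
and every parameter on the right side --- the name $\dot X$, the sequence $\langle A_s\mid s\in S\rangle$, the restriction of the forcing relation to formulas of the form $\check s\in\dot X$ (which is definable in $M$), and the filter $G_0$ --- lies in $M[G_0]$. Therefore $X\in M[G_0]=M[G\cap\Fn(I_0,2)]$, as required.

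The only genuinely delicate points are bookkeeping: one must check that $I_0$ is computed inside $M$ so that the cardinality estimate is an estimate in $M$ (it is, since $\dot X$, all the $A_s$, and the choice function producing them belong to $M$), and one must verify that the witnessing condition $p_s$ really lands in $G_0$ --- but that is immediate from $\dom(p_s)\subseteq I_0$ together with $G_0=G\cap\Fn(I_0,2)$. The countable chain condition for $\Fn(I,2)$, used only to keep each $A_s$ countable, is the single external input and is entirely standard.
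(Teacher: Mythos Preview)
Your proof is correct and is exactly the standard argument; in fact the paper does not prove this lemma at all but simply cites it from Kunen's textbook \cite[Lemma 2.2, p.~256]{KK80}, where the proof proceeds precisely as you describe (choose deciding maximal antichains $A_s$, use the c.c.c.\ to bound their size, collect the supports into $I_0$, and read off $X$ from $G\cap\Fn(I_0,2)$). Your parenthetical remark about the finite-$S$ case is also apt: the stated bound $|I_0|\le|S|$ tacitly assumes $S$ is infinite, which it is in the paper's only application ($S=\N\times\N$).
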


\begin{theorem}
  Let $M$ be a model of $\ZFC + \CH$.  Then there is a very mad family
  $\A$ in $M$ such that for any $I \in M$ and $\Fn(I,2)$-generic set
  $G$, $M[G] \models$ ``$\check{\A}$ is a very mad family of size
  $\aleph_1$''.
\end{theorem}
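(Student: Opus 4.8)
The plan is to build the very mad family $\A = \{g_\alpha \mid \alpha < \aleph_1\}$ by a transfinite recursion of length $\aleph_1$, using $\CH$ in the ground model $M$ exactly as in the Martin's axiom argument (Lemma~\ref{lem:stepma} under $\CH$ is just the countable instance), but with the crucial extra requirement that at each stage I also anticipate the names for potential counterexamples in every future Cohen extension. The point is that a single poset $\Fn(I,2)$ for arbitrary $I \in M$ can be handled uniformly because of Lemma~\ref{lem:kk}: any subset of $\N$ (hence any real, hence any function in $\BS$, coded as a subset of $\N\times\N$) appearing in $M[G]$ already appears in $M[G \cap \Fn(I_0,2)]$ for some countable $I_0 \in M$. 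Since there are only $\aleph_1$-many countable subsets of any $I$ under $\CH$ — wait, that is false in general, so instead I use that it suffices to diagonalize against all $\Fn(\omega,2)$-names: up to isomorphism of the forcing, every countable-support piece $\Fn(I_0,2)$ with $I_0$ countable is just Cohen forcing $\mathrm{Add}(\omega,1)$ (or a finite/countable product thereof, which is again Cohen forcing), and in $M$ under $\CH$ there are exactly $\aleph_1$-many nice $\Fn(\omega,2)$-names for elements of $\BS$.

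First I would fix, in $M$, an enumeration $\langle \dot f_\alpha \mid \alpha < \aleph_1 \rangle$ of all nice $\Fn(\omega,2)$-names for elements of $\BS$, together with an enumeration $\langle (\dot F_\alpha, q_\alpha) \mid \alpha < \aleph_1\rangle$ of all pairs consisting of a nice name $\dot F$ for a countable subset of $\BS$ and a condition $q$ forcing ``$\dot F$ is not finitely covered by $\check\A$'' — more carefully, I enumerate all triples $(\dot F, q, \beta)$ where $\dot F$ names a countable subset of $\BS$; using a bookkeeping function I arrange that each such triple is considered at some stage. At stage $\beta$, with $A_\beta = \{g_\alpha \mid \alpha < \beta\}$ already built as a countable almost disjoint family, I invoke the $\CH$-version of Lemma~\ref{lem:stepma}: I need a $g_\beta \notin A_\beta$ with $A_\beta \cup \{g_\beta\}$ almost disjoint and such that $g_\beta$ meets infinitely often every function in a suitably chosen countable family $F_\beta$. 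The family $F_\beta$ must include not only the actual ground-model functions $f_\alpha$ ($\alpha < \beta$) not finitely covered by $A_\beta$, but also, for the bookkept name $\dot F$, ``enough'' possible values: here I dovetail over all conditions $q \in \Fn(\omega,2)$ and all $n$, and for each I throw into $F_\beta$ a function $f_{q,n}$ chosen (in $M$) so that $q$ does not force $\dot F$ to be finitely covered by $A_\beta$ entails $q$ has an extension forcing some member of $\dot F$ to agree with $f_{q,n}$ on some value $\geq n$ — the standard ``genericity of the name'' extraction.

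The key lemma to isolate and prove is the following: if $X = \bigcup\{s \mid (s,A) \in G\}$ arises from a filter $G$ meeting the dense sets $C_h$ ($h \in A_\beta$), $D_n$, and $E_{f,n}$ ($f \in F_\beta$, $n \in \N$) from the proof of Lemma~\ref{lem:stepma}, and $F_\beta$ was built to anticipate the $\Fn(\omega,2)$-name $\dot F$ as above, then $\Vdash_{\Fn(\omega,2)}$ ``$g_\beta$ meets every member of $\dot F$ infinitely often''. Granting this, the final verification runs: let $G$ be $\Fn(I,2)$-generic over $M$ for arbitrary $I \in M$, and suppose in $M[G]$ that $F \subseteq \BS$ is countable and not finitely covered by $\A$. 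By Lemma~\ref{lem:kk}, $F \in M[G \cap \Fn(I_0,2)]$ for some countable $I_0 \in M$; fixing an isomorphism of $\Fn(I_0,2)$ with $\Fn(\omega,2)$ in $M$, the family $F$ has a nice name $\dot F$ in my enumeration, and ``$\dot F$ is not finitely covered by $\check\A$'' is forced by some condition in $G \cap \Fn(I_0,2)$; at the stage $\beta$ where this was bookkept, $g_\beta$ was built to meet every member of every possible value of $\dot F$ infinitely often — hence $g_\beta \in \A$ witnesses the defining property of a very mad family. (The size is $\aleph_1 = 2^{\aleph_0}$ in $M[G]$ since Cohen forcing is c.c.c.\ and preserves cardinals, and $\aleph_1 = |\A| \le 2^{\aleph_0}$ trivially, while $\Fn(I,2)$ with $|I| = \aleph_1$ can push $2^{\aleph_0}$ up, but $|\check\A| = \aleph_1$ regardless; ``of size $\aleph_1$'' is meant literally, not ``$= 2^{\aleph_0}$''.)

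The main obstacle is getting the name-anticipation to interact correctly with the ordering of eventually different forcing. When I add $f_{q,n}$ to $F_\beta$ I must ensure the dense set $E_{f_{q,n},m}$ of Lemma~\ref{lem:stepma} is genuinely dense with respect to $A_\beta$, i.e.\ that $f_{q,n}$ is not finitely covered by $A_\beta$ — but a priori a value extracted from $\dot F$ could be finitely covered by $A_\beta$ even though $\dot F$ as a whole is not. The fix is to be careful about what ``$q$ does not force $\dot F$ finitely covered'' buys me: from it I can find, for each $n$, an extension $q' \le q$ and some name $\dot f \in \dot F$ with $q' \Vdash \dot f(m) \notin \{h(m) : h \in A_\beta\}$ for some $m \ge n$, and it is the partial function $m \mapsto$ (that value) that I feed into the density argument — not a total function — so that density of $E_{f,n}$ is automatic from the finite-support structure. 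Equivalently, I can reformulate $F_\beta$ as a countable set of \emph{finite partial functions} to be met, which is all the proof of Lemma~\ref{lem:stepma} actually uses. Making this bookkeeping airtight — so that every name over every countable subforcing is caught cofinally and the extracted constraints are always compatible with the almost-disjointness requirement — is the real work; the rest is a routine adaptation of the $\CH$ construction plus an application of Lemma~\ref{lem:kk}.
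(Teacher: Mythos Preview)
Your proposal is correct and follows essentially the same strategy as the paper: reduce arbitrary $\Fn(I,2)$ to $\Fn(\omega,2)$ via Lemma~\ref{lem:kk}, enumerate nice names under $\CH$, and build each $g_\beta$ by finite approximations that at each sub-stage extract, via extensions of conditions, values of the named functions lying outside $\{g'_j(m) : j \le s\}$ --- exactly the fix you arrive at in your final paragraph. The paper's organization is somewhat cleaner than yours: rather than bookkeeping one name $\dot F$ for a countable \emph{family} per stage, it enumerates pairs $(p_i,\tau_i)$ of a condition and a nice name for a \emph{single} subset of $\N\times\N$, and at stage $\beta$ handles simultaneously all $\tau_\alpha$ ($\alpha<\beta$) that the fixed condition $p_\beta$ forces to be a total function not finitely covered by $\{g_\gamma : \gamma<\beta\}$. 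This sidesteps the scheduling issues you worry about and makes the verification more direct: given a countable uncovered $F$ in $M[G]$, find $\alpha$ with $F \subseteq \{\tau_\gamma[G] : \gamma < \alpha\}$ and a $p\in G$ forcing this; then any later stage $\beta$ with $p_\beta = p$ already took care of a superset of $F$.
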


\begin{proof}
  We construct a very mad family that survives forcing with
  $\Fn(\mathbb{N},2)$, and then show that this family survives forcing
  with $\Fn(I,2)$ for any $I$.  Note that since the continuum
  hypothesis is true in $M$, strongly and very mad families in $M$ are
  the same thing, and that the very mad family in $M$ will be of size
  $\aleph_1$, so it only has to contain functions capturing any
  countable collection in the extension.

  Since in $M$
  \begin{gather*}
    |\Fn(\N,2) \times \{\tau \mid \tau \text{ is a nice name
      for a subset of } (\mathbb{N} \times \mathbb{N})\highcheck \
      \}| \\
    \leq \aleph_0 \times \left( (\# \text{ anti-chains in } \Fn(\N,2)) \times
      |\{0,1\}^{\aleph_0}|\right)^{|\N \times \N|} \\
    \leq \aleph_0 \times (2^{\aleph_0} \times 2^{\aleph_0})^{\aleph_0} \stackrel{\CH}{=}
      \aleph_1,
  \end{gather*}
  we can enumerate $\Fn(\mathbb{N},2) \times \{\tau \mid \tau \text{
  is a nice name for a subset of } (\mathbb{N} \times
  \mathbb{N})\highcheck \ \}$ as $\langle (p_i, \tau_i) \mid i <
  \omega_1 \rangle$.

  We construct $\A = \{g_\alpha \mid \alpha < \omega_1 \}$, the very
  mad family, recursively.
  
  Assume that all $g_\alpha$ for $\alpha<\beta$ have been defined.  We need
  $g_\beta$ to satisfy:
  \renewcommand{\theenumi}{G\arabic{enumi}}
  \begin{enumerate}
  \item \label{global:1} for all $\alpha < \beta$, the functions
    $g_\alpha$ and $g_\beta$ are almost disjoint, and
  \item \label{global:2}
     if $F_\beta := \{\tau_\alpha \mid\alpha < \beta, p_\beta \forces
     ``\tau_\alpha $ is a total function and $ \tau_\alpha$ is not
     finitely covered by $\{\check{g}_\gamma \mid \gamma <
     \check{\beta} \}$''$\}$, then $(\forall \tau_\alpha \in F_\beta)\
     p_\beta \forces `` |\tau_\alpha \cap \check{g}_\beta| =
     \check{\omega}$''.
   \end{enumerate}

     $p_\beta \forces `` |\tau_\alpha \cap \check{g}_\beta| =
     \check{\omega}$'' is equivalent to 
     \begin{equation*}
       (\forall n) ( \forall q \leq p_\beta) (\exists r \leq q) (\exists m
       \geq n) \ r \forces \text{``}\tau_\alpha(\check{m}) =
       \check{g}_\beta(\check{m})\text{''}.
    \end{equation*}

  Enumerate $\mathbb{N} \times \{q \mid q \leq p_\beta\}$ as $\langle
  (n_i,q_i) \mid i < \omega\rangle$, $\langle g_\alpha \mid \alpha <
  \beta \rangle$ as $\langle g_i' \mid i < \omega\rangle$ and
  $F_\beta$ as $\langle \tau_i' \mid i < \omega\rangle$.

  Recursively define $g_\beta$.  Before stage $s$ we have $g_\beta$
  defined on $\{0, \ldots, n_s\}$.  At stage $s$ we want to define
  $g_\beta$ on $\{n_s+1, \ldots, n_{s+1}\}$ for some $n_{s+1}$ so that
  \ref{global:1} and \ref{global:2} will eventually be satisfied.

  The requirements at this stage will be:

  \renewcommand{\theenumi}{L\arabic{enumi}}
  \begin{enumerate}
  \item \label{local:1}
     $\forall n \in \{n_s+1, \ldots, n_{s+1}\}\ (g_\beta(n) \not \in
     \{g_0'(n), \ldots, g_s'(n)\})$, and
  \item \label{local:2}
     there are $n_{s,0}, \ldots, n_{s,s} > n_s$ all distinct and
     $r_{s,0}, \ldots, r_{s,s} \leq q_s$ such that for all
     $i$ from $0$ to $s$ we have $r_{s,i} \forces ``
     \tau_i'(\check{n}_{s,i}) = \check{g}_\beta(\check{n}_{s,i})$''.
  \end{enumerate}
  \renewcommand{\theenumi}{\arabic{enumi}}
  Requirement \ref{local:1} ensures that $g_\beta$ will satisfy
  \ref{global:1}, and requirement \ref{local:2} ensures that $g_\beta$
  will satisfy \ref{global:2}.
 
  Since $p_\beta \forces\ $``$\tau_i' \text{ is a total function not finitely
  covered by}\, \check{g_0'}, \ldots, \check{g_s'}$'', for every
  $m > n_s$ there is an $n_{s,i} > m$ and $r'_{s,i} \leq q_s$ such that
  $r'_{s,i} \forces `` \tau_i'(\check{n}_{s,i}) \not \in \{
  \check{g_i'}(\check{n}_{s,i}) \mid 0 \leq i \leq s\}$''.  Then below
  $r'_{s,i}$ there is a condition $r_{s,i}$ that decides the value of
  $\tau_i'(\check{n}_{s,i})$.
  
  We use this observation repeatedly to find $n_{s,0} < \cdots <
  n_{s,s}$ all larger than $n_s$, and define $g_\beta(n_{s,i})$ to be
  the number that $\tau_i'(\check{n}_{s,i})$ is forced to be by
  $r_{s,i}'$.  Then we set $n_{s+1} = n_{s,s}$ and set $g_\beta(n)$,
  for $n < n_{s+1}$ such that $g_\beta(n)$ is not defined yet, to be
  any number not in $\{g_i'(n)\mid 0 \leq i \leq s\}$.  This completes
  the construction.
  
  We show that $\A = \{ g_\beta \mid \beta < \omega_1 \}$ is a very
  mad family in the forcing extension $M[G]$, for any $G$ that is
  $\Fn(\mathbb{N},2)$-generic over $M$.  First note that $\A$ is an
  almost disjoint family in $M[G]$, since it is almost disjoint in
  $M$: the functions are almost disjoint by the first requirement.  To
  see that it is very mad, let $F$ be a countable family of functions,
  all of which are not finitely covered by ${\A}$.  Then in
  $M[G]$ we have $F \subseteq \{ \tau_\beta[G] \mid \beta < \omega_1 \}$;
  therefore there exists an $\alpha$ (by the countability of $F$) such
  that $F \subseteq \{ \tau_\beta[G] \mid \beta < \alpha \}$, and this is
  forced by some $p \in G$.  Then at some point $\beta$ in the
  construction (after stage $\alpha$) when we have a $p_\beta$ equal
  to $p$ we will correctly deal with a superset of $F$ (and therefore
  with $F$), by the second requirement.
  
  It remains to show that for any $I$ the family $\A$ is very mad in
  any forcing extension $M[G]$ with $G$ $\Fn(I,2)$-generic over $M$.
  Suppose that in some forcing extension by $\Fn(I,2)$ the family $\A$
  defined above is no longer very mad.  There is then a countable
  family $F$ of functions not finitely covered by $\A$ for which there
  does not exist $g \in \A$ such that for all $f \in F$ the set $f \cap
  g$ infinite.  Code the family in a single real.  Then there is a
  countable $I_0$ such that this real, and therefore the family $F$,
  are in the extension of $M$ by $\Fn(I_0,2) \cap G$ (by Lemma
  \ref{lem:kk}).  Since $\Fn(I_0,2) \cong \Fn(\N,2)$ the above
  argument for $\Fn(\N,2)$ shows that there is a function $g \in \A{}$
  such that for all $f \in F$ the set $f \cap g$ is infinite.  This
  contradicts the existence of such a family $F$. (Note that this also
  shows $\A$ is very mad in $M$, by taking $I = \emptyset$.)
\end{proof}


\section{Orthogonality Results}
\label{sect:orthogonality results}

In this section we'll prove the existence, under Martin's axiom, of
many orthogonal very mad families (see Definition \ref{definition
  orthogonal}).  This intuitively shows that there can be much
``room'' outside of a very mad family.  We also show that under the
continuum hypothesis, for any very mad family there exists a very mad
family orthogonal to it.

Let $\langle \A_\alpha \mid \alpha < \beta \rangle$ be subsets of
Baire space.  Define the notion of forcing $\mathbb{Q}_{\langle
  \A_\alpha \mid \alpha < \beta \rangle}$, to consist of all
conditions $\bar{p}$ that are finite partial functions with domain
contained in $\beta$ and $\bar{p}(\alpha) \in \mathbb{P}_{\A_\alpha}$
(from Section \ref{sec:CHMA}).  Define $\bar{q} \leq \bar{p}$ iff
$\dom(\bar{p}) \subseteq \dom(\bar{q})$ and $\forall \alpha \in
\dom(\bar{p}) \ [\bar{q}(\alpha) \leq_{\mathbb{P}_{\A_\alpha}}
\bar{p}(\alpha)]$.

\begin{lemma}[$\MA$]
  \label{lem:maOrthStep}
  Let $\A_\alpha$ ($\alpha < \beta < 2^{\aleph_0}$) be almost disjoint
  families that are pairwise orthogonal, and let $F_\alpha$ ($\alpha <
  \beta$) be families of functions such that $| F_\alpha | <
  2^{\aleph_0}$ and $F_\alpha$ is not finitely covered by $\A_\alpha$.
  Then there exist functions $g_\alpha \not \in \A_\alpha$ ($\alpha <
  \beta$) such that
  {
    \renewcommand{\theenumi}{(\roman{enumi})}
  \begin{enumerate}
    \item
      for all $\alpha < \beta$, the family $\A_\alpha \cup
      \{g_\alpha\}$ is an almost disjoint family,
    \item
      for all $\alpha < \beta$ and $f \in F_\alpha$, the set $f \cap
      g_\alpha$ is infinite, and
    \item
      for all $\alpha_1, \alpha_2 < \beta$, the families $A_{\alpha_1}
      \cup \{g_{\alpha_1}\}$ and $A_{\alpha_2} \cup \{g_{\alpha_2}\}$
      are orthogonal.
  \end{enumerate}
  }
\end{lemma}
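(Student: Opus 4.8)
The plan is to use the product forcing $\mathbb{Q}_{\langle \A_\alpha \mid \alpha < \beta \rangle}$ defined just above the statement, together with $\MA$, exactly as in the proof of Lemma~\ref{lem:stepma} but handling all $\beta$ coordinates simultaneously. A filter meeting appropriately many dense sets will yield all the $g_\alpha$'s at once: from coordinate $\alpha$ of the conditions in the filter we read off $g_\alpha = \bigcup\{s \mid \exists \bar p \in G\ \exists A\ [\bar p(\alpha) = \langle s,A\rangle]\}$. The only genuinely new feature compared with Lemma~\ref{lem:stepma} is clause~(iii), orthogonality of $A_{\alpha_1}\cup\{g_{\alpha_1}\}$ and $A_{\alpha_2}\cup\{g_{\alpha_2}\}$, which I expect to be the main obstacle and which forces some care in the choice of dense sets.

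\textbf{The dense sets.} For each $\alpha < \beta$ I will use the analogues of the dense sets from Lemma~\ref{lem:stepma}, adapted to the coordinate $\alpha$:
\begin{itemize}
\item $C^\alpha_h := \{\bar p \mid \alpha \in \dom(\bar p) \text{ and, writing } \bar p(\alpha) = \langle s,A\rangle,\ h \in A\}$, for each $h \in \A_\alpha$, to guarantee $g_\alpha$ is almost disjoint from every member of $\A_\alpha$ (clause~(i));
\item $D^\alpha_n := \{\bar p \mid \alpha \in \dom(\bar p),\ \bar p(\alpha) = \langle s,A\rangle,\ n \in \dom(s)\}$, for each $n \in \N$, to make $g_\alpha$ total;
\item $E^\alpha_{f,n} := \{\bar p \mid \alpha \in \dom(\bar p),\ \bar p(\alpha) = \langle s,A\rangle,\ \exists m \geq n\ f(m) = s(m)\}$, for each $f \in F_\alpha$ and $n \in \N$, to force $f \cap g_\alpha$ infinite (clause~(ii)).
\end{itemize}
Density of each of these is checked at the single coordinate $\alpha$, using the argument already given in Lemma~\ref{lem:stepma}: extending a condition $\bar p$ at coordinate $\alpha$ (and leaving the other coordinates alone, or adjoining $\alpha$ to the domain with the appropriate one-coordinate condition) never interferes with the other coordinates, since the order on $\mathbb{Q}$ is coordinatewise. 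For clause~(iii) I add, for each pair $\alpha_1 \neq \alpha_2 < \beta$ and each $n \in \N$, the set
\[
O^{\alpha_1,\alpha_2}_n := \{\bar p \mid \alpha_1,\alpha_2 \in \dom(\bar p),\ \exists m \geq n\ [\,s_{\alpha_1}(m) \text{ defined},\ s_{\alpha_1}(m) \neq h(m) \text{ for all } h \in A_{\alpha_2}\cup\{\text{the current } s_{\alpha_2}\text{-graph}\},\dots]\},
\]
designed to witness that $g_{\alpha_1}$ is not finitely covered by $A_{\alpha_2}\cup\{g_{\alpha_2}\}$ above the level $n$, and symmetrically for $g_{\alpha_2}$ against $A_{\alpha_1}\cup\{g_{\alpha_1}\}$.

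\textbf{The main obstacle, and how to beat it.} Making $O^{\alpha_1,\alpha_2}_n$ dense is the crux. Given $\bar p$ with $\bar p(\alpha_1) = \langle s_1,A_1\rangle$ and $\bar p(\alpha_2) = \langle s_2,A_2\rangle$, I must find $m \geq n$ and a value $k = s_1(m)$ such that, simultaneously, $k$ avoids $g_0(m)$ for $g_0 \in A_1$ (to stay a legal extension in $\mathbb{P}_{\A_{\alpha_1}}$) and $k$ avoids $h(m)$ for every $h \in A_2$. The finitely many forbidden values from $A_1 \cup A_2$ are no problem for a single large $m$ outside $\dom(s_1)\cup\dom(s_2)$; the subtle point is that I also want the eventual $g_{\alpha_2}$ to miss the point $(m,k)$, i.e.\ I want $(m,k) \notin g_{\alpha_2}$. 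I handle this by choosing $m \notin \dom(s_2)$ and then simultaneously extending coordinate $\alpha_2$ by putting $s_2(m) := k'$ for some $k' \neq k$ that also avoids $\{h(m) \mid h \in A_2\}$ — two values $k \neq k'$ both avoiding a fixed finite set exist since $\N$ is infinite. Then $(m,k) \in s_1 \subseteq g_{\alpha_1}$ is committed, while $g_{\alpha_2}(m) = k' \neq k$ is committed, so $(m,k) \notin g_{\alpha_2}$, and $(m,k) \notin h$ for $h \in A_2$. Doing this for cofinally many $m$ (via the filter meeting all $O^{\alpha_1,\alpha_2}_n$, $n \in \N$) shows $g_{\alpha_1} \setminus (g_{\alpha_2} \cup \bigcup A_2)$ is infinite; since each member of $A_{\alpha_2}$ is already almost disjoint from $g_{\alpha_1}$ if it lies in $A_2$, and for the (finitely many in any would-be cover) members of $\A_{\alpha_2}\setminus A_2$ one uses the orthogonality hypothesis $\A_{\alpha_1} \perp \A_{\alpha_2}$ plus a further family of dense sets $C^{\alpha_1}_h, C^{\alpha_2}_h$ forcing those particular members into the conditions — this is the one place the pairwise-orthogonality assumption on the $\A_\alpha$'s is used — we conclude $g_{\alpha_1}$ is not finitely covered by $A_{\alpha_2}\cup\{g_{\alpha_2}\}$, and symmetrically. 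That is clause~(iii).

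\textbf{Closing off.} Finally I count: there are $\beta < 2^{\aleph_0}$ coordinates, and for each the families $C^\alpha_h$ ($h \in \A_\alpha$, and $|\A_\alpha| < 2^{\aleph_0}$), $D^\alpha_n$, $E^\alpha_{f,n}$ ($f \in F_\alpha$, $|F_\alpha| < 2^{\aleph_0}$), plus $O^{\alpha_1,\alpha_2}_n$ over pairs — altogether fewer than $2^{\aleph_0}$ dense subsets of $\mathbb{Q}_{\langle \A_\alpha \mid \alpha < \beta\rangle}$. Since $\mathbb{Q}$ is a finite-support product of the $\sigma$-centered (hence c.c.c.) posets $\mathbb{P}_{\A_\alpha}$, it is itself c.c.c. (indeed $\sigma$-centered, grouping by the finite sequence of first coordinates), so $\MA$ applies and gives a filter $G$ meeting every dense set in the list. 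Reading off $g_\alpha$ from coordinate $\alpha$ of $G$ as above, clauses~(i), (ii), (iii) follow from the three groups of dense sets respectively, completing the proof.
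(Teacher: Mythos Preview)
Your overall plan --- apply $\MA$ to the finite-support product $\mathbb{Q}_{\langle \A_\alpha\mid\alpha<\beta\rangle}$, using coordinatewise copies of the dense sets from Lemma~\ref{lem:stepma} for clauses (i) and (ii) and new dense sets for clause (iii) --- is exactly the paper's approach, and your treatment of (i), (ii), the density arguments, and the cardinality count are all fine.

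The gap is in your orthogonality sets $O^{\alpha_1,\alpha_2}_n$. As you define them, $s_{\alpha_1}(m)$ is required to avoid $\{h(m):h\in A_2\}$ where $A_2$ is the second coordinate \emph{of the condition itself} at $\alpha_2$. Meeting all $O^{\alpha_1,\alpha_2}_n$ therefore yields infinitely many $m$ with $g_{\alpha_1}(m)\neq g_{\alpha_2}(m)$, but it does \emph{not} give, for a fixed finite $a\subseteq\A_{\alpha_2}$, infinitely many $m$ with $g_{\alpha_1}(m)\notin\{h(m):h\in a\}\cup\{g_{\alpha_2}(m)\}$: the conditions witnessing membership in the various $O^{\alpha_1,\alpha_2}_n$ may carry entirely different finite sets $A_2$, none containing $a$. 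Your patch does not close this: you cannot control the order in which a filter meets dense sets (so ``eventually $a\subseteq A_2$'' is not usable), and the hypothesis $\A_{\alpha_1}\perp\A_{\alpha_2}$ says nothing about $g_{\alpha_1}$, which is not a member of $\A_{\alpha_1}$. The fix --- and this is what the paper does --- is to make $a\in[\A_{\alpha_2}]^{<\omega}$ an explicit parameter of the dense set: for each ordered pair $\alpha_1\neq\alpha_2$, each $n\in\N$, and each such $a$, take
\[
A_{\alpha_1,\alpha_2,a,n}=\bigl\{\bar p:\exists m>n\ [\,s_{\alpha_1}(m),s_{\alpha_2}(m)\text{ both defined and }s_{\alpha_1}(m)\notin\{f(m):f\in a\}\cup\{s_{\alpha_2}(m)\}\,]\bigr\}.
\]
Density is proved by precisely your two-value trick (choose $m$ fresh, set $s_{\alpha_1}(m)=k$ and $s_{\alpha_2}(m)=k'$ distinct, each avoiding the relevant finite list), and meeting $A_{\alpha_1,\alpha_2,a,n}$ for all $n$ directly gives that $g_{\alpha_1}$ is not finitely covered by $a\cup\{g_{\alpha_2}\}$. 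With this single change your argument goes through.
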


\begin{proof}
  In addition to the dense sets used in the proof of Lemma
  \ref{lem:stepma} for each coordinate $\alpha < \beta$, define for all
  $\alpha_1 \neq \alpha_2 < \beta, n \in \N$ and $a \in
  [\A_{\alpha_2}]^{<\omega}$ the sets (remember $\pi_i$ is the $i$th
  projection function, see page \pageref{chap2notation})

  \begin{align*}
    A_{\alpha_1, \alpha_2, a, n}& := \{ \bar{p} \mid a \subseteq 
      \pi_1(\bar{p}(\alpha_2)) \ \wedge  \exists m > n \big[ m \in
      \dom(\pi_0(\bar{p}(\alpha_1))) \ \wedge \\
    & \!\! m \in \dom(\pi_0(\bar{p}(\alpha_2))) \ \wedge  \pi_0(\bar{p}(\alpha_1))(m) \not \in \{ f(m)
    \mid f \in a \} \cup \{\pi_0(\bar{p}(\alpha_2))(m)\} \big] \}.
  \end{align*}
  These sets are easily seen to be dense, and combined with the dense
  sets from Lemma \ref{lem:stepma} there are still fewer than
  continuum many.  So $\MA$ implies there is a filter $G$ meeting them
  all.  Then set $g_\alpha := \cup \{ \bar{p}(\alpha)
  \mid \bar{p} \in G\}$.

  The dense sets from Lemma \ref{lem:stepma} guarantee the first two items of
  the theorem, and the new dense sets $A_{\alpha_1, \alpha_2, a, n}$
  ensure that the resulting families are still orthogonal: the filter
  intersecting all dense sets $A_{\alpha_1, \alpha_2, a, n}$ for $n
  \in \N$ ensures that $g_{\alpha_1}$ is not finitely covered by $a
  \cup \{g_{\alpha_2}\}$.
\end{proof}

\begin{theorem}[$\MA$]
  There exist very mad families $\A_\alpha$ ($\alpha < 2^{\aleph_0}$)
  such that for all $\alpha_1 \neq \alpha_2 < 2^{\aleph_0}$ the
  families $\A_{\alpha_1}$ and $\A_{\alpha_2}$ are orthogonal.
\end{theorem}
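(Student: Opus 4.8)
The plan is to run a transfinite recursion of length $2^{\aleph_0}$, building all the families $\A_\alpha$ simultaneously by adding, at each stage $\beta < 2^{\aleph_0}$, one new function $g_\alpha^\beta$ to each $\A_\alpha$ for $\alpha < 2^{\aleph_0}$. The bookkeeping mirrors the single-family construction in the previous theorem: fix an enumeration $\langle f_\gamma \mid \gamma < 2^{\aleph_0}\rangle$ of $\BS$ (or, better, an enumeration of $2^{\aleph_0} \times \BS$ by pairs $(\alpha, f)$ so that each potential ``challenge function'' for each coordinate $\alpha$ is considered cofinally often), and at stage $\beta$ let $\A_\alpha^{<\beta} = \{ g_\alpha^\delta \mid \delta < \beta\}$ be the part of $\A_\alpha$ built so far. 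For each $\alpha < 2^{\aleph_0}$ let $F_\alpha^\beta$ be the (set of challenges assigned to coordinate $\alpha$ with index $<\beta$ that is) maximal subcollection not finitely covered by $\A_\alpha^{<\beta}$; since $|\A_\alpha^{<\beta}|, |F_\alpha^\beta| < 2^{\aleph_0}$ and $\MA$ makes $2^{\aleph_0}$ regular, we can apply Lemma \ref{lem:maOrthStep} with the index set $\{\alpha : \alpha < 2^{\aleph_0}\}$ — or rather, since that lemma is stated for index sets of size $<2^{\aleph_0}$, apply it at stage $\beta$ to all coordinates $\alpha \le \beta$ (or $\alpha$ in some increasing family of index sets exhausting $2^{\aleph_0}$), obtaining functions $g_\alpha^\beta \notin \A_\alpha^{<\beta}$ satisfying (i)–(iii) of that lemma relative to $\A_\alpha^{<\beta}$ and $F_\alpha^\beta$.

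The key verification at the end is threefold. First, each $\A_\alpha = \{ g_\alpha^\beta \mid \beta < 2^{\aleph_0}\}$ is almost disjoint: this is immediate from clause (i) of Lemma \ref{lem:maOrthStep} applied at each stage, since any two members $g_\alpha^{\delta}, g_\alpha^{\beta}$ with $\delta < \beta$ are almost disjoint because $g_\alpha^\beta$ was chosen so that $\A_\alpha^{<\beta} \cup \{g_\alpha^\beta\}$ is almost disjoint and $g_\alpha^\delta \in \A_\alpha^{<\beta}$. Second, each $\A_\alpha$ is very mad: given $F \subseteq \BS$ with $|F| < |\A_\alpha| = 2^{\aleph_0}$ and $F$ not finitely covered by $\A_\alpha$, regularity of $2^{\aleph_0}$ gives a stage $\beta$ by which every member of $F$ has been enumerated as a challenge for coordinate $\alpha$; then $F \subseteq F_\alpha^\beta$ (using that ``not finitely covered by the whole $\A_\alpha$'' implies ``not finitely covered by $\A_\alpha^{<\beta}$''), so clause (ii) gives that $g_\alpha^\beta$ meets every $f \in F$ infinitely, as required. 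Third, orthogonality of $\A_{\alpha_1}$ and $\A_{\alpha_2}$ for $\alpha_1 \neq \alpha_2$: by Definition \ref{definition orthogonal} we must check neither is finitely covered by the other, i.e. for each fixed $g_{\alpha_1}^\delta \in \A_{\alpha_1}$ and each finite $a \subseteq \A_{\alpha_2}$, $g_{\alpha_1}^\delta \not\subseteq^* \bigcup a$. Pick a stage $\beta > \delta$ large enough that $a \subseteq \A_{\alpha_2}^{<\beta}$ and $\alpha_1, \alpha_2$ are both among the coordinates handled at stage $\beta$; clause (iii) of Lemma \ref{lem:maOrthStep} at stage $\beta$ says $\A_{\alpha_1}^{<\beta} \cup \{g_{\alpha_1}^\beta\}$ and $\A_{\alpha_2}^{<\beta} \cup \{g_{\alpha_2}^\beta\}$ are orthogonal, and since $g_{\alpha_1}^\delta$ and $a$ both lie inside these finite-over-$\A^{<\beta}$ sets this yields $g_{\alpha_1}^\delta \not\subseteq^* \bigcup a$, and symmetrically. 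Hence no $f \in \A_{\alpha_1}$ is finitely covered by $\A_{\alpha_2}$ and vice versa.

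The main obstacle is purely a matter of matching the quantifier shape of Lemma \ref{lem:maOrthStep} — which quantifies over a fixed index set of size $<2^{\aleph_0}$ — to a construction that must ultimately produce $2^{\aleph_0}$ many families. The clean fix is to not try to handle all $2^{\aleph_0}$ coordinates at a single stage, but to fix in advance an increasing continuous sequence $\langle I_\beta \mid \beta < 2^{\aleph_0}\rangle$ of subsets of $2^{\aleph_0}$ with $|I_\beta| < 2^{\aleph_0}$, $\bigcup_\beta I_\beta = 2^{\aleph_0}$, and at stage $\beta$ apply Lemma \ref{lem:maOrthStep} only to the coordinates in $I_\beta$ (for $\alpha \notin I_\beta$, simply do nothing at stage $\beta$, or equivalently start $\A_\alpha$'s construction only once $\alpha$ enters some $I_\beta$). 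One must check that this still enumerates, for each coordinate $\alpha$, all small challenge sets cofinally — which holds because once $\alpha \in I_{\beta_0}$ it stays in $I_\beta$ for all $\beta \ge \beta_0$, and there are $2^{\aleph_0}$ many stages $\beta \ge \beta_0$. The only other care needed is that in Lemma \ref{lem:maOrthStep} the hypothesis ``$\A_\alpha$ pairwise orthogonal'' must hold at each stage of the application: this is guaranteed inductively because the $\A_\alpha^{<\beta}$, being finite pieces of the eventual orthogonal families, are themselves pairwise orthogonal (indeed any sub-pieces of orthogonal families are orthogonal, vacuously or by the same computation). The rest is routine bookkeeping.
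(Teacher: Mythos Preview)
Your proposal is correct and matches the paper's approach essentially line for line: the paper applies Lemma~\ref{lem:maOrthStep} at stage $\beta$ only to the coordinates $\alpha < \beta$ (leaving $\A_{\alpha,\beta+1} = \emptyset$ for $\alpha \geq \beta$), which is exactly your ``clean fix'' with $I_\beta = \beta$. One small presentational wrinkle: in your final paragraph, justifying the inductive hypothesis by saying the $\A_\alpha^{<\beta}$ are ``pieces of the eventual orthogonal families'' is circular---just state the induction directly (trivial at $\beta=0$, preserved at successors by clause~(iii), preserved at limits since finite covering data live at a bounded stage).
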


\begin{proof}
  We will construct the families $\A_\alpha$ recursively as $\A_\alpha
  := \bigcup_{\gamma < 2^{\aleph_0}} \A_{\alpha, \gamma}$.
  
  We start with $\A_{\alpha,0} := \emptyset$ for all $\alpha <
  2^{\aleph_0}$, and we take unions at limit stages.  Then at step
  $\beta$ we apply Lemma \ref{lem:maOrthStep} with
  $\mathbb{Q}_{\langle \A_{\alpha, \beta} \mid \alpha < \beta
    \rangle}$ to obtain $\langle g_\alpha \mid \alpha < \beta \rangle$
  and set $\A_{\alpha,\beta+1} := \A_{\alpha, \beta} \cup
  \{g_\alpha\}$ for $\alpha < \beta$, and $\A_{\alpha, \beta+1} :=
  \A_{\alpha, \beta} = \emptyset$ for $\beta \leq \alpha < 2^{\aleph_0}$.
\end{proof}

Now we move towards proving the following theorem whose proof will
take up the remainder of this section.

\begin{theorem}[$\CH$]
  For every very mad family $\A$ there exists a very mad family $\B$
  such that $\A$ and $\B$ are orthogonal.
\end{theorem}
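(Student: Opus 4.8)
The plan is to run a standard transfinite recursion of length $\omega_1$, building $\B = \{h_\beta \mid \beta < \omega_1\}$ so that (a) $\B$ is an almost disjoint family, (b) $\B$ is orthogonal to $\A$ (neither finitely covers the other), and (c) $\B$ is very mad. Since $\CH$ holds, ``very mad'' is equivalent to ``strongly mad'', so it suffices to catch every \emph{countable} family $F \subseteq \BS$ that is not finitely covered by $\B$. Fix an enumeration $\langle F_\beta \mid \beta < \omega_1 \rangle$ of all countable subsets of $\BS$ (using $\CH$, there are only $\aleph_1$ of them) in which each appears cofinally often, and also fix an enumeration $\langle f_\beta \mid \beta < \omega_1 \rangle$ of $\BS$ whose $\beta$-th initial segment we will use to decide membership questions. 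At stage $\beta$, having built $\B_\beta = \{h_\gamma \mid \gamma < \beta\}$, I would ask whether $F_\beta$ is finitely covered by $\A \cup \B_\beta$; if not, I want to produce $h_\beta \notin \A$ with $h_\beta$ almost disjoint from every $h_\gamma$ ($\gamma < \beta$), with $h_\beta \cap f$ infinite for each $f \in F_\beta$, \emph{and} with $h_\beta$ not finitely covered by $\A$ (to keep $\B$ from being finitely covered by $\A$ as we go — actually this is automatic since a single function cannot finitely cover an infinite a.d.\ family, but I want each $h_\beta$ to also not finitely cover $\A$, i.e.\ no finitely many members of $\B$ finitely cover any $f \in \A$).

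The single-step construction is the heart of the matter, and it is essentially a diagonalization exactly like the recursive construction inside the proof of the Cohen-model theorem above, but now with the orthogonality bookkeeping added. Concretely: enumerate $\B_\beta$ as $\langle g'_i \mid i < \omega\rangle$, enumerate $F_\beta$ as $\langle f'_i \mid i < \omega \rangle$, and enumerate $\A$ (or a suitable cofinal-in-$\subseteq$ sequence of finite subsets of $\A$ together with the members of $\A$) as $\langle a'_i \mid i < \omega \rangle$. Build $h_\beta$ in $\omega$ stages; at stage $s$ I would (i) pick a fresh integer $n$ outside the current domain of $h_\beta$ and outside the (finitely many) arguments so far used, and set $h_\beta(n) = f'_s(n')$ for a suitable $n' \geq$ everything used so far at which $f'_s$ avoids the finite set $\{g'_i(n') \mid i \leq s\}$ — such $n'$ exists because $F_\beta$ is not finitely covered by $\B_\beta$, so in particular $f'_s$ is not finitely covered by $\{g'_0,\dots,g'_s\}$; this handles requirement (b) of very-madness for $\B$; (ii) pick another fresh integer to make $h_\beta$ avoid the first $s$ members of $\A$ at that point, ensuring $\A\cup\{h_\beta\}$ will remain a.d.; (iii) pick yet another fresh integer $m$ and set $h_\beta(m)$ to a value not in $\{a'_i(m) \mid i \leq s\} \cup \{g'_i(m) \mid i \leq s\}$, which is possible since only finitely many values are forbidden — doing this for all $s$ guarantees $h_\beta$ is not finitely covered by $\A$ and that no finite subset of $\B$ finitely covers any fixed member of $\A$; and (iv) fill in all remaining undefined values below the current frontier with values avoiding $\{g'_i(k)\mid i\le s\}$ so that $h_\beta \cap g'_i$ stays finite. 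The usual fresh-point argument shows all these tasks can be interleaved.

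The verification at the end is routine: $\B$ is a.d.\ by construction; $\B$ is very mad because any countable $F$ not finitely covered by $\B$ equals $F_\beta$ for cofinally many $\beta$, and choosing such a $\beta$ large enough that $\B_\beta \supseteq$ (the relevant fragment needed to certify ``not finitely covered'') and where the ``not finitely covered by $\A \cup \B_\beta$''-clause goes the right way, the function $h_\beta$ meets every $f \in F$ infinitely often; and $\B$ is orthogonal to $\A$ because no $f\in\A$ is finitely covered by $\B$ (each $h_\beta$ was kept off every member of $\A$ at infinitely many common arguments, via task (iii)) and no $h\in\B$ is finitely covered by $\A$ (task (iii) again, from the $\A$-side). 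I expect the main obstacle to be the case analysis when $F_\beta$ \emph{is} finitely covered by $\A\cup\B_\beta$ but is \emph{not} finitely covered by $\B$ alone: then $\A$ is doing the covering, and I must still add \emph{some} $h_\beta$ that keeps $\B$ a.d., keeps $\B$ orthogonal to $\A$, and does not finitely cover any member of $\A$ — but makes no promise about $F_\beta$ at this stage. This is handled by simply running the same construction omitting task (i); the subtlety is only in organizing the cofinal enumeration so that every genuinely $\B$-uncovered $F$ is still caught at some later stage where the clause works out, which one arranges by revisiting each countable family $\omega_1$ times and observing that once $F$ is not finitely covered by $\B$ it is not finitely covered by $\B_\beta$ for any $\beta$, so the only thing to wait for is a stage where the $\A$-part also fails to cover it — and such a stage need not exist. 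The genuine fix, which I would adopt, is to weaken task (i): instead of requiring ``$F_\beta$ not finitely covered by $\A\cup\B_\beta$'', require only ``$F_\beta$ not finitely covered by $\B_\beta$,'' and in task (i) diagonalize against $\B_\beta$ only; the resulting $h_\beta$ may by accident satisfy $h_\beta \cap f$ infinite even when $\A$ was covering $f$, which is harmless, and the final verification needs only that $F$ not finitely covered by $\B$ implies $F$ not finitely covered by $\B_\beta$ for every $\beta$, which is immediate. With that adjustment the proof is a clean union-of-$\omega_1$-steps argument.
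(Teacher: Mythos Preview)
Your plan has a genuine gap: under $\CH$ the very mad family $\A$ has size $\aleph_1$, so it cannot be enumerated as $\langle a'_i \mid i < \omega\rangle$ inside the $\omega$ substages that build a single $h_\beta$. This breaks both halves of orthogonality. For ``$h_\beta$ not finitely covered by $\A$'': task~(iii) only makes $h_\beta$ escape countably many finite subsets of $\A$, leaving uncountably many finite $A \subseteq \A$ for which nothing prevents $h_\beta \subseteq^* \bigcup A$. For ``no $g \in \A$ is finitely covered by $\B$'': when building $h_\beta$ you must protect \emph{every} $g \in \A$ from being covered by finite sets containing $h_\beta$, and there are $\aleph_1$ such $g$ but only $\omega$ substages. (Task~(ii) as stated---making $\A \cup \{h_\beta\}$ almost disjoint---is outright impossible, since a very mad family is already mad.)

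The paper replaces this impossible diagonalization over $\A$ by a structural argument exploiting the almost-disjointness of $\A$. The key lemma: if each $\bar g_i$ ($i \le n$) agrees with a specified $g_i \in \A$ on a common infinite set $W$, then any $g \in \A$ with $g \restrict W \subseteq^* \bigcup_i \bar g_i$ must equal some $g_i$. So the construction maintains, for each finite $S \subseteq \B$, \emph{two} such infinite sets $W_0,W_1$ together with two disjoint lists of witnesses in $\A$ (packaged in a bookkeeping function $H$ ``good for $\B$''); any $g \in \A$ covered by $S$ would then have to appear on both lists, a contradiction. This single observation handles all of $\A$ at once. For the other direction, each new $\bar g_\alpha$ is made to agree infinitely often with infinitely many distinct members of $\A$; a finite cover of $\bar g_\alpha$ from $\A$ would then, by almost-disjointness, have to contain all of them. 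These two uses of the a.d.\ structure of $\A$ are the missing ideas in your plan.
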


We will consistently use the convention that barred functions are to
be thought of as related to $\B$ where the unbarred versions relate to
$\A$.  First we work out the ideas needed to be able to construct a
family $\B$ not finitely covering $\A$.

\begin{lemma}
  \label{lem:basavoidfc}
  Let $\bar{g}_0, \ldots, \bar{g}_n \in \BS$, $g_0, \ldots g_n \in \A$
  and $W \in [\N]^{\aleph_0}$ such that for all $i \leq n$,
  $\dom(\bar{g}_i \cap g_i) \supseteq W$.  Then
  \begin{equation*}
    g \in \A \ \wedge \ g \restrict W \subseteq^* \bigcup_{i \leq n}
    \bar{g}_i \ \Rightarrow \ \text{there is } i \leq n \text{ such
    that } g = g_i
  \end{equation*}
\end{lemma}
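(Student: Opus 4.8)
The plan is a short pigeonhole argument exploiting almost disjointness of $\A$. First I would unwind the hypotheses in terms of pointwise agreement. The condition $\dom(\bar g_i \cap g_i) \supseteq W$ (reading functions as sets of pairs) says exactly that $\bar g_i \restrict W = g_i \restrict W$, i.e. $\bar g_i(k) = g_i(k)$ for every $k \in W$. The condition $g \restrict W \subseteq^* \bigcup_{i \leq n} \bar g_i$ says that the set $\{(k, g(k)) : k \in W\} \setminus \bigcup_{i \leq n}\bar g_i$ is finite, i.e. for all but finitely many $k \in W$ there is some $i \leq n$ with $g(k) = \bar g_i(k)$.

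Combining the two observations, for all but finitely many $k \in W$ there is $i \leq n$ with $g(k) = g_i(k)$; call this cofinite-in-$W$ set $W'$. Since $W$ (hence $W'$) is infinite and there are only finitely many indices $i \leq n$, by the pigeonhole principle there is a single $i^* \leq n$ such that $g(k) = g_{i^*}(k)$ for infinitely many $k \in W'$. In other words $g \cap g_{i^*}$ is infinite. This is the one step where infinitude of $W$ is actually used: having $g$ agree locally with \emph{some} $g_i$ on a cofinite subset of $W$ is not enough by itself; we need a fixed index to appear infinitely often.

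Finally, both $g$ and $g_{i^*}$ lie in $\A$, which is an almost disjoint family of functions, so any two distinct members have finite intersection. Since $g \cap g_{i^*}$ is infinite, we must have $g = g_{i^*}$, which is the desired conclusion with $i = i^*$. I do not anticipate any real obstacle here; the only care needed is the bookkeeping in the first paragraph (keeping straight the set-of-pairs interpretation of $\cap$, $\subseteq^*$, and $\dom$) and noting explicitly that the pigeonhole step requires $W$ to be infinite.
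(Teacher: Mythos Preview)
Your proposal is correct and follows essentially the same approach as the paper: pigeonhole on the finitely many indices to find some $\bar g_{i^*}$ agreeing with $g$ infinitely often on $W$, transfer that agreement to $g_{i^*}$ via the hypothesis, and conclude $g = g_{i^*}$ by almost disjointness of $\A$. The paper's version is a terse two-line rendering of exactly this argument.
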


\begin{proof}
  The partial function $g \restrict W$ agrees on infinitely many
  inputs with some $\bar{g}_i \restrict W$.  This means that $g
  \restrict W$ agrees on infinitely many inputs with $g_i$, which in
  turn gives $g = g_i$ by almost disjointness of $\A$.
\end{proof}

From this we immediately get:

\begin{corollary}
  \label{cor:infroom}
  Under the same hypothesis, if $g \in \A$ is not one of the functions
  $g_0, \ldots, g_n$ then $g \restrict W \setminus \bigcup_{i \leq n}
  \bar{g}_i$ is infinite.
\end{corollary}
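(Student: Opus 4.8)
The statement to prove is Corollary~\ref{cor:infroom}, which is an immediate consequence of Lemma~\ref{lem:basavoidfc}. The plan is to argue by contraposition using the lemma just stated. Suppose we are in the situation of the hypothesis of Lemma~\ref{lem:basavoidfc}: we have $\bar g_0,\dots,\bar g_n\in\BS$, $g_0,\dots,g_n\in\A$, and an infinite set $W\subseteq\N$ with $\dom(\bar g_i\cap g_i)\supseteq W$ for each $i\le n$. Let $g\in\A$ be one of the functions which is \emph{not} among $g_0,\dots,g_n$.

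First I would observe that if $g\restrict W\setminus\bigcup_{i\le n}\bar g_i$ were finite, then $g\restrict W\subseteq^*\bigcup_{i\le n}\bar g_i$. By Lemma~\ref{lem:basavoidfc} this forces $g=g_i$ for some $i\le n$, contradicting the choice of $g$ as not being any of the $g_i$. Hence $g\restrict W\setminus\bigcup_{i\le n}\bar g_i$ is infinite, which is exactly the conclusion.

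There is essentially no obstacle here: the corollary is a restatement of the contrapositive of the lemma, and the only thing to check is that $g\restrict W\setminus\bigcup_{i\le n}\bar g_i$ being finite is the same as $g\restrict W\subseteq^*\bigcup_{i\le n}\bar g_i$, which is just the definition of $\subseteq^*$ applied to the graph of the partial function $g\restrict W$ viewed as a subset of $\N\times\N$. So the whole proof is one line: if the set in question were finite we would have $g\restrict W\subseteq^*\bigcup_{i\le n}\bar g_i$, whence $g=g_i$ for some $i$ by Lemma~\ref{lem:basavoidfc}, contrary to assumption.
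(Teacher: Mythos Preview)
Your proof is correct and is exactly the argument the paper has in mind: the paper simply writes ``From this we immediately get'' without spelling out the contrapositive, and what you wrote is precisely that contrapositive step.
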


\begin{definition}
  For a family $\B \subseteq \BS$, a function $H : [\B]^{<\aleph_0}
  \rightarrow ([\N]^{\aleph_0})^2 \times ([\A\,]^{<\aleph_0})^2$ is
  \emph{good for $\B$} if for all $\{\bar{g}_0, \ldots, \bar{g}_n\}
  \in [\B]^{<\aleph_0}$ we have that $H(\{\bar{g}_0, \ldots,
  \bar{g}_n\}) = \langle W_0, W_1, \{g_0^0, \ldots, g_n^0\},$ $
  \{g_0^1,\ldots, g_n^1\} \rangle$ such that for all $i,j \leq n$ and
  $k,l \in \{0,1\}$,
  \begin{equation*}
    i\neq j \ \vee \ k \neq l \ \Rightarrow \ g_i^l \neq g_j^k,
  \end{equation*}
  and for all $ i \leq n$,
  \begin{equation*}
    \dom(\bar{g}_i \cap g_i^0) \supseteq W_0 \ \wedge \
    \dom(\bar{g}_i \cap g_i^1) \supseteq W_1.
  \end{equation*}
\end{definition}

\begin{lemma}
  \label{lem:goodnfc}
  If $\B$ is such that an $H$ good for $\B$ exists,
  then $\B$ does not finitely cover $\A$.
\end{lemma}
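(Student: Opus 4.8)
The goal is to show that if $\B$ admits a function $H$ good for $\B$, then $\B$ does not finitely cover $\A$; that is, no function in $\A$ is finitely covered by $\B$. The plan is to argue by contradiction: suppose some $g \in \A$ satisfies $g \subseteq^* \bigcup_{i \le n} \bar g_i$ for finitely many $\bar g_0, \dots, \bar g_n \in \B$. Apply $H$ to the set $\{\bar g_0, \dots, \bar g_n\}$ to obtain $\langle W_0, W_1, \{g_0^0, \dots, g_n^0\}, \{g_0^1, \dots, g_n^1\}\rangle$. Since $\dom(\bar g_i \cap g_i^0) \supseteq W_0$ for all $i \le n$, and also $\dom(\bar g_i \cap g_i^1) \supseteq W_1$ for all $i \le n$, we are exactly in the hypothesis of Lemma~\ref{lem:basavoidfc} (resp. Corollary~\ref{cor:infroom}) with the infinite set $W_0$ (resp. $W_1$) and the $\A$-members $g_0^0, \dots, g_n^0$ (resp. $g_0^1, \dots, g_n^1$).

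The main idea is that the two lists $\{g_i^0\}$ and $\{g_i^1\}$ together contain at most $2(n+1)$ members of $\A$, but the goodness condition forces all of $g_0^0, \dots, g_n^0, g_0^1, \dots, g_n^1$ to be pairwise distinct. Now $g$ restricted to $W_0$ is almost contained in $\bigcup_{i \le n} \bar g_i$ (since this holds on all of $\N$ up to a finite set), so Lemma~\ref{lem:basavoidfc} gives that $g = g_i^0$ for some $i \le n$; likewise Lemma~\ref{lem:basavoidfc} applied with $W_1$ gives $g = g_j^1$ for some $j \le n$. But then $g_i^0 = g = g_j^1$, contradicting the goodness condition (with $k = 0$, $l = 1$, so $k \neq l$, hence $g_i^0 \neq g_j^1$). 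Therefore no such $g$ exists, and $\B$ does not finitely cover $\A$.

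I do not anticipate a serious obstacle here — the lemma is essentially a bookkeeping consequence of the definition of ``good for $\B$'' together with Lemma~\ref{lem:basavoidfc}. The only point requiring a little care is the reduction from ``$g \subseteq^* \bigcup_{i \le n} \bar g_i$ on $\N$'' to ``$g \restrict W_0 \subseteq^* \bigcup_{i \le n} \bar g_i$'', which is immediate since restricting to a subset of the domain can only remove points from the finite exceptional set. The real content is that the function $H$ pre-packages, for every finite subset of $\B$, two \emph{disjoint} reasons (two infinite sets $W_0, W_1$ with two disjoint finite lists of witnesses in $\A$) why that finite subset cannot finitely cover any member of $\A$ other than a bounded list — and since the two lists are forced disjoint, they cannot both name the same $g$.

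\begin{proof}
  Suppose toward a contradiction that $\B$ finitely covers $\A$, so there are $\bar g_0, \ldots, \bar g_n \in \B$ and $g \in \A$ with $g \subseteq^* \bigcup_{i \leq n} \bar g_i$. Let $H(\{\bar g_0, \ldots, \bar g_n\}) = \langle W_0, W_1, \{g_0^0, \ldots, g_n^0\}, \{g_0^1, \ldots, g_n^1\} \rangle$ as in the definition of goodness. Then $W_0 \in [\N]^{\aleph_0}$ and $\dom(\bar g_i \cap g_i^0) \supseteq W_0$ for all $i \leq n$, so Lemma~\ref{lem:basavoidfc} applies with $W = W_0$ and the functions $g_0^0, \ldots, g_n^0 \in \A$. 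Since $g \restrict W_0 \subseteq^* \bigcup_{i \leq n} \bar g_i$ (this is inherited from $g \subseteq^* \bigcup_{i \leq n} \bar g_i$), the lemma yields $i \leq n$ with $g = g_i^0$. Applying Lemma~\ref{lem:basavoidfc} again with $W = W_1$ and the functions $g_0^1, \ldots, g_n^1$, we similarly get $j \leq n$ with $g = g_j^1$. Hence $g_i^0 = g_j^1$. But the defining property of $H$ being good for $\B$, applied with $k = 0 \neq 1 = l$, gives $g_i^0 \neq g_j^1$, a contradiction. Therefore $\B$ does not finitely cover $\A$.
\end{proof}
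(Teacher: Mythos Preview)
Your proof is correct and follows essentially the same approach as the paper's: assume some $g \in \A$ is finitely covered by $\bar g_0, \ldots, \bar g_n \in \B$, apply $H$, invoke Lemma~\ref{lem:basavoidfc} once with $W_0$ and once with $W_1$ to conclude $g = g_i^0 = g_j^1$, and derive a contradiction from the disjointness built into the definition of goodness.
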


\begin{proof}
  Suppose that $g\in \A$ is such that there are $\bar{g}_0, \ldots
  \bar{g}_n \in \B$ such that $g \subseteq^* \bigcup_{i \leq n}
  \bar{g}_i$.  Let $H(\{\bar{g}_0, \ldots, \bar{g}_n\}) = \langle W_0,
  W_1, \{g_0^0, \ldots, g_n^0\}, \{g_0^1, \ldots, g_n^1\} \rangle$.
  Then as $g \restrict W_0 \subseteq^* \bigcup_{i \leq n} \bar{g}_i$,
  we have $g = g_i^0$ for some $i \leq n$ (by Lemma
  \ref{lem:basavoidfc}), but also $g \restrict W_1 \subseteq^*
  \bigcup_{i \leq n} \bar{g}_i$, which gives $g = g_j^1$ for some $j
  \leq n$ (Lemma \ref{lem:basavoidfc} again).  So $g_i^0 = g_j^1$,
  contradicting $H$ being good for $\bar{A}$.
\end{proof}

Now that we know how to take care of not finitely covering $\A$ (it is
enough to ensure existence of a function good for $\B$), we look
into being not finitely covered by $\A$.

\begin{lemma}
  If $\bar{g} \in \BS$ is such that there exist $\langle g_n \mid n
  \in \N \rangle$ with all $g_n \in \A$ different and $\bar{g}$ agrees
  with each $g_n$ on infinitely many inputs, then $\bar{g}$ is not
  finitely covered by $\A$.
\end{lemma}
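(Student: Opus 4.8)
The plan is to show directly that $\bar g$ cannot be contained (mod finite) in any finite union $\bigcup_{i\le k} h_i$ with $h_i\in\A$. Suppose toward a contradiction that $\bar g\subseteq^* \bigcup_{i\le k} h_i$ for some $h_0,\dots,h_k\in\A$; we may assume the $h_i$ are pairwise distinct. The domain of agreement $\dom(\bar g\cap h_i)$ is the set of $m$ with $\bar g(m)=h_i(m)$, so away from a finite set every input $m\in\N$ lies in $\dom(\bar g\cap h_i)$ for at least one $i\le k$; that is, $\N\subseteq^* \bigcup_{i\le k}\dom(\bar g\cap h_i)$. Since there are only finitely many $i$, at least one $\dom(\bar g\cap h_i)$ must be infinite.

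First I would observe that in fact we can say more: by hypothesis there is an infinite sequence $\langle g_n\mid n\in\N\rangle$ of pairwise distinct members of $\A$, each agreeing with $\bar g$ infinitely often. Pick $k+2$ of these, say $g_{n_0},\dots,g_{n_{k+1}}$, all distinct. Each $g_{n_j}$ agrees with $\bar g$ on an infinite set $D_j:=\dom(\bar g\cap g_{n_j})$. Now a point $m\in D_j$ with $m$ outside the finite exceptional set of $\bar g\subseteq^*\bigcup_{i\le k}h_i$ satisfies $\bar g(m)\in\{h_0(m),\dots,h_k(m)\}$, so $g_{n_j}(m)=\bar g(m)=h_i(m)$ for some $i\le k$. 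The key step is a pigeonhole: for each $j$, infinitely many $m\in D_j$ land in the same $h_i$, so (discarding a finite set) we get some $i(j)\le k$ with $g_{n_j}$ agreeing with $h_{i(j)}$ on an infinite subset of $D_j$. By almost disjointness of $\A$ (both $g_{n_j}$ and $h_{i(j)}$ are in $\A$, and they agree infinitely often), this forces $g_{n_j}=h_{i(j)}$. Thus the map $j\mapsto i(j)$ sends $k+2$ distinct elements $g_{n_0},\dots,g_{n_{k+1}}$ of $\A$ injectively into $\{0,\dots,k\}$, which is impossible.

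The only place requiring care — and the main (mild) obstacle — is the double use of pigeonhole: first that some agreement domain $\dom(\bar g\cap h_i)$ is infinite on $D_j$ (finitely many $h_i$, infinitely many inputs), and then that distinct $g_{n_j}$ cannot collapse to the same $h_i$. Both are immediate from finiteness of $\{0,\dots,k\}$ together with the almost disjointness of $\A$ used exactly as in Lemma~\ref{lem:basavoidfc}. In fact, this lemma is just the ``dual'' packaging of that argument, and one could alternatively invoke Lemma~\ref{lem:basavoidfc} directly with $W=D_j$ for each $j$; I would present the pigeonhole version since it makes the contradiction (too many elements of $\A$ mapping into a finite set) fully transparent.
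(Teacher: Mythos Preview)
Your proof is correct and follows essentially the same route as the paper: assume a finite cover $\bar g\subseteq^*\bigcup_{i\le k}h_i$, use pigeonhole to see that each $g_{n_j}$ agrees infinitely with some $h_{i(j)}$, invoke almost disjointness of $\A$ to force $g_{n_j}=h_{i(j)}$, and then observe that there are too many $g_n$'s for the finitely many $h_i$'s. The paper's version is simply terser (it uses all infinitely many $g_n$ rather than selecting $k+2$ of them), but the logic is identical.
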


\begin{proof}
  Suppose $\bar{g} \subseteq^* \bigcup_{i \leq n} g_i'$ for $g_i' \in
  \A$.  Then every $g_n$ agrees infinitely often with some $g_j'$.  As
  there are infinitely many $g_n$, some $g_j'$ infinitely often agrees
  with two different $g_n$.  This contradicts the almost disjointness
  of $\A$.
\end{proof}

Now we are ready for the construction.  With $\CH$ we can enumerate
$\BS$ by $\langle f_\alpha \mid \alpha < \omega_1 \rangle$ and also
using $\omega_1 \cong \N \times \omega_1$ we can enumerate $\A =
\langle g_{n,\alpha} \mid n \in \N \, \wedge \, \alpha < \omega_1
\rangle$ without repetitions.

We will construct $\B$ and $H$ by recursion as $\langle
\bar{g}_\alpha \mid \alpha < \omega_1 \rangle$ and $\bigcup_{\alpha<
\omega_1} H_\alpha$.

We call $H_\alpha$ good if it is good for $\langle \bar{g}_\beta \mid
\beta < \alpha \rangle$ with the following added requirements:
\renewcommand{\theenumi}{H\arabic{enumi}}
\begin{enumerate}
  \item
    \label{Hreq:1}
    if $S \in [\{ \bar{g}_\beta \mid \beta < \alpha\}]^{<\aleph_0}$,
    then $\pi_2(H(S)) \cup \pi_3(H(S)) \subseteq \{ g_{n,\beta} \mid n
    \in \N \ \wedge \ \beta < \alpha \}$;
  \item
    \label{Hreq:2}
    if $S_0, S_1 \in [\{ \bar{g}_\beta \mid \beta <
    \alpha\}]^{<\aleph_0}$ and $S_0 \subseteq S_1$, then $\pi_0(H(S_1))
    \subseteq \pi_0(H(S_0))$ and $\pi_1(H(S_1)) \subseteq
    \pi_1(H(S_0))$.
\end{enumerate}

Note that if $\lambda$ is a limit ordinal and if all $H_\alpha$ for
$\alpha < \lambda$ are good, then $\bigcup_{\alpha < \lambda} H_\alpha$
is good for $\langle \bar{g}_\alpha \mid \alpha < \lambda \rangle$.
For limit ordinals $\lambda$ we take $H_\lambda = \bigcup_{\alpha <
\lambda} H_\alpha$.

Let $\alpha < \omega_1$ and assume that $\langle \bar{g}_\beta \mid
\beta < \alpha \rangle$ is an almost disjoint family orthogonal to
$\A$ and $H_\alpha$ is good.  It suffices to construct
$\bar{g}_\alpha$ and $H_{\alpha+1}$ such that
\renewcommand{\theenumi}{G\arabic{enumi}}
\begin{enumerate}
\item \label{G:1} $\bar{g}_\alpha$ is almost disjoint from $\langle
  \bar{g}_\beta \mid \beta < \alpha \rangle$,
\item \label{G:2} $H_{\alpha + 1}$ is good,
\item \label{G:3} $\bar{g}_\alpha$ agrees on infinitely many inputs
  with infinitely many members of $\A$, and
\item \label{G:4} all $f_\beta$, for $\beta < \alpha$, that are not
  finitely covered by $\langle \bar{g}_\beta \mid \beta < \alpha
  \rangle$ agree with $\bar{g}_\alpha$ on infinitely many inputs.
\end{enumerate}

Now in order to take care of this we do some reenumeration: enumerate
$\{ \bar{g}_\beta \mid \beta < \alpha \}$ by $\langle \bar{g}_n' \mid
n \in \N \rangle$, enumerate the set $\{ f_\beta \mid \beta < \alpha \ 
\wedge \ f_\beta \text{ is not finitely covered by }\! \{ \bar{g}_n'
\mid n \in \N \} \}$ by $\langle f_n' \mid n \in \N \rangle$ and
enumerate the set $[\{ \bar{g}_n' \mid n \in \N \}]^{<\aleph_0}$ by
$\langle S_n \mid n \in \N \rangle$.  Then write $W_{n,0}$ for
$\pi_0(H_\alpha(S_n))$ and $W_{n,1}$ for $\pi_1(H_\alpha(S_n))$.

We will construct $\bar{g}_\alpha$ recursively.  After step $s \in
\N$ we have $\bar{g}_\alpha$ defined on an initial segment $[0,
n_s]$.  During step $s+1$ we satisfy the following requirements.

\renewcommand{\theenumi}{L\arabic{enumi}}
\begin{enumerate}
  \item \label{L:1}
    $\bar{g}_\alpha(n) \not \in \{ \bar{g}_i'(n) \mid i \leq s \}$,
  \item \label{L:2} 
    For all $n \leq s$ there exist $k \in W_{n,0}$ and $l \in W_{n,1}$
    such that $\bar{g}_\alpha(k) = g_{0,\alpha}(k)$ and $\bar{g}_\alpha(l)
    = g_{1,\alpha}$,
  \item \label{L:3}
    For all $n \leq s$ there exists a $k$ where $\bar{g}_\alpha$ was
    not yet defined before this step, such that $\bar{g}_\alpha(k) =
    g_{n,\alpha}(k)$, and
  \item \label{L:4}
    For all $n \leq s$ there exists a $k$ where $\bar{g}_\alpha$ was
    not yet defined before this step, such that $\bar{g}_\alpha(k) =
    f_n(k)$.
\end{enumerate}
\renewcommand{\theenumi}{\arabic{enumi}}

A detailed construction taking care of \ref{L:1}--\ref{L:4} is the
following:
\begin{itemize}
  \item Since $H_\alpha$ is good, \ref{Hreq:1}, \ref{Hreq:2} and
    Corollary \ref{cor:infroom} give that $g_{0,\alpha} \restrict
    W_{n,0} \setminus (\bigcup S_n) \cup( \bigcup_{i \leq s}
    \bar{g}_s')$ is infinite:  \ref{Hreq:2} gives $\pi_0(H_\alpha(S_n \cup
    \{\bar{g}_i \mid i \leq s\})) \subseteq W_{n,0}$, \ref{Hreq:1}
    gives that $g_{0,\alpha}$ is not in $H_\alpha(S_n \cup \{ \bar{g}_i \mid
    i \leq s \})$ so Corollary \ref{cor:infroom} applies.  Similarly
    $g_{1,\alpha} \restrict W_{n,1} \setminus (\bigcup S_n) \cup
    (\bigcup_{i \leq s} \bar{g}_s')$ is infinite.

    So we can choose $w_{s,i}^{0} \in W_{i,0}$, $w_{s,i}^1 \in
    W_{i,1}$ for $i \leq s$, such that $n_s < w_{s,0}^{0} < \cdots <
    w_{s,s}^{0} < w_{s,0}^1 < \cdots < w_{s,s}^1$ with
    $g_{0,\alpha}(w_{s,i}^{0}) \not \in \{\bar{g}_i'(w_{s,i}^{0}) \mid
    i \leq s \}$ and $g_{0,\alpha}(w_{s,i}^{1}) \not \in
    \{\bar{g}_i'(w_{s,i}^{1}) \mid i \leq s \}$.

    Define $\bar{g}_\alpha(w_{s,i}^{0}) := g_{0,\alpha}(w_{s,i}^{0})$
    and $\bar{g}_\alpha(w_{s,i}^{1}) := g_{0,\alpha}(w_{s,i}^{1})$.
    This takes care of \ref{L:2} while respecting \ref{L:1}.
  \item As $g_{i,\alpha}$ is not finitely covered by $\{ \bar{g}_n'
    \mid n \in \N \}$ (Lemma \ref{lem:goodnfc}), for every $i \leq s$
    we can find $n_{s,i}$ such that $w_{s,s}^1 < n_{s,0} < \cdots <
    n_{s,s}$ and $g_{i,\alpha}(n_{s,i}) \not \in \{ \bar{g}_n'(n_s,i)
    \mid n \leq s\}$.

    Define $\bar{g}_\alpha(n_{s,i}) := g_{i,\alpha}(n_{s,i})$.  This
    takes care of \ref{L:3} while respecting \ref{L:1}.
  \item As $f_i'$ is not finitely covered by $\{ \bar{g}_n' \mid
    n \in \N \}$ for every $i \leq s$, we can find $m_{s,i}$
    such that $n_{s,s} < m_{s,0} < \cdots < m_{s,s}$ and
    $f_i'(m_{s,i}) \not \in \{ \bar{g}_j(m_{s,i}) \mid  j \leq s \}$.

    Define $\bar{g}_\alpha(m_{s,i}) := f_i'(m_{s,i})$.  This takes care
    of \ref{L:4} while respecting \ref{L:1}.
  \item Now for any $n$ such that $n_s < n < m_{s,s}$ where
    $\bar{g}_\alpha$ is not defined, define $\bar{g}_\alpha(n)$ to be
    the least number not in $\{\bar{g}_i'(n) \mid i \leq s\}$, and set
    $n_{s+1} = m_{s,s}$.
\end{itemize}

This construction satisfies the requirements \ref{L:1}--\ref{L:4}.
These in turn imply the requirements \ref{G:1}--\ref{G:4}; this is
immediate for all but \ref{G:2}, where we still have to define
\[
H_{\alpha+1} : [\{\bar{g}_\beta \mid \beta \leq \alpha\}]^{<\aleph_0}
\rightarrow ([\N]^{\aleph_0})^2 \times ([\{ g_{n,\beta} \mid n \in \N,
\beta \leq \alpha \}^{<\aleph_0}])^2.
\]
If $S \in [\{\bar{g}_\beta \mid \beta \leq \alpha\}]^{<\aleph_0} $
then $S \subseteq \{\bar{g}_\beta \mid \beta < \alpha\}$ or $S = S'
\cup \{\bar{g}_\alpha\}$ with $S' \subseteq \{\bar{g}_\beta\mid \beta
< \alpha\}$.

In the first case, set $H_{\alpha + 1}(S) := H_\alpha(S)$; otherwise
$S' = S_n$ for some $n \in \N$, and if $H_{\alpha}(S_n) = \langle W_0,
W_1, \{g_0^0, \ldots, g_n^0\},\{g_0^1, \ldots, g_n^1\}\rangle$ set
$H_{\alpha+1}(S) := \langle \{w_{s,n}^0 \mid n \leq s \}, \{ w_{s,n}^1
\mid n \leq s \}, \{g_0^0, \ldots, g_n^0, g_{0,\alpha} \}, \{g_0^1,
\ldots, g_n^1, g_{1,\alpha}\}\rangle$.

This $H_{\alpha+1}$ is good as can be easily checked, completing the
proof.

\section{A Very Mad Family Not of Size Continuum}
\label{sec:sizec}

In this section we show that consistently there exist very mad
families of any uncountable cardinality less than or equal to the
continuum.  The forcing is based on the proof of the similar result
for maximal cofinitary groups by Yi Zhang in \cite{Z}.

\begin{theorem}
  \label{thm:notSizec}
  Let $M$ be a model of \ZFC{} and assume that, in $M$, $\kappa$ is a
  regular cardinal such that $\aleph_1 \leq \kappa <
  2^{\aleph_0} = \lambda$.  Then there exists a c.c.c. forcing
  $\mathbb{P}$ such that $M^\mathbb{P}$ satisfies
  {
    \renewcommand{\theenumi}{(\roman{enumi})}
  \begin{enumerate}
    \item $2^{\aleph_0} = \lambda$, and
    \item there exists a very mad family $\mathcal{A}$ of cardinality
      $\kappa$.
  \end{enumerate}
  }
\end{theorem}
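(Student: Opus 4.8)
Proof proposal. The plan is to adapt Zhang's forcing for maximal cofinitary groups (Theorem~\ref{yi1.5}), using the eventually different forcing $\mathbb{P}_\A$ of Section~\ref{sec:CHMA} as the building block; the very mad case is actually cleaner, because a single $\mathbb{P}_\A$-generic function simultaneously diagonalizes against \emph{all} functions not finitely covered by $\A$, so no bookkeeping is needed. Observe first that the hypotheses already give $M\models 2^{\aleph_0}=\lambda$, so $\mathbb{P}$ only needs to avoid blowing the continuum up past $\lambda$.

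I would take $\mathbb{P}=\mathbb{P}_\kappa$ to be the finite support iteration $\langle\mathbb{P}_\xi,\dot{\mathbb{Q}}_\xi\mid\xi<\kappa\rangle$ in which $\dot{\mathbb{Q}}_\xi$ is a name for the eventually different forcing $\mathbb{P}_{\dot\A_\xi}$ relative to $\dot\A_\xi:=\{\dot g_\eta\mid\eta<\xi\}$, where $\dot g_\eta$ is the canonical name for the generic function added at stage $\eta$ (the union of the first coordinates of the $\eta$-th generic filter). Since each $\dot{\mathbb{Q}}_\xi$ is forced to be $\sigma$-centered, hence c.c.c., $\mathbb{P}$ is c.c.c. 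Representing every condition of $\mathbb{P}_{\dot\A_\xi}$ in the form $\langle\check s,\{\dot g_\eta\mid\eta\in a\}\rangle$ with $s$ a (ground-model) finite partial function $\N\partmap\N$ and $a\in[\xi]^{<\omega}$, one checks by induction that $\card{\mathbb{P}_\xi}\le\card{\xi}+\aleph_0$ for all $\xi\le\kappa$, so $\card{\mathbb{P}}\le\kappa\le\lambda$. Consequently $M^{\mathbb{P}}$ has at most $\lambda$ nice names for reals (using $\kappa^{\aleph_0}\le(2^{\aleph_0})^{\aleph_0}=\lambda$ in $M$), so $M^{\mathbb{P}}\models 2^{\aleph_0}=\lambda$, and, by c.c.c., $\kappa$ remains regular. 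This gives clause~(i).

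Set $\A:=\{g_\xi\mid\xi<\kappa\}$, where $g_\xi=\dot g_\xi^G$. For $\eta<\xi$, the dense sets $C_{g_\eta}=\{\langle s,A\rangle\mid g_\eta\in A\}$ together with the ordering of $\mathbb{P}_{\A_\xi}$ force $g_\xi$ almost disjoint from $g_\eta$, and the dense sets $D_n$ force each $g_\xi$ total; hence $\A$ is an almost disjoint family of functions and $\card{\A}=\kappa$. To see $\A$ is very mad, let $F\in M^{\mathbb{P}}$ with $\card F<\kappa$ be not finitely covered by $\A$. Since $\mathbb{P}$ is c.c.c.\ and $\kappa$ is regular and uncountable, every real of $M^{\mathbb{P}}$ lies in some $M^{\mathbb{P}_\xi}$ with $\xi<\kappa$, and since $\card F<\kappa$ there is a single $\eta<\kappa$ with $F\subseteq M^{\mathbb{P}_\eta}$. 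As $\A_\eta\subseteq\A$, no $f\in F$ is finitely covered by $\A_\eta$; this statement concerns only reals and the fixed set $\A_\eta\subseteq M^{\mathbb{P}_\eta}$, hence it already holds in $M^{\mathbb{P}_\eta}$. Then, exactly as in the proof of Lemma~\ref{lem:stepma}, for each $f\in F$ and $n\in\N$ the set $E_{f,n}=\{\langle s,A\rangle\in\mathbb{P}_{\A_\eta}\mid\exists m\ge n\ f(m)=s(m)\}$ is dense in $\dot{\mathbb{Q}}_\eta$ over $M^{\mathbb{P}_\eta}$, so the stage-$\eta$ generic $g_\eta\in\A$ satisfies: $f\cap g_\eta$ is infinite for every $f\in F$. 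Thus $\A$ witnesses clause~(ii).

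The step I expect to be the crux is this reflection argument: one must be careful that a $({<}\kappa)$-sized family $F$ threatening very-madness really is captured at a bounded stage $\eta<\kappa$ (using regularity of $\kappa$ and the c.c.c.), and that ``$f$ is not finitely covered by $\A_\eta$'' transfers down to $M^{\mathbb{P}_\eta}$ --- it does, since any finite cover would come from $\A_\eta\subseteq M^{\mathbb{P}_\eta}$ --- so that genericity of $g_\eta$ over $M^{\mathbb{P}_\eta}$ can be invoked. The remaining ingredients (c.c.c.\ of the iteration, the cardinality bound keeping $2^{\aleph_0}=\lambda$, and the almost disjointness of $\A$) are routine, reusing the density computations already done for Lemma~\ref{lem:stepma}.
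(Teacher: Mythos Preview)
Your proposal is correct and follows essentially the same route as the paper: a finite support $\kappa$-length iteration of the eventually different forcings $\mathbb{P}_{\A_\xi}$, with the same cardinality count to keep $2^{\aleph_0}=\lambda$ and the same reflection-to-a-bounded-stage argument for very madness. The paper isolates the genericity step as a separate lemma (your $E_{f,n}$ argument) and phrases the conclusion as ``the generic added at any later stage'' rather than specifically $g_\eta$, but this is a cosmetic difference; your choice of $g_\eta$ itself works for exactly the reason you give.
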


In the proof we will use the c.c.c. poset $\mathbb{P}_\A$ from section
\ref{sec:CHMA}.  First we prove its main property (which is
basically Lemma \ref{lem:stepma} rephrased in the language of
forcing).

\begin{lemma}
  \label{lem:genericobject}
  If $N$ is a model of \ZFC{}, $f \in N \cap {}^\mathbb{N}
  \mathbb{N}$, $\A \subset \BS$, $\A \in N$ and $f$ is not finitely
  covered by $\A$, then the generic function $g \not \in N$ obtained from
  forcing with $\mathbb{P}_\A$ over $N$ satisfies
  {
    \renewcommand{\theenumi}{(\roman{enumi})}
  \begin{enumerate}
  \item $\A \cup \{g\}$ is an almost disjoint family;
  \item $f \cap g$ is infinite.
  \end{enumerate}
  }
\end{lemma}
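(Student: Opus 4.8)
The plan is to re-run the density argument from the proof of Lemma \ref{lem:stepma}, with the model $N$ in place of the ``small'' background universe and a $\mathbb{P}_\A$-generic filter $G$ over $N$ in place of the filter produced by $\MA$. So I would fix such a $G$, set $g := \bigcup\{s \mid \exists A\ [\langle s,A\rangle \in G]\}$, and check that the three families of dense sets from that proof still do the job. Concretely: the sets $D_n := \{\langle s,A\rangle \mid n \in \dom(s)\}$ are dense (extend $s$ at a fresh point by a value avoiding the finitely many $g_0(n)$, $g_0\in A$) and lie in $N$ since $\A\in N$; as $G$ is a filter, its conditions have pairwise-compatible first coordinates, so meeting every $D_n$ makes $g$ a total function $\N\to\N$.

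For (i) I would use the sets $C_h := \{\langle s,A\rangle \mid h\in A\}$ for $h\in\A$: these are dense and in $N$, so some $\langle s_0,A_0\rangle\in G$ has $h\in A_0$; for any further condition $\langle s',A'\rangle\in G$ the order gives $h\cap s'\subseteq s_0$, and since any two conditions of $G$ have a common extension in $G$ it follows that $h\cap g\subseteq s_0$, which is finite. To also record $g\notin N$ (in particular $g\notin\A$), note that for each $h\in N\cap\BS$ the set $\{\langle s,A\rangle \mid \exists m\in\dom(s)\ s(m)\neq h(m)\}$ is dense (extend $s$ at a fresh point with a value different from $h$'s there and from all $g_0\in A$) and lies in $N$, so $g\neq h$.

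For (ii), let $E_n := \{\langle s,A\rangle \mid \exists m\geq n\ f(m)=s(m)\}$. The one place the hypotheses are used is the density of $E_n$: given $\langle s,A\rangle$, since $f$ is not finitely covered by $\A$ the set $\{m \mid f(m)\notin\{g_0(m)\mid g_0\in A\}\}$ is infinite, so pick $m\geq n$ in it with $m\notin\dom(s)$; then $\langle s\cup\{(m,f(m))\},A\rangle$ extends $\langle s,A\rangle$ --- the new pair lies in no $g_0\in A$, so the clause $g_0\cap s_2\subseteq s_1$ of the order holds --- and belongs to $E_n$. Since $f,\A\in N$ we have $E_n\in N$, so $G$ meets every $E_n$, whence $f\cap g$ is infinite.

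I do not expect a genuine obstacle: the lemma is essentially Lemma \ref{lem:stepma} recast in forcing language, and the only nontrivial input --- the density of the sets $E_n$ --- is exactly where the assumption that $f$ is not finitely covered by $\A$ enters, exactly as before. The remaining points (the other densities, the small order computation, and the fact that all the relevant dense sets, being definable from $f$ and $\A$, lie in $N$ so that $N$-genericity of $G$ applies) are routine bookkeeping.
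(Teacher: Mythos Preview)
Your proposal is correct and follows essentially the same approach as the paper: it invokes the dense sets $D_n$, $C_h$, and $E_{f,n}$ from Lemma~\ref{lem:stepma}, notes they lie in $N$, and concludes via genericity. You supply a bit more detail (the verification of the order clause and a separate density argument for $g\notin N$), but the argument is the same.
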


\begin{proof}
  We use the notation from the proof of Lemma \ref{lem:stepma}.  $g$
  is a total function since for every $n \in \N$, $D_n = \{ \langle
  {s},{H} \rangle \in \mathbb{P}_\A  \mid n \in \dom(s) \}$ is dense
  and in $N$.  $\A \cup \{g\}$ is almost disjoint since for every $h
  \in \A$ the set $C_{h} = \{ \langle {s},{H} \rangle \in
  \mathbb{P}_\A \mid h \in H \}$ is dense and in $N$.  $f \cap g$ is
  infinite as for every $n \in \N$ the set $E_{f,n} = \{\langle
  {s},{H} \rangle \in \mathbb{P}_\A \mid (\exists m \geq n) \ m \in
  \dom(s) \wedge s(m) = f(m)\}$ is dense (by not finitely covering)
  and in $N$.
\end{proof}

\begin{proof}[Proof of Theorem \ref{thm:notSizec}]
  From this lemma we get that forcing with $\mathbb{P}_\A$ produces an
  a.d. family $\A \cup \{g\}$ containing a function $g$ that agrees on
  infinitely many inputs with each function in the ground model that
  is not finitely covered by $\A$.

  $\mathbb{P}$ is the $\kappa$ step finite support iteration of the
  $\mathbb{P}_\A$, where $\A$ at step $\alpha$ consists of the
  generics added so far ($\A_\alpha = \{g_\beta \mid \beta <
  \alpha\}$).
  
  For any $\alpha$, $|\A_\alpha | = |\alpha|$.  So we can use as
  underlying set for $\mathbb{P}_{\A_\alpha}$ the set of $\langle
  {s},{H} \rangle$, $s : \N \rightharpoonup \N$ finite partial and $H
  \subseteq |\alpha|$ ($= |\A|$) finite.  This shows that
  $|\mathbb{P}_{\A_\alpha}| = \max\{\omega, |\alpha|\}$, from which
  $|\mathbb{P}| = \kappa$ follows: since we use a finite support
  iteration of length $\kappa$ we can use finite subsets of $\kappa$
  to indicate at which indices the element is non-maximal, and then
  for each of those an element from $\bigcup_{\alpha < \kappa}
  \mathbb{P}_{\A_\alpha}$ which is in the right coordinate.  That is
  we can take the underlying set of $\mathbb{P}$ to be $\{p \in
  (\bigcup_{\alpha < \kappa}
  \mathbb{P}_{\A_\alpha})^{[\kappa]^{<\omega}} \mid \forall \alpha \in
  \dom(p) \ p(\alpha) \in \mathbb{P}_{\A_\alpha}\}$ (this also shows
  with the previous comment that the poset can be taken to have its
  underlying set in the ground model).

  Since $\mathbb{P}$ is a finite support iteration of {c.c.c.} posets it
  preserves cardinals, and using the {c.c.c.} and that $|\mathbb{P}| =
  \kappa$ and $\kappa^\omega = 2^{\aleph_0}$ it follows that in the
  forcing extension $2^{\aleph_0}$ is still $\lambda$.

  Now we see that the resulting collection $\A = \{ g_\alpha \mid \alpha <
  \kappa \}$ is very mad as follows.

  Let $F$, $|F| < |\A| = \kappa$, be a collection of
  functions not finitely covered by $\A$ (so also not finitely covered
  by any subcollection of $\A$).  Since the forcing is finite support
  and $|F| < \cf(\kappa) = \kappa$, $F$ already appears at some
  stage before $\kappa$.  Then the generic added at any later stage
  will agree on infinitely many inputs with each member of $F$
  (by the lemma).
\end{proof}

\section{Very Mad Families Can be Coanalytic}
\label{sect:VisL}
\label{sect:VMAD:VisL}

In this section we prove the following theorem.

\begin{theorem}
  \label{thm:compl}
  The Axiom of Constructibility implies the existence of a
  $\mathbf{\Pi^1_1}$ very mad family.
\end{theorem}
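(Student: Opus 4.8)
The plan is to run the standard ``$\Sigma^1_2$ well-ordering'' argument in $L$: build the very mad family $\A = \{g_\alpha \mid \alpha < \omega_1\}$ by recursion of length $\omega_1$ using the canonical $L$-well-ordering of the reals, but arrange the construction so that membership in $\A$ is witnessed by a $\mathbf{\Pi^1_1}$ condition rather than merely a $\mathbf{\Sigma^1_2}$ one. The key technical device, as in Miller's construction of a $\mathbf{\Pi^1_1}$ maximal almost disjoint family (Theorem~\ref{thm:IntroCoanalyticN}) and in the $\mathbf{\Pi^1_1}$ maximal cofinitary group constructions cited above, is to code, into the real $g_\alpha$ itself, enough information about the stage-$\alpha$ initial segment of the construction — a countable transitive model $M_\alpha \prec_{\Sigma_n} L_{\omega_1}$ containing $\langle g_\beta \mid \beta < \alpha\rangle$ — so that ``$x \in \A$'' becomes equivalent to: there is a countable well-founded model of a fixed finite fragment of $V=L$ in which $x$ appears as one of the $g_\alpha$'s of the (absolutely defined) recursion, and the reals of that model include all the earlier $g_\beta$'s with their codes. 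Well-foundedness of a coded model and the statement ``$x$ is the $\alpha$-th generator'' are each $\mathbf{\Pi^1_1}$ after one applies the usual trick (well-foundedness is $\mathbf{\Pi^1_1}$; the rest is arithmetic in the model, hence the whole thing is $\mathbf{\Pi^1_1}$ once the existential witness — the model — is forced to be an initial segment and hence unique/absolute).

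The concrete steps I would carry out are: (1) Fix a recursion that at stage $\alpha$ picks the $<_L$-least real $g_\alpha$ such that $\A_\alpha \cup \{g_\alpha\}$ is an almost disjoint family of functions, $g_\alpha$ meets infinitely often every member of the ``diagonalizing set'' $F_\alpha$ (the $<_L$-least countable subset of $\BS$ not finitely covered by $\A_\alpha$ that has not yet been handled, enumerated via a bookkeeping function on $\omega_1 \cong \omega_1 \times \omega$), \emph{and} $g_\alpha$ codes (on a fixed infinite coinfinite set of coordinates, say the even numbers, in a way that does not interfere with the almost-disjointness/agreement requirements on the odd coordinates) a real $e_\alpha$ coding a countable $M_\alpha$ as above; by Lemma~\ref{lem:stepma}'s method (the dense-sets argument, now in $L$ where $\CH$ holds, so $|\A_\alpha|, |F_\alpha| < 2^{\aleph_0}$ automatically for $\alpha < \omega_1$) such a $g_\alpha$ exists, and the coding requirement only adds countably many further dense sets, so it survives. (2) Verify that $\A$ so constructed is very mad: since $\CH$ holds in $L$, very mad $=$ strongly mad, so it suffices that every countable $F$ not finitely covered by $\A$ is met infinitely often by some $g \in \A$; this follows because such an $F$ appears inside some $M_\alpha$ and hence equals some $F_\beta$ that gets handled — a routine closure/reflection argument. (3) Carry out the complexity computation: ``$x \in \A$'' iff there exists a real $e$ such that $e$ codes a well-founded model $(M,\in)$ of ``$V=L$'' $+$ enough ZF, $M$ is (forced by the coding to be) a genuine $L_\delta$, $x$ is recognized inside $M$ as some $g_\alpha$ of the recursion, and for every $\beta < \alpha$ the code $e_\beta$ extracted from $g_\beta$ (which lives in $M$) really does code the correct $M_\beta$ — and then observe, via the by-now-standard Mansfield–Solovay style argument, that this whole statement is $\mathbf{\Pi^1_1}$: the existential quantifier over $e$ can be absorbed because the witnessing model is unique once we demand it be an initial segment of $L$ containing $x$, so ``$\exists e (\ldots)$'' collapses to ``$\forall e (\text{if } e \text{ codes a model with these closure properties then } \ldots)$'', which is $\mathbf{\Pi^1_1}$.

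The main obstacle I anticipate is step~(3) done honestly — specifically, getting the complexity down to $\mathbf{\Pi^1_1}$ rather than $\mathbf{\Sigma^1_2}$. The naive definition ``$x = g_\alpha$ for some $\alpha$'' quantifies existentially over an initial segment of the recursion, which is prima facie $\mathbf{\Sigma^1_2}$; the whole point of the coding is to make the witnessing countable model \emph{definable from} $x$ (as the unique minimal $L$-initial-segment containing $x$ and closed under the recursion), so that the existential quantifier becomes harmless and the residual assertion is purely about well-foundedness (a $\mathbf{\Pi^1_1}$ condition) plus first-order facts internal to a coded structure (arithmetic). Making the interference-free simultaneous coding work — choosing the coding coordinates and the bookkeeping so that the coding demand, the almost-disjointness demand, and the infinite-agreement demand on $g_\alpha$ are jointly satisfiable by one function, and so that $e_\alpha$ is genuinely recoverable from $g_\alpha$ alone — is the part requiring care, but it is exactly parallel to what is done in \cite{AM89} and \cite{BKPI11}, so I expect it to go through with the usual bookkeeping b|bookkeeping. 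Everything else (existence of each $g_\alpha$, very madness in $L$) is an immediate adaptation of Lemma~\ref{lem:stepma} and the reflection argument already used for the Cohen-model theorem above.
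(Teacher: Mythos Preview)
Your proposal is correct and follows essentially the same approach as the paper: Miller's coding method adapted to very mad families, where each $g_\alpha$ recursively encodes a well-founded model of an initial segment of $L$ so that membership becomes ``the decoded model is well-founded and thinks $g$ is in $\A$'', a $\mathbf{\Pi^1_1}$ statement. The paper's implementation differs only in details---it uses a pointer-based encoding scheme inside the values of $g$ (rather than reserving even coordinates), works with specific levels $L_{\beta_\gamma}$ satisfying $L_{\beta_\gamma}\cong\Sk(L_{\beta_\gamma})$ with small witness (rather than generic $\Sigma_n$-elementary submodels), and at stage $\gamma$ hits \emph{all} reals in $L_{\beta_\gamma}$ not finitely covered (rather than a bookkept single $F_\alpha$)---but the architecture is the same.
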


The proof is based on the proof of the analogous result for maximal
almost disjoint families of subsets of $\N$ by Arnold Miller, see
\cite{AM89}.  For background on constructibility see \cite[Chap.
VI]{KK80}, and \cite{KD} in combination with \cite{AMpre} (the theory
Basic Set Theory is not strong enough for the use Devlin makes of it,
this is analyzed in Mathias paper, and a replacement is offered there
that is sufficient for the results we use).
 
The idea of this proof is that we identify a set of good levels of $L$
(those for which $L_\alpha \cong \Sk(L_\alpha)$ with small witness, as
defined below).  We prove a coding lemma (Lemma~\ref{lem:coding})
allowing us to encode these levels into our construction.  Then we
show that from an encoding of a good level we have access to the limit
level after it (Lemma~\ref{lem:nextlevel}), which allows us to decide
membership (Lemma~\ref{lem:decidemembership}).

In this section we choose the sequence coding $\langle \ldots
\rangle$ and projections $\pi_i$ to be recursive.

\begin{lemma}
  \label{lem:coding}
  Let $A = \{ g_n \mid n \in \N\} \subseteq \BS$ be an almost disjoint
  family, $E \subseteq \mathbb{N} \times \mathbb{N}$ and $F = \{f_n
  \mid n \in \N \} \subseteq \BS$ not finitely covered by $A$.  Then
  there exists a function $g : \mathbb{N} \rightarrow \mathbb{N}$
  almost disjoint from all functions in $A$, such that $E$ is
  recursive in $g$ and $g$ agrees on infinitely many inputs with each
  member of $F$ ($\forall n \in \mathbb{N} \, \card{f_n \cap g} = \aleph_0$).
\end{lemma}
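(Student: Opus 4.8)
The plan is to build the function $g$ by a finite-extension (back-and-forth style) recursion, interleaving three kinds of requirements at each stage, in exactly the spirit of the construction in Lemma~\ref{lem:stepma} and its forcing reformulation Lemma~\ref{lem:genericobject}, but now keeping everything recursive. First I would set up the bookkeeping: since the sequence coding and projections $\pi_i$ are recursive (as stipulated in this section), and since $A = \{g_n\}$ and $F = \{f_n\}$ come indexed by $\N$, a single stage $s$ can be made responsible for (a) putting one more pair of $E$ into the ``record'' so that $E \leq_T g$, (b) ensuring $g$ agrees with $f_i$ for each $i \le s$ on at least one new input past the current point, and (c) keeping $g$ almost disjoint from $g_0,\dots,g_s$. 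After step $s$, $g$ will be defined on an initial segment $[0,n_s]$.

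\textbf{Encoding $E$ into $g$.} The key trick for making $E$ recursive in $g$ is to reserve, in a recursive way, certain ``coding positions'' whose values record membership in $E$. Concretely I would fix a recursive partition of $\N$ (or of the positions we fill at stage $s$) into blocks, and on the block assigned to the pair $\pair{k}{l}$ let $g$ take a value that is even iff $\pair{k}{l}\in E$ and odd otherwise (any recursive flag works). Given $g$, to decide $\pair{k}{l}\in E$ one recursively locates the coding position and reads its parity; so $E \le_T g$. This must be done compatibly with the almost-disjointness and agreement requirements, which is fine because at each stage we only need \emph{one} free position to carry each bit of coding information, and — crucially — Corollary~\ref{cor:infroom}-type reasoning (here just: $g_i$ is not finitely covered by $A$ and each $f_j\cap g_i$ consideration) is not even needed; what we need is simply that at each stage there are infinitely many inputs available where we may put whatever value we like while still avoiding the finitely many forbidden values $\{g_0(n),\dots,g_s(n)\}$, which holds trivially since $\N$ is infinite and only finitely many values are forbidden at each input.

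\textbf{Meeting each $f_i$ infinitely often.} This is where not-finitely-covered is used, exactly as in the proof of Lemma~\ref{lem:stepma}: since $f_i$ is not finitely covered by $A$, for any $n$ there is $m>n$ with $f_i(m)\notin\{g_0(m),\dots,g_s(m)\}$, and moreover such an $m$ can be found \emph{recursively} by searching (the search terminates because the statement is true). At stage $s$ I would, for each $i\le s$, pick a fresh such $m_{s,i}$ larger than everything used so far and set $g(m_{s,i}):=f_i(m_{s,i})$; this simultaneously keeps $g$ almost disjoint from $g_0,\dots,g_s$ at those points and guarantees $\card{f_i\cap g}=\aleph_0$. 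Then fill every remaining undefined position below the new $n_{s+1}$ with the least value not in $\{g_j(n)\mid j\le s\}$, except at the designated coding positions where we instead put the parity-flag value (again choosing the least such value of the correct parity not among the finitely many forbidden ones). Almost disjointness of $g$ from every $g_n\in A$ then follows because $g(n)\ne g_n(n)$ for all $n\ge$ (the stage at which $g_n$ enters the forbidden list).

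\textbf{Main obstacle.} The only real point requiring care is arranging the coding so that it is genuinely recursive in $g$ \emph{and} does not collide with the other two requirements. The danger is that a coding position might be forced, by the agreement-with-$f_i$ demand, to take a value of the ``wrong'' parity, or that the recursive decoding procedure cannot locate the coding position without already knowing $g$ on an unbounded set. I would resolve this by making the coding positions and the $f_i$-agreement positions \emph{disjoint by construction} — e.g. reserve residue class $0$ mod $2$ of the stage-$s$ block for coding of the pair currently being encoded and use only the other positions for $f_i$-agreement — and by making the block structure a fixed recursive function of $s$ alone, so that from $g$ one recovers $n_s$ recursively (it is the max of block $s$, a recursive quantity) and hence can read off each bit. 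With that separation in place the verification that $g$ is total, recursive-bookkept, almost disjoint from $A$, computes $E$, and meets each $f_n$ infinitely often is routine, completing the proof.
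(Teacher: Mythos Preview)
Your overall architecture --- a stage-by-stage construction handling (a) coding a bit of $E$, (b) meeting each $f_i$ at a fresh point, and (c) avoiding $g_0,\dots,g_s$ on new inputs --- is exactly the paper's approach. The gap is in the coding mechanism, and it is precisely the obstacle you flagged but did not actually resolve.

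You propose to make the block boundaries $n_s$ ``a fixed recursive function of $s$ alone'' so that the decoder can locate the coding positions from $g$. But this is incompatible with your own step (b): to hit $f_i$ at stage $s$ you search for $m_{s,i}>n_s$ with $f_i(m_{s,i})\notin\{g_0(m_{s,i}),\dots,g_s(m_{s,i})\}$, and ``not finitely covered'' only guarantees that such $m$ exist cofinally, not inside any preassigned interval. So either the blocks are fixed in advance and the $f_i$-agreement may be impossible within them, or the blocks are determined by the search (hence by $A$ and $F$) and the decoder cannot find the coding positions from $g$ alone. Your parity-at-reserved-positions idea does not survive this dilemma.

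The paper's fix is a pointer scheme: the coding position at stage $s$ is $n_s$ itself, and the value $g(n_s)$ is chosen to be $\langle k,\langle n_{s+1},\chi(s)\rangle\rangle$, simultaneously recording the bit $\chi(s)$ \emph{and} the location $n_{s+1}$ of the next coding position (with $k$ chosen to dodge $g_0(n_s),\dots,g_s(n_s)$). The decoder starts at $n_0=0$, reads off $\chi(0)$ and $n_1$ from $g(0)$, then reads $g(n_1)$, and so on; this is recursive in $g$ with no reference to $A$ or $F$. Once you have this pointer trick, the rest of your write-up goes through essentially verbatim.
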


\begin{proof}
  Instead of encoding $E$ directly we encode $\chi$ the
  characteristic function of $\{ \langle n,m \rangle \mid (n,m) \in E
  \}$.

  We define $g$ recursively.  At step $s$ we extend the initial segment of
  $\mathbb{N}$ on which $g$ is defined by doing the following:
  \begin{enumerate}
  \item 
    Find $n_{s,i}$, $i \in [0,s]$, such that $n_s < n_{s,0} < n_{s,1}
    < \cdots < n_{s,s}$, where $n_s$ is the least number where $g$ is
    not defined yet, and $f_i(n_{s,i})$ is different from all
    $g_0(n_{s,i}), \ldots, g_s(n_{s,i})$.  Then define $g(n_{s,i}) =
    f_i(n_{s,i})$.  Also define $n_{s+1}$ to be $n_{s,s}+1$.
  \item
    Define $g(l)$ for $n_s < l < n_{s+1}$ where $g$ is not yet defined
    to be the least number different from all $g_0(l), \ldots,
    g_s(l)$.
  \item Define $g(n_s)$ to be $\langle k,\langle
    n_{s+1},\chi(s)\rangle \rangle$ where $k$ is the least number such
    that $\langle k, \langle n_{s+1}, \chi(s) \rangle \rangle$ is
    different from all $g_0(n_s), \ldots, g_s(n_s)$.  Here the value
    $n_{s+1}$ is the ``pointer'' to the next location where a value of
    $\chi$ can be found.
  \end{enumerate}

  It can now be easily checked that the $g$ constructed satisfies the
  lemma.
\end{proof}

We note that if $A$, $F$, $E$ are members of $L_\alpha$, then $g$ is a
member of $L_{\alpha + 1}$; the proof shows how to define $g$ from
$A$, $F$ and $\chi$; also $E$ and $\chi$ appear at the same level of
the constructible hierarchy.  Also note that the encoding is uniform:
it does not depend on which functions and families we work with.  This
also means that we can talk about \emph{the} relation encoded in $g$
(later this relation will be the inclusion relation of a model on
$(\N, E)$).


\begin{definition}
  For $\alpha > \omega$ we say $L_\alpha \cong \Sk(L_\alpha)$ iff
  there exists $\langle h, \varphi, \seq{p} \rangle$ (the witness)
  such that:
  \begin{enumerate}
  \item $h$ is a Skolem function for all $\Sigma_k$ formulas for
    $L_\alpha$, for some $k \geq 1$,
  \item $h[ \N \times (\N \cup \seq{p}) ] \cong L_\alpha$, and
  \item $h(n,x) = y \Leftrightarrow L_\alpha \models
    \varphi(\seq{p},n,x,y)$.
  \end{enumerate}
\end{definition}
\begin{lemma}
  \label{lem:unboundedLsk}
  The set $\{ \alpha \mid L_\alpha \cong \Sk(L_\alpha)\}$ is unbounded
  in $\omega_1$.
\end{lemma}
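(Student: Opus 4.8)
The plan is to show that the set $\{\alpha \mid L_\alpha \cong \Sk(L_\alpha)\}$ is unbounded in $\omega_1$ by a standard Löwenheim--Skolem plus condensation argument. Fix an arbitrary $\beta < \omega_1$; I will produce $\alpha$ with $\beta < \alpha < \omega_1$ and $L_\alpha \cong \Sk(L_\alpha)$. The key is to build a countable elementary substructure of a sufficiently large $L_\gamma$ that contains $\beta$ and is closed under the relevant Skolem function, then transitively collapse it.

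The main steps, in order, would be as follows. First, choose a large enough limit ordinal $\gamma < \omega_1$ with $\beta \in L_\gamma$ such that $L_\gamma$ satisfies enough of $\mathsf{ZFC}$ (say a large finite fragment, or work with $L_\gamma$ for $\gamma$ admissible, or simply use that for club-many $\gamma$ the structure $L_\gamma$ is $\Sigma_k$-correct for the $k$ we need). Second, using the definable Skolem functions of $L$ (the canonical wellordering $<_L$ gives definable Skolem functions for every $\Sigma_k$ formula), fix a single $\Sigma_k$ Skolem function $h$ for $L_\gamma$, with $k \geq 1$, so that $h(n,x)=y \Leftrightarrow L_\gamma \models \varphi(n,x,y)$ for an appropriate $\varphi$ (this $\varphi$ and $k$ are where the witness data comes from). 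Third, form the Skolem hull $X = h[\,\N \times (\N \cup \{\beta\})\,]$ inside $L_\gamma$: since we feed in only countably many ``seeds'' ($\N$ together with the parameter $\beta$), $X$ is countable, and by construction $X \prec_{\Sigma_k} L_\gamma$ and $\beta \in X$. Fourth, apply the Condensation Lemma: $X$ is extensional (it is an elementary substructure of a transitive structure with respect to a large enough fragment), so its transitive collapse is some $L_\alpha$, and the collapse map is an isomorphism $X \cong L_\alpha$. Since $\beta \in X$ and $\beta$ is an ordinal below the first ordinal moved, one checks $\beta < \alpha$ (the collapse does not move ordinals that are ``small'' relative to $X$, and in any case $\alpha \geq \beta+1$ because $\beta \in L_\alpha$); and since $X$ is countable, $\alpha < \omega_1$. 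Fifth, transport the witness: let $\bar h$, $\bar\varphi$, $\bar{\seq p}$ be the images of $h$, $\varphi$, and the parameter $\beta$ under the collapse. Then $\bar h$ is a Skolem function for all $\Sigma_k$ formulas of $L_\alpha$, $\bar h[\,\N \times (\N \cup \bar{\seq p})\,] = L_\alpha$ (the hull is everything after collapse), and $\bar h(n,x)=y \Leftrightarrow L_\alpha \models \bar\varphi(\bar{\seq p},n,x,y)$, which is exactly the definition of $L_\alpha \cong \Sk(L_\alpha)$. Hence $\alpha$ is in the set and exceeds $\beta$; as $\beta$ was arbitrary, the set is unbounded in $\omega_1$.

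The step I expect to be the main obstacle is making sure the condensation argument applies cleanly at the level of a single fixed $\Sigma_k$: one needs $L_\gamma$ to be sufficiently correct (e.g. $\Sigma_k$-correct, or to satisfy ``$V=L$'' plus enough replacement/comprehension) so that the Skolem function $h$ genuinely witnesses $\Sigma_k$-elementarity, and so that the collapse of the hull is again an initial segment $L_\alpha$ of the constructible hierarchy rather than merely some transitive model of a fragment. The cleanest route is to invoke the standard fact (as in the references \cite{KK80}, \cite{KD}, \cite{AMpre} cited above) that for a club of $\gamma < \omega_1$ the structure $L_\gamma$ reflects the needed formulas and that Gödel condensation yields $\bar X = L_\alpha$; then the countable-hull construction produces unboundedly many good $\alpha$ because $\gamma$ itself can be taken arbitrarily large below $\omega_1$ while still retaining $\beta$. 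I would also remark that the uniformity of the definable Skolem functions (they come from $<_L$ and do not depend on $\gamma$) is what lets the single pair $(\varphi,k)$ serve as part of the witness after collapsing.
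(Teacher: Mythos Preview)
Your approach via a direct Skolem-hull-and-condense argument is sound in spirit, but there is a genuine gap at the step where you conclude $\alpha > \beta$. You assert that $\beta \in L_\alpha$ because ``the collapse does not move ordinals that are small relative to $X$'', but nothing in your construction guarantees $\beta \subseteq X$: the hull $h[\N \times (\N \cup \{\beta\})]$ need not contain every ordinal below $\beta$, so the Mostowski collapse may send $\beta$ to some $\pi(\beta) < \beta$, and then $\alpha$ could well be $\leq \beta$. The fix is easy: add to your seed parameters a surjection $f:\omega \to \beta$ (such an $f$ lies in some $L_\delta$ with $\delta < \omega_1$, so choose $\gamma > \delta$). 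Then every $\xi < \beta$ is $f(n)$ for some $n\in\omega\subseteq X$, hence $\beta \subseteq X$, the collapse fixes $\beta$, and $\alpha > \beta$ follows.

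Even with this fix, your route differs from the paper's in a way that matters downstream. The paper does not take an arbitrary $\gamma$ and collapse a hull inside it; instead it picks $\beta$ where a \emph{new real} $r$ first appears over $L_\beta$, works inside $L_{\beta+\omega}$, and uses $r$ to pin the collapse: since $r$ lies in the hull but $r \notin L_\beta$, the collapsed ordinal must be exactly $\beta+\omega$. This is engineered so that the resulting witness is a \emph{small witness} in the sense defined immediately after the lemma, and smallness is precisely what makes the property ``$L_\alpha \cong \Sk(L_\alpha)$ with small witness'' absolute for $L_{\alpha+\omega}$, which in turn lets the enumeration $\langle \beta_\gamma\mid\gamma<\omega_1\rangle$ of good levels be recognized at each stage of the construction. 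Your argument proves the lemma as literally stated but does not yield small witnesses, so you would still need the paper's new-real argument (or an equivalent) to carry out the rest of Section~\ref{sect:VisL}.
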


\begin{proof}
  First recall from G\"odel's proof of $\mathsf{CH}$ in $L$ that every
  constructible real is in $L_{\omega_1}$.  From this using the fact
  all $L_\beta$, $\beta < \omega_1$, are countable it follows that
  the set $\{\beta < \omega_1 \mid \exists r \ [\, r \in L_{\beta+1}
  \setminus L_\beta \wedge r \in \BS\,]\,\}$ is unbounded in
  $\omega_1$.  So it is sufficient to prove for each $\beta$ in this
  set that $L_{\beta+\omega} \cong \Sk(L_{\beta+\omega})$.
  
  Therefore let $r$ be definable over $L_\beta$ from a finite
  sequence of parameters $\bar{q}$, $r = \{ \langle m, n \rangle \mid
  L_\beta \models \psi( m, n, \seq{q})\}$, and such that $r \not \in
  L_\beta$.  Then $r \in L_{\beta + \omega}$ so that
  $L_{\beta+\omega} \models \exists r \ \forall m,n \in \omega \left(
  (m,n) \in r \leftrightarrow \psi^{L_\beta}(m,n,\seq{q})\right)$.

  Let $h : \N \times L_{\beta+\omega} \rightarrow L_{\beta+\omega} $
  be a definable Skolem function for $\Sigma_{k+2}$ formulas with $k
  \in \N$ such that $\psi \in \Sigma_k$.
  
  As $X = h[\N \times (\N \cup \seq{q} \cup \{L_\beta\})] \prec_{k+2}
  L_{\beta+\omega}$ we have $(X,\in) \models
  \psi^{L_\beta}(n,m,\seq{q})$ iff $(L_\beta,{\in}) \models
  \psi^{L_\beta}(n,m,\seq{q})$ and $(X,\in) \models \exists r \ 
  \forall m,n \in \omega \left((m,n) \in r \leftrightarrow
    \psi^{L_\beta}(m,n,\seq{q}\right)$, which shows $r$ is in
  $(X,\in)$.
  
  By the condensation lemma \cite[Theorem II.5.2]{KD} we have a $\pi$
  such that $\pi: (X,{\in}) \cong (L_\alpha,{\in})$, $\alpha \leq \beta +
  \omega$ and $\alpha$ is a limit ordinal; this $\pi$ is the identity
  on transitive sets, in particular on the natural numbers.  From this
  we get $(X, \in) \models \psi^{L_\beta}(n,m,\bar{q})$ iff
  $(L_\alpha, \in) \models \psi^{\pi L_\beta}(n,m,\pi{\seq{q}})$ and
  $(L_\alpha, \in) \models \exists r \ \forall m,n \in \omega
  \left((m,n) \in r \leftrightarrow
    \psi^{\pi L_\beta}(m,n,\pi\seq{q}\right)$, which shows that $r$ is in
  $L_\alpha$.  So since $\alpha \leq \beta+\omega$, $r \not \in
  L_\beta$, $r \in L_{\alpha}$, and $\alpha$ is a limit ordinal, we get
  $\alpha = \beta +\omega$.  This means $X \cong L_{\beta+\omega}$ as
  was to be shown.
\end{proof}

\begin{definition}
  $\langle h, \varphi, \seq{p}\rangle$ is a \emph{small witness} for
  $L_{\alpha} \cong \Sk(L_{\alpha})$ if it is a witness for $L_\alpha
  \cong \Sk(L_\alpha)$ and there exist $r$, $\psi$, and $\beta$ such
  that
  \begin{enumerate}
  \item $L_\beta$ is in $\bar{p}$;

  \item If $\bar{q}$ is $\bar{p}$ with $L_\beta$ removed, then $r =
    \{(m,n) \mid L_\beta \models \psi(m,n,\bar{q})\}$;
    
  \item $\alpha = \beta + \omega$;
    
  \item $h$ is a Skolem function for $\Sigma_{k+2}$ formulas for
    $L_\alpha$ and $k$ such that $\psi \in \Sigma_k$.
  \end{enumerate}
  
  That is $\langle h, \varphi, \seq{p} \rangle$ is related to $r$ and
  $\beta$ as in the proof of the previous lemma.
\end{definition}

The reason for this definition is that if we have a small witness, we
can check in $L_{\alpha + \omega}$ that $\langle h, \varphi,
\seq{p}\rangle$ is obtained from $r$ as in the proof of the lemma.
For a general witness we don't know at which level of $L$ the
isomorphism $h[ \N \times (\N \cup p) ] \cong L_{\alpha}$
will appear.

The proof of the lemma shows that the set $\{ \alpha < \omega_1 \mid
L_\alpha \cong \Sk(L_\alpha)$ with a small witness$\}$ is unbounded.
Enumerate it in increasing order by $\langle \beta_\gamma \mid \gamma
< \omega_1\rangle$\label{egl}.  Note that by absoluteness of the notion
of small witness and the fact that limit levels of the constructible
hierarchy are closed under certain simple recursions, we have that
$L_{\beta_\gamma + \omega} \models$``$ \langle \beta_{\gamma'} \mid
\gamma'\leq \gamma \rangle $ is an initial segment of the increasing
enumeration of ordinals $\alpha$ such that $L_\alpha \cong
\Sk(L_\alpha)$ with small witness''.

\begin{lemma}
  \label{lem:Eavailable}
  If $L_\alpha \cong \Sk(L_\alpha)$, then there is an $E \subseteq
  \N \times \N$ such that $E \in L_{\alpha + \omega}$ and
  $(L_\alpha, {\in}) \cong (\N, E)$.
\end{lemma}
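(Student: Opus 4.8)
The plan is to use the hypothesis $L_\alpha \cong \Sk(L_\alpha)$ to produce a definable (over $L_\alpha$, or at worst over a small initial stretch beyond it) bijection between $\N$ and the underlying set of $L_\alpha$, and then to let $E$ be the pullback of the membership relation along that bijection. Concretely, let $\langle h, \varphi, \seq{p}\rangle$ be a witness for $L_\alpha \cong \Sk(L_\alpha)$, so that $h$ is a Skolem function for all $\Sigma_k$ formulas over $L_\alpha$ (some $k \geq 1$), $h[\N \times (\N \cup \seq p)] \cong L_\alpha$ (indeed equals $L_\alpha$ as a set, since $h$ surjects onto $L_\alpha$), and $h(n,x) = y \Leftrightarrow L_\alpha \models \varphi(\seq p, n, x, y)$. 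Since $L_\alpha$ is countable and $\N \cup \seq p$ is countable, the domain $\N \times (\N \cup \seq p)$ is countable; fix any recursive bijection $\N \to \N \times (\N \cup \seq p)$ (using a code for the finite tuple $\seq p$), compose with $h$ to get a surjection $e : \N \twoheadrightarrow L_\alpha$, and then collapse it to a bijection $\bar e : \N \to L_\alpha$ in the usual way (enumerate, discarding any $n$ whose value $e(n)$ already occurred). Define $E = \{(m,n) \mid \bar e(m) \in \bar e(n)\}$; then by construction $(L_\alpha, {\in}) \cong (\N, E)$ via $\bar e$.

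It remains to check $E \in L_{\alpha+\omega}$. The point is that all the ingredients above are definable over $L_\alpha$ (or over $L_{\alpha}$ together with finitely much arithmetic bookkeeping), so the finished object appears a few levels up. The function $h$ is $\Sigma_k$-definable over $L_\alpha$ from $\seq p$ by clause (iii) of the definition of witness, so $h$, and hence $e$ (composing with a recursive map), is an element of $L_{\alpha+n}$ for some small finite $n$; the collapsing of $e$ to $\bar e$ is a straightforward recursion on $\omega$, absolute and performed at a bounded finite number of further steps, so $\bar e \in L_{\alpha + n'}$ for some finite $n'$; and $E$, being defined from $\bar e$ by a $\Delta_0$ formula, lies one step beyond that. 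In every case $n, n', \ldots < \omega$, so $E \in L_{\alpha+\omega}$ as required. (One should note $\alpha$ is a limit ordinal here — this is implicit in $\alpha > \omega$ and the way $L_\alpha \cong \Sk(L_\alpha)$ is used elsewhere, e.g. in Lemma~\ref{lem:unboundedLsk} the relevant $\alpha$ are of the form $\beta + \omega$ — which guarantees $L_\alpha$ is closed under pairing and the basic operations needed for these definitions to make sense internally.)

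The main obstacle, and the only place requiring care, is the bookkeeping that $h$ really is an \emph{element} of some $L_{\alpha + n}$ rather than merely a definable class over $L_\alpha$: one has to invoke that $L_\alpha \models \ZF^-$-like closure enough that $\Sigma_k$-definable partial functions on $L_\alpha$ (with parameters in $L_\alpha$) have their graphs appearing as sets at a finite stage beyond $\alpha$ — this is exactly the sort of fact recorded in the references the paper cites (\cite[Chap.\ VI]{KK80}, \cite{KD}), namely that for limit $\alpha$ the $L$-hierarchy satisfies enough $\Delta_0$-comprehension and that adding finitely many stages suffices to collect any single definable set. Given that, the rest is the routine surjection-to-bijection collapse and a $\Delta_0$ definition of $E$. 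Since all appeals to "finitely many more levels" involve only finitely many applications of the Skolem/definability machinery, the total stays strictly below $\alpha + \omega$, completing the proof.
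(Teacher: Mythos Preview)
Your argument is correct and follows the same underlying strategy as the paper: use the Skolem witness $\langle h,\varphi,\bar p\rangle$ to obtain a surjection from $\N$ onto $L_\alpha$, collapse it to a bijection, and pull back $\in$. The one difference worth noting is that the paper does not work with $h$ directly as an element of $L_{\alpha+1}$; instead it first shows $\Th(\langle L_\alpha,{\in},\bar p\rangle)\in L_{\alpha+\omega}$ and then carries out the entire construction \emph{syntactically}, defining the repetition-free enumeration $\tilde e$ and the relation $E$ by checking whether certain explicitly computed G\"odel numbers belong to this theory. This buys the paper an explicit recursive function $G$ with $(n,m)\in E \Leftrightarrow G(n,m)\in\Th(\langle L_\alpha,{\in},\bar p\rangle)$, i.e.\ uniformity in the parameters, which is exactly what is needed downstream in Lemmas~\ref{lem:nextlevel} and~\ref{lem:decidemembership} where one must recover $(\N,E_\omega)\cong L_{\alpha+\omega}$ arithmetically from the real $g$. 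For the bare statement of the present lemma your direct route (graph of $h$ in $L_{\alpha+1}$, then a $\Sigma_1$ recursion to collapse, then a $\Delta_0$ pullback) is cleaner and perfectly adequate; you are simply not recording the extra uniformity that the paper will exploit later.
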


\begin{proof}
  Let $L_\alpha \cong \Sk(L_\alpha)$ be witnessed by $\langle h,
  \varphi, \seq{p} \rangle$.  First notice that $\Th(\langle L_\alpha,
  {\in}, \seq{p}\rangle) \in L_{\alpha+\omega}$: we follow the ideas
  from pages 40 and 41 of \cite{KD}.  The theorem Devlin proves there is
  not correct, see \cite{AMpre}, but the method can be used here.  We
  have a function $f$ such that $f(0)$ is the set of all primitive
  formulas of set theory, and $f(i+1)$ is the set of all formulas
  formed from the formulas in $f(i)$ by conjunction, disjunction,
  implication, and quantification.  Then we construct a function $g$
  such that $g(i)$ is a set of pairs, first coordinate a formula $\varphi$ from
  $f(i)$, second coordinate a sequence $\bar{x}$ of elements of
  $L_\alpha$ such that $\varphi(\bar{x})$ is true in $(L_\alpha,
  {\in}, \bar{p})$.  All
  these elements are in $L_{\alpha + n}$ for some $n$.  Then at
  $L_{\alpha + n + 1}$ we can construct all $g \restrict k$ for $k \in
  \N$.  So at $L_{\alpha + n + 2}$ we can use the recursive definition
  of $g$ to construct it.  From $g$ we get $\Th(\langle L_\alpha,
  {\in}, \seq{p} \rangle)$ as the subset of the image consisting of
  all formulas with no free variables.  (Note that this, and the
  following, are all uniform with respect to the sequence $\bar{p}$,
  but, for notational convenience, we'll leave it implicit as a
  parameter.)

  Let $e: \N \rightarrow \N \times (\N \cup \bar{p})$ be 
  the definable bijection
  \begin{equation*}
    e(n) =
    \begin{cases}
      (\pi_0(n), \bar{p}_{\pi_1(n)}), \quad & \text{ if } \pi_1(n) <
      \lh(\seq{p});\\
      (\pi_0(n), \pi_1(n) - \lh(\seq{p})), \quad & \text{ otherwise},
    \end{cases}
  \end{equation*}
  and $\varphi_e$ the formula defining $e$, i.e. $\varphi_e(n,x,y)
  \Leftrightarrow e(n) = (x,y)$ (this formula defines $e$ in any
  $L_{\alpha+4}$ with $\alpha > \omega$ and $\bar{p} \in L_\alpha$ and is
  absolute for these levels).

  Define $\tilde{e}: \N \rightarrow \N$ from this by setting
  $\tilde{e}(0) = 0$ and $\tilde{e}(n+1) = k$ where $k$ is the least
  number bigger than $\tilde{e}(n)$ such that $\ulcorner
  \psi(k,\tilde{e}(n)) \urcorner \in \Th(\langle L_\alpha, \in,
  \seq{p}\rangle )$, where $\psi(k, \tilde{e}(n))$ is the formula
  \begin{align*}
    \forall l \leq \tilde{e}(n) \ \forall y_0, y_1 \ \big[ \
      \varphi(\seq{p}, \pi_0(e(l)), \pi_1(e(l)), y_0)\ & \wedge\
      \varphi(\seq{p}, \pi_0(e(k)), \pi_1(e(k)), y_1) \rightarrow \\
    \big(\exists z \ (z \in y_0 \wedge z \not \in y_1) \ & \vee \ (z \not
      \in y_0 \wedge z \in y_1)\big)\ \big],
  \end{align*}
  in which $\varphi$ is the formula defining $h$, and which after
  elimination of $e$ in favor of its definition becomes
  \begin{align*}
    \forall l \leq \tilde{e}(n) \ \forall l_0,l_1, k_0,k_1 \ \big\{ \ \varphi_e(l,l_0,l_1) \ &\wedge \
      \varphi_e(k,k_0,k_1) \rightarrow \\
    \forall y_0,y_1\, \big[ \ \varphi(\seq{p}, l_0, l_1,
      y_0)\ & \wedge\ \varphi(\seq{p}, k_0, k_1, y_1)
      \rightarrow \\
    \big(\,\exists z \ (z \in y_0 \wedge z \not \in y_1) \ & \vee \ (z \not
      \in y_0 \wedge z \in y_1)\,\big)\ \big] \ \big\}.
  \end{align*}

  Note $\psi(k,\tilde{e}(n))$ is the formula expressing $\forall l
  \leq \tilde{e}(n)\ h(e(k)) \neq h(e(l))$, and a G\"odel number for
  $\psi(k,\tilde{e}(n))$ can be recursively obtained from $k$ and
  $n$ (the function $(k,m) \mapsto \ulcorner \forall x \theta_m(x)
  \rightarrow \psi(k,x) \urcorner$ (where $\theta_m(x)$ is the formula
  defining $m \in \omega$) is in $L_{\omega+\omega}$, but
  $\tilde{e}$ which is recursively defined from it and $\Th(\langle
  L_\alpha, \in, \bar{p}\rangle)$ can be constructed at the level of
  $L$ after $\Th(\langle L_\alpha, \in, \bar{p} \rangle)$ is constructed).

  Let $\varphi_{\tilde{e}}(n,m)$ be such that
  $\varphi_{\tilde{e}}(n,m) \Leftrightarrow \tilde{e}(n) = m$.

  These definitions have been made so that $h \circ e \circ \tilde{e}
  : \N \rightarrow L_\alpha$ is an enumeration of $h[ \N
  \times (\N \cup \seq{p}) ]$ without repetitions.  We will set up
  the model $(\N, E)$ such that the number $m \in \N$ will
  represent the set $h(e(\tilde{e}(m)))$.  It is then clear that we
  want $n \mathrel{E} m$ iff $h(e(\tilde{e}(n))) \in
  h(e(\tilde{e}(m)))$.
  
  We show $E \in L_{\alpha+k}$ for some $k \in \N$ by eliminating all
  functions in favor of their definitions in the statement
  $h(e(\tilde{e}(n))) \in h(e(\tilde{e}(m)))$, and then noting this
  statement is true of $(n,m)$ iff the G\"odel number of the formula
  resulting from substituting terms defining $n$ and $m$ in this
  formula is in $\Th(\langle L_\alpha, E ,\seq{p} \rangle)$.

  First eliminating $h$, we get
  \begin{align*}
    \forall z_n,z_m\,\big[\,\big\{ &\varphi(\seq{p}, \pi_0(e(\tilde{e}(n))),
      \pi_1(e(\tilde{e}(n))), z_n) \ \wedge \\
    & \varphi(\seq{p}, \pi_0(e(\tilde{e}(m))), \pi_1(e(\tilde{e}(m))),
    z_m)\,\big\} \rightarrow z_n \in z_m\,\big].
  \end{align*}
  Then eliminating $e$ we get
  \begin{align*}
    \forall x_n,y_n, x_m,y_m \,\big\{ \ \varphi_e(\tilde{e}(n),x_n,y_n) \
      &\wedge \ \varphi_e(\tilde{e}(m), x_m,y_m) \rightarrow \\
    \forall z_n,z_m \,\big[ \ \varphi(\seq{p}, x_n,y_n,z_n) \ & \wedge \
      \varphi(\seq{p}, x_m,y_m,z_m) \rightarrow z_n \in z_m \,\big] \
      \big\}.
  \end{align*}
  After eliminating $\tilde{e}$ this gives
  \begin{align*}
    \forall l_n,l_m \, \big( \ \varphi_{\tilde{e}}(n,l_n) \ & \wedge \
      \varphi_{\tilde{e}}(m,l_m) \rightarrow \\
  \forall x_n,y_n, x_m,y_m \,\big\{ \ \varphi_e(l_n,x_n,y_n) \
    &\wedge \ \varphi_e(l_m, x_m,y_m) \rightarrow \\
  \forall z_n,z_m \,\big[ \ \varphi(\seq{p}, x_n,y_n,z_n) \ & \wedge \
    \varphi(\seq{p}, x_m,y_m,z_m) \rightarrow z_n \in z_m \, \big] \,
    \big\} \, \big).
  \end{align*}
  This is a formula in the language $\{{\in}, \seq{p}\}$ with free
  variables $n$ and $m$.  The recursive function $G$ that to $(n,m)$
  assigns the formula
  \begin{align*}
    \forall u, v \ \theta_n(u) \wedge \theta_m(v) \rightarrow \\
    \forall l_n,l_m \, \big( \ \varphi_{\tilde{e}}(u,l_n) \ & \wedge \
      \varphi_{\tilde{e}}(v,l_m) \rightarrow \\
  \forall x_n,y_n, x_m,y_m \,\big\{ \ \varphi_e(l_n,x_n,y_n) \
    &\wedge \ \varphi_e(l_m, x_m,y_m) \rightarrow \\
  \forall z_n,z_m \,\big[ \ \varphi(\seq{p}, x_n,y_n,z_n) \ & \wedge \
    \varphi(\seq{p}, x_m,y_m,z_m) \rightarrow z_n \in z_m \, \big] \,
    \big\} \, \big).
  \end{align*}
  is in $L_{\alpha+l}$ for some $l \in \N$ (note:
  $\varphi_{\tilde{e}}$ uses $\Th(\langle L_\alpha, \in,
  \bar{p}\rangle)$ as a parameter).
  
  This shows we can define $E$ over $L_{\alpha + l}$ by $(n,m) \in E$
  iff $G(n,m) \in \Th(\langle L_\alpha, {\in}, \seq{p}\rangle)$.
\end{proof}

We now define functions (as in \cite[page 217]{PGH}) relating the natural
numbers and the real numbers to their representatives in $(\N,E)$.

Define for any $(\N, E) \cong L_\alpha$, $\omega < \alpha < \omega_1$,
a recursive function $\Nat_E : \N \rightarrow \N$ by
\begin{align*}
  \Nat_E(0) &= \text{the unique } u \in \N \text{ such that }
    \forall l \in \N \ (\neg \, l \mathrel{E} u)\\
  \Nat_E(n+1) & = \text{the unique } u \in \N \text{ such that }
    \forall l \in \N [ (l \mathrel{E} u) \leftrightarrow\\
  & \qquad\big((l \mathrel{E} \Nat_E(n)) \vee (l = \Nat_E(n))\big)]
\end{align*}

Using this we can define $\Real_E: \BS \rightarrow \N$ a partial
function by
\begin{align*}
  \Real_E(r) = \text{the} & \text{ unique (if exists) } u \in \N
    \text{ such that } \\
  & \forall n,m [r(n) = m \leftrightarrow (\N, E) \models
    u(\Nat_E(n)) = \Nat_E(m)]
\end{align*}


If $L_\alpha \cong \Sk(L_\alpha)$, then there exists $\pi : (L_\alpha,
\in) \cong (\N, E)$.  So the sets $\mathbb{R}_\alpha = \BS \cap
L_\alpha$ and $\mathbb{R}_E := \{ n \in \N \mid (\N, E) \models n
\text{ is a real}\}$ are mapped to each other by the isomorphism.  We
have in fact that if $r \in \mathbb{R}_\alpha$ then $r(k) = l$ iff
$\pi(r)(\Nat_E(k)) = \Nat_E(l)$ is true in $(\N, E)$.  So we can
define in $L_{\alpha + \omega}$ an enumeration $e_\alpha : \N
\rightarrow R_\alpha$ of all reals in $L_\alpha$ as follows:

First let $e : \N \rightarrow \mathbb{R}_E$ be the bijection $e(0) =
\min\{\mathbb{R}_E\}$ and $e(n+1) = \min \{ m \in \mathbb{R}_E \mid m
> e(n) \}$.  Then $e_\alpha$ is $e$ composed with the map defined by
\begin{gather*}
  \{ (n,r) \in \mathbb{R}_E \times \mathbb{R}_\alpha \mid \forall k,l
    \in \N \ (\N, E) \models n(\Nat_E(k)) = \Nat_E(l)
    \leftrightarrow r(k) = l \} = \\
  \{ (n,r) \in \mathbb{R}_E \times \mathbb{R}_\alpha \mid \pi(r) = n
    \}.
\end{gather*}

Now we are ready for the construction of the very mad family $\A$
which we will show is coanalytic.  It will be recursively enumerated as
$\langle g_\alpha \mid \alpha < \omega_1 \rangle$.

To define $g_\gamma$ from $\langle g_\alpha \mid \alpha < \gamma
\rangle$ we use Lemma \ref{lem:coding} with $A = A_\gamma = \langle
g'_n \mid n \in \N \rangle$, $F = F_\gamma = \langle f_n \mid n
\in \N\rangle$ and $E$ as described below.

By induction we will have the set $\{ g_\alpha \mid \alpha < \gamma
\}$ in $L_{\beta_\gamma + \omega}$ ($\beta_\gamma$ as defined on page
\pageref{egl}), and by a recursion in $L_{\beta_\gamma + \omega}$ we
get the enumeration $\langle g_{\gamma'} \mid \gamma' < \gamma
\rangle$ in $L_{\beta_\gamma + \omega}$.  We can recursively find an
enumeration $\langle g'_n \mid n \in \N \rangle$ of it in
$L_{\beta_\gamma + \omega}$ by letting $g'_n$ be the $n^{\text{th}}$
member in the enumeration $e_{\beta_\gamma}$ of
$\mathbb{R}_{\beta_\gamma}$ which is in $\{g_\alpha \mid \alpha <
\gamma\}$.

We then recursively define $f_n$ to be the $n^{\text{th}}$ member in
the enumeration of $\mathbb{R}_{\beta_\gamma}$ which is not finitely
covered by $\{g_\alpha \mid \alpha < \gamma\}$.  This enumeration will
also be in $L_{\beta_\gamma + \omega}$.

By Lemma \ref{lem:Eavailable} we have an $E$ such that $(\N, E)
\cong (L_{\beta_\gamma},{\in})$ in $L_{\beta_\gamma + \omega}$.

After application of Lemma \ref{lem:coding} (and the observation
following it) we get $g_\gamma \in L_{\beta_\gamma + \omega}$.  This
finishes the construction.  Note that this construction is absolute
for $L_{\beta_\gamma + \omega}$.

Clearly $\A$ is an a.d. family, and if $F \subseteq \BS$ with
$\card{F} < \card{A} = \aleph_1$, then there is a $\beta < \omega_1$
such that $F \subseteq L_\beta$.  Now if $F$ is not finitely covered
by $\A$ then for every $f \in F$ and every $\gamma$ with $\beta_\gamma
\geq \beta$ the set $f \cap g_\gamma$ is infinite, which shows that $\A$
is a very mad family.

Now what remains to be seen is that this $\A$ is $\Pi^1_1$ definable.
\begin{lemma}
  \label{lem:nextlevel}
  If $(\N, E) \cong (L_\alpha, \in)$ and $g \in L_{\alpha+\omega}$
  encodes $E$ as in Lemma \ref{lem:coding}, then there is a formula
  $\varphi$ only containing quantifiers over the natural numbers such
  that
  \begin{align*}
    \varphi(\langle E_\omega, r, u \rangle, g) & \Leftrightarrow
      (\N, E_\omega) \cong (L_{\alpha + \omega}, {\in}) \ \wedge \\
    & r \text{ is the satisfaction relation for } (\N, E_\omega)
      \ \wedge \\
    & u = \Real_{E_\omega}(g).
  \end{align*}
\end{lemma}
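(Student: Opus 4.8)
The plan is to make $\varphi$ an \emph{arithmetic reconstruction} of $L_{\alpha+\omega}$ from its two real arguments: from $g$ it will recover $E$, and it will exploit that the component $r$ of the triple --- the full satisfaction relation of $(\N,E_\omega)$ --- already encodes in one real the entire $\omega$-step tower of ``definable-subsets'' levels over $L_\alpha$, so that verifying correctness needs no transfinite recursion, only a bounded arithmetic check against $r$. Two preliminary facts do most of the work. First, $E$ is \emph{recursive} in $g$: inverting the pointer-chasing of Lemma~\ref{lem:coding} (start at $n_0=0$, read $\chi(s)$ and the pointer $n_{s+1}$ off $g(n_s)$) recovers the characteristic function of $\{\langle n,m\rangle\mid(n,m)\in E\}$, hence $E$ itself, so an arithmetic formula with parameter $g$ may freely refer to $E$. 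Second, ``$r$ is the satisfaction relation of $(\N,E_\omega)$'' is arithmetic in $E_\omega,r$: it asserts closure of $r$ under the Tarski clauses for $(\N,E_\omega)$ (the atomic clauses mentioning $E_\omega$, the quantifier clause a number quantifier), and since those clauses pin the satisfaction relation uniquely, this is equivalent to $r$ being it.

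Granting these, $\varphi(\langle E_\omega,r,u\rangle,g)$ will be the conjunction of: (a) $r$ is the satisfaction relation of $(\N,E_\omega)$; (b) evaluated through $r$, $(\N,E_\omega)$ models a fixed finite theory --- extensionality, foundation, ``$V=L$'', ``there is a limit ordinal $\lambda$ with every ordinal $\le\lambda+n$ for some $n\in\omega$'' (forcing ordinal height $\lambda+\omega$), and ``$L_\lambda\cong\Sk(L_\lambda)$ with small witness''; (c) $\Nat_{E_\omega}$ is onto the natural numbers of $(\N,E_\omega)$, i.e.\ every element the model regards as a natural number equals $\Nat_{E_\omega}(n)$ for some $n\in\N$; (d) the object obtained by running, inside $(\N,E_\omega)$ and at its level $\lambda$, the construction of Lemma~\ref{lem:Eavailable}, when read out through $\Nat_{E_\omega}$ and $r$, is exactly $E$ (so the level-$\lambda$ part of $(\N,E_\omega)$ carries $g$'s coded structure); and (e) $u=\Real_{E_\omega}(g)$, which unwinds to: $u\in\N$ and, for all $n,m$, $g(n)=m$ iff $(\N,E_\omega)$ (via $r$) thinks $u$ maps $\Nat_{E_\omega}(n)$ to $\Nat_{E_\omega}(m)$. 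Each clause uses only number quantifiers over the parameters $g,E_\omega,r,u$.

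Finally, the biconditional. Right to left is routine: for a genuine copy $(\N,E_\omega)$ of $(L_{\alpha+\omega},\in)$ with $r$ its satisfaction relation and $u=\Real_{E_\omega}(g)$, one has $\lambda=\alpha$, $\omega^{(\N,E_\omega)}$ standard (so (c) holds), and --- by the uniformity of the coding noted after Lemmas~\ref{lem:coding} and~\ref{lem:Eavailable} --- running Lemma~\ref{lem:Eavailable} at level $\alpha$ inside the model and reading out reproduces the very $E$ coded by $g$ (so (d) holds); (a), (b), (e) are immediate. Left to right is the substance: (a)--(d) force $(\N,E_\omega)$ to be genuinely well-founded, hence $\cong(L_{\alpha+\omega},\in)$ by transitive collapse. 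Indeed (c) makes the range of $\Nat_{E_\omega}$ an external copy of $\omega$, so $\omega^{(\N,E_\omega)}$ is standard; then (d), read through the now-standard $\Nat_{E_\omega}$, forces the level-$\lambda$ part of $(\N,E_\omega)$ (via the model's internal isomorphism with the code) to be a genuine well-founded copy of $(\N,E)\cong(L_\alpha,\in)$; the higher levels $L_{\lambda+1},\dots$ are then genuine finite iterates of the definable-power-set operation over the genuine set $L_\alpha$, so they --- and their union, which by the height clause exhausts $(\N,E_\omega)$ --- are well-founded; the transitive collapse is then a transitive model of ``$V=L$'' of height $\alpha+\omega$ whose level $\alpha$ is $L_\alpha$, hence is $L_{\alpha+\omega}$, and (a), (e) identify $r$ and $u$ as claimed. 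The step I expect to be the main obstacle is making clauses (c) and (d) precise enough to exclude ill-founded impostors for $(\N,E_\omega)$ while remaining arithmetic --- in particular checking that ``run Lemma~\ref{lem:Eavailable} internally and translate the code out through $\Nat_{E_\omega}$'' does the right thing in a model not yet known to be well-founded, and that the absoluteness of the small-witness notion together with the deterministic, uniform nature of the codings in Lemmas~\ref{lem:coding} and~\ref{lem:Eavailable} are exactly what let the translated-out structure be recognized as $(\N,E)$.
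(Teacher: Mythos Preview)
Your approach is workable and genuinely different from the paper's. The paper does \emph{not} characterize $(\N,E_\omega)$ by the first-order theory it satisfies; instead it rigidly prescribes the bijection between $\N$ and $L_{\alpha+\omega}$. Concretely, the paper's $\varphi$ is $\mathsf{Sat}\wedge\mathsf{EonEvens}\wedge\mathsf{Levels}\wedge(\Real_{E_\omega}(g)=u)$, where $\mathsf{EonEvens}$ forces the even integers to carry an isomorphic copy of $(\N,E)$ via $n\mapsto 2n$, and $\mathsf{Levels}$ fixes a recursive bijection $\pi:\N\times\N\to\N$ and insists that $\pi(l,0)$ be the $l$-th level $L_{\alpha+l}$ while $k\mapsto\pi(l,k+1)$ enumerates, via a fixed G\"odel numbering of definitions, exactly the new definable sets appearing at that level. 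Thus the paper's $\varphi$ has a \emph{unique} solution $E_\omega$ (not merely one up to isomorphism), and well-foundedness of $E_\omega$ follows immediately from well-foundedness of $E$ because each $E_\omega$-element is explicitly a definable subset of lower levels. Your semantic approach instead quantifies over all isomorphic copies, which matches the lemma's biconditional more literally; the price is that you must argue well-foundedness from inside, via your clauses (c) and (d), exactly the obstacle you flag.

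One point in your clause (d) needs repair. You write ``run the construction of Lemma~\ref{lem:Eavailable} internally at level $\lambda$ and check the output equals $E$'', but that construction takes a \emph{choice} of small witness as input and different witnesses produce different encodings $E'$ of $L_\alpha$. The lemma's hypothesis does not say $E$ is the canonical such encoding, so in the $\Leftarrow$ direction a genuine copy of $L_{\alpha+\omega}$ might run the canonical construction and produce some $E'\ne E$, failing (d). The fix is easy: replace (d) by the weaker ``the model contains some element it regards as a bijection $\omega\to L_\lambda$, and the $\in$-relation on $L_\lambda$ pulled back along it and read out through $\Nat_{E_\omega}$ equals $E$''. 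This is still arithmetic, and your well-foundedness argument (standard $\omega$ by (c), hence the pullback is an external bijection, hence the model's $L_\lambda$ is externally isomorphic to $(\N,E)\cong L_\alpha$, hence the finitely many further levels are genuine definable-powerset iterates over a well-founded set) then goes through as you sketch.
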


\begin{proof}
  In the definition below we refer directly to $E$; that this can be
  replaced by $g$ is easy.

  We define
  \begin{align*}
    \varphi(\langle E_\omega,r,u \rangle, g) & \equiv
      \mathsf{Sat}(E_\omega, r) \ \wedge \ \mathsf{EonEvens}(E_\omega,
      E) \ \wedge \\
    & \qquad \mathsf{Levels}(E_\omega, E,r) \ \wedge \
      \Real_{E_\omega}(g) = u,
  \end{align*}
  where:

  $\mathsf{Sat}$: The formula $\mathsf{Sat}(E_\omega, r)$ states that
  $r$ is the satisfaction relation for $E_\omega$: (sketch)
  \begin{align*}
    r( \langle \code{\varphi}, \seq{m} \rangle ) = 1 \Leftrightarrow &
      (\code{\varphi} = \code{ x = y} \wedge m_0 = m_1) \ \vee \\
    & (\code{\varphi} = \code{x \in y} \wedge E_\omega(m_0, m_1)) \ \vee \\
    & (\code{\varphi} = \code{\forall x \psi(x)} \wedge \forall n \
    r(\langle \code{\psi}, \langle n,\seq{m} \rangle\rangle) = 1) \
      \vee \\
    & (\code{\varphi} = \code{\neg \psi} \wedge r(\langle
      \code{\psi}, \seq{m}\rangle) = 0) \ \vee \\
    & (\code{\varphi} = \code{\psi_1 \vee \psi_2} \wedge ( r(\langle
      \code{\psi_1}, \seq{m}\rangle) = 1 \vee r(\langle \code{\psi_2},
      \seq{m}\rangle) = 1))
  \end{align*}
  
  $\mathsf{EonEvens}$: $\mathsf{EonEvens}(E_\omega, E)$ states that
  $E$ is isomorphic to an initial segment of $E_\omega$ and lives on
  the even natural numbers.
  \begin{align*}
    \mathsf{EonEvens}(E_\omega, E) & \equiv \forall i,j \,\big( \neg (2i+1
      \mathrel{E_\omega} 2j) \ \wedge \ (2i \mathrel{E_\omega} 2j
      \leftrightarrow i \mathrel{E} j)\,\big)
  \end{align*}

  $\mathsf{Levels}$: Here we need a bijection $\pi : \N \times
  \N \rightarrow \N$ such that $\pi(0,0) = 1$ and $\pi(0,k+1)$
  enumerates the evens; we can easily find such a bijection which is
  recursive. 

  Then $\mathsf{Levels}(E_\omega, E, r)$ is the conjunction of
  $\mathsf{SLevels}(E_\omega, E)$ and $\mathsf{ELevels}(E_\omega, E,
  r)$ where $\mathsf{SLevels}$ states $\pi(l,0)$ is the $l$-th level
  after $(\N, E)$:
  \begin{align*}
    \forall l,i,j \,&\big( \,\big[ \, i < l \rightarrow \pi(i,j)
      \mathrel{E_\omega} \pi(l,0) \, \big] \ \wedge \ \pi(l,j + 1)
      \mathrel{E_\omega}
      \pi(l,0) \,\big) \wedge\\
    \forall l,i,j \, &\big( \pi(i,j) \mathrel{E_\omega} \pi(l,0)
      \rightarrow ( i < l \vee (i = l \wedge j > 1) ) \big),
  \end{align*}
  and $\mathsf{ELevels}(E_\omega, E, r)$ that $k \mapsto \pi(l,k +
  1)$ is an enumeration of the new sets at the level $l^{\text{th}}$
  after $(\N, E)$.  First we find an enumeration, $k \mapsto
  \mathsf{ge}(l,k)$, of formulas and parameters that can be used to
  define sets at the $l^{\text{th}}$ level:

  Let $S$ be the set $\{ (n, \seq{x} ) \mid n$ is the G\"odel number
  of a formula with $\lh(\seq{x}) + 1$ free variables $\wedge \ \seq{x}
  \in {}^{<\N}\N \}$.  Then define $\mathsf{ge} : \N \times
  \N \rightarrow S$ such that $\mathsf{ge}[\{ (l,k) \mid k \in
  \N \}] = \{ (n, \seq{x}) \in S \mid \seq{x} \in {}^{<\N}(\{ \pi(l, k+1)
  \mid k \in \N\} \cup \{ \pi(j,k) \mid j < l \, \wedge \, k \in
  \N\}) \}$.  Such a function $\mathsf{ge}$ can clearly
  be chosen to be recursive.

  We want to define $\tilde{\mathsf{ge}} : \N \times \N
  \rightarrow S$ such that $k \mapsto \tilde{\mathsf{ge}}(l, k)$
  enumerates only the data needed to define new sets at level $l$, and
  does so without repetition.  For this we do some preliminary work.
  
  First note that $(\pi(l,0), E_\omega) \models \varphi(x)$ is
  equivalent to $(\N, E_\omega) \models (\varphi(x))^{\pi(l,0)}$ which
  in turn is equivalent to $r(\langle \ulcorner (\varphi)^{\pi(l,0)}
  \urcorner, x \rangle = 1$.  The map $\mathsf{rel}: \N \times \N
  \rightarrow \N$ defined by $(\ulcorner \varphi \urcorner, l) \mapsto
  \ulcorner (\varphi)^{\pi(l,0)} \urcorner$ and $0$ if the first
  component of the input is not the G\"odel number of a formula is
  recursive.

  Then we define a formula $\mathsf{new}(n,\seq{x},l)$ such that it
  is true of $(n, \seq{x}, l)$ iff $n = \ulcorner \varphi \urcorner$
  and $\{ y \mathrel{E_\omega} \pi(l,0) \mid (\pi(l,0), E_\omega)
  \models \varphi(\seq{x},y) \}$ is different from all $\pi(j,k)$ for
  $j \leq l$ and $k \in \N$.  This means that
  the set determined by $(n,\seq{x})$ at level $l$ didn't exist before
  level $l$ (and is not the collection of all sets before level $l$,
  which is $\pi(l,0)$).  The formula expressing this is:
  \begin{align*}
    \mathsf{new}(n,\seq{x},l)  \equiv \forall j,k & \ j \leq l
      \rightarrow \exists j', k' \big\{ j' \leq l \ \wedge \\
    & \big[ \, \big(\pi(j',k') \mathrel{E_\omega} \pi(j,k) \wedge
      r(\langle \mathsf{rel}(n,l), \langle x,\pi(j',k')\rangle
      \rangle) = 0 \,\big) \ \vee \\
    & \phantom{\big[ \,}\big( \neg \pi(j',k') \mathrel{E_\omega} \pi(j,k) \wedge
      r(\langle \mathsf{rel}(n,l), \langle x,\pi(j',k')\rangle \rangle
      ) = 1 \, \big) \, \big] \, \big\}
  \end{align*}

  We also need a formula $\mathsf{nb}(l,m)$ that is true of $(l,m)$
  iff the set defined by $\mathsf{ge}(l,m)$ from $\pi(l,0)$ is not
  also defined by $\mathsf{ge}(l,m')$ with $m' < m$.
  \begin{align*}
    \mathsf{nb}(l,m) \equiv & \ \forall m' \leq m \exists j,k ( j < l
      \vee (j = l \wedge k > 0)) \ \wedge \\
    & \quad \big[ \ \big( \, r( \langle \mathsf{rel}(n,l), \langle x,
      \pi(j,k) \rangle \rangle ) = 1 \ \wedge \\
    & \quad \phantom{\big[ \ \big( \, } r( \langle
      \mathsf{rel}(\pi_0({\mathsf{ge}}(l,m')),l), \langle
      \pi_1({\mathsf{ge}}(l,m')), \pi(j,k) \rangle \rangle) = 0 \,
      \big) \ \vee \\
    & \quad \phantom{\big[ \ } \big( \, r( \langle \mathsf{rel}(n,l),
      \langle x, \pi(j,k) \rangle \rangle ) = 0 \ \wedge \\
    & \quad \phantom{\big[ \ \big( \,} r( \langle
      \mathsf{rel}(\pi_0({\mathsf{ge}}(l,m')),l), \langle
      \pi_1({\mathsf{ge}}(l,m')), \pi(j,k) \rangle \rangle) = 1\big) \
      \big]
  \end{align*}

  Now we can define $\tilde{\mathsf{ge}}$:
  \begin{align*}
    \tilde{\mathsf{ge}}(l,0) & = \mathsf{ge}(l,k) \text{ for } k \text{
      the least number such that }\\
    & \quad \quad \quad (n,x) = \mathsf{ge}(l,k) \text{
      defines a new set} \\
    & = \mathsf{ge}(l,k) \text{ for } k \text{ the least number such
      that for } \\
    & \quad \quad \quad (n,x) = \mathsf{ge}(l,k) \text{ we have }
      \mathsf{new}(n,x,l)
  \end{align*}
  and
  \begin{align*}
    \tilde{\mathsf{ge}}(l,m+1) & = \mathsf{ge}(l,k) \text{ for } k
      \text{ the least number such that } (n,x) = \mathsf{ge}(l,k) \\
    & \quad  \text{ defines a new set}
     \text{ that is not already defined by }
      \mathsf{ge}(l,\tilde{k}) \\
      & \quad \text{ with } \tilde{k} \text{ less
      than the } k \text{ used in } \tilde{\mathsf{ge}}(l,m) \\
    & = \mathsf{ge}(l,k) \text{ for } k \text{ the least number such
      that for } (n,x) = \mathsf{ge}(l,k)\\
    & \quad  \text{ we have } \mathsf{new}(n,x,l) \wedge \mathsf{nb}(l,k)
  \end{align*}

  Now the formula $\mathsf{ELevels}$ can be defined:

  \begin{align*}
    \forall l,k [ & \pi(l+1,k+1) \text{ is defined from } \pi(l,0)\\
    & \quad \quad \quad 
      \text{ by the formula and parameters in }
      \tilde{\mathsf{ge}}(l,k)] \\
    & \Leftrightarrow \forall l,k, n,x ( (n,x) = \tilde{\mathsf{ge}}(l,k)
      \rightarrow \\
    & \quad \quad \quad [\forall y r(\langle \mathsf{rel}(n,l), \langle x, y
      \rangle \rangle) = 1 \leftrightarrow y \mathrel{E_\omega} \pi(l,k)]
  \end{align*}
\end{proof}

Note that with these formulas, if $(\N, E)$ is wellfounded, then
so is $(\N, E_\omega)$ (which is the main reason for the lemma to be
done the way it is).

Let $\xi_\text{s} \in \Sigma_1$ and $\xi_\text{p} \in \Pi_1$ be the
formulas witnessing that the class $H = \{(x,\gamma) \mid x = L_\gamma
\}$ is uniformly $\Delta_1^{L_\alpha}$ for $\alpha > \omega$ a limit
ordinal (see \cite[Lemma 2.7]{KD}: the proof of this lemma uses some
results from earlier in the book which are not correct, but in
\cite{AMpre} (key result on p. 45) it is shown that there is a theory
which is strong enough to prove these results and which is true at
$L_\alpha$ for $\alpha$ a limit ordinal).

Let $E \subseteq \N \times \N$ be such that $(\N, E)$ is
wellfounded, and let $r$ be its satisfaction relation.  Then let
$\chi(E,r)$ be the formula (we write $(\N,E) \models \theta$ for
$r(\ulcorner \theta \urcorner)$)
\begin{gather*}
  \forall n,m \in \N \ \big[ \, (\N, E) \models \text{``}n
    \text{ is an ordinal'' } \rightarrow \quad \quad \quad \quad \quad
    \quad \mbox{}\\
  \mbox{} \quad \quad \quad\quad \quad \quad\quad \quad \quad (\N, E) \models
    \xi_\text{p}(m,n) \leftrightarrow (\N,E) \models
    \xi_\text{s}(m,n) \, \big] \wedge\\
  (\N, E) \models \text{ ``there is no largest ordinal''}
    \ \wedge \\
  \exists n \in \N \ (\N, E) \models ``n = \omega\text{''} \ \wedge \\
  (\N, E) \models \forall x \exists y \, \big( \, y \text{ is an
    ordinal } \wedge \forall z \, ( \xi_\text{p}(z,y) \rightarrow x
    \in z ) \, \big).
\end{gather*}
Then the image $X$ of the Mostowski collapse of $(\N, E)$
satisfies that $H$ is $\Delta_1^X$, there is no largest ordinal, $\omega
\in X$, and $V=L$.
This gives us that $X = (V)^X = (L)^X = L_\alpha$ for $\alpha = X \cap
\Ord$ a limit ordinal $> \omega$.

\begin{lemma}
  \label{lem:decidemembership}
  \label{lem:vmad:decidemembership}
\begin{align*}
  g \in \mathcal{A} \Leftrightarrow & \text{ the model encoded in } g
  \text{ is wellfounded } \wedge \\
  &  \forall \langle E_\omega, r, u \rangle \
  \varphi(\langle E_\omega, r, u\rangle, g) \wedge \chi(E_\omega,r)
  \rightarrow r( \code{u \in \A }, \seq{\emptyset}) = 1.
\end{align*}
\end{lemma}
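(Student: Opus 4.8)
The role of this lemma is to exhibit $\A$ as the set of reals satisfying the displayed condition, and that condition is visibly $\mathbf{\Pi^1_1}$: ``the model encoded in $g$ is wellfounded'' is $\mathbf{\Pi^1_1}$, while the rest is a single universal real quantifier over a matrix that is arithmetic in $g$ and $\langle E_\omega,r,u\rangle$ --- the formula $\varphi$ of Lemma~\ref{lem:nextlevel} has only number quantifiers, and both $\chi(E_\omega,r)$ and ``$r(\code{u\in\A},\seq{\emptyset})=1$'' are arithmetic in $E_\omega,r,u$. So all the work is in proving the biconditional, and most of the hard content is already packaged in Lemmas~\ref{lem:coding},~\ref{lem:Eavailable},~\ref{lem:nextlevel} and the construction of $\chi$; this lemma is chiefly an assembly step.

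For the forward direction, suppose $g=g_\gamma$. By construction $g_\gamma$ was produced by Lemma~\ref{lem:coding} from an $E$ with $(\N,E)\cong(L_{\beta_\gamma},\in)$ (via Lemma~\ref{lem:Eavailable}), so the model encoded in $g_\gamma$ is wellfounded. Since $g_\gamma$ encodes $E$ as in Lemma~\ref{lem:coding} and $g_\gamma\in L_{\beta_\gamma+\omega}$, Lemma~\ref{lem:nextlevel} applies: every $\langle E_\omega,r,u\rangle$ with $\varphi(\langle E_\omega,r,u\rangle,g_\gamma)$ has $(\N,E_\omega)\cong(L_{\beta_\gamma+\omega},\in)$, $r$ its satisfaction relation, and $u=\Real_{E_\omega}(g_\gamma)$ the representative of $g_\gamma$; moreover $\chi(E_\omega,r)$ then holds automatically, because $L_{\beta_\gamma+\omega}$ is a genuine limit level above $\omega$ in which $V=L$ and $H$ is $\Delta_1$. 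Finally, the recursion defining $\langle g_{\gamma'}\mid\gamma'\le\gamma\rangle$ is absolute for $L_{\beta_\gamma+\omega}$, and by the remark on page~\pageref{egl} this level correctly computes the initial segment $\langle\beta_{\gamma'}\mid\gamma'\le\gamma\rangle$ of good levels; hence it re-runs the construction and recognizes $g_\gamma\in\A$, i.e.\ $r(\code{u\in\A},\seq{\emptyset})=1$.

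For the converse, assume the right-hand side holds; let $(\N,E)$ be the wellfounded model encoded in $g$ and $X$ its transitive collapse. Running the $\omega$-step level construction underlying Lemma~\ref{lem:nextlevel} over $(\N,E)$ produces $E_\omega$ with satisfaction relation $r$, wellfounded by the remark following that lemma; set $u:=\Real_{E_\omega}(g)$, so $\varphi(\langle E_\omega,r,u\rangle,g)$ holds. Because $(\N,E)$ has domain all of $\N$, its collapse $X$ is infinite, hence of ordinal height $\ge\omega$, so the collapse $X_\omega$ of $(\N,E_\omega)$ has height $\ge\omega\cdot 2>\omega$; the clauses of $\chi$ are then satisfied, and, exactly by the argument in the paragraph preceding the lemma, force $X_\omega=L_\delta$ for a limit $\delta>\omega$, and consequently $X=L_{\delta'}$ with $\delta=\delta'+\omega$. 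The universal clause now yields $r(\code{u\in\A},\seq{\emptyset})=1$, i.e.\ $L_{\delta'+\omega}\models$ ``$g\in\A$''. Since the construction is absolute for limit levels of $L$ and $L_{\delta'+\omega}$ computes the good-level enumeration through the stage $\gamma$ with $\beta_\gamma=\delta'$ together with $\langle g_{\gamma'}\mid\gamma'\le\gamma\rangle$, this forces $g=g_\gamma$, whence $g\in\A$.

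The delicate point is the converse, specifically the passage from an \emph{abstract} wellfounded model coded by $g$ to a \emph{genuine} constructible level: one must verify that wellfoundedness, together with the particular coding of Lemma~\ref{lem:coding} (full domain $\N$, hence an infinite collapse), already guarantees that the canonical $E_\omega$ satisfies $\chi$ and that $X_\omega$ is an honest level $L_{\delta'+\omega}$ at which the whole recursion has been carried out --- and then that ``$L_{\delta'+\omega}\models g\in\A$'' is equivalent, by absoluteness of the (somewhat intricate) recursive definition of $\A$, to $g$ being one of the $g_\gamma$. Checking this absoluteness and the interaction between the level-extension formula $\varphi$, the theory-check $\chi$, and the definition of $\A$ is the main thing to get right.
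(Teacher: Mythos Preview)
Your proof takes essentially the same approach as the paper's: the forward direction unwinds the construction of $g_\gamma$, appeals to Lemma~\ref{lem:nextlevel}, and uses absoluteness of the recursion for $L_{\beta_\gamma+\omega}$; the converse produces the canonical $\langle E_\omega,r,u\rangle$ satisfying $\varphi$, observes it is wellfounded and satisfies $\chi$, so its collapse is a limit level $L_\beta$, and then absoluteness of ``$g\in\A$'' finishes. The paper additionally frames the argument as an induction on $\gamma<\omega_1$ over the reals in $L_{\beta_\gamma}$, whereas you argue directly; since the induction hypothesis does no visible work in the paper's argument, this is a cosmetic difference.

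One small remark on the converse: your height argument (``collapse infinite, hence ordinal height $\ge\omega$, hence $X_\omega$ has height $\ge\omega\cdot 2$'') does not by itself establish all clauses of $\chi$ for an \emph{arbitrary} wellfounded $E$ --- in particular the $V{=}L$ clause and the $\Delta_1$-agreement clause require more than a height bound. The paper, however, simply asserts that the canonical $E_\omega$ satisfies $\chi$ without further justification, so you are not doing less than the paper here; your closing paragraph correctly flags exactly this interaction between $\varphi$, $\chi$, and the absoluteness of the construction as the point demanding care.
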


\begin{proof}
  By induction on $\gamma < \omega_1$ we show that for all reals in
  $L_{\beta_\gamma}$ the equivalence holds.  So assume that $g \in
  L_{\beta_\gamma}$ and for all $\gamma' < \gamma$ we have the
  equivalence for all reals in $L_{\beta_{\gamma'}}$.

  If $g \in \A$, then $g$ uniformly encodes $(\N, E)$ such that $(\N,
  E) \cong (L_{\beta_{\gamma'}}, {\in})$ with $\gamma' < \gamma$.  The
  unique model $(\N, E_\omega)$ satisfying $\varphi(\langle E_\omega,
  r, u\rangle, g)$ has $(\N, E_\omega) \cong (L_{\beta_{\gamma'} +
  \omega},\in)$, so also satisfies $\chi$.  And in the description of
  the construction we have shown that $(L_{\beta_\gamma' + \omega},
  \in) \models g \in \A$, i.e. $(\N, E_\omega) \models \code{u \in
  \A}$ where $u$ represents $g$ in the model.

  If the model encoded by $g$ is wellfounded and we have $\forall
  \langle E_\omega, r, u \rangle \ \varphi( \langle E_\omega, r, u
  \rangle, g) \wedge \chi(E_\omega, r) \rightarrow r( \code{u \in \A},
  \seq{\emptyset}) = 1$, then the unique $\langle E_\omega, r, u
  \rangle$ for which $\varphi(\langle E_\omega, r, u \rangle, g)$
  has that $(\N, E_\omega)$ is wellfounded and satisfies
  $\chi(E_\omega, r)$.
  So there is a countable limit $\beta > \omega$ such that $(\N,
  E_\omega) \cong (L_\beta, \in)$.  Since $(\N, E_\omega) \models u
  \in \A$, we have $(L_\beta, \in) \models g \in \A$, which by
  absoluteness gives $g \in \A$.
\end{proof}

Since the formula on the right hand side of the equivalence is clearly
$\Pi^1_1$, this completes the proof of the theorem.

\chapter{Cofinitary Groups}
\label{chap3}

In
this chapter we prove our results on (maximal) cofinitary groups.  We
will again repeat the definitions for convenience, and give a short
list of notational conventions used.  After this, in Section
\ref{sec:MCGintro}, we give some of the basic ideas used in almost all
constructions.

\begin{definitions}
\item
  $\Sym(\N)$ is the group of bijections of the natural numbers with
  group operation composition.

\item
  A bijection $g \in \Sym(\N)$ is \emph{cofinitary} if it has finitely many
  fixed points, or is the identity.

\item
  A subgroup $G \leq \Sym(\N)$ is \emph{cofinitary} if all of its
  members are cofinitary.

\item
  A subgroup $G \leq \Sym(\N)$ is \emph{maximal cofinitary} if it is
  a cofinitary group and is not properly contained in another
  cofinitary group.
\end{definitions}
 
Now we will give our guide to notation in this chapter.

$G$ and $H$ will be (maximal) cofinitary groups.  $g$ with sub- and
superscripts will be elements of this group, elements under
consideration to be added to the group, or finite approximations
of elements thereof.
\section{Basics}
\label{sec:MCGintro}

In this section we will explain the construction of a maximal
cofinitary group from the continuum hypothesis.  That there exists a
maximal cofinitary group follows immediately from the wellorder of
$\BS$ that exists under $\CH$, but here we use it to introduce the
basic ideas for constructing a maximal cofinitary group.  This
construction is from earlier work by Yi Zhang, see \cite{Z4}, and gives
in a convenient setting the basic ideas used frequently later on.

We will construct the group by constructing a sequence of generators
$\langle g_\alpha \mid \alpha < \omega_1 \rangle$, such that $\langle
\{g_\alpha \mid \alpha < \omega_1 \} \rangle$ (the group generated by
the set $\{g_\alpha \mid \alpha < \omega_1\}$) is a maximal cofinitary
group.  This sequence will be constructed recursively.

The important step is adding to a given countable cofinitary group $G$
a new generator $g$ such that $\langle G,g \rangle$ is still
cofinitary and iteration of this construction $\omega_1$ many times
gives a maximal cofinitary group.

We first examine how to ensure we get a cofinitary group after adding
a new generator.

\begin{definitions}
\label{definitions WG etc}
\item
  For $G$ and $H$ two groups, we write $G * H$ for the free product of
  $G$ and $H$.

\item
  \label{definition WG}
  Define for a group $G \leq \Sym(\N)$ the group $W_G$ to be $G * F(x)$
  with $F(x)$ the free group on the generator $x$.
\end{definitions}

If $w(x) \in W_G$ then $w(x)$ has the reduced form
\begin{equation}
  \label{reduced form}
  \tag{**}
  w(x) = g_0 x^{k_0} g_1 x^{k_1} \cdots x^{k_{l-1}} g_{l},
\end{equation}
with $g_i \in G$ ($i \leq l$) and $k_i \in \mathbb{Z} \setminus
\{0\}$ ($i \leq l - 1$).

\begin{definition}
  For $w(x)$ as in \eqref{reduced form}, its length $\lh(w)$ is
  defined to be $l+1 + \Sigma_{i=0}^{l-1} k_i$.
\end{definition}

  The set $W_G$ is useful in this context because
of the following lemma.

\begin{lemma}
  If $h \in \langle G,g \rangle$, then there is a $w(x) \in W_G$ such
  that $w(g) = h$.
\end{lemma}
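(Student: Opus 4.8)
The plan is to prove that every element $h$ of the group $\langle G, g\rangle$ generated by $G$ together with the single new permutation $g$ can be written as $w(g)$ for some word $w(x)\in W_G = G * F(x)$. This is essentially the statement that the obvious surjection from the free product $G * F(x)$ onto $\langle G, g\rangle$ (sending $G$ to $G$ by the identity and $x$ to $g$) is surjective onto the subgroup generated — which is almost a tautology, but let me describe how I would lay it out carefully.

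First I would observe that $\langle G, g\rangle$, being the subgroup of $\Sym(\N)$ generated by the set $G \cup \{g\}$, consists precisely of all finite products of elements of $G \cup \{g\}$ and their inverses; since $G$ is already a group it is closed under inverses, and $g^{-1}$ is a power of $g$, so in fact every $h \in \langle G, g\rangle$ is a finite product of the form $a_0 g^{m_0} a_1 g^{m_1} \cdots g^{m_{r-1}} a_r$ where each $a_i \in G$ and each $m_i \in \mathbb{Z}$ (allowing $m_i = 0$, and allowing the string to be empty, i.e. $h = e$). This is immediate by induction on the length of an expression of $h$ as a product of generators and their inverses: one merges adjacent elements of $G$ using the group multiplication in $G$, and merges adjacent powers of $g$ by adding exponents.

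Next I would define $w(x) = a_0 x^{m_0} a_1 x^{m_1} \cdots x^{m_{r-1}} a_r \in W_G$, using exactly the same $a_i$ and $m_i$; this is a legitimate element of the free product $G * F(x)$ (one may or may not bother to reduce it to the canonical form \eqref{reduced form}, since reduction does not change the element of $W_G$ and the evaluation map respects it). Evaluating at $x = g$ — that is, applying the canonical homomorphism $W_G \to \Sym(\N)$ that is the identity on $G$ and sends $x$ to $g$ — gives $w(g) = a_0 g^{m_0}\cdots g^{m_{r-1}} a_r = h$. Hence $h = w(g)$, as desired.

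The only thing that needs a word of care — and the closest thing to an obstacle here — is the direction of the claim. One must not conflate this lemma with the much stronger and genuinely substantive assertion that the evaluation map $W_G \to \langle G, g\rangle$ is an \emph{isomorphism} (injectivity of the evaluation map is false in general and is precisely what the later constructions work hard to arrange for suitable $g$). The present lemma asserts only surjectivity of $w \mapsto w(g)$, which as sketched above is the trivial direction. So the proof is just the bookkeeping of the preceding paragraphs; I would keep it short, emphasizing only the inductive merging step that puts an arbitrary product of generators and inverses into the alternating form matching \eqref{reduced form}.
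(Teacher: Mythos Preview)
Your argument is correct. The paper does not actually supply a proof of this lemma; it states it and immediately moves on, treating it as self-evident (which it essentially is, being the surjectivity of the canonical evaluation homomorphism $G * F(x) \to \langle G, g\rangle$). Your write-up is the natural justification, and your caveat about not conflating this with injectivity is well placed.
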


So for the group $\langle G,g \rangle$ to be cofinitary it suffices to
ensure that each $w(g)$ has finitely many fixed points or is the
identity.

The bijection $g$ will be constructed recursively from finite
approximations, i.e. $g = \bigcup_{s \in \N} g_s$ with $g_s :
\N \partmap \N$ finite and injective.  For this our strategy will be
to avoid as many fixed points as possible.

\begin{definition}
  $z \in W_G$ is a conjugate subword of $w \in W_G$ if there exists a
  $u$ such that without cancellation $w=u^{-1} z u$.
\end{definition}

If for a conjugate subword $z$ of $w$ the partial permutation $z(g_s)$
has a fixed point, then for any extension to a total permutation $g$
this fixed point will give rise to a fixed point of $w$.  So the best
we can hope for extending finite approximations is encoded in the
following definition.

\begin{definition}
  \label{definition good extension}
  For $w \in W_G$ and finite one-to-one functions $p,q$ such that
  $p \subseteq q$, we say that $q$ is a \emph{good extension} of $p$
  with respect to $w$ if the following condition is satisfied:

  \noindent for each $l \in \N$ such that
  \begin{equation*}
    w(p)(l) \text{ is undefined and } w(q)(l)=l,
  \end{equation*}
  there are subwords $u$ and $z$ of $w$ and $n \in \N$ such that
  \begin{gather*}
    w = u^{-1} z u \text{ without cancellation},\\
    u^{-1}(q)(n) = l \text{, and } z(p)(n)=n.
  \end{gather*}
\end{definition}

The following lemma shows that this definition works.

\begin{lemma}
  \label{lem:finfxpts}
  If $G$ is any countable cofinitary group, $w(x) \in W_G$, and $g =
  \bigcup_{s \in \N} g_s$ where all $g_s$ are finite and for some $t$
  and all $s \geq t$, $g_{s+1}$ is a good extension of $g_s$ with
  respect to
  $w$ and all of its subwords, then $w(g)$ will have finitely many
  fixed points.
\end{lemma}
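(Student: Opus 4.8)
The plan is to induct on $\lh(w)$. For the base case, when $w$ contains no occurrence of $x$ we have $w \in G$, so $w(g) = w$ has finitely many fixed points (or is the identity) because $G$ is cofinitary; so assume $w$ involves $x$, hence $\lh(w) \geq 3$, and assume the lemma for all words of strictly smaller length. Fix $t$ as in the hypothesis. A point worth recording at the outset: the hypothesis is inherited by subwords, since if $z$ is a subword of $w$ then $z$ and all subwords of $z$ are among the subwords of $w$, so for every $s \geq t$ the function $g_{s+1}$ is again a good extension of $g_s$ with respect to $z$ and all subwords of $z$.

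Now suppose toward a contradiction that $w(g)$ has infinitely many fixed points. The computation of $w(g)(l)$ uses only finitely many values of $g$ and $g^{-1}$, so for each fixed point $l$ there is a least stage $s_l$ with $w(g_{s_l})(l)=l$. Since $g_t$ is finite, $w(g_t)$ is a finite partial injection, hence has only finitely many fixed points; and as $w(p)\subseteq w(q)$ whenever $p\subseteq q$, any $l$ with $s_l\le t$ is a fixed point of $w(g_t)$. So all but finitely many fixed points $l$ satisfy $s_l\ge t+1$. Fix such an $l$. By minimality $w(g_{s_l-1})(l)$ is undefined while $w(g_{s_l})(l)=l$, and since $s_l-1\ge t$ the function $g_{s_l}$ is a good extension of $g_{s_l-1}$ with respect to $w$; applying Definition~\ref{definition good extension} we get subwords $u_l,z_l$ of $w$ and $n_l\in\N$ with $w=u_l^{-1}z_lu_l$ without cancellation, $u_l^{-1}(g_{s_l})(n_l)=l$, and $z_l(g_{s_l-1})(n_l)=n_l$. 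Crucially $u_l$ cannot be trivial: if it were, then $z_l=w$ and $n_l=l$, and $z_l(g_{s_l-1})(n_l)=n_l$ would read $w(g_{s_l-1})(l)=l$, contradicting that this value is undefined. Hence $z_l$ is a proper subword of $w$ with $\lh(z_l)<\lh(w)$.

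There are only finitely many ways to write $w=u^{-1}zu$ without cancellation, $w$ being of finite length, so by pigeonhole a single such decomposition $w=u^{-1}zu$ occurs for infinitely many of these $l$. For all of them $u^{-1}(g_{s_l})(n_l)=l$, hence $u^{-1}(g)(n_l)=l$ since $g_{s_l}\subseteq g$ and $p\mapsto u^{-1}(p)$ is monotone with respect to $\subseteq$; as $u^{-1}(g)$ is a partial injection, $l\mapsto n_l$ is injective on this infinite set, so infinitely many distinct $n_l$ occur. Likewise $z(g_{s_l-1})(n_l)=n_l$ yields $z(g)(n_l)=n_l$, so $z(g)$ has infinitely many fixed points. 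But $\lh(z)<\lh(w)$ and, by the remark above, the hypothesis of the lemma holds for $z$ in place of $w$; the induction hypothesis therefore says $z(g)$ has finitely many fixed points, a contradiction.

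The step I expect to be the crux is the reduction to a strictly shorter word: verifying that the conjugating word $u_l$ must be nontrivial — which is exactly what forces $z_l$ to be a proper subword and makes the induction close — together with the bookkeeping that lets one extract a \emph{single} conjugate subword $z$ and infinitely many genuinely distinct witnesses $n_l$ from the infinitely many fixed points. The remaining ingredients (finiteness of $w(g_t)$, monotonicity of $p\mapsto w(p)$, and the finiteness of the list of conjugate subwords) are routine.
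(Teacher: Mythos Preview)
Your proof is correct. The core idea---using the good-extension clause to pass from a new fixed point of $w$ to a fixed point of a strictly shorter conjugate subword $z$---is exactly what the paper uses. Your observation that $u_l$ cannot be trivial (else $w(g_{s_l-1})(l)=l$ contrary to undefinedness) is the key point, and your pigeonhole/injectivity bookkeeping to extract a single $z$ with infinitely many distinct witnesses is sound.

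The organization differs from the paper's. You run an induction on $\lh(w)$; the paper instead proves a sharper quantitative statement (Lemma~\ref{lem:enoughforcof}): $w(g)$ has exactly as many fixed points as $z(g_t)$, where $z$ is the \emph{shortest} conjugate subword of $w$. It does this by a two-case split: if $w$ has no proper conjugate subword then the good-extension condition forces $z=w$, which (by your same undefinedness observation) means no new fixed points can ever appear past stage $t$; otherwise every fixed point of any conjugate subword is conjugated to a fixed point of the minimal one, reducing to the first case. Your induction yields only finiteness, not the exact count, but that is all the lemma asks for; the paper's route buys the quantitative version at no extra cost.
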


We prove the following more quantitative version.

\begin{lemma}
  \label{lem:enoughforcof}
  In the context of the lemma above $w(g)$ will have the same number
  of fixed points as $z(g_t)$, where $z$ is the shortest conjugate
  subword of $w$.
\end{lemma}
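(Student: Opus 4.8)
The plan is to refine the counting argument that underlies Lemma~\ref{lem:finfxpts} so that it yields the exact number of fixed points, not merely finiteness. The key observation is the one already built into Definition~\ref{definition good extension}: every fixed point $l$ of $w(g)$ that is \emph{not} already realized by $w(g_t)$ is ``witnessed'' by a genuine fixed point of some $z(g_t)$, where $z$ is a conjugate subword of $w$, together with an element $u$ with $w = u^{-1}zu$ without cancellation and $u^{-1}(g)(n) = l$, $z(g_t)(n) = n$. So fixed points of $w(g)$ are in correspondence with pairs consisting of a conjugate subword $z$ of $w$, a fixed point $n$ of $z(g_t)$, and the appropriate $u$. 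I would first make precise that for a fixed conjugate decomposition $w = u^{-1}zu$, the map $n \mapsto u^{-1}(g)(n)$ is a bijection on its domain, so it transports fixed points of $z(g_t)$ to fixed points of $w(g)$ injectively; and conversely every fixed point of $w(g)$ arises this way for \emph{some} decomposition.

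The main work is then to show that all these conjugate subwords contribute \emph{the same} count, namely the count contributed by the shortest conjugate subword $z$. The point is that if $z'$ is any other conjugate subword of $w$, then $z'$ is itself conjugate (without cancellation, cyclically) to $z$: all conjugate subwords of a cyclically reduced $w$ are cyclic rotations of one another, hence of the form $v^{-1} z v$ without cancellation for a suitable $v$ (an initial/terminal segment of $w$). Consequently $z'(g_t)$ and $z(g_t)$ — once $t$ is large enough that $g_t$ already handles the finitely many ``boundary'' discrepancies coming from $v$ via the good-extension hypothesis — have the same number of fixed points, and moreover the fixed points they produce for $w(g)$ coincide as a set. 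So rather than over-counting, the different conjugate subwords all point to the \emph{same} set of fixed points of $w(g)$, and that set has size equal to the number of fixed points of $z(g_t)$.

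The argument structure I would follow is: (1) recall from the proof of Lemma~\ref{lem:finfxpts} that beyond stage $t$ no new fixed point of $w(g)$ is created except as forced by a conjugate-subword fixed point present already at stage $t$; (2) fix the shortest conjugate subword $z$ with $w = u^{-1}zu$ without cancellation, and show the assignment $n \mapsto u^{-1}(g)(n)$ is a well-defined injection from $\operatorname{Fix}(z(g_t))$ into $\operatorname{Fix}(w(g))$; (3) show surjectivity: given $l \in \operatorname{Fix}(w(g))$, use the good-extension property (applied to $w$ and all its subwords, for all $s \geq t$, and the stabilization of the relevant finite data) to trace $l$ back through the $u$-decomposition to a fixed point of $z(g_t)$ — here one uses that $z$ being \emph{shortest} means its own conjugate subwords are only itself, so the recursion terminates at $z$; (4) conclude $|\operatorname{Fix}(w(g))| = |\operatorname{Fix}(z(g_t))|$.

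The step I expect to be the main obstacle is (3), the surjectivity/"no extra fixed points" direction, because one has to argue that the witnessing conjugate subword can always be taken to be the shortest one $z$ (and not some longer $z'$ that would have a different fixed-point count), and that the finitely many fixed points of $w(g_t)$ itself already lie in the image of the map from $\operatorname{Fix}(z(g_t))$ — i.e. that choosing $t$ large enough (as in Lemma~\ref{lem:finfxpts}) genuinely synchronizes the count. Handling the bookkeeping of "without cancellation" when passing between $z$, $z'$, and their conjugators, and checking that $u^{-1}(g)$ is defined on all of $\operatorname{Fix}(z(g_t))$ once $g$ is total, is where the care is needed; everything else is a direct unwinding of Definition~\ref{definition good extension}.
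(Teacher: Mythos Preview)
Your approach is correct and shares the key ideas with the paper, but the paper organizes the argument more economically and you are making step~(3) harder than it needs to be.

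The paper's proof is a two-case split. Case~1: if $w$ has no proper conjugate subword, then in Definition~\ref{definition good extension} the witnessing $z$ can only be $w$ itself, so for every $s\ge t$ the extension $g_{s+1}$ introduces no fixed point of $w$ that was not already present in $w(g_s)$; hence $|\mathrm{Fix}(w(g))| = |\mathrm{Fix}(w(g_t))|$. Case~2: write $w=u^{-1}zu$ with $z$ shortest. Since $g$ is a total permutation, $u(g)\in\Sym(\N)$, and conjugation by $u(g)$ is a bijection $\mathrm{Fix}(w(g))\to\mathrm{Fix}(z(g))$. Now $z$ has no proper conjugate subword, so Case~1 applied to $z$ gives $|\mathrm{Fix}(z(g))|=|\mathrm{Fix}(z(g_t))|$.

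The point is that your surjectivity step~(3) does not require tracing through the good-extension witnesses for $w$ at every stage $s\ge t$, nor reconciling the various intermediate conjugate subwords $z'$ that might appear as witnesses. Once $g$ is total, for any $l\in\mathrm{Fix}(w(g))$ you can simply set $n=u(g)(l)$; this automatically lies in $\mathrm{Fix}(z(g))$, and the only remaining question is whether $\mathrm{Fix}(z(g))=\mathrm{Fix}(z(g_t))$ --- which is exactly Case~1 for $z$, where the good-extension hypothesis is used in its trivial form. Your worry about ``the witnessing conjugate subword can always be taken to be the shortest one'' disappears under this organization: you never consult the good-extension witnesses for $w$ at all, only for $z$.
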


We use the following definitions related to a word $w \in W_G$; these
are also used frequently in later sections.

\begin{definitions}
\item
  Define $w_{(i)}$ to be the $i^\text{th}$ letter in $w$ counted from the
  right (for example if $w = g_0 x^2 g_1$, then $w_{(0)} = g_1$, $w_{(1)} =
  x$, $w_{(2)} = x$, and $w_{(4)} = w_{(\lh(w))} = g_0$).

\item For $p : \N \rightharpoonup \N$ a partial function, $w(x) \in
  W_G$ and $n \in \N$, we define the \emph{evaluation path} for $n$ in
  $w(p)$ to be the sequence $\langle l_i \in \N \mid i \leq j
  \rangle$, with $l_0 := n$, $l_{i+1} := w_i(p)(l_i)$ and
  \[
    j :
= \begin{cases} \lh(w), &\text{if }w(p)(l) \text{ is defined};\\
      \max\{ i \mid w_{i}(p)(l_i)\text{ is  defined}\} + 1,
      &\text{otherwise}.
      \end{cases}
  \]
\item
  Define $w \restrict i$ to be $w_{(i-1)} w_{(i-2)} \cdots w_{(0)}$, the
  initial segment (from the right) of length $i$ of the word $w$.  The
  evaluation path for $n$ can be expressed then as $l_i = (w \restrict
  i)(p)(n)$
\item
  The pairs $(l_i, l_{i+1})$ of $p$ are the pairs of $p$ \emph{used}
  in this evaluation.  For a general function $f$ (possibly partial)
  we call $(n,f(n))$ a pair from $f$.
\end{definitions}

\begin{proof}[Proof of Lemma \ref{lem:enoughforcof}]
  By case analysis:

  Case 1: $w$ has only one conjugate subword (itself).  Then $w(g)$
  will have as many subwords as $w(g_t)$.  Since there are no
  proper conjugate subwords, in the definition of good extension every
  time $z$ will equal $w$.  It is then clear no new fixed points can
  be introduced.

  Case 2: $w$ has more than one conjugate subword.  Let $z$ be the
  minimal conjugate subword.  Notice that in any expression
  $w=u^{-1}z' u$ we have that $z$ is a conjugate subword of $z'$, so
  we can divide further.  A fixed point of $z'$ at some stage $s$
  ``comes from'' a fixed point of $z$ defined at that same stage: let
  $z'(g_s)(n) = n$, and $z' = v^{-1} z v$.  Then $z = v z' v^{-1}$
  and $(v z' v^{-1} )(g_s)(v(g_s)(n)) = (vz(g_s))(n) = v(g_s)(n)$.  And
  by the previous case, the total number of fixed points of $z(g)$ is
  equal to the total number of fixed points of $z(g_t)$.
\end{proof}

The nice and amazing thing is that enough good extensions exist to
achieve what we need.  This was shown with the following lemma(s) by Yi
Zhang, see \cite{Z} and \cite{Z4}.

\begin{lemma}
  
  If $G$ is a countable cofinitary group, $p$ a finite injective
  function $\N \partmap \N$, and $w \in W_G$ then
{
    \renewcommand{\theenumi}{(\roman{enumi})}
\begin{enumerate}
\item \textbf{(Domain Extension Lemma)} \label{lem:DomExtLem} for all
  $n \in \N \setminus \dom(p)$, for all but finitely many $k \in \N$,
  the extension $p \cup \{(n,k)\}$ is a good extension of $p$ with
  respect to
  $w$.
\item \textbf{(Range Extension Lemma)} \label{lem:RanExtLem} for all
  $k \in \N \setminus \ran(p)$, for all but finitely many $n \in \N$ the extension $p
  \cup \{(n,k)\}$ is a good extension of $p$ with respect to $w$.
\item \textbf{(Hitting $f$ Lemma)} \label{lem:HitfLem} for all $f \in
  \Sym(\N) \setminus G$ such that $\langle G,f \rangle$ is cofinitary,
  for all but finitely many $n \in \N$ the extension $p \cup
  \{(n,f(n))\}$ is a good extension of $p$ with respect to $w$.
\end{enumerate}
}
\end{lemma}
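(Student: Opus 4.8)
The plan is to prove the three parts uniformly. In each we adjoin a single pair: set $q := p \cup \{(n_0,k_0)\}$, where in (i) $n_0 \in \N\setminus\dom(p)$ is fixed and $k_0$ varies, in (ii) $k_0 \in \N\setminus\ran(p)$ is fixed and $n_0$ varies, and in (iii) $k_0 = f(n_0)$ with $n_0$ varying. The goal in each case is an explicit \emph{finite} set of forbidden values of the varying parameter outside of which $q$ is one-to-one and is a good extension of $p$ with respect to $w$ in the sense of Definition~\ref{definition good extension}. Since $w$ has only finitely many subwords, taking the union of these finite sets over all subwords yields at once the form of the statement actually needed for Lemmas~\ref{lem:finfxpts} and~\ref{lem:enoughforcof}, namely ``with respect to $w$ and all of its subwords''. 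A first reduction handles the degenerate cases: if $w$ is a single letter of $G$ then $w(p)$ is total and there is nothing to prove, and if $w$ begins or ends with a $G$-letter then conjugating $w$ by that letter relabels all of $\N$ by a bijection and, after cancellation, shortens $w$; so by induction on $\lh(w)$ I may assume $w$ is cyclically reduced, i.e.\ begins and ends with a nonzero power of $x$.

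Fix such a $w$ and let $l$ be a fixed point of $w(q)$ with $w(p)(l)$ undefined; I must show that, for the varying parameter outside a suitable finite set, $l$ is \emph{explained}: $w = u^{-1}zu$ without cancellation for subwords $u,z$, with $u^{-1}(q)(n) = l$ and $z(p)(n) = n$ for some $n$. Consider the evaluation path $l = l_0, l_1, \dots, l_{\lh(w)} = l$ of $l$ in $w(q)$. Since $w(p)(l)$ is undefined but $w(q)(l)$ is defined, this path must traverse the one new pair $(n_0,k_0)$ at least once. The crucial point is the \emph{isolation} of the new endpoints: once the varying parameter is chosen outside the set
\[
\dom(p)\ \cup\ \ran(p)\ \cup\ \{n_0\}\ \cup\ \bigcup_{h}\Bigl(\mathrm{Fix}(h)\ \cup\ h^{-1}[\dom(p)\cup\ran(p)]\Bigr)\ \cup\ \bigl\{\bigl((w\restrict i)(p)\bigr)^{-1}(n_0): i\le\lh(w)\bigr\},
\]
where $h$ runs over the finitely many $G$-letters of $w$ (cofinitariness of $G$ makes the $\mathrm{Fix}(h)$ finite, and injectivity of $p$ makes the last set finite), the path can arrive at or leave a new endpoint only along the new edge itself. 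Tracing the path then shows that each visit to a new endpoint is bracketed by two traversals of the new pair in opposite directions with only $G$-motion in between; the portion of $w$ consumed strictly between such a matched pair of traversals is a conjugate subword $z$ of $w$ that uses only $p$ — so $z(p)$ really fixes the intermediate value — while the complementary portion of $w$ is precisely a word $u$ with $w = u^{-1}zu$ (without cancellation) and $u^{-1}(q)(n) = l$. (Were no such matching possible, two traversals would have to be adjacent, forcing an $xx^{-1}$ cancellation in $w$, which is excluded.) Hence every new fixed point is explained, so $q$ is a good extension.

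For part (iii) the sole extra ingredient is that $\langle G,f\rangle$ is cofinitary. There the varying parameter is $n_0$, the new endpoints are $n_0$ and $f(n_0)$, and pulling the exclusion set above back through $f$ (which is one-to-one) one also meets sets of the shape $\{\,n_0 : (h\circ f)^{\pm1}(n_0) = n_0\,\}$ for $G$-letters $h$ of $w$ — exactly the $n_0$ for which the path could slip off $f(n_0)$ by a $G$-step and land back on $n_0$. Each composite $(h\circ f)^{\pm1}$ lies in $\langle G,f\rangle$, hence has finitely many fixed points unless it is the identity; and $h\circ f = \id$ would force $f = h^{-1}\in G$, contrary to $f\notin G$. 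So all these sets are finite, and the argument of the previous paragraph applies unchanged.

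The step I expect to be the main obstacle is the combinatorial bookkeeping in the middle paragraph: pinning down precisely that for a generic new value the evaluation path can touch the new endpoints only along the new edge; handling the cases where the new pair is traversed several times, or is traversed at the very start or end of the path (so that $l$ itself is $n_0$ or $k_0$); and verifying in each configuration that the $u$ and $z$ read off from the path genuinely satisfy $w = u^{-1}zu$ \emph{without cancellation} with $z(p)(n)=n$. Identifying the exact finite exclusion set and checking that the case division is exhaustive, rather than any one case, is where the real work lies.
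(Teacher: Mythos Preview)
The paper does not supply a proof of this lemma at all: it is stated with the attribution ``This was shown with the following lemma(s) by Yi Zhang, see \cite{Z} and \cite{Z4}'' and then simply used. So there is no in-paper argument to compare against; your outline is the standard evaluation-path analysis and, once the bookkeeping is carried out, will succeed.

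Two points where your write-up needs repair. First, the reduction step is wrong as stated: you claim that if $w$ begins or ends with a $G$-letter then conjugating by that letter ``after cancellation, shortens $w$'', and induct on $\lh(w)$. This only shortens when \emph{both} ends carry $G$-letters (they merge); if $w = g_0 v$ with $v$ ending in $x^{\pm1}$ then $g_0^{-1}wg_0 = vg_0$ has the same length and you have merely shuffled the $G$-letter to the other end. The reduction is inessential --- the path analysis handles boundary $G$-letters directly --- but your induction does not terminate.

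Second, your matching description treats the two new endpoints symmetrically, and they are not. In part~(i) only the \emph{varying} endpoint $k_0$ is isolated in your sense; the fixed endpoint $n_0$ may lie in $\ran(p)$ and be reachable by arbitrarily long $p$-and-$G$ paths, so visits to $n_0$ are not bracketed by anything. The correct picture is: visits of the path to $k_0$ (not to $n_0$) are what get pinned down; generically they can occur only at positions immediately preceded by an $x$-use of the new pair or immediately followed by an $x^{-1}$-use (modulo a single boundary $G$-letter). This forces the uses of the new pair, read along the cycle, to alternate $x,x^{-1},x,x^{-1},\dots$, and between a consecutive $x^{-1}$-use and $x$-use the path runs from $n_0$ back to $n_0$ using only $p$ --- that segment is your $z$, with $z(p)(n_0)=n_0$. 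One then checks (this is where an induction on $\lh(w)$ is genuinely useful) that the two remaining tails are inverse words, yielding $w=u^{-1}zu$. Your final paragraph correctly flags exactly this case analysis as the real content.
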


Since in each of the items in this lemma there are only finitely many
choices for a particular word to get an extension that is not good,
these items are also true for finite lists of words.  Also, as is
clear from the proof, the hitting $f$ lemma also works for $f$ that
are infinite partial functions.

With these preparations we can prove that the continuum hypothesis
implies the existence of a maximal cofinitary group.  With the
continuum hypothesis enumerate $\Sym(\N)$ as $\langle f_\alpha \mid
\alpha < \omega_1 \rangle$.

We construct a generating set of this group recursively as $\langle
g_\alpha \mid \alpha < \omega_1 \rangle$.  At step $\beta$ we have
already constructed $\langle g_\alpha \mid \alpha < \beta \rangle$,
generating a countable cofinitary group $G_\beta = \langle \{g_\alpha
\mid \alpha < \beta \} \rangle$ such that for all $\alpha < \beta$
either $f_\alpha \in G_\beta$ or $\langle G_\beta,f_\alpha \rangle$ is not
cofinitary.

$g_\beta$ will be constructed from finite approximations $g_s$, i.e.
$g_\beta = \bigcup_{s \in \N} g_{\beta,s}$.  $W_{G_\beta}$ is a countable set;
enumerate it as $\langle w_n \mid n \in \N \rangle$.

At sub-stage $s \in \N$ we have already constructed $g_{\beta,s}$
($g_{\beta,0} := \emptyset$), and we will construct $g_{\beta,s+1}$
from $g_{\beta,s}$ in three steps:
\begin{itemize}
\item $g_{\beta,s}^1 = g_{\beta,s} \cup \{(n,k)\}$, where $n$ is the
  least number not in the domain of $g_{\beta,s}$ and $k$ is the least
  number for which $g_{\beta,s}^1$ is a good extension of
  $g_{\beta,s}$ with respect to $w_0, \ldots, w_s$ ($k$ exists by the
  domain extension lemma).
  
\item $g_{\beta,s}^2 = g_{\beta,s}^1 \cup \{(n,k)\}$, where $k$ is the
  least number not in the range of $g_{\beta,s}^1$ and $n$ is the
  least number for which $g_{\beta,s}^2$ is a good extension of
  $g_{\beta,s}^1$ with respect to $w_0, \ldots, w_s$ ($n$ exists by
  the range extension lemma).
  
\item $g_{\beta,s+1} = g_{\beta,s}^2$ if $\langle G_\beta, f_\beta
  \rangle$ is not cofinitary, or $f_\beta \in G_\beta$; otherwise
  $g_{\beta,s+1} = g_{\beta,s}^2 \cup \{(n,f(n))\}$, where $n$ is the
  least number such that $g_{\beta,s+1}$ is a good extension of
  $g_{\beta,s}^2$ with respect to $w_0, \ldots, w_s$ ($n$ exists by
  the hitting $f$ lemma).
\end{itemize}

$\langle G_\beta, g_\beta \rangle$ will be cofinitary: for any $h \in
\langle G_\beta,g_\beta \rangle$ there is a $w \in W_{G_\beta}$ such
that $h = w(g_\beta)$.  And for any $w \in W_{G_\beta}$ there is a
stage $t$ such that $w$ and all of its subwords are included in $w_0,
\ldots, w_t$, so Lemma \ref{lem:finfxpts} applies.

The group $G := \langle \{g_\alpha \mid \alpha < \omega_1 \} \rangle$
is maximal cofinitary: for any $f_\beta$ either $f_\beta \in
G_{\beta+1} \subseteq G$, or $\langle G_{\beta+1}, f_\beta \rangle$ is
not cofinitary (if $\langle G_\beta, f_\beta \rangle$ is cofinitary
and $f_\beta \not \in G_\beta$, we infinitely often add a pair from
$f$ to $g_\beta$ in the third step above), so also $\langle G, f_\beta
\rangle$ is not cofinitary.

Note that $G$ is a group that is freely generated by the $g_\alpha$
($\alpha < \omega_1$): the method of good extensions, without
additional work, always leads to free groups.  For any word $w$ as
soon as we take good extensions with respect to it and its subwords we do not
add any fixed points we are not forced to add.  And because at this
stage we have only a finite approximation to the new generator, there
are only finitely many of those.

\section{Orbits}

\subsection{No Maximal Cofinitary Group Has Countably Many Orbits}
\label{sec:NotCountablyManyOrbits}

This subsection is devoted to the proof of the following theorem.  Note
that we are only looking at orbits of the action of $G$ on $\N$
obtained from the inclusion $G \subseteq \Sym(\N)$.

\begin{theorem}
  \label{thm:MCGNotCountOrb}
  A maximal cofinitary group has only finitely many orbits.
\end{theorem}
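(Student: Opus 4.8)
The plan is to argue by contradiction: suppose $G$ is a maximal cofinitary group with infinitely many orbits $\langle O_n \mid n \in \N \rangle$, and construct a permutation $f \notin G$ such that $\langle G, f \rangle$ is still cofinitary, contradicting maximality. The natural candidate for $f$ is a permutation that sends each orbit $O_n$ into a different orbit, say $f[O_n] = O_{n+1}$ for all $n \ge 1$ and $f[O_0] = O_0$ (or some similar permutation of the index set $\N$ with infinitely many infinite cycles, or a single infinite cycle on the indices). The point of moving orbits around is that any reduced word $w(f) = h_0 f^{k_0} h_1 \cdots f^{k_{l-1}} h_l$ with $h_i \in G$ and not all $k_i$ zero, when evaluated at a point $m$, will trace out an evaluation path (in the sense of the evaluation-path definition on page \pageref{chap2notation} of Chapter 3) whose sequence of orbit-indices is governed by the index-permutation; since each $f$-letter shifts the index and each $G$-letter preserves it, a word with nonzero total $f$-exponent cannot return $m$ to its own orbit, hence cannot fix $m$. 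Words with total $f$-exponent zero require more care and will be the main obstacle (see below).

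First I would fix, for each $n$, a bijection $\theta_n : O_0 \to O_n$ (possible since — by the finite-orbit structure is \emph{not} yet known, so I must allow that the $O_n$ might have different cardinalities; if some are finite this fails, so I would instead handle the finite orbits separately or note that infinitely many orbits forces infinitely many of them to be infinite and work within those). More robustly: since $G$ has infinitely many orbits, I can choose infinitely many of them, $O_{n_0}, O_{n_1}, \ldots$, and define $f$ to act as the identity off $\bigcup_j O_{n_j}$ and to carry $O_{n_j}$ to $O_{n_{j+1}}$ via fixed bijections, being careful to make the whole thing a bijection of $\N$ (so I actually want a two-sided infinite chain, or to cycle back, so I'd use an enumeration $\langle O_{n_j} \mid j \in \Z \rangle$ of a subfamily, or combine countably many orbits into one bi-infinite $f$-orbit). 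The key structural fact I would then establish is: for any reduced word $w \in W_G$ with at least one nonzero $f$-exponent, $w(f)$ has no fixed points outside the union of the moved orbits, and on the moved orbits the "index" of a point strictly changes under any nonempty evaluation path that crosses an $f$-letter with nonzero net effect — so $w(f)(m) = m$ is impossible unless the word's $f$-letters cancel in index-shift, which for a genuinely bi-infinite chain they never do.

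The main obstacle is words of the form $w = h_0 f^{k_0} h_1 f^{k_1} \cdots h_l$ where the $f$-exponents sum to zero but the word is not reducible — e.g. $f^{-1} h f$ with $h \in G$. Such a word evaluated at $m \in O_{n_j}$ goes to $O_{n_{j+1}}$, then $h$ keeps it in $O_{n_{j+1}}$, then $f^{-1}$ brings it back to $O_{n_j}$; it could well have a fixed point. To handle these I would exploit the freedom in choosing the bijections $\theta_j : O_{n_j} \to O_{n_{j+1}}$: I want to choose them generically so that conjugates like $f^{-1} h f$ behave like "random" permutations of $O_{n_j}$ and thus have few fixed points, and more generally so that every nonidentity $w(f)$ has only finitely many fixed points. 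This is exactly the kind of finite-approximation construction used throughout Chapter 3: build $f = \bigcup_s f_s$ with each $f_s$ finite injective, enumerate $W_G$ (a countable set, since $G$ is — wait, $G$ need not be countable). Here is the real subtlety: $G$ is a \emph{maximal} cofinitary group, hence uncountable by Adeleke--Truss, so $W_G$ is uncountable and a direct recursion of length $\omega$ enumerating $W_G$ is impossible. I expect the actual argument circumvents this by choosing the bijections $\theta_j$ to be \emph{eventually-different-like} or by a closure/reflection argument reducing to a countable subgroup — finding the right such device is where I'd expect to spend most of the effort, and I would look to the proof of "finitely many orbits" in \cite{BKOI} and to the hitting-$f$-type lemmas above for the mechanism that makes $\langle G, f\rangle$ cofinitary while keeping the construction of length $\omega$.
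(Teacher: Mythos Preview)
You have the right contradiction framework and you correctly isolate the real difficulty: $G$ is uncountable, so you cannot run a length-$\omega$ recursion through $W_G$ using good extensions. But you do not resolve this, and the construction you propose (bijections $\theta_j$ carrying whole orbits to whole orbits) is precisely what creates the unmanageable zero-exponent words $f^{-1}hf$ that you flag; ``generic'' choices of the $\theta_j$ cannot be made good against uncountably many $h\in G$ simultaneously, so this route stalls exactly where you say it does.

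The paper's idea is different in kind, not in degree. The permutation $h$ is \emph{not} built from orbit-to-orbit bijections; instead each pair $(a,b)$ added to $h$ has one coordinate in a completely fresh orbit (one that $\dom(h_s)\cup\ran(h_s)$ has not yet touched). Consequently between any two orbits there is at most one pair of $h$ connecting them, and the graph on $\{O_i\}$ with edges given by $h$-pairs is a \emph{tree}. Now the argument that $\langle G,h\rangle$ is cofinitary is purely combinatorial and works one word at a time, with no recursion over $W_G$: for a fixed point $n$ of $w(h)$, follow its orbit path (the sequence of orbits visited along the evaluation path) and look at the first orbit $O_{l_i}$ realizing the maximal distance from $O_{l_0}$ in the tree. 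Entering $O_{l_i}$ uses an $h^{\pm 1}$-step across a unique tree edge; since the tree has no cycles, leaving $O_{l_i}$ toward $O_{l_0}$ must use the \emph{same} edge in reverse. In between, only a single $g_j\in G$ from $w$ is applied, and it must fix the point just reached in $O_{l_i}$. Distinct fixed points of $w(h)$ give distinct fixed points of some $(g_j,j)$ occurring in $w$, so if $w(h)$ had infinitely many fixed points some $g_j$ would too, contradicting cofinitariness of $G$.

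The moral: rather than making $h$ rich enough to shuffle orbits and then fighting to control all conjugates, make $h$ \emph{sparse} on the orbit level so that the orbit graph is a tree; the tree forces every closed evaluation path to ``bounce'' at its deepest vertex, pinning a fixed point on a $G$-letter of $w$. This replaces any need to enumerate $W_G$ and is why the argument goes through for uncountable $G$.
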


Suppose $G$ is a cofinitary group with infinitely many orbits.  Fix an
enumeration without repetitions $\langle O_i \mid i \in \N \rangle$ of
all orbits of $G$.  From these data we define a function $h$ such that
$h \not \in G$ and $\langle G, h \rangle$ is cofinitary, so $G$ is not
maximal.  We will first define $h$, show some of its properties and
finally show how these properties can be used to show that $h$ is as
required.

We define $h : \N \rightarrow \N$ by a sequence of finite
approximations $h_s$, $s \in \N$.  Set $h_0 := \emptyset$, and suppose
$h_s$ has been defined.  Let $n := \min\big((\N \setminus \dom(h_s))
\cup (\N \setminus \ran(h_s))\big)$ and $m := \min O_j$, where $j$ is
the least number such that $O_j \cap \big(\dom(h_s) \cup
\ran(h_s)\big) = \emptyset$.  Then set $h_{s+1} := h_s \cup \{\langle
n,m \rangle \}$ if $n \not \in \dom(h_s)$ and $h_{s+1} := h_s \cup
\{\langle m,n \rangle\}$ otherwise.

Clearly $h \in \Sym(\N) \setminus G$; we only need to verify that for
all $w(x) \in W_G$ the function $w(h)$ has finitely many fixed points,
or is the identity.  It will in fact be the case that all $w(h)$
(except the identity word) have only finitely many fixed points;
showing this will take some work.

First note that for all $O_i$ and $O_j$ there is at most one pair
$\langle a,b \rangle \in h$ such that $a \in O_i$ and $b \in O_j$.
But in fact \emph{much} more is true.  This much more is described by
the following definition, which also describes the picture from which
this proof developed.

\begin{definition}
  The \emph{$G$-orbits tree of $h$} has vertex set $\{O_j \mid j \in
  \N\}$. It has an edge between $O_j$ and $O_i$ if there is an $n
  \in O_j$ such that $h(n) \in O_i$.
\end{definition}

We need to see that this in fact defines a tree.  Suppose not, then
there is a cycle $O_{n_0}, O_{n_1}, \ldots, O_{n_l} = O_{n_0}$ and for
all $0 \leq i < l$ vertex $O_{n_i}$ is connected to vertex
$O_{n_{i+1}}$.  This means that for every $0 \leq i < l$ there is a
pair $\langle a,b \rangle \in h$ such that $a \in O_{n_i}$ and $b \in
O_{n_{i+1}}$ or $a \in O_{n_{i+1}}$ and $b \in O_{n_i}$.  By the
observation above these pairs are unique.  Let $s \in \N$ be the least
$s$ such that all pairs $\langle a,b \rangle$ used in this cycle are
in $h_{s+1}$.  Since $s$ is least with this property the unique pair
$\langle a,b \rangle \in h_{s+1} \setminus h_s$ is used in the cycle.
Then $\langle a,b \rangle$ connects some $O_{n_j}$ with one of its
neighbors, $O_{n_{j-1}}$ or $O_{n_{j+1}}$.  But each of these is
already connected to its other neighbor, so $a \in O_k$ and $b \in
O_l$ such that $O_k \cap (\dom(h_s) \cup \ran(h_s)) \neq \emptyset$
and $O_l \cap (\dom(h_s) \cup \ran(h_s)) \neq \emptyset$.  This
however means that the pair $\langle a,b \rangle$ does not satisfy the
defining criterion for inclusion in $h_{s+1}$; so we have the
contradiction we were looking for.

The next definition gives us a way to talk about the process of
evaluating a word $w(h)$ on a number $n$.  The orbit path defined
here can be looked at as a walk on the vertices of the
$G$-orbit tree of $h$.

\begin{definition}
  For $m \in \N$, $w(x) = g_0 x^{k_0}g_1 \cdots x^{k_{l-1}} g_l \in
  W_G$ and $h \in \Sym(\N)$ we define the \emph{orbit path} of $n$ in
  $w(h)$ to be the sequence of orbits the evaluation passes through
  --- that is $\bar{l} = \langle l_i \mid 0 \leq i \leq
  \lh(w) \rangle$ where $l_i = j$ iff $z_i \in O_j$
  with $\bar{z}$ the evaluation path for $w$ on $n$.
\end{definition}

One of the essential features of the function $h$ we have defined is
that for any $n \in \N$ and $w(x) \in W_G$, the evaluation path for
$n$ and the orbit path of $n$ determine each other.  This equivalence
will be useful as in a word with infinitely many fixed points the
action on the orbits allows us to conclude that one of the $g_i$ in
$w(x)$ has infinitely many fixed points which will be the, at that
time, desired conclusion.

We are now ready to finish the proof of Theorem
\ref{thm:MCGNotCountOrb}.  Suppose, towards a contradiction, that
there is a $w \in W_G$ such that $w(h)$ has infinitely many fixed
points.  We will show that each fixed point of $w(h)$ gives rise to a
fixed point in some $g_i$ appearing in $w$.

Let $n$ be one of the fixed points of $w(h)$ and $\bar{l}$ its orbit
path.  Let $l_i \in \bar{l}$ be such that $O_{l_i}$ is the first
vertex realizing the maximal distance from $O_{l_0}$ in the $G$-orbit
tree of $h$.  The orbit $O_{l_{i-1}}$ preceding $O_{l_i}$ is closer to
$O_{l_0}$, so $w_{(i)} = x$ or $x^{-1}$ (application of any member of $G$
will not change the orbit we are in) and $(w \upharpoonright i)(n) \in
O_{l_{i-1}}$ and $ (w \upharpoonright i+1)(n) \in O_{l_i}$ with
$\langle (w \upharpoonright i)(n), (w \upharpoonright i + 1)(n)
\rangle \in h$ or $\langle (w \upharpoonright i+1)(n), (w
\upharpoonright i)(n) \rangle \in h$.

Assume the former; the other case is analogous.

Since the $G$-orbit tree of $h$ is a tree, $O_{l_{i-1}}$ and $O_{l_i}$
are connected by an edge and $O_{l_{i-1}}$ is strictly closer to
$O_{l_0}$ than $O_{l_i}$, all the other neighbors of $O_{l_i}$ are
strictly closer to $O_{l_0}$ than $O_{l_i}$.  This means that the
first vertex after $O_{l_i}$ different from $O_{l_i}$ has to be equal
to $O_{l_{i-1}}$.  But as $\langle (w \upharpoonright i)(n), (w
\upharpoonright i + 1)(n) \rangle$ is the only pair in $h$ allowing
direct passage between $O_{l_{i-1}}$ and $O_{l_i}$ this means we have
to apply $h^{-1}$ with input $(w \upharpoonright i+1)(n)$ to get back
to $O_{l_{i-1}}$.

We have the following situation in the orbit path of $n$:
\begin{align*}
  (w \upharpoonright i)(n) \in O_{l_{i-1}}& \stackrel{h}{\rightarrow}
  (w \upharpoonright i+1)(n) \in O_{l_i} \rightarrow \cdots \\
  & \rightarrow (w\upharpoonright i+i)(n) \in O_{l_i}
  \stackrel{h^{-1}}{\rightarrow} (w \upharpoonright i)(n) \in
  O_{l_{i-1}}
\end{align*}

Now, in between arriving at $O_{l_i}$ and leaving $O_{l_i}$ we obviously stay
in the same orbit.  This means that between arriving at $O_{l_i}$ and
leaving we can only apply members of $G$.  By the shape of $w$ we
apply exactly one member $g_j$ of $G$.  And by the work above this member
has to fix $(w \upharpoonright i+1)(n)$.

We now know that every fixed point of $w(h)$ gives rise to a fixed
point in some $g_i$ appearing in $w$.  There is a $j$ such that
infinitely many fixed points of $w(h)$ give rise to a fixed point in
that $g_j$.  No two such fixed points of $w(h)$ can be associated to
the same fixed point of $g_j$ as for different points the $j$th
members of their respective evaluation paths are never equal. (Note
that here we are considering $(g_j, j)$ the group element together
with an indication of where it occurs in the word.  It is possible
that one group element occurs more than once.)  This shows that this
$g_j$ appearing in $w$ has infinitely many fixed points, contradicting
that it is a member of the cofinitary group $G$.

\subsection{A Maximal Cofinitary Group With Finitely Many Infinite Orbits}
\label{sec:FinitelyManyOrbits}

In this subsection we prove the following theorem:

\begin{theorem}
  \label{thm:FinitelyInfOrbits}
  The continuum hypothesis implies that for every $n \in \N \setminus
  \{0\}$ and $m \in \N$ there exists a maximal cofinitary group with
  exactly $n$ infinite orbits and exactly $m$ finite orbits.
\end{theorem}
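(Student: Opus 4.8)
The plan is to carry out the $\CH$ construction of Section~\ref{sec:MCGintro} inside a fixed partition of $\N$, exploiting the slack in its extension lemmas both to keep the group inside the partition and to diagonalize against permutations that violate it. Concretely, fix a partition $\mathcal P$ of $\N$ into infinite sets $I_1,\dots,I_n$ and singletons $\{p_1\},\dots,\{p_m\}$, and take the first generator $g_0$ to be a permutation whose restriction to each $I_j$ is a single two-sided infinite cycle and which fixes each $p_i$; then $g_0$ is cofinitary (its fixed points are exactly the $p_i$), it respects $\mathcal P$ (fixes each $p_i$, maps each $I_j$ onto itself), and $\langle g_0 \rangle$ already acts transitively on each $I_j$. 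Every later generator $g_\alpha$ ($0 < \alpha < \omega_1$) will likewise respect $\mathcal P$. Granting that the final group $G = \langle g_\alpha \mid \alpha < \omega_1 \rangle$ can be kept cofinitary and made maximal, these features force its orbits to be exactly $I_1,\dots,I_n$ (infinite) and $\{p_1\},\dots,\{p_m\}$ (finite).

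Next I would note that the good-extension apparatus of Section~\ref{sec:MCGintro} localizes to each infinite block: if $G_\beta = \langle g_\alpha \mid \alpha < \beta \rangle$ respects $\mathcal P$ and $w \in W_{G_\beta} = G_\beta * F(x)$, then the evaluation path of any $n \in I_j$ under $w$ never leaves $I_j$, so the Domain Extension Lemma, the Range Extension Lemma and the Hitting $f$ Lemma all apply verbatim inside $I_j$, viewed as a copy of $\N$ carrying the countable cofinitary group $G_\beta \restriction I_j$. So I would build $g_\beta = \bigcup_s g_{\beta,s}$ exactly as in Section~\ref{sec:MCGintro}, but always adding pairs $(n,k)$ with $n,k$ in a common block: at substage $s$, run the domain- and range-extension steps once inside each of the finitely many $I_j$ (so $g_\beta$ becomes a total $\mathcal P$-respecting bijection), and then handle the enumerated $f_\beta \in \Sym(\N)$ (under $\CH$, $\Sym(\N) = \langle f_\alpha \mid \alpha < \omega_1 \rangle$). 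If $f_\beta \in G_\beta$, or $\langle G_\beta, f_\beta \rangle$ is already not cofinitary, nothing is needed (non-cofinitariness persists through later stages). If $f_\beta$ respects $\mathcal P$ while $\langle G_\beta, f_\beta \rangle$ is cofinitary and $f_\beta \notin G_\beta$, then there must be a block $I_j$ on which $f_\beta$ disagrees with every member of $G_\beta$ — otherwise $g^{-1} f_\beta$ would be the identity on an infinite block for a suitable $g \in G_\beta$, contradicting cofinitariness — and I use the Hitting $f$ Lemma inside each block on which $f_\beta$ disagrees with all of $G_\beta$ to make $g_\beta$ agree with $f_\beta$ on an infinite subset of that block, so $g_\beta^{-1} f_\beta$ has infinitely many fixed points; carried out for all such blocks, this also makes each $G \restriction I_j$ a maximal cofinitary group on $I_j$.

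If instead $f_\beta$ does not respect $\mathcal P$ (and $\langle G_\beta, f_\beta \rangle$ is cofinitary, $f_\beta \notin G_\beta$), I first apply a pigeonhole on the $n$ infinite blocks: iterating the splitting of a cofinite piece of $I_b$ as $I_b \setminus f_\beta^{-1}\{p_1,\dots,p_m\} = \bigsqcup_{b'} (I_b \cap f_\beta^{-1}(I_{b'}))$ along an itinerary of length $n+1$ yields an integer $d$ with $1 \le d \le n$, a block $I_k$, and an infinite $U \subseteq I_k$ with $f_\beta^{\,d}(U) \subseteq I_k$. Then I apply the Hitting $f$ Lemma to the permutation $f_\beta^{\,d}$ — legitimate because $\langle G_\beta, f_\beta^{\,d} \rangle \le \langle G_\beta, f_\beta \rangle$ is cofinitary — choosing the witnessing inputs inside $U$; since the added pairs lie in $I_k \times I_k$, $g_\beta$ still respects $\mathcal P$ and now agrees with $f_\beta^{\,d}$ on an infinite set, so $g_\beta^{-1} f_\beta^{\,d}$ (an element of $\langle G, f_\beta \rangle$) has infinitely many fixed points and $\langle G, f_\beta \rangle$ is not cofinitary. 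Taking unions at limit stages, Lemma~\ref{lem:finfxpts} applied blockwise (every word of $W_{G_\beta}$ and all its subwords occur in the enumeration by some substage, and finitely many fixed points in each of the finitely many blocks, together with the $p_i$, is still finitely many) shows each $\langle G_\beta, g_\beta \rangle$, hence $G$, is cofinitary; $G$ has the prescribed orbits; and by the casework every $f \notin G$ has $\langle G, f \rangle$ non-cofinitary, so $G$ is the desired maximal cofinitary group.

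I expect the real obstacle to be the non-$\mathcal P$-respecting case in which the pigeonhole power $d$ is forced to be $\ge 2$ — an $f_\beta$ that leaves every infinite block onto the others, essentially permuting $I_1,\dots,I_n$ up to a finite perturbation — and for which, unluckily, every ``returning'' power $f_\beta^{\,d}$ already lies in $G_\beta$, so that it can neither be fed to the Hitting $f$ Lemma (which requires $f \notin G$) nor matched by a $\mathcal P$-respecting generator. (For $n = 1$ this case never arises: $f_\beta(I_1) \cap I_1$ is then automatically infinite, so $d = 1$ always works, and the construction above already handles $n = 1$ with any $m$.) To defeat such $f_\beta$ one must prevent any block permutation from being absorbed, and the natural device — presumably the ``new idea'' alluded to in the introduction — is to arrange during the construction that the restricted groups $G \restriction I_1, \dots, G \restriction I_n$ are pairwise non-conjugate as permutation groups: for instance, ensure that $G \restriction I_j$ contains an element with exactly $j$ fixed points while no element of $G \restriction I_k$ has exactly $j$ fixed points for $k \ne j$. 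Then the conjugation argument ``$f_\beta^{-1}(G \restriction I_{\sigma(j)}) f_\beta \restriction I_j$ is a cofinitary overgroup of the maximal cofinitary group $G \restriction I_j$'' yields the required contradiction. Making this extra requirement compatible with the extension lemmas, and handling the finite perturbations carefully, is where the real work concentrates.
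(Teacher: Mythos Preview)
Your setup and the $\mathcal P$-respecting case are essentially the paper's, but your treatment of the non-$\mathcal P$-respecting case diverges from it, and the paper's route is simpler than your proposed fix. You try to return to a block via a \emph{power} $f_\beta^{\,d}$, and you correctly isolate the failure mode: $f_\beta^{\,d}$ may already lie in $G_\beta$ (e.g.\ when $n=2$ and $f_\beta$ is an involution swapping the two infinite blocks). Your remedy --- forcing the restricted groups $G\restriction I_j$ to be pairwise non-conjugate --- is plausible but carries a real verification burden (you would have to control fixed-point spectra on each block throughout the construction and push a conjugation argument through the finite perturbations).

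The paper never takes a power of $f_\beta$ at all; it uses a \emph{commutator}. Whenever $f_\beta$ maps an infinite piece of one infinite orbit $O_0$ into a different infinite orbit $O_i$ (so $R:=\ran\big(f_\beta\cap(O_0\times O_i)\big)$ is infinite), the paper builds $g_\beta$ on $O_i$ and $O_0$ in a coordinated way: first arrange, via domain/range extensions on $O_i$, that $g_\beta^{O_i}\cap(R\times R)$ is infinite; this makes $f_\beta^{-1} g_\beta f_\beta$ an infinite partial function from $O_0$ to $O_0$. Then, while building $g_\beta^{O_0}$ by good extensions on $O_0$, hit this partial function infinitely often but not everywhere. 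One then has infinitely many $x\in O_0$ with $g_\beta(x)=f_\beta^{-1} g_\beta f_\beta(x)$, i.e.\ infinitely many fixed points of $w(f_\beta)$ for $w(x)=g_\beta^{-1}x^{-1}g_\beta x$, and the ``not everywhere'' clause guarantees $w(f_\beta)\neq\Id$. The point is that the object being hit involves the fresh generator $g_\beta$ itself, so there is no danger of it accidentally lying in $G_\beta$; this is exactly what your power approach cannot guarantee. The coordination is painless: for a chosen $a\in R$, all but finitely many $b\in R$ make $(a,b)$ a good extension on $O_i$, and all but finitely many $b'$ make $(f_\beta^{-1}(a),b')$ a good extension on $O_0$; since $f_\beta$ is injective these constraints exclude only finitely many $b\in R$. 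This is the ``new idea'' the introduction alludes to, and it replaces your non-conjugacy machinery entirely.
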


\begin{proof}
  Let $n \in \N \setminus \{0\}$ and $m \in \N$ be given.  Choose a
  partition $F_0 \mathrel{\dot{\cup}} F_1 \cdots F_{m-1}
  \mathrel{\dot{\cup}} O_0 \mathrel{\dot{\cup}} O_1 \cdots O_{n-1} =
  \N$ with $F_i$ finite and $O_j$ infinite; these will be the orbits.

  We have to ensure that the group $G$ we construct satisfies the
  following four conditions.
  \begin{enumerate}
  \item
    \label{enum:inforb:orbits}
    $G$ is transitive on all $F_i$ and $O_j$,
  \item
    \label{enum:inforb:respectingorbits}
    $G$ respects all $F_i$ and $O_j$,
  \item
    \label{enum:inforb:cofinitary}
    $G$ is cofinitary,
  \item
    \label{enum:inforb:maximal}
    $G$ is \emph{maximal} cofinitary.
  \end{enumerate}

We will construct
sequences of generators $\langle g^{F_i}_\alpha \mid \alpha < \omega_1
\rangle$ ($i < m$) and $\langle g^{O_i}_\alpha \mid \alpha < \omega_1
\rangle$ ($i < n$) such that $g^{F_i}_\alpha \in \Sym(F_i)$ and
$g^{O_i}_\alpha \in \Sym(O_i)$.  We then define $G$ to be the group
generated by $\langle g_\alpha \mid \alpha < \omega_1 \rangle$, where
$g_\alpha = (\bigcup_{i < m} g^{F_i}_\alpha) \cup (\bigcup_{i < n}
g^{O_i}_\alpha)$.

To ensure condition \ref{enum:inforb:orbits} we choose the first member in each
of the sequences to generate a transitive group.

Condition \ref{enum:inforb:respectingorbits} is ensured by the way the elements
of the group $G$ are obtained from the sequences of generators we construct.

To ensure condition \ref{enum:inforb:cofinitary} we use the method of good
extensions on the infinite orbits.  Since then the groups on the
infinite orbits are freely generated and cofinitary, the same will be
true for $G$.

To ensure condition \ref{enum:inforb:maximal} we pick an enumeration $\langle
f_\alpha \mid \alpha < \omega_1 \rangle$ of $\Sym(\N)$ and ensure in
step $\alpha$ of the construction (which will be recursive with
$\omega_1$ steps) that either $f_\alpha \in G$ or there is a $w \in
W_G$ such that $w(f_\alpha)$ has infinitely many fixed points but is
not the identity.

We now give the details of the construction.  We recursively
construct the sequences of generators on the infinite orbits, and
in the first step fix the sequences on the finite orbits.

For each $i < m$ choose a permutation of $F_i$ that generates a
transitive group.  Then set each $g^{F_i}_\alpha$ to be equal to that
permutation.  If we make sure that the permutations on the infinite
orbits are cofinitary and generate the group freely, this takes care
of the finite orbits (the group generated by the $g^{F_i}_\alpha$
clearly is transitive on $F_i$, and as the generators on the $O_i$
generate the group freely we don't have to consider relations between
different $g^{F_i}_\alpha$).  So from now on we will restrict our
attention to the infinite orbits.

For each $i < n$ choose a transitive permutation of $O_i$, and set
$g^{O_i}_0$ equal to that permutation.

Note that the group generated by $g_0$ is free, cofinitary, and
transitive when restricted to each of the sets $F_i$ ($i < m$) and
$O_i$ ($i < n$).

At step $\beta < \omega_1$ we have constructed the sequences $\langle
g^{O_i}_\alpha \mid \alpha < \beta \rangle$ ($i< n$).  And the
$g^{O_i}_\alpha$ freely generate a cofinitary subgroup of $\Sym(O_i)$.
Also for all $\alpha < \beta$ we have ensured that either $f_\alpha
\in \langle g_{\alpha} \mid \alpha < \beta \rangle$ or there is a
$w(x) \in W_{\langle \{ g_\alpha \mid \alpha < \beta \} \rangle}$ such that
$w(f_\alpha)$ has infinitely many fixed points, but is not the
identity.

There are two cases.

\textsc{Case 1}: $f_\beta \cap (O_0 \times O_0)$ is infinite.

Then on in $\Sym(O_0)$ we perform the construction of $g^{O_0}_\beta$
using the domain, range, and hitting $f$ lemma (which, as we remarked,
also works for infinite partial functions), so construct a bijection
such that $f \cap g^{O_0}$ is infinite, but not all of $g^{O_0}$.  On
the other infinite orbits, we only use domain and range extension to
construct a new generator.

\textsc{Case 2}: Otherwise there is an $i \neq 0$ such that $f_\beta \cap
(O_0 \times O_i)$ is infinite.

Let $R := \ran(f_\beta \cap (O_0 \times O_i)$.  Then $R$ is an
infinite subset of $O_i$.  We now use domain and range extension to
construct $g^{O_i}_\beta$ such that $g^{O_i}_\beta \cap (R \times R)$
is infinite.  This means that $f_\beta^{-1} g^{O_i}_\beta f_\beta$ is
an infinite partial function $O_0 \partmap O_0$.  And we can construct
$g^{O_0}_\beta$ by using the domain extension, range extension, and
hitting $f$ lemmas, so that $g^{O_0}_\beta \cap f_\beta^{-1}
g^{O_i}_\beta f_\beta$ is infinite, but not all of $g^{O_0}$.  This
shows there is a $w(x) \in W_{G_\beta}$ ($w(x) = g_\beta^{-1} x^{-1}
g_\beta x$) such that $w(f_\beta)$ has
infinitely many fixed points but is not the identity.

 On the other infinite orbits, we only use domain and range
extension to construct a new generator.

In both cases we get freely generated cofinitary groups, and in both
cases we take care of the function $f_\beta$.  Therefore we have
constructed a maximal cofinitary group.
\end{proof}


\section{Isomorphism Types}

\renewcommand{\poset}{\mathbb{P}}

\subsection{A Maximal Cofinitary Group Universal for Countable Groups}
\label{unifCountableGrp}
This subsection is dedicated to proving the following theorem.

\begin{theorem}
  The continuum hypothesis implies that there exists a maximal
  cofinitary group $G$ such that every countable group $H$ embeds into
  $G$.
\end{theorem}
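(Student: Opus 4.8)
The plan is to run the standard $\CH$ recursion for constructing a maximal cofinitary group, as laid out in Section~\ref{sec:MCGintro}, but to seed the construction so that a fixed countable ``universal'' group gets embedded along the way. Since there is a countable group $U$ into which every countable group embeds (for instance a suitable locally finite universal group, or any countable group containing isomorphic copies of all finitely presented groups and closed under the relevant operations — but in fact it suffices to take $U$ to be \emph{any} fixed countable group and note that it is enough to embed, say, the free group $F_\omega$ of countable rank, since every countable group is a quotient of $F_\omega$ — wait, quotients don't embed, so instead use that every countable group embeds in a countable group with strong enough universality, e.g. Hall's universal locally finite group is not quite right for non-torsion groups; the cleanest choice is Higman's embedding giving a finitely generated universal-for-recursively-presented group, or simply: every countable group embeds into a countable group generated by two elements of infinite order — by the classical HNN/amalgamation embedding theorems). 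Fix once and for all such a countable group $U$ with the property that every countable group embeds into $U$; it suffices that $|U|\le\aleph_0$.

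First I would realize $U$ as a cofinitary group. By P.~Neumann's theorem (quoted in the Introduction) every countable group has a cofinitary action; more concretely, take the translation action of $U$ on itself, which is free, hence has no nonidentity fixed points at all, so it is cofinitary. Fix an embedding $U \hookrightarrow \Sym(\N)$ with cofinitary image $U_0$. Now start the $\CH$-recursion from Section~\ref{sec:MCGintro} with $G_0 := U_0$ in place of the trivial group: enumerate $\Sym(\N) = \langle f_\alpha \mid \alpha < \omega_1\rangle$, and at stage $\beta$, given the countable cofinitary group $G_\beta \supseteq U_0$, adjoin a new generator $g_\beta$ built from finite approximations using the Domain Extension, Range Extension, and Hitting~$f$ Lemmas, exactly as in the basic construction, so that $\langle G_\beta, g_\beta\rangle$ is cofinitary and either $f_\beta \in G_{\beta+1}$ or $\langle G_{\beta+1}, f_\beta\rangle$ is not cofinitary. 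Let $G := \bigcup_{\beta<\omega_1} G_\beta = \langle U_0 \cup \{g_\beta \mid \beta < \omega_1\}\rangle$.

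The verification then has two parts. Maximality and cofinitarity of $G$ follow verbatim from the argument in Section~\ref{sec:MCGintro}: at each stage we kill $f_\beta$, and cofinitarity is preserved at successor and limit stages because every element of $\langle G_\beta, g_\beta\rangle$ equals $w(g_\beta)$ for some $w \in W_{G_\beta}$ and Lemma~\ref{lem:finfxpts} applies once we have taken good extensions with respect to $w$ and its subwords (which happens at some finite substage, as $W_{G_\beta}$ is countable). The one point needing care is that the base group $U_0$ is \emph{not} freely generated, so $W_{G_0} = U_0 * F(x)$ genuinely uses the group structure of $U_0$ in its reduced words; but Lemma~\ref{lem:enoughforcof} and the three extension lemmas are stated for an \emph{arbitrary} countable cofinitary group $G$, not only free ones, so nothing changes. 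For the universality part: $U \cong U_0 \le G$ and every countable group embeds into $U$, hence into $G$. The main obstacle is therefore purely the choice of $U$ — one must exhibit a single countable group that is universal for countable groups and simultaneously admits a cofinitary (in fact free) action; the self-translation action handles the cofinitary requirement for free, so the real content is invoking the classical group-embedding theorem that such a countable universal group exists, after which the rest is a routine adaptation of the basic $\CH$ construction already given.
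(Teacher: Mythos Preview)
Your approach has a fatal gap: the ``classical group-embedding theorem that such a countable universal group exists'' is false. No countable group $U$ contains an isomorphic copy of every countable group. The reason is a counting argument combined with the Higman--Neumann--Neumann embedding theorem: every countable group embeds into some $2$-generator group, and there are $2^{\aleph_0}$ pairwise non-isomorphic countable groups (for instance $\bigoplus_{p\in S}\mathbb{Z}_p$ as $S$ ranges over sets of primes), hence $2^{\aleph_0}$ pairwise non-isomorphic $2$-generator groups. But a countable group $U$ has only $\aleph_0$ many ordered pairs of elements, hence at most $\aleph_0$ many $2$-generator subgroups, so it cannot contain copies of all of them. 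Your own hesitation --- Hall's group only handles locally finite groups, Higman's theorem only recursively presented ones, $F_\omega$ gives quotients not subgroups --- reflects exactly this obstruction; none of the candidates work because none can.

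This is precisely why the paper does \emph{not} try to seed the recursion with a single universal countable group. Instead it proves a new extension lemma (Theorem~\ref{thm:countgrp}): given any countable cofinitary group $G$ and any countable group $H$, one can adjoin generators $\vec{f}$ so that $\langle\vec{f}\rangle\cong H$ and $\langle G,\vec{f}\rangle$ is still cofinitary. This requires a genuine generalization of the good-extension machinery to handle relations among the new generators (the notion of $(G,H)$-good extension and the ``applying relations'' step). With that in hand, under $\CH$ one enumerates all $\aleph_1$ many countable groups and all permutations, and at stage $\alpha$ embeds $H_\alpha$ and deals with $h_\alpha$ for maximality. The universality is achieved across the whole length-$\omega_1$ recursion, not at a single countable stage.
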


\subsubsection*{Domain and Range Extension}

Here we generalize the domain and range extension lemmas.

\begin{definitions}
\item
  Let $G \leq \Sym(\N)$ and let $\langle x_n \mid n \in \N \rangle$ be
  a sequence of variables.  Then $W_{G,n}$ is $G * F(x_0, \ldots,
  x_n)$, the free product of $G$ and the free group on the generators
  $x_0, \ldots, x_n$; i.e. $W_{G,n}$ is the set of words of the form
  $w = w(x_0, \ldots, x_n) = g_0\bar{x}_0 g_1\bar{x}_1 \cdots
  \bar{x}_kg_{k+1}$, where $g_i \in G$ for $0 \leq i \leq k+1$, $g_i
  \neq \Id$ for $1 \leq i \leq k$, and $\bar{x}_i$ is a nonempty
  reduced word in $F(x_0, \ldots, x_n)$.

\item
  For $\bar{p} = p_0, \ldots, p_n$, $\bar{q} = q_0, \ldots, q_n$ with
  all $p_i : \N \rightharpoonup \N$ finite injective, all $q_i : \N
  \rightharpoonup \N$ finite injective and $q_i \supseteq p_i$, we
  call $\bar{q}$ a \emph{good} extension of $\bar{p}$ with respect to
  $w \in W_{G,n}$ if the following holds:

  For all $l \in \N$ such that
  \begin{equation*}
    w(\bar{q})(l) = l \text{ and } w(\bar{p})(l) \text{ is undefined}
  \end{equation*}
  there exists an $k \in \N$ and
  $z,u$ subwords of $w$ such that
  \begin{equation}
    \label{eqn:defgoodext}\tag{$\dagger$}
    \begin{split}
      w = u^{-1}zu & \text{ without cancellation},\\
      z(\bar{p})(k) = &\ k \text{ and } u(\bar{q})(l) = k.
    \end{split}
  \end{equation}

  For $n=0$ this reduces to the original definition of good extension.

\item
  If $G$ is countable, then $W_{G,n}$ is countable; enumerate it
  by $\langle w_k \mid k \in \N \rangle$.  Now we define a partial
  order $\poset_{G,n} = \langle P, \leq \rangle$.  $P$ is the set of
  length $n+1$ sequences of finite injective functions $\N \rightarrow
  \N$, and $\bar{q} \leq \bar{p}$ if $\bar{q}$ is a good extension of
  $\bar{p}$ with respect to all words $\{ w_k \mid k \leq \card{\bar{p}}\}$
  and their subwords.
\end{definitions}

This order depends on the enumeration $\langle w_k \mid k \in \N
\rangle$ of $W_{G,n}$, but in a way that will never matter to us.

We can now state and prove the versions of the domain and range
extension lemmas we need.

\begin{lemma}[Domain Extension Lemma]
  Let $G$ be a countable cofinitary group, $\bar{p} = p_0, \ldots,
  p_n$ finite injective functions, $i \leq n$, $a \in \N \setminus
  \dom(p_i)$ and $w \in W_{G,n}$ then for all but finitely many $b \in
  \N$ the sequence $p_0, \ldots, p_i \cup \{(a,b)\}, \ldots, p_n$ is a
  good extension of $\bar{p}$ with respect to $w$.
\end{lemma}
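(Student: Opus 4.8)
The plan is to follow the single-variable Domain Extension Lemma of Zhang stated above (the case $n=0$), the only genuinely new ingredient being the bookkeeping of \emph{which} of the coordinates $p_0,\dots,p_n$ is enlarged. Write $\bar{q}$ for the sequence $p_0,\dots,p_{i-1},\,p_i\cup\{(a,b)\},\,p_{i+1},\dots,p_n$. I would fix in advance a finite set $E=E(w,\bar{p},a,i)\subseteq\N$ and show that for every $b\notin E$ the sequence $\bar{q}$ is a good extension of $\bar{p}$ with respect to $w$. The set $E$ should collect: $\ran(p_i)$ (so that $p_i\cup\{(a,b)\}$ stays injective); all values $\gamma(c)$ and $\gamma^{-1}(c)$ with $c\in\{a\}\cup\bigcup_{j\le n}(\dom(p_j)\cup\ran(p_j))$ and $\gamma$ ranging over the finitely many products, of length at most $\lh(w)$, of the group letters occurring in $w$; the (finitely many) fixed points of those $\gamma\neq\Id$; and the finitely many ``controlled'' numbers obtained by evaluating, backward, an initial segment (counted from the right) of a subword of $w$ at $a$ using only $\bar{p}$ and its old pairs. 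Each of these is finite because $\bar{p}$ is finite and $G$ is cofinitary.

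Now suppose $b\notin E$ and let $l$ be a \emph{new} fixed point of $w(\bar{q})$: that is, $w(\bar{q})(l)=l$ but $w(\bar{p})(l)$ is undefined. I must exhibit subwords $z,u$ of $w$ and a number $k$ with $w=u^{-1}zu$ without cancellation, $z(\bar{p})(k)=k$, and $u(\bar{q})(l)=k$. Let $z_0$ be the shortest conjugate subword of $w$, so $w=v^{-1}z_0v$ without cancellation for some $v$ and $z_0$ has no proper conjugate subword. Put $k:=v(\bar{q})(l)$. Since $w(\bar{q})(l)=l$ is defined the whole evaluation path exists, so $v^{-1}(\bar{q})(z_0(\bar{q})(k))=l=v^{-1}(\bar{q})(k)$, and since $v(\bar{q})$ (hence its inverse) is injective this forces $z_0(\bar{q})(k)=k$. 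As $\bar{p}\subseteq\bar{q}$ coordinatewise, if $z_0(\bar{p})(k)$ were defined it would equal $z_0(\bar{q})(k)=k$; so either $z_0(\bar{p})(k)=k$ — in which case $z:=z_0$, $u:=v$ and this $k$ are the required witnesses — or $z_0(\bar{p})(k)$ is undefined, which I claim cannot happen when $b\notin E$.

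To see the claim, look at the evaluation loop $k=k_0\to k_1\to\cdots\to k_{\lh(z_0)}=k$ of $k$ under $z_0(\bar{q})$. Since $z_0(\bar{p})(k)$ is undefined the loop must use the new pair $(a,b)$, so $a$ appears among the $k_t$; fix such an occurrence and trace the loop both ways from it. Any maximal stretch of steps avoiding $a$ and $b$ is a $\bar{p}$-evaluation of an initial segment of $z_0$, so a stretch ending at (or starting from) $a$ lands on a controlled number of $E$ or on $b$; and starting from $b$ the loop can only run through a block of pure group letters to some $\gamma(b)$, after which the next ($x$-)letter can be applied only if $\gamma(b)$ lies in some $\dom(p_j)$ or $\ran(p_j)$, equals $a$, or is a fixed point of $\gamma$ — in each case $b\in E$. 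The one remaining possibility is that the loop runs from $b$ through group letters straight to its end $k_{\lh(z_0)}=k_0=k$; then $\gamma(b)$ (or a symmetric expression) equals a controlled number, again forcing $b\in E$, unless this last identity exhibits $z_0$ as $\delta^{-1}z'\delta$ without cancellation for a nontrivial product $\delta$ of group letters — contradicting that $z_0$ has no proper conjugate subword. Hence $b\in E$, the desired contradiction.

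The main obstacle is exactly this last case analysis: a block of group letters carries the generic point $b$ to another generic point $\gamma(b)$, so one cannot just say the $\bar{p}$-evaluation ``gets stuck'' immediately after $b$; one must follow the loop until it is forced either to apply a partial letter (which pins $b$ down to one of finitely many values) or to close up, and it is precisely the minimality of $z_0$ among conjugate subwords that forbids the loop from silently undoing such a group-letter block. This is the core of Zhang's original one-variable argument; the passage to $W_{G,n}$ only records, at each $x$-step, which variable $x_j$ is being used, and this never interferes with the counting. (The Range Extension Lemma will come out of the symmetric argument, tracing the loop from $b$ rather than from $a$.)
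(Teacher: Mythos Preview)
Your direct approach is sound: tracing the $z_0$-loop from a use of the new pair and invoking the minimality of $z_0$ to rule out the wrap-around case is exactly how the single-variable lemma is proved, now with the extra bookkeeping of which variable $x_j$ is being applied at each step. The details you flag as the main obstacle are real but can be pushed through as you indicate.

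The paper, however, takes a completely different and much shorter route: rather than redoing Zhang's loop-tracing in the multi-variable setting, it \emph{reduces} to the single-variable lemma. Using the one-variable domain and range extension lemmas, extend each $p_j$ with $j\neq i$ (one at a time, over successively larger cofinitary groups) to a total permutation $\tilde p_j$ so that $G':=\langle G,\{\tilde p_j:j\neq i\}\rangle$ is cofinitary and the $\tilde p_j$ are free generators over $G$. Substituting these into $w$ yields a one-variable word
\[
w':=w(\tilde p_0,\dots,\tilde p_{i-1},x,\tilde p_{i+1},\dots,\tilde p_n)\in W_{G'},
\]
and the original single-variable Domain Extension Lemma gives that for all but finitely many $b$ the function $p_i\cup\{(a,b)\}$ is a good extension of $p_i$ with respect to $w'$. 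Because the $\tilde p_j$ are free over $G$, the conjugate-subword structure of $w'$ matches that of $w$, so any witnessing decomposition $w'=(u')^{-1}z'u'$ pulls back to a decomposition $w=u^{-1}zu$, and hence $\bar q$ is a good extension of $\bar p$ with respect to $w$.

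Your method is self-contained and makes the bad set $E$ explicit, but it reproves the hard part of Zhang's argument. The paper's reduction is slicker: it treats the single-variable lemma as a black box and avoids the case analysis entirely, at the cost of first needing to know that the single-variable lemmas suffice to manufacture free cofinitary generators extending any given finite partial functions.
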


\begin{proof}
  Using the original domain and range extension lemmas we can in turn
  extend each $p_j, j \neq i$, to a permutation $\widetilde{p_j}$ so
  that $\langle G, \{\tilde{p}_j | j \leq n, j \neq i\} \rangle$ is a
  cofinitary group, and the $\tilde{p}_j, j \leq n, j \neq i$, are
  free generators.

  Then, by the original domain extension lemma, for all but finitely
  many $b \in \N$, $p_i \cup \{(a,b)\}$ is a good extension of $p_i$ with
  respect to the word $w' = w(\tilde{p}_0, \ldots, \tilde{p}_{i-1},
  x,$ $\tilde{p}_{i+1}, \ldots, \tilde{p}_n)$.

  We claim that for such $b$ the sequence $p_0, \ldots, p_i \cup
  \{(a,b)\}, \ldots, p_n$ is a good extension of $\bar{p}$ with
  respect to $w$, which we see as follows:
  For any fixed point $s \in \N$ of $w(p_0, \ldots, p_i \cup
  \{(a,b)\}, \dots, p_n)$ that is not a fixed point of $w(\bar{p})$ we
  have that $s$ is a fixed point of $w(\tilde{p}_0, \ldots, p_i \cup
  \{(a,b)\}, \dots, \tilde{p}_n)$ that is not a fixed point of
  $w(\tilde{p}_0, \ldots, p_i, \dots, \tilde{p}_n)$.  So as $p_i \cup
  \{(a,b)\}$ is a good extension of $p_i$ with respect to $w'$ we have
  subwords $u$, $z$ of $w'$ as in \eqref{eqn:defgoodext} (for words
  with only one variable).  As the $\tilde{p}_m$ were free generators
  by replacing in them the $\tilde{p}_l$ by $x_l$ we get $w =
  u^{-1}zu$ as in \eqref{eqn:defgoodext} (for words with n variables),
  showing this is a good extension.
\end{proof}

\begin{lemma}[Range Extension Lemma]
  Let $G$ be a countable cofinitary group, $\bar{p} = p_0, \ldots,
  p_n$ finite injective functions, $i \leq n$, $l \in \N \setminus
  \ran(p_i)$ and $w \in W_{G,n}$ then for all but finitely many $k \in
  \N$ the sequence $p_0, \ldots, p_i \cup \{(k,l)\}, \ldots, p_n$ is a
  good extension of $\bar{p}$ with respect to $w$.
\end{lemma}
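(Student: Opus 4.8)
The plan is to deduce this from the Domain Extension Lemma just proved, by passing to inverses. The key observation is that for a word $w \in W_{G,n}$, the fixed points of $w(\bar p)$ are exactly the fixed points of $w^{-1}(\bar p)$, where $w^{-1}$ is the formal inverse word (obtained by reversing the letters of $w$ and inverting each), and that $w^{-1} \in W_{G,n}$ as well. Moreover, a conjugate/subword decomposition $w = u^{-1} z u$ without cancellation transforms into a decomposition $w^{-1} = (u^{-1})^{-1} z^{-1} u^{-1}$, i.e. $w^{-1} = \tilde u^{-1} \tilde z \tilde u$ without cancellation with $\tilde u = u^{-1}$, $\tilde z = z^{-1}$, and the relations $z(\bar p)(k)=k$, $u(\bar q)(l)=k$ become $\tilde z(\bar p)(k)=k$, $\tilde u(\bar q)(k)=l$. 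So ``good extension with respect to $w$'' is essentially symmetric under the substitution that replaces each finite injective function $p_i$ by its converse $p_i^{-1}$ and $w$ by $w^{-1}$ — one only has to check that this substitution swaps the roles of domain and range.

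First I would make precise the involution $p \mapsto p^{-1}$ on finite injective functions (converse relation) and the involution $w \mapsto w^{-1}$ on $W_{G,n}$, and verify that for all $\bar p$, $l$ is a fixed point of $w(\bar p)$ iff $l$ is a fixed point of $w^{-1}(\bar p^{\,-1})$, where $\bar p^{\,-1} = p_0^{-1}, \ldots, p_n^{-1}$. (This is because evaluating $w^{-1}$ with the converse functions traces the evaluation path of $w$ backwards.) Next I would check that $\bar q$ is a good extension of $\bar p$ with respect to $w$ if and only if $\bar q^{\,-1}$ is a good extension of $\bar p^{\,-1}$ with respect to $w^{-1}$; this uses the transformation of the decomposition $w = u^{-1}zu$ described above, together with the fact that subwords of $w$ correspond under the involution to subwords of $w^{-1}$. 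Finally, given $l \in \N \setminus \ran(p_i)$, note $l \in \N \setminus \dom(p_i^{-1})$; apply the Domain Extension Lemma to $\bar p^{\,-1}$, the index $i$, the point $l$, and the word $w^{-1} \in W_{G,n}$, to get that for all but finitely many $k$ the sequence $p_0^{-1}, \ldots, p_i^{-1} \cup \{(l,k)\}, \ldots, p_n^{-1}$ is a good extension of $\bar p^{\,-1}$ with respect to $w^{-1}$. Translating back via the involution (noting $(p_i^{-1} \cup \{(l,k)\})^{-1} = p_i \cup \{(k,l)\}$), for all but finitely many $k$ the sequence $p_0, \ldots, p_i \cup \{(k,l)\}, \ldots, p_n$ is a good extension of $\bar p$ with respect to $w$, as required.

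The only real obstacle is bookkeeping: confirming carefully that the involution on words interacts correctly with ``without cancellation'' and with the notion of subword, and that conjugate subwords map to conjugate subwords under $w \mapsto w^{-1}$. This is routine but must be done explicitly since the definition of good extension quantifies over subwords $z,u$ of $w$. An alternative, if one prefers not to set up the involution machinery, is to repeat verbatim the proof of the Domain Extension Lemma with the original range extension lemma in place of the original domain extension lemma — the structure is identical — but the inversion argument is cleaner and avoids duplicating the reduction to a single variable.
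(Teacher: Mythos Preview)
Your proposal is correct and is exactly the approach the paper takes: the paper's entire proof is the single sentence ``From the domain extension lemma by using $\bar{p}^{-1} = p_0^{-1}, \ldots, p_n^{-1}$ and $\tilde{w} = w^{-1}$.'' Your more detailed verification that the involution respects the notion of good extension is a welcome elaboration of what the paper leaves implicit.
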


\begin{proof}
  From the domain extension lemma by using $\bar{p}^{-1} =
  p_0^{-1}, \ldots, p_n^{-1}$ and $\tilde{w} = w^{-1}$.
\end{proof}

\begin{corollary}[Domain and Range Extension Lemma]
  Let $G$ be a countable cofinitary group.  Then the sets $D_{i,k} :=
  \{ \bar{q} \in P \mid k \in \dom(q_i)\}$ and $R_{i,k} := \{ \bar{q}
  \in P \mid k \in \ran(q_i)\}$, $i \leq n$ and $k \in \N$, are dense
  in $\poset_{G,n}$.
\end{corollary}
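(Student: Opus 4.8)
The plan is to derive this corollary directly from the Domain Extension Lemma and Range Extension Lemma just established, so essentially no new work is needed. Fix $i \le n$ and $k \in \N$; I will show $D_{i,k}$ is dense, the argument for $R_{i,k}$ being symmetric. So let $\bar{p} = p_0, \ldots, p_n$ be an arbitrary condition in $\poset_{G,n}$; the goal is to find $\bar{q} \leq \bar{p}$ with $k \in \dom(q_i)$.

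First I would dispose of the trivial case $k \in \dom(p_i)$, where $\bar{q} = \bar{p}$ works: a condition is always a good extension of itself with respect to every word, since then the hypothesis ``$w(\bar{p})(l) = l$ and $w(\bar{p})(l)$ undefined'' in the definition of good extension is never met. So assume $k \notin \dom(p_i)$. Let $\mathcal{W}$ be the finite set of words consisting of $w_0, \ldots, w_{\card{\bar{p}}}$ together with all their subwords. For each individual $w \in \mathcal{W}$, the Domain Extension Lemma says that for all but finitely many $b \in \N$ the sequence $p_0, \ldots, p_i \cup \{(k,b)\}, \ldots, p_n$ is a good extension of $\bar{p}$ with respect to $w$. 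Since $\mathcal{W}$ is finite, I can choose a single $b$ that works simultaneously for every $w \in \mathcal{W}$ and that moreover avoids $\ran(p_i)$ (only finitely many further values are excluded), so that $q_i := p_i \cup \{(k,b)\}$ is still injective. Setting $q_j := p_j$ for $j \ne i$, the sequence $\bar{q} = \langle q_0, \ldots, q_n \rangle$ lies in $P$, has $k \in \dom(q_i)$, and is a good extension of $\bar{p}$ with respect to all of $w_0, \ldots, w_{\card{\bar{p}}}$ and their subwords; hence $\bar{q} \leq \bar{p}$ and $\bar{q} \in D_{i,k}$, proving density of $D_{i,k}$.

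For $R_{i,k}$ I would run the identical argument with the Range Extension Lemma in place of the Domain Extension Lemma: assuming $k \notin \ran(p_i)$, all but finitely many $a \in \N \setminus \dom(p_i)$ make $p_0, \ldots, p_i \cup \{(a,k)\}, \ldots, p_n$ a good extension of $\bar{p}$ with respect to each word in $\mathcal{W}$, and any such $a$ yields the required $\bar{q} \leq \bar{p}$ in $R_{i,k}$.

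I do not expect any genuine obstacle here; all the substance — the existence of good extensions adding a prescribed domain or range value for a single word — is already contained in the two preceding lemmas. The only points to watch are bookkeeping ones: that the exceptional sets stay finite when one ranges over the finitely many words and subwords relevant to a given condition, that the new pair keeps the relevant component injective, and that the definition of $\leq$ in $\poset_{G,n}$ only demands a good extension with respect to the words indexed by $\card{\bar{p}}$ (not by the possibly larger $\card{\bar{q}}$), which is exactly what the construction delivers.
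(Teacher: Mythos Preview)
Your proposal is correct and is essentially the approach the paper intends: the paper gives no separate proof of this corollary, treating it as an immediate consequence of the Domain and Range Extension Lemmas, and your argument is exactly the straightforward unpacking of that implication. The bookkeeping points you flag (finiteness of $\mathcal{W}$, injectivity, and that $\leq$ only refers to the words indexed by $\card{\bar{p}}$) are precisely the right things to check.
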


\label{same reasoning used}
From this corollary it follows that we can add $n$ permutations $\bar{f}$
at a time to a countable cofinitary group $G$ in such a way that $\langle G,
\bar{f} \rangle$ is cofinitary and the group generated by $\bar{f}$ is
free.

Enumerate $\{ D_{i,k} \mid i \leq n \text{ and } k \in \N \} \cup
\{R_{i,k} \mid i \leq n \text{ and } k \in \N \}$ by $\langle D_s \mid
s \in \N \rangle$.  Then let $\bar{p}_0 = \emptyset$, and recursively
choose $\bar{p}_{s+1} \leq \bar{p}_s$ such that $\bar{p}_{s+1} \in D_s$.

Clearly this gives that all $f_i := \bigcup_{s \in \N} (\bar{p}_s)_i$,
where $(\bar{p}_s)_i$ is the $i$-th component of $\bar{p}_s$, are
bijections.  It remains to verify that the group $\langle G, \bar{f}
\rangle$ is cofinitary.  For this it is sufficient to show that for
any $w \in W_{G,n}$ the permutation $w(\bar{f})$ is cofinitary.  In
fact it will only have finitely many fixed points.

Let $w \in W_{G,n}$; so $w = w_k$ for some $k \in \N$.
$\card{\bar{p}_k} \geq k$ so that for $s \geq k$, $\bar{p}_{s+1}$ is a
good extension of $\bar{p}_s$ with respect to $w_k$ and its subwords.
Suppose, towards a contradiction, that $w(\bar{f})$ has infinitely
many fixed points.  Let $w'$ be the shortest subword of $w$ such that
$w'(\bar{f})$ has infinitely many fixed points.

However, $w'$ being a subword of $w$, from stage $k$ on we only take
good extensions with respect to this word, and $w'(\bar{p}_k)$ has
only finitely many fixed points (as it is a finite permutation).  So
for every stage $s > k$, when $w'(\bar{p}_s)$ has a fixed point that
$w'(\bar{p}_{s-1})$ does not have, we find a subword $w''$ of $w'$ and
a fixed point for $w''(\bar{p}_{s-1})$ (since $\bar{p}_s$ is a good
extension).  Every time we find a different fixed point for some
subword $w''$ of $w'$.  Since there are only finitely many subwords
$w''$, one of them has infinitely many fixed points, contradicting the
assumption that $w'$ was the shortest subword with infinitely many
fixed points.


\subsubsection*{Finitely Generated Groups}

Here we'll show how to add an isomorphic copy of any
finitely generated group to any countable cofinitary group.

So let $H$ be a finitely generated group, with generators $h_0,
\ldots, h_n$.  Let $x_0, \ldots, x_n$ be variables, where obviously
the intention is that $x_i$ will represent $h_i$, and we write
$\bar{x}$ for the sequence of $x$'s.  Let $W_{H,\Id}$ be the set of
words $w(\bar{x}) \in F(\bar{x})$ such that $w(\bar{h}) := w(h_0,
\ldots, h_n) = \Id$, i.e. the set of words representing the identity
(which is a normal subgroup of $F(\bar{x})$).  It follows that $H$ is
isomorphic to $F(\bar{x}) / W_{H,\Id}$.

To make $H$ act cofinitarily we plan to construct 
permutations of $\N$ $f_0, \ldots, f_n$ such that
\begin{itemize}
  \item for all $w \in W_{H,\Id}$, $w(\bar{f}) = \Id$, and
  \item for all $w \in F(\bar{x}) \setminus W_{H, \Id}$, $w(\bar{f})$
    has finitely many fixed points.
\end{itemize}
If we do this we clearly get the following theorem:

\begin{theorem}
  Any finitely generated group has a cofinitary action.
\end{theorem}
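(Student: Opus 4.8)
The plan is to carry out the construction announced just above the statement by exhibiting $f_0,\dots,f_n$ outright, via the left regular representation, rather than by a finite‑approximation recursion. Since $H=\langle h_0,\dots,h_n\rangle$ is finitely generated it is countable, so $H\times\N$ is a countably infinite set; fix a bijection $\iota:\N\to H\times\N$. For $g\in H$ let $\lambda_g:H\times\N\to H\times\N$ be left translation in the first coordinate, $\lambda_g(a,k)=(ga,k)$, and set $f_i:=\iota^{-1}\circ\lambda_{h_i}\circ\iota\in\Sym(\N)$. The map $\Phi:g\mapsto\iota^{-1}\circ\lambda_g\circ\iota$ is an injective homomorphism $H\to\Sym(\N)$: it is a homomorphism because $\lambda_{gg'}=\lambda_g\circ\lambda_{g'}$, and injective because $\lambda_g$ fixes a point only when $g=\Id$. (If one prefers to avoid the spare $\N$ factor one may use $H$ itself when $H$ is infinite and treat the finite case separately, but the product version is uniform.)

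Next I would verify the two displayed conditions preceding the theorem, which together are exactly what is needed. For any reduced word $w=w(\bar x)\in F(\bar x)$ we have $w(\bar f)=\Phi(w(\bar h))$, since $\Phi$ is a homomorphism and $f_i=\Phi(h_i)$; that is, $w(\bar f)=\iota^{-1}\circ\lambda_{w(\bar h)}\circ\iota$. If $w\in W_{H,\Id}$ then $w(\bar h)=\Id$, so $\lambda_{w(\bar h)}$ is the identity and $w(\bar f)=\Id$, giving the first condition. If $w\notin W_{H,\Id}$ then $w(\bar h)\neq\Id$, and $\lambda_{w(\bar h)}$ has \emph{no} fixed point at all, since $w(\bar h)a\neq a$ for every $a\in H$; hence $w(\bar f)$ is fixed‑point free, in particular it has finitely many fixed points and is not $\Id$, giving the second condition. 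Therefore $\bar x\mapsto\bar f$ descends to an embedding $H=F(\bar x)/W_{H,\Id}\hookrightarrow\Sym(\N)$ whose image $\langle f_0,\dots,f_n\rangle$ is a cofinitary group, i.e. $H$ has a cofinitary action.

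At this level there is essentially no obstacle: the regular representation already does everything asked. The real work in the surrounding machinery — the words $W_{G,n}$, the generalized domain/range extension lemmas, and the posets $\poset_{G,n}$ — is needed only for the harder task taken up afterwards, namely installing such a copy of $H$ \emph{inside} an already‑built countable cofinitary group $G$ so that $\langle G,\bar f\rangle$ stays cofinitary. If one did want to prove the present theorem within that framework (building $f_i=\bigcup_s(\bar p_s)_i$ as an increasing union of finite injective partial functions), the delicate point would be to enlarge the approximations so as to keep the partial maps coherent with the multiplication table of $H$ — which forces one to choose values for \emph{all} the generators simultaneously along each relation of $H$ — while still introducing no fixed point for any word $w\notin W_{H,\Id}$, i.e. using a good‑extension lemma taken relative to the normal subgroup $W_{H,\Id}$. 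That is where the genuine difficulty in the universality construction lies, but it is not required for this statement.
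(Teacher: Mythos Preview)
Your proof is correct. The left regular representation of a countable group on $\N$ (via any bijection $\N\to H$, or $\N\to H\times\N$ as you do to handle the finite case uniformly) gives an embedding of $H$ into $\Sym(\N)$ in which every non-identity element acts without fixed points; this is exactly the two bullet points preceding the theorem, so $H$ has a cofinitary action.

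Your route is genuinely different from the paper's, and more elementary for this particular statement. The paper does not give a standalone proof of this theorem; it states it as the outcome of the construction plan and then carries out a much heavier construction (the poset $\poset_{G,H}$, the notion of $(G,H)$-good extension, and Lemma~\ref{lem:fgdense}) whose real purpose is the stronger result that follows: given an \emph{arbitrary} countable cofinitary group $G$, one can adjoin $\bar f$ with $\langle\bar f\rangle\cong H$ so that $\langle G,\bar f\rangle$ remains cofinitary. Your regular-representation argument cannot touch that stronger statement, since it offers no control over how the copy of $H$ interacts with a pre-existing $G$; that is precisely where the good-extension machinery earns its keep, as you correctly observe in your final paragraph. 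It is worth noting that the paper itself acknowledges elsewhere (in the introduction) that ``any countable group has an obvious cofinitary action, its translation action on itself,'' so the author is aware of the shortcut you take --- the point of this subsection is the embedding-into-$G$ refinement, not the bare existence of a cofinitary action.
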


But we want to start with $G$ a countable cofinitary group and add
$\bar{f}$ such that
\begin{itemize}
  \item the group generated by $\bar{f}$ is isomorphic to $H$,
  \item $\langle G, \bar{f} \rangle$ is cofinitary.
\end{itemize}

So let $G$ be a countable cofinitary group, and set $W_{G,H} := G *
(F(\bar{x})/W_{H,\Id})$.  This is a countable set, so we can enumerate
it by $\langle w_n \mid n \in \N \rangle$.

For sequences $\bar{p}$ and $\bar{q}$ of length $n+1$ of finite
injective functions $\N \rightarrow \N$,   we call $\bar{q}$ a
$(G,H)$-good extension of $\bar{p}$ if for all $i \leq n$ we have
$p_i \subseteq q_i$ and for all $i \leq \Sigma_j \card{p_j}$, all $l \in
\N$ and all subwords $w$ of $w_i$, if $w(\bar{q})(l) = l$ then there
are subwords $u_1, u_2 ,z$ of $w$, $m \in \N$ and $z' \in F(\bar{x})$
such that
\begin{itemize}
  \item $w = u_1^{-1} z u_2$ without cancellation,
  \item $u_1(\bar{h}) = u_2(\bar{h})$,
  \item $z(\bar{h}) = z'(\bar{h})$,
  \item $z'(\bar{p})(m) = m$, and
  \item $u_2(\bar{q})(l) = m$.
\end{itemize}
Note that for $w_1, w_2 \in W_{G,n}$, if $w_1(\bar{h}) = w_2(\bar{h})$
then these words represent the same group element of $W_{G,H}$.  Also
note that the notion of $(G,H)$-good extension depends on which
generators for $H$ we have chosen and which enumeration of $W_{G,H}$.
But this dependence will never matter to us.  Lastly, if $\bar{q}$ is
a good extension of $\bar{p}$ with respect to all words $w_i$, $i \leq
\card{\bar{p}}$, and all their subwords, then $\bar{q}$ is a
$(G,H)$-good extension of $\bar{p}$.

We define a poset $\poset_{G,H} = \langle P, {\leq} \rangle$
associated with the groups $G$ and $H$.  The elements of $P$ are
$\bar{p} = (p_0,\ldots, p_n)$, finite sequences of length $n+1$ of finite
injective partial maps $\N \rightarrow \N$, such that for all $w \in
W_{H,\Id}$ we have $w(\bar{p}) \cong \Id$ ($w(\bar{p})$ is the
identity where defined). $\bar{q} \leq \bar{p}$ if $\bar{q}$ is a
$(G,H)$-good extension of $\bar{p}$.

Note that if $\bar{q}$ is a good extension of $\bar{p}$ with respect
to all words $w_i$, $i \leq \card{\bar{p}}$, and their subwords, then
not necessarily $\bar{q} \in P$.

If $\bar{p} \in P$ we say $\bar{q}$, a finite sequence of length $n+1$
of injective partial maps $\N \rightarrow \N$, is obtained from
$\bar{p}$ by \emph{applying relations} if
\begin{equation*}
  (a,b) \in q_i
    \quad \Leftrightarrow \quad
      \exists w' [x_i w' \in W_{H, \Id} \wedge w'(\bar{p})(b) = a].
\end{equation*}

\begin{lemma}
  \label{lemma:applying relations}\label{lem:applying relations}
  If $\bar{q}$ is obtained from $\bar{p}$ by applying relations, then
{
\renewcommand{\theenumi}{(\roman{enumi})}
  \begin{enumerate}
  \item \label{applRel:extension} for all $i \leq n$, $p_i \subseteq
    q_i$;
  \item \label{applRelApplRel} applying relations to $\bar{q}$ doesn't
    add anything;
  \item \label{inPoset} $\bar{q} \in P$;
  \item \label{ExtensionIsGood} $\bar{q} \leq \bar{p}$
  \end{enumerate}
}
\end{lemma}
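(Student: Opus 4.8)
The plan is to prove the four clauses in order, with essentially all of the content packed into one \emph{path-replacement} observation. Throughout I take $\bar p\in P$, which is implicit in the statement and is needed for (iii) and (iv), and I assume, as one may, that no $h_i=\Id$. Clause (i) is immediate: given $(a,b)\in p_i$, the word $w'=x_i^{-1}$ satisfies $x_iw'\in W_{H,\Id}$ (it reduces to the empty word) and $w'(\bar p)(b)=p_i^{-1}(b)=a$, so $(a,b)\in q_i$. The key step I would isolate first is: \emph{if $w$ is a word over $G\cup\{x_j^{\pm1}\}$ and $w(\bar q)(l)=l'$ is defined, then there is a word $\tilde w$ defining the same element of $W_{G,H}$ as $w$ — obtained from $w$ by leaving $G$-letters untouched and replacing each $\bar x$-letter evaluated through a pair of $q_i\setminus p_i$ by a word over $\bar x$ with the same value in $H$ — with $\tilde w(\bar p)(l)=l'$.} This is proved by following the evaluation path $l=y_0,\dots,y_N=l'$ of $w(\bar q)$ step by step: whenever a step uses a new pair $(a,b)\in q_i\setminus p_i$, the definition of applying relations supplies a witness $v\in F(\bar x)$ with $x_iv\in W_{H,\Id}$ (equivalently $v=_H x_i^{-1}$) and $v(\bar p)(b)=a$, so that step reroutes through $\bar p$ upon replacing $x_i$ by $v^{-1}$ (or $x_i^{-1}$ by $v$) at no change to the $H$-value; concatenating these replacements gives $\tilde w$, and $\tilde w(\bar p)(l)=l'$ survives reduction of $\tilde w$ because reduction only enlarges the domain of the associated partial injection.

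From this, (ii) is quick: if $\bar q'$ is obtained from $\bar q$ by applying relations, then $q_i\subseteq q_i'$ by (i), and if $(a,b)\in q_i'$ with witness $w'\in F(\bar x)$ (so $x_iw'\in W_{H,\Id}$ and $w'(\bar q)(b)=a$), path-replacement produces $\tilde w'\in F(\bar x)$ with $\tilde w'=_Hw'$ and $\tilde w'(\bar p)(b)=a$, whence $x_i\tilde w'\in W_{H,\Id}$ and $(a,b)\in q_i$. For (iii): each $q_i$ is finite because, reading off the first and last letters applied in a witness $v$, both coordinates of any pair of $q_i$ lie in the fixed finite set $\bigcup_j(\dom p_j\cup\ran p_j)$ (the empty-witness case occurs only if $h_i=\Id$); $q_i$ is an injective partial map because, given witnesses $v,v'$ for $(a,b),(a',b)$ (resp.\ for $(a,b),(a,b')$), the word $v(v')^{-1}$ (resp.\ $v^{-1}v'$) lies in $W_{H,\Id}$ by normality of $W_{H,\Id}$ in $F(\bar x)$ and its partial injection takes $a'$ to $a$ (resp.\ $b$ to $b'$) and is defined there, so $\bar p\in P$ forces $a=a'$ (resp.\ $b=b'$); and $\bar q$ respects each $w\in W_{H,\Id}$ because, if $w(\bar q)(l)=l'$, path-replacement yields $\tilde w\in F(\bar x)\cap W_{H,\Id}$ with $\tilde w(\bar p)(l)=l'$, so $l'=l$ as $\bar p\in P$. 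Hence $\bar q\in P$.

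Clause (iv) — that $\bar q$ is a $(G,H)$-good extension of $\bar p$ — is where the real work lies. The condition $p_i\subseteq q_i$ is (i). Given a subword $w$ of one of the enumerated words with $w(\bar q)(l)=l$ while $w(\bar p)(l)$ is undefined, one needs subwords $u_1,u_2,z$ of $w$, an $m$, and $z'\in F(\bar x)$ with $w=u_1^{-1}zu_2$ (no cancellation), $u_1=_Hu_2$, $z=_Hz'$, $z'(\bar p)(m)=m$, and $u_2(\bar q)(l)=m$. When $w$ is a word over $\bar x$ alone this falls straight out of path-replacement: take $u_1=u_2$ empty, $z=w$, $m=l$, and $z'=\tilde w$, which lies in $F(\bar x)$, satisfies $z=_Hw=_Hz'$ and $z'(\bar p)(l)=l$, while $u_2(\bar q)(l)=l=m$. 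When $w$ contains $G$-letters I would walk along the evaluation path of $w(\bar q)$ on $l$, locate the maximal $\bar x$-block inside which the return to $l$ is not already forced by $\bar p$, peel the letters flanking that block into a prefix $u_1^{-1}$ and a suffix $u_2$ (using that a $G$-letter can never be absorbed into the $H$-part, so that block survives as a genuine subword $z$ and the flanking pieces satisfy $u_1=_Hu_2$), and apply path-replacement to the corresponding segment of the evaluation path to produce $z'$ and $m$.

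\textbf{The main obstacle is clause (iv) in the presence of $G$-letters}: producing the decomposition $w=u_1^{-1}zu_2$ with the no-cancellation and $u_1=_Hu_2$ conditions requires careful bookkeeping along the evaluation path about which maximal $\bar x$-block carries the unforced fixed point and about the $H$-equalities among the flanking pieces. Once the path-replacement observation is in hand, (i)–(iii) and the $\bar x$-word case of (iv) are routine.
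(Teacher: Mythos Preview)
Your approach is the paper's: isolate the path-replacement observation and read (i)--(iv) off from it. The paper's proof is four one-liners, each of which is precisely path-replacement applied to the relevant word --- for (ii) to the witness word for a new pair of $q_i$, for (iii) to a word in $W_{H,\Id}$, for (iv) to a word in $W_{G,H}$ --- so your write-up is an expansion of the same argument, filling in details the paper leaves implicit (injectivity of each $q_i$, the finiteness bound via $\bigcup_j(\dom p_j\cup\ran p_j)$).

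The one place you go beyond the paper is (iv) in the presence of $G$-letters. The paper's entire proof of (iv) is the sentence: if $w(\bar q)(n)$ is defined then there is $w'\in W_{G,H}$ with $w'(\bar h)=w(\bar h)$ and $w'(\bar p)(n)$ defined with the same value. That is path-replacement and nothing more; no decomposition $w=u_1^{-1}zu_2$ is extracted. Read against the stated definition this amounts to the trivial decomposition $u_1=u_2=\emptyset$, $z=w$, $m=l$, $z'=w'$, which meets every clause except that $z'$ is only placed in $W_{G,H}$ rather than in $F(\bar x)$. So either the paper is silently reading its own definition with $z'\in W_{G,H}$, or it is simply terse here. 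You take $z'\in F(\bar x)$ at face value and are therefore forced into the extra $\bar x$-block-location step you sketch; that is genuinely more work, and the paper does not do it.
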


\begin{proof}
  \ref{applRel:extension} is immediate since $x_i x_i^{-1} \in
  W_{H,\Id}$.
  
  For \ref{applRelApplRel}: if $x_j w \in W_{H,\Id}$ and $w(\bar{q})(b)
  = a$, then there exists a $w'$ such that $(x_j w)^{-1} x_j w' \in
  W_{H,\Id}$ and $w'(\bar{p})(b)=a$.
  
  For \ref{inPoset}: if $w \in W_{H,\Id}$ is such that $w(\bar{q})
  \ncong \Id$, then there is a word $w'$ such that $w^{-1}w' \in
  W_{H,\Id}$ such that $w'(\bar{p}) \ncong \Id$.  Also $\bar{q}$ is
  finite since any map in this sequence only has pairs $(a,b)$ in it
  where $a,b \in \bigcup_{i \leq n} \left( \dom(p_i) \cup \ran(p_i)
  \right)$.
  
  For \ref{ExtensionIsGood}: if $w \in W_{G,H}$ is such
  that $w(\bar{q})(n)$ is defined, then there is a word $w' \in
  W_{G,H}$ such that $w'(\bar{h}) = w(\bar{h})$ and $w'(\bar{p})(n)$
  is defined and has the same value.
\end{proof}

Now we define for $i \leq n$ and $k \in \N$ the sets $D_{i,k} := \{
\bar{p} \in P \mid k \in \dom(p_i) \}$ and $R_{i,k} := \{ \bar{p} \in
P \mid k \in \ran(p_i) \}$.

\begin{lemma}
  \label{lem:fgdense}
  For all $i \leq n$ and $k \in \N$, the sets $D_{i,k}$ and $R_{i,k}$
  are dense.
\end{lemma}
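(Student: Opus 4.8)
The plan is to prove density of $D_{i,k}$; density of $R_{i,k}$ is entirely symmetric (using the Range Extension Lemma above in place of the Domain Extension Lemma, and adjoining a pair whose \emph{second} coordinate is $k$ rather than one whose first coordinate is $k$). So fix $\bar{p} \in P$, $i \leq n$ and $k \in \N$; I must produce $\bar{q} \leq \bar{p}$ with $\bar{q} \in P$ and $k \in \dom(q_i)$. First I would replace $\bar{p}$ by the sequence obtained from it by applying relations: by Lemma~\ref{lem:applying relations} this sequence is still in $P$, lies $\leq \bar{p}$, and is closed under applying relations, so by transitivity of the order on $\poset_{G,H}$ it suffices to work below it. Thus assume $\bar{p}$ is relation-closed. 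If $k \in \dom(p_i)$ we are done; otherwise there is no word $w$ with $w(\bar{h}) = h_i$ such that $w(\bar{p})(k)$ is defined: such a word would, via $w' = w^{-1}$ (so that $x_i w' \in W_{H,\Id}$), force the pair $(k, w(\bar{p})(k))$ into $p_i$ when relations are applied, contradicting relation-closedness together with $k \notin \dom(p_i)$.

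The heart of the argument is the following. Choose $b \in \N$ with $b \notin \{k\} \cup \bigcup_{j \leq n}(\dom(p_j) \cup \ran(p_j))$, and let $\bar{p}^{+}$ be $\bar{p}$ with the pair $(k,b)$ adjoined to its $i$-th coordinate. I claim $\bar{p}^{+} \in P$, i.e. $w(\bar{p}^{+}) \cong \Id$ for every $w \in W_{H,\Id}$. Since $b$ occurs in no coordinate of $\bar{p}$, the pair $(k,b)$ is the only pair of $\bar{p}^{+}$ meeting $b$; hence in the evaluation path of any $l$ under $w(\bar{p}^{+})$ the number $b$ can be left only by the step $b \mapsto k$ (an application of $x_i^{-1}$) and reached only by the step $k \mapsto b$ (an application of $x_i$). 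As $w$ is reduced, the patterns $x_i x_i^{-1}$ and $x_i^{-1} x_i$ cannot occur consecutively in it, so the evaluation path can meet $b$ only as its first or last point. If it never meets $b$ then $w(\bar{p}^{+})(l) = w(\bar{p})(l) = l$ since $\bar{p} \in P$; otherwise a short induction on the length of $w$ (peel off the outermost occurrence of $x_i^{\pm 1}$ and apply $\bar{p} \in P$ to the inner word) gives $l = b = w(\bar{p}^{+})(l)$. Injectivity of the new $i$-th coordinate is immediate from $k \notin \dom(p_i)$ and $b \notin \ran(p_i)$. So $\bar{p}^{+} \in P$, and $k \in \dom(p_i^{+})$.

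It remains to arrange $\bar{p}^{+} \leq \bar{p}$ and then close up. By the remark recorded just before Lemma~\ref{lem:applying relations}, for $\bar{p}^{+}$ to be a $(G,H)$-good extension of $\bar{p}$ it is enough that it is an ordinary good extension of $\bar{p}$ with respect to the finitely many words $w_i$ ($i \leq \card{\bar{p}}$) and their subwords; and that holds for all but finitely many $b$ by the Domain Extension Lemma above. Shrinking the permissible set of $b$ by those finitely many further values, fix such a $b$. Finally let $\bar{q}$ be the sequence obtained from $\bar{p}^{+}$ by applying relations; Lemma~\ref{lem:applying relations} now applies, since $\bar{p}^{+} \in P$, and gives $\bar{q} \in P$ and $\bar{q} \leq \bar{p}^{+}$, whence $\bar{q} \leq \bar{p}$ by transitivity, while $k \in \dom(p_i^{+}) \subseteq \dom(q_i)$. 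This $\bar{q}$ witnesses density of $D_{i,k}$, and $R_{i,k}$ is handled symmetrically.

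I expect the main obstacle to be the verification that $\bar{p}^{+} \in P$: one has to be confident that the freshness of $b$ really does prevent every relator of $H$ from being violated and keeps all coordinates injective, which is the word-combinatorial core of the proof. Everything else is bookkeeping on top of the Domain and Range Extension Lemmas, Lemma~\ref{lem:applying relations}, and the (routine) transitivity of the order on $\poset_{G,H}$.
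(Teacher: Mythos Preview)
Your approach matches the paper's: close under relations, adjoin a fresh pair $(k,b)$ to the $i$-th coordinate, and argue the result remains in $P$ by tracking where $b$ can lie in an evaluation path of a relator. There is one expository gap in your verification that $\bar{p}^{+}\in P$: the ``short induction'' you sketch (peel off the outermost $x_i^{\pm 1}$ and apply $\bar{p}\in P$ to the inner word) only treats the case where $b$ occurs at \emph{both} endpoints of the path, since only then does the inner word again lie in $W_{H,\Id}$. When $b$ occurs at exactly one endpoint the inner word represents $h_i$ rather than the identity, so ``$\bar{p}\in P$'' says nothing about it; that case is instead ruled out by the observation you already made in your first paragraph (no word $w$ with $w(\bar h)=h_i$ has $w(\bar p)(k)$ defined, by relation-closedness), and this is exactly how the paper closes the argument. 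The final application of relations to $\bar{p}^{+}$ is harmless but unnecessary: $\bar{p}^{+}$ itself already lies in $P\cap D_{i,k}$ and below $\bar{p}$.
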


\begin{proof}
  The proofs for $D_{i,k}$ and $R_{i,k}$ are basically the same, so
  we'll only show that $D_{i,k}$ is dense for some $i \leq n$ and $k
  \in \N$.  (Also that $R_{i,k}$ is dense follows from $D_{i,k}$ being
  dense for any group and set of generators; just consider the group
  generated by the inverses of the generators.)

  So let $i \leq n$, $k \in \N$ and $\bar{p} \in P$.  We need to find
  $\bar{r} \in P$ s.t. $\bar{r} \leq \bar{p}$ and $\bar{r} \in
  D_{i,k}$.
  
  First apply relations to $\bar{p}$ to get $\bar{q}$.  If $\bar{q}
  \in D_{i,k}$ we are done by \ref{inPoset} and \ref{ExtensionIsGood}
  of the previous lemma, so suppose $\bar{q}$ is not in $D_{i,k}$.
  
  Now by the domain extension lemma, for all but finitely many $l \in
  \N$, $(q_0, \ldots, q_i \cup \{(k,l)\}, \ldots, q_n)$ is a good
  extension of $\bar{q}$ with respect to all words $w_i$, $i \leq
  \card{\bar{p}}$, and their subwords.  Choose such $l$ such that $l >
  \max\{ \{k\} \cup \bigcup_{i \leq n} \left(\dom(q_i) \cup
    \ran(q_i)\right)\}$.
  
  Let $\bar{r} = (q_0, \ldots, q_i \cup \{(k,l)\}, \ldots, q_n)$ for
  this $l$.  If $\bar{r} \in P$ then clearly $\bar{r} \leq \bar{p}$
  since a good extension with respect to all words $\{ w_i \mid i \leq
  \card{\bar{p}}\}$ and their subwords is a $(G,H)$-good extension.
  So, towards a contradiction, suppose that there is a $w \in
  W_{H,\Id}$ such that $w(\bar{r}) \ncong \Id$ and let $w$ be the
  shortest such word.  Since $w(\bar{q}) \cong \Id$ (since $\bar{q}
  \in P$ by \ref{inPoset} of the previous lemma), this new computation
  $w(\bar{r})(a) \neq a$ uses the pair $(k,l)$.  Since $l > \max\{
  \bigcup_{i \leq n} \left(\dom(q_i) \cup \ran(q_i)\right)\}$ this
  pair has to be used at the beginning or the end; if it is used in a
  location in the middle then it needs to be used again immediately
  (in the opposite direction).  This would mean $w$ has a subword
  $x_i^{-1} x_i$ or $x_i x_i^{-1}$ contradicting its minimality.  If
  $(k,l)$ is used at the beginning and the end then $w = x_i w' x_i^{-1}$,
  $l = a$ and $w(\bar{r})(a) = a$ contrary to the assumption that $a$
  is not a fixed point.  So $(k,l)$ is used only once either at the
  beginning of the word or at the end of the word.  This however
  contradicts $\bar{q}$ having been obtained by applying relations, if
  there is a word $w$ such that $w = w' x_i$ and $w'(\bar{q})(k)$ is
  defined, then $(k, w'(\bar{q})(k)) \in q_i$ (for exact
  correspondence with the definition of applying relations consider
  $x_i (w')^{-1}$), showing $k$ was already in the domain of $q_i$
  which means $\bar{q} \in D_{i,k}$.
\end{proof}

From this lemma we get the following theorem by reasoning similar to
that at the end of the previous subsection (page
\pageref{same reasoning used}).

\begin{theorem}
  For $G$ a countable cofinitary group and $H$ a finitely generated
  group, there exist $\bar{f}$ such that the group generated by
  $\bar{f}$ is isomorphic to $H$ and $\langle G, \bar{f} \rangle$ is
  cofinitary.
\end{theorem}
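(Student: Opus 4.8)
The plan is to combine Lemma~\ref{lem:fgdense} with the standard recursion-of-length-$\omega$ argument that was already used twice in this subsection (for $\poset_{G,n}$ and at the place marked on page~\pageref{same reasoning used}). First I would enumerate the countably many dense sets $\{D_{i,k}, R_{i,k} \mid i \leq n,\ k \in \N\}$ from Lemma~\ref{lem:fgdense} as $\langle D_s \mid s \in \N\rangle$, set $\bar{p}_0 := \bar\emptyset$ (the sequence of $n+1$ empty functions, which lies in $P$ since every $w \in W_{H,\Id}$ evaluated at the empty sequence is vacuously the identity), and recursively pick $\bar{p}_{s+1} \leq_{\poset_{G,H}} \bar{p}_s$ with $\bar{p}_{s+1} \in D_s$; this is possible by density. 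Then put $f_i := \bigcup_{s\in\N} (\bar{p}_s)_i$ and $\bar{f} = (f_0,\dots,f_n)$.

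Next I would verify the two required properties of $\bar{f}$. Because we met every $D_{i,k}$ and $R_{i,k}$, each $f_i$ is a total surjective injective map, i.e.\ $f_i \in \Sym(\N)$. To see $\langle G,\bar{f}\rangle$ is cofinitary, take any word $w \in W_{G,H}$; it is $w_k$ for some $k$, and since $\card{\bar{p}_s} \to \infty$ we have, from some stage $k$ on, that $\bar{p}_{s+1}$ is a $(G,H)$-good extension of $\bar{p}_s$ with respect to $w_k$ and all its subwords. Now suppose $w(\bar{f})$ has infinitely many fixed points; let $w'$ be a shortest subword of $w$ with this property. Each new fixed point of $w'(\bar{p}_{s})$ not present in $w'(\bar{p}_{s-1})$ forces, by the $(G,H)$-good extension condition, a fixed point of $z'(\bar{p}_{s-1})$ for some $z' \in F(\bar x)$ with $z'(\bar h) = z(\bar h)$ and $z$ a proper subword of $w'$ (or, in the degenerate case $z=w'$, a fixed point already present earlier, which is impossible for $s$ large). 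The new fixed points are pairwise distinct along the evaluation path, so one of the finitely many subwords $z$ of $w'$ must have $z(\bar f)$ with infinitely many fixed points, contradicting minimality of $w'$. This is exactly the argument already spelled out in the paragraph following the Domain and Range Extension Corollary, now with the $(G,H)$-good extension notion replacing the plain one; the only extra bookkeeping is tracking the passage from $z$ to the $z'$ representing the same element of $W_{G,H}$.

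Finally I would check that the group generated by $\bar{f}$ is isomorphic to $H$ rather than a proper quotient. Since every $\bar{p}_s \in P$, by definition $w(\bar{p}_s) \cong \Id$ for all $w \in W_{H,\Id}$, and taking unions gives $w(\bar{f}) = \Id$ for all $w \in W_{H,\Id}$; hence the map $h_i \mapsto f_i$ extends to a surjective homomorphism $H \to \langle \bar f\rangle$. Conversely, if $w \in F(\bar x) \setminus W_{H,\Id}$, then $w$ represents a nonidentity element of $W_{G,H}$, so by the cofinitariness argument just given $w(\bar{f})$ has only finitely many fixed points and in particular $w(\bar f) \neq \Id$ (a nonidentity cofinitary permutation is not the identity). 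Thus the homomorphism is injective, so $\langle \bar f\rangle \cong H$, and together with the previous paragraph this gives the theorem.

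The routine-but-delicate point, and the one I would expect to be the main obstacle, is the cofinitariness argument: one has to be careful that when a fixed point of $w'(\bar p_s)$ is ``inherited'' from a fixed point of a subword, the relevant subword $z$ of $w'$ really is \emph{proper} (so that minimality of $w'$ applies) except in the case where $z(\bar h) = w'(\bar h)$ as group elements, in which case one instead uses that $w'(\bar p_k)$ is a finite — hence eventually stable — partial permutation, so no genuinely new fixed point can appear after stage $k$. Handling the bookkeeping of the representatives $z'$ versus the subwords $z$, and the distinctness of the induced fixed points, is where the care is needed; everything else is a verbatim repeat of the earlier recursion.
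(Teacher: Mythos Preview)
Your proposal is correct and follows exactly the approach the paper intends: the paper's entire proof is the single sentence ``From this lemma we get the following theorem by reasoning similar to that at the end of the previous subsection (page~\pageref{same reasoning used}),'' and you have faithfully reconstructed that reasoning---building a descending sequence in $\poset_{G,H}$ through the dense sets of Lemma~\ref{lem:fgdense}, then running the shortest-subword minimality argument for cofinitariness. You have in fact supplied more than the paper does, namely the explicit verification that $\langle\bar f\rangle \cong H$ (relations hold because every $\bar p_s \in P$; no extra relations because nonidentity words are cofinitary), and you have correctly flagged the one genuinely delicate spot: tracking the passage from the subword $z$ to the representative $z' \in F(\bar x)$ with $z'(\bar h)=z(\bar h)$ in the $(G,H)$-good extension clause.
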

\subsubsection*{Countable Groups}

Here we'll adapt the method from the previous section to
any countable group $H$ (as opposed to only finitely generated).

So let $H$ be a countable group, with generators $\vec{h} := \langle
h_n \mid n \in \N \rangle$.  Let $\vec{x} := \langle x_n \mid n \in \N
\rangle$ be variables, where obviously the intention is that $x_n$
will represent $h_n$.  Let $W_{H,\Id}$ be the set of words $w(\vec{x})
\in F(\vec{x})$ such that $w(\vec{h}) = \Id$; i.e. the set of words
representing the identity.  With this we have as before that $H$ is
isomorphic to $F(\vec{x}) / W_{H,\Id}$.

We want as before to start with $G$ a countable cofinitary group and add
$\vec{f}$ such that
\begin{itemize}
  \item the group generated by $\vec{f}$ is isomorphic to $H$,
  \item $\langle G, \vec{f} \rangle$ is cofinitary.
\end{itemize}

So let $G$ be a countable cofinitary group, and set $W_{G,H} := G *
(F(\vec{x})/W_{H,\Id})$.  This is a countable set, so we can enumerate
it by $\langle w_n \mid n \in \N \rangle$.

If $\vec{p} := \langle p_n \mid n \in \N \rangle$ is a sequence of
finite injective functions $\N \partialmap \N$ we define its support
$\supp(\vec{p}) := \{ i \in \N \mid p_i \neq \emptyset \}$.

The notion of $(G,H)$-good extension is the same as before except that
$\vec{p}$ and $\vec{q}$ are sequence of length $\omega$ of finite
injective functions, and in stead of finite sequences $\bar{y}$ we use
infinite sequences $\vec{y}$ (where $y$ is $x$, $p$, $q$, or $h$).

The same remarks apply to this order as in the last section.

We define a poset $\poset_{G,H} = \langle P, {\leq} \rangle$
associated with the groups $G$ and $H$.  The only difference in the
definition with the poset from the last section is that its elements
are length $\omega$ sequences with finite support of finite injective
partial maps.

Now we get to the major difference with the work before.  If we apply
all relations to an element of the poset $\poset_{G,H}$, then we are
likely to get a sequence with infinite support.  To avoid this we
``localize'' the applying of relations as follows.

If $\vec{p} \in P$ and $A$ is a finite set of natural numbers, we say
$\vec{q}$, a sequence of injective partial maps $\N \rightarrow \N$,
is obtained from $\vec{p}$ by \emph{$A$-applying relations} if
\begin{equation*}
  (a,b) \in q_i
    \quad \Leftrightarrow \quad
      \left(\exists w' \left[ x_i w' \in W_{H, \Id} \wedge w'(\vec{p})(b) =
    a\right] \right) \wedge \left( p_i \neq \emptyset \text{ or } i \in A \right).
\end{equation*}

Lemma \ref{lem:applying relations} where we replace applying relations
by $A$-applying relations can be proved as before.

Now we define for $i, k \in \N$ the sets $D_{i,k} := \{ \vec{p} \in
\mathbb{P}_{G,H} \mid k \in \dom(p_i) \}$ and $R_{i,k} := \{ \vec{p}
\in \mathbb{P}_{G,H} \mid k \in \ran(p_i) \}$.

\begin{lemma}
  For all $i \leq n$ and $k \in \N$, the sets $D_{i,k}$ and $R_{i,k}$
  are dense.
\end{lemma}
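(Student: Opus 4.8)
The plan is to imitate the proof of Lemma~\ref{lem:fgdense} essentially verbatim; the one new ingredient needed for the countable setting is that ordinary ``applying relations'' must be replaced by ``$A$-applying relations'' for a suitable finite set $A$, so that the sequences stay finitely supported and hence remain elements of $\mathbb{P}_{G,H}$. As in the finitely generated case, it suffices to prove that each $D_{i,k}$ is dense: density of $R_{i,k}$ then follows by passing to the group generated by the inverses $\langle h_n^{-1}\mid n\in\N\rangle$ together with the word $w^{-1}$, exactly as before.

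So fix $i,k\in\N$ and $\vec{p}\in P$. Put $A:=\supp(\vec{p})\cup\{i\}$, a finite set, and let $\vec{q}$ be obtained from $\vec{p}$ by $A$-applying relations. By the $A$-version of Lemma~\ref{lem:applying relations} (which, as already remarked, is proved exactly like the original), we have $p_j\subseteq q_j$ for all $j$, $\vec{q}\in P$, $\vec{q}\leq\vec{p}$, $\supp(\vec{q})\subseteq A$ is finite, and $A$-applying relations to $\vec{q}$ adds nothing further. If $k\in\dom(q_i)$ we are done with $\vec{r}=\vec{q}$, so assume $k\notin\dom(q_i)$. Since only finitely many $q_j$ are nonempty we may regard $\vec{q}$ as the finite sequence indexed by $A$ and invoke the (multi-variable) Domain Extension Lemma: for all but finitely many $l\in\N$, the sequence $\vec{r}$ obtained from $\vec{q}$ by replacing $q_i$ with $q_i\cup\{(k,l)\}$ is a good extension of $\vec{q}$ with respect to every word $w_j$, $j\leq\card{\vec{q}}$, and all their subwords. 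Choose such an $l$ with $l>k$ and $l$ larger than every element of $\bigcup_j\bigl(\dom(q_j)\cup\ran(q_j)\bigr)$.

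It remains to verify $\vec{r}\in P$; granting this, $\vec{r}$ is a good extension of $\vec{q}$ with respect to all $w_j$ ($j\leq\card{\vec{q}}$) and their subwords, hence in particular a $(G,H)$-good extension, so $\vec{r}\leq\vec{q}\leq\vec{p}$ and $\vec{r}\in D_{i,k}$, as required. To see $\vec{r}\in P$, suppose not and pick a shortest $w\in W_{H,\Id}$ with $w(\vec{r})\ncong\Id$. Since $\vec{q}\in P$ we have $w(\vec{q})\cong\Id$, so any witnessing evaluation $w(\vec{r})(a)\neq a$ must use the new pair $(k,l)$. Because $l$ is fresh and larger than everything occurring in the $q_j$'s, the pair $(k,l)$ cannot be used at an interior step (the evaluation could not proceed without immediately reusing it in the opposite direction, forcing a subword $x_i^{-1}x_i$ or $x_ix_i^{-1}$ and contradicting minimality), and it cannot be used at both the first and the last step (then $w=x_iw'x_i^{-1}$ and $a=l$ would be a genuine fixed point of $w(\vec{r})$). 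Hence $(k,l)$ is used exactly once, at the beginning or the end, and then a proper subword of the form $w'x_i$ (respectively $x_i^{-1}w'$) already has $w'(\vec{q})(k)$ defined, so the definition of $A$-applying relations (applicable since $i\in A$) forces $(k,w'(\vec{q})(k))\in q_i$, contradicting $k\notin\dom(q_i)$. This is precisely the contradiction used in Lemma~\ref{lem:fgdense}. The only real obstacle, and the sole point of departure from the finitely generated argument, is keeping the sequences finitely supported while still having ``enough relations applied'' — which is exactly what $A$-applying relations accomplishes — so once the $A$-version of Lemma~\ref{lem:applying relations} is in place, the rest transfers without change.
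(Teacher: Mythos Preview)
Your argument is correct and matches the paper's approach: the paper's proof simply says to repeat the argument of Lemma~\ref{lem:fgdense}, replacing ``applying relations'' by ``$\{i\}$-applying relations,'' which is exactly what you do. Your choice $A=\supp(\vec{p})\cup\{i\}$ is harmless since the definition of $A$-applying relations already extends every coordinate $j$ with $p_j\neq\emptyset$, so your $\vec{q}$ coincides with the one obtained from $\{i\}$-applying relations.
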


\begin{proof}
  The proof is basically the same as the proof of Lemma
  \ref{lem:fgdense} except that we don't apply relations, but we
  $\{i\}$-apply relations.
\end{proof}

From this lemma we get the following theorem by reasoning similar to
previous section.

\begin{theorem}
  \label{thm:countgrp}
  For $G$ a countable cofinitary group and $H$ a countable
  group, there exist $\vec{f}$ such that the group generated by
  $\vec{f}$ is isomorphic to $H$ and $\langle G, \vec{f} \rangle$ is
  cofinitary.
\end{theorem}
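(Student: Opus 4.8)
The plan is to run the same generic-sequence construction in $\poset_{G,H}$ that was used at the end of the finitely generated case (see page \pageref{same reasoning used}), now using the density lemma just proved. First I would enumerate $\{D_{i,k} \mid i,k \in \N\} \cup \{R_{i,k} \mid i,k\in\N\}$ as $\langle D_s \mid s \in \N\rangle$, set $\vec{p}_0 := \langle \emptyset \mid n \in \N\rangle \in P$, and recursively choose $\vec{p}_{s+1} \leq \vec{p}_s$ with $\vec{p}_{s+1}\in D_s$; this is possible by the density lemma, and since $\leq$ is the order of the poset $\langle P,\leq\rangle$ every $\vec{p}_s$ lies in $P$. (One must go through the density lemma rather than take arbitrary $(G,H)$-good extensions, since a $(G,H)$-good extension of an element of $P$ need not itself lie in $P$.) Arranging the bookkeeping so that the conditions grow — so that each word $w_k$ together with its subwords is eventually respected — I set $f_i := \bigcup_{s\in\N}(\vec{p}_s)_i$ and $\vec{f} := \langle f_i \mid i\in\N\rangle$; meeting all the $D_{i,k}$ and $R_{i,k}$ makes each $f_i$ a total injection onto $\N$, i.e.\ a bijection.

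Next I would check that $h_i \mapsto f_i$ induces an isomorphism $H \cong \langle \vec{f}\rangle$. Since $H \cong F(\vec{x})/W_{H,\Id}$, it is enough to show that for $w(\vec{x})\in F(\vec{x})$ we have $w(\vec{f}) = \Id$ iff $w\in W_{H,\Id}$. If $w\in W_{H,\Id}$ then each $\vec{p}_s\in P$ gives $w(\vec{p}_s)\cong\Id$, and for any $n$ the finitely many pairs used in evaluating $w$ on $n$ lie in some $\vec{p}_s$, so $w(\vec{f})(n)=n$. If $w\notin W_{H,\Id}$ then $w$ represents a nontrivial element of $W_{G,H} = G*(F(\vec{x})/W_{H,\Id})$ (the free factor $F(\vec{x})/W_{H,\Id}$ embeds), and the cofinitariness claim below shows $w(\vec{f})$ has only finitely many fixed points, hence $w(\vec{f})\neq\Id$.

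It remains to prove $\langle G,\vec{f}\rangle$ cofinitary, i.e.\ that $w(\vec{f})$ has finitely many fixed points for every nontrivial $w\in W_{G,H}$; this is where the bulk of the work sits, but it is the same shortest-subword argument as in the finitely generated case. Writing $w = w_k$, from some stage on $\vec{p}_{s+1}$ is a $(G,H)$-good extension of $\vec{p}_s$ with respect to $w_k$ and all its subwords; if $w(\vec{f})$ had infinitely many fixed points, take a shortest subword $w'$ of $w$ with $w'(\vec{f})$ having infinitely many fixed points, observe $w'(\vec{p}_k)$ has only finitely many (being a finite map), and note that each fixed point of $w'(\vec{p}_{s+1})$ that is not already a fixed point of $w'(\vec{p}_s)$ forces, through clause \eqref{eqn:defgoodext} of $(G,H)$-goodness, a fixed point of $z'(\vec{p}_s)$ where $z$ is a \emph{proper} subword of $w'$ (the conjugating words $u_1,u_2$ must be nontrivial) and $z'(\vec{h})=z(\vec{h})$; since $\vec{f}$ extends $\vec{p}_s$ these fixed points persist, so for one of the finitely many proper subwords $z$ of $w'$ the permutation $z(\vec{f})$ has infinitely many fixed points, contradicting minimality of $w'$.

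The main obstacle, already disposed of by the preceding density lemma, is the localization needed because $H$ has infinitely many generators: one cannot apply all relations of $H$ to a condition without producing infinite support, so one $A$-applies relations for a finite $A$, and the density proof shows this still suffices to realize $D_{i,k}$ and $R_{i,k}$ densely. With density in hand, the remaining two steps above are routine adaptations of the finitely generated argument.
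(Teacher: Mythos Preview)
Your proposal is correct and takes essentially the same approach as the paper: the paper's entire proof of this theorem is the single sentence ``From this lemma we get the following theorem by reasoning similar to previous section,'' and what you have written is precisely a careful unpacking of that reasoning --- the generic-sequence construction through the dense sets $D_{i,k}$, $R_{i,k}$, the verification that $h_i\mapsto f_i$ respects $W_{H,\Id}$ because each $\vec p_s\in P$, and the shortest-subword cofinitariness argument carried over from the free case. Your remark that the localization via $A$-applying relations is the one genuinely new ingredient, already handled by the preceding density lemma, is exactly right.
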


From this we get the following theorem

\begin{theorem}
  \label{thm:mcgcountembed}
  The Continuum Hypothesis implies that there exists a maximal
  cofinitary group $G$ such that every countable group $H$ embeds into
  $G$.
\end{theorem}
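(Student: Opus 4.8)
The plan is to run the $\omega_1$-length recursion from Section~\ref{sec:MCGintro} that builds a maximal cofinitary group under $\CH$, but to interleave into it, at every step, one application of Theorem~\ref{thm:countgrp} so as to absorb a copy of each countable group. Working in a model of $\CH$, I would first fix an enumeration $\langle H_\alpha \mid \alpha < \omega_1 \rangle$ of all isomorphism types of countable groups (each realised on an underlying set contained in $\N$; there are exactly $2^{\aleph_0} = \aleph_1$ of them) and an enumeration $\langle f_\alpha \mid \alpha < \omega_1 \rangle$ of $\Sym(\N)$. Then I would construct an increasing chain $\langle G_\alpha \mid \alpha < \omega_1 \rangle$ of countable cofinitary groups with $G_0 = \{\Id\}$ and $G_\lambda = \bigcup_{\alpha < \lambda} G_\alpha$ for limit $\lambda$, noting that such a union is again a countable cofinitary group (countably many countable sets; every element lies in some $G_\alpha$ and is therefore cofinitary).

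At a successor stage $\alpha + 1$ I would do two things in succession. First, apply Theorem~\ref{thm:countgrp} with the countable cofinitary group $G_\alpha$ and the countable group $H_\alpha$ to get a sequence $\vec f$ of permutations with $\langle \vec f \rangle \cong H_\alpha$ such that $G'_\alpha := \langle G_\alpha, \vec f \rangle$ is cofinitary; since $\vec f$ is countable, $G'_\alpha$ is a countable cofinitary group. Second, handle $f_\alpha$ exactly as in the basic $\CH$ construction: if $f_\alpha \in G'_\alpha$ or $\langle G'_\alpha, f_\alpha \rangle$ is not cofinitary, set $G_{\alpha+1} := G'_\alpha$; otherwise use the domain, range, and hitting $f$ lemmas (with $f = f_\alpha$) to build, from finite good extensions with respect to the words of $W_{G'_\alpha}$, a permutation $g$ freely adjoined to $G'_\alpha$ with $\langle G'_\alpha, g \rangle$ cofinitary and $|g \cap f_\alpha| = \aleph_0$, and set $G_{\alpha+1} := \langle G'_\alpha, g \rangle$. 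In either case $G_{\alpha+1}$ is a countable cofinitary group. The group sought is $G := \bigcup_{\alpha < \omega_1} G_\alpha$.

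Finally I would verify the three properties of $G$. It is cofinitary, since every member lies in some $G_\alpha$. Every countable group $H$ embeds in $G$, because $H \cong H_\alpha$ for some $\alpha$ and by construction $H_\alpha \cong \langle \vec f \rangle \leq G'_\alpha \leq G$. And $G$ is maximal: given $f \in \Sym(\N) \setminus G$, write $f = f_\alpha$; then $f_\alpha \notin G'_\alpha$, so at stage $\alpha + 1$ either $\langle G'_\alpha, f_\alpha \rangle$ was already not cofinitary, whence neither is the larger group $\langle G, f_\alpha \rangle$, or we adjoined $g \in G$ with $|g \cap f_\alpha| = \aleph_0$, in which case $f_\alpha^{-1} g \in \langle G, f_\alpha \rangle$ fixes infinitely many points and is not the identity (as $g \neq f_\alpha$, since $f_\alpha \notin G$); either way $\langle G, f_\alpha \rangle$ is not cofinitary, so $G$ is contained in no strictly larger cofinitary group.

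The only real work is the bookkeeping: checking that performing both tasks at every step keeps every $G_\alpha$ countable (so that both Theorem~\ref{thm:countgrp} and the extension lemmas, which need countable cofinitary groups, continue to apply) while still exhausting both $\omega_1$-enumerations, and that the hitting $f$ step destroys the cofinitariness of $\langle G, f_\alpha \rangle$ without affecting that of $G$ itself. I expect no surprises here given Section~\ref{sec:MCGintro} and Theorem~\ref{thm:countgrp}; the essential use of $\CH$ is precisely that the class of countable group types and $\Sym(\N)$ both have size $\aleph_1$, so a recursion of length $\omega_1$ --- along which the groups remain countable --- is long enough.
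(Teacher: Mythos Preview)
Your proposal is correct and follows essentially the same approach as the paper: interleave at each stage one application of Theorem~\ref{thm:countgrp} (to absorb the next countable group) with one hitting-$f$ step (to handle the next permutation), building an $\omega_1$-chain of countable cofinitary groups whose union is the desired $G$. Your write-up is in fact more explicit than the paper's in verifying maximality and universality, but the strategy is identical.
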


\begin{proof}
  Enumerate all countable groups $\langle H_\alpha \mid \alpha <
  \omega_1 \rangle$ and all permutations of $\N$ $\langle h_\alpha \mid \alpha
  < \omega_1 \rangle$.  At stage $\alpha$ we have already constructed
  the group $G_\alpha$ such that for all $\gamma < \alpha$ the group
  $H_\gamma$ embeds into $G_\alpha$ and $h_\gamma \in G_\alpha$ or
  $\langle G_\alpha, h_\gamma \rangle$ is not cofinitary.  Then extend
  the group $G_\alpha$ by $H_\alpha$ (by applying Theorem
  \ref{thm:countgrp} with $H$ equal to $H_\alpha$ and $G$ equal to
  $G_\alpha$) obtaining the group $G_\alpha'$.  When $h_\alpha \not
  \in G_\alpha'$ and $\langle G_\alpha', h_\alpha \rangle$ is
  cofinitary, add a function hitting $h_\alpha$ infinitely often (this
  is done by using the domain and range extension lemmas, and the
  hitting $f$ lemma).
\end{proof}

It should be noted that this is not the first proof of Theorem
\ref{thm:countgrp}: in \cite{C} we found that Truss and Adeleke had
proved the following theorem:

\begin{theorem}
  Let $G$ and $H$ be countable cofinitary groups.  Then there is a
  permutation $f : \N \rightarrow \N$ such that $\langle G, fHf^{-1}
  \rangle$ is cofinitary.
\end{theorem}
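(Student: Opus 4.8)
The plan is to build the permutation $f = \bigcup_{s\in\N} f_s$ as an increasing union of finite injective partial functions $\N \partmap \N$, imitating the construction of a new generator in Section~\ref{sec:MCGintro}, but with the bookkeeping arranged so as to control the conjugate $fHf^{-1}$ rather than $H$ itself. The first step is to reduce to controlling words of one restricted shape. Every element of $\langle G, fHf^{-1}\rangle$ can be written $g_0\,f\,h_1\,f^{-1}\,g_1\,f\,h_2\,f^{-1}\cdots f\,h_m\,f^{-1}\,g_m$ with $g_i\in G$ and $h_i\in H$; reading $x$ for $f$, this is a word $w(x)$ in the single variable $x$ whose letters between consecutive powers of $x$ are non-identity elements of $G\cup H$. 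I would let $W_{G,H}$ denote the countable set of all words $c_0x^{k_0}c_1x^{k_1}\cdots c_l$ with $c_i\in(G\cup H)\setminus\{\Id\}$ and $k_i\in\mathbb{Z}\setminus\{0\}$; it is closed under passing to subwords, and the elements of $\langle G, fHf^{-1}\rangle$ are exactly the $w(f)$ for $w$ of the bracketed alternating form above. Using the normal form theorem for free products — which identifies the subgroup of $G * H * F(x)$ generated by $G$ together with $\{xhx^{-1}:h\in H\}$ with $G*H$ — such a $w(x)$ represents a non-trivial group element precisely when the corresponding element of $G*H$ is non-trivial. Hence it suffices to arrange that for every $w\in W_{G,H}$ the permutation $w(f)$ has finitely many fixed points or is the identity; cofinitariness of $G$ (pure $G$-words) and the fact that $fhf^{-1}$ has exactly as many fixed points as $h$ (pure conjugate words) then come out as automatic special cases.

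Next I would prove the analogues, for coefficient set $G\cup H$, of the Domain Extension and Range Extension Lemmas of Section~\ref{sec:MCGintro}: for $p$ finite injective, $w\in W_{G,H}$, and $n\notin\dom(p)$, all but finitely many choices of $k$ make $p\cup\{(n,k)\}$ a good extension of $p$ with respect to $w$, and dually for the range. The point is that Zhang's proofs of those lemmas use of the coefficients only that each \emph{individual} non-identity coefficient has finitely many fixed points — and here every coefficient lies in one of the cofinitary groups $G$, $H$ — so no closure of the coefficient set under multiplication or inversion is required and the evaluation-path arguments transcribe verbatim. The same applies to Lemma~\ref{lem:finfxpts} and its quantitative form Lemma~\ref{lem:enoughforcof}, which then hold for words in $W_{G,H}$.

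The construction itself is then the usual recursion: enumerate $W_{G,H}$ as $\langle w_n\mid n\in\N\rangle$, set $f_0:=\emptyset$, and at stage $s+1$ perform a domain step followed by a range step, each time adjoining a pair covering the least natural number missing from the current domain (resp.\ range) and choosing the remaining coordinate, via the adapted extension lemmas, so that the extension stays good with respect to $w_0,\dots,w_s$ and all their subwords. Then $f=\bigcup_s f_s$ is a bijection of $\N$, and for any $w\in W_{G,H}$, say $w=w_n$, every extension from stage $n$ onward is good with respect to $w$ and its subwords, so by Lemmas~\ref{lem:finfxpts} and~\ref{lem:enoughforcof} the permutation $w(f)$ has finitely many fixed points or is the identity. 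By the reduction of the first paragraph, $\langle G, fHf^{-1}\rangle$ is cofinitary, which is what we want.

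The step I expect to be the real obstacle is the claim in the second paragraph that the good-extension machinery of Section~\ref{sec:MCGintro} survives the replacement of the coefficient group $G$ by the (in general non-cofinitary) union $G\cup H$: one must verify line by line that no part of the evaluation-path analysis secretly uses closure of the coefficients under products, only the finiteness of the fixed-point sets of the individual letters, which $G$ and $H$ supply separately. A minor secondary point is getting the normal-form bookkeeping right, so that the exceptional case ``$w(f)$ is the identity'' is matched correctly with ``the corresponding element of $G*H$ is trivial''.
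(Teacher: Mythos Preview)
The paper does not give a proof of this statement: it is quoted as a theorem of Truss and Adeleke (attributed via \cite{C}) immediately after Theorem~\ref{thm:mcgcountembed}, as an alternative route to Theorem~\ref{thm:countgrp}. So there is no paper proof to compare your proposal against.

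Your approach is sound and is essentially the natural one. The crucial observation, which you correctly flag as the point needing care, is that the evaluation-path analysis behind the Domain and Range Extension Lemmas (and behind Lemma~\ref{lem:enoughforcof}) proceeds letter by letter and only ever needs two facts about a coefficient $c$: that $c$ is a bijection (so $c(k)=m$ has at most one solution) and that $c$ has finitely many fixed points (so $c(k)=k$ has finitely many solutions). No step composes two adjacent coefficients, because in the words you actually encounter every pair of coefficients is separated by an $x^{\pm 1}$, and the ``without cancellation'' clause in the definition of good extension refers to cancellation of $x$-letters, not to multiplication of coefficients. Hence replacing the coefficient group $G$ by the set $G\cup H$ does no harm, even though $\langle G,H\rangle$ need not be cofinitary. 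Your normal-form remark is also correct: in $G*H*F(x)$ the subgroup $\langle G, xHx^{-1}\rangle$ is canonically $G*(xHx^{-1})\cong G*H$, so the reduced alternating words you describe are exactly the nontrivial elements of $\langle G, fHf^{-1}\rangle$, and the good-extension construction actually yields the stronger conclusion that $\langle G, fHf^{-1}\rangle\cong G*H$.

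One small point of bookkeeping worth stating explicitly in a write-up: when some $g_i$ or $h_j$ is the identity, reducing the word may bring two $G$-coefficients (or two $H$-coefficients) together, but never a $G$-coefficient next to an $H$-coefficient; so after reduction the word is again of the alternating shape (possibly shorter) or a pure element of $G$, and the latter is already cofinitary. With that said, the argument goes through.
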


\subsection{Forcing Cofinitary Actions}

In this subsection we start our investigation of consistency of the
existence of cofinitary actions of groups for which we can't outright
find one.  We'll focus on the group $H := \bigoplus_{\alpha <
  \aleph_1} \mathbb{Z}_2$ (as this is one we understand already, and
the general case is not yet quite clear).

Let $\vec{e} := \langle e_\alpha \mid \alpha < \aleph_1 \rangle$ be
the standard generating set for $H$ ($e_\alpha(\alpha) = 1$ and
$e_\alpha(\beta) =0$ for all $\beta \neq \alpha$).

Let $\vec{x} := \langle x_\alpha \mid \alpha < \aleph_1 \rangle$ be a
sequence of variables, and $W_{H,\Id}$ the set of words $w(\vec{x})$
with $w(\vec{e}) = \Id$.  Then $F(\vec{x}) / W_{H,\id}$ is isomorphic
to $H$.  Also note that for this group $W_{H, \id}$ is generated by
$\{x_\alpha^2 \mid \alpha < \aleph_1 \} \cup \{ [ x_\alpha, x_\beta ]
\mid \alpha, \beta < \aleph_1\}$ (where $[x_\alpha,
x_\beta] = x_\alpha^{-1} x_\beta^{-1}x_\alpha x_\beta$, the commutator
of $x_\alpha$ and $x_\beta$).

We define a poset $\poset_{H} = \langle P, {\leq} \rangle$.  The set
$P$ consists of those sequences $\vec{p}$ of length $\aleph_1$ of
partial finite functions $N \partialmap \N$ with finite support (i.e.
$\{\alpha < \omega_1 \mid (\vec{p})_\alpha \neq \emptyset \}$ is finite)
for which $w(\vec{p})\cong \Id$ for all $w \in W_{H,\Id}$.  For $I$ a
finite subset of $\aleph_1$ we write $x_I$ for $\{x_\alpha \mid \alpha
\in I\}$.  Then we choose for any finite set $I \subseteq \aleph_1$ an
enumeration $\langle w_n^{I} \mid n \in \N \rangle$ of $F(x_I)$.  If
$\vec{p} \in P$, then $W_{\vec{p}}$ is the set of words $\bigcup_{I
  \subseteq \supp(\vec{p})} \{ w_n^{I} \mid n \leq \card{\vec{p}} \}$
and all their subwords.  If $\vec{p}, \vec{q} \in P$, then $\vec{q}
\leq \vec{p}$ iff for any word $w$ in $W_{\vec{p}}$ if $w(\vec{q})(l)
= l$ for some $l \in \N$, then there are $u_1, u_2 ,z$ subwords of $w$,
$m \in \N$ and $z' \in F(\vec{x})$ such that
\begin{itemize}
  \item $w = u_1^{-1} z u_2$ without cancellation,
  \item $u_1(\vec{e}) = u_2(\vec{e})$,
  \item $z(\vec{e}) = z'(\vec{e})$,
  \item $z'(\vec{p})(m) = m$, and
  \item $u_2(\vec{q})(l) = m$.
\end{itemize}

We first prove the following lemma.

\begin{lemma}
  $\poset_H$ is c.c.c..
\end{lemma}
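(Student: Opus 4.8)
The plan is to rule out an uncountable antichain by a $\Delta$-system argument that has been set up so that the two conditions to be amalgamated carry \emph{literally the same} data off a common root. Suppose $\{\vec{p}^{\xi}\mid\xi<\omega_{1}\}$ were an antichain in $\poset_{H}$. First I would apply the $\Delta$-system lemma to the finite sets $\supp(\vec{p}^{\xi})$: passing to an uncountable subfamily I may assume they form a $\Delta$-system with root $R$, all of the same cardinality, so that the coordinates of $\supp(\vec{p}^{\xi})\setminus R$ list in increasing order as $\alpha^{\xi}_{0}<\cdots<\alpha^{\xi}_{j-1}$ for a fixed $j$. Since a finite injective partial map $\N\to\N$ is just a finite subset of $\N\times\N$, there are only countably many possibilities for the pair consisting of $\vec{p}^{\xi}\restrict R$ and the tuple $(p^{\xi}_{\alpha^{\xi}_{0}},\dots,p^{\xi}_{\alpha^{\xi}_{j-1}})$; so by a further thinning I may assume $\vec{p}^{\xi}\restrict R=\vec{p}_{R}$ and $p^{\xi}_{\alpha^{\xi}_{i}}=q_{i}$ for all $\xi$ and all $i<j$, where $\vec{p}_{R}$ and $q_{0},\dots,q_{j-1}$ are now fixed.

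Fix $\xi\neq\eta$ in the thinned family and write $S=\{\alpha^{\xi}_{i}\mid i<j\}$, $S'=\{\alpha^{\eta}_{i}\mid i<j\}$, so $R,S,S'$ are pairwise disjoint. Define $\vec{r}$ by $r_{\gamma}=p^{\xi}_{\gamma}$ for $\gamma\in R\cup S$, $r_{\alpha^{\eta}_{i}}=q_{i}$ for each $i<j$, and $r_{\gamma}=\emptyset$ otherwise; this has finite support and $r_{\gamma}\supseteq p^{\xi}_{\gamma}$ and $r_{\gamma}\supseteq p^{\eta}_{\gamma}$ for every $\gamma$. I claim $\vec{r}$ is a common extension. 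Here the verification that $\vec{r}\leq\vec{p}^{\xi}$ is essentially free: $W_{\vec{p}^{\xi}}$ involves only variables indexed by $\supp(\vec{p}^{\xi})=R\cup S$, on which $\vec{r}$ agrees with $\vec{p}^{\xi}$, so $w(\vec{r})=w(\vec{p}^{\xi})$ as partial functions for every $w\in W_{\vec{p}^{\xi}}$; no new fixed point is ever created, so $\vec{r}$ is vacuously a good extension of $\vec{p}^{\xi}$. Symmetrically $\vec{r}\leq\vec{p}^{\eta}$.

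So the real content is that $\vec{r}\in P$, i.e. $w(\vec{r})\cong\Id$ for all $w\in W_{H,\Id}$, and this is exactly where having identical off-root tuples is used. Recall $W_{H,\Id}$ is the set of reduced words in which every variable occurs an even number of times. Let $\Psi$ be the homomorphism of words sending $x_{\alpha^{\eta}_{i}}\mapsto x_{\alpha^{\xi}_{i}}$ for each $i<j$ and fixing every other variable; since it merges the two ``copies'' $\alpha^{\xi}_{i},\alpha^{\eta}_{i}$ of each off-root index, it preserves the parity of occurrences and hence maps $W_{H,\Id}$ into itself. Because $r_{\gamma}=p^{\xi}_{\Psi(\gamma)}$ for \emph{every} coordinate $\gamma$ (one checks the three cases $\gamma\in R$, $\gamma\in S$, $\gamma\in S'$, and $\gamma$ outside all of these, where both sides are $\emptyset$), substituting $\vec{r}$ into $w$ letter by letter gives the same partial function as substituting $\vec{p}^{\xi}$ into the word obtained from $w$ by renaming coordinates via $\Psi$; allowing for the free cancellations this renaming may create one gets $w(\vec{r})\subseteq\Psi(w)_{\mathrm{red}}(\vec{p}^{\xi})$ as partial functions. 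As $\Psi(w)_{\mathrm{red}}\in W_{H,\Id}$ and $\vec{p}^{\xi}\in P$, the right-hand side is $\cong\Id$, hence so is $w(\vec{r})$ (and a word genuinely using a coordinate outside $\supp(\vec{r})$ is nowhere defined, so causes no trouble). Thus $\vec{r}\in P$ is a common extension, so $\vec{p}^{\xi}$ and $\vec{p}^{\eta}$ are compatible, a contradiction.

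The step I expect to be the main obstacle is precisely $\vec{r}\in P$: the argument only works once the $\Delta$-system and pigeonhole reductions have forced the off-root coordinates of the two conditions to carry literally the same partial maps, for it is only then that the folding homomorphism $\Psi$ exists and transports membership of $\vec{p}^{\xi}$ in $\poset_{H}$ to membership of $\vec{r}$. Trying instead to check the defining relations $x_{\alpha}^{2}$ and $[x_{\alpha},x_{\beta}]$ directly on a naive union would lead into a delicate evaluation-path analysis of how partial injections interact under those relations; the ``applying relations'' machinery of Lemma~\ref{lem:applying relations} (in its support-localized form for this poset) gives an alternative route to the same point, but the folding argument above keeps the bookkeeping minimal.
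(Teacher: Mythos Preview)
Your proof is correct and follows essentially the same route as the paper: thin by a $\Delta$-system on supports, then by pigeonhole force the off-root tuples to be literally identical, and take the coordinatewise union as the common extension; the verification that $\vec{r}\leq\vec{p}^{\xi}$, $\vec{r}\leq\vec{p}^{\eta}$ is the same vacuous argument in both.

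The one point of difference is the check that $\vec{r}\in P$. You use a folding homomorphism $\Psi$ and the inclusion $w(\vec{r})=\Psi(w)(\vec{p}^{\xi})\subseteq\Psi(w)_{\mathrm{red}}(\vec{p}^{\xi})\cong\Id$, appealing to the parity description of $W_{H,\Id}$ to see that $\Psi$ preserves it. The paper instead checks the generating relations $x_{\alpha}^{2}$ and $[x_{\alpha},x_{\beta}]$ directly: each $s_{\alpha}$ is some $p_{\alpha}$ or $q_{\beta}$, so $s_{\alpha}^{2}\cong\Id$; and for a mixed commutator $[s_{\alpha},s_{\beta}]$ with $s_{\alpha}=p_{\alpha}$, $s_{\beta}=q_{\beta}$, the identical-off-root-data hypothesis gives an $\alpha'$ with $q_{\beta}=p_{\alpha'}$, whence $[s_{\alpha},s_{\beta}]=[p_{\alpha},p_{\alpha'}]\cong\Id$. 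So your remark that the direct check ``would lead into a delicate evaluation-path analysis'' overstates the difficulty --- it is in fact a one-line reduction using exactly the same matching of coordinates that your $\Psi$ encodes. Your folding argument is a clean alternative and has the virtue of handling all of $W_{H,\Id}$ at once rather than relying on the reduction to generators, but both exploit the same combinatorial fact.
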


\begin{proof}
  Let $A$ be an uncountable set of elements from $P$.

  Apply the $\Delta$-system lemma to $\{\supp(\vec{p}) \mid \vec{p}
  \in A\}$ to get $A'$, an uncountable subset of $A$, such that
  $\{\supp(\vec{p}) \mid \vec{p} \in A'\}$ is a $\Delta$-system
  with root $r_{\aleph_1}$.
  
  Then apply the $\Delta$-system lemma to $A'$ to get $\tilde{A}$, an
  uncountable subset of $A'$, a $\Delta$-system with root $\vec{r}$.
  Note that $\supp(\vec{r}) \subseteq r_{\aleph_1}$.

  In the coordinates in $r_{\aleph_1}$ there are only countably many
  possible finite extensions of $\vec{r}$, remove countably many
  members from $\tilde{A}$ so that for all the remaining members we
  have that $\vec{p} \upharpoonright r_{\aleph_1} = \vec{r}$.
  
  Let $C = \{ \vec{p} \mid \vec{p}$ a sequence of length $\omega$ of
  finite partial functions $\N \partialmap \N$ with finite
  support$\}$.  Any $\vec{p} \in P$ has finite support $I = \{\gamma_0
  < \gamma_1 < \cdots < \gamma_k\}$, so to $\vec{p}$ we can assign
  $F(\vec{p}) = \langle p_{\gamma_0}, p_{\gamma_1}, \ldots,
  p_{\gamma_k}, \emptyset, \ldots \rangle$.  Then $F : P \rightarrow
  C$.  Since $C$ is a countable set, there is an uncountable subset
  $\bar{A}$ of $\tilde{A}$ such that all members of $\bar{A}$ map to
  the same element.
  
  We claim that all members of $\bar{A}$ are compatible.  So let
  $\vec{p}, \vec{q} \in \bar{A}$, and let $\vec{s} = \vec{p} \cup
  \vec{q} = \langle p_\alpha \cup q_\alpha \mid \alpha < \aleph_1
  \rangle$.  We get $\vec{s} \leq \vec{p}$ since $\vec{s}$ does not
  extend any $p_\alpha \neq \emptyset$, and $W_{\vec{p}}$ only
  contains words in variables $x_\alpha$ where $p_\alpha \neq
  \emptyset$.  By the same reasoning $\vec{s} \leq \vec{q}$.  Also all
  members of $\vec{s}$ are of order two, so satisfy the relations
  $\{x_\alpha^2 = \Id \mid \alpha < \aleph_1 \}$.  If we can show that
  $\vec{s}$ also satisfies all $\{[x_\alpha, x_\beta] = \Id \mid \alpha,
  \beta < \aleph_1\}$ it follows that $\vec{s} \in P$.  Now suppose
  there are $\alpha, \beta \in \supp(\vec{s})$ s.t. $[x_\alpha,
  x_\beta](\vec{s}) \ncong \id$.  Then $s_\alpha$ and $s_\beta$ are
  different, and one is from $\vec{p}$ the other from $\vec{q}$.
  W.l.o.g. $s_\alpha = p_\alpha$ and $s_\beta = q_\beta$.  Since all
  elements of $\bar{A}$ map, under $F$, to the same element, there is
  a $\alpha' \in \supp(\vec{p})$ s.t. $q_\beta = p_{\alpha'}$.  But
  now we see that $[x_\alpha,x_\beta](\vec{s}) = [x_\alpha,
  x_{\alpha'}](\vec{p}) \cong \Id$, a contradiction.  So $\vec{s}$
  satisfies all relations after all, and $\vec{s} \in P$ showing the
  compatibility of $\vec{p}$ and $\vec{q}$.
\end{proof}

It is clear from our forcing and the reasoning in the last sections
that a $\poset_H$ generic set will give a cofinitary action for
$\check{H}$.  It remains to see that $\check{H} = H$ in the forcing
extension.  But since the forcing is c.c.c. no cardinals get
collapsed, and if we take for $H$ the group with underlying set the
finite subsets of $\aleph_1$ with the obvious group operation, we get
that $\check{H} = H$, showing the following theorem.

\begin{theorem}
  There is a c.c.c. notion of forcing $\mathbb{P}_H$ such that
  $V^{\mathbb{P}_H} \models \bigoplus_{\alpha < \aleph_1} \mathbb{Z}_2$
  has a cofinitary action.
\end{theorem}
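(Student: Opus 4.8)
The plan is to take $\mathbb{P}_H$ to be the poset defined just above and let $G$ be $\mathbb{P}_H$-generic over $V$. By the lemma just proved, $\mathbb{P}_H$ is c.c.c., so it collapses no cardinals; in particular $\aleph_1^{V[G]} = \aleph_1^V$. For each $\alpha < \aleph_1$ I would set $f_\alpha := \bigcup\{(\vec{p})_\alpha \mid \vec{p} \in G\}$. The first point to check is that every $f_\alpha$ is a bijection of $\N$, and this follows once we know that for all $i < \aleph_1$ and $k \in \N$ the sets $D_{i,k} = \{\vec{p} \in P \mid k \in \dom(p_i)\}$ and $R_{i,k} = \{\vec{p} \in P \mid k \in \ran(p_i)\}$ are dense in $\mathbb{P}_H$ --- for then $G$, meeting each of these ($\aleph_1$-many) ground-model dense sets, makes each $f_\alpha$ total and onto. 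Density is proved exactly as for the countable-group poset in Lemma \ref{lem:fgdense}: given a condition, $\{i\}$-apply relations to bring it into $P$ without enlarging its support, then invoke the domain (resp. range) extension lemma and use minimality of a hypothetical bad word to verify that the resulting sequence still lies in $P$.

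Next I would show that $\alpha \mapsto f_\alpha$ yields a cofinitary action of $H$. Since every $\vec{p} \in P$ satisfies $w(\vec{p}) \cong \Id$ for all $w \in W_{H,\Id}$, passing to unions along $G$ gives $w(\vec{f}) = \Id$ for each such $w$; hence $x_\alpha \mapsto f_\alpha$ factors through $F(\vec{x})/W_{H,\Id} \cong H$ and defines a homomorphism $\iota : H \to \Sym(\N)$. To see that $\iota$ is an embedding with cofinitary image, let $w \in F(\vec{x}) \setminus W_{H,\Id}$; then $w$ lies in some $F(x_I)$ with $I$ finite, so $w = w_n^{I}$ for some $n$, and once $G$ has met $D_{i,0}$ for each $i \in I$ and enough of the $D_{i,k}, R_{i,k}$ to make $\card{\vec{p}} \geq n$, every further step of the generic is a good extension (in the sense of $\mathbb{P}_H$) with respect to $w$ and all its subwords. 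The usual argument --- take a shortest subword of $w$ whose evaluation at $\vec{f}$ has infinitely many fixed points and derive a contradiction from the definition of good extension --- then shows $\iota(w) = w(\vec{f})$ has only finitely many fixed points. In particular $\iota(w) \neq \Id$, so $\iota$ is injective, and every element of its image either equals $\Id$ or has finitely many fixed points, so the image is cofinitary. Thus in $V[G]$ the group $H$ embeds into $\Sym(\N)$ as a cofinitary group.

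It remains to identify $\check{H}$ with $H$ in the extension. I would realize $H$ concretely as the group whose underlying set is $[\aleph_1]^{<\omega}$ with symmetric difference as the operation (so $e_\alpha = \{\alpha\}$); since $\mathbb{P}_H$ is c.c.c.\ and preserves $\aleph_1$, both this underlying set and the operation are absolute between $V$ and $V[G]$, so $\check{H}$ is literally the group $\bigoplus_{\alpha < \aleph_1}\mathbb{Z}_2$ of $V[G]$. Combining this with the previous paragraph gives $V^{\mathbb{P}_H} \models$ ``$\bigoplus_{\alpha < \aleph_1}\mathbb{Z}_2$ has a cofinitary action'', as required. The one genuinely new ingredient, the c.c.c.\ of $\mathbb{P}_H$, is already in hand from the lemma above, so there is no serious remaining obstacle; the work is a transcription of the density and fixed-point arguments from the countable-group case, and the place calling for a little care is keeping conditions inside $P$ --- finite support, and all relations of $W_{H,\Id}$ respected --- when one performs domain/range extensions, which is exactly where the localized $\{i\}$-applying of relations is needed.
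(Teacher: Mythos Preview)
Your proposal is correct and follows essentially the same approach as the paper: the paper's own proof is extremely terse, saying only that ``it is clear from our forcing and the reasoning in the last sections that a $\mathbb{P}_H$ generic set will give a cofinitary action for $\check{H}$,'' and then handling $\check{H}=H$ via the c.c.c.\ and the concrete realization of $H$ as $[\aleph_1]^{<\omega}$ under symmetric difference---exactly as you do. Your write-up simply unpacks the density of $D_{i,k}$, $R_{i,k}$ and the fixed-point argument that the paper leaves implicit, correctly identifying $\{i\}$-applying relations as the device that keeps conditions in $P$ with finite support.
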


From this we get the following theorem.

\begin{theorem}
  Martin's Axiom together with the negation of the continuum
  hypothesis implies $\bigoplus_{\alpha < \aleph_1} \mathbb{Z}_2$ has
  a cofinitary action.
\end{theorem}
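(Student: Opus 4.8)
The plan is to prove this along the same lines as the previous theorem, but by applying Martin's Axiom directly to the forcing $\poset_H$ rather than passing through a generic extension. Recall that $\poset_H$ is c.c.c.\ by the lemma just proved, and that $\neg\CH$ gives $\aleph_1 < 2^{\aleph_0}$, so $\MA$ is available to meet any family of at most $\aleph_1$ dense subsets of a c.c.c.\ partial order.

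First I would isolate the dense sets to be met. For $\alpha < \aleph_1$ and $k \in \N$ put $D_{\alpha,k} := \{\vec p \in P \mid k \in \dom(p_\alpha)\}$ and $R_{\alpha,k} := \{\vec p \in P \mid k \in \ran(p_\alpha)\}$. Each of these is dense in $\poset_H$ by the argument used for the countable-group poset (Lemma~\ref{lem:fgdense}), with ordinary ``applying relations'' replaced by the localized ``$\{\alpha\}$-applying relations'' and with the needed one-point extensions supplied by the Domain and Range Extension Lemmas; the point is that a good extension with respect to all words of $W_{\vec p}$ and their subwords that additionally respects the relations of $H$ is still below $\vec p$ in $\poset_H$. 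The family $\mathcal D := \{D_{\alpha,k}\} \cup \{R_{\alpha,k}\}$ has cardinality $\aleph_1 \cdot \aleph_0 = \aleph_1 < 2^{\aleph_0}$, so $\MA$ yields a filter $G$ on $\poset_H$ meeting every member of $\mathcal D$.

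Next I would set $f_\alpha := \bigcup\{(\vec p)_\alpha \mid \vec p \in G\}$ for each $\alpha < \aleph_1$. Since the elements of the filter $G$ are pairwise compatible, each $f_\alpha$ is a well-defined injective partial function; meeting the sets $D_{\alpha,k}$ makes $f_\alpha$ total and meeting the $R_{\alpha,k}$ makes it onto, so $f_\alpha \in \Sym(\N)$. Every condition of $P$ satisfies $w(\vec p) \cong \Id$ for all $w \in W_{H,\Id}$, and any one evaluation $w(\vec f)(l)$ uses only finitely many pairs and so already takes place inside a single condition of $G$; hence $w(\vec f) = \Id$ for every $w \in W_{H,\Id}$, and $x_\alpha \mapsto f_\alpha$ induces a surjection $H = F(\vec x)/W_{H,\Id} \twoheadrightarrow \langle f_\alpha \mid \alpha < \aleph_1 \rangle$.

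It remains --- and this is the substantive point --- to show that $w(\vec f)$ has only finitely many fixed points for every $w \in F(\vec x)$ with $w(\vec e) \neq \Id$; this at once gives $w(\vec f) \neq \Id$, so the surjection above is an isomorphism, and it says the action is cofinitary. Here I would reuse the reasoning already employed for the generic extension and for the earlier constructions. Given such a $w$, involving only the finitely many variables $x_\alpha$ with $\alpha$ in a finite set $I$, directedness of $G$ produces $\vec p_0 \in G$ with $I \subseteq \supp(\vec p_0)$ and with $w$ and all its subwords in $W_{\vec p_0}$ (meeting the sets in $\mathcal D$ forces each $\alpha \in I$ into some support and makes the conditions of $G$ of unbounded size); then for every $\vec q \in G$ with $\vec q \leq \vec p_0$, and every subword $w'$ of $w$, the defining clause of $\leq$ in $\poset_H$ holds for $w'(\vec q)$ relative to $\vec p_0$, so the conditions $\{\vec q \in G \mid \vec q \leq \vec p_0\}$ play exactly the role of the increasing sequence of finite approximations in the earlier arguments. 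Running the ``shortest subword with infinitely many fixed points'' count from those arguments then produces a contradiction: a new fixed point of such a subword, witnessed by some $\vec q \leq \vec p_0$ in $G$, forces via the ordering clause a fixed point of a shorter conjugate piece already present in $\vec p_0$, and the finiteness of the set of conjugate pieces contradicts minimality (the base case, where the subword has no proper conjugate subword, being disposed of as in Case~1 of Lemma~\ref{lem:enoughforcof}). The main obstacle is precisely this transcription: the earlier cofinitariness lemmas are phrased for an explicitly recursive construction, and one has to check that the same bookkeeping of ``new'' fixed points still goes through when the sequence of approximations is replaced by the conditions of $G$ lying below $\vec p_0$.
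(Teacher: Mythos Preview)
Your proposal is correct and follows exactly the approach of the paper, which gives only a one-line proof: ``We don't need a generic set for all dense sets, only for the $\aleph_1$ many $D_{\alpha,i}$ and $R_{\alpha,i}$.'' You have simply unpacked what that sentence means, and your detailed verification of cofinitariness via the filter (which the paper leaves entirely implicit) is on target.
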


\begin{proof}
  We don't need a generic set for all dense sets, only for the
  $\aleph_1$ many $D_{\alpha,i}$ and $R_{\alpha,i}$.
\end{proof}

\section{Cardinal Characteristics}

\subsection{Diamond Results}

In \cite{MHD}, Justin Moore, Michael Hru\v{s}\'{a}k and Mirna
D\v{z}amonja introduced weakenings of the diamond principle related to
cardinal characteristics.  We'll first study the effect of one of
these weakenings of the diamond principle on families related to the
symmetric group of the natural numbers.

\begin{definitions}
\item
  ${=}^\infty$ is the relation on Baire space, $\BS$, of
  \emph{infinite equality}: for $f,g \in \BS$, we have $f =^\infty
  g$ iff $\{n \in \N \mid f(n) = g(n) \}$ is infinite.

\item
  A function $F: {}^{<\omega_1} 2 \rightarrow \BS$ is a Borel function
  if for all $\delta < \omega_1$ the function $F\upharpoonright
  {}^\delta 2 : {}^\delta 2 \rightarrow \BS$ is Borel.

\item
  $\Diamond(\BS,{=^\infty})$ is the following guessing principle:

  \noindent for every Borel function $F : {}^{<\omega_1} 2 \rightarrow
  \BS$, there is a function $G: \omega_1 \rightarrow \BS$ such that for
  every $f : \omega_1 \rightarrow 2$ the set
  \begin{equation*}
    \{ \delta < \omega_1 \mid F(f \upharpoonright \delta) =^\infty
    G(\delta) \}
  \end{equation*}
  is stationary.

  A $G$ related to $F$ in this way is called a $\Diamond(\BS,
  {=^\infty})$-sequence for $F$.
\end{definitions}

We will study the effect of this $\Diamond$-principle (which is weaker
than ordinary $\Diamond$) on the cardinal invariants $\mathfrak{a}_p$
and $\mathfrak{a}_g$ (see Definition \ref{definition ap and ag}).

\begin{theorem}
  \label{thm:ap}
  $\Diamond(\BS,{=^\infty})$ implies $\mathfrak{a}_p = \aleph_1$.
\end{theorem}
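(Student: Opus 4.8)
The plan is to build a maximal almost disjoint family of permutations of size $\aleph_1$ directly from a $\Diamond(\BS,{=^\infty})$-sequence, along the same lines as the $\CH$ construction of a maximal cofinitary group in Section \ref{sec:MCGintro}, but replacing the enumeration $\langle f_\alpha \mid \alpha < \omega_1\rangle$ of $\Sym(\N)$ by the guessing sequence. Concretely, I would define a Borel function $F : {}^{<\omega_1}2 \to \BS$ so that, along any branch $f : \omega_1 \to 2$, the values $F(f\restrict\delta)$ read off (a code for) the permutation we are currently trying to diagonalize against; one lets the branch $f$ encode the partial family $\langle g_\alpha \mid \alpha < \delta\rangle$ constructed so far, and $F(f\restrict\delta)$ extracts the ``next candidate permutation'' from that code. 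Then a $\Diamond(\BS,{=^\infty})$-sequence $G$ for this $F$ has the property that, on a stationary set of $\delta$, $G(\delta)$ agrees infinitely often with the candidate permutation coded at stage $\delta$ — which is exactly the handle needed to guarantee that the candidate is not almost disjoint from the family.

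The recursion then goes as follows: at stage $\delta$, given $\langle g_\alpha \mid \alpha < \delta\rangle$ (an almost disjoint family of permutations), let $h_\delta$ be the permutation coded by $G(\delta)$ if $G(\delta)$ happens to code a genuine element of $\Sym(\N)$ that is almost disjoint from all $g_\alpha$, $\alpha < \delta$; otherwise take $h_\delta$ to be anything suitable. Construct $g_\delta$ by finite approximations using the domain/range extension ideas already available (here we only need that a finite injective partial function can be extended while staying injective and controlling a bounded number of ``collisions'' with the earlier $g_\alpha$ — this is the easy permutation analogue, no free-group machinery is needed since we are not asking for a group), together with a third step that, whenever $h_\delta$ has been chosen to be a legitimate competitor, throws in a pair from $h_\delta$; since $G(\delta) =^\infty F(f\restrict\delta)$ on a stationary (in particular unbounded) set, this will force $g_\delta \cap h_\delta$ to be infinite, witnessing non-maximality of any proper subfamily. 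Finally $\A = \{g_\delta \mid \delta < \omega_1\}$ is an almost disjoint family of permutations, and it is maximal: any $f \in \Sym(\N)$ almost disjoint from all of $\A$ would, by a standard coding-and-branch argument, be the permutation coded at some stage $\delta$ in the stationary guessing set (pick the branch $f$ so that $F(f\restrict\delta)$ codes $f$ for club-many $\delta$), contradicting that $g_\delta$ meets $f$ infinitely often. Hence $\mathfrak{a}_p \le \aleph_1$, and since there is no countable maximal almost disjoint family of permutations (Adeleke--Truss), $\mathfrak{a}_p = \aleph_1$.

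The main obstacle — and the step deserving real care — is the bookkeeping that makes the coding Borel and makes the ``stationarily many correct guesses'' translate into ``every potential competitor is killed''. One must arrange a single Borel $F$ on ${}^{<\omega_1}2$ whose restriction to each ${}^\delta 2$ is Borel, such that: (i) from $f\restrict\delta$ one can Borel-reconstruct the family built so far; (ii) for any target permutation $f^*$ one can choose a branch $f$ with $F(f\restrict\delta)$ coding $f^*$ on a club; and (iii) the diagonalization step at stage $\delta$ depends only on $G(\delta)$ and the earlier construction, so that the recursion is well defined. This is exactly the kind of argument Moore--Hru\v{s}\'ak--D\v{z}amonja use to deduce $\mathfrak{a} = \aleph_1$ (and more generally to deduce that various cardinal invariants equal $\aleph_1$) from the corresponding parametrized $\Diamond$; the permutation case differs only in that the ``extension'' lemma one feeds in is the trivial injective-partial-function extension rather than anything about almost disjoint subsets of $\N$. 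I expect no genuine difficulty beyond adapting that template, and I would present it by first isolating the combinatorial extension lemma for permutations, then the coding lemma, then the recursion.
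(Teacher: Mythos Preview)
Your overall architecture is right, but there is a genuine gap in how you design $F$ and use the guess $G(\delta)$. You have $F(f\restrict\delta)$ output (a code for) the competitor permutation $p$, and then at stage $\delta$ you set $h_\delta = G(\delta)$ \emph{provided $G(\delta)$ codes a permutation almost disjoint from the earlier $g_\alpha$'s}. The problem is that the relation in $\Diamond(\BS,{=^\infty})$ is only infinite agreement: at the stationarily many $\delta$ where the guess is ``correct'' you get $G(\delta)(n) = p(n)$ for infinitely many $n$, nothing more. There is no reason for $G(\delta)$ to be injective, or surjective, or almost disjoint from the earlier family; so your side condition will typically fail and $h_\delta$ will be undefined precisely at the stages where you need it. Even if you drop the side condition and just try to throw in pairs $(n,G(\delta)(n))$, you have no control over which $n$'s you end up using, and nothing prevents you from systematically choosing $n$'s outside the (unknown) set $\{n : G(\delta)(n)=p(n)\}$.

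The paper closes this gap by packaging $F$ differently: rather than outputting $p$ itself, $F(\langle p_\alpha\mid\alpha<\delta\rangle,p)(n)$ outputs a finite tuple of $6n+1$ pairs from $p$, each chosen to avoid $p_{e_\delta(0)},\ldots,p_{e_\delta(n)}$. A single match $G(\delta)(n)=F(\cdot)(n)$ then hands you $6n+1$ candidate pairs whose validity (distinct first coordinates, distinct second coordinates, avoidance of the first $n$ permutations) can be \emph{checked locally without knowing $p$}. Since $|p_{\delta,n}|\le 3n$, at least one of these pairs is compatible with the partial $p_{\delta,n}$ already built, so you can always incorporate one. Thus each of the infinitely many correct values of $G(\delta)$ contributes a genuine pair of $p$ to $g_\delta$, and maximality follows. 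The missing idea in your proposal is exactly this: the output of $F$ at $n$ must be a verifiable finite packet of usable information, not the whole competitor.
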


\begin{proof}
  We will define a map $F$ such that the
  $\Diamond(\BS,{=^\infty})$-sequence for it will help us build a
  sequence of permutations $\langle p_\alpha \mid \alpha < \omega_1
  \rangle$ which will be a maximal almost disjoint family of
  permutations of $\N$.
  
  Let $C \subseteq \omega_1$ be the set of ordinals $\delta \in
  \omega_1$ such that $\delta = \omega + \omega \cdot \delta$.  $C$ is
  closed and unbounded in $\omega_1$.  Also fix a bijection $\theta :
  {}^\N 2 \rightarrow \Sym(\N)$, that is Borel.  Let $P_\delta$ be the
  set $\{(\langle p_\alpha \mid \alpha < \delta \rangle, p) \mid
  \{p_\alpha \mid \alpha < \delta \} \cup \{p\}$ is a set of
  permutations$\}$.  Pick $\delta \in C$ and let $\pi: \omega
  \mathrel{\dot{\cup}} \delta \times \omega \rightarrow \delta$ be an order
  isomorphism (where the order on $\omega \mathrel{\dot{\cup}} \delta \times
  \omega$ is such that all elements of $\omega$ are less then all
  elements of $\delta \times \omega$, and $\delta \times \omega$ is
  ordered lexicographically).  Then for $x \in {}^\delta 2$, define
  $D(x) = ( \langle \theta(x \restrict \pi[\{\alpha\} \cdot \omega]),
  \theta(x \restrict \pi[\omega]))$.  $D$ is a bijection ${}^\delta 2
  \rightarrow P_\delta$.

  To define $F : {}^{<\omega_1} 2 \rightarrow \BS$, by coding we let
  its range be ${}^\N (\N \cup {}^{<\omega}(\N \times \N))$.  For
  $\delta \not \in C$ we let $F \restrict {}^\delta 2$ be any constant
  map; for $\delta \in C$ by the coding in the previous paragraph we
  let its domain be the set of pairs $( \langle p_\alpha \mid \alpha <
  \delta \rangle, p)$ with $\{p_\alpha \mid \alpha < \delta \} \cup
  \{p\}$ a family of permutations and define $F$ as follows.  
  
  Fix for every $\delta< \omega_1$ a bijection $e_\delta : \N
  \rightarrow \delta$.

  If $\{p_\alpha \mid \alpha < \delta \} \cup \{p\}$ is not almost
  disjoint, we define $F(\langle p_\alpha \mid \alpha < \delta \rangle,
  p) (n) = n$.  Otherwise, we define $F(\langle p_\alpha \mid \alpha <
  \delta \rangle, p)(n)$ to be $\big((k_0,p(k_0)),(k_1,p(k_1)),
  \ldots,$ $(k_{6n},p(k_{6n}))\big)$ with
  \begin{itemize}
  \item $k_0$ the least number such that $p(k_0) \not \in \{
    p_{e_\delta(j)}(k_0) \mid j \leq n \}$, and
  \item $k_{i+1}$ the least number strictly bigger than $k_i$ with
    $p(k_{i+1}) \not \in \{p_{e_\delta(j)}(k_{i+1}) \mid j \leq n \}$.
  \end{itemize}
  Since the family is almost disjoint, these $k_i$ exist.
  
  For any $\delta \in C$ the function as defined above restricted to
  those $(\langle p_\alpha \mid \alpha < \delta \rangle,p)$ for which
  $\{ p_\alpha \mid \alpha < \delta\} \cup \{p\}$ is an almost
  disjoint family is continuous, and the set of $(\langle p_\alpha
  \mid \alpha < \delta \rangle, p)$ for which $\{ p_\alpha \mid \alpha
  < \delta\} \cup \{p\}$ is an almost disjoint family is a Borel set.
  Composing this with the Borel coding $D$ this shows that $F$ is a
  Borel function.

  Let $G: \omega_1 \rightarrow \BS$ be a
  $\Diamond(\BS,{=^\infty})$-sequence for this $F$.  We define
  $G(\delta)(n)$ to be a \emph{valid guess} for $\langle p_\alpha \mid
  \alpha < \delta \rangle$, a family of almost disjoint permutations,
  iff
  \begin{itemize}
    \item $G(\delta)(n) = \big( (k_0, o_0), (k_1,o_1), \ldots ,
      (k_{6n},o_{6n})\big)$ for some $k_i, o_i \in \N$,
    \item all $k_i$ are distinct, and
    \item all $o_i$ are distinct and $o_i \not \in
      \{p_{e_\delta(j)}(k_i) \mid j \leq n \}$.
  \end{itemize}

  Note that for any $\delta < \omega_1$, $n \in \N$, and any
  permutation $p$ almost disjoint from all $p_\alpha$, if $F(\langle
  p_\alpha \mid \alpha < \delta \rangle, p)(n) = G(\delta)(n)$ then
  $G(\delta)(n)$ is a valid guess for $\langle p_\alpha \mid \alpha <
  \delta \rangle$.
  
  Now we use $G$ to construct $\langle p_\alpha \mid \alpha < \omega_1
  \rangle$ recursively.  Suppose $\langle p_\alpha \mid \alpha <
  \delta \rangle$ has been defined.  Then define $p_\delta$
  recursively, $p_\delta := \bigcup_{s \in \N } p_{\delta,s}$, where
  \renewcommand{\theenumi}{P\arabic{enumi}}
  \begin{enumerate}
    \item $p_{\delta,0} := \emptyset$,
    \item $p'_{\delta,s+1} := p_{\delta, s}$ if $G(\delta)(s)$ is not
      a valid guess for $\langle p_\alpha \mid \alpha < \delta
      \rangle$,
    \item \label{item:step3}
      $p'_{\delta, s+1} := p_{\delta,s} \cup \{ (k_i,o_i) \}$ if
      $G(\delta)(s) = \big((k_0,o_0), (k_1,o_1), \ldots,
      (k_{6s},o_{6s})\big)$ is a valid guess for $\langle p_\alpha
      \mid \alpha < \delta \rangle$ and $i$ is least such that $k_i
      \not \in \dom(p_{\delta,s})$ and $o_i \not \in
      \ran(p_{\delta,s})$,
    \item \label{item:step4}$p''_{\delta,s+1} := p'_{\delta,s+1} \cup
      \{(a,b)\}$ where $a$ is the least number not in
      $\dom(p'_{\delta,s+1})$ and $b$ is the least number not in
      $\ran(p'_{\delta,s+1})$ and not in $\{p_{e_\delta(j)}(a) \mid j
      \leq s \}$, and
    \item \label{item:step5}$p_{\delta,s+1} := p''_{\delta,s+1} \cup
      \{(c,d)\}$ where $d$ is the least number not in
      $\ran(p''_{\delta,s+1})$ and $c$ is the least number not in
      $\dom(p''_{\delta,s+1})$ and not in $\{p^{-1}_{e_\delta(j)}(d)
      \mid j \leq s\}$.
  \end{enumerate}

  Note that $|p_{\delta,s}|$ is at most $3s$.  This means we can do
  step \ref{item:step3}, as the requirement $k_i \not \in
  \dom(p_{\delta,s})$ excludes at most $3s$ pairs in $G(\delta)(s)$,
  $o_i \not \in \ran(p_{\delta,s})$ excludes at most another $3s$
  pairs in $G(\delta)(s)$, and $G(\delta)(s)$ has $6s+1$ pairs, always
  leaving at least one pair.
  
  Now $p_{\delta}$ is a permutation (by \ref{item:step4} and
  \ref{item:step5}) almost disjoint from all $p_\alpha$, $\alpha <
  \delta$.  This completes the construction of $\langle p_\alpha \mid
  \alpha < \omega_1 \rangle$.

  It remains to see that this almost disjoint family of permutations
  is maximal.  We do this by contradiction; suppose, therefore, that
  there is a permutation $p$ almost disjoint from all $p_\alpha$,
  $\alpha < \omega_1$.  Then the set
  \begin{equation*}
    \{ \delta < \omega_1 \mid F(\langle p_\alpha \mid \alpha <
      \delta\rangle, p) =^\infty G(\delta) \}
  \end{equation*}
  is stationary.  Remember that we use a coding for the inputs of the
  function $F$, and note that the sequence $\delta \mapsto (\langle
  p_\alpha \mid \alpha < \delta \rangle,p)$ determines a path $f:
  \omega_1 \rightarrow 2$ in the tree ${}^{<\omega_1}2$.
  
  Now let $\delta \in C$ be a member of this set.  Then $F(\langle
  p_\alpha \mid \alpha < \delta \rangle, p) = ^\infty G(\delta)$,
  which means there are infinitely many $n$ such that $G(\delta)(n)$
  is a valid guess for $\langle p_\alpha \mid \alpha < \delta
  \rangle$, and all the pairs in $G(\delta)(n)$ belong to $p$.  So we
  hit $p$ infinitely often with $p_\delta$ (by \ref{item:step3}),
  which is a contradiction.
\end{proof}

We now start to work towards the second theorem of this subsection;
this will be a theorem similar to the above but for $\mathfrak{a}_g$.

\begin{definition}
  For $w \in W_G$ (see Definition \ref{definitions WG etc}) and $p
  \subseteq q$ we call $q$ a \emph{very good extension} of $p$ with
  respect to $w$ if $w(q)$ has no more fixed points than $w(p)$.
\end{definition}

Note that a very good extension is a good extension (see Definition
\ref{definition good extension}). 

The following two lemmas show that we can construct a function $F$
similar to the $F$ in the proof of Theorem \ref{thm:ap} but for
maximal cofinitary groups.

\begin{lemma}
  Let $H$ be a cofinitary group, $f \in \Sym(\N) \setminus H$ such
  that $\langle H,f \rangle$ is a cofinitary group and $w \in W_H$.
  Then for every $k \in \N$ there exists a finite set $S$ of pairs
  from $f$ such that for every finite injective map $p$ with $|p|$
  less than $k$ there exists a pair $(a,b)$ in $S$ such that $p \cup
  \{(a,b)\}$ is a very good extension of $p$ with respect to $w$.
\end{lemma}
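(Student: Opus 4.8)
The plan is to reduce the lemma to a uniform bound on ``bad sets''. For a finite injective $p$, let $B(p) = \{a \in \N : p \cup \{(a,f(a))\}$ is not a very good extension of $p$ with respect to $w\}$; since $p \cup \{(a,f(a))\}$ extends $p$ we have $w(p) \subseteq w(p \cup \{(a,f(a))\})$ as partial functions, so $a \in B(p)$ exactly when adding $(a,f(a))$ creates a fixed point of $w$ that $w(p)$ lacked. I will show $|B(p)| \le C$ for a constant $C = C(w,k,f)$ whenever $|p| < k$. This suffices: take any $C+1$ distinct naturals $a_0,\dots,a_C$ and set $S = \{(a_j, f(a_j)) : j \le C\}$; then for every $p$ with $|p| < k$ the set $\{a_0,\dots,a_C\}$ is not contained in $B(p)$, so some pair of $S$ yields a very good extension of $p$. (We may assume $w$ involves $x$, since otherwise $w(g)$ is a fixed element of $H$ and $B(p) = \emptyset$.) This is the refinement of the Hitting $f$ Lemma that is needed: ``very good'' rather than ``good'', with the exceptional set controlled uniformly in $p$.

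To bound $|B(p)|$: given $a \in B(p)$ with a witnessing fixed point $l$ of $w(q)$, where $q = p \cup \{(a,f(a))\}$ and $w(p)(l) \ne l$, the evaluation path of $l$ in $w(q)$ is a closed walk of length $\lh(w)$ that uses the new pair $(a,f(a))$ at certain $x^{\pm1}$-positions $i_1 < \dots < i_r$, $r \ge 1$. Deleting the new-pair steps cuts the walk into arcs, each an evaluation of a cyclic subword of $w$ through $p$ and elements of $H$ only, with both endpoints lying in $\{a, f(a)\}$. \emph{If some arc uses a pair of $p$}: the first $x^{\pm1}$-letter of that arc forces the state there into $\dom(p)$ or $\ran(p)$, so that arc computes a partial function with domain of size $\le |p| < k$; hence its $\{a,f(a)\}$-valued starting endpoint lies in a $p$-dependent set of size $< k$. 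Since $w$ has at most $\lh(w)^2$ cyclic subwords, this case accounts for fewer than $2k\,\lh(w)^2$ values of $a$, a bound independent of $p$.

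\emph{Otherwise} every arc is a word in $H$ alone, which forces \emph{all} $x^{\pm1}$-letters of $w$ to be new-pair uses. Take the two leftmost $x^{\pm1}$-positions of $w$ (or, if $w$ has a single $x$-letter, that position paired with itself, the ``arc'' being all of $w$ minus that letter), and let $B \in H$ be the product of the $H$-letters between them; from the reduced form of $w$, $B = \mathrm{id}$ only when the two letters are adjacent and hence of the \emph{same} sign, and otherwise $B$ is a single non-identity letter $g_m$. Transporting the output ($f(a)$ or $a$, according to sign) of the first use to the input ($a$ or $f(a)$) of the second through $B$ yields one of $B(f(a)) = a$, $B(a) = f(a)$, $B(a) = a$, $B(f(a)) = f(a)$. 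Because a reduced word in $H * F(x)$ never contains $xx^{-1}$ or $x^{-1}x$, this equation is never vacuous, and in each case it states $a \in \mathrm{Fix}(\rho_0)$ for a single element $\rho_0 \in \langle H, f\rangle$ determined by $w$ (built from $B$ and at most two copies of $f^{\pm1}$), which is \emph{not} the identity: were it trivial we would get $f \in H$, or $g_m = \mathrm{id}$. As $\langle H, f\rangle$ is cofinitary, $\mathrm{Fix}(\rho_0)$ is finite, bounding this case by $|\mathrm{Fix}(\rho_0)|$. Thus $|B(p)| \le 2k\,\lh(w)^2 + |\mathrm{Fix}(\rho_0)| =: C$, as required.

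The arc decomposition and the ``some arc uses an old pair'' case are routine manipulation of the evaluation-path formalism already set up in this chapter. The substance is the last case: that when only the new pair and elements of $H$ appear along the walk, the offending values of $a$ are confined to a $p$-independent finite set. This hinges on two simple facts: reduced words in $H * F(x)$ never cancel two consecutive $x$-letters, so matching two successive occurrences of the new pair always produces a genuine equation on $a$; and $f \notin H$ forces the resulting word $\rho_0$ in $f^{\pm1}$ and $H$ to be non-trivial, whence cofinitariness of $\langle H, f\rangle$ makes $\mathrm{Fix}(\rho_0)$ finite. I expect the only fussy point to be verifying the output-to-input matching through $B$ in each sub-case (the two signs, $B$ trivial or not, the arc that wraps through $l$), but no new idea is needed there.
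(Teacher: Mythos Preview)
Your argument is correct, and the approach is genuinely different from the paper's. The paper does not bound $B(p)$ directly. Instead it first \emph{preprocesses} $f$: it extracts an infinite $f'\subseteq f$ with the property that $w(f')$ has no fixed points at all (this takes some care when $w(f)=\Id$, where the paper passes through an intermediate $f''$ and then thins recursively so that no evaluation path of $w$ can be completed). Having killed the ``all-new-pair'' case up front, the paper then counts, for a fixed $p$, how many pairs $(a,b)\in f'$ can combine with pairs of $p$ to create a new fixed point; the bound is $2|p|+|p|\cdot\ocx(w)$, so any $2k+k\cdot\ocx(w)+1$ pairs of $f'$ serve as $S$. Your route avoids the preprocessing entirely: you handle the ``all-new-pair'' case on the fly by reading off a single non-trivial element $\rho_0\in\langle H,f\rangle$ from two consecutive $x$-positions (or the wrap-around when there is only one) and invoking cofinitariness of $\langle H,f\rangle$ to get $|\mathrm{Fix}(\rho_0)|<\infty$. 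What this buys you is a cleaner statement---any $C+1$ pairs of $f$ work for $S$, not just pairs drawn from a carefully constructed $f'$---at the cost of a slightly more delicate case analysis. What the paper's preprocessing buys is that once $f'$ is fixed the counting is one line.

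One small omission: your $B(p)$ should also include those $a$ with $a\in\dom(p)$ and $p(a)\neq f(a)$, or $f(a)\in\ran(p)$ and $p^{-1}(f(a))\neq a$, since then $p\cup\{(a,f(a))\}$ is not even a function and certainly not a very good extension. This contributes fewer than $2k$ further bad values, so just absorb it into $C$.
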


\begin{proof}
  First we will find an infinite subset $f'$ of $f$ such that $w(f')$
  has no fixed points, then we'll show that a big enough finite subset
  of $f'$ exists.  The first step ensures that we don't have to worry
  about fixed points caused by pairs from $f$ alone.  The second part
  is done by counting how many pairs from $f'$ could combine with
  pairs from $p$ to cause a fixed point.

  Obtaining $f'$ from $f$ is done differently depending on whether
  $w(f)$ is the identity or not.

  If $w(f)$ is not the identity, then it has only finitely many fixed
  points.  Let $f'$ be equal to $f$ with for each of those finitely
  many fixed points one pair from $f$ used in the evaluation path of
  that fixed point removed.  We have ensured that $w(f')$ has no
  fixed points.

  If $w(f)$ is the identity, then we know there is more than one
  occurrence of $x$ in $w(x)$ (since $f \not \in H$).  So either there
  is an occurrence of $x^2$ or $x^{-2}$, or there is a subword of the
  form $x^{\epsilon_0} g x^{\epsilon_1}$, with $\epsilon_i \in
  \{{-1},{+1}\}$ and $g \in H$.  In either case there are only
  finitely many evaluation paths of $w(f)$ that use the same pair from
  $f$ in both these selected occurrences of $x$ (use that $f$ has only
  finitely many fixed points for the first case, and that $f \not \in
  H$ for the second case).  Remove these finitely many pairs from $f$
  to obtain $f''$.

  Now we have to find an infinite subset $f'$ of $f''$ such that
  $w(f')$ is nowhere defined (which in this case is equivalent to not
  having fixed points).
  
  We do this by recursively defining an enumeration $\{e_n \mid n \in
  \N\}$ of $f'$.  Let ${\prec}$ be a wellorder of $\N \times \N$.
  Then define $e_0$ to be the ${\prec}$-least pair $(a,b)$ in $f''$,
  and $e_{n+1}$ to be the ${\prec}$-least pair in $f''$ that is not
  used in any evaluation path where a pair in $\{e_0, \ldots, e_n\}$
  is also used.

  We end up with an infinite $f'$ such that $w(f')$ is indeed nowhere
  defined.
  
  Now we examine for a given $k$ and $p$, an injective map with $|p| =
  l \leq k$, how many pairs $(a,b)$ of $f'$ can have that $p \cup
  \{(a,b)\}$ is not a very good extension of $p$ for $w$.

  First there are at most $2l$ pairs $(a,b)$ from $f'$ that have $a
  \in \dom(p)$ or $b \in \ran(p)$.  Remove these from $f'$ to obtain
  $\tilde{f}$.  Now we look at $w(p \cup \tilde{f})$; any fixed point
  of $w(p \cup \tilde{f})$ that was not a fixed point of $w(p)$ has an
  evaluation path where both pairs from $p$ and from $\tilde{f}$ are
  used.  If we remove one pair from $\tilde{f}$ for each of those
  evaluation paths to obtain $\hat{f}$ the partial permutation $w(p
  \cup \hat{f})$ will only have fixed points that $w(p)$ already had.
  
  So we only have to find an upper bound for the number of evaluation
  paths using pairs from both $p$ and $\tilde{f}$.  This upper bound
  is attained if, for each occurrence of $x$ in $w$ and any pair of
  $p$, it gets completed to an evaluation path with all pairs from
  $\tilde{f}$.  This gives us $|p| \cdot \ocx(w)$ as an upper bound,
  where $\ocx(w)$ is the number of occurrences of $x$ and $x^{-1}$ in
  $w$.

  So in total at most $2l + l \cdot \ocx(w)$ pairs $(a,b)$ of $f'$ are
  such that $p \cup \{(a,b)\}$ is not a very good extension of $p$
  with respect to $w$.
  
  This means that if we take $S$ to consist of any $2k + k \cdot
  \ocx(w) + 1$ pairs of $f'$ we have a set as desired.
\end{proof}

We need and easily get the following stronger lemma.

\begin{lemma}
  \label{lem:Sexists}
  Let $H$ be a cofinitary group, $f \in \Sym(\N) \setminus H$ such
  that $\langle H,f \rangle$ is a cofinitary group and $w_0, \ldots,
  w_n \in W_H$.  Then for every $k \in \N$ there exists a finite set
  $S$ of pairs from $f$ such that for every injective map $p$ with
  $|p|$ less than $k$ there exists a pair $(a,b) \in S$ such that $p
  \cup \{(a,b)\}$ is a very good extension of $p$ for all the words
  $w_0, \ldots, w_n$.
\end{lemma}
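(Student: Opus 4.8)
The plan is to bootstrap the single-word version of this lemma just proved. The first step is to produce \emph{one} infinite subset $f' \subseteq f$ that works for all of $w_0,\dots,w_n$ simultaneously, i.e. such that $w_i(f')$ has no fixed point for every $i \le n$. This is done exactly as in the previous proof, carried out for the finitely many words at once: for each $i$ with $w_i(f)\ne\Id$ we delete from $f$ one pair on each of the (finitely many) evaluation paths of the fixed points of $w_i(f)$; for each $i$ with $w_i(f)=\Id$ we run the greedy enumeration of the previous proof, but now avoiding, for all such $i$ together, evaluation paths that reuse a pair in the distinguished occurrences of $x$. The observation that makes this legitimate is that passing to a subset never creates a fixed point: if $m$ is a fixed point of $w_i(f'')$ with $f''\subseteq f'$, then the evaluation path witnessing it uses only pairs from $H$ and from $f''\subseteq f'$, so $m$ is already a fixed point of $w_i(f')$. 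Hence these finitely many modifications can be performed one after another without spoiling what was achieved for the other words, and the resulting $f'$ has $w_i(f')$ fixed-point-free for every $i$.

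The second step is the counting, again lifted from the previous proof but summed over the words. Fix $k$ and an injective $p$ with $\card{p} = l < k$. For a single $w_i$ the argument there shows that at most $2l + l\cdot\ocx(w_i)$ pairs $(a,b)\in f'$ are such that $p\cup\{(a,b)\}$ fails to be a very good extension of $p$ with respect to $w_i$ — the $2l$ accounting for pairs meeting $\dom(p)\cup\ran(p)$, and the $l\cdot\ocx(w_i)$ for evaluation paths of $w_i$ that mix a pair of $p$ with pairs of $f'$. Summing, at most $\sum_{i\le n}(2l + l\cdot\ocx(w_i)) \le \sum_{i\le n}(2k + k\cdot\ocx(w_i))$ pairs of $f'$ are bad for some $w_i$. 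So any $S$ consisting of $M := \sum_{i\le n}(2k + k\cdot\ocx(w_i)) + 1$ pairs from $f'$ is as required: for each injective $p$ with $\card{p} < k$ at least one pair $(a,b)\in S$ avoids every bad pair, and then $p\cup\{(a,b)\}$ is a very good extension of $p$ with respect to all of $w_0,\dots,w_n$ simultaneously.

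There is really no genuine obstacle here — the phrase ``we easily get'' is accurate — so the only thing to be careful about is the bookkeeping in the first step: one must check that the finitely many modifications used to build $f'$ do not interfere (which follows from the ``shrinking cannot create fixed points'' observation) and that in the counting step the bounds merely add, so that a single finite $S$ suffices for all words. Everything else is word-for-word the previous proof.
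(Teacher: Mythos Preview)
Your proposal is correct and follows essentially the same approach as the paper: apply the first-half construction of the previous lemma $n+1$ times (your ``shrinking never creates a fixed point'' observation is exactly what makes this iteration legitimate), then apply the second-half counting $n+1$ times and sum the bounds. Your explicit constant $M = \sum_{i\le n}(2k + k\cdot\ocx(w_i)) + 1$ is a welcome addition, and your phrasing of the greedy step for the identity-case words is slightly garbled (the ``reuse a pair in the distinguished occurrences'' clause is the pre-processing deletion, not the greedy avoidance condition), but the intent is clear and matches the paper.
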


\begin{proof}
  By applying the method used in the first half of the proof of the
  last lemma $n+1$ times we can find an infinite $f' \subseteq f$ such
  that none of $w_0(f'), \ldots, w_n(f')$ have fixed points.  Then
  using the method in the second half of the proof of the last lemma
  also $n+1$ times we can find how big a subset $S$ of $f'$ we have to
  choose.
\end{proof}

Now we are ready to state and prove the second theorem of this
subsection.

\begin{theorem}
  \label{thm:ag}
  $\Diamond(\BS,{=^\infty})$ implies $\mathfrak{a}_g = \aleph_1$.
\end{theorem}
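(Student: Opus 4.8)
The plan is to re-run the argument of Theorem \ref{thm:ap}, but building a maximal cofinitary group instead of a maximal a.d.\ family of permutations, and using the very-good-extension machinery (Lemma \ref{lem:Sexists}) in place of the naive ``hitting'' step. First I would fix, exactly as in Theorem \ref{thm:ap}, the club $C=\{\delta<\omega_1\mid\delta=\omega+\omega\cdot\delta\}$, a Borel bijection $\theta\colon{}^{\N}2\to\Sym(\N)$, bijections $e_\delta\colon\N\to\delta$ for $\delta<\omega_1$, and for each $\delta\in C$ a Borel coding $D\colon{}^{\delta}2\to P_\delta$, where now $P_\delta$ is the set of pairs $(\langle g_\alpha\mid\alpha<\delta\rangle,f)$ such that $\{g_\alpha\mid\alpha<\delta\}$ generates a cofinitary group $H_\delta$ and $\langle H_\delta,f\rangle$ is cofinitary with $f\notin H_\delta$. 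Here ``generates a cofinitary group'' is the conjunction, over the countably many words, of the $\Sigma^0_2$ statement ``this word has only finitely many fixed points or is the identity'', so $P_\delta$ is Borel; and from the reals coding $\langle g_\alpha\mid\alpha<\delta\rangle$ together with $e_\delta$ one reads off, by a fixed scheme, an enumeration $\langle w_k\mid k\in\N\rangle$ of $W_{H_\delta}$ (arranged to be closed under subwords up to each stage). I would then define the Borel function $F\colon{}^{<\omega_1}2\to\BS$ by letting $F\restrict{}^{\delta}2$ be constant for $\delta\notin C$ and, for $\delta\in C$ and an input coding $(\langle g_\alpha\mid\alpha<\delta\rangle,f)\in P_\delta$, letting $F(\cdots)(s)$ be a code for the finite set $S_s$ of pairs from $f$ given by Lemma \ref{lem:Sexists} applied to $H_\delta$, $f$, the words $w_0,\dots,w_s$ and the bound $k=3s+1$; thus every finite injective $p$ with $|p|\le 3s$ admits a pair $(a,b)\in S_s$ with $p\cup\{(a,b)\}$ a very good extension of $p$ with respect to $w_0,\dots,w_s$. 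Since $S_s$ is constructed effectively from its data and $P_\delta$ is Borel, $F$ is Borel; let $G$ be a $\Diamond(\BS,=^\infty)$-sequence for $F$.

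Next I would build $\langle g_\alpha\mid\alpha<\omega_1\rangle$ by recursion, mirroring Theorem \ref{thm:ap} and the $\CH$ construction of Section \ref{sec:MCGintro}. At stage $\delta$, given $H_\delta$ cofinitary and $\langle w_k\rangle$ as above, I put $g_\delta=\bigcup_s g_{\delta,s}$ with $g_{\delta,0}=\emptyset$ and, at substep $s+1$, three one-pair extensions of $g_{\delta,s}$: first, if $G(\delta)(s)$ codes a finite injective set of pairs containing some $(a,b)$ with $a\notin\dom(g_{\delta,s})$, $b\notin\ran(g_{\delta,s})$ and $g_{\delta,s}\cup\{(a,b)\}$ a very good extension of $g_{\delta,s}$ with respect to $w_0,\dots,w_s$ (a finite check; call this a \emph{valid guess}), add the least such pair, and otherwise do nothing; then a domain extension (add $(n,m)$ with $n$ least outside the domain and $m$ least keeping this a good extension with respect to $w_0,\dots,w_s$, by the Domain Extension Lemma) and a range extension (dually, by the Range Extension Lemma). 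Then $g_\delta\in\Sym(\N)$, $|g_{\delta,s}|\le 3s$, every substep is a good extension, and since any $w\in W_{H_\delta}$ together with its subwords is eventually among $w_0,\dots,w_s$, Lemma \ref{lem:finfxpts} gives that $w(g_\delta)$ has finitely many fixed points; hence $\langle H_\delta,g_\delta\rangle$ is cofinitary, and, taking unions at limits, $H:=\langle\{g_\alpha\mid\alpha<\omega_1\}\rangle$ is cofinitary.

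For maximality I would argue as in Theorem \ref{thm:ap}: suppose $f\in\Sym(\N)\setminus H$ with $\langle H,f\rangle$ cofinitary. Then $(\langle g_\alpha\mid\alpha<\delta\rangle,f)\in P_\delta$ for every $\delta$, these determine a path $\bar f\colon\omega_1\to 2$, and $\{\delta\mid F(\bar f\restrict\delta)=^\infty G(\delta)\}$ is stationary; pick $\delta\in C$ in it. For each of the infinitely many $s$ with $G(\delta)(s)=F(\langle g_\alpha\mid\alpha<\delta\rangle,f)(s)$, the value $G(\delta)(s)$ codes the set $S_s\subseteq f$ of Lemma \ref{lem:Sexists}, which, since $|g_{\delta,s}|\le 3s<3s+1$, contains a pair very-good-extending $g_{\delta,s}$ with respect to $w_0,\dots,w_s$; so $G(\delta)(s)$ is a valid guess and at substep $s+1$ a pair of $f$ entered $g_\delta$. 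Hence $g_\delta$ and $f$ agree on infinitely many inputs, so $f^{-1}g_\delta\in\langle H,f\rangle$ has infinitely many fixed points, and it is not the identity (otherwise $f=g_\delta\in H$), contradicting the cofinitariness of $\langle H,f\rangle$. Therefore $H$ is a maximal cofinitary group, hence uncountable by the Adeleke--Truss theorem, so $|H|=\aleph_1$ and $\mathfrak{a}_g=\aleph_1$.

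The step I expect to be the main obstacle is establishing the Borelness and uniformity of the guessing apparatus: that the enumeration $\langle w_k\rangle$ of $W_{H_\delta}$ and the set $S_s$ of Lemma \ref{lem:Sexists} depend on the coded input in a Borel (essentially continuous) way, and, crucially, that $F$ must specify $S_s$ without seeing the approximation $g_{\delta,s}$ it will later be measured against --- which is exactly why Lemma \ref{lem:Sexists} is phrased for \emph{all} injective $p$ with $|p|<k$ and why the fixed bound $|g_{\delta,s}|\le 3s$ must be maintained by the construction. One also has to check that interleaving the guess substep (a very good, hence good, extension) with the domain and range extensions still leaves, for each fixed word, all but finitely many substeps good extensions, so that Lemma \ref{lem:finfxpts} applies and the group stays cofinitary while nonetheless absorbing infinitely many pairs of any $f$ that the diamond sequence guesses correctly.
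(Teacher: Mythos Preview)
Your proposal is correct and follows essentially the same approach as the paper: define the Borel $F$ via Lemma~\ref{lem:Sexists} so that $F(\cdots)(s)$ codes a finite set of pairs from $f$ that very-good-extends every injective $p$ of size at most $3s$, build $g_\delta$ in three substeps keeping $|g_{\delta,s}|\le 3s$, and use the stationary hitting set for maximality. The only cosmetic differences are that the paper lets $F$ output $0$ when no such code exists (rather than restricting the domain to a Borel $P_\delta$ as you do), and the paper's notion of ``valid guess'' requires the coded set to work for \emph{all} $p$ with $|p|\le 3n$, whereas yours checks only against the current $g_{\delta,s}$ --- your simpler check suffices since $|g_{\delta,s}|\le 3s$ and the matched value $F(\cdots)(s)$ is designed to work for all such $p$.
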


\begin{proof}
  We use the same strategy as in the proof of the previous theorem: we
  define a function $F$ whose $\Diamond(\BS,{=^\infty})$-sequence
  helps us build a maximal cofinitary group $\langle \{g_\alpha \mid
  \alpha < \omega_1\}\rangle$.
  
  By coding, as in the proof of Theorem \ref{thm:ap}, we let its
  domain be the set of pairs $(\langle g_\alpha \mid \alpha < \delta
  \rangle, g)$ with $\delta < \omega_1$ and $\{g_\alpha \mid \alpha <
  \delta \} \cup \{g\}$ a family of permutations.  This coding works
  on a club $C \subseteq \omega_1$, which is enough.  Also by coding
  we let its range be ${}^\N (\N \cup {}^{<\omega}(\N \times \N))$. We
  also fix for every $\delta < \omega_1$ a bijection $e_\delta : \N
  \rightarrow \delta$.

  For $\langle g_\alpha \mid \alpha < \delta \rangle$ a sequence of
  permutations we let $n \mapsto \tilde{w}_n$ be an enumeration of
  $W_{\langle \{g_\alpha \mid \alpha < \delta\} \rangle}$.
  
  Now we can define $F$.  On the levels $\delta < \omega_1$ where the
  chosen coding for the input does not work, define $F$ to be any
  constant map.  On the levels where the coding does work, define
  $F(\langle g_\alpha \mid \alpha < \delta \rangle,g)(n)$ to be either
  $m$, the least code for $\big( (k_0,g(k_0)), (k_1,g(k_1)), \ldots,
  (k_N, g(k_N))\big)$ such that for every injective partial map $p :
  \N \rightharpoonup \N$ with $|p| \leq 3n$ there is a pair
  $(k_i,g(k_i))$ coded in $m$ such that $p \cup \{(k_i,g(k_i))\}$ is a
  very good extension of $p$ with respect to all words $\tilde{w}_0,
  \ldots, \tilde{w}_n$, or $0$ if such a code does not exist.
  
  Note that by Lemma \ref{lem:Sexists}, if $\{g_\alpha \mid \alpha <
  \delta \} \cup \{g\}$ generates a cofinitary group and $g \not \in
  \langle \{g_\alpha \mid \alpha < \delta \} \rangle$, then there is
  such a code $m$.  Also note that the function $F$ is Borel (which
  can be shown in the same way we showed $F$ in the proof of
  Theorem \ref{thm:ap} to be Borel).

  Let $G : \omega_1 \rightarrow \BS$ be a $\Diamond(\BS,
  {=^\infty})$-sequence for this $F$. We define $G(\delta)(n)$ to be a
  \emph{valid guess} for $\langle g_\alpha \mid \alpha < \delta
  \rangle$, a family of permutations that generates a cofinitary group,
  iff
  \renewcommand{\theenumi}{V\arabic{enumi}}
  \begin{itemize}
    \item $G(\delta)(n) = \big( (k_0, o_0), (k_1, o_1), \ldots, (k_N,
      o_N) \big)$ for some $k_i, o_i \in \N$ and $N \in \N$,
    \item all $k_i$ are distinct,
    \item all $o_i$ are distinct, and
    \item for every partial injective map $p : \N \rightharpoonup \N$
      with $|p| \leq 3n$ there is a pair $(k_i,o_i)$ such that $p \cup
      \{(k_i,o_i)\}$ is a very good extension of $p$ with respect to
      all words $\tilde{w}_0, \ldots, \tilde{w}_n$.
  \end{itemize}
  Note that for any $\delta < \omega_1$, $n \in \N$, and any
  permutation $g$ such that $g \not \in \langle \{g_\alpha \mid \alpha <
  \delta \} \rangle$ and $\langle \{g_\alpha \mid \alpha < \delta \}
  \cup \{g\} \rangle$ is cofinitary, if $F(\langle g_\alpha \mid
  \alpha < \delta \rangle, g )(n) = G(\delta)(n)$ then $G(\delta)(n)$
  is a valid guess for $\langle g_\alpha \mid \alpha < \delta
  \rangle$.
  
  Now we use $G$ to construct recursively $\langle g_\alpha \mid
  \alpha < \omega_1 \rangle$, a sequence of permutations which
  generates a maximal cofinitary group.  So suppose $\langle g_\alpha
  \mid \alpha < \delta \rangle$ has been constructed.  Then construct
  $g_\delta := \bigcup_{s \in \N} g_{\delta,s}$ recursively by:
  \renewcommand{\theenumi}{P\arabic{enumi}}
  \begin{enumerate}
    \item $g_{\delta,0} := \emptyset$,
    \item $g'_{\delta,s+1} := g_{\delta,s}$ if $G(\delta)(s)$ is not a
      valid guess for $\langle g_\alpha \mid \alpha < \delta
    \rangle$,
    \item \label{item:agstep3}
      $g'_{\delta,s+1} := g_{\delta,s} \cup \{ (k_i,o_i) \}$ if
      $G(\delta)(s) = \left( (k_0,o_0), \ldots, (k_N,o_N) \right)$ is
      a valid guess for $\langle g_\alpha \mid \alpha < \delta
      \rangle$ and $i$ is least such that $p \cup \{(k_i,o_i)\}$ is a
      very good extension of $p$ for all words $\tilde{w}_0, \ldots,
      \tilde{w}_n$,
    \item \label{item:agstep4} $g''_{\delta,s+1} := g'_{\delta,s+1}
      \cup \{(a,b)\}$ where $a$ is the least number not in
      $\dom(g'_{\delta,s+1})$ and $b$ is the least number such that
      $g'_{\delta,s+1} \cup \{(a,b)\}$ is a good extension of
      $g'_{\delta,s+1}$ for all words $\tilde{w}_0, \ldots,
      \tilde{w}_n$ (this $b$ exists by the domain extension lemma),
      and
    \item \label{item:agstep5} $g_{\delta,s+1} := g''_{\delta,s+1}
      \cup \{(c,d)\}$ where $d$ is the least number not in
      $\ran(g''_{\delta,s+1})$ and $c$ is the least number such that
      $g''_{\delta,s+1} \cup \{(c,d)\}$ is a good extension of
      $g''_{\delta,s+1}$ with respect to all words $\tilde{w}_0,
      \ldots, \tilde{w}_n$ (this $c$ exists by the range extension
      lemma).
  \end{enumerate}

  Note that $|g_{\delta,s}|$ is at most $3s$ which means we can always
  perform step \ref{item:agstep3} when applicable.
  
  Now $g_\delta$ is a permutation (by \ref{item:agstep4} and
  \ref{item:agstep5}) such that $\{g_\alpha \mid \alpha < \delta \}
  \cup \{g_\delta\}$ generates a cofinitary group; this completes the
  construction of $\langle g_\alpha \mid \alpha < \omega_1 \rangle$.

  It remains to see that this group is \emph{maximal} cofinitary.
  Suppose, towards a contradiction, that there is $g \in
  \Sym(\N)$ such that $g \not \in \langle \{ g_\alpha \mid \alpha <
  \omega_1 \} \rangle$ and that $\langle \{g_\alpha \mid \alpha <
  \omega_1 \},g\rangle$ is a cofinitary group.  Then the set
  \begin{equation*}
    \{ \delta < \omega_1 \mid F(\langle g_\alpha \mid \alpha < \delta
    \rangle, g) =^\infty G(\delta) \}
  \end{equation*}
  is stationary.  Remember that we use a coding for the inputs of the
  function $F$, and note that the sequence $\delta \mapsto (\langle
  g_\alpha \mid \alpha < \delta \rangle, g)$ determines a path $f :
  \omega_1 \rightarrow 2$ in the tree ${}^{<\omega_1} 2$.  Now let
  $\delta \in c$ be a member of this set. Then $F(\langle g_\alpha
  \mid \alpha < \delta \rangle, g) =^\infty G(\delta)$, which means
  that for infinitely many $n$, the value $G(\delta)(n)$ is a valid
  guess for $\langle g_\alpha \mid \alpha < \delta \rangle$ and all
  pairs in $G(\delta)(n)$ belong to $g$. This means we hit $g$
  infinitely often with $g_\delta$ (by \ref{item:agstep5}), which is a
  contradiction.
\end{proof}

Combining Theorem \ref{thm:ap} and Theorem \ref{thm:ag} with
\begin{theorem}[\cite{MHD}]
  $\Diamond(\BS,{=^\infty})$ is true in the Miller model.
\end{theorem}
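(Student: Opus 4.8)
This is the theorem of Moore, Hru\v{s}\'{a}k and D\v{z}amonja from \cite{MHD}, and the plan is to prove it with the general ``parametrized $\diamondsuit$'' machinery of that paper. The Miller model is $V^{\mathbb{P}_{\omega_2}}$, where $\langle \mathbb{P}_\alpha, \dot{\mathbb{Q}}_\alpha \mid \alpha < \omega_2 \rangle$ is a countable support iteration of superperfect (Miller) forcing over a ground model $V \models \GCH$, and we may also arrange $V \models \diamondsuit$. The first thing to note is that $\Diamond(\BS,{=^\infty})$ follows trivially from $\diamondsuit$: given a Borel $F : {}^{<\omega_1}2 \to \BS$ and a $\diamondsuit$-sequence $\langle b_\delta \mid \delta < \omega_1 \rangle$ with each $b_\delta \in {}^\delta 2$, the function $\delta \mapsto F(b_\delta)$ is a $\Diamond(\BS,{=^\infty})$-sequence for $F$. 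So the whole content is that a guessing structure of this kind can be \emph{preserved} through the iteration even though $\CH$, hence $\diamondsuit$, fails in $V^{\mathbb{P}_{\omega_2}}$.

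First I would reduce the parameter to the ground model. A Borel $F$ as above is coded by a single subset of $\omega_1$; since each iterand is proper of size $\le \mathfrak{c} = \aleph_1$ at its stage and $\CH$ holds at every intermediate stage, $\mathbb{P}_{\omega_2}$ has the $\aleph_2$-chain condition, so a name for a subset of $\omega_1$ has support of size $\le \aleph_1$ and is therefore a $\mathbb{P}_\alpha$-name for some $\alpha < \omega_2$. Because the tail $\mathbb{P}_{[\alpha,\omega_2)}$ is, over $V^{\mathbb{P}_\alpha}$, again a countable support Miller iteration of length $\omega_2$, one can re-base the argument and assume that $F$, and the $\diamondsuit$-data, lie in $V$; it then suffices to produce $G \in V^{\mathbb{P}_{\omega_2}}$ guessing $F(x \restrict \delta)$ on a stationary set for every $x : \omega_1 \to 2$ of the extension.

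The heart of the argument is threading the guessing sequence through the iteration. Against $x \in {}^{\omega_1}2 \cap V$ the function $G_0(\delta) = F(b_\delta)$ already works, so the issue is the new $x$'s. Here one uses that Miller forcing adds no eventually different real --- equivalently, $V \cap \BS$ remains ${=^\infty}$-dominating (non-meager) --- and, crucially, that this holds in a uniform, ``Borel relational system'' form that is preserved under countable support iteration of Miller forcing (a Shelah-style preservation theorem; compare the Miller preservation arguments in \cite{BJ}). One would carry along, by recursion on $\alpha \le \omega_2$, a function $G_\alpha : \omega_1 \to \BS$ in $V^{\mathbb{P}_\alpha}$ that is a $\Diamond(\BS,{=^\infty})$-sequence for $F$ against all of ${}^{\omega_1}2 \cap V^{\mathbb{P}_\alpha}$: at a successor step the freshly added Miller real supplies, on a club of $\delta$, the data needed to ``correct'' the earlier guesses so that they continue to ${=^\infty}$-hit the newly appearing $x$'s, while the preservation theorem is exactly what is needed to keep such a sequence alive through the limit stages of cofinality $\omega_1$, where $\aleph_1$ many of the $G_\beta$ must be fused into one. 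Setting $G := G_{\omega_2}$ would then be the required guessing function, and since every $x : \omega_1 \to 2$ of the final model (hence each $x \restrict \delta$) appears by some stage below $\omega_2$, by construction $\{ \delta < \omega_1 \mid F(x \restrict \delta) =^\infty G(\delta)\}$ is stationary; as $F$ was an arbitrary Borel function, this gives $\Diamond(\BS,{=^\infty})$.

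The hard part will be the preservation step: isolating the right uniform (Suslin) form of ``Miller forcing adds no eventually different real,'' proving that it is preserved by the countable support iteration --- the fusion at limits of cofinality $\omega_1$ being the delicate point --- and organizing the stage-by-stage corrections into one function $G$ that stationarily guesses $F$ against \emph{every} $x$ of the final model at once.
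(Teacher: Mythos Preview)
The paper does not give its own proof of this statement at all; it simply quotes the result from \cite{MHD} as a black box and immediately combines it with Theorems~\ref{thm:ap} and~\ref{thm:ag} to get the corollary about the Miller model. So there is nothing in the paper to compare your sketch against.

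Your outline is in the right spirit for the actual \cite{MHD} argument: reduce the Borel code for $F$ to an intermediate stage by the $\aleph_2$-c.c., start from $\diamondsuit$ in the ground model, and push a guessing sequence through the countable-support iteration using a preservation theorem tied to the relational system $(\BS,\BS,{=^\infty})$. Two points are worth tightening. First, the relevant combinatorial fact about Miller forcing is most cleanly phrased as ``Miller forcing preserves non-meager sets of reals'' (equivalently, the ground-model reals remain a ${=^\infty}$-covering family); your phrasing ``adds no eventually different real'' is the same thing, but the preservation theorem in \cite{MHD} is stated in the former, Suslin-relational-system form, and that is what iterates. Second, the picture of building $G_\alpha$ stage by stage and ``correcting'' at successors is not quite how \cite{MHD} proceed: they prove a single abstract theorem that $\diamondsuit$ in $V$ plus an appropriate iterable preservation property for the relational system yields $\Diamond(A,B,E)$ in the final model, and then verify that Miller forcing has the required preservation property. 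The hard work is entirely in that general preservation theorem; there is no ad hoc fusion of $G_\beta$'s at limits of cofinality $\omega_1$ in the way your last paragraph suggests.
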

\noindent we have

\begin{corollary}
  $\mathfrak{a}_p =\mathfrak{a}_g = \aleph_1$ is true in the Miller model.
\end{corollary}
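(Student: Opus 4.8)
The plan is short, because this corollary is nothing more than the conjunction of two theorems already proved together with one cited fact, so the work is entirely in the citation chain; the real content has been absorbed into the preceding subsection and into \cite{MHD}.

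First I would recall that the Miller model — the countable-support iteration of superperfect (Miller) forcing of length $\omega_2$ over a model of $\CH$ — is a model of $\ZFC$, so every $\ZFC$-fact about $\mathfrak{a}_p$ and $\mathfrak{a}_g$ established earlier holds there. In particular Theorem \ref{thm:1.10} gives $\non(\mathcal{M}) \leq \mathfrak{a}_p, \mathfrak{a}_g$, and since $\non(\mathcal{M}) \geq \aleph_1$ always, the lower bounds $\mathfrak{a}_p, \mathfrak{a}_g \geq \aleph_1$ come for free. Thus it suffices to exhibit, in the Miller model, a maximal almost disjoint family of permutations of size $\aleph_1$ and a maximal cofinitary group of size $\aleph_1$.

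Next I would invoke the theorem of Moore, Hru\v{s}\'{a}k and D\v{z}amonja (\cite{MHD}) that $\Diamond(\BS,{=^\infty})$ holds in the Miller model. Granting this, Theorem \ref{thm:ap} immediately yields $\mathfrak{a}_p = \aleph_1$ in that model and Theorem \ref{thm:ag} immediately yields $\mathfrak{a}_g = \aleph_1$; combining the two equalities gives $\mathfrak{a}_p = \mathfrak{a}_g = \aleph_1$, which is the corollary. No separate construction is required at this point: the proofs of Theorems \ref{thm:ap} and \ref{thm:ag} already build the witnessing family of permutations, respectively the witnessing generating sequence of a maximal cofinitary group, directly from a $\Diamond(\BS,{=^\infty})$-sequence for the Borel coding map $F$ defined there.

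The only thing worth checking carefully — and I expect it to be the sole (very mild) obstacle — is that the hypotheses of the cited results genuinely line up: that the guessing principle verified in the Miller model in \cite{MHD} is exactly the $\Diamond(\BS,{=^\infty})$ of the definition used here (same target space $\BS$, same relation ${=^\infty}$, same notion of Borel map ${}^{<\omega_1}2 \rightarrow \BS$), and that the maps $F$ constructed in the proofs of Theorems \ref{thm:ap} and \ref{thm:ag} are Borel in precisely that sense. Both have in fact been arranged in the preceding pages (the Borel-ness of $F$ was checked when each theorem was proved, using continuity of the coding on the relevant club together with the Borel codings $D$ and $\theta$), and the hard analytic content — the ``very good extension'' machinery and Lemma \ref{lem:Sexists} behind Theorem \ref{thm:ag}, and the forcing analysis of \cite{MHD} — is already in place, so the corollary simply harvests it.
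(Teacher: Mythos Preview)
Your proposal is correct and matches the paper's approach exactly: the paper simply states that the corollary follows by combining Theorems~\ref{thm:ap} and~\ref{thm:ag} with the cited result from \cite{MHD} that $\Diamond(\BS,{=^\infty})$ holds in the Miller model. Your additional remarks about the lower bound and the alignment of hypotheses are fine but not needed, since Theorems~\ref{thm:ap} and~\ref{thm:ag} already deliver the equalities directly.
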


Then with $\mathfrak{g}\leq c(\Sym(\N))$, from \cite{BL}
($\mathfrak{g}$ is the groupwise density, for a definition see
\cite{B}, and $c(\Sym(\N))$ is the cofinality of the symmetric group,
see Definition \ref{def part Cichon}), and the fact, from \cite{B},
that the cardinal $\mathfrak{g}$ is $\aleph_2$ in the Miller model, we
obtain the following theorem.

\begin{theorem}
  In the Miller model $\mathfrak{a}_p = \mathfrak{a}_g = \aleph_1 <
  \aleph_2 = c(\Sym(\N))$.
\end{theorem}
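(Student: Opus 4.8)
The plan is to obtain the theorem by splicing together the two halves of the displayed strict inequality, each of which is already available from material in this subsection plus the two citations. First I would pin down the Miller model as the countable support iteration of superperfect (Miller) forcing of length $\omega_2$ over a model of $\CH$; in this model $2^{\aleph_0}=\aleph_2$. For the left-hand equality I would simply invoke the Corollary stated just above, namely that $\mathfrak{a}_p=\mathfrak{a}_g=\aleph_1$ holds in the Miller model; recall this Corollary is itself obtained by applying Theorems \ref{thm:ap} and \ref{thm:ag} in the Miller model together with the cited fact that $\Diamond(\BS,{=^\infty})$ holds there.

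For the right-hand side I would use the $\ZFC$ inequality $\mathfrak{g}\leq c(\Sym(\N))$ of \cite{BL} and the fact of \cite{B} that $\mathfrak{g}=\aleph_2$ in the Miller model; together these give $c(\Sym(\N))\geq\aleph_2$. Combining this with the bound $c(\Sym(\N))\leq\mathfrak{d}$ of \cite{ST} and the trivial inequality $\mathfrak{d}\leq 2^{\aleph_0}=\aleph_2$, I conclude $c(\Sym(\N))=\aleph_2$. (Alternatively one can bypass $\mathfrak{d}$ and use $c(\Sym(\N))\leq 2^{\aleph_0}$ directly.)

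Putting the two computations side by side yields $\mathfrak{a}_p=\mathfrak{a}_g=\aleph_1<\aleph_2=c(\Sym(\N))$ in the Miller model, which is exactly the assertion of the theorem. There is no genuine obstacle here: all the combinatorial content lives in Theorems \ref{thm:ap} and \ref{thm:ag} and in the cited evaluations of $\mathfrak{g}$ and of $\Diamond(\BS,{=^\infty})$ in the Miller model. The only point deserving a moment's care is bookkeeping — checking that the various cited facts about ``the Miller model'' all refer to the same standard iteration, so that they may legitimately be used simultaneously.
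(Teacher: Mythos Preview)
Your proposal is correct and follows essentially the same route as the paper: invoke the Corollary (from Theorems \ref{thm:ap}, \ref{thm:ag} and the fact that $\Diamond(\BS,{=^\infty})$ holds in the Miller model) for the left half, and combine $\mathfrak{g}\leq c(\Sym(\N))$ from \cite{BL} with $\mathfrak{g}=\aleph_2$ in the Miller model from \cite{B} for the right half. Your explicit remark on the upper bound $c(\Sym(\N))\leq 2^{\aleph_0}=\aleph_2$ is a small addition the paper leaves implicit, but otherwise the arguments coincide.
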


\subsection{Template Forcing Result}

In this subsection we prove the following theorem (for
$\add(\mathcal{N})$ and $\cof(\mathcal{N})$ see Definition \ref{def
  part Cichon}).

\begin{theorem}
  \label{thm:template}
  \label{theorem template}
  The continuum hypothesis implies that for all regular cardinals
  $\lambda > \mu > \aleph_1$ with $\lambda = \lambda^\omega$, there is
  a forcing extension in which $\add(\Null) = \cof(\Null) = \mu$ and
  $\mathfrak{a}_g = 2^{\aleph_0} = \lambda$.
\end{theorem}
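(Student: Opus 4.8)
The plan is to adapt Shelah's template forcing technique (\cite{Sh}), in the form developed for cardinal characteristics of maximal almost disjoint families, to the cofinitary setting, replacing the eventually different poset by a ccc poset that adjoins to a cofinitary group $G$ a new generator $g$ with $\langle G,g\rangle$ still cofinitary. Concretely, working in a model $M$ of $\ZFC+\CH$, I would fix a \emph{template} $(L,\mathcal I)$: a linear order $L$ of size $\lambda$ together with an ``algebra'' $\mathcal I$ of subsets of $L$ with depths, closed under the usual operations, arranged so that every subset of $L$ of size $<\lambda$ lies inside a member of $\mathcal I$ of the same size and every countable one inside a member of size $\le\mu$. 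The template carries two kinds of coordinates: $\mu$-many \emph{amoeba coordinates}, placed cofinally, at each of which the iterand is amoeba forcing $\mathbb A$ for measure; and $\lambda$-many \emph{generator coordinates} $x$, at each of which the iterand is the ccc poset $\mathbb Q_x$ of finite injective partial functions $\N\partmap\N$ ordered by good extension (Definition~\ref{definition good extension}) with respect to a fixed enumeration of the words in $W_{G_{<x}}$, where $G_{<x}$ is the cofinitary group generated by the generators at coordinates $<_L x$; each $\mathbb Q_x$ also carries a requirement that its generic $g_x$ meet a prescribed target real $f_x$ (from a bookkeeping enumeration distributing every real cofinally often) infinitely often, whenever this is compatible with cofinitariness, using the Domain, Range and Hitting-$f$ Lemmas of Section~\ref{sec:MCGintro}. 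One then defines $\mathbb Q(L)$ by recursion on depth as the finite-support template iteration; by the general theory $\mathbb Q(L)$ is ccc and (using $\lambda=\lambda^\omega$ and $\CH$ in $M$) has size $\lambda$, so it preserves cardinals, forces $2^{\aleph_0}=\lambda$, and for each $A\in\mathcal I$ the restriction $\mathbb Q(L\restrict A)$ is a complete suborder.

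For the Cicho\'n half I would show $\add(\Null)\ge\mu$ and $\cof(\Null)\le\mu$ in $M^{\mathbb Q(L)}$; since $\add(\Null)$ is the least and $\cof(\Null)$ the largest cardinal in Cicho\'n's diagram and $\add(\Null)\le\cof(\Null)$ always, this forces every cardinal in the diagram to be $\mu$. The inequality $\cof(\Null)\le\mu$ comes from the $\mu$ amoeba reals: one proves by induction along the template that $\mathbb Q(L)$ is \emph{good} for the relevant preservation relation, i.e.\ adds no null set (no slalom) not already captured by an amoeba real added so far, so these $\mu$ reals yield a cofinal family in $\Null$. The inequality $\add(\Null)\ge\mu$ follows because $\mu$ is regular and $\mathbb Q(L)$ is ccc: any $<\mu$ null sets of the extension already appear in $M^{\mathbb Q(L\restrict A)}$ for some $A\in\mathcal I$ with $|A|<\mu$, and a later amoeba coordinate adds an amoeba real over $M^{\mathbb Q(L\restrict A)}$, hence a single null set covering all of them.

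For $\mathfrak a_g=\lambda$, put $G:=\langle\{g_x\mid x\text{ a generator coordinate}\}\rangle$, a group of size $\lambda$. It is cofinitary: any $h\in G$ equals $w(g_{x_1},\dots,g_{x_k})$ for finitely many generators, and since $L$ is a \emph{linear} order one of them, say $x_k$, is $<_L$-above the rest, so $h=w'(g_{x_k})$ with $w'\in W_{G_{<x_k}}$; as $g_{x_k}$ takes, from some stage on, good extensions with respect to $w'$ and all its subwords, Lemma~\ref{lem:enoughforcof} gives that $h$ has finitely many fixed points or is the identity. Hence $\mathfrak a_g\le\lambda$ once $G$ is seen to be maximal: any $f\in\Sym(\N)$ in the extension has a nice name on countably many coordinates, so $f\in M^{\mathbb Q(L\restrict A)}$ for some $A\in\mathcal I$ of size $<\lambda$; choosing a generator coordinate $x$ with target $f_x=f$ and $A\subseteq L_{<x}$ (the bookkeeping and the template's room make such $x$ exist cofinally), either $f\in G_{<x}\subseteq G$, or $g_x$ agrees with $f$ on an infinite set, whence $f^{-1}g_x\in\langle G,f\rangle$ has infinitely many fixed points and is not the identity, so $\langle G,f\rangle$ is not cofinitary. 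For $\mathfrak a_g\ge\lambda$ (note we only get $\mathfrak a_g\ge\non(\mathcal M)=\mu$ for free from Theorem~\ref{thm:1.10}, so something must be added) one must show every cofinitary group $H$ with $|H|<\lambda$ is extendible: again $H\in M^{\mathbb Q(L\restrict A')}$ for some $A'\in\mathcal I$ of size $<\lambda$, and for a generator coordinate $x$ above $A'$ the generic $g_x$ is not in $H$ and, because the Domain and Range Extension Lemmas apply to the cofinitary group $H$ as well, the set of conditions in $\mathbb Q_x$ that are good extensions with respect to any fixed word over $H$ and its subwords is dense below every condition; diagonalizing against the countably many such words yields that $v(g_x)$ has finitely many fixed points for every reduced word $v$ over $H$, i.e.\ $\langle H,g_x\rangle$ is cofinitary, so $H$ is not maximal.

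The main obstacle is the preservation theorem behind $\add(\Null)=\cof(\Null)=\mu$: one must verify, by induction along the depths of the template, that $\mathbb Q(L)$ adds no ``too complicated'' null set, which reduces to showing each iterand is good for the appropriate preservation relation. For the amoeba coordinates this is classical, but for the generator posets $\mathbb Q_x$ it must be established essentially from scratch --- these are $\sigma$-centered (parametrized by the countably many finite partial functions), yet their ordering is the subtle good-extension ordering, and one has to check that this ordering does not destroy the ``an amoeba real is cofinal'' statements --- and the template recursion must be set up so that goodness survives at limits and at the amalgamation steps. A secondary, cofinitary-specific difficulty is the robustness point used for $\mathfrak a_g\ge\lambda$: the generator poset $\mathbb Q_x$ only ``knows'' the local group $G_{<x}$, so making the generic $g_x$ remain cofinitary when composed with an \emph{arbitrary} small cofinitary group $H$ from a previously unseen part of the extension requires the template's independence to be strong enough to absorb the words over $H$ one finite chunk at a time, while keeping the whole forcing ccc and good for the measure preservation; reconciling these demands is where the construction is most delicate.
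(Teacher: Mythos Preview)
Your plan diverges from the paper's in a fundamental way, and the divergence is exactly where the gap lies.

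The paper does \emph{not} introduce any generator coordinates. Its template iteration uses localization forcing $\loc$ at \emph{every} coordinate; the $\lambda$-sized linear order $L$ is built from finite sequences in $\mu\cup\lambda^*\cup\lambda$, but each node carries the same iterand. The Cicho\'n half then needs no preservation theorem beyond Lemma~\ref{lem:realsEtcInSmaller}: every real lies in $V^{\mathbb{P}(A,\mathcal I\restrict A)}$ for some countable $A$, hence below some $L_\alpha$ with $\alpha<\mu$, and the slalom at coordinate $\alpha$ localizes it; dually, fewer than $\mu$ reals are captured at a single stage and localized by a later slalom. The whole point of using $\loc$ and Bartoszy\'nski's characterisation is to avoid the goodness-preservation induction you describe as ``the main obstacle.''

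More importantly, the paper obtains $\mathfrak a_g\ge\lambda$ by an \emph{isomorphism-of-names} argument, not by exhibiting a particular group. Given a name for a cofinitary group $\dot G$ of size $\kappa\in[\mu,\lambda)$, one associates to each $\dot g_\alpha$ a countable subtree $B^\alpha\subseteq L$; there are only $2^{\aleph_0}$ isomorphism types of such labelled trees, so one finds $\omega_1$-many pairwise isomorphic $B^\alpha$ together with a fresh tree $B^\kappa$ and coherent tree-isomorphisms that move the names and make the relevant induced templates innocuous extensions of one another. The transported name $\dot g_\kappa$ then satisfies: for any word $w$ over finitely many $\dot g_{\beta_j}$, the poset on $B^\kappa\cup\bigcup_j B^{\beta_j}$ is isomorphic to one on $B^\alpha\cup\bigcup_j B^{\beta_j}$ for some $\alpha<\omega_1$, fixing the $\dot g_{\beta_j}$ and sending $\dot g_\kappa$ to $\dot g_\alpha$; since $w(\dot g_\alpha)$ is cofinitary, so is $w(\dot g_\kappa)$. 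Thus $\dot G$ was not maximal. The template's intricate structure (the $\lambda^*$-copy, the partition $\{S^\alpha\}$, the notion of relevance) exists precisely to make this tree-matching and the innocuous-extension verification go through.

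Your argument for $\mathfrak a_g\ge\lambda$ does not work as written. You want the generic $g_x$ to be cofinitary over an \emph{arbitrary} cofinitary $H$ from an earlier stage, and you argue that for each word $v\in W_H$ the conditions good for $v$ are dense. But density is not enough: Lemma~\ref{lem:finfxpts} requires that from some stage on \emph{every} extension taken is good with respect to $v$, and the ordering of your $\mathbb Q_x$ only enforces goodness for words over $G_{<x}$. Between visits to your dense sets the generic may well accumulate infinitely many fixed points for $v(g_x)$. Since $H$ need not be contained in $G_{<x}$, there is no way to fold the $W_H$-words into the ordering after the fact. This is not a technical wrinkle; it is the reason the paper abandons the ``build a witness'' strategy entirely and uses the isomorphism-of-names mechanism instead.
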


This proof is derived from the work of J\"org Brendle in \cite{JB} and
suggestions from Tapani Hytinnen.  We will show the details for the
arguments Brendle only outlines, but refer to \cite{JB} for the
arguments that are given completely there.

\subsubsection*{The Basic Forcing}

Our basic forcing will be localization forcing.  Here we collect the
basic facts about this forcing for the convenience of the reader.

\begin{definition}
For $S$ a set, and $n \in \N$ define
\begin{itemize}
\item $\finset{S}$ to be the set of finite subsets of $S$.  

\item ${}^{\leq n}[S]$ to be the set of subsets of $S$ of size $\leq n$.

\item $\finsetd{S}{n}$ to be the set of size $n$ subsets of $S$.

\item $\finseq{S}$ to be the set of finite sequences from $S$.

\item $\funcN{S}$ to be the set of functions from $\N$ to $S$.
\end{itemize}
\end{definition}

The localization forcing notion $\loc = \langle P, \leq \rangle$ is
defined by
\begin{itemize}
  \item $P$ is the set of all pairs $(\sigma, \varphi) \in
    \finseq{\finset{\N}} \times \funcN{(\finset{\N})}$ such that
    $|\sigma(i)| \leq i$ for all $i < \lh(\sigma)$ and $|\varphi(i)|
    \leq \lh(\sigma)$ for all $i$.
  \item $(\tau, \psi) \leq (\sigma, \varphi)$ if $\lh(\tau) \geq
    \lh(\sigma)$, $\sigma \subseteq \tau$, $\varphi(j) \subseteq
    \tau(j)$ for all $\lh(\sigma) \leq j < \lh(\tau)$ and $\varphi(j)
    \subseteq \psi(j)$ for all $j$.
\end{itemize}

Note that here $\sigma \subseteq \tau$ means that $\sigma = \tau
\restrict \lh(\sigma)$.

\begin{lemma}
  $\loc$ is $\sigma$-linked.
\end{lemma}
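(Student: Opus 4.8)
The plan is to realize $P$ as a countable union of linked sets, grouping conditions by a finite amount of their information. The naive choice---partitioning the conditions $(\sigma,\varphi)$ according to the stem $\sigma$ alone (of which there are only countably many, since $\finset{\N}$ is countable and hence so is $\finseq{\finset{\N}}$)---does not quite work, and understanding why is the crux. Given $(\sigma,\varphi_1),(\sigma,\varphi_2)$ with the same stem, the natural common extension would keep the stem and set the side condition to $\psi(i)=\varphi_1(i)\cup\varphi_2(i)$; but these sets can have size up to $2\lh(\sigma)$, so to stay inside $\loc$ one must first lengthen the stem to some $\tau$ with $\lh(\tau)\ge 2\lh(\sigma)$, and then, for indices $j$ in the interval $[\lh(\sigma),2\lh(\sigma))$, one is forced to choose $\tau(j)\supseteq\varphi_1(j)\cup\varphi_2(j)$ subject to $|\tau(j)|\le j$, which is impossible in general. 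This is the only real obstacle, and it disappears once $\varphi$ is frozen on that interval.

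So the decomposition I would use is: for each $\sigma\in\finseq{\finset{\N}}$ with $|\sigma(i)|\le i$ for all $i<\lh(\sigma)$, writing $l:=\lh(\sigma)$, and for each function $\rho\colon\{l,l+1,\ldots,2l-1\}\to\finset{\N}$, put
\[
  P_{\sigma,\rho}:=\{(\sigma,\varphi)\in P \mid \varphi\restrict\{l,\ldots,2l-1\}=\rho\}.
\]
There are countably many such pairs $(\sigma,\rho)$ (countably many $\sigma$, and for each only finitely many coordinates for $\rho$, each ranging over the countable set $\finset{\N}$), and every condition $(\sigma,\varphi)\in P$ lies in the piece indexed by its stem $\sigma$ and by $\varphi\restrict\{l,\ldots,2l-1\}$.

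It remains to check each $P_{\sigma,\rho}$ is linked. The case $l=0$ is degenerate: $|\varphi(i)|\le 0$ forces $\varphi$ to be the constant $\emptyset$ function, so this piece is a singleton. For $l\ge 1$, take $(\sigma,\varphi_1),(\sigma,\varphi_2)\in P_{\sigma,\rho}$, let $\tau$ have length $2l$ with $\tau\restrict l=\sigma$ and $\tau(j)=\rho(j)$ for $l\le j<2l$, and let $\psi(i)=\varphi_1(i)\cup\varphi_2(i)$ for all $i$. The verifications are routine and I would only sketch them: $(\tau,\psi)\in P$ because $|\tau(j)|\le j$ (for $j<l$ by $(\sigma,\varphi_1)\in P$, for $l\le j<2l$ because $|\tau(j)|=|\rho(j)|=|\varphi_1(j)|\le l\le j$) and because $|\psi(i)|\le|\varphi_1(i)|+|\varphi_2(i)|\le 2l=\lh(\tau)$; and $(\tau,\psi)\le(\sigma,\varphi_k)$ for $k=1,2$ reads directly off the definition of the order, since $\lh(\tau)\ge\lh(\sigma)$, $\tau$ extends $\sigma$, $\varphi_k(j)=\rho(j)=\tau(j)$ for $l\le j<2l$, and $\varphi_k(i)\subseteq\psi(i)$ for every $i$. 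Hence any two conditions in $P_{\sigma,\rho}$ are compatible, which is exactly what is needed. (The same trick, freezing $\varphi$ on $[\lh(\sigma),k\cdot\lh(\sigma))$ instead, shows each piece is $k$-linked for every $k$, but $\sigma$-linked is all we require.)
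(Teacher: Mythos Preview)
Your proof is correct and follows essentially the same approach as the paper: partition conditions according to the stem $\sigma$ together with finitely many values of $\varphi$. The paper is terser---it simply states that $\sigma_1=\sigma_2$ together with $\varphi_1(i)=\varphi_2(i)$ for all $i\le 2\lh(\sigma_1)$ is sufficient for compatibility---while you freeze only on the interval $[l,2l)$ and spell out the construction of the common extension; but the key idea is identical.
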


\begin{proof}
  A sufficient (but not necessary) condition for $(\sigma_1,
  \varphi_1)$ and $(\sigma_2, \varphi_2)$ to be compatible is:
  \begin{equation*}
    \sigma_1 = \sigma_2 \ \wedge \ (\varphi_1(i) = \varphi_2(i) \text{
      for all } i \leq 2\lh(\sigma_1)).
  \end{equation*}
  There are only countably many choices for $\sigma$ and then only
  countably many for the part of $\varphi$ that matters.
\end{proof}

A \emph{slalom} is a function $\phi: \N \rightarrow
\finset{\N}$ such that for all $n \in \N$ we have $|\phi(n)|
\leq n$.  We say a slalom \emph{localizes a real} $f \in
\BS$ if there is an $m \in \N$ such that for all $n \geq
m$ we have $f(n) \in \phi(n)$.

\begin{lemma}
  $\loc$ adds a slalom which localizes all ground model reals.
\end{lemma}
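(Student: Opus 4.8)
The plan is to show directly that the generic object produced by forcing with $\loc$ is, after reading off its first coordinate, a slalom that localizes every ground-model real. First I would unpack what the generic filter gives us. Given a $\loc$-generic filter $G$ over the ground model $V$, set $\phi := \bigcup \{ \sigma \mid \exists \varphi\ (\sigma,\varphi) \in G\}$. The compatibility of any two conditions in $G$ forces their stems to be $\subseteq$-comparable (here $\sigma \subseteq \tau$ means $\sigma = \tau \restrict \lh(\sigma)$), so $\phi$ is a well-defined function with domain an initial segment of $\N$; the requirement $|\sigma(i)| \leq i$ built into the poset guarantees $|\phi(i)| \leq i$, so $\phi$ is a slalom as soon as we know $\dom(\phi) = \N$.

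Next I would verify two families of dense sets. For each $n \in \N$, the set $E_n := \{ (\sigma,\varphi) \mid \lh(\sigma) > n \}$ is dense: given $(\sigma,\varphi)$ with $\lh(\sigma) \leq n$, extend $\sigma$ to length $n+1$ by setting $\sigma(j) := \varphi(j)$ for $\lh(\sigma) \leq j \leq n$ (this respects $|\sigma(j)| \le j$ once we note $|\varphi(j)| \le \lh(\sigma) \le j$, using $j \geq \lh(\sigma)$, and for the finitely many small $j$ where this could fail one just truncates $\varphi(j)$ — in fact since $j \ge \lh(\sigma)$ always holds in the range being filled and $\lh(\sigma)\le j$, the bound is automatic). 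Meeting all $E_n$ gives $\dom(\phi) = \N$, so $\phi$ is a genuine slalom. For localization, fix $f \in V \cap \BS$ and $m \in \N$, and let $D_{f,m} := \{ (\sigma,\varphi) \mid \forall i\ (i \geq \lh(\sigma) \Rightarrow f(i) \in \varphi(i)) \}$; I claim this is dense. Given $(\sigma,\varphi)$, define $\psi(i) := \varphi(i)$ for $i < \lh(\sigma)$ and $\psi(i) := \varphi(i) \cup \{f(i)\}$ for $i \geq \lh(\sigma)$. Then $|\psi(i)| \leq |\varphi(i)| + 1 \leq \lh(\sigma) + 1 = \lh(\sigma)$ after possibly first extending the stem by one step — more cleanly: first pass to $(\sigma',\varphi)$ with $\lh(\sigma') = \lh(\sigma)+1$ using $E_{\lh(\sigma)}$, then the bound $|\psi(i)| \le \lh(\sigma)+1 = \lh(\sigma')$ holds, and $(\sigma',\psi) \leq (\sigma',\varphi) \leq (\sigma,\varphi)$ lies in $D_{f,m}$.

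Finally I would assemble the argument: meeting $D_{f,m}$ for a single $m$ (say $m=0$) already produces a condition $(\sigma,\psi) \in G$ such that $f(i) \in \psi(i)$ for all $i \geq \lh(\sigma)$; since $\psi(i) \subseteq \phi(i)$ for all such $i$ that get filled into the stem at later stages (by the ordering, any further extension preserves $\varphi$-values into $\psi$-values and $\psi$-values into stem-values), we conclude $f(i) \in \phi(i)$ for all but finitely many $i$, i.e. $\phi$ localizes $f$. Since $f \in V \cap \BS$ was arbitrary, $\phi$ localizes all ground-model reals. The one point deserving care — and the main obstacle to a fully rigorous write-up — is tracking how the side condition $\varphi$ interacts with the ordering: one must check that a value placed into $\varphi(i)$ (equivalently into $\psi(i)$) at some stage is indeed eventually absorbed into the stem $\phi(i)$, which follows from the clause ``$\varphi(j) \subseteq \tau(j)$ for all $\lh(\sigma) \leq j < \lh(\tau)$'' in the definition of $\leq$, combined with genericity forcing the stem length past $i$. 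I would spell this out as the technical heart of the proof, and leave the density verifications as routine.
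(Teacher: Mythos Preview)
Your proposal is correct and follows essentially the same approach as the paper: define the generic slalom as the union of the stems, then show density of the set of conditions whose side condition $\varphi$ contains $f(i)$ for all $i$ beyond the stem, by first lengthening the stem by one and then adjoining $f(i)$ to each $\varphi(i)$. The paper does this in a single step (passing from $(\sigma,\varphi)$ to $(\sigma\,\hat{\ }\,\varphi(\lh(\sigma)),\,\varphi')$ with $\varphi'(n)=\varphi(n)\cup\{f(n)\}$), and your parameter $m$ in $D_{f,m}$ is superfluous since it never appears in the defining condition, but these are cosmetic differences only.
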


\begin{proof}
  If $G$ is $\loc$ generic, then $\phi = \bigcup_{p \in G} \pi_0(p)$
  is a function $\N \rightarrow \finset{\N}$ such that $|\phi(n)| \leq
  n$, i.e. a slalom.
  
  For $f \in \BS$ define $D_f = \{ p \in \loc \mid p \forces f$ is
  localized by $\bigcup_{g \in \dot{G}} \pi_0(p)\}$ ($\dot{G}$ a name
  for the generic).  Then $D_f$ is dense: If $p=(\sigma, \varphi) \in
  \loc$, then $(\sigma \hat{\phantom{x}} \varphi(\lh(\sigma)),
  \varphi')$, with $\varphi'(n) = \varphi(n) \cup \{f(n)\}$, is an
  extension of $p$ that is a member of $D_f$.
\end{proof}

Note that \cite[page 106]{BJ} defines localization forcing,
$\mathbf{LOC}$, to have underlying set $\{S \in \funcN{(\finset{\N})}
\mid \forall n |S(n)| \leq n \wedge \exists k,N \forall n \geq N
|S(n)| \leq k \}$ with order $S \leq S'$ iff $\forall n [S'(n)
\subseteq S(n)]$.  We see that this forcing is equivalent to $\loc$ by
noting that the set $\mathcal{L}$ of those $S \in \mathbf{LOC}$ with
for all $i < l$, $|S(i)| = i$, where $l$ is the least number such that
for all $n$ we have $|S(n)| \leq l$, is dense in $\mathbf{LOC}$.  And
the suborder $(\mathcal{L}, \leq)$ densely embeds in $\loc$ by the
embedding $F: \mathbf{LOC} \rightarrow \loc$ defined by $F(S) =
(\sigma, \varphi)$ with $\varphi = S$ and $\lh(\sigma) = l$, where $l$
is the least number such that for all $n$ we have $|S(n)| \leq l$ and
$\sigma(i) = S(i)$ for $i \leq l$.

The reason for using this forcing here comes from the following
characterizations of $\add(\Null)$ and $\cof(\Null)$ given by
Bartoszy\'nski, see \cite[Chapter 2]{BJ}.

\begin{theorem}
  $\add(\Null)$ is the least cardinality of a family $F \subseteq
  \BS$ such that there is no slalom localizing all members
  of $F$.

  $\cof(\Null)$ is the least cardinality of a family $\Phi$ of
  slaloms such that every member of $\BS$ is localized by a
  member of $\Phi$.
\end{theorem}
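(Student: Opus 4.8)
The plan is to recast both clauses as the statement that a single relational system --- the \emph{localization relation} --- has bounding number $\add(\Null)$ and dominating number $\cof(\Null)$, and to prove this by exhibiting maps between $(\Null,{\subseteq})$ and that relation running in both directions. Let $\mathbb{L}$ be the relation whose points are the reals $f\in\BS$, whose ``norms'' are the slaloms $\phi\colon\N\to[\N]^{<\omega}$ with $|\phi(n)|\le n$, and with $\phi$ related to $f$ when $\phi$ localizes $f$ (i.e.\ $f(n)\in\phi(n)$ for all large $n$); then the least size of an $F\subseteq\BS$ localized by no single slalom is $\mathfrak{b}(\mathbb{L})$, and the least size of a family of slaloms localizing every real is $\mathfrak{d}(\mathbb{L})$, so the theorem is precisely $\add(\Null)=\mathfrak{b}(\mathbb{L})$ and $\cof(\Null)=\mathfrak{d}(\mathbb{L})$. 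I will also invoke the standard (and easy) fact that $\mathfrak{b}(\mathbb{L})$ and $\mathfrak{d}(\mathbb{L})$ are unchanged if the bound $|\phi(n)|\le n$ is replaced by $|\phi(n)|\le h(n)$ for any $h$ with $h(n)\to\infty$, which gives me room to choose convenient bounds. It then suffices to produce (i) a map $f\mapsto N_f$ from $\BS$ to $\Null$ and a map $N\mapsto\phi_N$ from $\Null$ to slaloms with $N_f\subseteq N\Rightarrow\phi_N$ localizes $f$, and (ii) a map $N\mapsto f_N$ from $\Null$ to $\BS$ and a map $\phi\mapsto N_\phi$ from slaloms to $\Null$ with ``$\phi$ localizes $f_N$''$\ \Rightarrow\ N\subseteq N_\phi$. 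From (i): if $F$ is localized by no slalom then $\bigcup_{f\in F}N_f$ is not null (otherwise, calling it $N$, every $f\in F$ is localized by $\phi_N$), so $\add(\Null)\le\mathfrak{b}(\mathbb{L})$; and if $\mathcal{B}\subseteq\Null$ is cofinal then $\{\phi_N:N\in\mathcal{B}\}$ localizes every real, so $\mathfrak{d}(\mathbb{L})\le\cof(\Null)$. Symmetrically (ii) yields $\mathfrak{b}(\mathbb{L})\le\add(\Null)$ and $\cof(\Null)\le\mathfrak{d}(\mathbb{L})$, and the four inequalities together are the theorem.

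The construction of these maps is the substance, and it rests on Bartoszy\'nski's combinatorial description of the null ideal. Fix a sufficiently fast partition $\N=\bigsqcup_k I_k$ into consecutive finite intervals and identify $2^\N$ with $\prod_k 2^{I_k}$. One shows: (a) for any $\langle J_k\rangle$ with $J_k\subseteq 2^{I_k}$ and $\sum_k |J_k|\,2^{-|I_k|}<\infty$, the set $\{x : x\restriction I_k\in J_k \text{ for infinitely many }k\}$ is null (Borel--Cantelli); and (b) conversely, every null set is contained in a set of this form, where --- after re-chopping the intervals --- one may in addition require $|J_k|\,2^{-|I_k|}\le 2^{-k}$. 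Given this, $N_f$ is the Borel--Cantelli-null set determined by reading $f$ as a point of $\prod_k 2^{I_k}$ and asking to be hit infinitely often on the blocks; $f_N$ is the real that records, block by block, the normalized data $\langle J_k\rangle$ witnessing $N\in\Null$; and $\phi_N$, $N_\phi$ are extracted from $\langle J_k\rangle$ in the two directions. Verifying the cross-implications $N_f\subseteq N\Rightarrow\phi_N$ localizes $f$ and ``$\phi$ localizes $f_N$''$\Rightarrow N\subseteq N_\phi$ is then a matter of unwinding these codings.

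The main obstacle is clause (b) together with the bookkeeping reconciling the analytic ``summable weight'' bound $\sum_k|J_k|\,2^{-|I_k|}<\infty$ with the finitary slalom bound $|\phi(n)|\le n$: the normalization $|J_k|\,2^{-|I_k|}\le 2^{-k}$ only forces $|J_k|\le 2^{|I_k|-k}$, which is far larger than $k$, so the slalom cannot simply \emph{be} $\langle J_k\rangle$. One must instead re-partition each block $I_k$ into suitably many sub-blocks and spread $J_k$ across them so that on the $n$-th sub-block only $\le h(n)$ many strings are needed for some $h\to\infty$ --- which is exactly where the freedom in the bound function is spent --- and dually absorb a width-$h$ slalom back into block data of summable weight. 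This re-chopping argument, done with care, is what occupies the relevant section of \cite[Chapter~2]{BJ}; I would import it as the one nontrivial lemma and then assemble the four inequalities as described above.
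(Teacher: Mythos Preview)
The paper does not prove this theorem at all: it is stated as a known result of Bartoszy\'nski and cited from \cite[Chapter~2]{BJ}, with no argument given. Your outline is the standard Galois--Tukey proof found in that reference, so there is nothing to compare --- your proposal simply supplies what the paper deliberately outsources.
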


The construction of the model for Theorem \ref{theorem template} is by
iterated forcing.  The general structure of an iterated forcing
construction consists of a system $\langle \mathbb{P}_i \mid i \in O
\rangle$ of partial orders indexed by a directed set $(O,<)$ with
maximal element $m$ such that if $i < j$ then there is a complete
embedding $p_{i,j} : \mathbb{P}_i \rightarrow \mathbb{P}_j$.  Our
forcing will be such that there is a cofinal set $C \subseteq O
\setminus \{m\}$ of order type $\mu$ such that for $\alpha \in C$,
$\mathbb{P}_\alpha = \mathbb{Q}_\alpha * \loc$ (the two step iteration
of $\mathbb{Q}_\alpha$ and $\loc$) for some $\mathbb{Q}_\alpha$ and
for all $i < \alpha$ the embedding $p_{i,\alpha}$ maps into
$\mathbb{Q}_\alpha$.  The forcing will also have the property that for
any $\mathbb{P}_m$ generic $G$ and any real $r \in V[G]$, there is
some $i < m$ such that $r \in V[\mathbb{P}_i \cap G]$.

This shows that the family of slaloms added at the coordinates in $C$
gives a family $\Phi$ of slaloms in $V^{\mathbb{P}_m}$ such that any
real is localized by a member of $\Phi$, which with the theorem above
shows that $\cof(\Null) \leq \mu$ in $V^{\mathbb{P}_m}$.

Using similar ideas we see that $\add(\Null) \geq \mu$ in
$V^{\mathbb{P}_m}$.  If $F$ is a family of reals of size $< \mu$ in
$V[G]$, then it already appears in $V[G \cap \mathbb{P}_i]$ for some
$i < m$.  But then it is localized by the slalom that gets added by
any forcing $\mathbb{P}_\alpha$ with $\alpha > i$.

This shows that after we have constructed a forcing that has the
properties claimed here, in the generic extension all cardinal numbers
in Cicho\'n's diagram will be equal to $\mu$  (all forcings used
are c.c.c. so that cardinals in $V$ and $V[G]$ are the same).

We will then show by an isomorphism-of-names argument that in the
forcing extension no cofinitary group of cardinality less than
$2^{\aleph_0}$ (which will equal $\lambda$ by a counting-of-names
argument) and bigger or equal to $\mu$ can exist.  This shows that
$\mathfrak{a}_g = 2^{\aleph_0}$ using the following result by Brendle,
Spinas, and Zhang (for $\non(\mathcal{M})$ see Definition \ref{def
  part Cichon}).

\begin{theorem}[\cite{BSZ}]
  $\mathfrak{a}_g \geq \non(\mathcal{M})$.
\end{theorem}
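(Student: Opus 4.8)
The plan is to prove the contrapositive: if $G \leq \Sym(\N)$ is cofinitary with $\card{G} < \non(\mathcal{M})$, then $G$ is not maximal, because one can find $f \in \Sym(\N) \setminus G$ with $\langle G, f \rangle$ cofinitary. The tool is Bartoszy\'nski's combinatorial characterization of $\non(\mathcal{M})$ as the least size of a family $F \subseteq \BS$ that is \emph{infinitely often equal to everything} (for every $g$ there is $f \in F$ with $\exists^\infty n\, f(n) = g(n)$). Dually, any family of size $< \non(\mathcal{M})$ can be \emph{eventually avoided}: there is a single $h^*$ with $\{n : h^*(n) = \phi(n)\}$ finite for every $\phi$ in the family. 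I would build $f = \bigcup_s f_s$ from finite injections by letting such an $h^*$ dictate the choices, so that the eventual avoidance of $h^*$ translates into $w(f)$ having finitely many fixed points for every word $w \in W_G$.

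First I would pin down the crucial quantitative point. When one extends a finite injection $p$ by a single pair $(n,k)$ with $n$ the least point outside $\dom(p)$, the Domain Extension Lemma guarantees that $p \cup \{(n,k)\}$ fails to be a good extension with respect to a fixed $w$ for only finitely many $k$; I would sharpen its proof to a bound $C_w$ on the number of such bad $k$ depending only on $w$ (in fact only on $\lh(w)$ and $\ocx(w)$) and \emph{not} on $p$. This is the heart of the matter: a new fixed point forces the evaluation path to meet the new pair, and tracing the path backward from $n$ through initial segments of $w$ evaluated by $p$ pins down the relevant entry point in at most $\lh(w)$ ways, each constraining $k$ to a single value; the Range Extension Lemma gives the same uniform bound for range steps. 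It is precisely this \emph{stage-independent} bound that yields $\non(\mathcal{M})$ rather than the softer $\add(\mathcal{N})$, since a bound growing with $\card{p}$ would only produce slaloms of growing width.

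Next I would remove the dependence of the bad set on the history $f_s$. Index everything by the countable set $\mathcal{I}$ of all finite injections (with a bit to mark a domain- versus a range-extension step), and decode $h^* \colon \mathcal{I} \to \N$ by the rule: when the current approximation is $p$, take the new value to be the $h^*(p)$-th element of $\N \setminus \ran(p)$ enumerated increasingly (dually, the $h^*(p)$-th missing preimage on range steps), so that \emph{every} value of $h^*$ yields a legitimate injective extension, and interleaving the two kinds of steps makes $f$ a bijection by back-and-forth. For each $w \in W_G$ and each $j \leq C_w$ define a predictor $\phi_w^{\,j} \colon \mathcal{I} \to \N$ giving the index of the $j$-th bad value at $p$; these are defined in advance from $G$ and $w$ alone, with no reference to $h^*$. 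The family $\{\phi_w^{\,j}\}$ has size $\card{W_G} \cdot \aleph_0 = \card{G} < \non(\mathcal{M})$, so the characterization furnishes $h^*$ eventually different from all of them. Since the approximations $f_0 \subsetneq f_1 \subsetneq \cdots$ are distinct elements of $\mathcal{I}$ and each set $\{p : h^*(p) = \phi_w^{\,j}(p)\}$ is finite, for each fixed $w$ all but finitely many stages make a good extension with respect to $w$ and, by the same argument applied to its finitely many subwords, with respect to each of them as well.

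To finish, I would invoke Lemma \ref{lem:finfxpts}: for each $w \in W_G$ the tail of the construction consists of good extensions with respect to $w$ and all its subwords, so $w(f)$ has finitely many fixed points (Lemma \ref{lem:enoughforcof} even identifies their number), whence $\langle G, f \rangle$ is cofinitary. Properness is automatic: applying this to the word $g^{-1}x$ shows $\{n : f(n) = g(n)\}$ is finite for every $g \in G$, so $f \neq g$ and $f \notin G$; thus $\langle G, f \rangle \supsetneq G$ contradicts maximality, giving $\card{G} \geq \non(\mathcal{M})$. The main obstacle I anticipate is the uniform bound $C_w$ of the second paragraph — verifying that the count of bad extensions is genuinely independent of the size of the current approximation, with careful handling of evaluation paths that traverse the new pair more than once — since this is exactly what separates the sharp bound $\non(\mathcal{M})$ from the weaker $\add(\mathcal{N})$; a secondary point to check is that the index-by-conditions decoding really produces a surjection and that the finitely many exceptional stages per word do no harm.
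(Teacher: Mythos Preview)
The paper does not prove this theorem; it is quoted from \cite{BSZ} and used as a black box, so there is no proof here to compare against. Your overall strategy --- reduce to Bartoszy\'nski's eventually-different characterization of $\non(\mathcal{M})$, then let an eventually-different real steer a good-extension construction --- is the natural one and is in the spirit of the cited result.

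The gap is exactly where you flagged it: the bound $C_w$ on bad values is \emph{not} uniform in $p$. Take $w = g_1 x^{-1} g_2 x g_3 x$ with $g_1 \neq \Id$. This word has no proper conjugate subword. Let $p$ be any finite injection with $n := \min(\N \setminus \dom(p))$ a fixed point of $g_1$. Then for every $k \notin \ran(p)$ with $(g_3(k), g_2^{-1}(k)) \in p$ one checks that $w(p \cup \{(n,k)\})(n) = n$ while $w(p)(n)$ is undefined, and the absence of proper conjugate subwords means this is a genuinely bad extension. The number of such $k$ is $\lvert\{(a,b) \in p : a = g_3 g_2(b)\}\rvert$, which can be as large as $\lvert p \rvert$ (take $p \subseteq (g_3 g_2)^{-1}$). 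Thus a fixed finite list $\phi_w^1, \ldots, \phi_w^{C_w}$ of predictors cannot cover the bad set at every condition, and the eventually-different real $h^*$ may still land on an uncovered bad value at such $p$. Your outline therefore does not go through as written. A repair is plausible --- note that the pathological case above is triggered only when $n$ lies in the finite set $\mathrm{Fix}(g_1)$, and $n$ strictly increases along the construction --- but turning this into a proof requires a systematic analysis of which ``all arcs $k \to k$ or $n \to n$'' configurations can actually occur along the recursion, and that analysis (or an alternative device) is what the argument is missing.
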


\subsubsection*{This Forcing Along a Template}

Here we introduce templates and describe how to iterate our basic forcing
along them.  The notion of template forcing was introduced by Saharon
Shelah in \cite{Sh}.  J\"org Brendle in \cite{JB} wrote an introduction to this
theory. In \cite{JB} Brendle indicates how to change his definitions
and proofs to work for localization forcing; we fill in the details.

In the following $L$ will always denote a linear order $(L, \leq)$,
and $L_x$ the initial segment of $L$ determined by $x$, i.e. $L_x = \{
y \in L \mid y < x \}$.

The directed system of posets will be indexed by $(\powset(L),
\subseteq)$ for some $L$.  So for each subset $A$ of $L$ we need to
construct a poset.  This is done by using a system of subsets of $A$
along which we can recursively define the poset, and which has enough
structure to prove the properties we need.  In this section we will
show how to do this.  The order $L$ (and template $\I$) we use will be
defined in the next section.

\begin{definition}
  A \emph{template} is a pair $(L,\mathcal{I})$, with $\mathcal{I}
  \subseteq \mathcal{P}(L)$ satisfying:
  \begin{enumerate}
    \item \label{template prop1} $\emptyset, L \in \mathcal{I}$;
    \item \label{template prop2} $\mathcal{I}$ is closed under finite
      unions and finite intersections;
    \item \label{template prop3} if $y,x \in L$ are such that $y < x$,
      then there is an $A \in \mathcal{I}$ such that $A \subseteq L_x$
      and $y \in A$;
    \item \label{template prop4} for all $A \in \mathcal{I}$ and $x
      \in L \setminus A$ the set $A \cap L_x$ belongs to
      $\mathcal{I}$;
    \item \label{template prop5} $\mathcal{I}$ is wellfounded with
      respect to ${\subseteq}$.
  \end{enumerate}
\end{definition}

Since $\mathcal{I}$ is wellfounded we can define a rank function $\Dp
: \mathcal{I} \rightarrow \ON$ by $\Dp(\emptyset) := 0$ and $\Dp(A) =
\sup\{ \Dp(B) +1 \mid B \in \mathcal{I} \wedge B \subsetneq A\}$.

We want to use one template to generate all the posets in the directed
system.  This is done by defining, in the next section, one big
template and, for the smaller indices, using induced templates as defined
next.

If $L' \subseteq L$ we define the \emph{induced template} $(L',
\mathcal{I}\restrict L')$ by $\mathcal{I} \restrict L' := \{ A \cap L'
\mid A \in \mathcal{I} \}$.  An induced template is in fact a
template: properties \ref{template prop1}---\ref{template prop4} are
easy to verify.  To verify \ref{template prop5} suppose, towards a
contradiction, that $A'_n$, $ n \in \N$, is an infinite strictly
decreasing sequence in $\mathcal{I}\restrict L'$, then for each $A'_n$
there is an $A_n \in \mathcal{I}$ such that $A_n \cap L' = A'_n$.
Then $\tilde A_n = \bigcap_{k \leq n} A_n$ will be an infinite
strictly decreasing sequence in $\mathcal{I}$.

For $p$ a finite sequence with domain contained in the order $L$, we
write $\md(p)$ for $\max \dom(p)$.

\begin{definition}
  The poset $\poset (L, \mathcal{I}) $ has underlying set recursively
  defined as follows:
  \begin{enumerate}
    \item
      $\poset (\emptyset, \I \restrict \emptyset) = \{\emptyset\}$;
    \item $p \in \poset (L, \I)$ if $p$ is a finite sequence with
      $\dom(p) \subseteq L$ and there exists $B \in \mathcal{I}$ such
      that $B \subseteq L_{\md(p)}$, $p \restrict L_{\md(p)} \in
      \poset (B, \I \restrict B)$ and $p(\md(p)) = (\sigma,
      \dot{\varphi})$, with $\sigma \in \finseq{\finset{\N}}$ and
      $\dot{\varphi}$ a $\poset(B, \I \restrict B)$ name such that $p
      \restrict L_{\md(p)} \forces_{\poset(B, \I \restrict B)}
      (\check{\sigma}, \dot{\varphi}) \in \loc$.
  \end{enumerate}
\end{definition}

Note that in this definition in particular we have that $p \restrict
L_{\md(p)} \forces_{\poset(B, \I \restrict B)} \dot{\varphi} \in
\funcN{([\mathbb{N}]^{\leq \lh(\check{\sigma})})}$.  Also we don't
use all names for forcing conditions in $\loc$, since the first
projection of any $p(x)$ is an element of
$\finseq{\finset{\N}}$, not a name for such an element; the
set of names we use is however dense in $\loc$.

\begin{definition}
  For $p,q \in \poset(L, \I)$, we define $q \leq p$ if $\dom(p)
  \subseteq \dom(q)$ and
  \begin{enumerate}
    \item
      if $\md(p) = \md(q)$, then there is a $B \in \mathcal{I}$ with $B
      \subseteq L_{\md(p)}$ such that $\models q \restrict L_{\md(p)}
      \leq_{\poset( B, \I \restrict B)}  p \restrict L_{\md(p)}$ and
      $q \restrict L_{\md(p)} \forces_{\poset(B, \I \restrict B)}
      q(\md(p)) \leq_{\loc} p(\md(p))$;
    \item
      if $\md(q) > \md(p)$ then there is a $B \in \mathcal{I}$ with $
      \dom(q \restrict L_{\md(q)} ) \subseteq B \subseteq L_{\md(q)}$
      such that $ q \restrict L_{\md(q)} \leq_{\poset( B, \I\restrict
      B)} p$.
  \end{enumerate}
\end{definition}

Note that in the first item $q \restrict L_{\md(p)} \forces_{\poset(B,
\I \restrict B)} q(\md(p)) \leq_{\loc} p(\md(p))$ is shorthand for the
corresponding statement with checks introduced on the first
coordinates of $q(\md(p))$ and $p(\md(p))$.

Of course at every recursive step we can take the usual action (finding a
$V_\beta$ containing names for all possible $\varphi : \N
\rightarrow \finset{\N}$ in the generic extension) to ensure
that we in fact end up with a set of forcing conditions.

Now since in defining the order there was an existential quantifier
over $B \subseteq A \cap L_x$ it is not immediately clear that this
defines an order at all.  Reflexivity is clear (antisymmetry is false,
but as is usual we can look at the poset modulo the relation $p \leq
q$ and $q \leq p$), but transitivity is not clear.  It is conceivable
that $r \leq q$ and $q \leq p$ use different witnesses that can not be
unified to give $r \leq p$.  The following lemma shows that this does
not happen.

\begin{lemma}
  $\poset(L, \I)$ is a poset.
\end{lemma}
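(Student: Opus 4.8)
The plan is to verify the three order axioms for $\poset(L,\I)$, with reflexivity being essentially immediate, antisymmetry being avoided (as the remark before the lemma notes, we pass to the quotient by mutual $\leq$), and transitivity being the real content. So the bulk of the proof will be showing: if $r \leq q$ and $q \leq p$ in $\poset(L,\I)$, then $r \leq p$. Since the underlying set and the order are defined by recursion along the wellfounded relation $\subsetneq$ on $\I$ (equivalently, by induction on $\Dp$), the proof will be by induction on the rank of the relevant initial-segment templates; the main bookkeeping issue is that the witnesses $B$ appearing in the definitions of $r \leq q$ and $q \leq p$ need not coincide.

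First I would set up the induction cleanly. Note that membership $p \in \poset(L,\I)$ and the relation $q \leq p$ only ever refer to $\poset(B,\I\restrict B)$ for $B \in \I$ with $B \subseteq L_{\md(\cdot)}$, hence of strictly smaller $\Dp$ (indeed $B \subsetneq L$ whenever $L_x \neq L$, and $\I\restrict B$ has smaller rank). So I would prove the statement ``$\poset(L,\I)$ is a poset, and moreover $\leq$ restricted to any $\poset(B,\I\restrict B)$ agrees with the order computed internally in that subposet'' by induction on $\Dp(L)$ (reading $\Dp(L) = \sup\{\Dp(A)+1 : A \in \I, A \subsetneq L\}$ when $L \notin \I$, or directly when $L \in \I$). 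The coherence clause — that the order is absolute between $\poset(L,\I)$ and its subposets $\poset(B,\I\restrict B)$ — is what lets the induction go through, since in the definition of $q \leq p$ one sees forcing statements like $q\restrict L_{\md(p)} \forces_{\poset(B,\I\restrict B)} q(\md(p)) \leq_\loc p(\md(p))$ and one must know these are stable under enlarging $B$.

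Then I would handle transitivity by cases on the comparison of $\md(p), \md(q), \md(r)$. The base-case-like situation is $\md(p) = \md(q) = \md(r) =: x$: here $r \leq q$ gives $B_1 \in \I$, $B_1 \subseteq L_x$, with $r\restrict L_x \leq_{\poset(B_1,\I\restrict B_1)} q\restrict L_x$ and a forcing statement about the $\loc$-components; $q \leq p$ gives $B_2$ similarly. Take $B := B_1 \cup B_2 \in \I$ (closure under unions, property~\ref{template prop2}); then $B \subseteq L_x$, and by the coherence part of the induction hypothesis both inequalities hold in $\poset(B,\I\restrict B)$, so by the inductive transitivity applied to that strictly-smaller template we get $r\restrict L_x \leq_{\poset(B,\I\restrict B)} p\restrict L_x$; transitivity of $\leq_\loc$ inside the forcing extension (using that $r\restrict L_x$ forces everything $q\restrict L_x$ forces) finishes this case. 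When the $\md$'s differ one argues similarly, absorbing the extra top coordinates of $q$ or $r$ into the chosen $B$ using property~\ref{template prop4} to know the relevant $A \cap L_x$ stay in $\I$, and using the coherence clause to move the sub-conditions into a common subposet before appealing to the inductive hypothesis; a couple of sub-cases ($\md(q) > \md(p)$, $\md(r) > \md(q)$, etc.) must be checked but they are structurally the same.

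The main obstacle is exactly the witness-unification step: making precise that two instances of $\leq$ with different $B$'s can be recomputed in a single common template $B_1 \cup B_2$, which requires the coherence/absoluteness statement threaded through the induction (that $\poset(B,\I\restrict B)$ sits inside $\poset(B',\I\restrict B')$ as a subposet when $B \subseteq B'$, with the order and the forcing relation agreeing). Once that lemma-within-the-induction is isolated and stated carefully, the case analysis for transitivity is routine. I would also remark, as the excerpt anticipates, that we do not prove antisymmetry — we simply pass to the separative quotient $\poset(L,\I)/{\!\sim}$ where $p \sim q$ iff $p \leq q \leq p$ — so ``poset'' here means preorder, which suffices for forcing.
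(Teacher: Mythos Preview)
Your approach is essentially the same as the paper's: induction on rank, transitivity by cases on the $\md$'s, and unification of witnesses via $B_1\cup B_2$ using closure of $\I$ under finite unions. The one place where the paper is more explicit than you is the ``coherence clause''. You state it as ``the order and the forcing relation agree'' between $\poset(B,\I\restrict B)$ and $\poset(B',\I\restrict B')$; but for forcing statements to transfer upward you need the embedding to be \emph{complete}, and the paper therefore proves transitivity simultaneously with the full completeness-of-embeddings lemma (constructing explicit reduction maps $\red(p,A,B)$). So your lemma-within-the-induction is exactly right in spirit, but the paper loads the inductive hypothesis with the reduction property, which is what actually justifies the step ``$r\restrict L_x$ forces everything $q\restrict L_x$ forces'' in the larger poset.
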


The next lemma shows that this sort of forcing can be used for an
iterated forcing argument.

\begin{lemma}[Completeness of Embeddings]
  For any $A \subseteq B \subseteq L$ we have that $\poset(A, \I
  \restrict A)$ completely embeds into $\poset(B, \I \restrict B)$,
  written $\poset(A, \I \restrict A) \leqc \poset(B, \I \restrict B)$.
\end{lemma}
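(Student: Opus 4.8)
The plan is first to reduce to the case $B = L$: since $\I\restrict A = (\I\restrict B)\restrict A$ and $\poset(L',\I\restrict L')$ depends only on the induced template on $L'$, it suffices to prove $\poset(A,\I\restrict A)\leqc\poset(L,\I)$ for an arbitrary $A\subseteq L$. Unwinding the definition of $\leqc$, I must produce an embedding $\iota\colon\poset(A,\I\restrict A)\to\poset(L,\I)$ which preserves and reflects $\leq$, which preserves and reflects incompatibility, and such that every $p\in\poset(L,\I)$ has a reduction $\rho(p)\in\poset(A,\I\restrict A)$ --- meaning every $q\leq\rho(p)$ in $\poset(A,\I\restrict A)$ has $\iota(q)$ compatible with $p$ in $\poset(L,\I)$.

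I would define $\iota$ and $\rho$ simultaneously by recursion and verify all four clauses by one simultaneous induction, whose measure is built from the $\Dp$-ranks of the witnessing sets $B\in\I$ occurring in the definitions of the two posets (with a secondary induction on $|\dom(p)|$); passing from a condition to $p\restrict L_x$ keeps us inside induced templates by property~\ref{template prop4} and strictly lowers the measure. For $\iota$, the only real content is at the maximal coordinate: a $q\in\poset(A,\I\restrict A)$ with $\md(q)=x$, witness $B'\in\I\restrict A$, and $q(x)=(\sigma,\dot\varphi)$ is sent to the condition with the same stems whose coordinate-$x$ name has been reinterpreted over a set $B''\in\I$ with $B''\cap A=B'$, using the complete embeddings of the proper-initial-segment subposets that the induction has already delivered; along the way one checks that the conditions whose $\loc$-names are already in such ``reflected'' form are dense, so nothing is lost. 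For $\rho$, with $\md(p)=x$: if $\dom(p)\subseteq A$, set $\rho(p):=\iota^{-1}(p)$; if $x\notin A$, set $\rho(p):=\rho(p\restrict L_x)$, computed for the induced template on $L_x$ with target $A\cap L_x$; and if $x\in A$, let $\rho(p)$ agree with $\rho(p\restrict L_x)$ below $x$ and carry, at the coordinate $x$, the stem $\sigma$ of $p(x)$ together with the canonical name for the slalom that is empty beyond $\lh(\sigma)$ (keeping the stem is what makes this coordinate compatible with $p(x)$).

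Granting $\iota$ and $\rho$, the order and incompatibility clauses come out of a routine top-down induction. Order preservation and reflection are immediate from the recursive definitions of the two orderings, once one notes using closure of $\I$ under finite intersections (property~\ref{template prop2}) that witnesses on the two sides can be chosen compatibly. For the reduction clause one shows: if $q\leq\rho(p)$ in $\poset(A,\I\restrict A)$, then $\iota(q)$ and $p$ have a common extension in $\poset(L,\I)$, built coordinate by coordinate from the top. Below $x$ the inductive reduction clause for $L_x$ gives a common extension of $\iota(q)\restrict L_x$ and $p\restrict L_x$; at the coordinate $x$ (when $x\in A$) one amalgamates $q(x)$ and $p(x)$ inside $\loc$ --- their stems both extend $\sigma$, hence are $\subseteq$-comparable, and two $\loc$-conditions with comparable stems are compatible, exactly as in the proof that $\loc$ is $\sigma$-linked --- this amalgam being forced over a witness $B''\in\I$ large enough to contain the supports of both names, which exists by properties~\ref{template prop2} and~\ref{template prop3}. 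Assembling these pieces over a large enough common witness yields the desired condition, which simultaneously witnesses the incompatibility clause.

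I expect the main obstacle to be the bookkeeping of this simultaneous induction, and in particular the handling of the $\loc$-name at the maximal coordinate when the $A$-witness $B'$ is not itself a subset of the image of $A$ in $L$: the name must be transported to some $B''\in\I$ with $B''\cap A=B'$ coherently with the recursively built embeddings of the proper initial segments, and making the ``reflected names are dense'' reduction precise is the crux. Everything else is then forced by the template axioms \ref{template prop1}--\ref{template prop5} together with the combinatorics of $\loc$. This is the localization-forcing analogue of the completeness-of-embeddings lemma in Brendle's \cite{JB}; here one fills in the steps he only outlines.
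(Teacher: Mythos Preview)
Your proposal has a genuine gap at the heart of the reduction map. You define $\rho(p)(x)$, for $x=\md(p)\in A$, to carry the stem $\sigma$ together with the \emph{empty} slalom name, and you justify the compatibility step by claiming that ``two $\loc$-conditions with comparable stems are compatible, exactly as in the proof that $\loc$ is $\sigma$-linked''. This is false. If $(\sigma,\dot\psi)$ and $(\tau,\dot\chi)$ have $\sigma\subsetneq\tau$, any common extension $(\tau',\dot\chi')$ must satisfy $\dot\psi(j)\subseteq\tau'(j)=\tau(j)$ for $\lh(\sigma)\le j<\lh(\tau)$, and there is no reason for $\tau$ to contain these values of $\dot\psi$. (This is precisely why the $\sigma$-linkedness proof requires not just equal stems but agreement of the side conditions up to $2\lh(\sigma)$.) So if $q\le\rho(p)$ extends the stem at $x$ without regard to $\dot\psi$, there is in general no way to amalgamate $q(x)$ with $p(x)$.

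The paper's proof repairs exactly this point: at coordinate $x\in A$ it does \emph{not} discard $\dot\psi$, but constructs a reduced name $\dot\varphi$ by projecting the Boolean values $\bv{\check\sigma'\subseteq\dot\psi}$ from the regular-open algebra of $\poset(B^{\md(p)},\I\restrict B^{\md(p)})$ down to that of $\poset(A^{\md(p)},\I\restrict A^{\md(p)})$, and then re-normalizing so that the projected values form a genuine name for an element of $\funcN{({}^{\le\lh(\sigma)}[\N])}$. This retains enough information that any $q\le\red(p,\I\restrict B,A)$ already forces $\tau$ to lie in the set $\Sigma$ of stems compatible with $\dot\psi$, which is what lets the amalgamation go through. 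Two further differences worth noting: the paper shows that $\iota$ is literally set-theoretic inclusion $\poset(A,\I\restrict A)\subseteq\poset(L,\I)$ (no reinterpretation of names is needed, because a $\poset(B',\I\restrict B')$-name is already a $\poset(B'',\I\restrict B'')$-name once the inductive complete embedding is in hand); and the paper proves the reduction in a \emph{stronger, uniform} form --- $\red(p,\I\restrict B,A)$ works simultaneously for all $C,D$ with $B\subseteq D$, $C\subseteq D$, $B\cap C=A$ --- and carries transitivity of the order along in the same induction. That extra uniformity is what makes the inductive step close.
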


We will in fact prove both these lemmas at the same time by recursion,
showing that an apparently stronger form of complete embedding is true
(we currently don't know whether this notion really is stronger).

\begin{lemma}[\protect{\cite[Lemma 4.4]{JB}}; similar to
  \protect{\cite[Lemma 1.1]{JB}}]
  For any $B \in \mathcal{I}$ and $A \subseteq B$,
  $\poset(B, \I \restrict B)$ is a partial order, and for any $p \in
  \poset(B, \I \restrict B)$ there exists a $\red(p,A,B) \in \poset(A,
  \I \restrict A)$ such that for any $D \in \mathcal{I}$ and $C
  \subseteq D$ such that $B \subseteq D$ and $B \cap C = A$ if $r
  \leq_{\poset(C, \I \restrict C)} \red(p,A,B)$ then $r$ and $p$ are
  compatible in $\poset(D, \I \restrict D)$.
\end{lemma}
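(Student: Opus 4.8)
The plan is to prove the lemma by induction on the rank $\Dp(B)$ of $B$ in the wellfounded order $(\I,{\subseteq})$ (available by property \ref{template prop5}), establishing \emph{simultaneously} that $\poset(B,\I\restrict B)$ is a partial order and that for every $A\subseteq B$ a reduction $\red(\cdot,A,B)$ with the stated amalgamation property exists. The base case $B=\emptyset$ is trivial: $\poset(\emptyset,\I\restrict\emptyset)=\{\emptyset\}$, set $\red(\emptyset,\emptyset,\emptyset):=\emptyset$, and $\emptyset$ is compatible with every $r$ since it is the largest condition. For the inductive step fix $B\in\I$ of rank $\delta$ and assume the statement for all $B'\in\I$ with $\Dp(B')<\delta$. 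The decisive bookkeeping point is that whenever $x=\md(p)$ for $p\in\poset(B,\I\restrict B)$ and $B_p\in\I$ is a witness set carrying the stem $p\restrict(B\cap L_x)\in\poset(B_p,\I\restrict B_p)$, one has $B_p\subseteq B\cap L_x$ and $x\in B\setminus B_p$, so $B_p\subsetneq B$ and hence $\Dp(B_p)<\delta$; moreover $A\cap B_p\in\I\restrict A$ and $\poset(A\cap B_p,\I\restrict(A\cap B_p))\subseteq\poset(A,\I\restrict A)$. Before the main argument I would record the monotonicity fact that if $\dom(p)\subseteq B'\subseteq B''$ with $B',B''\in\I$ then $p\in\poset(B',\I\restrict B')$ yields $p\in\poset(B'',\I\restrict B'')$, a $\poset(B')$-name being reinterpretable along the complete embedding as a $\poset(B'')$-name; this is used repeatedly to move conditions between index sets and follows from the induction hypothesis at lower rank.

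Next I would define $\red(p,A,B)$ by recursion on $p$. If $p=\emptyset$, set $\red(p,A,B):=\emptyset$. Otherwise let $x:=\md(p)$ and fix a witness $B_p\in\I$, $B_p\subseteq B\cap L_x$, for the stem. If $x\notin A$, discard the top coordinate and put $\red(p,A,B):=\red(p\restrict(B\cap L_x),A\cap B_p,B_p)$, which lies in $\poset(A\cap B_p,\I\restrict(A\cap B_p))\subseteq\poset(A,\I\restrict A)$ by the induction hypothesis. If $x\in A$, keep the coordinate $x$: let the stem of $\red(p,A,B)$ be $\red(p\restrict(B\cap L_x),A\cap B_p,B_p)$ and let its value at $x$ be the reduction of $p(x)=(\sigma,\dot\varphi)$ --- the first coordinate $\sigma\in\finseq{\finset{\N}}$ is a ground-model object, kept verbatim, while the $\poset(B_p,\I\restrict B_p)$-name $\dot\varphi$ must be replaced by a $\poset(A\cap B_p,\I\restrict(A\cap B_p))$-name. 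This replacement is the subtle point: I would carry it out using the complete embedding $\poset(A\cap B_p,\I\restrict(A\cap B_p))\leqc\poset(B_p,\I\restrict B_p)$ from the induction hypothesis together with the specific combinatorics of $\loc$ (its conditions consist of a finite stem plus a promise whose relevant part is of bounded size, and one may restrict attention to the dense set of names noted after the definition of $\poset(L,\I)$), producing a name forced to lie $\leq_{\loc}$-above the interpretation of $\dot\varphi$. One then checks that $\red(p,A,B)$ is genuinely an element of $\poset(A,\I\restrict A)$, with $A\cap B_p$ serving as the required witness set, and that $\red(\cdot,A,B)$ is order-preserving on $\poset(B,\I\restrict B)$.

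For the amalgamation claim, suppose $D\in\I$, $C\subseteq D$, $B\subseteq D$, $B\cap C=A$, and $r\leq_{\poset(C,\I\restrict C)}\red(p,A,B)$. I would form $s:=r\cup p$, the function on $\dom(r)\cup\dom(p)\subseteq C\cup B\subseteq D$ equal to $r$ on $\dom(r)$, to $p$ off $\dom(r)$, and on the overlap $\dom(r)\cap\dom(p)\subseteq A$ equal to $r$, whose values there already refine the reduced values of $p$ since $r\leq\red(p,A,B)$. One verifies by induction on $\md$ that $s\in\poset(D,\I\restrict D)$ and that $s\leq_{\poset(D,\I\restrict D)}r$ and $s\leq_{\poset(D,\I\restrict D)}p$; the witness set at each stage is gotten by uniting the witness sets coming from $r$ and from $p$ (using closure of $\I$ under finite unions, property \ref{template prop2}, and property \ref{template prop3} to fit their union below the point in play), and at the coordinate $x=\md(p)$ with $x\in A$ the statement $s\restrict(D\cap L_x)\forces s(x)\leq_{\loc}p(x)$ is where the construction of the reduced name is cashed in: $r(x)$ refines $\red(p,A,B)(x)$, whose name dominates $\dot\varphi$, so after forcing with $s\restrict(D\cap L_x)$ one may take $s(x)$ to be a common lower bound in $\loc$ of $r(x)$ and $p(x)$, which exists precisely because the extra promise forced by $\red(p,A,B)(x)$ already constrains the stems of $r(x)$ and $p(x)$ to be consistent.

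I expect the main obstacle to be exactly this interaction of the reduction with the name in the top $\loc$-coordinate --- showing that $\dot\varphi$ can be pushed down to the smaller iterand while retaining enough information that the amalgamated condition still extends $p$ --- entangled with the discipline of keeping the witness sets $B_p$, $C$, $D$ consistent so that the induction hypothesis genuinely applies at strictly smaller rank at every recursive call; the remaining verifications that $s\leq p$ and $s\leq r$ are lengthy but routine once the recursion is in place. Once Lemma 4.4 is available the two preceding lemmas follow: $\poset(L,\I)$ is a partial order because $L\in\I$ by property \ref{template prop1} (take $B=L$), and for arbitrary $A\subseteq B\subseteq L$ the complete embedding $\poset(A,\I\restrict A)\leqc\poset(B,\I\restrict B)$ is extracted from the $B\in\I$ case via the directed-union structure of $\poset(B,\I\restrict B)$ over $\I\restrict B$, applying the reduction maps level by level.
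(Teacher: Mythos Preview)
Your overall architecture matches the paper's: simultaneous induction on rank, the same case split on whether $\md(p)\in A$, and the same shape for the amalgam. The paper also proves, as part of the same induction, the inclusion $\poset(A,\I\restrict A)\subseteq\poset(L,\I)$ that you use as a ``monotonicity fact'', so your plan to record it up front is right.

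There is, however, a real gap in the $x:=\md(p)\in A$ case of the amalgamation. You describe the reduced name as one ``forced to lie $\leq_{\loc}$-above the interpretation of $\dot\varphi$''; this is not what the paper does, and it is not enough. Writing $p(x)=(\sigma,\dot\psi)$ and $\red(p,A,B)(x)=(\sigma,\dot\varphi)$, the paper defines $\dot\varphi$ by taking the Boolean values $\bv{\check\tau\subseteq\dot\psi}$ in the regular-open algebra of $\poset(B_p,\I\restrict B_p)$ and \emph{projecting} them to the subalgebra coming from $\poset(A\cap B_p,\I\restrict(A\cap B_p))$, then disjointifying. The resulting $\dot\varphi$ is not in general forced (by anything) to contain $\dot\psi$ pointwise; the only link is that each $\bv{\check\tau\subseteq\dot\varphi}$ sits above a reduction of $\bv{\check\tau\subseteq\dot\psi}$. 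Consequently your proposed amalgam $s=r\cup p$, with $s(x)$ a common $\loc$-lower bound of $r(x)$ and $p(x)$, need not exist below the stem $r\restrict L_x$: from $r\restrict L_x\forces\dot\varphi(i)\subseteq\sigma_r(i)$ for $\lh(\sigma)\le i<\lh(\sigma_r)$ you cannot conclude $r\restrict L_x\forces\dot\psi(i)\subseteq\sigma_r(i)$. The paper resolves this by passing to the set $\Sigma$ of finite stems compatible with $\sigma_r$, observing $r\restrict L_x\le\bigvee_{\tau\in\Sigma}\bv{\check\tau\subseteq\dot\varphi}$, using that this join is a reduction of $\bigvee_{\tau\in\Sigma}\bv{\check\tau\subseteq\dot\psi}$, and then invoking the induction hypothesis once more to find a genuine extension $d\le r\restrict L_x$ in $\poset(D^{\md(p)},\I\restrict D^{\md(p)})$ that also lies below the $\dot\psi$-join. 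Only over this extended stem $d$ do $r(x)$ and $p(x)$ become compatible in $\loc$. So the amalgam is not $r\cup p$ but rather $d\cup\{(x,r(x))\}\cup(r\restrict\{y:y>x\})$, and producing $d$ is where the specific Boolean-projection construction of $\dot\varphi$ is cashed in.

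A minor point: you assert that $\red(\cdot,A,B)$ is order-preserving. The paper does not claim or use this, and with the Boolean-projection definition it is not obviously true; you should drop it.
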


\begin{proof}
  By induction on $\alpha$ we will show that
  \begin{enumerate}
    \item
      \label{enum:templ:transitive}
      for all templates $(L, \I)$ with $\Dp(L) \leq \alpha$,
      the poset $\poset(L, \I)$ is transitive,
    \item
      \label{enum:templ:subset}
      for all templates $(L, \I)$ with $\Dp(L) \leq \alpha$
      and $A \subseteq L$,
      \begin{equation*}
        \poset(A, \I \restrict A) \subseteq \poset(L, \I),
      \end{equation*}
    \item
      \label{enum:templ:constRed}
      for all templates $(L, \I)$ with $\Dp(L) \leq \alpha$,
      all $B \in \I$, all $A \subseteq B$, and all $p \in \poset(B, \I
      \restrict B)$ there exists an element $\red(p, \I \restrict B, A) \in
      \poset(A, \I \restrict A)$ such that $\dom(\red(p, \I \restrict
      B, A)) = \dom(p) \cap A$ and for all $x \in \dom(p) \cap A$ we
      have $\pi_0(\red(p, \I \restrict B, A)(x)) = \pi_0(p(x))$ (note:
      $\red(p, \I \restrict B, A)$ depends only on $p$, $\I \restrict
      B$ and $A$, \emph{not} on the part of the template outside of
      $\I \restrict B$),
    \item
      \label{enum:templ:RedProperty}
      for all templates $(L, \I)$ with $\Dp(L) \leq \alpha$ and
      all $D \in \I$, $B,C \subseteq D$, and $p \in \poset(B, \I
      \restrict B)$, that (write $A$ for $B \cap C$) for all $q \in
      \poset(C, \I \restrict C)$,
      \begin{equation*}
        q \leq_{\poset(C, \I \restrict C)} \red(p, \I \restrict B, A)
        \quad \Rightarrow \quad p \mathrel{\|_{\poset(D, \I \restrict
        D)}} q.
      \end{equation*}
  \end{enumerate}
  
  We first show \ref{enum:templ:transitive}.  Let $p,q,r \in \poset(L,
  \I)$ such that $r \leq q$ and $q \leq p$.  We then need to analyze
  cases depending on the orders of $\md(r), \md(q)$, and $\md(p)$.
  We'll show one; the others are analogous.  The main idea here is
  that a template is closed under finite unions so that we can unify
  the witness to the order.
  
  So assume $\md(r) = \md(q) = \md(p)$.  Then there is a $B^{p,q} \in
  \I$ with $B^{p,q} \subseteq L_{\md(p)}$ such that $q \restrict
  L_{\md(p)} \leq_{\poset(B^{p.q}, \I \restrict B^{p,q})} p \restrict
  L_{\md(p)}$ and $q \restrict L_{\md(p)} \forces_{\poset(B^{p,q}, \I
    \restrict B^{p,q})} q(\md(p)) \leq_{\loc} p(\md(p))$.  There is a
  similar set $B^{q,r}$ obtained from the order $r \leq q$.  Then set
  $B^{p,r} = B^{p,q} \cup B^{q,r}$, and note that by induction
  hypothesis (since $\md(p) \not \in B^{p,r}$, $\Dp(B^{p.r}) <
  \Dp(L)$) we have that $\poset(B^{p,q}, \I \restrict B^{p,q}),
  \poset(B^{q,r}, \I \restrict B^{q,r}) \leqcirc \poset(B^{p,r}, \I
  \restrict B^{p,r})$.  From this we get that $r \restrict L_{\md(p)}
  \leq_{\poset(B^{p,r}, \I \restrict B^{p,r})} p \restrict L_{\md(p)}$
  and $r \restrict L_{\md(p)} \forces_{\poset(B^{p,r}, \I \restrict
    B^{p,r})} r(\md(p)) \leq_{\loc} p(\md(p))$, which witnesses that
  $r \leq p$ as was to be shown.
  
  Now we show \ref{enum:templ:subset}.  Let $A \subset L$ and $p \in
  \poset(A, \I \restrict A)$.  We need to show that $p \in \poset(L,
  \I)$.  The main idea is to unfold one step of the construction that
  shows that $p \in \poset(A, \I \restrict A)$, and then use a similar
  construction step to build $p \in \poset(L, \I)$.
  
  $p \in \poset(A, \I \restrict A)$ means there is $B \in \I \restrict
  A$ such that $B \subseteq L_{\md(p)}$, $p \restrict L_{\md(p)} \in
  \poset(B, \I \restrict B)$, and $p\restrict L_{\md(p)}
  \forces_{\poset(B, \I \restrict B)} p(\md(p)) \in \loc$.
  
  $B \in \I \restrict A$ means that $B = B' \cap A$ for some $B' \in
  \I$.  Since $\md(p) \in A \setminus B$ we have $\md(p) \not \in B'$
  and we can (using the definition of template) assume that $B'
  \subseteq L_{\md(p)}$.  By the induction hypothesis we have
  $\poset(B, \I \restrict B) \leqcirc \poset(B', \I \restrict B')$,
  which gives us that $p \restrict L_{\md(p)} \in \poset(B', \I
  \restrict B')$ and $p \restrict L_{\md(p)} \forces_{\poset(B', \I
    \restrict B')} p(\md(p)) \in \loc$.  This in turn shows $p \in
  \poset(L, \I)$.
  
  Now we show \ref{enum:templ:constRed}.  Assume that $\red(p,
  I\restrict B, A)$ has already been defined for $B$ with $\Dp(B) <
  \alpha$.  So all that remains at this stage is to define it for $B =
  L$.  The main idea is to use the reduction already defined for $p
  \restrict L_{\md(p)}$, and for the reduction at $\md(p)$ (if it
  needs to be defined) use a reduction of $\pi_1(\md(p))$.

  Let $A \subseteq L$ and $p \in \poset(L, \I)$.  There exists
  $B^{\md(p)} \in \I$ such that $B^{\md(p)} \subseteq L_{\md(p)}$, $p
  \restrict L_{\md(p)} \in \poset(B^{\md(p)}, \I \restrict
  B^{\md(p)})$, and $p \restrict L_{\md(p)}
  \forces_{\poset(B^{\md(p)}, \I \restrict B^{\md(p)})} p(\md(p)) \in
  \loc$.  Set $A^{\md(p)} = A \cap B^{\md(p)}$.
  
  If $\md(p) \not \in A$, define $\red(p, \I, A) := \red(p \restrict
  L_{\md(p)}, \I \restrict B^{\md(p)}, A^{\md(p)})$.  Since
  $\Dp(B^{\md(p)}) < \alpha$ we have by induction hypothesis that
  $\poset(A^{\md(p)}, \I \restrict A^{\md(p)}) \leqcirc \poset(A, \I
  \restrict A)$, so that indeed $\red(p,\I,A) \in \poset(A, \I
  \restrict A)$.

  Otherwise, define $\red(p, \I, A) \restrict L_{\md(p)} := \red(p
  \restrict L_{\md(p)}, \I \restrict B^{\md(p)}, A^{\md(p)})$, and
  $\red(p, \I, A)(\md(p)) := (\sigma, \dot{\varphi})$, where $p(\md(p)) =
  (\sigma, \dot{\psi})$ and $\dot{\varphi}$ is the reduction of the
  $\poset(B^{\md(p)}, \I \restrict B^{\md(p)})$-name $\dot{\psi}$ to
  $\poset(A^{\md(p)}, \I \restrict A^{\md(p)})$ defined as follows:

  We know that $\poset(A^{\md(p)}, \I \restrict A^{\md(p)}) \leqcirc
  \poset(B^{\md(p)}, \I \restrict B^{\md(p)})$.  Consider the complete
  Boolean algebras $\mathbb{A}$ and $\mathbb{B}$ which are the regular
  open algebras of these respective posets.
  
  Note that the $\mathbb{B}$-name $\dot{\psi}$ is completely
  determined by the Boolean values $\bv{\check{\sigma} \subseteq
    \dot{\psi}}$ where $\sigma \in \finseq{\finset{\N}}$.  $p
  \restrict L_{\md(p)} \forces \dot{\psi} \in
  \funcN{(\finsetd{\N}{\lh(\check{\sigma})})}$ is equivalent
  to
  \[
    \forall \sigma' \in \finseq{\finset{\N}} \, \big( \exists i \in
    \dom(\sigma')\ |\sigma'(i)| > \lh(\sigma) \big) \rightarrow p \restrict
    L_{\md(p)} \wedge \bv{\check{\sigma'} \subseteq \dot{\psi}} = 0.
  \]

  For $\sigma \in \finseq{\finset{\N}}$,
  define $a_\sigma$ to
  be the projection of $\bv{\check{\sigma} \subseteq \dot{\psi}}$ to
  $\mathbb{A}$ (this projection is $\bigwedge \{ a \in \mathbb{A} \mid
  \bv{\check{\sigma} \subseteq \dot{\psi}} \leq a \}$).  Enumerate
  $\finset{\N}$ as $\langle c_k \mid k \in \N \rangle$, and
  recursively on the length of $\sigma$ define $\bv{\check{\sigma}
  \subseteq \dot{\varphi}}$ to be
  \begin{equation*}
    \red(p, \I \restrict B, A) \restrict L_{\md(p)} \wedge \bv{(\sigma
    \restrict (\lh(\sigma) - 1))\check{\vphantom{A}} \subseteq
    \dot{\varphi}} \wedge (a_\sigma \setminus \bigvee_{k < l}
    a_{(\sigma\restrict \lh(\sigma) -1) \hat{\phantom{a}} c_k}),
  \end{equation*}
  where $l$ is such that $\sigma(\lh(\sigma)) = c_l$.

  Note that $\bv{ \check{\sigma_1} \subseteq \dot{\varphi}} \wedge
  \bv{\check{\sigma_2} \subseteq \dot{\varphi}} = 0$ if there is an
  $i$ such that $\sigma_1(i) \neq \sigma_2(i)$ or if there is an $i$
  such that $| \sigma_1(i) | > \lh(\sigma)$, since this is true for
  $\dot{\psi}$.  Also $\langle \bv{(\sigma \hat{\phantom{a}}
  c_k)\check{\vphantom{A}} \subseteq \dot{\varphi}} \mid k \in \N
  \rangle$ is a maximal
  antichain below $\bv{\check{\sigma} \subseteq \dot{\varphi}}$.

  Now we show that the reduction works as advertised, i.e. show
  \ref{enum:templ:RedProperty}.  This is just a matter of using the
  induction hypothesis and the properties of templates which allow us
  to construct the right sets to consider conditions in and over.

  So let $D \in \I$, $B,C \subseteq D$, and $p \in \poset(B, \I
  \restrict B)$.  Write $A$ for $B \cap C$.  Let $B^{\md(p)}$ and
  $A^{\md(p)}$ be as in the definition of $\red(p, \I \restrict B, A)$
  ($A^{\md(p)} = A$ if $\md(p) \not \in A$).  Let $q \in \poset(C, \I
  \restrict C)$ be such that $q \leq_{\poset(C, \I \restrict C)}
  \red(p, \I \restrict B, A)$.  We have to show that $q$ and $p$ are
  compatible in $\poset(D, \I \restrict D)$.
  
  \underline{{If\footnotemark} $\md(p) \not \in A$}, \footnotetext{We
    have underlined this and the next case to make them easier to
    find.} then also $\md(p) \not \in C$.  We will use the induction
  hypothesis to find an $r$ (with suitable domain) that witnesses that
  $q \restrict L_{\md(p)}$ and $\red(p, \I \restrict B, A) \restrict
  L_{\md(p)}$ are compatible.  Then we show that the element $r \cup
  \{(\md(p), p(\md(p)))\} \cup q \restrict \{ y \in L \mid \md(p) < y
  \}$ shows that $p$ and $q$ are compatible.

  Using the definition for $q \in \poset(C, \I \restrict C)$, we can
  find a set $C^q \in \I$ with $C^q \subseteq L_{\min\{x \in L \mid x \in
  \dom(q) \wedge x > \md(p)\}}$ (if there is an $x \in \dom(q)$ with
  $x > \md(p)$, otherwise let $C^q = C$) such that $q \restrict
  L_{\md(p)} \in \poset(C^q, \I \restrict C^q)$.  Since $C^q$ might be
  somewhat too big for our purposes, we unfold the definition of $q$ one
  step further and then refold it to a smaller set.
  
  Since $q \restrict L_{\md(p)} \in \poset(C^q, \I \restrict C^q)$,
  using the definition there exists ${C^q}' \in \I \restrict C^q$ such
  that ${C^q}' \subseteq L_{\md(q\restrict L_{\md(p)})}$, $q
  \restrict L_{\md(q \restrict L_{\md(p)})} \in \poset({C^q}', \I
  \restrict {C^q}')$ and $q \restrict L_{\md(q \restrict L_{\md(p)}}
  \forces_{\poset({C^q}', \I \restrict {C^q}')} q(\md(q\restrict
  L_{\md(p)})) \in \loc$.  Now there exists $C^{\md(p)} \subseteq
  L_{\md(p)}$ with ${C^q}' \subseteq C^{\md(p)}$, $A^{\md(p)}
  \subseteq C^{\md(p)}$, and $\md(q \restrict L_{\md(p)}) \in
  C^{\md(p)}$.  Then $q \in \poset(C^{\md(p)}, \I \restrict
  C^{\md(p)})$.

  Setting $D^{\md(p)} = C^{\md(p)} \cup B^{\md(p)}$, we can find by
  induction hypothesis an $r \in \poset(D^{\md(p)}, \I \restrict
  D^{\md(p)})$ such that $r \leq_{\poset(D^{\md(p)}, \I \restrict
  D^{\md(p)})} q \restrict L_{\md(p)}, p \restrict L_{\md(p))}$.

  Since $B^{\md(p)} \subseteq D^{\md(p)}$, we have $\poset(B^{\md(p)},
  \I \restrict B^{\md(p)}) \leqcirc \poset(D^{\md(p)}, \I \restrict
  D^{\md(p)})$.  Then since $r \leq p \restrict L_{\md(p)}$, we have
  that $r \forces_{\poset(D^{\md(p)}, \I \restrict D^{\md(p)})}
  p(\md(p)) \in \loc$.

  Now if $\{ y \in L \mid y \in \dom(q) \wedge \md(p) < y \}$ is
  empty, then from the above we find, by the definitions,
  that $r \cup \{(\md(p), p(\md(p)))\} \in \poset(D, \I \restrict D)$
  and $r \leq p, q$.  Otherwise we first have to enlarge $D^{\md(p)}$
  to a set containing $C^q$ so that we can from there on use the way
  $q$ was constructed to construct $r \cup \{(\md(p), p(\md(p)))\}
  \cup q \restrict \{y \in L \mid y \in \dom(q) \wedge \md(p) < y\}$.

  Since $\md(p) < \min\{ y \in L \mid y \in \dom(q) \wedge \md(p) <
  y\}$, there exists $E \in \I$ with $E \subseteq L_{\{ y \in L
  \mid y \in \dom(q) \wedge \md(p) < y\}}$ and $\md(p) \in E$.  Set
  $E' = E \cup C^q \cup D^{\md(p)}$.  Then with the above we get $r
  \cup \{(\md(p), p(\md(p)))\} \in \poset(E', \I \restrict E')$.  By
  the induction hypothesis $\poset(C^q, \I \restrict C^q) \leqcirc
  \poset(E', \I \restrict E')$, and using the definition of order we
  have $r \cup \{(\md(p), p(\md(p)))\} \leq_{\poset(E', \I \restrict
  E')} q \restrict q \restrict L_{\md(p)}$ and $r \cup \{(\md(p),
  p(\md(p)))\} \leq_{\poset(E', \I \restrict E')} p$.

  We can now add the next element from the domain of $q$ to $r \cup
  \{(\md(p),p(\md(p)))\}$.  After iteratively adding the elements from
  the domain of $q$, we find $r \cup \{(\md(p),$ $p(\md(p)))\} \cup q
  \restrict \{y \in L \mid \md(p) < x \} \leq_{\poset(\bar{C}, \I
  \restrict \bar{C})} q, p$ for a $\bar{C}$ containing $C$.  If when adding
  $\md(q)$, we take the domain set to be all of $D$, then we get $r \cup
  \{(\md(p),p(\md(p)))\} \cup q \restrict \{y \in L \mid \md(p) < x \}
  \leq_{\poset(D, \I \restrict D)} q, p$ as desired.

  \underline{If $\md(p) \in A$}, then $q \leq_{\poset(C, \I \restrict
  C)} \red(p, \I \restrict B, A)$ is witnessed by $C^{\md(p)} \in \I$
  such that $A^{\md(p)} \subseteq C^{\md(p)} \subseteq L_{\md(p)}$, $q
  \restrict L_{\md(p)} \leq_{\poset(C^{\md(p)}, \I \restrict
  C^{\md(p)})} \red(p, \I \restrict B, A) \restrict L_{\md(p)}$, and
  $q \restrict L_{\md(p)} \forces_{\poset(C^{\md(p)}, \I \restrict
  C^{\md(p)})} q(\md(p)) \leq_\loc \red(p, \I \restrict B,
  A)(\md(p))$ and from there by possibly extending $q$.

  Since $\red(p, \I \restrict B, a) \restrict L_{\md(p)} = \red(p
  \restrict L_{\md(p)}, \I \restrict B^{\md(p)}, A^{\md(p)})$ and
  $C^{\md(p)} \cap B^{\md(p)} = A^{\md(p)}$, we can apply the induction
  hypothesis with $D^{\md(p)} = B^{\md(p)} \cup C^{\md(p)}$ to obtain
  $r^{\md(p)} \in \poset(D^{\md(p)}, \I \restrict D^{\md(p)})$ such
  that $r^{\md(p)} \leq_{\poset(D^{\md(p)}, \I \restrict D^{\md(p)})}
  q \restrict L_{\md(p)}, p \restrict L_{\md(p)}$.

  Write $q(\md(p)) = (\sigma_q, \dot{\varphi}_q)$ and remember that
  $\red(p, \I \restrict B, A)(\md(p)) = (\sigma_p, \dot{\varphi})$.
  Then
  \begin{equation}
    \tag{*}
    \label{eqn:templ:star}
    r^{\md(p)} \forces_{\poset(D^{\md(p)}, \I \restrict D^{\md(p)})}
    (\check{\sigma_q}, \dot{\varphi}_q) \leq_\loc (\check{\sigma_p},
    \dot{\varphi}).
  \end{equation}
  Therefore $\sigma_p \subseteq \sigma_q$ and $r^{\md(p)} \forces
  (\forall i \dot{\varphi}(i) \subseteq \dot{\varphi}_q(i)) \wedge
  \forall i (\lh(\check{\sigma_p}) < i \leq \lh(\check{\sigma_q})
  \rightarrow \dot{\varphi}(i) \subseteq \check{\sigma_q}(i))$.

  It is enough to find a condition $d^{\md(p)}$ in $\poset(D^{\md(p)},
  \I \restrict D^{\md(p)})$ such that $d^{\md(p)} \leq r^{\md(p)}$ and
  $s^{\md(p)} \forces (\check{\sigma_q}, \dot{\varphi}_q) \leq_\loc
  (\sigma_p, \dot{\psi})$.  Then by extending $d^{\md(p)}$ first by
  $(\md(p),$ $ (\sigma_q, \dot{\varphi}_q))$ and then in the same way $q$
  is extended we find the condition that shows $p$ and $q$ are
  compatible.

  Write $\Sigma = \{\sigma \in \finseq{\finset{\N}} \mid \forall
  i (\lh(\sigma) \leq i < \lh(\sigma_q) \rightarrow \sigma(i)
  \subseteq \sigma_q(i))\}$, then by \eqref{eqn:templ:star}
  \begin{equation*}
    r^{\md(p)} \leq \bigvee_{\sigma \in \Sigma} \bv{ \sigma \subseteq
      \dot{\varphi}}.
  \end{equation*}

  Therefore
  \begin{equation*}
    \red(r^{\md(p)}, \I \restrict D^{\md(p)}, A^{\md(p)}) \leq
    \bigvee_{\sigma \in \Sigma} \bv{\sigma \subseteq \dot{\varphi}}.
  \end{equation*}
  
  The right hand side of this inequality is a reduction of
  $\bigvee_{\sigma \in \Sigma} \bv{\sigma \subseteq \dot{\psi}}$, and
  therefore there is a $b^{\md(p)} \in \poset(B^{\md(p)}, \I \restrict
  B^{\md(p)})$ such that
  \begin{equation*}
    b^{\md(p)} \leq_{\poset(B^{\md(p)}, \I \restrict B^{\md(p)})} r^{\md(p)},
    \bigvee_{\sigma \in \Sigma} \bv{ \sigma \subseteq \dot{\psi}}
  \end{equation*}
  By induction hypothesis this gives a condition $d^{\md(p)} \in
  \poset(D^{\md(p)}, \I \restrict D^{\md(p)})$ such that $d^{\md(p)}
  \leq^{\poset(D^{\md(p)}, \I \restrict D^{\md(p)})} \bigvee_{\sigma
  \in \Sigma} \bv{\sigma \subseteq \dot{\psi}}, r^{\md(p)}$.

  Therefore $d^{\md(p)} \forces \forall i (\lh(\check{\sigma_p}) \leq i <
  \lh(\check{\sigma_q}) \rightarrow \dot{\psi}(i) \subseteq \sigma_q(i))$ and
  $d^{\md(p)} \forces \forall i \dot{\psi}(i) \subseteq
  \dot{\varphi}_q(i)$; that is $d^{\md(p)} \forces (\check{\sigma_q},
  \dot{\varphi}) \leq_\loc (\check{\sigma_p}, \dot{\psi})$.  Which is as
  was to be shown.
\end{proof}

We now show that the properties of this poset that establish that the forcing
preserves cardinals and that any real in
the forcing extension is already added by a small forcing completely
embedded in it.

\begin{definition}
  A poset $\mathbb{P}$ satisfies \emph{Knaster's condition} if every
  uncountable subset of it has an uncountable subset of pairwise
  compatible elements.
\end{definition}

A poset satisfying Knaster's condition is clearly c.c.c.

\begin{lemma}[\protect{\cite[Lemma 4.5]{JB}}]
  For all templates $(L, \I)$, $\poset (L, \I)$ satisfies Knaster's
  condition.
\end{lemma}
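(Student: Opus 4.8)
The plan is to prove the lemma by transfinite induction on $\Dp(L)$, which is legitimate because $\I$ is wellfounded under $\subseteq$. Beyond the obvious inductive hypothesis that $\poset(B,\I\restrict B)$ satisfies Knaster's condition for every $B\in\I$ with $\Dp(B)<\Dp(L)$, I would use freely the completeness-of-embeddings lemma just proved: since any witness $B$ in the definition of a nonempty $p\in\poset(L,\I)$ satisfies $B\subseteq L_{\md(p)}\subsetneq L$, hence $\Dp(B)<\Dp(L)$, every name occurring in a condition of $\poset(L,\I)$ lives over a sub-poset of strictly smaller depth, and completeness of embeddings lets me transport such a name up to $\poset(B',\I\restrict B')$ for any larger $B'\in\I$ with $B\subseteq B'\subseteq L_{\md(p)}$ (such $B'$ exist since $\I$ is closed under finite unions and has members $\subseteq L_x$ realizing any given $y<x$). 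So let $\{p_\alpha\mid\alpha<\omega_1\}\subseteq\poset(L,\I)$. First I would apply the $\Delta$-system lemma to the finite sets $\dom(p_\alpha)$, getting an uncountable $X_0$ whose domains form a $\Delta$-system with root $R$. As $R$ is finite and each first coordinate $\pi_0(p_\alpha(x))\in\finseq{\finset{\N}}$ is a genuine object (not a name) ranging over a countable set, I would thin to an uncountable $X_1\subseteq X_0$ on which $\pi_0(p_\alpha(x))$ equals a fixed $\sigma_x$ for all $x\in R$, and on which the order type of $\dom(p_\alpha)$ together with the position of $R$ inside it is fixed.

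The core claim is that any two $p_\alpha,p_\beta$ with $\alpha,\beta\in X_1$ are compatible, which yields Knaster's condition at once. I would prove the claim by constructing a common lower bound $r$ with $\dom(r)=\dom(p_\alpha)\cup\dom(p_\beta)$, defined by recursion along this finite set in increasing $L$-order, maintaining the invariant that each initial piece $r\restrict L_z$ lies in some $\poset(B,\I\restrict B)$ with $\dom(r\restrict L_z)\subseteq B\subseteq L_z$, $B\in\I$, and $r\restrict L_z\leq$ the corresponding restrictions of $p_\alpha$ and $p_\beta$. For $z\in\dom(p_\alpha)\setminus\dom(p_\beta)$ (hence $z\notin R$) I would set $r(z):=p_\alpha(z)$, first transporting its name-valued second coordinate up to the current $\poset(B,\I\restrict B)$ via completeness of embeddings; symmetrically when $z\in\dom(p_\beta)\setminus\dom(p_\alpha)$. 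The crucial case is $z\in R$: there $p_\alpha(z)=(\sigma_z,\dot\varphi^\alpha_z)$ and $p_\beta(z)=(\sigma_z,\dot\varphi^\beta_z)$ have the \emph{same} stem $\sigma_z$, and after transporting both names to a common $\poset(B,\I\restrict B)$ I would extend $r\restrict L_z$ inside $\poset(B,\I\restrict B)$ so as to decide $\dot\varphi^\alpha_z$ and $\dot\varphi^\beta_z$ on their first $2\lh(\sigma_z)$ coordinates, and then apply the compatibility criterion from the proof that $\loc$ is $\sigma$-linked (equal stem together with agreement on the first $2\lh(\sigma_z)$ coordinates yields a common $\loc$-extension, obtained by lengthening the stem and taking the pointwise union of the $\varphi$'s), letting $r(z)$ be that common extension with its second coordinate the pointwise union of the two names. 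Carrying the name along the remaining steps and using the order clauses of $\poset(L,\I)$ then gives $r\leq p_\alpha,p_\beta$.

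The main obstacle is exactly this merge at a root point $z$: for the $\loc$-criterion to apply one needs the \emph{decided} values of $\dot\varphi^\alpha_z$ and $\dot\varphi^\beta_z$ on the finitely many coordinates in $[\lh(\sigma_z),2\lh(\sigma_z))$ to coincide below $r\restrict L_z$, and since these names live over different, $\alpha$-dependent sub-posets one cannot simply thin on their values. The way around this is a preliminary isomorphism-of-names thinning of $X_1$: the $\Delta$-system structure supplies, for $\alpha,\beta\in X_1$, an order isomorphism of $\dom(p_\alpha)$ onto $\dom(p_\beta)$ that is the identity on $R$, and by the recursive definitions of $\poset(\,\cdot\,,\cdot\,)$ and $\I\restrict(\,\cdot\,)$ this lifts to an isomorphism of the relevant sub-posets carrying $p_\alpha$ to $p_\beta$; restricting from the start to a dense set of conditions in which the name at each point of $R$ is presented in a fixed uniform way (so that, modulo these isomorphisms, only countably many configurations of the relevant finite name-pieces occur), one can pass to an uncountable subfamily on which those pieces literally agree. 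With that extra thinning in place the recursion of the previous paragraph produces the common lower bound $r$, and the induction on $\Dp(L)$ closes.
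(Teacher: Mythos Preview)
Your overall architecture is right and matches the paper's: apply the $\Delta$-system lemma to the domains, thin so that the stems $\sigma_x=\pi_0(p(x))$ agree at every root point, and build a common lower bound $r$ by recursion along $\dom(p_\alpha)\cup\dom(p_\beta)$, using the $\sigma$-linkedness of $\loc$ to merge at root points. You also correctly isolate the one real obstacle: at a root point $z$ the $\loc$-merge needs the two names $\dot\varphi^\alpha_z,\dot\varphi^\beta_z$ to be decided to the \emph{same} values on $[\lh(\sigma_z),2\lh(\sigma_z))$ below $r\restrict L_z$.

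Your proposed fix, however, does not go through for an arbitrary template. The assertion that the order isomorphism $\dom(p_\alpha)\to\dom(p_\beta)$ fixing the root ``lifts to an isomorphism of the relevant sub-posets carrying $p_\alpha$ to $p_\beta$'' is unjustified: the witnessing sets $B\in\I$ over which the names $\dot\varphi^\alpha_z$ live are not determined by $\dom(p_\alpha)$, and nothing in the template axioms forces $\I$ to respect such finite order isomorphisms. (Isomorphism-of-names arguments do appear later in the chapter, but they rely on the specific combinatorics of the Shelah template, not on the abstract axioms.) Even if one had such isomorphisms, agreement of names ``modulo isomorphism'' does not make their decided values literally equal below a single common condition, which is what the merge requires. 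The paper's fix is simpler and sidesteps the issue: \emph{before} any thinning, pass to the dense set of $p\in\poset(L,\I)$ such that for every $x\in\dom(p)$ there is an actual $\tau_x\in{}^{2\lh(\sigma_x)}({}^{\leq\lh(\sigma_x)}[\N])$ with $p\restrict L_x\Vdash\check\tau_x\subseteq\pi_1(p(x))$ (density is an easy induction on $\Dp(L)$: extend $p\restrict L_{\md(p)}$ to decide enough of the last name, then recurse). On this dense set the data at each $x$ is the genuine finite object $(\sigma_x,\tau_x)$, so after the $\Delta$-system step one thins on the countably many possible values of $(\sigma_x,\tau_x)$ at each root point. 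Now at a root point both conditions have the same stem and force the same $\tau_x$, so the $\loc$ lower bound is obtained with stem $\sigma_x\,\hat{}\,\tau_x\restrict[\lh(\sigma_x),2\lh(\sigma_x))$ and second coordinate the pointwise union of the two names; no isomorphism argument is needed.
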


\begin{proof}
  Let $\mathcal{A} \subseteq \mathbb{P}(L, \I)$ be an uncountable set.

  We first show by induction on the rank of $L$ that the set of $p
  \in \mathbb{P}(L, \I)$ such that for all $x \in \dom(p)$ there are a
  $B \subseteq L_x$ and $\tau \in {}^{2 \lh(\pi_0(p(x)))}({}^{\leq
    \lh(\pi_0(p(x)))}[\N])$ such that $p \restrict L_x
  \forces_{\mathbb{P}(B, \I \restrict B)} \tau \subseteq \pi_1(p(x))$
  is dense.  These are the conditions where all the second projections
  at points in the domain are determined up to twice the length of the
  first coordinate.
  
  For $p \in \mathbb{P}(L,\I)$ there is a $B \subseteq L_{\md(p)}$
  such that $p \restrict L_{\md(p)} \forces_{\mathbb{P}(B, \I
    \restrict B)} \pi_1(p(x)) \in \funcN{({}^{\leq
    \lh(\pi_0(p(\md(p))))}[\N])}$.  So there is a $q \in
  \mathbb{P}(B, \I \restrict B)$ and $\tau \in {}^{2 \lh(\pi_0(p(x)))}({}^{\leq
    \lh(\pi_0(p(x)))}[\N])$ with $q
  \leq_{\mathbb{P}(B, \I \restrict B)} p \restrict L_{\md(p)}$ such
  that $q \forces_{\mathbb{P}(B, \I \restrict B)} \check{\tau}
  \subseteq \pi_1(p(\md(p)))$.  Then by the induction hypothesis there
  is $q' \leq_{\mathbb{P}(B, \I \restrict B)} q$ satisfying the
  requirement.  So $q' \cup \{(\md(p), p(\md(p)))\}$ is as desired.

  Let $\mathcal{A}' \subseteq \mathbb{P}(L,\I)$ be uncountable, have
  all elements in this dense set, and contain an element below every
  element of $\mathcal{A}$.  It is enough to show that $\mathcal{A}'$
  has an uncountable subset with all conditions compatible.

  By the $\Delta$-system lemma we can find an uncountable subset
  $\mathcal{A}''$ of $\mathcal{A}'$ such that $\{ \dom(p) \mid p \in
  \mathcal{A}''\}$ is a $\Delta$ system with root $r$.

  For each $x \in r$ there are countably many choices for
  $\pi_0(p(x))$, and then countably many choices for $\pi_1(p(x))
  \restrict 2 \lh(\pi_0(p(x))) \in {}^{2 \lh(\pi_0(p(x)))}({}^{\leq
    \lh(\pi_0(p(x)))}[\N])$.  So we can find an
  uncountable subset $\mathcal{A}'''$ of $\mathcal{A}''$ where all
  these choices are the same.

  Now for all $p_0, p_1 \in \mathcal{A}'''$ we see they are compatible
  by constructing a $q$ below both of them.  $q$ will have $\dom(q) =
  \dom(p_0) \cup \dom(p_1)$ and is constructed by recursion along its
  domain using the way $p_0$ and $p_1$ are constructed, and if the
  next value $q(x)$ to be constructed has $x \in r$ take $q(x)$ to be
  $(\tau_{p_0 \restrict (L_x \cup \{x\})}, \pi_1(p_0(x)) \cup \pi_1(p_1(x)))$.
\end{proof}

\begin{lemma}[analogue of \protect{\cite[Lemma 1.6]{JB}}]
  \label{lem:realsEtcInSmaller}
  \label{reduce to countable}
  For all templates $(L, \I)$, every $\dot{r}$ a
  $\mathbb{P}(L,\I)$ name for a real, every $\dot{\varphi}$ a
  $\mathbb{P}(L, \I)$ name for an element of $\funcN{(^{\leq n}[\N])}$ with $n \in \N$, and every $p \in \mathbb{P}(L,
  \I)$, there are countable sets $A_r, A_\varphi, A_p \subseteq L$ such
  that $\dot{r}$ is a $\mathbb{P}(A_r, \I \restrict A_r)$ name for a
  real, $\dot{\varphi}$ is a $\mathbb{P}(A_\varphi, \I \restrict
  A_\varphi)$ name for an element of $\funcN{(^{\leq n}[\N])}$, and
  $p \in \mathbb{P}(A_p, \I \restrict A_p)$.
\end{lemma}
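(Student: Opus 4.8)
The plan is to prove, by induction on $\Dp(L)$, the conjunction of two assertions for every template $(L,\I)$ of that rank: \textbf{(C)} every $p\in\poset(L,\I)$ lies in $\poset(A,\I\restrict A)$ for some countable $A\subseteq L$ (automatically with $\dom(p)\subseteq A$ in my construction); and \textbf{(N)} every $\poset(L,\I)$-name for a real is a $\poset(A,\I\restrict A)$-name for some countable $A\subseteq L$. Since $^{\leq n}[\N]$ is a countable set, a name for an element of $\funcN{(^{\leq n}[\N])}$ is coded by a name for a real, so the $\dot\varphi$-case is subsumed by (N); and after replacing $\dot r$ by an equivalent nice name we may assume $\dot r$ is nice. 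The three sets $A_r,A_\varphi,A_p$ of the lemma are then three applications of (C) and (N) to $(L,\I)$ itself. The base case $\Dp(L)=0$, i.e. $L=\emptyset$, is trivial since $\poset(\emptyset,\I\restrict\emptyset)=\{\emptyset\}$.

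For the inductive step I would first establish (C) from the induction hypothesis. Given $p\neq\emptyset$, let $x=\md(p)$ and pick, by the recursive definition of the poset, $B\in\I$ with $B\subseteq L_x$, $p\restrict L_x\in\poset(B,\I\restrict B)$, $p(x)=(\sigma,\dot\varphi)$, and $p\restrict L_x\forces_{\poset(B,\I\restrict B)}(\check\sigma,\dot\varphi)\in\loc$. Since $B\subsetneq L$ and $\I$ is closed under finite intersections, one has $\I\restrict B=\{A\in\I\mid A\subseteq B\}$, so the template $(B,\I\restrict B)$ has strictly smaller $\Dp$ and the induction hypothesis applies to it: (C) gives a countable $D\subseteq B$ with $p\restrict L_x\in\poset(D,\I\restrict D)$, and (N) gives a countable $C\subseteq B$ with $\dot\varphi$ a $\poset(C,\I\restrict C)$-name. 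I would then set $A=\{x\}\cup C\cup D$ (whence $\dom(p)\subseteq A$) and $B'=B\cap A$; check that $B'\in\I\restrict A$, $B'\subseteq A\cap L_x$, and $C\cup D\subseteq B'$; and invoke the containment $\poset(A',\I\restrict A')\subseteq\poset(B'',\I\restrict B'')$ established in the big induction above (item~\ref{enum:templ:subset}), together with $(\I\restrict A)\restrict B'=\I\restrict B'$, to conclude $p\restrict L_x\in\poset(B',\I\restrict B')$ and that $\dot\varphi$ is a $\poset(B',\I\restrict B')$-name. Finally, since $\poset(B',\I\restrict B')\leqc\poset(B,\I\restrict B)$ by the Completeness of Embeddings lemma and the clause $(\check\sigma,\dot\varphi)\in\loc$ concerns only the name $\dot\varphi$, which now lives in the subposet, that clause transfers downward, giving $p\in\poset(A,\I\restrict A)$.

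With (C) available at the current rank, (N) is short. I would represent the nice name $\dot r$ by countably many maximal antichains $\langle\mathcal A_k\mid k\in\N\rangle$ of $\poset(L,\I)$, the $k$-th deciding $\dot r(k)$; by the Knaster property, hence c.c.c., of $\poset(L,\I)$ each $\mathcal A_k$ is countable, so $\dot r$ mentions only the conditions in the countable set $P_0=\bigcup_k\mathcal A_k$. Applying (C) to each $q\in P_0$ and letting $A_r$ be the union of the resulting countable sets, item~\ref{enum:templ:subset} gives $P_0\subseteq\poset(A_r,\I\restrict A_r)$, so $\dot r$ is a $\poset(A_r,\I\restrict A_r)$-name. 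That it is a name \emph{for a real} over this subposet follows from $\poset(A_r,\I\restrict A_r)\leqc\poset(L,\I)$: any $\poset(A_r,\I\restrict A_r)$-generic $G$ extends to a $\poset(L,\I)$-generic $\hat G$ with $\hat G\cap\poset(A_r,\I\restrict A_r)=G$, and since every condition appearing in $\dot r$ lies in $\poset(A_r,\I\restrict A_r)$ we get $\dot r[\hat G]=\dot r[G]$, which is a real. The $\dot\varphi$-case is identical under the coding of $^{\leq n}[\N]$ by $\N$.

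The step I expect to require the most care is (C): one must make sure the countable set $A$ is closed enough that intersecting the $\I$-witness $B$ down to $B\cap A$ still yields a legitimate witness---that $p\restrict L_x$ remains a condition over $(B',\I\restrict B')$, that $\dot\varphi$ remains a name there, and that the $\loc$-membership clause survives---which is precisely why $C$, $D$ and $\dom(p)$ are thrown into $A$, and why one leans on item~\ref{enum:templ:subset} of the big induction and on the Completeness of Embeddings lemma. Everything else is routine once (C) and (N) are organized as a simultaneous induction on $\Dp(L)$.
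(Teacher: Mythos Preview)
Your argument is correct and matches the paper's two-sentence sketch: induction on $\Dp(L)$, handling conditions via their recursive structure and names via the c.c.c./nice-name reduction. One small wrinkle worth tightening: when you invoke (N) on the $\dot\varphi$ sitting inside $p(\md(p))$, that name is only forced by $p\restrict L_x$ (not by $\emptyset$) to lie in $\funcN{([\N]^{\leq\lh(\sigma)})}$, so (N) as you phrase it does not literally apply---replace it by the version ``any nice name lives over a countable subposet'' (which your proof of (N) already establishes from (C) alone), at the cost that the condition you produce in $\poset(A,\I\restrict A)$ is equivalent to $p$ rather than literally $p$, a point the paper's sketch glosses over as well.
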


\begin{proof}
  This is easy to see by induction on the rank of $L$ and the facts
  that both $\dot{r}$ and $\dot{\varphi}$ are determined by countably
  many conditions in a $\mathbb{P}(B, \I \restrict B)$ for $B
  \subsetneq L$, and $p(\md(p))$ is a pair $(\sigma, \dot{\varphi})$.
\end{proof}
  
The following lemma, which is immediate from the definition of the
posets and the completeness of embeddings lemma, will be used later to
see that we indeed cofinally often take an extension by localization
forcing.

\begin{lemma}[Embedding Localization Forcing, analogue of
  \protect{\cite[Cor. 1.5]{BJ}}]
  \label{lem:embeddingLocForc}
  For any template $(L,\I)$, $x \in L$, and $A \subseteq L_x$ such
  that $A \in \I$, we have that $\mathbb{P}(A, \I \restrict A)
  \leqc \mathbb{P}(A \cup\{x\}, \I \restrict (A \cup \{x\}) \cong
  \mathbb{P}(A, \I \restrict A) * \loc \leqc \mathbb{P}(L,
  \I)$. \hfill $\qed$
\end{lemma}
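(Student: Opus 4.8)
The plan is to verify the three assertions in turn. The two complete‑embedding claims are direct appeals to the Completeness of Embeddings lemma, so the real content is the middle isomorphism. Since $A\subseteq A\cup\{x\}\subseteq L$ and $(\I\restrict(A\cup\{x\}))\restrict A=\I\restrict A$, applying that lemma to the pair $A\subseteq A\cup\{x\}$ gives $\mathbb{P}(A,\I\restrict A)\leqc\mathbb{P}(A\cup\{x\},\I\restrict(A\cup\{x\}))$, and applying it to $A\cup\{x\}\subseteq L$ gives $\mathbb{P}(A\cup\{x\},\I\restrict(A\cup\{x\}))\leqc\mathbb{P}(L,\I)$; composing the latter with the middle isomorphism yields $\mathbb{P}(A,\I\restrict A)*\loc\leqc\mathbb{P}(L,\I)$.

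For the isomorphism, the key point is that $A\subseteq L_x$ forces $x=\max(A\cup\{x\})$, so a condition $p\in\mathbb{P}(A\cup\{x\},\I\restrict(A\cup\{x\}))$ falls into one of two cases. If $x\notin\dom(p)$, then $\dom(p)\subseteq A$, and by the ``$\mathbb{P}(A,\cdot)\subseteq\mathbb{P}(L,\cdot)$'' property established in the proof of the Completeness of Embeddings lemma together with the identification of restricted templates above, $p\in\mathbb{P}(A,\I\restrict A)$. If $x\in\dom(p)$, then $\md(p)=x$, so $p\restrict L_x=p\restrict A$ lies in $\mathbb{P}(B,\I\restrict B)$ for some $B\in\I$ with $B\subseteq A$ (using properties (4) and (5) of a template and the definition of $\I\restrict(A\cup\{x\})$), hence by Completeness of Embeddings $p\restrict A\in\mathbb{P}(A,\I\restrict A)$ and the forcing statement $p\restrict A\forces(\check\sigma,\dot\varphi)\in\loc$ lifts from $\mathbb{P}(B,\cdot)$ to $\mathbb{P}(A,\cdot)$, where $p(x)=(\sigma,\dot\varphi)$. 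I would then define $\Phi\colon\mathbb{P}(A\cup\{x\},\I\restrict(A\cup\{x\}))\to\mathbb{P}(A,\I\restrict A)*\loc$ by $\Phi(p)=(p,\mathbf{1})$ when $x\notin\dom(p)$ (with $\mathbf{1}$ the trivial condition of $\loc$) and $\Phi(p)=(p\restrict A,\,p(x))$ when $x\in\dom(p)$, reading $p(x)$ as a $\mathbb{P}(A,\I\restrict A)$‑name for a condition of $\loc$. The two clauses in the definition of the order of $\mathbb{P}(L,\I)$, for $\md(q)=\md(p)$ and for $\md(q)>\md(p)$, match exactly the two clauses defining the order of a two‑step iteration, so checking that $\Phi$ and its natural inverse are order preserving comes down to unifying the witness sets $B$ via closure of $\I$ under finite unions and induction on $\Dp$, as in the proof of the Completeness of Embeddings lemma and its associated reduction map $\red$. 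Finally, since the names of the special form $(\check\sigma,\dot\varphi)$ are dense in $\loc$ (as remarked after the definition of $\mathbb{P}(L,\I)$), the image of $\Phi$ is dense in $\mathbb{P}(A,\I\restrict A)*\loc$, so $\Phi$ is a dense embedding and the two posets are forcing equivalent, giving the ``$\cong$''.

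The only real friction I expect is the name bookkeeping in the isomorphism: making precise that $p(x)=(\sigma,\dot\varphi)$, initially given relative to an auxiliary $B\in\I$ with $B\subseteq A$, is coherently a $\mathbb{P}(A,\I\restrict A)$‑name, and that the order is preserved through the complete embedding $\mathbb{P}(B,\cdot)\leqc\mathbb{P}(A,\cdot)$ in both directions. This is exactly the flavor of argument already carried out in the Completeness of Embeddings proof (the construction and the reduction property of $\red$), so it is routine but must be spelled out. Everything else — the identities $(A\cup\{x\})_x=A$, $(A\cup\{x\})_y=A_y$ for $y\in A$, and $(\I\restrict(A\cup\{x\}))\restrict A=\I\restrict A$ — is immediate from the definitions.
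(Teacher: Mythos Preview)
Your proposal is correct and follows exactly the approach the paper intends: the paper states this lemma as ``immediate from the definition of the posets and the completeness of embeddings lemma'' and gives no further proof, and what you have written is precisely the unfolding of that remark. Your observation that the image of $\Phi$ is only dense (because the first coordinate $\sigma$ is an actual finite sequence rather than a name) matches the paper's own comment after the definition of $\mathbb{P}(L,\I)$, so the ``$\cong$'' is indeed forcing equivalence via a dense embedding.
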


The following notion is very important to us; it will be used below to
see that posets defined using different templates are isomorphic.

\begin{definition}
  For $(L,\I)$ and $(L, \I')$ two templates, we say $\I$ is an
  \emph{innocuous extension} of $\I'$ if $\I' \subseteq \I$ and for
  all $B \in \I$ with $B \subseteq L_x$ and all countable $A \subseteq
  B$ there is a $C \in \I'$ with $C \subseteq L_x$ such that $A
  \subseteq C$.
\end{definition}

For this notion we have the following.

\begin{lemma}[Innocuous extensions, \protect{\cite[Lemma 1.7]{BJ}}]
  If $(L, \I)$ and $(L, \I')$ are two templates with $\I$ an
  innocuous extension of $\I'$, then $\mathbb{P}(L,\I) \cong
  \mathbb{P}(L,\I')$.\hfill $\qed$
\end{lemma}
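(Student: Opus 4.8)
The plan is to prove the isomorphism by a single recursion on $\Dp(L)$, exploiting one simple mechanism: because every condition of $\mathbb{P}(L,\I)$ has finite domain and, hereditarily, its data (the sequences $\sigma$ together with the countably many conditions determining the names $\dot\varphi$) is determined by a countable subset of each witnessing set, any assertion of the form ``there is $B\in\I$ with $B\subseteq L_x$ and [a property of $B$ depending only on a countable subset of $B$]'' can be converted into ``there is $C\in\I'$ with $C\subseteq L_x$ and [the same property]''. This is exactly what innocuousness of the extension provides. I would package this as the statement that the map $e\colon\mathbb{P}(L,\I')\to\mathbb{P}(L,\I)$ induced by $\I'\subseteq\I$ (translating, at lower ranks, names along the complete embeddings $\mathbb{P}(B,\I'\restrict B)\leqc\mathbb{P}(B,\I\restrict B)$ supplied by the Completeness of Embeddings lemma and the induction hypothesis) is a dense embedding, which yields $\mathbb{P}(L,\I)\cong\mathbb{P}(L,\I')$.

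First I would check that $e$ is well defined and order preserving: a condition $p$ of $\mathbb{P}(L,\I')$ is witnessed by some $B\in\I'\subseteq\I$, and $p\restrict L_{\md(p)}\in\mathbb{P}(B,\I'\restrict B)$ maps by induction into $\mathbb{P}(B,\I\restrict B)$, through which the name $p(\md(p))=(\sigma,\dot\varphi)$ is reinterpreted; the order clauses translate the same way. Next, to see $e$ reflects order and incompatibility, I would observe that each clause of the order on $\mathbb{P}(L,\I)$ again quantifies existentially over a witnessing $B\in\I$ contained in some $L_x$ and using only the (finite-support) data of the two conditions; by innocuousness such a $B$ can be replaced by a $C\in\I'$ with $C\subseteq L_x$ containing the relevant countable set, and then the induction hypothesis at $\Dp(C)<\Dp(L)$ lets us rewrite the relation inside $\mathbb{P}(C,\I'\restrict C)$. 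Hence a witness to $e(q)\le e(p)$ produces a witness to $q\le p$, and similarly for incompatibility.

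The density of $e[\mathbb{P}(L,\I')]$ is the substantive point. Given $p\in\mathbb{P}(L,\I)$, I would unfold its recursive definition to list the finite tree of witnessing sets $B\in\I$ attached to $p$ and, for each point of its domain, invoke Lemma~\ref{lem:realsEtcInSmaller} to see that the name $\dot\varphi$ there is in fact a $\mathbb{P}(B_0,\I\restrict B_0)$-name for some countable $B_0\subseteq B$; all of these countable sets sit inside appropriate $L_x$'s. Innocuousness then supplies, for each, a $C\in\I'$ with the right containment, and walking up the tree while transporting each name along the complete embedding $\mathbb{P}(C,\I'\restrict C)\leqc\mathbb{P}(B,\I\restrict B)$ (using the reduction maps $\red(\cdot,\I\restrict B,C)$ from the Completeness of Embeddings lemma to push Boolean values of the name down) reassembles a condition $q\le p$ that uses only sets from $\I'$, i.e.\ lies in the image of $e$.

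The main obstacle, and the place where the previous lemmas really earn their keep, is this name transport: a condition of $\mathbb{P}(L,\I)$ is not a purely combinatorial object but carries, at each point of its domain, a name for an element of $\funcN{(\finset{\N})}$ over a lower-rank template poset, and one must both locate a countable sub-template over which that name lives (Lemma~\ref{lem:realsEtcInSmaller}) and then actually produce the corresponding name over $\mathbb{P}(C,\I'\restrict C)$ for the $C$ given by innocuousness. Everything else --- wellfoundedness of $\I$ making the reassembly a legitimate recursion, the order clauses, the fact that Knaster's condition plays no role here --- is routine once this transport step is in place.
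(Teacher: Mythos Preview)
The paper does not prove this lemma: it is stated with a citation to Brendle and marked with a terminal $\qed$, so there is no proof in the paper to compare your proposal against. What I can do is compare your approach to the standard argument (essentially Brendle's), which the paper is invoking.

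Your instinct to induct on $\Dp(L)$ and to rely on Lemma~\ref{lem:realsEtcInSmaller} plus innocuousness is correct; that is exactly the engine of the proof. But you are working harder than necessary. The standard argument shows that $\mathbb{P}(L,\I)$ and $\mathbb{P}(L,\I')$ are literally the same partial order --- same underlying set, same $\leq$ --- not merely connected by a dense embedding. Given $p\in\mathbb{P}(L,\I)$ witnessed by some $B\in\I$ with $B\subseteq L_{\md(p)}$, Lemma~\ref{lem:realsEtcInSmaller} gives a countable $A\subseteq B$ such that $p\restrict L_{\md(p)}\in\mathbb{P}(A,\I\restrict A)$ and the name $\dot\varphi$ in $p(\md(p))$ is already a $\mathbb{P}(A,\I\restrict A)$-name. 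Innocuousness produces $C\in\I'$ with $A\subseteq C\subseteq L_{\md(p)}$; since $C\in\I'\subseteq\I$ has strictly smaller rank, the induction hypothesis gives $\mathbb{P}(C,\I\restrict C)=\mathbb{P}(C,\I'\restrict C)$, and the subset property (item~\ref{enum:templ:subset} of the big completeness lemma) then shows $p\restrict L_{\md(p)}$ and $\dot\varphi$ live over $\mathbb{P}(C,\I'\restrict C)$. So $p\in\mathbb{P}(L,\I')$ on the nose. The same reasoning handles the order relation.

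Your ``name transport'' via reduction maps $\red(\cdot,\I\restrict B,C)$ is therefore unnecessary, and as written is a little worrying: reductions do not give you extensions, only conditions whose further extensions are compatible with the original, so using them to build a $q\leq p$ in the image of $e$ would take extra care. You can sidestep this entirely once you see that the names never need to be moved --- they are already names over the smaller template.
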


\section*{The Template}

The template we describe in this section is due to Shelah \cite{Sh}; we
work with the description of it in Brendle \cite{JB}.

The template we use is $(L, \I) = (L(\mu,\lambda),
\I(\mu,\lambda))$ with $\mu$ and $\lambda \geq \aleph_1$ two cardinals.

Let $\lambda^*$ be a disjoint copy of $\lambda$ with the reverse ordering,
and $\{ S^\alpha \mid \alpha < \omega_1 \}$ be a partition of
$\lambda^*$ such that each $S^\alpha$ is coinitial in $\lambda^*$.  We
call the elements of $\lambda$ \emph{positive} and those of
$\lambda^*$ \emph{negative}.  We order the set $\lambda^* \cup
\lambda$ by having all elements of $\lambda^*$ be smaller than all
elements of $\lambda$ and having the usual order between elements of
$\lambda^*$ or elements of $\lambda$.  We set $L := \{ x \in \finseq{\mu \cup
\lambda^* \cup \lambda} \mid x(0) \in \mu \ \wedge \ x(n)
\in \lambda^* \cup \lambda \text{ for all } n \text{ such that } 1
\leq n < \lh(x) \}$, and we define $x < y$ if
\begin{itemize}
  \item $x \subset y$ and $y(\lh(x)) \in \lambda$ (elements get bigger
    by extending in $\lambda$), or
  \item $y \subset x$ and $x(\lh(y)) \in \lambda^*$ (elements get
    smaller by extending in $\lambda^*$), or
  \item if $n := \min \{ m \mid x(m) \neq y(m) \}$ then $x(n) < y(n)$.
\end{itemize}

We say $x \in L$ is \emph{relevant} if
\begin{itemize}
  \item $\lh(x) \geq 3$ and is odd, and
  \item $x(n)$ is negative for odd $n$ and positive for even $n$, and
  \item $x(\lh(x)-1) < \omega_1$, and
  \item if $n < m$ are such that $x(n), x(m) < \omega_1$ and are even,
    then there are $\beta < \alpha$ such that $x(n-1) \in S^\alpha$
    and $x(m-1) \in S^\beta$.
\end{itemize}

For a relevant $x$ we define $J_x$ to be the interval
$[x\upharpoonright (\lh(x)-1), x)$. 

Now we define $\mathcal{I}$ to be generated by taking finite unions of
singletons of members of $L$, $L_\alpha$ for $\alpha \in \mu \cup
\{\mu\}$, and $J_x$ for relevant $x$.

\begin{lemma}[\protect{\cite[Lemma 3.2]{JB}}]
  $(L, \mathcal{I})$ is a template.
\end{lemma}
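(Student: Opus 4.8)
The plan is to verify directly the five clauses in the definition of template for the pair $(L,\I)$ just constructed, following \cite[Lemma 3.2]{JB}. Recall that $\I$ is the closure under finite unions of the family $\I_0$ of \emph{generators}, consisting of the singletons $\{x\}$ for $x\in L$, the initial segments $L_\alpha$ for $\alpha\in\mu\cup\{\mu\}$ (with $L_\mu=L$), and the intervals $J_x=[x\restrict(\lh(x)-1),x)$ for relevant $x$. Clauses \ref{template prop1} and \ref{template prop3} are immediate: $\emptyset$ is the empty union, $L=L_\mu\in\I_0\subseteq\I$, and given $y<x$ in $L$ the singleton $\{y\}\in\I_0$ satisfies $\{y\}\subseteq L_x$ and $y\in\{y\}$ — so the singletons, which Kastermans includes among the generators, trivialize \ref{template prop3}.

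For \ref{template prop2} and \ref{template prop4} one distributes intersection over union: since $\bigl(\bigcup_i B_i\bigr)\cap\bigl(\bigcup_j C_j\bigr)=\bigcup_{i,j}(B_i\cap C_j)$, it suffices to show that the intersection of two generators is a finite union of generators, and (for \ref{template prop4}, using that $x\notin\bigcup_i A_i$ forces $x\notin A_i$) that $A\cap L_x$ is a finite union of generators whenever $A$ is a single generator and $x\in L\setminus A$. The cases are routine except one: any intersection involving a singleton is $\emptyset$ or a singleton; $L_\alpha\cap L_\beta=L_{\min(\alpha,\beta)}$, since the $L_\alpha$ form a $\subseteq$-chain of initial segments; $L_\alpha\cap J_x$ is $J_x$ or $\emptyset$, using that (by the three-clause definition of $<$, together with $x\restrict(\lh(x)-1)\le z<x$) every $z\in J_x$ shares the first coordinate $x(0)$, plus a short check of the boundary case via the relevance of $x$; for \ref{template prop4}, $\{w\}\cap L_x\in\{\{w\},\emptyset\}$, while $L_\alpha\cap L_x=L_\alpha$ because $L_\alpha$ is an initial segment of $L$ and $x\notin L_\alpha$ forces every element of $L_\alpha$ to be $<x$, and $J_y\cap L_x$ is $\emptyset$ or $J_y$ because $J_y$ is convex, so $x\notin J_y$ means either $x<\min J_y$ (whence $L_x\cap J_y=\emptyset$) or $x\ge y$ (whence $J_y\subseteq L_y\subseteq L_x$). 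The one genuinely combinatorial item is $J_{y_1}\cap J_{y_2}$: using the explicit order on $L$ and the conditions defining \emph{relevant} — the alternation of positive and negative coordinates, the last coordinate being a countable ordinal, and the $S^\alpha$-monotonicity — one shows that two such intervals are always disjoint or nested, so their intersection is again one of the $J$'s or $\emptyset$.

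The main obstacle is \ref{template prop5}, wellfoundedness of $\I$ under $\subseteq$. Suppose $A_0\supsetneq A_1\supsetneq\cdots$ in $\I$; write each $A_i$ as a finite union of generators in reduced form (no piece contained in the union of the others) and attach to it an ordinal rank assembled from ranks of its constituent generators — rank $0$ for singletons, a rank tracking $\alpha$ for $L_\alpha$, and for $J_x$ a rank read off from $\lh(x)$ and from the $S^\alpha$-indices of the negative coordinates of $x$ — combining the generator ranks of a finite union by a multiset (Hessenberg) sum. The singleton and $L_\alpha$ pieces contribute wellfounded behaviour at once (ordinals are wellfounded); the crux is that an infinite strictly $\subseteq$-descending chain of $J$-intervals would produce relevant nodes $x_0,x_1,\dots$ with $J_{x_{i+1}}\subsetneq J_{x_i}$, hence with lengths tending to infinity, and — because each relevant $x$ has its last (even-position) coordinate a countable ordinal while the $S^\alpha$ are coinitial in $\lambda^*$ with strictly decreasing indices along such a descent — a strictly decreasing sequence of countable ordinals, which is absurd. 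Propagating this through the reduced-form decompositions of the $A_i$ gives a strictly decreasing sequence of the attached ranks, the desired contradiction. This step is exactly where Shelah's particular choice of $L$ and of the relevance conditions is used, and it is the portion for which one should follow \cite[Lemma 3.2]{JB} in detail.
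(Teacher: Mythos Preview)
The thesis gives no proof of this lemma: it is stated with the citation to \cite[Lemma 3.2]{JB} and the text moves immediately to the next section. There is therefore nothing in the paper to compare your proposal against; you have already written more than the author does.

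As a sketch of Brendle's argument your outline is sound. Clauses (1)--(4) go through essentially as you describe, the only nontrivial ingredient being that two intervals $J_{y_1}$, $J_{y_2}$ are always disjoint or nested; this is precisely where the alternation and $S^\alpha$-monotonicity conditions in the definition of \emph{relevant} do their work. For clause (5) your idea is the right one --- a strictly descending chain of nested $J$'s would force a strictly descending sequence of $S^\alpha$-indices, hence of countable ordinals --- but the passage from wellfoundedness of the generator family to wellfoundedness of its closure under finite unions is only gestured at with the multiset/Hessenberg rank, and you end by sending the reader back to \cite{JB} for the details. Since the thesis does exactly the same, this is not a gap relative to the paper.
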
    

\section*{The Result}

Now we are ready to put all our work together to prove the following
theorem which clearly implies Theorem \ref{thm:template}.

\begin{theorem}
  Let $\lambda > \mu > \aleph_1$ be regular cardinals with $\lambda =
  \lambda^\omega$.  Then $\mathbb{P}(L, \I)
  \forces ``\add(\Null) = \cof(\Null) = \mu \text{ and } \mathfrak{a}_g
  = \lambda = 2^{\aleph_0}$''.
\end{theorem}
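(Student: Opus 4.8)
The plan is to establish the three conclusions — $2^{\aleph_0}=\lambda$, $\add(\Null)=\cof(\Null)=\mu$, and $\mathfrak{a}_g=\lambda$ — separately, using the lemmas already proved about $\mathbb{P}:=\mathbb{P}(L,\I)$ for the template $(L,\I)=(L(\mu,\lambda),\I(\mu,\lambda))$.

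\emph{Cardinal arithmetic.} First, $\mathbb{P}$ satisfies Knaster's condition, hence is c.c.c., so it preserves all cardinals and cofinalities; in particular $\mu,\lambda$ remain regular. A counting argument — using $\CH$ in the ground model, $|L|=\lambda$, and the fact that a condition is a finite sequence each coordinate of which is a pair $(\sigma,\dot\varphi)$ with $\sigma\in\finseq{\finset{\N}}$ and $\dot\varphi$ a name in a strictly smaller subtemplate, together with $\lambda=\lambda^\omega$ — gives $|\mathbb{P}|=\lambda$; counting nice names then yields $2^{\aleph_0}\le\lambda$ in $V^{\mathbb P}$. Conversely, applying Lemma~\ref{lem:embeddingLocForc} at $\lambda$ many suitably nested coordinates $x\in L$ produces $\lambda$ pairwise distinct generic slaloms, so $2^{\aleph_0}\ge\lambda$; hence $2^{\aleph_0}=\lambda$.

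\emph{Cicho\'n's diagram collapses to $\mu$.} I would work with the initial segments $L_\alpha=\{x\in L\mid x(0)<\alpha\}$, $\alpha<\mu$, which lie in $\I$ and are $\subseteq$-increasing; by Lemma~\ref{lem:embeddingLocForc} the coordinate $\langle\alpha\rangle$ adjoins, over $\mathbb{P}(L_\alpha,\I\restrict L_\alpha)$, a slalom $s_\alpha$ localizing every real of $V[G\cap\mathbb{P}(L_\alpha,\I\restrict L_\alpha)]$. Now every real of $V^{\mathbb{P}}$ has, by Lemma~\ref{reduce to countable}, a name in $\mathbb{P}(A,\I\restrict A)$ for some countable $A\subseteq L$; as $x(0)\in\mu$ for all $x\in L$ and $\cf(\mu)>\aleph_0$, such an $A$ lies inside some $L_\alpha$, so the real belongs to $V[G\cap\mathbb{P}(L_\alpha,\cdot)]$ and is localized by $s_\alpha$. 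Thus $\{s_\alpha\mid\alpha<\mu\}$ is a family of $\mu$ slaloms localizing all reals, so $\cof(\Null)\le\mu$ by Bartoszy\'nski's characterization; symmetrically, a family of $<\mu$ reals has all of its names below some $L_\alpha$ (here one uses that $\mu$ is regular and $\mu>\aleph_1$), and $s_\alpha$ localizes the whole family, so $\add(\Null)\ge\mu$. Since $\add(\Null)$ and $\cof(\Null)$ are the least and greatest entries of Cicho\'n's diagram, every entry — in particular $\non(\mathcal{M})$ — equals $\mu$.

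\emph{$\mathfrak{a}_g=\lambda$.} The bound $\mathfrak{a}_g\le 2^{\aleph_0}=\lambda$ is automatic: any cofinitary group of size continuum (which exists in $\ZFC$) extends by Zorn's lemma to a maximal one, necessarily of size $2^{\aleph_0}$. For the lower bound, the Brendle--Spinas--Zhang theorem $\mathfrak{a}_g\ge\non(\mathcal{M})=\mu$ rules out maximal cofinitary groups of size $<\mu$, so it remains to exclude sizes $\kappa$ with $\mu\le\kappa<\lambda$. This is the isomorphism-of-names argument: if $\dot A$ names such a group, then writing its elements as nice names for reals, each lives in a countable subtemplate, whence $\dot A$ is equivalent to a $\mathbb{P}(W,\I\restrict W)$-name for a set $W\subseteq L$ with $|W|\le\kappa<\lambda$. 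Using the homogeneity of $L(\mu,\lambda)$ in the positive ($\lambda$-) direction — there are $\lambda$ relevant intervals $J_x$ of each of the $\aleph_1$ similarity types, so one may pick a relevant $x$ with $J_x\cap W=\emptyset$ and an isomorphism of templates, legitimized by the innocuous-extension and completeness-of-embeddings lemmas, carrying the localization coordinates along $J_x$ onto a forcing over $\mathbb{P}(W,\I\restrict W)$ — I would read off from the corresponding $\loc$-generic, via a good-extension construction in the style of Section~\ref{sec:MCGintro} together with the domain/range/hitting lemmas, a permutation $f\notin A$ with $\langle A,f\rangle$ cofinitary, contradicting maximality of $A$. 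Hence $\mathfrak{a}_g\ge\lambda$, so $\mathfrak{a}_g=\lambda$.

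\emph{Main obstacle.} The delicate part is the last paragraph. It rests on two technical inputs: (i) the precise homogeneity of Brendle's template — that any piece of size $<\lambda$ can be avoided and that the posets attached to isomorphic subtemplates are genuinely isomorphic, which is where the structural lemmas (Knaster, completeness of embeddings, innocuous extensions, reduction to countable subtemplates) get combined; and (ii) the lemma that localization forcing over a model containing a cofinitary group $G$ adds a permutation $f$ with $\langle G,f\rangle$ cofinitary, and does so uniformly enough that one generic works simultaneously against every finite word over $A$. Both follow the patterns of \cite{JB} and of the good-extension machinery recalled earlier, but assembling them — in particular verifying that the $\loc$-step along a fresh relevant interval truly destroys maximality of $A$ — is the crux.
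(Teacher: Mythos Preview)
Your treatment of $2^{\aleph_0}=\lambda$ and $\add(\Null)=\cof(\Null)=\mu$ is essentially the paper's argument. The gap is in the $\mathfrak{a}_g$ step: although you correctly call it an ``isomorphism-of-names argument'', your mechanism for producing the extra permutation is wrong. Localization forcing adds a slalom, not a permutation, and there is no lemma saying that a $\loc$-generic over a model containing a cofinitary group $G$ yields, via good extensions or otherwise, a permutation $f$ with $\langle G,f\rangle$ cofinitary. The good-extension machinery of Section~\ref{sec:MCGintro} applies to the eventually-different-style forcings, not to $\loc$; your item (ii) in the ``Main obstacle'' is not a known lemma and is not what is used.

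What the paper actually does is this. Each element $\dot g_\alpha$ of the purported group is supported on a countable tree $B^\alpha\subseteq L$. There are at most $2^{\aleph_0}$ isomorphism types of such trees (where ``isomorphism'' respects the order, tree structure, and relevance data). Following Brendle, one finds an $\omega_1$-sized family of indices $\alpha$ whose trees $B^\alpha$ are coherently isomorphic, together with a fresh position $B^\kappa$ in the template so that for any finitely many $\beta_0,\dots,\beta_i<\kappa$ the template $\I\restrict(B^\kappa\cup\bigcup_j B^{\beta_j})$ is an innocuous extension of the image of $\I\restrict(B^\alpha\cup\bigcup_j B^{\beta_j})$ for some $\alpha<\omega_1$. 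One then \emph{defines} $\dot g_\kappa$ by transporting the antichains defining $\dot g_\alpha$ along the isomorphism $\phi_{\alpha,\kappa}$. The point is that for any word $w$ in finitely many $\dot g_{\beta_j}$'s, the statement ``$w(\dot g_\kappa)$ is cofinitary'' is decided in $\mathbb{P}(B^\kappa\cup\bigcup_j B^{\beta_j},\cdot)$, which by innocuous extension is isomorphic to $\mathbb{P}(B^\alpha\cup\bigcup_j B^{\beta_j},\cdot)$ via a map fixing the $B^{\beta_j}$ and sending $\dot g_\kappa$ to $\dot g_\alpha$; since $\dot g_\alpha$ is already in the group, $w(\dot g_\alpha)$ is cofinitary, hence so is $w(\dot g_\kappa)$. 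Thus the new permutation is a \emph{copy of an old group element}, not something read off a slalom, and the verification uses the template isomorphism, not the domain/range/hitting lemmas.
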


Since $\mu$ (identified with sequences of length $1$) is cofinal in
$L$, and $L_\mu \in I$, the slaloms added at the
coordinates associated to $\mu$ localize all reals in the generic
extension as follows.  Let $G$ be $\mathbb{P}(L,\I)$ generic and $r$ a real in
$V[G]$.  Then by Lemma \ref{lem:realsEtcInSmaller} we find a countable
$A \subseteq L$ such that $r \in V[G \cap \mathbb{P}(A, \I \restrict
A)]$.  There is an $\alpha \in \mu$ such that $A \subseteq L_\alpha$,
so that the real $r$ is in $V[G \cap \mathbb{P}(L_\alpha, \I \restrict
L_\alpha)]$.  Then Lemma \ref{lem:embeddingLocForc} shows that $r$ is
localized by the slalom added at coordinate $\alpha$.

Since for a family of reals $\{r_\gamma \mid \gamma < \beta\}$ with
$\beta < \mu$ we can do similar reasoning, we see that these slaloms
actually capture all families of reals of size less than $\mu$ in the
generic extension as follows.  Code the family $\{ r_\gamma \mid
\gamma < \beta \}$ into a function $\dot{F} : \beta \times \omega
\rightarrow \N$.  This name is determined by $\beta \times \omega$
many antichains $\{B_{\theta,n} \mid (\theta,n) \in \beta \times
\omega\}$, where the antichain $B_{\theta,n}$ consists of conditions
deciding the value of $\dot{F}(\theta,n)$.  By the c.c.c. all these
antichains are countable; enumerate for each $\theta$ and $n$ the
antichain $A_{\theta,n}$ by $\{p_{\theta,n,k} \mid k \in \N\}$.  For
each $p_{\theta,n,k}$ there exists a countable set $A_{\theta,n,k}$
such that $p_{\theta,n,k} \in \poset(A_{\theta,n,k}, \I \restrict
A_{\theta,n,k})$ (by Lemma \ref{reduce to countable}).  Since these
are $< \mu$ many countable sets, there is an $\alpha < \mu$ such that
all are contained in $L_\alpha$.  This means $\dot{F}$, and therefore
the family $\{r_\gamma \mid \gamma < \beta\}$, are in
$V^{\poset(L_{\alpha}, \I \restrict L_{\alpha})}$.  The slalom added
at coordinate $\alpha$ then localizes all of them.

These two arguments together show that $\cof(\Null) = \add(\Null) =
\mu$ in the generic extension.

By counting names for reals we see that in the forcing extension
$2^{\aleph_0} \leq \lambda$ as follows.  A nice name for a real is
determined by countably many maximal antichains and countably many
reals.  This gives us
\[
(\#\text{maximal antichains})^{\aleph_0} \cdot 2^{\aleph_0}
\]
as an upper bound for the number of nice names for reals.  We have to
show that the number of maximal antichains is bounded by $\lambda$.
Because the forcing satisfies Knaster's condition, all antichains are
countable.  This gives us
\[
|\poset(L,\I)|^{\aleph_0 \times \aleph_0} \cdot 2^{\aleph_0}
\]
as an upper bound for the number of nice names for reals.  We show by
induction on the rank of $L$ that $|\poset(L, \I)| \leq \lambda$.
Assume that for all $B$ of rank less than $L$ we have that $|\poset(B,
\I \restrict B)| \leq \lambda$.  Then $|\poset(L, \I)| \leq \Sigma_{B
  \in \I, B\neq L} |\poset(B, \I \restrict B)|^{\aleph_0} \cdot
(2^{\aleph_0} \cdot \aleph_0) = \Sigma_{B \in \I, B\neq L}
\lambda^{\aleph_0} \cdot (2^{\aleph_0} \cdot \aleph_0) = |\I| \cdot \lambda =
\lambda$ (using the induction hypothesis, the fact that
$\lambda^{\aleph_0} = \lambda$, and the continuum hypothesis).

Now we will show by an isomorphism-of-names argument that in the
forcing extension no cofinitary group of cardinality less than
$2^{\aleph_0}$ and bigger than or equal to $\mu$ can exist (remember
that $\mathfrak{a}_g \geq \non(\mathcal{M})$, as was mentioned above,
and that $\add(\mathcal{N}) \leq \non(\mathcal{M}) \leq
\cof(\mathcal{N})$) which completes the proof.

Let $\dot{G}$ be a name for a cofinitary group of size $< \lambda$ and
$\geq \mu$, say of size $\kappa$.  We can find names $\dot{g}_\alpha$
($\alpha < \kappa$) such that in the forcing extension, $\{
\dot{g}_\alpha \mid \alpha < \kappa \}$ is this cofinitary group.  For
\looseness=-1
each $\dot{g}_\alpha$ we find maximal antichains $\{p^\alpha_{n,k,i}
\mid k,i \in \N\}$ (for each $n \in \N$) in $\mathbb{P}(L, \I)$ such
that $p^\alpha_{n,k,i} \forces \dot{g}_\alpha(n) = k$.  Let $B^\alpha
= \cup \{ \dom(p^\alpha_{n,k,i}) \mid n,k,i \in \N\}$.  Each
$B^\alpha$ is a countable set of finite sequences from $\lambda^* \cup
\lambda$.  If we close it under initial segments with respect to
$\subseteq$ (that is not with respect to the order given on $L$) we
can assume it to be a tree (in the following, ``tree'' will always
mean tree with respect to $\subseteq$).

We will analyze countable subtrees of $L$ for a moment to see how we
can use these trees.

So let $T_0, T_1 \subseteq L$ be countable subtrees of $L$.  We say
$T_0 \cong T_1$ if there is a bijection $\varphi: T_0 \rightarrow T_1$
such that
\begin{itemize}
  \item $\varphi$ respects the order of $L$: if $x < y$ then
    $\varphi(x) < \varphi(y)$;
  \item $\varphi$ is a tree map: $\varphi(x \restrict n) =
    \varphi(x) \upharpoonright n$ and $\lh(\varphi(x)) = \lh(x)$;
  \item $\varphi$ respects the structure on the trees used to determine
    relevance:  $\varphi(x)(n)$ is positive, less than $\omega_1$, or
    a member of $S^\alpha$ iff $x(n)$ is. (One could choose to have a
    somewhat less restrictive requirement here, but this one suffices
    for our purposes.)
\end{itemize}

If $T_0 \cong T_1$ as witnessed by $\varphi$, then $\varphi$ induces an
isomorphism of $\mathcal{I} \restrict T_0$ with $\mathcal{I} \restrict
T_1$.

The number of types of trees can now be determined as follows.  The
number of trees up to equivalence for maps satisfying the first two
items is less than the number of countable subtrees of $\finseq{\omega_1^*
\cup \omega_1}$.  This gives us an upper bound of
$\aleph_1^{\aleph_0} = 2^{\aleph_0}$.  For each tree the information
for the last item can be encoded in a map from the set of nodes, size
$\aleph_0$, to $\omega_1 \times \{0,1\} \times \{0,1\}$ ($\omega_1$
for the labels of the $S^\alpha$ and the two $\{0,1\}$ as labels for
positive vs. negative, and for positive nodes bigger vs.  smaller than
$\omega_1$).  There is a total of $\aleph_1^{\aleph_0} = 2^{\aleph_0}$
of these maps, which gives us a grand total of $2^{\aleph_0} \cdot
2^{\aleph_0} = 2^{\aleph_0}$ isomorphism types of trees.

If we have an isomorphism of linear orders that induces an isomorphism
on the associated templates, then this isomorphism induces an
isomorphism of the associated posets.  So from the above we can
conclude that if $T_0 \cong T_1$ then $\poset \restrict T_0 \cong
\poset \restrict T_1$.

Brendle in the proof of Theorem 3.3, \cite[pp. 21--23]{JB}, shows how
in the set $\{B^\alpha \mid \alpha < \kappa\}$ we can find an
$\omega_1$ size subset, which after renumbering we can assume to be
$\{B^\alpha \mid \alpha < \omega_1\}$, and a $B^\kappa$ such that:
\begin{itemize}
\item there is a coherent set of maps $\phi_{\alpha,\beta}$ ($\alpha,
  \beta < \omega_1$) and $\phi_{\alpha, \kappa}$ ($\alpha < \omega_1$)
  such that the $\phi_{\alpha, \beta}: B^\alpha \rightarrow B^\beta$
  are isomorphisms of trees, and $\phi_{\alpha, \kappa}: B^\alpha
  \rightarrow B^\kappa$ is order preserving;
\item $\phi_{\alpha, \beta}(p^\alpha_{n,k,i}) = p^\beta_{n,k,i}$
  (names map to names);
\item for any $\beta < \kappa$, $\I \restrict B^\kappa \cup B^\beta$
  is an innocuous extension of the image of $\I \restrict B^\alpha \cup
  B^\beta$ for some $\alpha < \omega_1$ (the image under the mapping
  induced by the mappings $\phi$).
\end{itemize}

In fact it is clear from his construction that for any $\beta_0,
\ldots, \beta_i < \kappa$, $\I \restrict B^\kappa \cup (\bigcup_{j
  \leq i} B^{\beta_j})$ is an innocuous extension of the image of $\I
\restrict B^\alpha \cup (\bigcup_{j \leq i} B^{\beta_j})$ for some
$\alpha < \omega_1$.

If we define $\dot{g}_\kappa$ to be the name for a bijection by
$p^\kappa_{n,k,i} = \phi_{\alpha, \kappa}(p^\alpha_{n,k,i})$, we get a
name such that in the generic extension $\{\dot{g}_\alpha \mid \alpha
< \kappa\} \cup \{\dot{g}_\kappa\}$ is a cofinitary group that
properly includes $\{\dot{g}_\alpha \mid \alpha < \kappa\}$, so
the group we started with was not maximal: Let $\mathcal{G}$ be
$\mathbb{P}(L,\I)$ generic, and in the generic extension, $w(x)
\in W_{\dot{G}}$.  We need to see that $w(\dot{g}_\kappa)$ is
cofinitary.  Let $\dot{g}_{\beta_j}$ with $j < i$ be the elements of
$\dot{G}$ appearing in $w$.  Then $V[\mathcal{G}] \models
``w(\dot{g}_\kappa)$ is cofinitary'', iff $V[\mathcal{G} \cap
\mathbb{P}(B^\kappa \cup (\bigcup_{j < i} B^{\beta_j}), \I \restrict
B^\kappa \cup (\bigcup_{j < i} B^{\beta_j})) \models
``w(\dot{g}_\kappa)$ is cofinitary''.  But by the above
$\mathbb{P}(B^\kappa \cup (\bigcup_{j < i} B^{\beta_j}), \I \restrict
B^\kappa \cup (\bigcup_{j < i} B^{\beta_j})) \cong \mathbb{P}(B^\alpha
\cup (\bigcup_{j < i} B^{\beta_j}), \I \restrict B^\alpha \cup
(\bigcup_{j < i} B^{\beta_j}))$ for some $\alpha < \omega_1$ fixing
the $B^{\beta_j}$ (therefore fixing the $\dot{g}_{\beta_j}$ and
mapping $\dot{g}_\kappa$ to $\dot{g}_\alpha$).  This means that
$V[\mathcal{G} \cap \mathbb{P}(B^\kappa \cup (\bigcup_{j < i}
B^{\beta_j}), \I \restrict B^\kappa \cup (\bigcup_{j < i}
B^{\beta_j}))] \models ``w(\dot{g}_\kappa)$ is cofinitary'' if
$V[\mathcal{G} \cap \mathbb{P}(B^\alpha \cup (\bigcup_{j < i}
B^{\beta_j}), \I \restrict B^\alpha \cup (\bigcup_{j < i}
B^{\beta_j}))] \models ``w(\dot{g}_\alpha)$ is cofinitary'', which is
true since $\dot{G}$ is forced to be a cofinitary group.


\section{Definability}

\subsection{There Does Not Exist a $K_\sigma$ Maximal Cofinitary
  Group}

A set is $K_\sigma$ if it is a countable union of compact sets; every
$K_\sigma$ set is eventually bounded in the following sense.

\begin{definitions}
\item
  We write $\forall^* n \, \varphi(n)$ if for all but finitely many $n\in
  \N$, $\varphi(n)$.
  
\item For $f,g \in \BS$, $f$ is \emph{eventually bounded} by $g$,
  written $f <^* g$, if $\forall^*n\, f(n) < g(n)$ (if there exists $k
  \in \N$ such that for all $l > k$, $f(l) < g(l)$.
\item
  A set $S \subseteq \BS$ is \emph{eventually bounded} if there exists
  $f \in \BS$ such that for all $g \in S$, $g <^* f$.
\end{definitions}

\begin{theorem}
  If $G$ is a cofinitary group that is eventually bounded, then $G$
  is not maximal.
\end{theorem}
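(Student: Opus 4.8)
The plan is to show $G$ is not maximal by constructing a permutation $h \in \Sym(\N) \setminus G$ for which $\langle G, h\rangle$ is still cofinitary; both the construction and its verification follow the pattern of the proof that no maximal cofinitary group has countably many orbits (Theorem~\ref{thm:MCGNotCountOrb}), with a sequence of rapidly growing \emph{levels} in the role there played by the orbits of $G$.

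\textbf{Setup and levels.} Let $f$ witness $g <^* f$ for all $g\in G$; without loss of generality $f$ is strictly increasing with $f(n)>n$. Since $G$ is a group, $g\in G$ implies $g^{-1}\in G$, so for each $g$ both $g,g^{-1}$ are $<^*f$; fix $N_g$ with $g(n),g^{-1}(n)<f(n)$ for $n\ge N_g$. Fix once and for all a sequence $0=b_0<b_1<b_2<\cdots$ with $b_{i+1}>f(b_i)$ for every $i$, and put $I_i:=[b_i,b_{i+1})$, the $i$-th \emph{level}; write $I_{i_n}$ for the level containing $n$. The one structural fact needed is: for every $g\in G$ there is $i_g$ with $g[I_i]\subseteq I_{i-1}\cup I_i\cup I_{i+1}$ and $g^{-1}[I_i]\subseteq I_{i-1}\cup I_i\cup I_{i+1}$ for all $i\ge i_g$; indeed for $n\in I_i$ with $i$ large, $g(n)<f(n)\le f(b_{i+1}-1)<b_{i+2}$, while $n=g^{-1}(g(n))<f(g(n))$ forces $g(n)>b_{i-1}$ (using $f(b_{i-1})<b_i\le n$). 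Thus every element of $G$ eventually displaces each level by at most one level.

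\textbf{Construction of $h$.} Build $h=\bigcup_s h_s$ as an increasing chain of finite injective partial maps exactly as in the proof of Theorem~\ref{thm:MCGNotCountOrb}, but with ``the least point of a fresh orbit'' replaced by ``the point $b_{i_s}$, where $I_{i_s}$ is a fresh level whose index $i_s$ is chosen enormous relative to everything used so far (and relative to $s$)''. As there, one checks that $h$ is a bijection of $\N$, that it has no finite cycles (hence $h^k$ is fixed-point free for all $k\ne 0$), and that the \emph{levels tree of $h$} --- vertices the $I_i$, with an edge $\{I_i,I_j\}$ whenever $h(n)\in I_j$ for some $n\in I_i$ --- really is a tree each of whose edges joins levels of wildly different index. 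Since infinitely often we adjoin a pair $(n,b_{i_s})$ with $n$ small and $i_s$ huge, we get $h(n)\ge b_{i_s}>f(n)$ infinitely often, so $h\notin G$.

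\textbf{Verification.} Let $w(x)=g_0x^{k_0}g_1\cdots x^{k_{l-1}}g_l\in W_G$. Conjugation does not change the number of fixed points of $w(h)$, so we may pass to the cyclically reduced form of $w$. If it has at most one syllable, then $w(h)$ is conjugate in $\Sym(\N)$ either to an element of $G$ or to a power of $h$, hence cofinitary; otherwise the cyclically reduced form genuinely alternates between $G$- and $\langle x\rangle$-syllables, so $x$ occurs in it, and we may assume $w$ itself has this form. Suppose $w(h)$ has infinitely many fixed points and pick one, $n$, so large that all the finitely many $g_j^{\pm1}$ occurring in $w$ obey the structural bound on every level of index $\ge i_n-\lh(w)$. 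Track the evaluation path of $w(h)$ at $n$ together with the induced \emph{level path}, a closed walk returning to $I_{i_n}$: each $g_j^{\pm1}$-step shifts the level by at most $1$, each $h^{\pm1}$-step moves along an edge of the levels tree (a jump of huge index-difference). Now run the argument of Theorem~\ref{thm:MCGNotCountOrb} verbatim: let the first vertex of the level path at maximal tree-distance from its start be $I_{i^*}$; because the bounded $\pm1$ drift of the $g$-steps is negligible against the index-jumps of the $h$-edges, the path can only leave $I_{i^*}$ by retracing the unique $h$-edge on which it arrived, so between arrival and departure it applies exactly one $g_j$ from $w$, and that $g_j$ must fix the current value. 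Distinct fixed points $n$ yield distinct such values, so some single $g_j$ occurring in $w$ has infinitely many fixed points, contradicting $g_j\in G$. Hence every $w(h)$ is cofinitary, $\langle G,h\rangle$ is a cofinitary group properly containing $G$, and $G$ is not maximal.

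\textbf{Main obstacle.} The crux is the last step. In Theorem~\ref{thm:MCGNotCountOrb} the $g$-steps \emph{preserved} orbits, whereas here they merely move levels by $\pm1$; the argument therefore hinges on choosing the gaps $b_{i+1}$ (as a function of $f$) and the index-jumps of the $h$-edges (as a function of $\lh(w)$) fast-growing enough that this $\pm1$ drift can never combine with the $h$-steps to close up a walk that is not already forced. This is the one place eventual boundedness of $G$ is used essentially, and it is also where care is needed so that a \emph{single} $h$ works for all $w$ simultaneously --- we cannot enumerate $W_G$ when $G$ is uncountable.
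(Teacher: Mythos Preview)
Your overall plan is the right one, and your ``main obstacle'' paragraph correctly isolates the crux; but you have not actually resolved that obstacle, and as written the verification does not go through. The failure is at ``that $g_j$ must fix the current value''. In the orbit argument this worked because every $g$-step preserved the orbit exactly, so at the furthest vertex the only exit was to reapply the same $h$-pair in reverse, forcing the intervening $g_j$ to send the arrival point to itself. Here a $g$-step may carry the current value from $I_{i^*}$ into $I_{i^*\pm 1}$; once that happens you are at a \emph{different} number in a \emph{different} level, and the next $h^{\pm1}$-step uses a different pair of $h$. ``Negligible $\pm1$ drift'' is enough to control the level path coarsely, but it does not force any $g_j$ to return to the exact same number, which is what you need for a fixed point.

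The paper supplies the missing device: inside each interval $[i_n,i_{n+1})$ it plants a \emph{distinguished point} $p_n=f(i_n)$, with $i_{n+1}=f(p_n)$, so that eventually every $g\in G$ satisfies $g(p_n)\in[i_n,i_{n+1})$ --- the distinguished points, unlike arbitrary level points, really are $g$-invariant at the interval scale. The permutation $h$ is then built so that every pair of $h$ has one coordinate equal to some $p_n$, hence every \emph{increasing} $h^{\pm1}$-step lands on a distinguished point, and from any non-distinguished point both $h$ and $h^{-1}$ increase. Verification tracks the maximum value in the evaluation path rather than tree-distance: after conjugating so the first $h$-step increases, one locates the first index $m$ at which the path drops across an interval boundary. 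That drop must be an $h$-step issued from some $z_m=p_k$; the preceding step $z_{m-1}\to z_m$ must be a $g$-step; and $z_{m-1}$, being produced by an increasing $h$-step, is itself some $p_l$. Since $g$ keeps $p_l$ in its own interval, $p_k$ and $p_l$ share an interval, hence $p_k=p_l$ and that $g$ fixes $p_l$. This conversion of ``$\pm1$ drift'' into ``exact fixed point'' is precisely what your construction of $h$ lacks.
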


The basic idea in this proof is to use the bound to get an interval
partition which can be used similarly to how the orbits were used in
the proof of Theorem \ref{thm:MCGNotCountOrb}.

\begin{proof}
  Let $G$ be a cofinitary group that is eventually bounded.  This
  means $G$ is contained in a set of the form $\{g \in \BS \mid g <^*
  f\}$ where we can assume $f: \N \rightarrow \N$ is a strictly
  increasing function with $f(0)>0$.  We will use this bound $f$
  to construct an interval partition, with a distinguished point in
  each interval, that we in turn use to construct $h \in \Sym(\N)
  \setminus G$ such that $\langle G,h \rangle$ is cofinitary.

  Define $I = \big\langle \big([i_n,i_{n+1}), p_n\big) \mid n \in \N
  \big\rangle$ by $i_0 := 0$, $p_n := f(i_n)$, and $i_{n+1} :=
  f(p_n)$.  The main property of this sequence is
  \begin{equation*}
    \forall g \in G \ \forall^*n \ g(p_n) \in [i_n,i_{n+1}),
  \end{equation*}
  which follows easily from the fact that all elements of $G$ are
  nearly everywhere strictly bounded by $f$.

  Define $h$ by finite approximations:  let $h_0 := \emptyset$. Then
  at step $s$ define $h_{s+1}$ from $h_s$ as follows:
  \begin{enumerate}
    \item Let $a := \min( \N \setminus \dom(h_s))$, let $n$ be the
      least number such that for all $l \geq n$ we have $[i_l,i_{l+1})
      \cap \big(\dom(h_s) \cup \ran(h_s) \cup \{a\} \big) =
      \emptyset$, and set $\bar{h}_{s+1} := h_s \cup \{ (a,p_n)\}$.
    \item Let $b := \min( \N \setminus \ran(\bar{h}_{s+1}))$, let $m$
      be the least number such that for all $l \geq m$ we have
      $[i_l,i_{l+1}) \cap \big(\dom(\bar{h}_{s+1}) \cup
      \ran(\bar{h}_{s+1}) \cup \{b\}\big) = \emptyset$, and set
      $h_{s+1} := \bar{h}_{s+1} \cup \{(p_m,b)\}$.
  \end{enumerate}
  
  Note that the $a$ and $b$ used in this construction satisfy $a \leq
  b \leq a+1$.  To see this first note that the $p_n$ used go
  alternately into the domain and range, starting with the range.
  Then by induction it follows that if $a = b$ in an iteration then
  the least number in $\dom(h_s) \cup \ran(h_s) \setminus a$ is in the
  range.  If $a+1=b$, then that least number is in the domain.  The note
  quickly follows from these facts.
  
  The main properties of $h$ are the following (for $n >0$).
  \begin{enumerate}
    \item If $a \in [i_n,i_{n+1}) \setminus \{p_n\}$ and $(a,b) \in
      h$, then $b > i_{n+1}$.
    \item If $b \in [i_n,i_{n+1}) \setminus \{p_n\}$ and $(a,b) \in
      h$, then $a > i_{n+1}$.
    \item If $(a,p_n), (p_n,b) \in h$, then at most one of $a$ and
      $b$ is less than $i_n$.
    \item \label{item:proph4}
      If $(a_0,b_0),(a_1,b_1) \in h \cup h^{-1}$ and $a_0 < a_1 < b_0$,
      then $b_1 < a_0$ or $b_1 > b_0$.
  \end{enumerate}

  The first three of these follow from the observation that for any
  pair added to $h$ one of the coordinates is a $p_n$ and this $p_n$
  is from a later interval than the other coordinate is in (sometimes
  both coordinates are equal to $p_n$ for some $n$, but only one is
  used as such in the construction).  The last one follows from the
  fact that any added pair has one coordinate strictly bigger than any
  number mentioned before and the note on the order of $a$ and $b$
  above.

  Taking the first three properties of $h$ together we get that for
  any $n > 0$ there is at most one pair in $h \cup h^{-1}$ with one
  coordinate in $[i_n, i_{n+1})$ and the other smaller than $i_n$.
  From this we see that if $l < i_n$ and $h^\epsilon(l) \in [i_n,
  i_{n+1})$ for $\epsilon \in \{{-1},{+1}\}$ then $h^\epsilon(l) =
  p_n$.  Moreover for $m \in [i_n, i_{n+1}) \setminus \{p_n\}$ both
  $h(m)$ and $h^{-1}(m)$ are bigger than $i_{n+1}$; this is also the
  case for $h^\epsilon(p_n)$, but not for $h^{-\epsilon}(p_n) = l$.

  Now we show that $\langle G,h \rangle$ is cofinitary.  Let us
  assume, towards a contradiction, that $w(x) = g_0 x^{k_0} g_1 \cdots g_k
  x^{k_m} g_{m+1} \in W_G$ is such that $w(h)$ has infinitely many
  fixed points.  We can also assume that $g_{m+1} = \Id$, since this
  only requires conjugation by $g_{m+1}$ and this does not change the
  number of fixed points.

  We normalize the word $w$ further.  For this we work above $M$, the
  least number such that for all $n \geq M$ and all $g \in G$
  appearing in $w$ we have $g(n) < f(n)$.  We want a conjugate $w'$ of
  $w$ of the form $g_l x^{k_l} g_{l+1} \cdots$ $g_{m+1} g_0 x^{k_0}
  \cdots x^{k_{l-1}}$ such that for infinitely many of its fixed
  points, $n$, the image after the first application of $h$ (if
  $k_{l-1} > 0 $) or $h^{-1}$ (if $k_{l-1}<0$) is bigger than $n$.
  Such a conjugate $w'$ exists if for infinitely many fixed points we
  can find a location in the evaluation path where an application of
  $h$ increases the number.  So suppose that you can't do this; then
  for all but finitely many fixed points every application of $h$
  leads to a smaller number.  In this case we can find an $n$, a fixed
  point of $w(h)$ such that no point in its evaluation path $\bar{z}$
  is less than $M$ and for all $i$ such that $w_i = h^\epsilon$,
  $\epsilon \in \{+1,-1\}$, we have $z_{i+1} < z_i$ (remember $z_{i+1}
  = w_i z_i$).

  Now since we start in $w(h)$ by applying $h$ we get $z_1 < z_0 = n$.
  After this we cannot get back to $z_0$ as any application of a $g$
  appearing in $w$ to a number less than or equal to $z_1$ will lead
  to a number strictly less than $z_0$ ($z_0$ is in the middle of an
  interval which does not contain $z_1$ and $z_0$ is the $f$ image of
  the start of the interval it is in).  And any application of $h$ to
  a number strictly less than $z_0$ will lead to a number strictly
  less than $z_1$ (follows from the assumption and \ref{item:proph4}).
  This contradiction shows that a conjugate $w'$ as desired exists.

  We will study this conjugate $w'$ of $w$; if it can't have
  infinitely many fixed points neither can $w$. 
  There are only finitely many points whose evaluation path in $w'(h)$
  involves natural numbers less than $M$.  Leave these out of
  consideration.

  Let $\bar{z}$ be the evaluation path of $w'(h)$ on $n$, a fixed
  point for this word where the image after the first application of
  $h$ is bigger than $n$.  There is a least $m$ such that there is an
  $a \in \N$ such that  $z_{m+1} < i_a \leq z_m$.
  
  If for some $l$ we have $z_{l+1} > z_l$ by an application of $h$
  (either $h$ or $h^{-1}$) we have $z_{l+1} = p_b$ for some $b \in
  \N$.  If we now apply $h$ again (the same of $h$ or $h^{-1}$) we map
  to a $p_m$ with $p_m > p_b$.  So if we are in $w'$ at some $x^l$,
  repeatedly applying $h$, once we start increasing we will keep on
  increasing.

  If after such applications of $h$ where we increase we apply a $g
  \in G$ as indicated by $w'$, then $g$ doesn't map the element out of
  the interval it is in (we are working above $M'$ where no element of
  $g$ appearing in $w'$ can map further than $f$ or $f^{-1}$).

  Now we know that $z_m = p_k$ for some $k$, $z_{m+1}$ is obtained from
  $z_m$ by an application of $h$, $z_m$ is obtained from $z_{m-1}$ by
  an application of some $g \in G$ and $z_{m-1}$ is obtained from
  $z_{m-2}$ by an application of $h$ which was increasing.  From the
  last fact in the last sentence we know $z_{m-1} = p_l$ for some $l$.
  Since $p_l$ and $p_k$ are in the same interval, $p_l = p_k$ and we
  have found a fixed point for this $g \in G$.

  So we have found from a fixed point of $w'(h)$ a fixed point for some
  $g \in G$ appearing in $w'$.  Also, any fixed point of a $g \in G$
  appearing in $w'$ can only be used in the evaluation path of
  finitely many points (and only in the evaluation path of one fixed
  point if $g$ only appears once in $w'$).  From this we see that if
  $w'(h)$ has infinitely many fixed points, so does some $g \in G$.
  This is the contradiction we were looking for.
\end{proof}


\subsection{A Coanalytic Maximal Cofinitary Group}

In this subsection we will prove the following theorem.

\begin{theorem}
  The axiom of constructibility implies that there exists a coanalytic
  maximal cofinitary group.
\end{theorem}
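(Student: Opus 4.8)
The plan is to run the $\CH$/good-extension construction of a maximal cofinitary group from Section~\ref{sec:MCGintro} inside the constructible hierarchy, in the way Section~\ref{sect:VMAD:VisL} turns the $\CH$ construction of a very mad family into a $\mathbf{\Pi^1_1}$ one. Let $\langle\beta_\gamma\mid\gamma<\omega_1\rangle$ enumerate increasingly the ordinals $\alpha$ with $L_\alpha\cong\Sk(L_\alpha)$ with a small witness (Lemma~\ref{lem:unboundedLsk} and the remarks on page~\pageref{egl}); each $\beta_\gamma$ is a countable limit $>\omega$ and $\gamma\le\beta_\gamma$. I would build generators $\langle g_\gamma\mid\gamma<\omega_1\rangle$ by recursion so that, writing $G_\gamma:=\langle\{g_\delta\mid\delta<\gamma\}\rangle$: $G_\gamma$ is a countable cofinitary group and the whole construction through stage $\gamma$ lies in $L_{\beta_\gamma+\omega}$; $\langle G_\gamma,g_\gamma\rangle$ is cofinitary with $g_\gamma$ a free generator over $G_\gamma$; and --- the ingredient special to the group case --- \emph{every} non-identity word $w(g_\gamma)\in W_{G_\gamma}$ recursively and \emph{uniformly} encodes a structure $(\N,E)\cong(L_{\beta_\gamma},{\in})$ in the $\langle k,\langle\mathrm{pointer},\chi(s)\rangle\rangle$-style of Lemma~\ref{lem:coding}, \emph{by one fixed recursive decoding procedure}. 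The group produced is $G:=\langle\{g_\gamma\mid\gamma<\omega_1\}\rangle=\bigcup_{\gamma<\omega_1}G_{\gamma+1}$.

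The engine is a coding lemma for cofinitary groups, the analogue of Lemma~\ref{lem:coding}: given a countable cofinitary group $G_0$ with a fixed recursive enumeration $\langle w_n\mid n\in\N\rangle$ of $W_{G_0}$, a relation $E\subseteq\N\times\N$, and (optionally) $f\in\Sym(\N)\setminus G_0$ with $\langle G_0,f\rangle$ cofinitary, there is $g\notin G_0$ freely generating over $G_0$ a cofinitary group $\langle G_0,g\rangle$, with $f\cap g$ infinite, such that the fixed decoding procedure applied to any non-identity $w(g)$ returns $E$. I would prove it by the usual construction of $g=\bigcup_s g_s$ from finite injective approximations, carrying out at stage $s$, simultaneously, (i) a good extension of $g_s$ with respect to $w_0,\dots,w_s$ and their subwords plus one step towards hitting $f$ --- the three-step move of the $\CH$ construction, justified by the Domain Extension, Range Extension and Hitting~$f$ Lemmas, each of which forbids only finitely many values at each step --- and (ii) for each $i\le s$, a prolongation of the evaluation path of a fresh seed for $w_i$ (or of the pointer chain already started for $w_i$) making $w_i(g_{s+1})$ acquire one further coding pair carrying the next bit of $\chi_E$. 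As with the remark following Lemma~\ref{lem:coding}, the construction is absolute, so taking $E=E_\gamma$ the relation coding $L_{\beta_\gamma}$ from Lemma~\ref{lem:Eavailable} (so $E_\gamma\in L_{\beta_\gamma+\omega}$) gives $g\in L_{\beta_\gamma+\omega}$. The recursion then runs as in Section~\ref{sect:VMAD:VisL}: at stage $\gamma$, inside $L_{\beta_\gamma+\omega}$ recover $\langle g_\delta\mid\delta<\gamma\rangle$ and an enumeration of $\Sym(\N)\cap L_{\beta_\gamma}$, let $f_\gamma$ be the $<_L$-least as-yet-untreated $f\in L_{\beta_\gamma}$ with $\langle G_\gamma,f\rangle$ cofinitary and $f\notin G_\gamma$ (omit the hitting step if there is none), get $E_\gamma$ from Lemma~\ref{lem:Eavailable}, and apply the coding lemma. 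Then $G$ is cofinitary by Lemma~\ref{lem:finfxpts}, and maximal: each permutation $f$ appears in some $L_{\beta_\gamma}$ and is eventually considered, so it either ends up in $G$, or is already in $G$, or has $\langle G_\gamma,f\rangle$ --- hence $\langle G,f\rangle$ --- non-cofinitary, and when it is hit the permutation $g_\gamma^{-1}f\in\langle G,f\rangle$ is a non-identity element with infinitely many fixed points.

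Coanalyticity follows exactly as in Lemma~\ref{lem:vmad:decidemembership}, using Lemma~\ref{lem:nextlevel} and the formula $\chi$. For $h\in\Sym(\N)$,
\[
 h\in G \iff h=\Id \ \vee\ \big(\text{the structure decoded from }h\text{ is a wellfounded level }L_\beta\text{ of }L,\text{ and }L_{\beta+\omega}\models h\in G\big),
\]
where the right-hand clause is made arithmetic in $h$ via $\varphi$ (Lemma~\ref{lem:nextlevel}, producing $(\N,E_\omega)\cong(L_{\beta+\omega},{\in})$ from the decoding of $h$), $\chi(E_\omega,r)$, and $r(\code{u\in G},\seq{\emptyset})=1$ with $u=\Real_{E_\omega}(h)$. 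If $h\in G\setminus\{\Id\}$, write $h=w(\bar g)$ with greatest generator index $\gamma$; by the coding lemma $h$ uniformly decodes $(L_{\beta_\gamma},{\in})$, and $L_{\beta_\gamma+\omega}$ contains $G_{\gamma+1}\ni h$ and is closed under the absolute construction, so $L_{\beta_\gamma+\omega}\models h\in G$. Conversely the right-hand side forces the decoded level to be a genuine $L_\beta$ with $L_{\beta+\omega}\models h\in G$, and absoluteness of the construction gives $h\in G$. As ``the structure decoded from $h$ is wellfounded'' is $\mathbf{\Pi^1_1}$ in $h$ and the rest is arithmetic in $h$ (the quantifier $\forall\langle E_\omega,r,u\rangle$ adding one more $\mathbf{\Pi^1_1}$ conjunct, as in Lemma~\ref{lem:vmad:decidemembership}), $G$ is $\mathbf{\Pi^1_1}$.

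The hard part is clause~(ii) of the coding lemma. In Section~\ref{sect:VMAD:VisL} the object coded \emph{is} the generator, so each $g_\gamma$ must code $L_{\beta_\gamma}$ only once; here an arbitrary element of $G$ is a word $w(\bar g)$ that need not ``be'' a generator, so \emph{every} non-identity $w(g_\gamma)\in W_{G_\gamma}$ must code $L_{\beta_\gamma}$, and by one uniform procedure that does not depend on $w$. These words share the single partial function $g_\gamma$, so a value of $g_\gamma$ one is forced to fix to realise a coding pair for one word can be incompatible with the demand of another --- already $x$ and $g_0 x$ ``want'' different values at the same input --- while evaluation paths of distinct words overlap. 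Managing this needs careful bookkeeping: treat one word per stage, keep the coding data (seeds, pointer chains) of distinct words on disjoint sparse sets, make the coding values self-locating so they are recognisable without knowing $w$, and exploit that the Domain, Range and Hitting~$f$ Lemmas each leave cofinitely many admissible values at every step, so the finitely many coding demands active at stage $s$ can always be met while preserving cofinitariness. Verifying that this can be carried out, and that the resulting $g_\gamma$ still lies in $L_{\beta_\gamma+\omega}$, is the crux of the argument.
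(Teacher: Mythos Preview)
Your outline correctly identifies the crux --- that every non-identity element $w(g_\gamma)$, not just $g_\gamma$ itself, must carry the code --- but your proposed solution to it has a genuine gap. You insist on ``one fixed recursive decoding procedure'' and ``one uniform procedure that does not depend on $w$''; the paper in fact proves a Proposition showing that no pair of recursive functionals $\Psi,\Phi$ can achieve this kind of fully uniform coding/decoding across all words over an arbitrary countable cofinitary group. Your suggestion of self-locating values on disjoint sparse sets does not address the concrete obstruction: if $w$ has a proper conjugate subword, say $w=u^{-1}zu$ with rightmost letter $g_{l+1}$ and leftmost $g_0$, then once you have extended $g$ so that $w(g)(m)=g_0(b)$, the \emph{next} starting point for $w$ is $g_{l+1}g_0(b)=b$ automatically, so you cannot freely choose where the next bit is written. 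Nothing in ``sparse sets'' or ``self-locating'' gets around the fact that the next location is forced.

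The paper's route is quite different from what you sketch. It fixes from the outset a specific definable countable cofinitary group $G_0=\langle h\rangle$ (for an explicit $h\in\Sym(\N)$) and uses $h$ as a \emph{twist}: the decoding is not uniform but carries natural-number parameters $(m,\epsilon)$, with $\epsilon=0$ meaning ``read $z(n)$ from $w^n(m)$'' and $\epsilon=1$ meaning ``read $z(n)$ from $w(hw)^n(m)$''. The case $\epsilon=1$ is used exactly when $w$ has a proper conjugate subword: the insertion of $h$ breaks the forced return $g_{l+1}g_0(b)=b$ and restores cofinitely many admissible choices. Accordingly the final membership predicate is not your displayed equivalence but
\[
g\in G \ \Leftrightarrow\ \phi_{G_0}(g)\ \vee\ \exists(m,\epsilon)\,\big[\text{the model encoded in }g\text{ via }(m,\epsilon)\text{ is wellfounded }\wedge\ \forall\langle E_\omega,r,u\rangle\ \ldots\big],
\]
with an arithmetical existential in front and an extra disjunct for the base group. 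You are missing precisely these three ingredients: the fixed base group with its distinguished definable element $h$, the conjugate-subword dichotomy with the $h$-twist, and the parametrised (rather than uniform) decoding.
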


The proof will be very much related to the proof of the analogous
result for very mad families, which is in Section
\ref{sect:VMAD:VisL}, but there are some essential differences.

The construction of very mad families (and many other types of maximal
almost disjoint families) proceeds by adding one new member at a time.
We recursively construct the family to be $\A = \{ f_\alpha \mid
\alpha < \omega_1 \}$.  Then under the axiom of constructibility we
have to prove a coding lemma of the following form.

\begin{lemma}[Coding Lemma --- Generic Form]
  If $A$ is a countable almost disjoint family and $z \in 2^\N$, we can
  construct a new member $f$ to adjoin to the family such that
  {
    \renewcommand{\theenumi}{(\roman{enumi})}
  \begin{enumerate}
    \item
      $z$ is recursive in $f$, and
    \item if we iterate the construction $\omega_1$ many times we
      construct a maximal almost disjoint family.
  \end{enumerate}
  }
\end{lemma}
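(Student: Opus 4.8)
The plan is to reduce this to Lemma~\ref{lem:coding}, which is precisely the coding lemma in its concrete form (there stated for very mad families), together with the $\omega_1$-iteration already described in Section~\ref{sect:VMAD:VisL}. Given the countable almost disjoint family $A = \{g_n \mid n \in \N\}$ and $z \in 2^\N$, regard $z$ as the characteristic function of a set $E \subseteq \N \times \N$ via a recursive pairing, and apply Lemma~\ref{lem:coding} with this $A$, with $E$, and with $F$ the (possibly empty) countable family of functions one wishes to meet infinitely often. That lemma produces $f \in \BS$ almost disjoint from every $g_n$, with $E$ --- hence $z$ --- recursive in $f$, and with $f$ agreeing infinitely often with each member of $F$. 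Clause (i) is then immediate.

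For clause (ii) I would iterate exactly as in Section~\ref{sect:VMAD:VisL}: fix (under $\CH$, so the notions of very mad and mad coincide) an enumeration $\langle f_\alpha \mid \alpha < \omega_1 \rangle$ of $\BS$, and recursively build $\A = \{ h_\gamma \mid \gamma < \omega_1 \}$ so that at stage $\gamma$ one applies the construction with $A = \{ h_\delta \mid \delta < \gamma \}$ (enumerated as a countable set), with whatever bit-string $z$ is to be encoded at that stage, and with $F = \{ f_\delta \mid \delta < \gamma,\ f_\delta \text{ not finitely covered by } \{ h_\delta \mid \delta < \gamma\} \}$. Then $\A$ is almost disjoint by construction; and if $G \subseteq \BS$ has $|G| < \aleph_1$ and is not finitely covered by $\A$, then $G \subseteq \{ f_\alpha \mid \alpha < \beta \}$ for some $\beta < \omega_1$, so every later generator $h_\gamma$ agrees infinitely often with each member of $G$. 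Taking $|G| = 1$ already shows $\A$ is maximal almost disjoint (and the same reasoning gives very madness).

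The only real work --- and the step I expect to be the main obstacle --- is the single-stage construction underlying Lemma~\ref{lem:coding}, i.e. interleaving, at each step $s$ of the recursion defining $f = \bigcup_s f_s$, three kinds of moves: (a) at a fresh input $n_{s,i}$ for each $i \le s$, setting $f(n_{s,i}) = f_i(n_{s,i})$ (legal since $f_i$ is not finitely covered by $\{g_0,\dots,g_s\}$), to force infinite agreement with the members of $F$; (b) filling the remaining new inputs $l$ with the least value avoiding $\{ g_0(l), \dots, g_s(l)\}$, to force almost disjointness from $A$ (correct from stage $n$ onward for $g_n$); and (c) at the distinguished input $n_s$, setting $f(n_s) = \code{\langle k, \langle n_{s+1}, z(s)\rangle\rangle}$ for the least $k$ avoiding $\{ g_0(n_s),\dots,g_s(n_s)\}$, so that following the pointers $n_0 \mapsto n_1 \mapsto \cdots$ recovers $z$ recursively. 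One checks all these choices exist because only finitely many values are ever forbidden, and that the three goals do not interfere, exactly as in the proof of Lemma~\ref{lem:coding}. I would also record, as the excerpt does right after that lemma, that the construction is uniform in $(A,F,E)$ and places $f$ in $L_{\alpha+1}$ whenever $A,F,E \in L_\alpha$ --- the property actually needed for the coanalyticity argument that follows, and the point at which, for cofinitary groups, one-generator-at-a-time coding fails and must be replaced.
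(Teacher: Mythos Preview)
Your proposal is correct and matches the paper's treatment. In the paper this ``Generic Form'' lemma is not given a standalone proof; it is presented as a template describing the shape of coding lemma one must establish for the $\Pi^1_1$ construction, and the paper immediately points to Lemma~\ref{lem:coding} and the iteration of Section~\ref{sect:VMAD:VisL} as the concrete instance for (very) mad families---exactly the reduction you carry out, including the three-way interleaving (hitting $F$, avoiding $A$, pointer-coding $z$) and the uniformity/level-of-$L$ remark.
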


In fact the construction has to be such that $z$ is uniformly
recursive in $f$; the function computing $z$ from $f$ should not
depend on $A$ or on other parameters in the construction.

If this can be achieved, the proof as in Section \ref{sect:VMAD:VisL}
can be easily adjusted to yield a coanalytic
maximal almost disjoint family, for whichever notion of almost
disjoint you are considering.

Using this method Su Gao and Yi Zhang were able to prove the following
in \cite{SGYZ}.

\begin{theorem}
  The axiom of constructibility implies that there exists a maximal
  cofinitary group with a coanalytic generating set.
\end{theorem}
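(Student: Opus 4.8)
The plan is to transport the $V=L$ construction of a $\mathbf{\Pi^1_1}$ very mad family from Section~\ref{sect:VMAD:VisL} into the cofinitary setting, replacing the ``one new almost disjoint function at a time'' step by the ``one new free generator at a time'' step of Section~\ref{sec:MCGintro}. Concretely, I would recursively build a generating sequence $\langle g_\gamma \mid \gamma < \omega_1 \rangle$, letting $G_\gamma := \langle \{ g_{\gamma'} \mid \gamma' < \gamma \} \rangle$ be the countable cofinitary group built so far, and arrange that the final $G := \langle \{ g_\gamma \mid \gamma < \omega_1 \} \rangle$ is maximal cofinitary while the \emph{set} $\{ g_\gamma \mid \gamma < \omega_1 \}$ is coanalytic. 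As in the very mad case I would index the recursion by the good levels $\langle \beta_\gamma \mid \gamma < \omega_1 \rangle$ of Lemma~\ref{lem:unboundedLsk} (see page~\pageref{egl}), maintaining the induction hypothesis that $\{ g_{\gamma'} \mid \gamma' < \gamma \} \in L_{\beta_\gamma + \omega}$.

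The central device is a cofinitary-group analogue of the coding Lemma~\ref{lem:coding}. I would prove: given a countable cofinitary group $G$, a permutation $f$ with $\langle G, f \rangle$ cofinitary, and a set $E \subseteq \N \times \N$, there is $g \in \Sym(\N)$ such that $\langle G, g \rangle$ is cofinitary, $g \cap f$ is infinite whenever $f \notin G$, and $E$ is uniformly recursive in $g$; the construction of $g$ is recursive in the data $(G,f,E)$. The construction of $g = \bigcup_s g_s$ is the finite-approximation loop of Section~\ref{sec:MCGintro} — at stage $s$ one takes a good extension with respect to the words $w_0, \ldots, w_s$ enumerating $W_G$ using the Domain Extension, Range Extension, and Hitting~$f$ Lemmas — but at a distinguished ``pointer'' input $n_s$ I would set $g(n_s) = \langle k, \langle n_{s+1}, \chi(s) \rangle \rangle$ exactly as in Lemma~\ref{lem:coding}, where $\chi$ is the characteristic function of a recursive copy of $E$ and $n_{s+1}$ points to the next distinguished input. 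The key point making this compatible with the group requirements is that each of the three extension lemmas succeeds for \emph{all but finitely many} candidate values, so one may always choose the encoding value $k$ both large enough to keep $g_s$ injective and so that the resulting extension is good for $w_0,\ldots,w_s$; surjectivity of $g$ is secured by the Range Extension steps inserted between consecutive pointers. The decoding of $E$ from $g$ is the same pointer-chase as in Section~\ref{sect:VMAD:VisL}, hence uniform and recursive.

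With this lemma in hand the recursion runs as in Theorem~\ref{thm:compl}: at stage $\gamma$ I work inside $L_{\beta_\gamma+\omega}$, use Lemma~\ref{lem:Eavailable} to obtain $E$ with $(\N,E)\cong(L_{\beta_\gamma},{\in})$, let $f$ be the least permutation of $L_{\beta_\gamma}$ not yet diagonalized against for which $\langle G_\gamma, f\rangle$ is cofinitary and $f\notin G_\gamma$ (the maximality bookkeeping), and apply the coding lemma to produce $g_\gamma$. Since the data lie in $L_{\beta_\gamma+\omega}$ and the construction is recursive in them, the same limit-level argument as in Section~\ref{sect:VMAD:VisL} gives $g_\gamma\in L_{\beta_\gamma+\omega}$, maintaining the induction hypothesis. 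Maximality of $G$ follows because any permutation $f$ lies in some $L_{\beta_\gamma}$, and at the corresponding stage either $f\in G_\gamma$ or some generator $g_\gamma$ satisfies that $g_\gamma\cap f$ is infinite, whence $g_\gamma^{-1}f\in\langle G,f\rangle$ is a non-identity element with infinitely many fixed points, so $\langle G,f\rangle$ is not cofinitary.

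Finally, for definability I would mirror Lemmas~\ref{lem:nextlevel} and~\ref{lem:decidemembership}. From the coding one recovers $E$ and hence the limit level $(\N,E_\omega)\cong(L_{\beta_\gamma+\omega},{\in})$, in which the whole construction up to stage $\gamma$ is absolute; so membership in the generating set is decided by a formula asserting that the model encoded in $g$ is wellfounded and that in the reconstructed $L_{\beta_\gamma+\omega}$ the element represented by $g$ equals the generator built at that level, i.e. $(\N,E_\omega)\models g\in\{g_{\gamma'}\}$. This right-hand side is $\mathbf{\Pi^1_1}$, giving a coanalytic generating set. The main obstacle is the coding lemma itself: unlike the very mad case, where $g$ need only be a function meeting finitely or infinitely many members of $A$ or $F$, here $g$ must be a \emph{bijection} and $\langle G,g\rangle$ must stay cofinitary against \emph{all} words of $W_G$, so I must check that reserving the pointer inputs for encoding never conflicts with the good-extension requirements; the ``all but finitely many'' content of the extension lemmas is precisely what resolves this tension.
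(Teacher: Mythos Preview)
Your proposal is correct and matches the approach the paper attributes to Gao and Zhang: prove a cofinitary-group version of the generic coding lemma (encode $E$ into the new generator $g$ via the pointer scheme of Lemma~\ref{lem:coding}, using the ``all but finitely many'' slack in the Domain/Range/Hitting~$f$ Extension Lemmas to keep every step a good extension), and then run the $V=L$ machinery of Section~\ref{sect:VMAD:VisL} verbatim. The paper's own summary is simply that ``they prove a nice version of the generic type coding lemma, and the generating set is constructed in the right way for the general method to apply,'' which is exactly what you have outlined; the only cosmetic difference is that the paper's later coding lemma hits an entire countable family $F$ at once rather than one $f$ per stage with bookkeeping, but either variant suffices for maximality.
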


They prove a nice version of the generic type coding lemma, and the
generating set is constructed in the right way for the general method
to apply.

The difficulty in showing that the whole group can be coanalytic is that
when you add a new generator you also add countably many other new
elements.  In the construction we will use the method of good
extensions, which means that the new generator will be free over all
that came before.  Then for all $w \in W_G \setminus G$ we will have
$w(g) \not \in G$.  And all these elements need to encode $z$ for the
method to work.

The following lemma shows that in the case of cofinitary groups we
cannot get $z$ uniformly recursive in $f$ in the coding lemma, when
our construction is computable.  This does not prove that uniform
computability is not possible as the construction does not need to be
computable, and moreover, it only has to work over a fixed group (the
one constructed in previous steps of the construction).

\begin{proposition}
  There do not exist recursive functionals $\Psi(X,Z,n)$ and
  $\Phi(X,n)$ such that for all countable cofinitary groups $G$, and
  all $z \in 2^\N$ the function $g \in \Sym(\N)$ defined by $g(n) =
  \Psi(G,z,n)$ satisfies that $\langle G,g \rangle$ is cofinitary, and
  for all $w \in W_G$ we have that $z(n) = \Phi(w(g),n)$.
\end{proposition}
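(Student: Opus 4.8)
The plan is to derive a contradiction from the existence of such recursive functionals by building, for a single fixed cofinitary group $G$, two reals $z_0 \neq z_1$ whose corresponding generators $g_0 = \Psi(G, z_0, \cdot)$ and $g_1 = \Psi(G, z_1, \cdot)$ cannot be distinguished by the recovery functional $\Phi$ at a suitably chosen word $w$. The key point is that $\Phi(w(g), n)$ must recover $z(n)$ from the \emph{whole} permutation $w(g)$ for \emph{every} $w \in W_G$ simultaneously, and this over-constrains the situation: $w(g)$ is a rearrangement of the "information" in $g$, so the functional $\Phi$ sees, in some sense, the same data no matter which $w$ is used, yet it must output the same $z$. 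The tension I want to exploit is between the uniformity over words $w$ and the fact that a single short computation $\Phi(w(g), n)$ only queries $w(g)$ at finitely many places.

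\textbf{First I would} fix a convenient countable cofinitary group, e.g. $G = \langle a \rangle$ for a single cofinitary permutation $a$ of infinite order (or even $G = \{\Id\}$ if the functionals are required to work there), and fix $z = z_0$ arbitrary with $g = g_0 := \Psi(G, z_0, \cdot)$. Since $\langle G, g \rangle$ is cofinitary and $g \notin G$ (as $g$ encodes $z_0$, which is not computable from $G$ alone for generic $z_0$ — here one uses that $\Phi(g, \cdot) = z_0$ and $\Phi(w(g), \cdot) = z_0$ for every $w$, so in particular $z_0$ is recursive in $g$ but we can choose $z_0$ not recursive in $G$), we have that $W_G$ contains words $w$ with $w(g) \notin G$. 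Now pick the word $w = x g' x^{-1}$ (conjugation by $x$) for a suitable $g' \in G$, or more simply $w = x$ itself, and observe: $w(g) = g$ in the trivial case, but for $w = g_1^G$-style conjugates we get $w(g) = u g u^{-1}$ for some word $u$ in $x$ and elements of $G$; since $g$ is a bijection, $w(g)$ is obtained from $g$ by pre- and post-composing with a bijection determined by finitely much of $g$ near the region being queried. The recursive functional $\Phi(w(g), n)$, in computing $z_0(n)$, queries $w(g)$ at finitely many arguments; by tracing these back through the (computable) relationship between $w(g)$ and $g$, I can identify a finite set $F_n \subseteq \N$ of "relevant" coordinates of $g$.

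\textbf{The main obstacle} — and where the real work lies — is engineering a second real $z_1$ and showing $g_1 = \Psi(G, z_1, \cdot)$ differs from $g_0$ only outside all the relevant finite sets $F_n$, so that $\Phi(w(g_1), n) = \Phi(w(g_0), n) = z_0(n) \neq z_1(n)$ for some $n$, the desired contradiction. The difficulty is that we do not control $\Psi$: changing $z$ to $z_1$ may change $g$ everywhere, not just on a sparse set. The way around this is a \emph{diagonalization / fixed-point} argument: rather than trying to control $\Psi$ directly, use the recursiveness of $\Psi$ and $\Phi$ together with the recursion theorem. One sets up, for each candidate modification, a recursive procedure that reads off from $\Psi(G, \cdot, \cdot)$ and $\Phi$ which coordinates $\Phi$ uses to recover each bit, then defines $z_1$ to disagree with the \emph{output} of $\Phi$ applied to the appropriate $w(g)$; because everything is recursive, the recursion theorem (Kleene's fixed-point theorem) lets us solve the resulting circular definition, producing an actual $z_1 \in 2^\N$. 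Then $g_1 := \Psi(G, z_1, \cdot)$ gives $\langle G, g_1 \rangle$ cofinitary, and for the word $w$ and index $n$ witnessing the diagonalization we get $\Phi(w(g_1), n) \neq z_1(n)$, contradicting the hypothesis. I expect the bulk of the proof to be the careful bookkeeping needed to make the use bounds of $\Phi$ compatible with the (uncontrolled) behavior of $\Psi$ — concretely, handling the fact that $\Phi$ must work for \emph{all} $w$ at once, which is precisely what forces a nontrivial amount of information about $g$ to be "spread around" and hence recoverable by at least one $w$ even after modification, whereas $\Phi$'s finite use cannot follow it. An alternative, possibly cleaner route: take two words $w_1, w_2$ with $w_1(g)$ and $w_2(g)$ related by a permutation of $\N$ that moves the use-sets apart, forcing $\Phi(w_1(g), n)$ and $\Phi(w_2(g), n)$ to query disjoint parts of $g$ yet return the same bit $z(n)$ — iterating over $n$ this says $z$ is recoverable from arbitrarily sparse parts of $g$, which (after a counting argument on how much of $g$ is actually determined by the construction) is impossible. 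I would pursue the recursion-theorem version first, as it sidesteps needing quantitative control and matches the style of the analogous results in the paper.
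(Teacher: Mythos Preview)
Your proposal has a genuine gap: you fix the group $G$ throughout and try to produce two reals $z_0,z_1$ whose generators $g_0,g_1$ are indistinguishable by $\Phi$ at some word. But you have no control over $\Psi$, and you correctly observe that changing $z$ may change $g$ everywhere. Your proposed fix via the recursion theorem does not work: the natural self-referential definition $z_1(n)=1-\Phi(w(\Psi(G,z_1,\cdot)),n)$ would force $z_1(n)\neq z_1(n)$, so no \emph{total} $z_1\in 2^{\N}$ can satisfy it; Kleene's theorem only hands you an index for a partial function, which is useless here since the hypothesis concerns arbitrary total $z\in 2^{\N}$. The ``alternative cleaner route'' with two words $w_1,w_2$ is not an argument as written: the claim that $z$ is recoverable from arbitrarily sparse parts of $g$ does not by itself yield a contradiction without some counting or information-theoretic bound, and none is supplied.

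The idea you are missing is that the hypothesis quantifies over \emph{all} countable cofinitary groups $G$, and this freedom can be exploited. The paper's argument runs as follows. Take $G$, take $z$ with $z(0)=0$, form $g=\Psi(G,z,\cdot)$, and record the use $u$ of $\Phi$ on input $(g,0)$. Now take $z'$ with $z'(0)=1$, form $g'=\Psi(G,z',\cdot)$, and record the use $U$ of $\Psi$ on $(G,z')$ in computing $g'\restrict u$. Now replace $G$ by a cofinitary group $\bar G$ whose enumeration agrees with that of $G$ on indices $\leq U$ but which additionally contains elements $g_l,g_k$ with $g_l\,(g'\restrict u)\,g_k=g\restrict u$. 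Then $g''=\Psi(\bar G,z',\cdot)$ satisfies $g''\restrict u=g'\restrict u$ (since $\Psi$ only used the first $U$ entries of the enumeration), and for $w(x)=g_l x g_k\in W_{\bar G}$ one gets $w(g'')\restrict u=g\restrict u$, whence $\Phi(w(g''),0)=0\neq z'(0)$. The whole point is that the contradiction is manufactured by \emph{changing the group}, not by changing $z$ within a fixed group.
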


\begin{proof}
  Let $G$ be given to us as a countable sequence $\langle g_i \mid i
  \in \N \rangle$, and assume that $\Psi$ and $\Phi$ as in the
  statement do exist.

  Pick a countable cofinitary group $G$, and a $z \in 2^\N$ with
  $z(0)=0$.  Define $g$ from $G$ and $z$ using $\Psi$ as in the
  statement of the lemma.  Let $u = \use(\Phi,g,0)$, the use of $g$ by
  the functional $\Phi$ when calculating $\Phi(g,0)$.
  Then for all $h \in \Sym(\N)$, if $h \restrict u = g \restrict u$
  then $\Phi(h,0) = 0$.

  Let $z' \in 2^\N$ be such that $z'(0) = 1$.  Define $g'$ from $G$
  and $z'$ using $\Psi$ as in the statement of the lemma.  Let $U =
  \use(\Psi,G,z',g'\restrict u)$, the maximal use of $G$ and $z$ by
  $\Psi$ in calculating $g'(0), \ldots, g'(u-1)$. 

  So in determining $g'(0), \ldots, g'(u-1)$ no use is made of any
  group element in the enumeration $\langle g_i \mid i \in \N \rangle$
  with index $i > U$.

  Now pick a new cofinitary group $\bar{G}$ and enumeration of it
  $\langle \bar{g}_i \mid i \in \N \rangle$ such that $\bar{g}_i =
  g_i$ for $i \leq U$ and there are elements $g_l$ and $g_k$
  such that $g_l (g'\restrict u) g_k  = g \restrict u$.

  Define $g''$ from $\bar{G}$ and $z'$ using $\Psi$ as in the
  statement of the lemma.  Then $g'' \restrict u = g' \restrict u$.
  However if we choose $w(x) = g_l x g_k$, then $w(g'') = g \restrict
  u$, which means that $\Phi(w(g''),0) = 0$ contradicting the fact
  that $\Phi$ computes $z'$ from $w(g'')$.
\end{proof}

Now that we know what the difficulty is, we will show how to deal with
it.  We will recursively construct the maximal cofinitary group.  To
make the coding work out though we have to start with a specific
countable group.

Let $G_0$ be the countable cofinitary group generated by $h$ defined
as follows:
\begin{equation*}
  h(n) = \begin{cases} n-2, & \text{ if } n \text{ is even and not
    zero};\\
    n+2, & \text{ if } n \text{ is odd};\\
    1, & \text{ if } n = 0. \end{cases}
\end{equation*}

Then there is a formula only involving natural number quantifiers
$\phi_{G_0}(x)$ that defines this group as a subgroup of $\Sym(\N)$.

The coding method we use has two cases and a parameter.  But with
these it will be uniform; there exists a recursive functional
$\Phi(X,m,\epsilon,n)$ such that if $z$ is encoded in $f$ we have that
there exist $m \in \N$ and $\epsilon \in \{0,1\}$ such that for all $n
\in \N$ we have $z(n) = \Phi(f,m,\epsilon,n)$.

The encoding will be as follows; $z$ is encoded in $f$ with parameter
$(m,0)$ iff
\begin{equation*}
  (k_n, z(n)) = f^n(m), \text{ for some } k_n \in \N;
\end{equation*}
$z$ is encoded in $f$ with parameter $(m,1)$ iff
\begin{equation*}
  (k_n, z(n)) = f (hf)^{n}(m).
\end{equation*}

This encoding will be done in the following way.  At some point in the
construction, we have already constructed a finite approximation $p$
to the new generator $g$.  We then start encoding into a new word $w
\in W_G$.  Let $w = g_0 x^{k_0} g_1 \cdots x^{k_l} g_{l+1}$ with $g_i
\in G$ ($i \leq l+1$) and $k_i \in \mathbb{Z} \setminus
\{0\}$. \label{gamma explained}  Pick
$m$ such that $g_{l+1}(m) \not \in \dom(p) \cup \ran(p)$, and let
$\gamma = 0$ if $w$ does not have a proper conjugate subword, $\gamma
= 1$ otherwise.  We extend $p$ by taking good extension with respect
to certain words, extending the evaluation path of $w(p)$ for $m$.  We
do this until $a = (w \restrict \lh(w) - 2)(m)$ is defined.  Then
(assuming $k_0 > 0$, the other case is analogous) we choose a $b$ such
that $p \cup \{(a,b)\}$ is a good extension with respect to certain
words, such that $w(p \cup \{(a,b)\})(m) \in \{(k,z(0)) \mid k \in
\N\}$, and such that we can encode $z(1)$ into the next location.

This last requirement is where the two different types of encoding
play a role.  If $w$ has no proper conjugate subword, then since $G$
is cofinitary there is only finitely much restriction from the
requirement that $g_{l+1}(g_0(b)) \not \in \dom(p) \cup \ran(p) \cup
\{b\}$.  If $w$ does have a proper conjugate subword, then we will
always have that $g_{l+1}(g_0(b)) = b$.  This is why in that case we
``twist'' by $h$.  The next location we then want to encode in is
$h(g_0(b))$ and, again since $G$ is cofinitary, we will have only
finitely much restriction from requiring $g_{l+1}(h(g_0(b))) \not \in
\dom(p) \cup \ran(p) \cup \{b\}$.

With this we have enough information to state and prove the coding
lemma for cofinitary groups.

\begin{lemma}
  Let $G$ be a countable cofinitary group, $F \leq \Sym(\N) \setminus
  G$ a countable family of permutations such that for all $f \in F$
  the group $\langle G,f \rangle$ is cofinitary, and $z \in 2^\N$.
  Then there exists $g$ such that $\langle G,g \rangle$ is
  cofinitary, for all $f \in F$ the set $f \cap g$ is infinite, and
  $z$ is recursive in $w(g)$ for all $w \in W_G \setminus G$.
\end{lemma}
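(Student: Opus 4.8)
The plan is to build $g=\bigcup_{s\in\N}g_s$ as an increasing union of finite injective partial functions by a construction that cycles through four kinds of substeps, so that in the limit $g$ is a bijection, $\langle G,g\rangle$ is cofinitary, every $f\in F$ has $\card{f\cap g}=\aleph_0$, and $z$ is uniformly recursively coded into $w(g)$ for each $w\in W_G\setminus G$. First I would fix enumerations $\langle w_n\mid n\in\N\rangle$ of $W_G\setminus G$ and $\langle f_n\mid n\in\N\rangle$ of $F$, and dovetail the substeps so that each natural number is forced into $\dom(g)$ and into $\ran(g)$, each $f_n$ is attended to, and each $w_n$ is attended to, infinitely often. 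Every passage $g_s\subseteq g_{s+1}$ will be a good extension with respect to $w_0,\dots,w_s$ and all their subwords; by Lemmas \ref{lem:finfxpts} and \ref{lem:enoughforcof} this forces $w(g)$ to have finitely many fixed points for every $w\in W_G$, hence $\langle G,g\rangle$ is cofinitary. The domain- and range-extension substeps invoke the Domain and Range Extension Lemmas to keep $g$ total and onto, and the ``hit $f_n$'' substeps invoke the Hitting $f$ Lemma, valid for $f_n$ as an infinite partial function as remarked after that lemma, so that $f_n\cap g$ becomes infinite. Only the coding substep needs thought.

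For the coding I would attach to each word $w=w_n=g_0x^{k_0}g_1\cdots x^{k_l}g_{l+1}$ a base point $m_n$ and a flag $\gamma_n\in\{0,1\}$, where $\gamma_n=0$ if $w_n$ has no proper conjugate subword and $\gamma_n=1$ otherwise, and $m_n$ is chosen the first time $w_n$ is attended to so that $g_{l+1}(m_n)\notin\dom(g_s)\cup\ran(g_s)$ at that moment. On the $(t+1)$-st occasion that $w_n$ is attended to, the aim is that the sequence of checkpoints obtained by iterating $w(g)$ from $m_n$ --- i.e.\ $w(g)(m_n),w(g)^2(m_n),\dots$ when $\gamma_n=0$, and $w(g)(m_n),w(g)(h(w(g)(m_n))),\dots$ when $\gamma_n=1$, with $h$ the fixed cofinitary permutation from the definition of $G_0$ (which, as in the intended application of the lemma, we assume lies in $G$) --- should through stage $t$ be pairs whose second coordinates are $z(0),\dots,z(t-1)$, with the location the next pass needs still unused. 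Concretely, following the discussion preceding the lemma, one first does good domain extensions to prolong the evaluation path of the current pass of $w$ on the current base value $v$ until the value $a:=(w\restrict(\lh(w)-2))(v)$, the point reached just before the final $x^{\pm1}$ and then $g_0$ are applied, is defined; one then adds the pair $(a,b)$ (with the obvious modification if $k_0<0$), where $b$ is chosen least so that: (i) $g_s\cup\{(a,b)\}$ is a good extension with respect to $w_0,\dots,w_s$ and their subwords; (ii) $b=g_0^{-1}(\langle k,z(t)\rangle)$ for some $k$, forcing the new checkpoint $w(g)(v)=g_0(b)$ to equal $\langle k,z(t)\rangle$, and $b\notin\ran(g_s)$; and (iii) the location the next pass needs, namely $g_{l+1}(g_0(b))$ when $\gamma_n=0$ and $h(g_0(b))$ when $\gamma_n=1$, lies outside $\dom(g_s)\cup\ran(g_s)\cup\{b\}$. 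Condition (i) rules out only finitely many $b$ by the Domain and Range Extension Lemmas; condition (iii) rules out only finitely many $b$ because $G$ is cofinitary, so the relevant composite ($g_{l+1}g_0$, or $g_{l+1}\circ h\circ g_0$) has finitely many fixed points --- and the twist by $h$ when $\gamma_n=1$ is precisely to avoid the fact that there $g_{l+1}(g_0(b))=b$ would be forced for every admissible $b$; and the candidates admitted by (ii) form an infinite recursive set. Hence an admissible $b$ exists and the substep can be performed.

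Once the construction is complete, the four conclusions follow: $g$ is a total injective surjection by the domain and range substeps; $\langle G,g\rangle$ is cofinitary by Lemmas \ref{lem:finfxpts} and \ref{lem:enoughforcof}; $\card{f\cap g}=\aleph_0$ for each $f\in F$ by the hitting substeps; and for $w\in W_G\setminus G$, writing $f=w(g)$, the function $z$ is computed from the oracle $f$ by a single recursive functional $\Phi(f,m,\epsilon,n)$, which with $m=m_n$ and $\epsilon=\gamma_n$ iterates $x\mapsto f(x)$ (if $\epsilon=0$) or $x\mapsto f(h(x))$ (if $\epsilon=1$) $n$ times from $m$ and returns the second coordinate of the resulting pair --- the construction having arranged exactly that these iterates are pairs $\langle k,z(n)\rangle$. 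The step I expect to be the crux is this coding substep together with its bookkeeping: one must maintain the invariant that the coding orbit of $w_n$ (the base point $m_n$, the checkpoints, and the intermediate evaluation-path points used for it) stays disjoint from the coding orbits of all other $w_{n'}$ and from everything touched by the domain/range/hitting substeps, so that at every stage the finitely many constraints coming from the active words, the already-placed pairs, and conditions (i)--(iii) can all be met by one choice of $b$. This disjointness is preserved because at every stage only finitely much has been touched and each Extension Lemma leaves all but finitely many choices available, so one can always dodge; verifying that these dodges are mutually consistent, in particular that the evaluation path of a later word is never forced into the coding orbit of an earlier one, is the place where care is required.
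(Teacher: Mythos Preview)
Your proposal is correct and follows essentially the same route as the paper's proof: the same four substeps (domain, range, hitting $f$, coding) interleaved, the same two-case coding scheme with the twist by $h$ when $w$ has a proper conjugate subword, and the same reliance on the Domain/Range/Hitting lemmas to leave cofinitely many choices at each step. The paper implements exactly the bookkeeping you flag as the crux by carrying an explicit finite avoidance set $A$ (points currently reserved for active coding orbits) together with a sequence of triples $c_n=(m,l,\gamma)$ recording the current base point, the next bit to encode, and the flag; every non-coding substep is required to avoid $A$, and each coding substep updates $A$ by removing the point just consumed and inserting the next reserved location. This is precisely the ``invariant'' you describe in your final paragraph, made concrete.
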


\begin{proof}
  $W_G \setminus G$ is countable, enumerate it by $\langle w_n \mid n
  \in \N \rangle$, and enumerate $F$ by $\langle f_n \mid n \in \N
  \rangle$.
  
  Start by setting $g:= \emptyset$, $A := \emptyset$ and $\langle c_n
  \mid n \in \N \rangle$ with all $c_n := \emptyset$.  $g$ will be the
  permutation we construct, so at any time it will be a finite
  injective function.  $A$ is a set of numbers; it is the set of
  numbers in domain and range that are being used in coding.  We have
  to avoid this set in all steps other than coding steps .  It will
  always be finite and any number will stay in it for only finitely
  many stages of the construction.  $\langle c_n \mid n \in \N
  \rangle$ is a sequence of which at any time an initial segment will
  contain triples that hold information on how far we are in the
  coding, how we are coding, and where the coding currently is being
  done.

  At step $s \in \N$ in the construction we do the following:
  \begin{itemize}
  \item Extend Domain: Set $a := \min\{\N \setminus (\dom(g) \cup
    A)\}$.  By the domain extension lemma for all but finitely many
    $b$ the extension $g \cup \{(a,b)\}$ is a good extension of $g$
    with respect to all words $w_i$, $i \leq s$.  Choose $b$ to be the
    least such number such that $b \not \in A$ and set $g = g \cup
    \{(a,b)\}$.
    
  \item Extend Range: Set $b := \min\{\N \setminus (\ran(g) \cup
    A)\}$.  By the range extension lemma for all but finitely many $a$
    the extension $g \cup \{(a,b)\}$ is a good extension of $g$ with
    respect to all words $w_i$, $i \leq s$.  Choose $a$ to be the
    least such number such that $b \not \in A$ and set $g = g \cup
    \{(b,a)\}$.
  \end{itemize}
  Note: these two sub-steps ensure that $g$ will be a permutation of
  $\N$; no number stays in $A$ long enough to cause problems.
  \begin{itemize}
    \item
      Hit $f$:  For each $j \leq s$ in turn do the following:
      
      By the Hitting $f$ lemma, for all but finitely many $a$ the
      extension $g \cup \{(a,f_j(a))\}$ is a good extension of $g$
      with respect to all words $w_i$, $i \leq s$.  Choose $a$ to be
      the least such number such that $a, f_j(a) \not \in A$ and set
      $g = g \cup \{(a,f_j(a))\}$.
  \end{itemize}
  Note: this ensures for all $f \in F$ that $f \cap g$ is infinite.
  \begin{itemize}
    \item
      Coding:  For each $j < s$ in turn do the following:

      $c_j$ is a triple $(m,l,\gamma)$, where $m$ denotes where the
      coding is taking place, $l$ denotes the next location of $z$
      to encode, and $\gamma$ determines how to encode.

      Let $n$ be the largest number such that $a := (w_j \restrict
      n)(g)(m)$ is defined.  Then $w_j = w' g_j x^k x^\delta (w_j
      \restrict n)$, where $w' \in W_G$, $g_j \in G$, and $k \geq 0$ if
      $\delta = 1$ and $k \leq 0$ if $\delta = -1$.

      \underline{Case $\delta = 1$:}

      By the domain extension lemma, for all but finitely many $b$ the
      extension $g \cup \{(a,b)\}$ is a good extension of $g$ with
      respect to
      all words $w_i$, $i \leq s$.

      \underline{Subcase $k > 0$:}

      Choose $b$ to be the least number such that $b \not \in A \cup
      \dom(p)$, set $g = g \cup \{(a,b)\}$ and replace $a$ in $A$ by
      $b$ (so $a$ is no longer a member of $A$ but $b$ now is).

      \underline{Subcase $k=0$:}
      
      \underline{SubSubcase $w' = w'' x^{\delta'}$ ($\delta' \in
        \{-1,1\}$):}

      Choose $b$ to be the least number such that $b \not \in A$ and
      $g_j(b) \not \in A \cup \dom(g) \cup \ran(g)$ (in fact depending
      on $\delta'$ we only care about avoiding one of $\dom(g)$ or
      $\ran(g)$).  Set $g = g \cup \{(a,b)\}$ and replace $a$ in $A$
      by $g_j(b)$.

      \underline{SubSubcase $w' = \emptyset$:} (This is where the
      actual coding happens.)

      Choose $b$ to be the least number such that $b \not \in A$,
      $g_j(b) \not \in A$, $g(b) \in \{ (c, z(l)) \mid c \in \N \}$
      and if $\gamma = 0$ $w_0(g_i(b)) \not \in A \cup \dom(g) \cup
      \ran(g) \cup \{b\}$ or if $\gamma = 1$ then $w_0(h(g_i(b))) \not
      \in A \cup \dom(p) \cup \ran(g) \cup \{b\}$.

      The requirements $b \not \in A$, $g_j(b) \not \in A$, and
      $w_0(g_j(b)) \not \in A \cup \dom(p) \cup \ran(p)$ or
      $w_0(h(g_j(b))) \not \in A \cup \dom(p) \cup \ran(p)$ exclude
      finitely many possibilities for $b$.  Since $G$ is cofinitary,
      $w_0(g_j(b)) \neq b$ or $w_0(h(g_j(b))) \neq b$ also excludes
      finitely many possibilities.  So we can choose $b \in g_j^{-1} [
      \{(c, z(l)) \mid c \in \N\}]$ satisfying the last condition on
      $b$.
      
      Then set $g = g \cup \{(a,b)\}$, replace $a$ in $A$ by
      $w_0(g_i(b))$ (if $\gamma = 0$) or $w_0(h(g_i(b)))$ (if $\gamma
      = 1$) and set $c_j := (g_j(b), l+1, 0)$ (if $\gamma = 0$) or
      $c_j := (h(g_j(b)), l+1, 1)$ (if $\gamma = 1$). ($\gamma$ is set
      in Extending Coding below and explained on page \pageref{gamma
        explained}.)

      \underline{Case $\delta = -1$:}  The method and (sub)subcases
      are analogous to the case $\delta = 1$ but using the range
      extension lemma.

    \item
      Extending Coding:  If $w_s$ has a proper conjugate subword, set
      $\gamma = 1$; otherwise set $\gamma = 0$.  Then let $a$ be the
      least number such that if $w_s = w' g_s$, then $g_s(a) \not \in
      \dom(g) \cup \ran(g) \cup A$.  Add $g_s(a)$ to $A$ and set $c_s
      = (a,0,\gamma)$.  This indicates that at the next stage we will
      start encoding $z(0)$ into location $a$ for $w_s$.
  \end{itemize}
  Note: with the explanation before the lemma this ensures that the
  coding happens correctly.
\end{proof}

With the above indicated changes, Lemma \ref{lem:vmad:decidemembership}
has to be modified to be the following.

\begin{lemma}
\begin{align*}
  g \in G \Leftrightarrow & \ \phi_{G_0}(g) \vee \exists (m,\epsilon) \big[
      \text{ the model encoded in } g \text{ is wellfounded } \wedge \\
  & \forall \langle E_\omega, r, u \rangle \ \varphi(\langle
      E_\omega, r, u\rangle, g) \wedge \chi(E_\omega,r) \rightarrow r(
      \code{u \in \A }, \seq{\emptyset}) = 1 \big].
\end{align*}
\end{lemma}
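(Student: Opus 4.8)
The plan is to follow the proof of Lemma~\ref{lem:vmad:decidemembership} essentially verbatim, adjusting for the two genuinely new features of the present construction. First, the group is built on top of the fixed base group $G_0$, and membership in $G_0$ is picked up by the arithmetic disjunct $\phi_{G_0}(g)$. Second, the coding method here carries a parameter $(m,\epsilon)$, so the fixed formula $\varphi$ of Lemma~\ref{lem:nextlevel} is replaced by a family $\varphi_{m,\epsilon}$ that reads the model $(\N,E)$ off of $g$ via $g^n(m)$ when $\epsilon=0$ and via $g(hg)^n(m)$ when $\epsilon=1$, and the quantifier $\exists(m,\epsilon)$ (over $\N\times\{0,1\}$, hence harmless for complexity) is prepended. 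As before I would prove the displayed equivalence by induction on $\gamma<\omega_1$ for all reals $g\in L_{\beta_\gamma}$, using the hypothesis for reals in the $L_{\beta_{\gamma'}}$ with $\gamma'<\gamma$; this is what licenses the self-reference of the predicate ``$u$ codes an element of $G$'' appearing inside the right-hand side, as the construction of the generating sequence is absolute for each $L_{\beta_{\gamma'}+\omega}$.

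For the forward direction let $g\in G\cap L_{\beta_\gamma}$. If $g\in G_0$ then $\phi_{G_0}(g)$ holds, since $\phi_{G_0}$ defines $G_0$. Otherwise write $g=w(g_{\gamma'})$ with $\gamma'$ the largest index of a generator occurring in $g$ and $w\in W_{G_{\gamma'}}\setminus G_{\gamma'}$. By the coding lemma just proved, the real $z$ coding the $E$ with $(\N,E)\cong(L_{\beta_{\gamma'}},{\in})$ is recursive in $g$ through the coding with parameter $(m,\epsilon)$, where $\epsilon=1$ if $w$ has a proper conjugate subword and $\epsilon=0$ otherwise, and $m$ is the location fixed for $w$ in the ``Extending Coding'' step. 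This $(m,\epsilon)$ witnesses the existential: the model encoded in $g$ is $(\N,E)\cong(L_{\beta_{\gamma'}},{\in})$, which is wellfounded, and by the analogue of Lemma~\ref{lem:nextlevel} the unique $\langle E_\omega,r,u\rangle$ with $\varphi_{m,\epsilon}(\langle E_\omega,r,u\rangle,g)$ satisfies $(\N,E_\omega)\cong(L_{\beta_{\gamma'}+\omega},{\in})$, whence $\chi(E_\omega,r)$ holds by the Mostowski collapse argument preceding Lemma~\ref{lem:decidemembership}, since $L_{\beta_{\gamma'}+\omega}$ is a limit level above $\omega$ satisfying $V=L$, with no largest ordinal and with $H$ being $\Delta_1$. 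Because the whole stage-$\gamma'$ construction is carried out inside $L_{\beta_{\gamma'}+\omega}$, where $g=w(g_{\gamma'})\in G_{\gamma'+1}\subseteq G$ is verified, $(\N,E_\omega)\models\code{u\in G}$ for $u$ representing $g$, so $r(\code{u\in G},\seq{\emptyset})=1$; and since $\varphi_{m,\epsilon}$ pins $\langle E_\omega,r,u\rangle$ down uniquely, the universally quantified implication holds.

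For the converse suppose the right-hand side holds for $g\in L_{\beta_\gamma}$. If $\phi_{G_0}(g)$ then $g\in G_0\subseteq G$. Otherwise fix a witnessing $(m,\epsilon)$: the model encoded in $g$ is wellfounded, so by the closure remark following Lemma~\ref{lem:nextlevel} the unique $\langle E_\omega,r,u\rangle$ with $\varphi_{m,\epsilon}(\langle E_\omega,r,u\rangle,g)$ has $(\N,E_\omega)$ wellfounded, and $\chi(E_\omega,r)$ holds because $\varphi_{m,\epsilon}$ builds $E_\omega$ as genuine constructible levels over the bottom model, so wellfoundedness of $(\N,E_\omega)$ forces its transitive collapse to be a genuine $L_\beta$ for a countable limit $\beta>\omega$ --- which is precisely what the clauses of $\chi$ verify (that the collapse satisfies $V=L$, has no greatest ordinal, and contains $\omega$). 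Applying the implication, $(\N,E_\omega)\models\code{u\in G}$ with $u$ representing $g$, so $(L_\beta,{\in})\models g\in G$, and by absoluteness of the construction for $L_\beta$ we get $g\in G$. Finally $\phi_{G_0}$ is arithmetic, ``the model encoded in $g$ is wellfounded'' together with the universal clause is $\Pi^1_1$, and $\exists(m,\epsilon)$ is a number quantifier, so the right-hand side is $\Pi^1_1$ --- the remark that completes the theorem.

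The step I expect to be the main obstacle is making the two-parameter coding interact with the $\varphi/\chi$ apparatus tightly enough that the converse direction cannot be satisfied vacuously: one must confirm that whenever the right-hand side holds, the model encoded in $g$ genuinely collapses onto some $L_\alpha$, so that $\chi(E_\omega,r)$ really holds and the implication is not vacuous, and one must check that the ``twist by $h$'' of the $\epsilon=1$ case is decoded correctly by $\varphi_{m,1}$, reflecting the identity $g_{l+1}(g_0(b))=b$ that made the twist necessary. By comparison the base-group disjunct and the complexity count are routine.
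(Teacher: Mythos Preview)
Your proposal is correct and follows precisely the approach the paper intends: the paper does not give a separate proof of this lemma but simply says that with the indicated changes (the $\phi_{G_0}$ disjunct and the $(m,\epsilon)$ parameter) the proof of Lemma~\ref{lem:vmad:decidemembership} carries over, and you have spelled out exactly those modifications and rerun the induction. Your identification of the nontrivial point --- that the converse direction must not be vacuous, and that the $\epsilon=1$ twist by $h$ must be decoded correctly --- matches the only places where the argument genuinely differs from the very mad case.
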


This is clearly still a $\Pi^1_1$ formula, showing the result.






\bibliography{biblio}   


\startabstractpage
{Cofinitary Groups and Other Almost Disjoint Families of Reals} {Bart
Kastermans} {Co-Chairs: Andreas R. Blass and Yi Zhang}

We study two different types of (maximal) almost disjoint families: very
mad families and (maximal) cofinitary groups.

For the very mad families we prove the basic existence results.  We
prove that $\MA$ implies there exist many pairwise orthogonal families,
and that $\CH$ implies that for any very mad family there is one
orthogonal to it.  Finally we prove that the axiom of constructibility
implies that there exists a coanalytic very mad family.

\pagestyle{empty} 

Cofinitary groups have a natural action on the natural numbers.  We
prove that a maximal cofinitary group cannot have infinitely many
orbits under this action, but can have any combination of any finite
number of finite orbits and any finite (but nonzero) number of
infinite orbits.

We also prove that there exists a maximal cofinitary group into which
each countable group embeds.  This gives an example of a maximal
cofinitary group that is not a free group.  We start the investigation
into which groups have cofinitary actions.  The main result there is
that it is consistent that $\bigoplus_{\alpha \in \aleph_1}
\mathbb{Z}_2$ has a cofinitary action.

Concerning the complexity of maximal cofinitary groups we prove that they
cannot be $K_\sigma$, but that the axiom of constructibility implies
that there exists a coanalytic maximal cofinitary group.

We prove that the least cardinality $\mathfrak{a}_g$ of a maximal cofinitary
group can consistently be less than the cofinality of
the symmetric group.  Finally we prove that $\mathfrak{a}_g$ can
consistently be bigger than all cardinals in Cicho\'n's diagram.

\end{document}